\pdfoutput=1
\documentclass[11pt]{article}
\usepackage[left=1in,right=1in,top=1in,bottom=1in]{geometry}
\usepackage{times}
\usepackage{expl3}
\usepackage{cite}
\usepackage[table]{xcolor}
\usepackage{multirow}
\usepackage{stackengine} 
\usepackage{hhline}
\usepackage{lipsum}
\usepackage{titlesec}
\usepackage[all]{xy}
\usepackage{wrapfig}
\usepackage{enumerate}
\usepackage{epsfig}
\usepackage{tikz-cd}
\usepackage{amsmath}
\usepackage{tabularx}
\usepackage{array}
\usepackage{booktabs}
\usepackage{enumitem}
\usepackage{bbm}
\usepackage{calc}
\usepackage{graphicx}
\usepackage{amsmath}
\usepackage[title]{appendix}
\usepackage{amssymb}
\usepackage{epstopdf}
\usepackage{boldline}
\usepackage{arydshln}
\usepackage{calligra}
\usepackage{bm}
\usepackage{url}
\usepackage{blindtext}
\usepackage{accents}
\usepackage{amsthm}
\usepackage{amscd}
\usepackage[ruled,vlined]{algorithm2e}

\newtheorem{definition}{Definition}

\newtheorem{theorem}{Theorem}
\newtheorem{proposition}{Proposition}
\newtheorem{corollary}{Corollary}
\newtheorem{lemma}{Lemma}

\newtheorem{example}{Example}
\newtheorem{remark}{Remark}
\usepackage{mathtools}
\usepackage{tikz-cd}
\usepackage{epstopdf}
\usepackage{balance}
\usepackage{thmtools}
\usepackage{thm-restate}
\usepackage{hyperref}
\usepackage{cleveref}
\usepackage[mathscr]{euscript}

\usepackage[ruled,vlined]{algorithm2e}
\newcommand{\ostar}{\mathbin{\mathpalette\make@circled\star}}

\makeatletter
\newcommand{\removelatexerror}{\let\@latex@error\@gobble}
\makeatother
\makeatletter
\newcommand*{\rom}[1]{\expandafter\@slowromancap\romannumeral #1@}
\makeatother

\ExplSyntaxOn
\newcommand\latinabbrev[1]{
  \peek_meaning:NTF . {
    #1\@}%
  { \peek_catcode:NTF a {
      #1.\@ }%
    {#1.\@}}}
\ExplSyntaxOff

\titleclass{\subsubsubsection}{straight}[\subsubsection]

\begin{document}
\vspace{1cm}
\title{A Universal Theory of Spectral Propagation for Compositional Operator Networks}
\vspace{1.8cm}
\author{Shih-Yu~Chang
\thanks{Shih-Yu Chang is with the Department of Applied Data Science,
San Jose State University, San Jose, CA, U. S. A. (e-mail: {\tt
shihyu.chang@sjsu.edu})
}}

\maketitle

\begin{abstract}
Classical spectral theory lacks a framework for understanding how spectra
propagate through compositional systems like deep neural networks, feedback
control loops, and quantum circuits. This paper develops a universal theory
governed by three invariants: the operadic spectrum (local spectral data),
spectral derivatives (perturbation sensitivity), and interaction residue
(emergent interface-generated content). We prove three main theorems: the
Spectral Propagation Theorem decomposes global output into propagated local
spectra, residues, and derivative corrections; the Stability Theorem
introduces the SOC stability radius and condition number; and the
Universality Theorem shows any reasonable propagation rule is uniquely
determined by the three invariants. These results provide a coordinate-free,
representation-invariant language for spectral analysis of compositional
operator systems.
\end{abstract}

\tableofcontents

\section{Introduction: From Spectral Decomposition to Spectral Propagation}
\label{sec:introduction}

Classical spectral theory is fundamentally a theory of isolated operators. 
Given an operator $A$, one studies its spectrum $\operatorname{Spec}(A)$, its resolvent, 
eigenspaces, spectral decomposition, perturbation behavior, and functional calculus 
\cite{DunfordSchwartz}. This viewpoint has produced some of the deepest developments 
in modern mathematics, including operator algebras \cite{Dixmier1977,Kadison}, 
quantum mechanics, noncommutative geometry \cite{Connes1994}, dynamical systems, 
and infinite-dimensional analysis.

However, modern scientific and engineering systems are rarely isolated. 
Contemporary architectures are inherently compositional:
\begin{itemize}
    \item deep neural networks consist of layered nonlinear operators,
    \item control systems contain nested feedback loops,
    \item signal-processing pipelines propagate information across multiple stages,
    \item distributed systems combine interacting subsystems,
    \item quantum circuits compose local quantum gates into global processes.
\end{itemize}

In such settings, the primary object of interest is no longer a single operator, 
but rather an interacting compositional network of operators. This shift mirrors 
developments in algebraic topology, where operads were introduced to describe 
compositional algebraic structures \cite{LodayVallette2012,GetzlerJones}, and in 
higher category theory, where $\infty$-operads provide a framework for coherent 
composition \cite{Lurie-HA2017}.

Consequently, the central mathematical problem changes fundamentally. Instead of asking
\[
A \longmapsto \operatorname{Spec}(A),
\]
we must ask
\[
\mathcal{N}(A_1,\ldots,A_n) \longmapsto \operatorname{Spec}(\mathcal{N}),
\]
where $\mathcal{N}$ is an operadic composition network built from interacting 
operators $A_1,\ldots,A_n$.

The essential challenge is therefore no longer spectral decomposition, but 
\emph{spectral propagation}. This problem connects to several active research 
directions: the calculus of functors pioneered by Goodwillie \cite{Goodwillie1992,Goodwillie2003}, 
which approximates functors by polynomial objects; deformation theory 
\cite{Hoang2025,HarpazHoang2026}, which studies infinitesimal variations of 
algebraic structures; and the theory of C*-algebras, where spectral invariants 
play a central role \cite{Dixmier1977,Fell,Glimm}.

\subsection{Core Question}

The central question addressed in the current work is:

\begin{center}
\emph{How does spectrum propagate through operadic composition networks?}
\end{center}

More precisely, the framework investigates how local spectra combine under 
operadic composition, how perturbations propagate through layered architectures, 
how feedback loops amplify or suppress spectral instability, how interface 
interactions generate emergent spectral behavior, and which invariants universally 
govern propagation across all admissible representations. The current work 
develops a general mathematical framework answering these questions, culminating 
in seven main theorems on spectral propagation, perturbation sensitivity, and 
feedback robustness in operadic operator networks.

\subsection{Relation to the Preceding Framework (SOC I--III)}

The present work is the culmination of the structural program initiated in our 
previous papers. 

\begin{remark}
\label{rem:soc-notation}
In this paper, SOC I refers to our previous work~\cite{ChangSOC1}, 
SOC II refers to our previous work~\cite{ChangSOC2}, and SOC III refers to our 
previous work~\cite{ChangSOC3}.
\end{remark}

\paragraph{SOC I: Operadic spectral invariants.}
SOC I established that classical spectral invariants fail to behave functorially 
under operadic composition. This obstruction is measured by the operadic residue:
\[
\mathcal{O}_P^{\mathrm{res}}.
\]
Thus, spectral propagation in composite systems necessarily contains correction 
terms absent from classical operator theory. The universality of this residue 
is analogous to the role of the cotangent complex in deformation theory 
\cite{Hoang2025}, which captures obstructions to deformations of algebraic 
structures.

\paragraph{SOC II: Spectral derivatives.}
SOC II introduced spectral derivative structures $\partial_*^{\mathrm{spec}}$, 
providing a calculus for spectral sensitivity, perturbation transport, and 
infinitesimal propagation behavior. This extended spectral theory beyond static 
invariants toward dynamical propagation laws, drawing inspiration from Goodwillie 
calculus \cite{Goodwillie1992,Goodwillie2003} and the operadic chain rules 
developed by Arone and Ching \cite{Fresse}.

\paragraph{SOC III: Interface residues.}
SOC III localized spectral correction phenomena onto interfaces between interacting 
subsystems through the residue structure $\Sigma^{\mathrm{res}}$. This revealed 
that emergent spectral behavior is fundamentally generated by compositional 
interactions rather than by isolated operators alone. This perspective resonates 
with the study of defects in topological field theory \cite{StolzTeichner2011} 
and the analysis of singular supports in microlocal analysis.

\paragraph{The Current Work: Global propagation laws.}
Building upon these developments, the current work establishes universal laws 
governing spectral propagation across operadic operator networks. The theory 
integrates:
\begin{itemize}
    \item operadic spectra $\sigma_P$,
    \item spectral derivatives $\partial_*^{\mathrm{spec}}$,
    \item interaction residues $\Sigma^{\mathrm{res}}$,
\end{itemize}
into a unified framework for compositional spectral dynamics. This framework 
provides a quantitative, analytic counterpart to the homotopy-theoretic calculus 
of functors \cite{Goodwillie2003,Fresse}, while also connecting to the spectral 
theory of operator algebras \cite{Dixmier1977,Kadison,Renault}.

\subsection{One-Sentence Summary}

The central result of the current work may be summarized as follows:

\begin{center}
\emph{
Spectral propagation, perturbation sensitivity, and feedback robustness in 
composite operator networks are universally governed by
\[
\sigma_P,
\qquad
\partial_*^{\mathrm{spec}},
\qquad
\Sigma^{\mathrm{res}}.
\]
}
\end{center}

These three invariants form the universal structural coordinates for spectral 
propagation theory, analogous to how homology groups serve as universal invariants 
in algebraic topology \cite{StolzTeichner2011} or how the Gelfand spectrum 
characterizes commutative C*-algebras \cite{Dixmier1977,Fell}.

\subsection{Central Philosophy}

The philosophical transition underlying the current work is simple but fundamental.

Classical operator theory studies isolated operators:
\[
A \mapsto \operatorname{Spec}(A).
\]

The current framework studies compositional propagation:
\[
\mathcal{N}(A_1,\ldots,A_n) \mapsto \operatorname{Spec}(\mathcal{N}),
\]
where $\mathcal{N}$ is an operadic network encoding compositional architecture.

Thus, spectral behavior is no longer determined solely by local operators, but by 
how operators interact, compose, propagate, and feed back through the network 
structure itself. This viewpoint reflects a broader trend in modern mathematics, 
where compositional structures are increasingly recognized as fundamental 
\cite{LodayVallette2012,Lurie-HA2017}.

This viewpoint may be summarized by the following principle:

\begin{center}
\[
\boxed{
\parbox{0.75\textwidth}{
\centering
Spectral behavior is governed by compositional architecture.
}
}
\]
\end{center}

This viewpoint has several concrete consequences: layered architectures produce 
recursive derivative propagation; feedback loops generate stability radii; 
interfaces create residue corrections; noncommutative interactions generate 
emergent spectral modes; and representation changes preserve propagation laws 
functorially. Consequently, spectral theory becomes fundamentally operadic and 
network-theoretic.

The framework developed here also connects to the study of non-locality in 
quantum mechanics, as exemplified by Bell's theorem \cite{Bell1964}, where 
compositional interactions produce phenomena irreducible to local descriptions. 
In a similar spirit, the interaction residue $\Sigma^{\mathrm{res}}$ captures 
spectral contributions that cannot be reduced to the spectra of individual 
components.

\subsection{Main Theorems of the Current Work}

The principal contributions are the following seven theorems.

\begin{enumerate}
    \item \textbf{Operadic Network Evaluation Theorem:} Establishes canonical spectral evaluation from node and edge data.
    \item \textbf{Spectral Propagation Theorem:} Decomposes spectral output into propagated node spectra, transported residues, and derivative corrections.
    \item \textbf{Stability Bound via Spectral Derivatives:} Shows low-order derivatives dominate sensitivity, introducing the SOC condition number.
    \item \textbf{Feedback Stability Criterion:} Introduces the SOC stability radius, governing recursive stability.
    \item \textbf{Layerwise Stability Theorem:} Enables recursive analysis of hierarchical networks.
    \item \textbf{Covariant Stability Theorem:} Ensures invariance under base change.
    \item \textbf{Universality Theorem:} Proves that any compositional, local, base-change-compatible propagation rule must factor through the three invariants.
\end{enumerate}

%
%
%

\subsection{Structure of the Paper}

The remainder of the paper is organized as follows.

Section~\ref{sec:operadic-networks} introduces admissible operadic operator networks and establishes the Operadic Network Evaluation Theorem. Section~\ref{sec:spectral-propagation} develops the Spectral Propagation Theorem, which decomposes global spectral output into propagated node spectra, interaction residues, and derivative corrections. Section~\ref{sec:stability_and_sensitivity} introduces the spectral sensitivity operator, the SOC condition number, and the Stability Bound via Spectral Derivatives. Section~\ref{sec:feedback-networks} analyzes recursive networks, defining the SOC stability radius and proving the Feedback Stability Criterion. Section~\ref{sec:strcutured_networks_multiscale_composition} develops the Layerwise Stability Theorem for hierarchical architectures. Section~\ref{sec:functorial_robustness} establishes the Covariant Stability Theorem, proving invariance under admissible base-change functors. Section~\ref{sec:universality} presents the Universality Theorem, which demonstrates that any reasonable spectral propagation rule must factor through the three SOC invariants $\sigma_P$, $\partial_*^{\mathrm{spec}}$, and $\Sigma^{\mathrm{res}}$. Finally, Section~\ref{sec:case_studies_applications} provides case studies and applications of the framework.

\begin{remark}
The author is solely responsible for the mathematical insights and theoretical directions proposed in this work. AI tools, including OpenAI's ChatGPT and DeepSeek models, were used only for verification, reference organization, and exposition consistency~\cite{chatgpt2025,deepseek2025}. 
\end{remark}

\section{Operadic Operator Networks}
\label{sec:operadic-networks}

Before analyzing how spectra propagate through composite systems, we must first establish a precise mathematical description of the networks themselves. Classical network theory represents systems as graphs with nodes (operators) and edges (signal flows), but this framework is too rigid to capture the subtle algebraic structures arising from hierarchical composition, nontrivial interfaces, and feedback. In this section we introduce the notion of an \emph{admissible operadic operator network}, which replaces the graph-theoretic picture with operadic composition rules. Subsection~\ref{subsec:admissible-network} defines the components of such a network: nodes as spectrally analytic $P$-algebras, edges as operadic couplings, paths as compositional channels, feedback loops as cyclic compositions, and the overall assembly law. Subsection~\ref{subsec:evaluation-map} introduces the network spectral evaluation map $\mathcal{E}_{\mathcal{N}}$, which sends node-level operator data to global spectral output. Theorem~\ref{thm:network-evaluation} (Operadic Network Evaluation Theorem) then establishes that $\mathcal{E}_{\mathcal{N}}$ is uniquely determined by three pieces of data: node spectra $\sigma_P(A_v)$, edge coupling tensors $\tau_I$, and the operadic composition maps of $P$. This theorem provides the foundational input for the Spectral Propagation Theorem in Section~\ref{sec:spectral-propagation}.

\subsection{Definition of Admissible Network}
\label{subsec:admissible-network}

We now formalize the class of operadic operator systems that admit coherent spectral propagation and compositional dynamics.

\begin{definition}[Operadic Operator Network]
\label{def:operadic-network}
An \emph{operadic operator network} over a $C$-colored operad $P$ is a tuple
\[
\mathcal{N} = (V, E, \mathcal{P}, \mathcal{C}, \mathfrak{A}),
\]
where:

\begin{enumerate}
    \item \textbf{Nodes $V$:} A finite set of nodes. Each node $v \in V$ is assigned a local operator system $A_v$, assumed to be a spectrally analytic $P$-algebra (SOC II, Definition~10).

    \item \textbf{Edges $E$:} A set of directed interface relations. Each edge $e \in E$ has source and target maps
    \[
    s(e), t(e) \in V,
    \]
    and determines an operadic coupling map
    \[
    \tau_e : A_{s(e)} \longrightarrow A_{t(e)}.
    \]

    \item \textbf{Paths $\mathcal{P}$:} The set of finite composable directed paths in $E$. For a path
    \[
    p = (e_1, \ldots, e_k), \qquad t(e_j) = s(e_{j+1}) \text{ for } j = 1, \ldots, k-1,
    \]
    the associated propagation operator is the operadic composition
    \[
    \tau_p := \tau_{e_k} \circ \cdots \circ \tau_{e_1}.
    \]

    \item \textbf{Cycles $\mathcal{C} \subseteq \mathcal{P}$:} The set of cyclic paths, namely paths $p = (e_1, \ldots, e_k)$ such that
    \[
    s(e_1) = t(e_k).
    \]
    Elements of $\mathcal{C}$ represent feedback loops in the network.

    \item \textbf{Assembly structure $\mathfrak{A}$:} The collection of operadic composition rules that determine how local propagation channels assemble into higher-order network structures. Explicitly, $\mathfrak{A}$ consists of:
    \begin{itemize}
        \item The operadic composition maps of $P$:
        \[
        \gamma : P(n) \times P(k_1) \times \cdots \times P(k_n) \longrightarrow P(k_1 + \cdots + k_n),
        \]
        \item The induced propagation operators $\tau_p$ for each path $p \in \mathcal{P}$, defined as above.
    \end{itemize}
\end{enumerate}

The network is called \emph{coherent} if all coupling maps $\tau_e$ are type-compatible and all path compositions $\tau_p$ are well-defined.
\end{definition}

\begin{remark}[Colored Operad Structure]
\label{rem:colored-operad}
Let $C$ be a fixed set of \emph{colors} (also called types or sorts). 
A $C$-colored operad $P$ consists of:

\begin{itemize}
    \item For each tuple of input colors $(c_1,\ldots,c_n)$ and output color $c$ with 
          $c_i, c \in C$, a set $P(c_1,\ldots,c_n; c)$ of \emph{operations} taking 
          $n$ inputs of colors $c_1,\ldots,c_n$ and producing an output of color $c$.
    
    \item For each color $c \in C$, an identity operation $\mathrm{id}_c \in P(c; c)$.
    
    \item Composition laws: for $\theta \in P(c_1,\ldots,c_n; c)$ and 
          $\psi_i \in P(d_{i,1},\ldots,d_{i,k_i}; c_i)$ for $i = 1,\ldots,n$, the 
          composite $\theta \circ (\psi_1,\ldots,\psi_n)$ is an operation in 
          $P(d_{1,1},\ldots,d_{n,k_n}; c)$, respecting the color matching.
\end{itemize}

A $P$-algebra $A$ assigns to each color $c \in C$ a vector space (or more generally, 
an object in a symmetric monoidal category) $A(c)$, and to each operation 
$\theta \in P(c_1,\ldots,c_n; c)$ a multilinear map
\[
\theta_A: A(c_1) \times \cdots \times A(c_n) \longrightarrow A(c),
\]
satisfying the natural compatibility conditions with identities and compositions 
(associativity and unitality).

In an operadic operator network $\mathcal{N} = (V, E, \mathcal{P}, \mathcal{C}, \mathfrak{A})$ 
over a $C$-colored operad $P$, the coloring imposes the following constraints:

\begin{enumerate}
    \item \textbf{Node colors:} Each node $v \in V$ is assigned a color $\mathrm{col}(v) \in C$. 
          The local operator system $A_v$ is required to be an algebra over the 
          sub-operad of $P$ consisting of operations whose output color matches 
          $\mathrm{col}(v)$. Equivalently, $A_v$ is an object in the fiber of the 
          operadic spectrum over the color $\mathrm{col}(v)$.
    
    \item \textbf{Edge color compatibility:} For an edge $e: v \to w$ with coupling map 
          $\tau_e: A_{s(e)} \to A_{t(e)}$, the colors must satisfy either:
          \begin{itemize}
              \item $\mathrm{col}(s(e)) = \mathrm{col}(t(e))$ (type-preserving edge), or
              \item More generally, $\tau_e$ is a morphism between $P$-algebras of 
                    compatible colors, meaning that for any operation $\theta$ with 
                    output color $\mathrm{col}(s(e))$, $\tau_e(\theta_A(\dots))$ 
                    is expressible in terms of operations with output color 
                    $\mathrm{col}(t(e))$.
          \end{itemize}
    
    \item \textbf{Path color consistency:} For a composable path $p = (e_1,\ldots,e_k)$, 
          the intermediate colors must match: $\mathrm{col}(t(e_j)) = \mathrm{col}(s(e_{j+1}))$ 
          for all $j$. This ensures that the composition $\tau_p = \tau_{e_k} \circ \cdots \circ \tau_{e_1}$ 
          is well-defined.
    
    \item \textbf{Cycle color closure:} For a cycle $c = (e_1,\ldots,e_k) \in \mathcal{C}$, 
          we have $s(e_1) = t(e_k)$, so the color condition implies that all nodes 
          in the cycle share the same color. Thus feedback loops occur within a fixed 
          color type.
\end{enumerate}

The operadic assembly structure $\mathfrak{A}$ must also respect colors: when composing 
operations via $\gamma$, the colors of inputs and outputs must match according to the 
operad's composition rules.

This colored structure is essential for the spectral propagation theorem, as it ensures 
that the operadic spectrum $\sigma_P(A)$ is graded by colors, and the interaction residue 
$\Sigma^{\mathrm{res}}$ can be decomposed into contributions from different color sectors.
\end{remark}

\begin{definition}[Contractive Cycle Operator]
\label{def:contractive-cycle}
For a cycle $c = (e_1,\ldots,e_k) \in \mathcal{C}$, define its \emph{cycle operator}
\[
\tau_c := \tau_{e_k} \circ \cdots \circ \tau_{e_1} : A_{s(e_1)} \longrightarrow A_{s(e_1)}.
\]
The cycle $c$ is called \emph{contractive} if its operator satisfies
\[
\|\tau_c\|_{\mathrm{sp}} \le \alpha_c < 1,
\]
where $\|\cdot\|_{\mathrm{sp}}$ denotes the spectral radius (or the operator norm 
induced by the spectral size $\|\sigma_P(-)\|$ from SOC II, Definition~2). 
The constant $\alpha_c$ may depend on the cycle.
\end{definition}

\begin{definition}[Contractive Operadic Network]
\label{def:contractive-network}
An operadic operator network $\mathcal{N}$ is \emph{contractive} if every cycle $c \in \mathcal{C}$ is contractive (Definition~\ref{def:contractive-cycle}) with a uniform bound $\alpha := \sup_{c \in \mathcal{C}} \alpha_c < 1$.
\end{definition}

\begin{theorem}[Fixed-Point Theorem for Contractive Networks]
\label{thm:fixed-point-contractive}
Assume that the vertex set $V$ of the operadic operator network $\mathcal{N}$ is finite and that every cycle $c \in \mathcal{C}$ is contractive in the sense of Definition 5, guaranteeing a unique fixed point for each cycle operator. If $\mathcal{N}$ is a contractive operadic operator network, then:

\begin{enumerate}
    \item For every cycle $c \in \mathcal{C}$, the fixed-point equation
    \[
    \tau_c(A) = A
    \]
    has a unique solution $A_c^*$ in the category of spectrally analytic $P$-algebras.
    
    \item The Banach fixed-point iteration $A_{n+1} = \tau_c(A_n)$ converges exponentially to $A_c^*$ from any initial condition $A_0$.
    
    \item The resulting fixed points are stable under small perturbations of the network structure.
\end{enumerate}
\end{theorem}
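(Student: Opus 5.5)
The plan is to reduce all three claims to the Banach contraction mapping principle, applied cycle by cycle on a suitable complete metric space. The first step is to fix the ambient space. For a cycle $c=(e_1,\ldots,e_k)\in\mathcal{C}$ with base node $v=s(e_1)$, the cycle operator $\tau_c$ is an endomorphism of $A_v$. I will therefore work in the space $X_v$ of spectrally analytic $P$-algebra states over the color $\mathrm{col}(v)$; by Remark~\ref{rem:colored-operad} all nodes on $c$ share this color. I equip $X_v$ with the metric
\[
d(A,B):=\|\sigma_P(A)-\sigma_P(B)\|,
\]
induced by the spectral size of SOC II, Definition~2. Completeness of $(X_v,d)$ has to be taken from the spectral analyticity hypothesis (SOC II, Definition~10), via closedness of the analytic class under limits in spectral norm. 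If $d$ is only a pseudometric, I will pass to the quotient, so uniqueness is understood up to spectral equivalence.

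The second step is to upgrade the hypothesis $\|\tau_c\|_{\mathrm{sp}}\le\alpha_c<1$ to a genuine Lipschitz estimate $d(\tau_c A,\tau_c B)\le\alpha_c\, d(A,B)$. This is the step I expect to be the main obstacle. A spectral-radius bound alone does not give a contraction in one step, so I will use the alternative reading in Definition~\ref{def:contractive-cycle}: $\|\cdot\|_{\mathrm{sp}}$ is the operator norm induced by $\|\sigma_P(-)\|$, together with linearity of $\tau_c$ as a composite of coupling morphisms.

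If only the spectral-radius interpretation is available, the fallback is Gelfand's formula. It gives $\|\tau_c^m\|\le(\alpha_c+\varepsilon)^m$ for $m\ge m_0$, so some power $\tau_c^m$ is a strict contraction. The fixed point of $\tau_c^m$ is then also the fixed point of $\tau_c$ by the standard commuting argument. Alternatively, one can pass to an equivalent adapted norm in which $\tau_c$ itself has norm at most $\alpha_c+\varepsilon<1$.

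With the contraction in hand, claims (1) and (2) follow directly. Banach's theorem gives existence and uniqueness of $A_c^*$. Completeness of the analytic class guarantees that $A_c^*$ stays in the category of spectrally analytic $P$-algebras, which is the content of (1). The standard estimate
\[
d(A_n,A_c^*)\le \frac{\alpha_c^n}{1-\alpha_c}\,d(A_1,A_0)
\]
gives the exponential convergence in (2), at the uniform rate $\alpha=\sup_c\alpha_c<1$ from Definition~\ref{def:contractive-network}. In the Gelfand-formula route the rate is $\alpha+\varepsilon$ with a constant depending on $m_0$.

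For (3), I will model a perturbation of the network as replacing each $\tau_e$ by $\tau_e'$ with $\|\tau_e-\tau_e'\|\le\delta$. A telescoping expansion of the composite $\tau_c'-\tau_c$ bounds it by $k\,M^{k-1}\delta$, where $M$ is the maximum of the edge norms. For $\delta$ small the perturbed cycle operator remains a contraction with constant $\alpha'<1$. The usual fixed-point perturbation bound then gives
\[
d(A_c^*,A_c'^*)\le \frac{\sup_A d(\tau_cA,\tau_c'A)}{1-\alpha}\le \frac{kM^{k-1}\delta}{1-\alpha},
\]
with the supremum taken over a bounded invariant ball. This yields continuous dependence on the network structure. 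Finiteness of $V$ is used to make $M$ finite and the bounds uniform.
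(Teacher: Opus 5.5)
Your proposal is sound at the paper's level of rigor. It runs on the same engine as the paper's proof: Banach's contraction principle applied cycle by cycle, the geometric convergence rate, and the $\delta/(1-\alpha)$ perturbation bound. Where you differ is in how you obtain the contraction and how you handle (3).

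\textbf{How the paper does it.} The paper never uses the hypothesis of Definition~\ref{def:contractive-cycle} as written. It substitutes a uniform bound $\|\partial^{\mathrm{spec}}\tau_c\|\le\alpha$ on the Banach space underlying $A_{s(e_1)}$, and gets a global Lipschitz constant from the Fr\'echet mean value theorem, which allows nonlinear $\tau_c$. For (3) it simply posits a cycle-level perturbation $\|\tau_c-\widetilde{\tau}_c\|<\delta$ that keeps the same constant $\alpha$.

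\textbf{What your route buys.} You take the spectral-radius/operator-norm bound literally. Your Gelfand-formula or adapted-norm fallback is exactly what is needed when only $\rho(\tau_c)<1$ is known, a point the paper glosses over. Your edge-level telescoping also derives, rather than assumes, that contractivity survives small perturbations. The exact factor there is $k(M+\delta)^{k-1}\delta$ rather than $kM^{k-1}\delta$, which is harmless for small $\delta$.

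\textbf{The trade-off.} Both your main line and your fallback require $\tau_c$ to be a bounded linear operator. In that case the unique solution is simply $A_c^*=0$, so (1)--(3) become nearly immediate. The paper's derivative route is what gives the theorem content for nonlinear cycle maps.

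\textbf{Two repairs.} First, with the operator norm $\sup_X\|\sigma_P(\tau_cX)\|/\|\sigma_P(X)\|$ and linearity, you get $\|\sigma_P(\tau_c(A-B))\|\le\alpha_c\|\sigma_P(A-B)\|$. That is a contraction for $d(A,B)=\|\sigma_P(A-B)\|$, not for your $\|\sigma_P(A)-\sigma_P(B)\|$ unless $\sigma_P$ is additive, so use the former. Second, if $d$ is only a pseudometric, the quotient yields a class with $d(\tau_cA^*,A^*)=0$, not an actual solution of $\tau_c(A)=A$; this loses existence as well as uniqueness. For (1) as stated, work with a genuine complete norm on the space underlying $A_{s(e_1)}$, as the paper does.
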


\begin{proof}
We prove each statement in order, relying on the finiteness of $V$ and the contractivity of cycles to resolve all cycles uniquely into a DAG-like evaluation order.

\underline{\textbf{Part 1 (Existence and uniqueness).}} 
Fix an arbitrary cycle $c = (e_1, \ldots, e_k) \in \mathcal{C}$. By Definition 5 (contractive network), the cycle operator $\tau_c : A_{s(e_1)} \to A_{s(e_1)}$ satisfies
\[
\|\partial^{\mathrm{spec}}\tau_c\| \le \alpha < 1
\]
in the spectral norm induced by the analytic $P$-algebra structure on $A_{s(e_1)}$.

Let $\mathcal{A}$ denote the Banach space underlying the spectrally analytic $P$-algebra $A_{s(e_1)}$. Because $\mathcal{N}$ has finite $V$, each cycle is isolated and resolved without ambiguity. Since $\tau_c$ is a morphism in the category of spectrally analytic $P$-algebras, it is differentiable with derivative $\partial^{\mathrm{spec}}\tau_c$. The condition $\|\partial^{\mathrm{spec}}\tau_c\| \le \alpha < 1$ implies that $\tau_c$ is a contraction mapping on $\mathcal{A}$.

More precisely, for any $A, B \in \mathcal{A}$, the Mean Value Theorem for Fr\'echet derivatives yields
\[
\|\tau_c(A) - \tau_c(B)\| \le \sup_{\xi \in [A,B]} \|\partial^{\mathrm{spec}}\tau_c(\xi)\| \cdot \|A - B\| \le \alpha \|A - B\|,
\]
where the supremum is bounded by $\alpha$ due to the uniform contractivity condition on the cycle.

Since $\mathcal{A}$ is a Banach space (hence complete) and $\tau_c$ is a contraction with Lipschitz constant $\alpha < 1$, the Banach Fixed-Point Theorem applies directly. Therefore, there exists a unique $A_c^* \in \mathcal{A}$ such that $\tau_c(A_c^*) = A_c^*$. Moreover, because $\tau_c$ preserves spectral analyticity by the coherence condition of $\mathcal{N}$, the fixed point $A_c^*$ itself lies in the subcategory of spectrally analytic $P$-algebras.

\underline{\textbf{Part 2 (Exponential convergence).}} 
Consider the iterative sequence $\{A_n\}_{n=0}^\infty$ defined by $A_{n+1} = \tau_c(A_n)$ for arbitrary initial $A_0 \in \mathcal{A}$. From the contraction inequality established above,
\[
\|A_{n+1} - A_c^*\| = \|\tau_c(A_n) - \tau_c(A_c^*)\| \le \alpha \|A_n - A_c^*\|.
\]
By induction on $n$, we obtain
\[
\|A_n - A_c^*\| \le \alpha^n \|A_0 - A_c^*\|.
\]
Since $\alpha < 1$, the right-hand side decays exponentially to $0$ as $n \to \infty$. Explicitly, for any $\varepsilon > 0$, choose $N$ such that
\[
N > \frac{\ln(\varepsilon / \|A_0 - A_c^*\|)}{\ln \alpha}
\]
(noting $\ln \alpha < 0$). Then for all $n \ge N$, $\|A_n - A_c^*\| < \varepsilon$, establishing convergence at rate $\mathcal{O}(\alpha^n)$.

\underline{\textbf{Part 3 (Stability under perturbations).}} 
Let $\widetilde{\mathcal{N}}$ be a perturbed network such that each cycle operator $\widetilde{\tau}_c$ satisfies $\|\tau_c - \widetilde{\tau}_c\|_{\mathrm{op}} < \delta$ for some $\delta > 0$, while preserving contractivity with constant $\alpha$. Let $\widetilde{A}_c^*$ denote the unique fixed point of $\widetilde{\tau}_c$.

We estimate the distance between fixed points:
\[
\|A_c^* - \widetilde{A}_c^*\| = \|\tau_c(A_c^*) - \widetilde{\tau}_c(\widetilde{A}_c^*)\|.
\]
Adding and subtracting terms,
\[
\|A_c^* - \widetilde{A}_c^*\| \le \|\tau_c(A_c^*) - \tau_c(\widetilde{A}_c^*)\| + \|\tau_c(\widetilde{A}_c^*) - \widetilde{\tau}_c(\widetilde{A}_c^*)\|.
\]
The first term is bounded by $\alpha \|A_c^* - \widetilde{A}_c^*\|$ by contractivity. The second term is bounded by $\delta$ by the perturbation assumption. Thus,
\[
\|A_c^* - \widetilde{A}_c^*\| \le \alpha \|A_c^* - \widetilde{A}_c^*\| + \delta.
\]
Solving for $\|A_c^* - \widetilde{A}_c^*\|$ yields
\[
(1 - \alpha)\|A_c^* - \widetilde{A}_c^*\| \le \delta \quad \Longrightarrow \quad \|A_c^* - \widetilde{A}_c^*\| \le \frac{\delta}{1 - \alpha}.
\]
Therefore, the fixed point depends Lipschitz-continuously on the network parameters, with stability constant $(1-\alpha)^{-1}$.

\underline{\textbf{Termination.}} 
Finally, we note that the finiteness of $V$ guarantees that the iterative evaluation over the DAG after cycle resolution terminates. The functoriality of the spectral map $\sigma_P$ and the specific claim $\sigma_P(\Phi_*) = \mathrm{Spec}(\Phi)$ are not proved in the referenced SOC I paper; they are assumed as additional coherence conditions for the operadic network $\mathcal{N}$. Under these assumptions, the proof is complete.
\end{proof}

\begin{definition}[Admissible Network]
\label{def:admissible-network}
An operadic operator network $\mathcal{N}$ is called \emph{admissible} if it is contractive (Definition~\ref{def:contractive-network}). Consequently, by Theorem~\ref{thm:fixed-point-contractive}, all cyclic fixed-point equations have unique solutions.
\end{definition}

\begin{remark}[Notation and interpretation]
\label{rem:network-notation}
The tuple $\mathcal{N} = (V, E, \mathcal{P}, \mathcal{C}, \mathfrak{A})$ encodes, respectively:
\begin{itemize}
    \item $V$: the node set (vertices). Each node $v \in V$ carries a spectrally analytic $P$-algebra $A_v$.
    \item $E$: the directed interface set (edges). Each edge $e \in E$ has a coupling map $\tau_e: A_{s(e)} \to A_{t(e)}$.
    \item $\mathcal{P}$: the set of composable propagation paths. For a path $p = (e_1,\ldots,e_k)$, the induced propagation operator is $\tau_p = \tau_{e_k} \circ \cdots \circ \tau_{e_1}$.
    \item $\mathcal{C}$: the set of cyclic feedback loops (paths where $s(e_1) = t(e_k)$).
    \item $\mathfrak{A}$: the operadic assembly data, consisting of the composition maps $\gamma$ of $P$, the induced path operators $\tau_p$, and the fixed-point equations $\tau_c(A) = A$ for each cycle $c \in \mathcal{C}$.
\end{itemize}
Thus the tuple records both the combinatorial network structure (graph) and the operator-theoretic propagation structure (coupling tensors, path compositions, and feedback conditions).

Admissibility ensures that the spectral evaluation map $\mathcal{E}_{\mathcal{N}}$ (Definition~\ref{def:network-evaluation-map}) is well-defined. Under the hypotheses of the Operadic Network Evaluation Theorem (Theorem~\ref{thm:network-evaluation}), this evaluation map exists and is uniquely determined by the network data.
\end{remark}

\begin{example}[Linear Network as Special Case]
\label{ex:linear-cascade}
Suppose $P$ is the operad with only unary operations (i.e., $P(1) = \mathbb{C}$ and $P(n) = \emptyset$ for $n \neq 1$). Then each $A_v$ is a finite-dimensional vector space (e.g., $\mathbb{C}^{d_v}$), and each coupling map $\tau_e: A_{s(e)} \to A_{t(e)}$ is a linear map (matrix).

In this setting:
\begin{itemize}
    \item The operadic composition $\gamma$ reduces to ordinary composition (matrix multiplication) of linear maps.
    \item A path $p = (e_1,\ldots,e_k) \in \mathcal{P}$ corresponds to a walk in the directed graph, with induced map $\tau_p = \tau_{e_k} \cdots \tau_{e_1}$.
    \item A cycle $c \in \mathcal{C}$ gives a fixed-point equation $A_{s(e_1)} = \tau_c(A_{s(e_1)})$, which for linear maps becomes $(I - \tau_c)A = 0$ — a homogeneous linear system.
\end{itemize}
Thus, Definition~\ref{def:admissible-network} recovers classical linear network theory (weighted directed graphs, transfer matrices, and linear feedback systems) as a special case, while providing a rigorous operadic foundation for spectral propagation.
\end{example}

\begin{example}[Planar Binary Tree Operad]
\label{ex:planar-tree-operad}
Let $P$ be the operad of planar binary trees. For $n \ge 1$, $P(n)$ has $C_{n-1}$ elements 
(the Catalan number), each representing a distinct way to parenthesize $n$ inputs. 
This operad is non-associative and plays a fundamental role in algebraic topology, 
operad theory, and the theory of $A_\infty$-algebras.

\paragraph{Structure of the planar binary tree operad.}
The operad $P$ is generated by a single binary operation $\mu \in P(2)$ (the planar 
binary tree with two leaves) subject to the quadratic relation:
\[
\mu \circ_1 \mu = \mu \circ_2 \mu,
\]
which encodes the associativity condition when projected to the associative operad. 
More concretely, $P(2)$ consists of the unique binary tree with two leaves; $P(3)$ 
contains two trees: $(\mu \circ_1 \mu)$ and $(\mu \circ_2 \mu)$, corresponding to the 
two distinct bracketings $(ab)c$ and $a(bc)$; $P(4)$ contains five trees, etc.

\paragraph{$P$-algebras: Non-associative algebras.}
A $P$-algebra $A$ consists of:
\begin{itemize}
    \item A vector space $A_1$ (the node space, or the space of inputs/outputs),
    \item A bilinear product $\mu: A_1 \otimes A_1 \to A_1$ (the composition operation),
    \item No additional operations for $n \neq 2$ because $P(n)$ for $n \neq 2$ is 
          generated by compositions of the binary operation.
\end{itemize}
Thus $P$-algebras are exactly non-associative algebras. There is no requirement that 
$\mu(\mu(a,b),c) = \mu(a,\mu(b,c))$; such an identity would hold only if the operad 
is quotiented by the associativity relation (i.e., the associative operad).

\paragraph{Operadic networks over $P$.}
An operadic operator network over the planar binary tree operad represents a 
non-associative composition tree. Each node $v \in V$ is assigned a non-associative 
algebra $A_v$ with product $\mu_v$. Each edge $e \in E$ carries a coupling map 
$\tau_e: A_{s(e)} \to A_{t(e)}$ that may be non-associative in the sense that it 
respects the operadic composition pattern.

For example, a binary node with two inputs $x_1, x_2$ and output $y$ corresponds to 
the operation $y = \mu(x_1, x_2)$. When multiple such nodes are composed, the 
resulting propagation operator is a well-defined planar binary tree representing 
the non-associative composition.

\paragraph{Spectral derivatives in the binary tree operad.}
For a binary edge $e$ with coupling map $\tau_e: A \to B$, the spectral derivative 
$\partial^{\mathrm{spec}}\tau_e$ must satisfy the operadic coherence conditions. 
In the case where $\tau_e$ is a $P$-algebra morphism, it commutes with the binary 
product:
\[
\tau_e(\mu_A(a,b)) = \mu_B(\tau_e(a), \tau_e(b)).
\]
Differentiating this relation yields constraints on $\partial^{\mathrm{spec}}\tau_e$:
\[
\partial^{\mathrm{spec}}\tau_e(\mu_A(a,b)) = \mu_B(\partial^{\mathrm{spec}}\tau_e(a), \tau_e(b)) + \mu_B(\tau_e(a), \partial^{\mathrm{spec}}\tau_e(b)),
\]
which is the operadic analogue of the Leibniz rule. Thus, the spectral derivative 
behaves like a derivation with respect to the binary product.

\paragraph{Interaction residues in binary compositions.}
When two non-associative algebras are coupled through an interface, the interaction 
residue $\mathcal{L}_I$ (SOC III, Theorem 4) captures the failure of the composition 
to be strictly associative. For instance, if $A$ and $B$ are two non-associative 
algebras with products $\mu_A$ and $\mu_B$, and $\tau: A \to B$ is an interface map, 
the residue may include terms proportional to
\[
\mathcal{L}_I(a,b,c) = \mu_B(\mu_B(\tau(a),\tau(b)),\tau(c)) - \mu_B(\tau(a),\mu_B(\tau(b),\tau(c))),
\]
measuring the non-associativity of the composition. When $\tau$ is a $P$-algebra 
morphism, this residue vanishes; otherwise, it contributes to $\Sigma^{\mathrm{res}}$.

\paragraph{Connection to $A_\infty$-algebras and deformation theory.}
The planar binary tree operad is the prototypical example of a non-symmetric operad 
and is intimately connected to $A_\infty$-algebras (strongly homotopy associative 
algebras). Indeed, an $A_\infty$-algebra structure on a vector space $A$ is given by 
a family of operations $\mu_n: A^{\otimes n} \to A$ for $n \ge 1$ that satisfy the 
quadratic $A_\infty$ relations, which can be encoded as a morphism from the planar 
binary tree operad to the endomorphism operad of $A$ (up to homotopy). In this context:
\begin{itemize}
    \item $\sigma_P(A)$ captures the homotopy invariants of the $A_\infty$-algebra,
    \item $\partial^{\mathrm{spec}}\tau$ encodes the deformation complex of the algebra morphism,
    \item $\Sigma^{\mathrm{res}}$ measures the obstruction to lifting morphisms to the 
          $A_\infty$ level.
\end{itemize}
Thus, the SOC framework provides a systematic way to study deformation and stability 
of $A_\infty$-algebras via operadic spectral propagation.

\paragraph{Relation to other operads.}
The planar binary tree operad is a suboperad of the associative operad $\mathsf{Ass}$ 
(where all bracketings are identified). There is a canonical surjection $P \to \mathsf{Ass}$ 
sending each binary tree to the associative product. This surjection induces a 
forgetful functor from $\mathsf{Ass}$-algebras (associative algebras) to $P$-algebras 
(non-associative algebras). The interaction residue $\Sigma^{\mathrm{res}}$ vanishes 
exactly when a $P$-algebra lifts to an associative algebra, i.e., when the binary 
product is associative. Therefore, the residue serves as an obstruction to 
associativity, providing a concrete invariant for measuring how far a non-associative 
structure is from being associative.

This example demonstrates the versatility of the SOC framework: even exotic operads 
like planar binary trees (with non-associative composition) are handled seamlessly, 
and the three invariants $(\sigma_P, \partial_*^{\mathrm{spec}}, \Sigma^{\mathrm{res}})$ 
provide meaningful spectral information about the non-associative structure.
\end{example}

\subsection{Network Spectral Evaluation Map}
\label{subsec:evaluation-map}

We now introduce the global evaluation mechanism that converts local operator data and operadic interactions into a unified spectral object associated with the entire network.

\begin{definition}[Network Spectral Evaluation Map]
\label{def:network-evaluation-map}
Let
\[
\mathcal{N} = (V, E, \mathcal{P}, \mathcal{C}, \mathfrak{A})
\]
be an admissible operadic operator network over a $C$-colored operad $P$, as in Definition~\ref{def:admissible-network}.

The \emph{network spectral evaluation map} is the map
\[
\mathcal{E}_{\mathcal{N}} : \prod_{v \in V} A_v \longrightarrow \operatorname{Spec}(\mathcal{N})
\]
defined by assembling the local operator systems $A_v$ through the edge couplings, path propagations, cyclic feedback constraints, and operadic composition rules encoded in $\mathfrak{A}$.

More explicitly, for a node assignment $(A_v)_{v \in V}$, the evaluation
\[
\mathcal{E}_{\mathcal{N}}\bigl((A_v)_{v \in V}\bigr)
\]
is the global spectral object obtained from:

\begin{enumerate}
    \item the local spectra $\sigma_P(A_v)$ at each node $v \in V$;
    
    \item the propagated spectral data along each path
    \[
    p = (e_1, \ldots, e_k) \in \mathcal{P}, \qquad
    \tau_p = \tau_{e_k} \circ \cdots \circ \tau_{e_1};
    \]
    
    \item the self-consistent spectral data determined by each feedback cycle $c \in \mathcal{C}$;
    
    \item the operadic assembly structure $\mathfrak{A}$, induced by the composition law $\gamma$ of $P$.
\end{enumerate}

The target $\operatorname{Spec}(\mathcal{N})$ denotes the resulting global operadic spectral object associated with the network.
\end{definition}

\begin{remark}[On the interpretation of $\operatorname{Spec}(\mathcal{N})$]
\label{rem:spec-interpretation}
The notation $\operatorname{Spec}(\mathcal{N})$ should not be interpreted as the ordinary spectrum of a single operator unless the network has first been realized as a global operator (e.g., by composing all edge couplings and solving feedback constraints). In general, it denotes the assembled spectral object determined by the local spectra, edge couplings, path propagations, feedback constraints, and operadic assembly rules. The precise construction of $\operatorname{Spec}(\mathcal{N})$ is given by the Spectral Propagation Theorem (Theorem~\ref{thm:spectral-propagation}).
\end{remark}

\begin{definition}[Category of Admissible Operadic Operator Networks]
\label{def:opnet-category}
Let $P$ be a $C$-colored operad. Define $\mathsf{OpNet}(P)$ to be the category whose objects are admissible operadic operator networks
\[
\mathcal{N} = (V, E, \mathcal{P}, \mathcal{C}, \mathfrak{A})
\]
over $P$ (Definition~\ref{def:admissible-network}).

A morphism $\Phi: \mathcal{N} \to \mathcal{N}'$ consists of the following data:

\begin{enumerate}
    \item \textbf{Graph morphism:} A pair $(\phi_V, \phi_E)$ where
    \[
    \phi_V: V \to V', \qquad \phi_E: E \to E',
    \]
    preserving source and target maps:
    \[
    s'(\phi_E(e)) = \phi_V(s(e)), \qquad
    t'(\phi_E(e)) = \phi_V(t(e)) \quad \text{for all } e \in E.
    \]

    \item \textbf{Node algebra morphisms:} For each node $v \in V$, a spectrally analytic $P$-algebra morphism
    \[
    \phi_v: A_v \longrightarrow A'_{\phi_V(v)}
    \]
    that commutes with the operadic structure maps of $P$.

    \item \textbf{Edge compatibility:} For each edge $e: v \to w$ with coupling map $\tau_e: A_v \to A_w$, the following diagram commutes:
    \[
    \begin{tikzcd}
    A_v \arrow[r, "\tau_e"] \arrow[d, "\phi_v"'] & A_w \arrow[d, "\phi_w"] \\
    A'_{\phi_V(v)} \arrow[r, "\tau'_{\phi_E(e)}"'] & A'_{\phi_V(w)}
    \end{tikzcd}
    \]
    Equivalently, $\phi_w \circ \tau_e = \tau'_{\phi_E(e)} \circ \phi_v$.

    \item \textbf{Path and cycle preservation:} For every path $p = (e_1, \ldots, e_k) \in \mathcal{P}$, the induced map on propagation operators satisfies
    \[
    \Phi(\tau_p) = \tau'_{\phi_E(e_k)} \circ \cdots \circ \tau'_{\phi_E(e_1)},
    \]
    and cyclic paths in $\mathcal{C}$ are mapped to cyclic paths in $\mathcal{C}'$.

    \item \textbf{Assembly compatibility:} The image under $\Phi$ of any operadic composite determined by $\mathfrak{A}$ agrees with the corresponding operadic composite determined by $\mathfrak{A}'$. That is, for any collection of local operators and couplings,
    \[
    \Phi\bigl(\mathcal{O}_{\mathcal{N}}(\{A_v\}, \{\tau_e\}, \gamma)\bigr) = \mathcal{O}_{\mathcal{N}'}\bigl(\{\phi_v(A_v)\}, \{\tau'_{\phi_E(e)}\}, \gamma'\bigr),
    \]
    where $\Phi$ acts componentwise on the constituent data.
\end{enumerate}

\noindent
\textbf{Identity and composition:}
\begin{itemize}
    \item The identity morphism $\mathrm{id}_{\mathcal{N}}: \mathcal{N} \to \mathcal{N}$ is given by $\phi_V = \mathrm{id}_V$, $\phi_E = \mathrm{id}_E$, and $\phi_v = \mathrm{id}_{A_v}$ for all $v \in V$.
    \item Composition of morphisms $\Phi: \mathcal{N} \to \mathcal{N}'$ and $\Psi: \mathcal{N}' \to \mathcal{N}''$ is defined componentwise:
    \[
    (\Psi \circ \Phi)_V = \psi_V \circ \phi_V, \quad
    (\Psi \circ \Phi)_E = \psi_E \circ \phi_E, \quad
    (\Psi \circ \Phi)_v = \psi_{\phi_V(v)} \circ \phi_v \quad \text{for each } v \in V.
    \]
\end{itemize}

With these identities and compositions, admissible operadic operator networks over $P$ form a category, denoted by $\mathsf{OpNet}(P)$.
\end{definition}

\begin{remark}[On the category $\mathsf{SpecObj}(P)$]
\label{rem:spec-category}
Let $\mathsf{SpecObj}(P)$ denote the category of operadic spectral objects associated with a $C$-colored operad $P$. Its objects are the operadic spectra $\sigma_P(A)$ for spectrally analytic $P$-algebras $A$ (SOC I, Definition~9). A morphism $f: \sigma_P(A) \to \sigma_P(B)$ in $\mathsf{SpecObj}(P)$ is defined whenever there exists an admissible $P$-algebra morphism $\phi: A \to B$ that induces a compatible map on spectra; in this case, $f = \sigma_P(\phi)$. Composition of morphisms is given by composition of the underlying $P$-algebra morphisms, and the identity morphism is $\sigma_P(\mathrm{id}_A)$. For the purposes of the present paper, we do not require a fully explicit description of this category; it suffices that $\mathsf{SpecObj}(P)$ exists and that the spectral evaluation map $\mathcal{E}_{\mathcal{N}}$ takes values in it.
\end{remark}

\begin{theorem}[Operadic Network Evaluation Theorem]
\label{thm:network-evaluation}
Let $\mathcal{N} = (V, E, \mathcal{P}, \mathcal{C}, \mathfrak{A})$ be an admissible operadic operator network over a $C$-colored operad $P$ (Definition~\ref{def:admissible-network}). Assume that the vertex set $V$ is finite. Let $\mathsf{SpecObj}(P)$ denote the category of operadic spectral objects (Remark~\ref{rem:spec-category}).

Then the assembly law $\mathfrak{A}$ determines a canonical global spectral object
\[
\operatorname{Spec}(\mathcal{N}) \in \mathsf{SpecObj}(P)
\]
obtained from the local operator systems $\{A_v\}_{v \in V}$, the edge coupling tensors $\{\tau_e\}_{e \in E}$, and the operadic composition structure of $P$. The construction is canonical up to canonical isomorphism in $\mathsf{SpecObj}(P)$.

Moreover, admissible network morphisms $\Phi: \mathcal{N} \to \mathcal{N}'$ induce compatible morphisms
\[
\operatorname{Spec}(\Phi): \operatorname{Spec}(\mathcal{N}) \longrightarrow \operatorname{Spec}(\mathcal{N}')
\]
between the associated spectral objects, making $\operatorname{Spec}(-)$ a functor from $\mathsf{OpNet}(P)$ to $\mathsf{SpecObj}(P)$.
\end{theorem}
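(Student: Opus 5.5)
The plan is to build $\operatorname{Spec}(\mathcal{N})$ in three stages: resolve feedback, evaluate what is left as an acyclic network, then check that the result is independent of choices and functorial.

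\textbf{Stage 1: resolve feedback.} Since $V$ is finite, form the condensation of the directed graph $(V,E)$. Each strongly connected component is collapsed to a single vertex, and the quotient graph is a finite DAG. Inside each component, every cycle $c\in\mathcal{C}$ is contractive with the uniform bound $\alpha<1$ of Definition~\ref{def:contractive-network}. Theorem~\ref{thm:fixed-point-contractive} then gives a unique fixed point $A_c^*$ for each cycle operator $\tau_c$. I replace the component by the self-consistent object these fixed points determine, compatibly across cycles through a common base vertex. When several cycles share a vertex, I will apply the Banach argument once to the joint operator on the finite product $\prod_{v\in\text{component}}A_v$, with the sup-norm. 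The hardest step is showing this joint operator is a contraction when only each individual cycle operator is assumed contractive. My first attempt is to pass to a weighted sup-norm adapted to path lengths, using that contractivity is uniform over all cycles. If that fails, I will add uniform contractivity of the joint operator as the working reading of admissibility.

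\textbf{Stage 2: evaluate the DAG.} Fix a topological order of the condensed DAG and proceed by induction along it. At each vertex, take its local spectrum $\sigma_P(A_v)$, or the spectrum of the resolved fixed-point object. Push forward the incoming data along the coupling maps using $\sigma_P(\tau_e)$, which is a morphism of $\mathsf{SpecObj}(P)$ by Remark~\ref{rem:spec-category}. Combine the incoming contributions using the operadic composition $\gamma$ of $P$, respecting colors as in Remark~\ref{rem:colored-operad}. Call the result at the sinks $\operatorname{Spec}(\mathcal{N})$. The data used are exactly the node spectra, the $\tau_e$, and $\gamma$, as the statement requires.

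\textbf{Canonicity.} Two topological orders differ by a sequence of swaps of incomparable vertices. Operadic associativity and equivariance of $\gamma$ give canonical isomorphisms between the evaluations for the two orders. Coherence of these isomorphisms follows from the coherence of operadic composition, since any two composites of swaps induce the same isomorphism. Different choices of condensation representatives change nothing, because the fixed points are unique.

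\textbf{Stage 3: functoriality.} Given a network morphism $\Phi$, the node morphisms $\phi_v$ commute with the couplings (edge compatibility) and with $\gamma$ (assembly compatibility). They carry fixed points to fixed points: $\phi_w(A_c^*)$ solves the fixed-point equation for the image cycle $\Phi(c)$, so by uniqueness it is $A_{\Phi(c)}^*$. Stage-by-stage naturality then assembles into $\operatorname{Spec}(\Phi)$, and the square with $\sigma_P(\phi_v)$ commutes at every stage of the induction. Identities and composites are preserved because the construction is componentwise, matching the componentwise composition in Definition~\ref{def:opnet-category}. As in Theorem~\ref{thm:fixed-point-contractive}, functoriality of $\sigma_P$ itself is taken as a standing coherence assumption.
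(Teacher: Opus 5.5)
Your SCC condensation (a cleaner version of the paper's ``replace each cycle by a node'') and your functoriality argument agree with the paper: fixed points are carried to fixed points by uniqueness, then naturality propagates along the induction. Stage~2, however, has a genuine gap. You apply $\sigma_P$ at each vertex and then propose to ``combine the incoming contributions using $\gamma$'' inside $\mathsf{SpecObj}(P)$. Remark~\ref{rem:spec-category} gives $\mathsf{SpecObj}(P)$ only objects $\sigma_P(A)$ and morphisms $\sigma_P(\phi)$; it carries no operadic or monoidal structure. Defining one compatibly would require $\sigma_P$ to intertwine operadic composition, and that is exactly what the framework says fails. The operadic residue $\mathcal{O}_P^{\mathrm{res}}$ of SOC~I and the interaction residue $\Sigma^{\mathrm{res}}(\mathcal{N})$ of Theorem~\ref{thm:spectral-propagation} (the part of $\operatorname{supp}(\operatorname{Spec}(\mathcal{N}))$ not covered by the local supports) record spectral content that the coupling creates and that is invisible in the $\sigma_P(A_v)$. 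So your step is either undefined or computes something other than the spectrum of the assembled system. Note also that the statement asks for an object built from the operator systems $\{A_v\}$, not from their spectra.

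The repair is the paper's route. Run your topological-order recursion on $P$-algebras, $\mathcal{O}_j=\mathfrak{A}_j\bigl(A_{v_j},\{\tau_e(\mathcal{O}_i)\}_{e\in E_{\mathrm{in}}(v_j)}\bigr)$, to obtain the global composite $\mathcal{O}_{\mathcal{N}}$, and only then set $\operatorname{Spec}(\mathcal{N}):=\sigma_P(\mathcal{O}_{\mathcal{N}})$. Canonicity then reduces to isomorphism invariance of $\sigma_P$. Functoriality becomes $\operatorname{Spec}(\Phi):=\sigma_P(\Phi_*)$, where $\Phi_*:\mathcal{O}_{\mathcal{N}}\to\mathcal{O}_{\mathcal{N}'}$ is the algebra map supplied by edge and assembly compatibility (Definition~\ref{def:opnet-category}). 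Your Stage~3 then goes through with $\phi_v$ in place of $\sigma_P(\phi_v)$ at each step.

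On Stage~1, your worry is well founded, and the weighted-norm attempt cannot succeed in general, so the fallback is necessary rather than optional. Take a single vertex with $A_v=\mathbb{C}$ and two self-loops with $\tau_{e_1}=\tau_{e_2}=\tfrac12\,\mathrm{id}$. Every cycle is a word of length $k$ in $e_1,e_2$ with operator $2^{-k}\,\mathrm{id}$. The network is therefore admissible with $\alpha=\tfrac12$, and each cycle has the unique fixed point $0$. Now combine incoming contributions additively, as in the transfer-matrix reading of Example~\ref{ex:linear-cascade}. The joint equation becomes $x=\tfrac12x+\tfrac12x$, which every $x$ satisfies, and no renorming makes the identity a contraction. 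Taking unique solvability of the simultaneous system as part of admissibility is what the paper itself does in Part~V of its proof, where it asserts this without deriving it from Definition~\ref{def:admissible-network}.
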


\begin{proof}
We prove the theorem in five parts: (I) construction of the global composite operator, (II) definition of the spectral object, (III) canonicity up to isomorphism, (IV) functoriality, and (V) handling of cycles.

\medskip
\noindent\textbf{Part I: Construction of the global composite operator.}

By Definition~\ref{def:admissible-network}, the network $\mathcal{N}$ comes equipped with an assembly law $\mathfrak{A}$. This assembly law is a recursive construction that takes as input:
\begin{itemize}
    \item the collection of local operator systems $\{A_v\}_{v \in V}$,
    \item the edge coupling tensors $\{\tau_e\}_{e \in E}$,
    \item the operadic composition maps $\gamma$ of $P$,
    \item the fixed-point equations $\tau_c(A) = A$ for each cycle $c \in \mathcal{C}$.
\end{itemize}

We construct the global composite operator $\mathcal{O}_{\mathcal{N}}$ as follows.

\emph{Step 1: Resolve cycles.}
For each cycle $c = (e_1, \ldots, e_k) \in \mathcal{C}$, the admissibility condition (Definition~\ref{def:admissible-network}) guarantees that the fixed-point equation
\[
\tau_c(A) = A, \qquad \text{where } \tau_c := \tau_{e_k} \circ \cdots \circ \tau_{e_1},
\]
has a unique solution $A_c$ in the category of spectrally analytic $P$-algebras. This solution is obtained by solving the recursive equation
\[
A_c = \tau_c(A_c).
\]
Since each $\tau_e$ is spectrally analytic, the composition $\tau_c$ is also spectrally analytic (by the stability of spectral analyticity under composition, SOC II, Theorem 11, which establishes that the class of spectrally analytic functors is closed under composition). We denote the resolved cycle algebra by $\tilde{A}_c$.

\emph{Step 2: Construct the propagation graph.}
Replace each cycle $c \in \mathcal{C}$ by a new node $v_c$ whose associated algebra is $\tilde{A}_c$, and replace each edge $e$ that participates in a cycle by an edge from the resolved node to the appropriate target. Because $V$ is finite, this yields a finite acyclic directed graph $\mathcal{G}$ (a DAG) where every path has finite length. The admissibility condition ensures that this transformation is well-defined and preserves all spectral data.

\emph{Step 3: Topological order and sequential composition.}
Since $\mathcal{G}$ is a finite DAG, it admits a topological ordering of its nodes. Let $v_1, \ldots, v_m$ be such an ordering, where every edge goes from $v_i$ to $v_j$ with $i < j$. Define the global composite operator recursively:

\begin{itemize}
    \item Base: $\mathcal{O}_1 = A_{v_1}$.
    \item Recursion: For $j = 2, \ldots, m$, let $E_{\text{in}}(v_j) = \{ e \in E : t(e) = v_j \}$ be the set of incoming edges to $v_j$. For each incoming edge $e$ from $v_i$ to $v_j$, we have already computed $\mathcal{O}_i$ (since $i < j$). Define the contribution from edge $e$ as $\tau_e(\mathcal{O}_i)$. Then define
    \[
    \mathcal{O}_j = \mathfrak{A}_j\!\left( A_{v_j}, \{\tau_e(\mathcal{O}_i)\}_{e \in E_{\text{in}}(v_j)} \right),
    \]
    where $\mathfrak{A}_j$ is the operadic assembly map for node $v_j$, which combines the local node algebra $A_{v_j}$ with the incoming propagated data according to the operadic composition rules of $P$.
\end{itemize}

\emph{Step 4: Final assembly.}
After processing all nodes in topological order, the final computed operator $\mathcal{O}_m$ is the global composite operator $\mathcal{O}_{\mathcal{N}}$. By construction, $\mathcal{O}_{\mathcal{N}}$ is a spectrally analytic $P$-algebra because each $\tau_e$ preserves spectral analyticity and the operadic composition of spectrally analytic maps is spectrally analytic (SOC II, Theorem 11).

\medskip
\noindent\textbf{Part II: Definition of the spectral object.}

Having constructed $\mathcal{O}_{\mathcal{N}}$, we define the global spectral object as its operadic spectrum:
\[
\operatorname{Spec}(\mathcal{N}) := \sigma_P(\mathcal{O}_{\mathcal{N}}) \in \mathsf{SpecObj}(P),
\]
where $\sigma_P$ is the operadic spectrum functor (SOC I, Definition 9). This definition is well-defined because $\mathcal{O}_{\mathcal{N}}$ is a spectrally analytic $P$-algebra, and the operadic spectrum is defined for all such algebras (SOC I, Theorem 4).

\medskip
\noindent\textbf{Part III: Canonicity up to canonical isomorphism.}

The construction of $\mathcal{O}_{\mathcal{N}}$ depends on the choice of topological ordering. However, different topological orderings of the same finite DAG produce isomorphic composite operators. This follows from the associativity of operadic composition (a standard property of operads; see, e.g., the operad axioms in SOC I, Section 2.2). Moreover, the resolved cycle algebras $\tilde{A}_c$ are unique by the admissibility assumption. Therefore, $\mathcal{O}_{\mathcal{N}}$ is unique up to canonical isomorphism in the category of $P$-algebras. Since the operadic spectrum functor $\sigma_P$ preserves isomorphisms (by functoriality, SOC I, Theorem 4), $\operatorname{Spec}(\mathcal{N})$ is unique up to canonical isomorphism in $\mathsf{SpecObj}(P)$.

\medskip
\noindent\textbf{Part IV: Functoriality.}

We now prove that $\operatorname{Spec}(-)$ is a functor. Let $\Phi: \mathcal{N} \to \mathcal{N}'$ be an admissible network morphism (Definition~\ref{def:opnet-category}).

\emph{Step 1: Induced map on cycles.}
For each cycle $c = (e_1, \ldots, e_k) \in \mathcal{C}$, the edge compatibility condition (Definition~\ref{def:opnet-category}, item 3) ensures that for each edge $e_j$,
\[
\phi_{t(e_j)} \circ \tau_{e_j} = \tau'_{\phi_E(e_j)} \circ \phi_{s(e_j)}.
\]
Composing these equalities over the cycle gives
\[
\phi_{s(e_1)} \circ \tau_c = \bigcirc_{j=1}^k (\phi_{t(e_j)} \circ \tau_{e_j}) = \bigcirc_{j=1}^k (\tau'_{\phi_E(e_j)} \circ \phi_{s(e_j)}) = \tau'_{\phi_E(c)} \circ \phi_{s(e_1)},
\]
where $\phi_E(c) = (\phi_E(e_1), \ldots, \phi_E(e_k))$ is the image cycle in $\mathcal{N}'$, and $\bigcirc$ denotes composition order. If $A_c$ is the unique fixed-point solution of $A = \tau_c(A)$, then applying $\phi_{s(e_1)}$ to both sides yields
\[
\phi_{s(e_1)}(A_c) = \phi_{s(e_1)}(\tau_c(A_c)) = \tau'_{\phi_E(c)}(\phi_{s(e_1)}(A_c)),
\]
so $\phi_{s(e_1)}(A_c)$ is a fixed point of $\tau'_{\phi_E(c)}$. By uniqueness of the fixed point in $\mathcal{N}'$ (admissibility of $\mathcal{N}'$), we have $\phi_{s(e_1)}(A_c) = A'_{\phi_E(c)}$. Thus $\Phi$ maps resolved cycle algebras to resolved cycle algebras compatibly.

\emph{Step 2: Induced map on DAG.}
The graph morphism $(\phi_V, \phi_E)$ preserves source and target maps, hence maps the topological ordering of $\mathcal{N}$ to a topological ordering of $\mathcal{N}'$ (up to reordering). The compatibility conditions guarantee that the recursive construction commutes with $\Phi$:
\[
\Phi(\mathcal{O}_{\mathcal{N}}) = \mathcal{O}_{\mathcal{N}'}(\{\phi_v(A_v)\}, \{\tau'_{\phi_E(e)}\}, \gamma').
\]

\emph{Step 3: Induced map on spectra.}
Applying the operadic spectrum functor $\sigma_P$ (SOC I, Theorem 4) yields a morphism
\[
\operatorname{Spec}(\Phi) := \sigma_P(\Phi_*) : \operatorname{Spec}(\mathcal{N}) \longrightarrow \operatorname{Spec}(\mathcal{N}').
\]

\emph{Step 4: Verification of functor axioms.}
\begin{enumerate}
    \item \textbf{Preservation of identities.}
    For the identity morphism $\mathrm{id}_{\mathcal{N}} : \mathcal{N} \to \mathcal{N}$, we have $(\mathrm{id}_{\mathcal{N}})_V = \mathrm{id}_V$, $(\mathrm{id}_{\mathcal{N}})_E = \mathrm{id}_E$, and $(\mathrm{id}_{\mathcal{N}})_v = \mathrm{id}_{A_v}$ for each $v \in V$. Then the induced map on algebras is $\Phi_* = \mathrm{id}$. Applying the spectrum functor:
    \[
    \operatorname{Spec}(\mathrm{id}_{\mathcal{N}}) = \sigma_P(\mathrm{id}) = \mathrm{id}_{\sigma_P(\mathcal{O}_{\mathcal{N}})} = \mathrm{id}_{\operatorname{Spec}(\mathcal{N})},
    \]
    where the second equality holds because $\sigma_P$ is a functor (SOC I, Theorem 4) and therefore preserves identity morphisms.

    \item \textbf{Preservation of composition.}
    Let $\Phi: \mathcal{N} \to \mathcal{N}'$ and $\Psi: \mathcal{N}' \to \mathcal{N}''$ be admissible network morphisms. Their composition $\Psi \circ \Phi: \mathcal{N} \to \mathcal{N}''$ is defined componentwise:
    \[
    (\Psi \circ \Phi)_V = \psi_V \circ \phi_V, \quad (\Psi \circ \Phi)_E = \psi_E \circ \phi_E, \quad (\Psi \circ \Phi)_v = \psi_{\phi_V(v)} \circ \phi_v.
    \]
    For the induced maps on algebras, we have $(\Psi \circ \Phi)_* = \Psi_* \circ \Phi_*$, because the assignment $\Phi \mapsto \Phi_*$ is defined by applying $\phi_v$ pointwise and $\psi_v$ pointwise, and composition of such pointwise maps is associative. Applying the spectrum functor $\sigma_P$, which is functorial (SOC I, Theorem 4), we obtain:
    \[
    \operatorname{Spec}(\Psi \circ \Phi) = \sigma_P((\Psi \circ \Phi)_*) = \sigma_P(\Psi_* \circ \Phi_*) = \sigma_P(\Psi_*) \circ \sigma_P(\Phi_*) = \operatorname{Spec}(\Psi) \circ \operatorname{Spec}(\Phi).
    \]

    \item \textbf{Coherence with monoidal structure (if applicable).}
    For tensor product networks $\mathcal{N}_1 \otimes \mathcal{N}_2$, the natural isomorphism $\Phi(A \otimes B) \cong \Phi(A) \otimes \Phi(B)$ follows from the Base Change Theorem (SOC I, Theorem 8), which establishes that $\sigma_P$ is compatible with strong monoidal base change. This ensures that $\operatorname{Spec}(-)$ respects the monoidal product when $P$ is symmetric.
\end{enumerate}

Thus $\operatorname{Spec}(-)$ preserves identities and compositions, and (where applicable) monoidal structure. Therefore, $\operatorname{Spec}(-)$ is a functor from $\mathsf{OpNet}(P)$ to $\mathsf{SpecObj}(P)$.

\medskip
\noindent\textbf{Part V: Handling of cycles (justification of Step 1).}

The existence and uniqueness of fixed-point solutions $A_c$ for each cycle $c \in \mathcal{C}$ is an admissibility condition, not a theorem proved here. However, we note that in many concrete settings (e.g., contractive linear maps, Banach fixed-point theorem, or spectral radius condition $\rho(\tau_c) < 1$), such fixed points exist and are unique. The admissibility definition (Definition~\ref{def:admissible-network}) explicitly requires this property for the network to be admissible. Therefore, Step 1 is valid under the hypotheses of the theorem.

For networks with multiple interacting cycles, the resolution must be performed simultaneously (e.g., by solving a system of fixed-point equations). The admissibility condition guarantees that this system has a unique solution. The finiteness of $V$ ensures that this simultaneous resolution terminates. The DAG construction generalizes to this case by treating the resolved cycle nodes as new nodes with fixed algebras.

This completes the proof of the Operadic Network Evaluation Theorem.
\end{proof}

\begin{remark}[On the role of admissibility]
\label{rem:admissibility-role}
The admissibility conditions in Definition~\ref{def:admissible-network} are precisely what makes the proof work:
\begin{itemize}
    \item Type-compatibility ensures that all compositions are well-defined.
    \item Uniqueness of cycle fixed-point solutions guarantees that the DAG reduction is deterministic.
    \item Preservation of spectral analyticity ensures that $\mathcal{O}_{\mathcal{N}}$ is in the domain of $\sigma_P$.
\end{itemize}
Without these conditions, the theorem would not hold.
\end{remark}

\begin{remark}[Significance of Functoriality]
\label{rem:functoriality-significance}
Functoriality guarantees that the spectral evaluation is invariant under permissible reparameterizations of the network. If two networks are related by an isomorphism in $\mathsf{OpNet}(P)$ (e.g., relabeling nodes, applying gauge transformations to edge couplings, or composing compatible network morphisms), their global spectral outputs are canonically isomorphic. This is essential for the Covariant Stability Theorem (Theorem~\ref{thm:covariant-stability}), which extends invariance to arbitrary base-change functors between categories of operadic algebras.
\end{remark}

\section{Spectral Propagation in Networks}
\label{sec:spectral-propagation}

Having established in Theorem~\ref{thm:network-evaluation} that an admissible operadic operator network admits a canonical spectral evaluation procedure, we now study how spectral data propagates through compositional network architectures.

Classical spectral theory primarily describes spectra of individual operators or loosely coupled systems, but provides limited structural tools for tracking spectral interactions across recursively composed interfaces. The operadic framework developed here introduces a compositional mechanism for analyzing how local spectral information is transported, modified, and assembled into global spectral behavior.

The central objects governing this propagation are:
\begin{itemize}
    \item the operadic node spectra $\sigma_P$,
    \item the spectral derivative operators $\partial_*^{\mathrm{spec}}$,
    \item and the interaction residue $\Sigma^{\mathrm{res}}$,
\end{itemize}
which measures nontrivial spectral contributions generated by compositional interfaces.

Subsection~\ref{subsec:def-propagation} formulates spectral propagation in operadic terms. Subsection~\ref{subsec:thm-propagation} establishes the Spectral Propagation Theorem, describing the decomposition of global spectral output into propagated node spectra, transported interaction residues, and derivative correction terms. Subsection~\ref{subsec:interp-consequences} interprets the resulting structure and explains how compositional interactions produce spectral effects not visible at the level of isolated subsystems.

\subsection{Definition of Spectral Propagation}
\label{subsec:def-propagation}

We now formalize the notion of spectral propagation in operadic operator networks. 
The propagation considered here is purely compositional and operadic in nature, referring to how local spectral information assembles into a global spectral structure through operadic compositions and admissible interfaces.

This notion is distinct from microlocal propagation, geometric singularity transport, or spectral defect dynamics, which belong to later developments of the theory.

\begin{definition}[Spectral Propagation]
\label{def:spectral-propagation}
Let
\[
\mathcal{N}
=
(V,E,\mathcal{P},\mathcal{C},\mathfrak{A})
\]
be an admissible operadic operator network.

For each node $v \in V$, let
\[
\sigma_P(A_v)
\]
denote the operadic spectrum of the local operator algebra $A_v$.

Let
\[
\mathcal{I}(P)
\]
denote the collection of admissible operadic interfaces.

For each admissible interface $I \in \mathcal{I}(P)$, let
\[
\partial^{\mathrm{spec}}\tau_I
\]
denote the associated spectral derivative operator governing first-order spectral transformation across the interface.

The \emph{spectral propagation} of the network is the operadic compositional process by which the local spectral data
\[
\{\sigma_P(A_v)\}_{v\in V}
\]
combine through admissible interfaces and operadic compositions to produce the global spectral support
\[
\operatorname{supp}\bigl(\sigma(F_*(A))\bigr),
\]
which we denote abstractly by
\[
\operatorname{Spec}(\mathcal{N}).
\]

The resulting propagated spectrum depends on:
\begin{enumerate}
    \item the local node spectra
    \[
    \sigma_P(A_v),
    \]

    \item the admissible interface structure
    \[
    \mathcal{I}(P),
    \]

    \item the associated spectral derivative operators
    \[
    \partial^{\mathrm{spec}}\tau_I,
    \]

    \item and the interaction residue
    \[
    \Sigma^{\mathrm{res}},
    \]
    capturing spectral contributions generated by nontrivial operadic interactions across interfaces.
\end{enumerate}
\end{definition}

\begin{remark}
\label{rem:propagation-notion}
The notion of spectral propagation introduced here is fundamentally compositional. 
It concerns how spectral structures transform under operadic assembly laws and network compositions. The term "propagation" here refers to operadic spectral assembly through compositional interfaces rather than geometric transport in a spatial or dynamical medium.

In particular:
\begin{itemize}
    \item it is not a geometric propagation theory,
    \item it does not involve wavefront sets or microlocal singularities,
    \item and it does not yet incorporate defect geometry or transport phenomena.
\end{itemize}

Those higher-order mechanisms will be developed separately in subsequent works on spectral defect geometry and operadic singularity dynamics.
\end{remark}

\subsection{Statement of the Spectral Propagation Theorem}
\label{subsec:thm-propagation}

We now formalize the central propagation principle for admissible operadic operator networks. The theorem identifies the universal structures that control how local spectral information assembles into global network spectra, drawing on the foundational results of SOC I, SOC II, and SOC III.

\begin{definition}[Admissible Base Change Functor]
\label{def:admissible-base-change}
Let $\mathcal{M}$ and $\mathcal{N}$ be symmetric monoidal categories. A strong monoidal functor
\[
\Phi: \mathcal{M} \longrightarrow \mathcal{N}
\]
is called \emph{admissible} if it satisfies the following conditions:

\begin{enumerate}
    \item \textbf{Spectral analyticity preservation}: If $A$ is a spectrally analytic $P$-algebra in $\mathcal{M}$, then $\Phi(A)$ is a spectrally analytic $\Phi(P)$-algebra in $\mathcal{N}$.
    
    \item \textbf{Cocontinuity}: $\Phi$ preserves colimits, ensuring that operadic compositions are transported faithfully.
    
    \item \textbf{Spectral radius invariance}: For any operator $T$ in $\mathcal{M}$,
    \[
    \rho(\partial^{\mathrm{spec}}\Phi(T)) = \rho(\partial^{\mathrm{spec}}T),
    \]
    up to canonical isomorphism.
\end{enumerate}

The collection of all admissible strong monoidal functors between categories of operadic algebras forms a category, with composition given by functor composition.
\end{definition}

\begin{theorem}[Spectral Propagation Theorem]
\label{thm:spectral-propagation}
Let
\[
\mathcal{N} = (V,E,\mathcal{P},\mathcal{C},\mathfrak{A})
\]
be an admissible operadic operator network (Definition~\ref{def:admissible-network}) over a $C$-colored operad $P$. Assume each node algebra $A_v$ is a spectrally analytic $P$-algebra (SOC II, Definition 10).

Then the global spectral support $\operatorname{supp}(\operatorname{Spec}(\mathcal{N}))$ satisfies the decomposition
\[
\operatorname{supp}\bigl(\operatorname{Spec}(\mathcal{N})\bigr)
=
\left(\bigcup_{v \in V} \operatorname{supp}\bigl(\sigma_P(A_v)\bigr)\right)
\;\cup\;
\Sigma^{\mathrm{res}}(\mathcal{N}),
\]
where $\Sigma^{\mathrm{res}}(\mathcal{N})$ is the interaction residue (SOC III, Definition 7), which localizes on admissible interfaces (SOC III, Theorem 4):
\[
\Sigma^{\mathrm{res}}(\mathcal{N}) \cong \coprod_{I \in \mathcal{I}(P)} \mathcal{L}_I(P,A).
\]

Moreover, for any admissible directed path $\pi = \tau_{I_k} \circ \cdots \circ \tau_{I_1}$ from node $v_0$ to node $v_k$, the \emph{spectral contribution} propagated along $\pi$ is given by
\[
\sigma_P^{\pi}(A_{v_k})
=
\partial^{\mathrm{spec}}(\tau_{I_k} \circ \cdots \circ \tau_{I_1})
\bigl(\sigma_P(A_{v_0})\bigr),
\]
where $\sigma_P^{\pi}(A_{v_k}) \subseteq \sigma_P(A_{v_k})$ denotes the part of the spectrum of $A_{v_k}$ that is attributable to propagation along $\pi$. The spectral derivatives compose via the operadic chain rule (SOC II, Theorem 10):
\[
\partial^{\mathrm{spec}}(\tau_{I_k} \circ \cdots \circ \tau_{I_1}) = \partial^{\mathrm{spec}}\tau_{I_k} \circ \cdots \circ \partial^{\mathrm{spec}}\tau_{I_1}.
\]

Finally, for any admissible strong monoidal cocontinuous functor $\Phi: \mathcal{M} \to \mathcal{N}$ (Definition~\ref{def:admissible-base-change}), base change transports the spectral structure coherently (SOC I, Theorem 8):
\[
\sigma_{\Phi(P)}(\Phi(A)) \cong \Phi(\sigma_P(A)).
\]

Thus, the global spectral object $\operatorname{Spec}(\mathcal{N})$ is canonically determined up to the canonical equivalences supplied by base change by:
\begin{enumerate}
    \item the node spectra $\sigma_P(A_v)$,
    \item the first-order spectral derivatives $\partial^{\mathrm{spec}}\tau_I$ governing propagation along edges,
    \item and the interaction residue $\Sigma^{\mathrm{res}}(\mathcal{N})$ (including its interface localization $\mathcal{L}_I$).
\end{enumerate}
\end{theorem}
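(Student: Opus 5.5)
The plan is to reduce everything to the construction of $\operatorname{Spec}(\mathcal{N}) := \sigma_P(\mathcal{O}_{\mathcal{N}})$ in Theorem~\ref{thm:network-evaluation}, and then apply the three imported results (SOC III Theorem~4, SOC II Theorem~10, SOC I Theorem~8) to that composite, one per clause.

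\emph{Step 1: reduce to a finite DAG.} Using Step~1 of Part~I of the proof of Theorem~\ref{thm:network-evaluation}, replace each cycle $c\in\mathcal{C}$ by its resolved algebra $\tilde A_c$. This is legitimate because Theorem~\ref{thm:fixed-point-contractive} gives a unique fixed point. The result is a finite DAG $\mathcal{G}$ with a topological order $v_1,\dots,v_m$. The support decomposition is then proved by induction on $j$ for the partial composites
\[
\mathcal{O}_j=\mathfrak{A}_j\bigl(A_{v_j},\{\tau_e(\mathcal{O}_i)\}_{e\in E_{\mathrm{in}}(v_j)}\bigr).
\]
At each step, $\mathfrak{A}_j$ is a single operadic composition of $A_{v_j}$ with the propagated inputs along the interfaces $I$ that are realized by the incoming edges. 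Applying the SOC I residue formula for one composition (the residue $\mathcal{O}^{\mathrm{res}}_P$ measures the failure of $\sigma_P$ to be functorial) gives
\[
\operatorname{supp}\sigma_P(\mathcal{O}_j)
=
\operatorname{supp}\sigma_P(A_{v_j})
\cup\bigcup_{e}\operatorname{supp}\sigma_P(\tau_e(\mathcal{O}_i))
\cup(\text{residue at the interfaces of }v_j).
\]
SOC III Theorem~4 localizes that residue as $\coprod_I\mathcal{L}_I$. Unwinding the induction gives the union of the node supports together with $\coprod_{I}\mathcal{L}_I(P,A)$, which is $\Sigma^{\mathrm{res}}(\mathcal{N})$. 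For a resolved cycle node, one uses that $\tilde A_c=\tau_c(\tilde A_c)$. Its support therefore contains nothing beyond the supports of the cycle's nodes and the residues of the cycle's interfaces, so cycle nodes add nothing new.

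\emph{Step 2: the path clause.} Here I would argue by induction on the path length $k$. For $k=1$, the claim is the defining property of $\partial^{\mathrm{spec}}\tau_I$ as the first-order transport of spectral data across $I$ (SOC II). For the inductive step, compose with one more edge and invoke the operadic chain rule (SOC II Theorem~10), which yields the displayed factorization $\partial^{\mathrm{spec}}\tau_{I_k}\circ\cdots\circ\partial^{\mathrm{spec}}\tau_{I_1}$. The inclusion $\sigma_P^{\pi}(A_{v_k})\subseteq\sigma_P(A_{v_k})$ holds because the propagated piece enters $\mathcal{O}_{v_k}$ through $\mathfrak{A}_{v_k}$, so it lies inside the union from Step~1.

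\emph{Step 3: base change and canonicity.} The base-change clause follows directly from SOC I Theorem~8, applied node by node and interface by interface. Admissibility of $\Phi$ (Definition~\ref{def:admissible-base-change}) supplies the needed ingredients: cocontinuity transports the operadic composites $\mathfrak{A}_j$, and spectral-radius invariance preserves contractivity, so the resolved cycles map to resolved cycles. Canonical determination by the three invariants then follows. The support is given by Step~1, the propagated pieces by Step~2, and independence of the topological order comes from Part~III of Theorem~\ref{thm:network-evaluation}.

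\emph{Main obstacle.} The hard step is Step~1, namely showing that the union is exact. One must show that $\sigma_P$ of a composite generates no support beyond the node supports and the interface residues, and that residues from different interfaces do not interact in a way that creates further terms. I would first try to verify that the localization in SOC III Theorem~4 applies to iterated compositions, not just a single one. Failing that, I would take exactness of the single-composition residue formula as the coherence assumption built into admissibility, in the same way the paper already assumes cycle coherence in Theorem~\ref{thm:fixed-point-contractive}.
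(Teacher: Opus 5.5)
Your Steps~2 and~3 match the paper. The path clause is the SOC~II chain rule (Theorem~10) applied to $\sigma_P(A_{v_0})$, and the base-change clause is a direct citation of SOC~I, Theorem~8. The gap is in Step~1.

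The paper does not build the support decomposition by induction over the topological order that defines $\mathcal{O}_{\mathcal{N}}$. It takes the global identity directly from the Stratified Base Change Decomposition Theorem (SOC~III, Theorem~1). There $\Sigma^{\mathrm{res}}(\mathcal{N})$ is, by SOC~III, Definition~7, the complement of $\bigcup_v\operatorname{supp}\sigma_P(A_v)$ inside the global support. Only afterwards does the paper apply SOC~III, Theorem~4, once, to that global residue to get $\coprod_I\mathcal{L}_I(P,A)$. With that definition, the exactness you flag as the main obstacle (no extra support, no interaction between residues) is automatic, because anything extra is residue by definition. The identity then reduces to each $\operatorname{supp}\sigma_P(A_v)$ lying in the global support, which is what the appeal to SOC~III, Theorem~1 supplies. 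Your fallback of folding exactness into admissibility would change the hypotheses rather than prove the statement.

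As written, your inductive route breaks at several concrete points.
\begin{enumerate}
\item The one-step union formula you attribute to SOC~I is never stated in the paper. In this proof SOC~I enters only through $\sigma_P(A)=\mathrm{Hoch}_{\mathcal{M}}(A)\otimes_P\mathcal{O}^{\mathrm{res}}_P$ and through base change, so your inductive step has nothing to rest on.
\item Even granting that formula, the terms $\operatorname{supp}\sigma_P(\tau_e(\mathcal{O}_i))$ are neither node supports nor interface residues, since couplings move spectra. They drop out without comment when you unwind. Absorbing them into $\operatorname{supp}\sigma_P(A_{v_j})$ would need the path-clause inclusion, which you only derive later from Step~1.
\item Since $\operatorname{Spec}(\mathcal{N})=\sigma_P(\mathcal{O}_m)$, unwinding reaches only the ancestors of $v_m$. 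A DAG with several sinks therefore does not give the union over all of $V$.
\item The identity $\tilde A_c=\tau_c(\tilde A_c)$ is self-referential. It says nothing about $\operatorname{supp}\sigma_P(\tilde A_c)$ relative to the original $A_v$ on the cycle, so the claim that cycle nodes add nothing new is unjustified.
\item In Step~2, entering through $\mathfrak{A}_{v_k}$ places $\sigma_P^{\pi}(A_{v_k})$ in $\operatorname{supp}\sigma_P(\mathcal{O}_{v_k})$, the assembled object, not in $\sigma_P(A_{v_k})$. In the paper that inclusion is part of what $\sigma_P^{\pi}$ denotes, not something derived.
\end{enumerate}
Replacing the induction with the global decomposition plus the complement definition of $\Sigma^{\mathrm{res}}$ removes all of these problems.
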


\begin{proof}
We prove the theorem by invoking the central results of SOC I, SOC II, and SOC III in sequence.

\paragraph{Step 1: Local node spectra as fundamental invariants (SOC I).}
By the definition of the operadic spectrum (SOC I, Definition 9), each node algebra $A_v$ admits a canonical spectral invariant
\[
\sigma_P(A_v) = \mathrm{Hoch}_{\mathcal{M}}(A_v) \otimes_P \mathcal{O}_P^{\mathrm{res}}.
\]
Since each $A_v$ is spectrally analytic (SOC II, Definition 10), its spectral Taylor expansion converges (SOC II, Theorem 5). The collection $\{\sigma_P(A_v)\}_{v \in V}$ forms the local spectral input data for the network.

\paragraph{Step 2: Propagation along edges via spectral derivatives (SOC II).}
For any edge coupling $\tau: A \to B$, SOC II (Definition 14) defines the spectral derivative $\partial^{\mathrm{spec}}\tau$, which quantifies how the operadic spectrum transforms along the edge. For a composable sequence $\tau_{I_k} \circ \cdots \circ \tau_{I_1}$, the operadic chain rule (SOC II, Theorem 10) gives
\[
\partial^{\mathrm{spec}}(\tau_{I_k} \circ \cdots \circ \tau_{I_1}) = \partial^{\mathrm{spec}}\tau_{I_k} \circ \cdots \circ \partial^{\mathrm{spec}}\tau_{I_1}.
\]
Applying this to the source node spectrum $\sigma_P(A_{v_0})$ yields the propagated contribution at the target node $v_k$:
\[
\sigma_P^{\pi}(A_{v_k}) = \partial^{\mathrm{spec}}(\tau_{I_k} \circ \cdots \circ \tau_{I_1})\bigl(\sigma_P(A_{v_0})\bigr).
\]
This contribution is generally a subset of the full spectrum $\sigma_P(A_{v_k})$, which may also receive contributions from other paths and its own intrinsic spectral data.

\paragraph{Step 3: Global decomposition via interaction residue (SOC III).}
When multiple propagation channels meet at an operadic composition node, local spectral data alone do not reconstruct the global spectrum. The Stratified Base Change Decomposition Theorem (SOC III, Theorem 1) establishes that the global spectral support decomposes into the union of local node spectra and an interaction residue:
\[
\operatorname{supp}\bigl(\operatorname{Spec}(\mathcal{N})\bigr) = \left(\bigcup_{v \in V} \operatorname{supp}\bigl(\sigma_P(A_v)\bigr)\right) \cup \Sigma^{\mathrm{res}}(\mathcal{N}).
\]
The Interface Localization Theorem (SOC III, Theorem 4) further refines $\Sigma^{\mathrm{res}}(\mathcal{N})$ into a disjoint union of interface-localized defects:
\[
\Sigma^{\mathrm{res}}(\mathcal{N}) \cong \coprod_{I \in \mathcal{I}(P)} \mathcal{L}_I(P,A).
\]
Thus, $\Sigma^{\mathrm{res}}$ captures spectral content generated purely by inter-node coupling and localized on admissible interfaces.

\paragraph{Step 4: Base change compatibility (SOC I).}
For any admissible strong monoidal cocontinuous functor $\Phi: \mathcal{M} \to \mathcal{N}$, the Base Change Theorem (SOC I, Theorem 8) gives
\[
\sigma_{\Phi(P)}(\Phi(A)) \cong \Phi(\sigma_P(A)).
\]
Hence the entire spectral propagation law is transported coherently across categories.

\paragraph{Conclusion.}
Combining Steps 1--4, we obtain the following explicit construction of the global spectral object $\operatorname{Spec}(\mathcal{N})$:

\begin{enumerate}
    \item \textbf{Construct propagated node contributions:} For each node $v \in V$, let $\mathcal{P}(v)$ be the set of all admissible directed paths from any input node to $v$. For each path $\pi \in \mathcal{P}(v)$ with source node $v_0$, the propagated contribution is $\partial^{\mathrm{spec}}(\tau_\pi)(\sigma_P(A_{v_0})) \subseteq \sigma_P(A_v)$. The total spectrum at node $v$ is the union (or sum) of its intrinsic spectrum and all such propagated contributions.

    \item \textbf{Incorporate the interaction residue:} The global spectral support is then given by
    \[
    \operatorname{supp}\bigl(\operatorname{Spec}(\mathcal{N})\bigr) = \left(\bigcup_{v \in V} \operatorname{supp}\bigl(\sigma_P(A_v)\bigr)\right) \;\cup\; \Sigma^{\mathrm{res}}(\mathcal{N}),
    \]
    where the union on the right-hand side accounts for both the node spectra and any interface-localized contributions $\mathcal{L}_I$ that constitute $\Sigma^{\mathrm{res}}$.

    \item \textbf{Canonicity:} This construction is canonical up to the equivalences supplied by base change because:
    \begin{itemize}
        \item The node spectra $\sigma_P(A_v)$ are canonically assigned (SOC I, Definition 9).
        \item The spectral derivatives $\partial^{\mathrm{spec}}\tau_I$ are uniquely determined by the edge couplings (SOC II, Definition 14).
        \item The residue $\Sigma^{\mathrm{res}}$ is uniquely defined as the complement of the local spectral supports within the global support (SOC III, Definition 7), and its decomposition into $\mathcal{L}_I$ is canonical (SOC III, Theorem 4).
        \item The Base Change Theorem (SOC I, Theorem 8) guarantees that this construction is independent of the ambient category up to canonical isomorphism.
    \end{itemize}
\end{enumerate}

Thus, the global spectral object $\operatorname{Spec}(\mathcal{N})$ is canonically determined by the node spectra, the spectral derivatives along paths, and the interaction residue. This completes the proof.
\end{proof}

\begin{remark}[Conceptual interpretation]
\label{rem:interpretation-spectral-propagation}
The theorem shows that spectral propagation in operadic operator networks is not merely a composition of local spectra, but obeys the explicit reconstruction formula:
\[
\operatorname{supp}(\operatorname{Spec}(\mathcal{N})) = \left(\bigcup_{v \in V} \operatorname{supp}(\sigma_P(A_v))\right) \cup \Sigma^{\mathrm{res}}(\mathcal{N}),
\]
with propagation along paths governed by $\partial^{\mathrm{spec}}$ and higher-order corrections given by the spectral Taylor expansion.
\end{remark}

\begin{example}[Two-Node Feedforward Network]
\label{ex:two-node-propagation}
We now illustrate the Spectral Propagation Theorem (Theorem~\ref{thm:spectral-propagation}) with the simplest nontrivial admissible operadic operator network: a two-node feedforward chain.

\paragraph{Setup.}
Consider the network $\mathcal{N}$ consisting of:
\begin{itemize}
    \item Two nodes $v_1, v_2$ with $P$-algebras $A_1, A_2$ (e.g., bounded linear operators on Banach spaces),
    \item A single directed edge $I: v_1 \to v_2$ with coupling tensor $\tau: A_1 \to A_2$,
    \item No feedback loops, and the operad $P$ taken to be the associative operad (so that composition is ordinary composition of maps).
\end{itemize}
Assume that $A_1$ and $A_2$ are spectrally analytic (SOC II, Definition 10).

\paragraph{Application of the Spectral Propagation Theorem.}
By Theorem~\ref{thm:spectral-propagation}, the global spectral output $\operatorname{Spec}(\mathcal{N})$ is determined as follows.

\begin{enumerate}
    \item \textbf{Local node spectra:} 
    \[
    \sigma_P(A_1), \qquad \sigma_P(A_2).
    \]
    In the classical setting ($P = \mathbb{I}$), $\sigma_P(A_i)$ is the ordinary spectrum $\sigma(A_i) \subseteq \mathbb{C}$.

    \item \textbf{Propagation along the edge:} 
    The spectral contribution propagated from $A_1$ to $A_2$ is
    \[
    \sigma_P^{\tau}(A_2) = \partial^{\mathrm{spec}}\tau\bigl(\sigma_P(A_1)\bigr) \subseteq \sigma_P(A_2),
    \]
    where $\partial^{\mathrm{spec}}\tau$ is the spectral derivative (SOC II, Definition 14). If $\tau$ is spectrally complete (for example, an isomorphism), one may have $\sigma_P^{\tau}(A_2) = \sigma_P(A_2)$.

    \item \textbf{Higher-order corrections:} 
    Since the induced spectral transformation is linear, the higher spectral derivatives vanish:
    \[
    D_n^{\mathrm{spec}} = 0 \qquad (n \ge 2).
    \]
    Thus only the first-order propagation term contributes.

    \item \textbf{Interaction residue:} 
    If $A_1$ and $A_2$ belong to the same operadic stratum (i.e., $\tau$ is an internal morphism), then the stratified decomposition (SOC III, Theorem 1) gives
    \[
    \Sigma^{\mathrm{res}}(\mathcal{N}) = \emptyset.
    \]
    If instead $A_1$ and $A_2$ belong to different strata $S_1$ and $S_2$, and $\tau$ is an admissible interface operation, then SOC III (Theorem 4) predicts a nontrivial interaction residue:
    \[
    \Sigma^{\mathrm{res}}(\mathcal{N}) = \mathcal{L}_I(P,A) \neq \emptyset.
    \]
\end{enumerate}

\paragraph{Global spectral output.}

\textbf{Case 1: Same stratum (no interface).}
\[
\operatorname{supp}\bigl(\operatorname{Spec}(\mathcal{N})\bigr) = \operatorname{supp}\bigl(\sigma_P(A_1)\bigr) \;\cup\; \operatorname{supp}\bigl(\sigma_P(A_2)\bigr),
\]
with $\sigma_P^{\tau}(A_2) = \partial^{\mathrm{spec}}\tau(\sigma_P(A_1)) \subseteq \sigma_P(A_2)$. For linear operators, this reduces to
\[
\operatorname{supp}\bigl(\operatorname{Spec}(\mathcal{N})\bigr) = \operatorname{supp}\bigl(\sigma(A_1)\bigr) \;\cup\; \operatorname{supp}\bigl(\sigma(A_2)\bigr).
\]

\textbf{Case 2: Different strata (interface present).}
\[
\operatorname{supp}\bigl(\operatorname{Spec}(\mathcal{N})\bigr) = \operatorname{supp}\bigl(\sigma_P(A_1)\bigr) \;\cup\; \operatorname{supp}\bigl(\sigma_P(A_2)\bigr) \;\cup\; \mathcal{L}_I,
\]
where $\mathcal{L}_I$ consists of interface-localized spectral contributions (e.g., interface-localized spectral contributions) that are not present in either $A_1$ or $A_2$ individually.

If $\tau$ is an isomorphism (e.g., $\tau$ is invertible and $A_2 = \tau A_1 \tau^{-1}$), then $\sigma_P(A_2) = \sigma_P(A_1)$, and the expression simplifies to
\[
\operatorname{supp}\bigl(\operatorname{Spec}(\mathcal{N})\bigr) = \operatorname{supp}\bigl(\sigma_P(A_1)\bigr) \;\cup\; \mathcal{L}_I.
\]
Thus, the residue $\mathcal{L}_I$ contributes additional spectral points or bands not present in either local spectrum.

\paragraph{Conclusion.}
This example demonstrates:
\begin{itemize}
    \item The spectral derivative $\partial^{\mathrm{spec}}\tau$ determines the propagated spectral contribution from $A_1$ to $A_2$.
    \item For linear spectral transformations, only the first-order term contributes.
    \item When $\tau$ is a genuine interface between distinct strata, the residue $\Sigma^{\mathrm{res}}(\mathcal{N}) = \mathcal{L}_I$ adds new spectral content not present in either local spectrum.
\end{itemize}
Thus, the Spectral Propagation Theorem provides a complete, quantitative description of spectral behavior in operadic networks.
\end{example}

\begin{remark}[On the reconstruction formula]
\label{rem:spectral-propagation-interpretation}
Theorem~\ref{thm:spectral-propagation} provides an explicit reconstruction of the global spectral support:
\[
\operatorname{supp}(\operatorname{Spec}(\mathcal{N})) = \left(\bigcup_{v \in V} \operatorname{supp}(\sigma_P(A_v))\right) \cup \Sigma^{\mathrm{res}}(\mathcal{N}).
\]

The terms on the right-hand side have distinct origins:
\begin{itemize}
    \item The node spectra $\sigma_P(A_v)$ are the local spectral data of the network's constituent operators. Their supports are propagated along paths via the first-order spectral derivatives $\partial^{\mathrm{spec}}\tau_I$ (SOC II, Theorem 10).
    \item The interaction residue $\Sigma^{\mathrm{res}}(\mathcal{N})$ captures spectral content generated purely by inter-node coupling across admissible interfaces (SOC III, Theorem 4). It decomposes as $\coprod_{I \in \mathcal{I}(P)} \mathcal{L}_I(P,A)$.
\end{itemize}

Higher-order spectral derivatives $D_n^{\mathrm{spec}}F$ for $n \ge 2$ (SOC II, Section 3) do not appear as independent set-theoretic terms in the support union. Instead, they determine:
\begin{itemize}
    \item the precise values of the propagated spectra when edge couplings are nonlinear,
    \item the sensitivity of the residue $\Sigma^{\mathrm{res}}$ under deformations of interface couplings.
\end{itemize}

Thus, the global spectral behavior emerges from the interaction of four structurally distinct mechanisms: local spectral structure, first-order compositional transport, interface-generated residues, and higher-order nonlinear corrections — with the operadic architecture serving as the organizing principle.

\[
\boxed{\text{Propagated node spectra (including higher-order corrections)} \;\cup\; \text{Interface residue} = \text{Global spectrum}.}
\]

The operadic architecture itself becomes a dynamical spectral invariant of the network.
\end{remark}

\subsection{Interpretation and Consequences}
\label{subsec:interp-consequences}

Building on Theorem~\ref{thm:spectral-propagation}, we now examine its structural implications for spectral propagation in operadic operator networks.

\paragraph{Structural meaning of the theorem.}

The Spectral Propagation Theorem provides a structural decomposition principle for global spectral behavior in admissible operadic operator networks. 

It shows that spectral propagation is not governed solely by local node spectra, but rather by the interaction between local spectral data, operadic composition, interface residues, and higher-order spectral sensitivity.

More precisely, the theorem identifies three structurally distinct mechanisms governing the global spectral object:
\begin{enumerate}
    \item propagated local spectral data,
    \item interface-generated residue contributions,
    \item and higher-order derivative corrections.
\end{enumerate}

The first mechanism corresponds to the operadic propagation of local spectra through admissible compositional paths. This propagation is governed functorially by operadic compositions and spectral derivative operators.

\paragraph{Emergence via interface residues.}

The second mechanism arises from interaction residues
\[
\Sigma^{\mathrm{res}},
\]
which encode spectral content generated purely by inter-node coupling. 

This phenomenon is intrinsically compositional. The residue does not arise from the spectral behavior of isolated nodes, but rather from the operadic interaction structure itself. Consequently, spectral propagation in composite operadic systems exhibits genuinely emergent behavior that cannot be reduced to local spectral analysis alone.

These terms measure the failure of exact local-to-global reconstruction and represent genuinely emergent spectral phenomena that do not exist at the level of isolated nodes.

\paragraph{Higher-order effects and nonlinear propagation.}

The third mechanism is governed by the higher spectral Taylor derivatives
\[
D_n^{\mathrm{spec}}, \qquad n \ge 2,
\]
which quantify nonlinear propagation sensitivity and higher-order interaction effects. 

In particular, nonlinear propagation effects become increasingly significant in deep operadic compositions, recursive architectures, and feedback-driven networks. Higher spectral derivatives therefore measure not only local sensitivity, but also the amplification of compositional complexity across the network architecture.

These derivative corrections become particularly important in networks containing feedback loops, high compositional complexity, or strong interface coupling.

\paragraph{Global spectrum vs. union of local spectra.}

A fundamental conceptual consequence is that the global spectral object is generally not equal to the union of local spectra:
\[
\operatorname{Spec}(\mathcal{N})
\neq
\bigcup_{v\in V}\sigma_P(A_v).
\]

Instead, operadic interactions generate new spectral structure through residue formation and higher-order propagation effects.

\paragraph{Why classical invariants are insufficient.}

This observation explains the failure of classical spectral invariants for composite systems. Classical spectral invariants are typically insufficient for capturing emergent operadic interaction phenomena in compositional networks. In particular, classical methods cannot detect:
\begin{itemize}
    \item interface-generated spectral residues,
    \item operadic interaction effects,
    \item or higher-order spectral propagation corrections.
\end{itemize}

Consequently, classical spectral invariants are insufficient for describing compositional operator systems, network operators, or higher operadic interactions.

\paragraph{Sufficiency of the SOC invariant triple.}

Another important consequence of Theorem~\ref{thm:spectral-propagation} is the sufficiency of the invariant triple
\[
\left(
\sigma_P,\;
D_n^{\mathrm{spec}},\;
\Sigma^{\mathrm{res}}
\right).
\]

Namely, spectral propagation is functorially reconstructed from:
\begin{enumerate}
    \item the local operadic spectra,
    \item the operadic propagation structure encoded by spectral derivatives,
    \item and the interaction residue geometry associated with admissible interfaces.
\end{enumerate}

No additional external spectral data are required beyond:
\begin{itemize}
    \item node spectra,
    \item admissible interface couplings,
    \item and the operadic composition structure itself.
\end{itemize}

\paragraph{Toward universality.}

This sufficiency principle will be strengthened further in the Universality Theorem of Section~\ref{sec:universality}, where we prove that any admissible spectral propagation rule satisfying natural functoriality and compositionality axioms necessarily factors through the same invariant triple
\[
\left(
\sigma_P,\;
D_n^{\mathrm{spec}},\;
\Sigma^{\mathrm{res}}
\right).
\]

Thus, the Spectral Propagation Theorem identifies the universal structural mechanisms governing spectral behavior in operadic operator networks.

\section{Stability and Sensitivity}
\label{sec:stability_and_sensitivity}

Having established how spectral data propagate through operadic networks in
Theorem~\ref{thm:spectral-propagation}, we now address a fundamental stability
question for compositional operator systems: how does the global spectral output
respond to perturbations of the local node data, and under what conditions does
recursive operadic composition remain stable?

Classical spectral theory provides powerful perturbation tools for isolated
operators, including spectral radii, resolvent estimates, and perturbation
series. However, these tools do not by themselves describe how sensitivity
propagates through operadic interfaces, how perturbations are amplified by
network composition, or how stability margins are modified by feedback and
hierarchical architecture.

In this section we develop the sensitivity and stability theory of operadic
operator networks. The central object is the \emph{spectral sensitivity
operator}
\[
\mathcal{S}_{\mathcal{N}},
\]
defined as the spectral derivative of the network evaluation map
\[
\mathcal{E}_{\mathcal{N}}.
\]
This operator measures how infinitesimal changes in the local node algebras
affect the assembled global spectral output.

Using the spectral Taylor expansion from SOC II, we prove a universal stability
bound, Theorem~\ref{thm:stability_bound}. The bound shows that low-order spectral
derivatives dominate network sensitivity, while higher-order terms are controlled
by powers of the perturbation norm and therefore remain subdominant in the
small-perturbation regime. This bound is architecture-independent in form, but
its constants depend on the spectral analytic structure of the constituent
\(P\)-algebras and their operadic couplings.

The key reusable invariant emerging from this analysis is the \emph{SOC condition
number}
\[
\kappa_{\mathrm{SOC}}(F;A)
=
\sum_{k\ge1}
\left\|
\partial_k^{\mathrm{spec}}F(A)
\right\|.
\]
This quantity aggregates the spectral sensitivity contributions of all derivative
orders. Unlike classical condition numbers, which primarily measure first-order
linear sensitivity, the SOC condition number captures nonlinear spectral response
within the spectral radius of convergence.

When \(F=\mathcal{E}_{\mathcal{N}}\), this invariant provides a network-level
measure of robustness. It is computable from the same spectral derivative data
used in the propagation theory of Section~\ref{sec:spectral-propagation}, and it
will reappear in the feedback stability criterion of
Section~\ref{sec:feedback-networks}. Thus, the SOC condition number serves as a
practical scalar invariant for comparing stability across hierarchical,
recursive, and feedback-driven operadic systems.

\subsection{Spectral Sensitivity Operator}
\label{subsec:spectral_sensitivity_operator}

The spectral propagation framework naturally induces a notion of network-level sensitivity with respect to perturbations of the underlying operator data. Since the global spectral evolution operator $\mathcal{E}_{\mathcal{N}}$ governs the propagation of spectral information through the operadic network, its spectral derivatives quantify how local perturbations amplify across compositions.

Before giving the definition, we clarify the differentiability structure. Throughout this section, spectral derivatives are understood in the sense of SOC II, where spectral data are represented in the spectral state space $\mathcal{S}$ (SOC II, Definition~15). This space is equipped with a differentiable structure compatible with operadic compositions, ensuring that derivatives of spectral evaluation maps are well-defined.

We therefore define the following operator.

\begin{definition}[Spectral Sensitivity Operator]
\label{def:spectral_sensitivity_operator}
Let $\mathcal{N}$ be an admissible operadic operator network (Definition~\ref{def:admissible-network}) with spectral evaluation map
\[
\mathcal{E}_{\mathcal{N}} : \prod_{v \in V} A_v \longrightarrow \mathcal{S},
\]
where $\mathcal{S}$ denotes the spectral state space introduced in SOC II (Definition~15). The \emph{spectral sensitivity operator} of the network is defined as the spectral derivative of the evaluation map:
\[
\mathcal{S}_{\mathcal{N}} := \partial^{\mathrm{spec}} \mathcal{E}_{\mathcal{N}}.
\]

More concretely, for a differentiable perturbation family
\[
A_v(t) = A_v + t\,\delta A_v,
\]
the first-order spectral variation is given by
\[
\mathcal{S}_{\mathcal{N}}\bigl( \{\delta A_v\} \bigr)
\;=\;
\left. \partial^{\mathrm{spec}} \mathcal{E}_{\mathcal{N}}\bigl(\{A_v(t)\}\bigr) \right|_{t=0}.
\]

Here $\partial^{\mathrm{spec}} = (\partial_1^{\mathrm{spec}}, \partial_2^{\mathrm{spec}}, \dots)$ denotes the collection of spectral derivative operators introduced in SOC II (Definition~14), where $\partial_k^{\mathrm{spec}}$ is the $k$-th spectral derivative operator.
\end{definition}

The operator $\mathcal{S}_{\mathcal{N}}$ measures the response of the global spectral behavior to perturbations in the local operator data. In particular, large low-order spectral derivatives indicate that small perturbations may propagate rapidly through the network and significantly alter the resulting global spectrum. Its norm provides a worst-case measure of network sensitivity: a small $\|\mathcal{S}_{\mathcal{N}}\|$ indicates robustness, while a large norm signals potential instability under perturbation.

The next theorem provides a universal perturbation estimate controlled by the spectral derivative hierarchy.

\begin{theorem}[Stability Bound via Spectral Derivatives]
\label{thm:stability_bound}

Let $\mathcal{A}$ and $\mathcal{B}$ be normed spectral state spaces (SOC II, Definition~1), 
and let $F : \mathcal{A} \to \mathcal{B}$ be a spectrally analytic operadic propagation map
associated with an admissible network $\mathcal{N}$ (e.g., the global evaluation map
$\mathcal{E}_{\mathcal{N}}$ or any composition of edge couplings). Suppose that $F$ admits a 
spectral Taylor expansion around $A$ up to order $n$, with spectral radius of convergence 
$R_F > 0$ (SOC II, Definition~13).

Then, for every perturbation $\delta A$ satisfying $\|\sigma_P(\delta A)\| < R_F$, we have
\[
\|F(A+\delta A)-F(A)\|
\le
\sum_{k=1}^{n}
\frac{1}{k!}\,
\left\|
\partial_k^{\mathrm{spec}}F(A)
\right\|
\,
\|\delta A\|^k
+
\|R_{n+1}(A,\delta A)\|,
\]
where the remainder satisfies
\[
\|R_{n+1}(A,\delta A)\|
\le
C_{n+1} \,\|\delta A\|^{n+1},
\]
with
\[
C_{n+1} \;:=\; \sup_{\|\sigma_P(t\delta A)\| < R_F,\; t\in[0,1]} 
\frac{1}{(n+1)!}\,
\left\|
\partial_{n+1}^{\mathrm{spec}}F(A + t\delta A)
\right\|.
\]

In particular, if $\|\partial_{n+1}^{\mathrm{spec}}F\|$ is uniformly bounded on the ball 
of radius $\|\delta A\|$ by some constant $M_{n+1}$, then
\[
C_{n+1} \le \frac{M_{n+1}}{(n+1)!}.
\]

\noindent
\textbf{Key insight:} Low-order spectral derivatives dominate network sensitivity.
\end{theorem}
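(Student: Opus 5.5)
The estimate is Taylor's theorem with integral remainder for a map between normed spaces, read in the notation of SOC II. I will treat $\partial_k^{\mathrm{spec}}F(A)$ as the $k$-th Fréchet-type spectral derivative: a bounded symmetric $k$-linear map from $\mathcal{A}$ to $\mathcal{B}$, with its operator norm. The hypothesis that $F$ is spectrally analytic with radius of convergence $R_F$ (SOC II, Definition~13) will be used only for two things. First, for every $t\in[0,1]$ the point $A+t\delta A$ lies in the region where the expansion is valid; this uses $\|\sigma_P(t\delta A)\| = t\|\sigma_P(\delta A)\|<R_F$, which requires the spectral size to be homogeneous and should follow from SOC II, Definition~2. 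Second, the one-variable curve $g(t):=F(A+t\delta A)$ is then $C^{n+1}$ on $[0,1]$, and its derivatives are given by the chain rule: $g^{(k)}(t)=\partial_k^{\mathrm{spec}}F(A+t\delta A)[\delta A,\dots,\delta A]$.

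\textbf{Step 1 (exact expansion).} Apply the vector-valued Taylor formula to $g$ on $[0,1]$:
\[
F(A+\delta A)-F(A)=\sum_{k=1}^{n}\frac{1}{k!}\,\partial_k^{\mathrm{spec}}F(A)[\delta A^{\otimes k}]+R_{n+1}(A,\delta A),
\]
\[
R_{n+1}(A,\delta A)=\int_0^1\frac{(1-t)^n}{n!}\,\partial_{n+1}^{\mathrm{spec}}F(A+t\delta A)[\delta A^{\otimes(n+1)}]\,dt.
\]
This is a Bochner integral in $\mathcal{B}$. If $\mathcal{B}$ is not assumed complete, I will instead pair with functionals and use Hahn--Banach.

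\textbf{Step 2 (norm estimates).} Apply the triangle inequality to the sum. For each $k$-linear term, use $\|L[\delta A^{\otimes k}]\|\le\|L\|\,\|\delta A\|^k$; this gives the polynomial part of the bound. For the remainder, move the norm inside the integral and bound the derivative by its supremum over the segment. Using $\int_0^1 (1-t)^n/n!\,dt = 1/(n+1)!$, this yields $\|R_{n+1}\|\le C_{n+1}\|\delta A\|^{n+1}$ with exactly the stated $C_{n+1}$. The "in particular" clause follows by replacing that supremum with the assumed uniform bound $M_{n+1}$. The key-insight remark is then the observation that, for $\|\delta A\|<1$ (or small relative to the derivative constants), the terms are ordered by powers of $\|\delta A\|$, so the remainder is $o(\|\delta A\|^n)$.

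\textbf{Main obstacle.} The hard part is justifying Step~1 in the spectral setting rather than in a plain Banach setting. Two gaps need closing. (a) The derivatives $\partial_k^{\mathrm{spec}}F$ of SOC II must coincide with the iterated Fréchet derivatives of $F$ as a map $\mathcal{A}\to\mathcal{B}$, and be continuous along the segment. (b) The convergence condition is stated in the spectral size $\|\sigma_P(\cdot)\|$ while the bound is in $\|\delta A\|$, so the two norms must be compatible. My first attempt is to use analyticity directly: inside the radius, termwise differentiation of the convergent spectral Taylor series gives smoothness and identifies the coefficients, which settles (a). For (b), I will take the SOC II convention that the norm on the spectral state space is the spectral size. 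If that identification is not available, I will add it as a standing assumption, in the same spirit as the explicit coherence assumptions in the proof of Theorem~\ref{thm:fixed-point-contractive}.
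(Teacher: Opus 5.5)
Your argument is correct and has the same skeleton as the paper's proof: a Taylor expansion to order $n$, the multilinear bound $\|\partial_k^{\mathrm{spec}}F(A)[\delta A^{\otimes k}]\|\le\|\partial_k^{\mathrm{spec}}F(A)\|\,\|\delta A\|^k$, and a supremum bound on the last term. The difference is in the remainder, and your version is the more defensible one.

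The paper asserts the Lagrange form $R_{n+1}(A,\delta A)=\frac{1}{(n+1)!}\,\partial_{n+1}^{\mathrm{spec}}F(\xi)[\delta A^{\otimes(n+1)}]$ for some $\xi=A+t\delta A$, citing the mean value theorem for Fr\'echet derivatives. For maps into a general normed space no such intermediate point need exist; this already fails for $\mathbb{C}$-valued maps, e.g.\ $t\mapsto e^{it}$ on $[0,2\pi]$. There the mean value theorem survives only as an inequality, so the paper's step is correct only when read as Taylor's inequality.

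Your integral remainder is valid in exactly this setting, whether taken as a Bochner integral or obtained by scalarizing with functionals and applying Hahn--Banach. It yields the stated $C_{n+1}$ via $\int_0^1(1-t)^n/n!\,dt=1/(n+1)!$. It also shows that only $C^{n+1}$ regularity along the segment is needed; analyticity serves merely to supply that regularity.

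You are also more explicit than the paper on two points it passes over:
\begin{itemize}
\item The whole segment $\{A+t\delta A: t\in[0,1]\}$ must lie in the convergence region. This needs homogeneity, or at least monotonicity, of the spectral size, so that the supremum defining $C_{n+1}$ really ranges over all $t\in[0,1]$.
\item The hypothesis is phrased in $\|\sigma_P(\cdot)\|$, while the estimate is in $\|\delta A\|$.
\end{itemize}
When the paper invokes SOC II, Theorem~7 and Lemma~5, it tacitly relies on the same identification of $\partial_k^{\mathrm{spec}}$ with iterated Fr\'echet derivatives that you flag as gap (a). Stating these as standing assumptions, as you propose, is appropriate.
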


\begin{proof}
Since $F$ is spectrally analytic (SOC II, Definition~10) with radius of convergence 
$R_F > 0$, it admits a convergent spectral Taylor expansion around $A$ for all 
$\delta A$ with $\|\sigma_P(\delta A)\| < R_F$ (SOC II, Theorem~7):
\[
F(A+\delta A)
=
F(A)
+
\sum_{k=1}^{n}
\frac{1}{k!}\,
\partial_k^{\mathrm{spec}}F(A)[\delta A^{\otimes k}]
+
R_{n+1}(A,\delta A),
\]
where $\partial_k^{\mathrm{spec}}F(A) : \mathcal{A}^{\otimes k} \to \mathcal{B}$ is the 
$k$-th spectral derivative operator (SOC II, Definition~14), and the factor 
$\frac{1}{k!}$ reflects the standard normalization from the homogeneous layer 
decomposition (SOC II, Lemma~5 and Corollary~10).

The remainder can be expressed exactly using the integral form of the Taylor 
remainder (or the Cauchy estimate for analytic functions):
\[
R_{n+1}(A,\delta A) = \frac{1}{(n+1)!}\,
\partial_{n+1}^{\mathrm{spec}}F(\xi)[\delta A^{\otimes (n+1)}],
\]
for some $\xi = A + t\delta A$ with $t \in [0,1]$, by the mean value theorem for 
Fréchet derivatives (or the standard analytic function remainder formula). 
Applying the operator norm and using the multilinear estimate (SOC II, Lemma~5):
\[
\left\|
\frac{1}{k!}\,
\partial_k^{\mathrm{spec}}F(A)[\delta A^{\otimes k}]
\right\|
\le
\frac{1}{k!}\,
\left\|
\partial_k^{\mathrm{spec}}F(A)
\right\|
\,
\|\delta A\|^k.
\]

For the remainder, we obtain:
\[
\|R_{n+1}(A,\delta A)\|
\le
\frac{1}{(n+1)!}\,
\left\|
\partial_{n+1}^{\mathrm{spec}}F(\xi)
\right\|
\,
\|\delta A\|^{n+1}
\le
C_{n+1} \,\|\delta A\|^{n+1},
\]
where
\[
C_{n+1} := \sup_{\|\sigma_P(t\delta A)\| < R_F,\; t\in[0,1]} 
\frac{1}{(n+1)!}\,
\left\|
\partial_{n+1}^{\mathrm{spec}}F(A + t\delta A)
\right\|.
\]

If $\|\partial_{n+1}^{\mathrm{spec}}F\|$ is uniformly bounded by $M_{n+1}$ on the ball 
$\{A + t\delta A : t\in[0,1]\}$, then $C_{n+1} \le M_{n+1} / (n+1)!$.

Summing the estimates for $k = 1$ to $n$ and adding the remainder bound yields:
\[
\|F(A+\delta A)-F(A)\|
\le
\sum_{k=1}^{n}
\frac{1}{k!}\,
\left\|
\partial_k^{\mathrm{spec}}F(A)
\right\|
\,
\|\delta A\|^k
+
C_{n+1} \|\delta A\|^{n+1}.
\]

This completes the proof.
\end{proof}

\begin{remark}[Lower bound via spectral radius]
\label{rem:lower-bound-spectral-radius}
Theorem~\ref{thm:stability_bound} provides an upper bound on perturbation growth. 
A complementary lower bound follows from the spectral radius of the first spectral 
derivative.

Assume that the underlying space is finite-dimensional, or more generally that 
$\partial^{\mathrm{spec}}F(A)$ has an eigenvalue $\lambda$ with $|\lambda| > 1$ 
(which is guaranteed when $\rho(\partial^{\mathrm{spec}}F(A)) > 1$ in finite dimensions).
Let $v$ be a corresponding eigenvector and choose $\delta A_0 = \varepsilon v$ for 
sufficiently small $\varepsilon > 0$. By the spectral Taylor expansion,
\[
F(A + \delta A_0) - F(A) = \partial^{\mathrm{spec}}F(A)(\delta A_0) + O(\varepsilon^2) = \lambda \varepsilon v + O(\varepsilon^2).
\]

Hence,
\[
\|F(A + \delta A_0) - F(A)\| \ge |\lambda| \|\delta A_0\| - O(\|\delta A_0\|^2).
\]
For sufficiently small $\|\delta A_0\|$, the linear term dominates, yielding
\[
\|F(A + \delta A_0) - F(A)\| \ge c \|\delta A_0\|
\]
for some constant $c > 1$ (e.g., $c = (|\lambda| + 1)/2$ when the quadratic remainder 
is bounded by $(|\lambda|-1)/2 \|\delta A_0\|$). This shows that a single application 
of $F$ can amplify certain small perturbations.

For the linearized recursive dynamics $\delta A_{k+1} = \partial^{\mathrm{spec}}F(A)(\delta A_k)$, 
it follows immediately that $\|\delta A_k\| \ge |\lambda|^k \|\delta A_0\|$, indicating 
exponential growth. By standard stable/unstable manifold theory (e.g., the Hartman–Grobman 
theorem in finite dimensions), the nonlinear dynamics inherit this exponential instability 
for sufficiently small initial perturbations. Thus $\rho(\partial^{\mathrm{spec}}F(A)) > 1$ 
is a sufficient condition for linearized (and, under additional regularity, nonlinear) 
instability, complementing the stability condition $\rho(\partial^{\mathrm{spec}}F(A)) < 1$ 
from the Feedback Stability Criterion (Theorem~\ref{thm:feedback-stability}).

When $\rho(\partial^{\mathrm{spec}}F(A)) = 1$, the linearized analysis is inconclusive; 
higher-order spectral derivatives determine the actual stability (see 
Theorem~\ref{thm:stability_bound} for the role of higher-order terms).
\end{remark}

\begin{remark}[Dominance of Low-Order Spectral Derivatives]
\label{rem:low_order_dominance}
The stability bound shows that the leading contribution to network sensitivity is governed by the lowest nonvanishing spectral derivatives. Consequently, highly stable operadic networks are characterized by suppression of low-order spectral derivative norms.

In particular:
\begin{itemize}
    \item the first spectral derivative $\partial_1^{\mathrm{spec}}F(A)$ controls \textbf{linear sensitivity} — it is the operadic analogue of the classical Jacobian;
    \item the second spectral derivative $\partial_2^{\mathrm{spec}}F(A)$ governs \textbf{nonlinear amplification} and becomes relevant when the linear term vanishes or when perturbations are large;
    \item higher-order derivatives ($k \ge 3$) describe \textbf{cascading instability effects} that emerge from repeated operadic compositions and feedback interactions.
\end{itemize}

Thus the spectral derivative hierarchy provides a quantitative mechanism for measuring robustness, instability propagation, and perturbation amplification within operadic operator networks. When $F = \mathcal{E}_{\mathcal{N}}$ is the global network evaluation map, the bound is moreover \emph{compositional}: if $\mathcal{E}_{\mathcal{N}} = \mathcal{E}_{\mathcal{N}_2} \circ \mathcal{E}_{\mathcal{N}_1}$, then
\[
\partial_1^{\mathrm{spec}}\mathcal{E}_{\mathcal{N}}
=
\partial_1^{\mathrm{spec}}\mathcal{E}_{\mathcal{N}_2}
\circ
\partial_1^{\mathrm{spec}}\mathcal{E}_{\mathcal{N}_1}
\]
by the operadic chain rule (SOC II, Theorem 10), enabling recursive sensitivity analysis.
\end{remark}

\subsection{SOC Condition Number}
\label{subsec:soc_condition_number}

The stability estimate in Theorem~\ref{thm:stability_bound} establishes that network sensitivity is controlled by the norms of the spectral derivatives $\|\partial_k^{\mathrm{spec}}F\|$. This suggests a natural scalar invariant that aggregates these sensitivity contributions into a single, easily computable quantity: the \emph{SOC condition number}.

\begin{definition}[SOC Condition Number]
\label{def:soc_condition_number}

Let
\[
F : \mathcal{A} \to \mathcal{B}
\]
be a spectrally analytic operadic propagation map associated with an
admissible operadic network $\mathcal{N}$, and let $A \in \mathcal{A}$ be a
base point. Assume that the spectral derivatives are normalized so that the
spectral Taylor expansion takes the form
\[
F(A + \delta A) = F(A) + \sum_{k=1}^{\infty} \frac{1}{k!} \,
\partial_k^{\mathrm{spec}}F(A)[\delta A^{\otimes k}],
\]
where $\partial_k^{\mathrm{spec}}F(A) : \mathcal{A}^{\otimes k} \to \mathcal{B}$
is the $k$-th spectral derivative operator (SOC II, Definition~14).

The \emph{SOC condition number} of $F$ at $A$ is defined by
\[
\kappa_{\operatorname{SOC}}(F, A)
:=
\sum_{k=1}^{\infty}
\left\|
\partial_k^{\mathrm{spec}}F(A)
\right\|,
\]
whenever the series converges. More generally, the \emph{truncated SOC
condition number} of order $n$ is
\[
\kappa_{\operatorname{SOC}}^{(n)}(F, A)
:=
\sum_{k=1}^{n}
\left\|
\partial_k^{\mathrm{spec}}F(A)
\right\|.
\]
When the base point and network context are clear, we write simply
$\kappa_{\operatorname{SOC}}$ or $\kappa_{\operatorname{SOC}}^{(n)}$.

\end{definition}

The quantity $\kappa_{\operatorname{SOC}}(F, A)$ measures the cumulative
spectral sensitivity of the operadic propagation law under perturbations
about $A$. Small values indicate that the network is spectrally stable
under local perturbations, whereas large values signal the possibility of
perturbation amplification through repeated operadic compositions and
interaction couplings. Consequently, the SOC condition number serves as a
reusable invariant for robustness analysis, perturbation propagation
estimates, operadic stability classification, sensitivity-aware
comparison of architectures, and quantitative measurement of spectral
resilience.

The following estimate shows that the SOC condition number controls the global magnitude of perturbation propagation.

\begin{proposition}[Global Stability Estimate]
\label{prop:global_stability_estimate}

Let
\[
F : \mathcal{A} \to \mathcal{B}
\]
be a spectrally analytic operadic propagation map between normed spectral
state spaces, and let $A \in \mathcal{A}$. Suppose that the SOC condition
number
\[
\kappa_{\operatorname{SOC}}(F, A)
=
\sum_{k=1}^{\infty}
\left\|
\partial_k^{\mathrm{spec}}F(A)
\right\|
\]
is finite. Then, for every sufficiently small perturbation $\delta A$
with $\|\delta A\| < 1$, one has
\[
\|F(A+\delta A)-F(A)\|
\le
\kappa_{\operatorname{SOC}}(F, A) \, \|\delta A\|.
\]

More generally, if $0 < r < 1$ and $\|\delta A\| \le r$, then
\[
\|F(A+\delta A)-F(A)\|
\le
\sum_{k=1}^{\infty}
\left\|
\partial_k^{\mathrm{spec}}F(A)
\right\|
r^k
\le
r \, \kappa_{\operatorname{SOC}}(F, A).
\]

\end{proposition}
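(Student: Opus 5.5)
The plan is to read the estimate directly off the normalized spectral Taylor expansion in Definition~\ref{def:soc_condition_number}, using the same multilinear estimate as in the proof of Theorem~\ref{thm:stability_bound}. The one extra ingredient is the elementary inequality $1/k! \le 1$ together with $r^k \le r$ for $0<r<1$ and $k\ge1$.

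\textbf{Step 1: expand.} For $\|\delta A\| \le r < 1$, I will first argue that the full expansion
\[
F(A+\delta A) - F(A) = \sum_{k\ge1} \frac{1}{k!}\,\partial_k^{\mathrm{spec}}F(A)[\delta A^{\otimes k}]
\]
is valid.

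\textbf{Step 2: bound term by term.} By the multilinear estimate (SOC II, Lemma~5), the $k$-th term has norm at most $\frac{1}{k!}\|\partial_k^{\mathrm{spec}}F(A)\|\,\|\delta A\|^k$. Since $1/k! \le 1$ and $\|\delta A\|^k \le r^k$, it is bounded by $\|\partial_k^{\mathrm{spec}}F(A)\|\, r^k$.

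\textbf{Step 3: sum.} Summing over $k$ (absolute convergence follows from Step 2 and the finiteness of $\kappa_{\mathrm{SOC}}$) and applying the triangle inequality in $\mathcal{B}$ gives the middle inequality. Using $r^k \le r$ for $k\ge 1$ then gives the bound $r\,\kappa_{\mathrm{SOC}}(F,A)$.

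\textbf{Step 4: the first claim.} To get the first displayed estimate, I will take $r = \|\delta A\|$ when $\delta A \neq 0$; the case $\delta A = 0$ is trivial. This yields $\|F(A+\delta A)-F(A)\| \le \kappa_{\mathrm{SOC}}(F,A)\,\|\delta A\|$. As a sharper option, one could instead keep $\|\delta A\|^k \le \|\delta A\|$ in Step 2 directly.

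\textbf{Main obstacle.} The difficulty is Step 1: justifying that the full Taylor series converges to $F(A+\delta A)$ on the whole unit ball, so that there is no remainder. Theorem~\ref{thm:stability_bound} only gives this on $\|\sigma_P(\delta A)\| < R_F$.

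I would first note that finiteness of $\kappa_{\mathrm{SOC}}$ forces the coefficients $\|\partial_k^{\mathrm{spec}}F(A)\|/k!$ to be summable. The majorant series $\sum_k \frac{1}{k!}\|\partial_k^{\mathrm{spec}}F(A)\|\,t^k$ therefore has radius of convergence at least $1$. I would then invoke spectral analyticity (SOC II, Theorem~7, with the identity principle for analytic maps) to identify the series with $F$ on that ball.

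If this identification cannot be made cleanly, the fallback is as follows. Interpret "sufficiently small" as also requiring $\|\sigma_P(\delta A)\| < R_F$. Apply Theorem~\ref{thm:stability_bound} for every $n$, and show that the remainder $R_{n+1}\to0$ as $n\to\infty$ using the summability of the derivative norms. Taking $n\to\infty$ in that bound then recovers Steps 2--3.
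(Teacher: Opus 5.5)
Your proposal is correct and follows the paper's proof. Both expand $F(A+\delta A)-F(A)$ in the full spectral Taylor series, bound each term by the multilinear estimate, sum, and use $\|\delta A\|^k\le\|\delta A\|$ (resp.\ $r^k\le r$) for $k\ge 1$. The paper asserts this expansion on $\|\delta A\|<1$ ``by spectral analyticity'' and omits the $1/k!$ of Definition~\ref{def:soc_condition_number}, so your $1/k!\le 1$ step and the convergence issue you flag are refinements the paper glosses over; note only that your identity-principle route needs $F$ analytic on the whole unit ball, and that in the fallback the remainder vanishes inside $R_F$ because the expansion converges there, not because $\sum_k\|\partial_k^{\mathrm{spec}}F(A)\|<\infty$ (the constant $C_{n+1}$ involves derivatives at $A+t\,\delta A$, not at $A$).
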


\begin{proof}
By spectral analyticity, $F$ admits a convergent spectral Taylor expansion
around $A$:
\[
F(A+\delta A)-F(A)
=
\sum_{k=1}^{\infty}
\partial_k^{\mathrm{spec}}F(A)[\delta A^{\otimes k}].
\]

Taking norms and using the multilinear estimate gives
\[
\|F(A+\delta A)-F(A)\|
\le
\sum_{k=1}^{\infty}
\left\|
\partial_k^{\mathrm{spec}}F(A)
\right\|
\|\delta A\|^k.
\]

If $\|\delta A\| < 1$, then $\|\delta A\|^k \le \|\delta A\|$ for all $k \ge 1$.
Therefore,
\[
\|F(A+\delta A)-F(A)\|
\le
\|\delta A\|
\sum_{k=1}^{\infty}
\left\|
\partial_k^{\mathrm{spec}}F(A)
\right\|
=
\kappa_{\operatorname{SOC}}(F, A) \, \|\delta A\|.
\]

The estimate with $\|\delta A\| \le r < 1$ follows similarly, using
$\|\delta A\|^k \le r^k$.
\end{proof}

\begin{remark}[Properties of the SOC Condition Number]
\label{rem:soc_properties}

The SOC condition number is nonnegative and measures the cumulative local
spectral sensitivity of the operadic propagation map near the base point
$A$. If all spectral derivatives vanish in a neighborhood of $A$, then
$F$ is locally spectrally constant. Under additional hypotheses ensuring
compatibility of the spectral chain rule and control of higher-order
composition terms, the SOC condition number can be used to estimate the
sensitivity of composite propagation maps. In particular, small values of
$\kappa_{\operatorname{SOC}}(F, A)$ indicate local robustness, while large
values suggest possible amplification of perturbations through operadic
composition. The truncated quantities
\[
\kappa_{\operatorname{SOC}}^{(n)}(F, A)
\]
increase monotonically to
\[
\kappa_{\operatorname{SOC}}(F, A)
\]
whenever the defining series converges.

\end{remark}

\begin{example}[Classical Linear Operator as a Special Case]
\label{ex:classical_condition_number}

Let $T: X \to Y$ be a bounded linear operator between Banach spaces, regarded
as a spectrally analytic map over the associative operad. Since $T$ is
linear, its only nonzero spectral derivative is the first one:
\[
\partial_1^{\mathrm{spec}}T = T,
\qquad
\partial_k^{\mathrm{spec}}T = 0 \quad (k \ge 2).
\]
Hence, for any base point $A$ (the value is independent of $A$ due to linearity),
\[
\kappa_{\operatorname{SOC}}(T, A) = \|T\|.
\]

If $T$ is invertible, the classical condition number
\[
\kappa(T) = \|T\|\|T^{-1}\|
\]
measures the sensitivity of solving $Tx = b$, whereas
$\kappa_{\operatorname{SOC}}(T) = \|T\|$ measures the forward sensitivity of
the propagation map $T$ itself. Thus the SOC condition number should be
viewed as a forward propagation sensitivity, not as a replacement for the
classical inverse-problem condition number. The two quantities coincide
numerically only when $\|T^{-1}\| = 1$ (e.g., for isometries), but they
capture fundamentally different notions of sensitivity.

\end{example}

\begin{proposition}[First-Order Decomposition of the SOC Condition Number]
\label{prop:network_kappa_decomposition}

Let $\mathcal{N} = (V, E, \mathcal{P}, \mathcal{C}, \mathfrak{A})$ be an admissible
operadic operator network with global evaluation map $\mathcal{E}_{\mathcal{N}}$.
Assume that the spectral derivatives of all node algebras, edge couplings, and
interface residues are bounded. Then the SOC condition number of the network,
evaluated at a collection of node algebras $\{A_v\}_{v \in V}$, admits the
following first-order structural decomposition:
\[
\kappa_{\operatorname{SOC}}(\mathcal{N}, \{A_v\})
=
\underbrace{
\sum_{v \in V}
\left\|
\partial_1^{\mathrm{spec}} A_v
\right\|
+
\sum_{e \in E}
\left\|
\partial_1^{\mathrm{spec}} \tau_e
\right\|
+
\sum_{I \in \mathcal{I}(P)}
\left\|
\partial_1^{\mathrm{spec}} \mathcal{L}_I
\right\|
}_{\text{first-order contributions}}
\;+\;
\mathcal{R}(\mathcal{N}, \{A_v\}),
\]
where $\mathcal{R}(\mathcal{N}, \{A_v\})$ collects all higher-order interaction
terms (involving spectral derivatives of order $k \ge 2$). The first-order
contributions arise from:
\begin{itemize}
    \item each node $v \in V$, via the sensitivity of its local algebra $A_v$;
    \item each edge $e \in E$, via the coupling map $\tau_e$;
    \item each admissible interface $I \in \mathcal{I}(P)$, via the
          interface-localized residue $\mathcal{L}_I$.
\end{itemize}
The higher-order remainder $\mathcal{R}$ is generally nonzero; it becomes
dominant when nonlinear effects, strong feedback loops, or nontrivial
interface interactions are present.
\end{proposition}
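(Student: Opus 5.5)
The statement is essentially definitional once $\mathcal{R}$ is taken to absorb everything beyond first order. The plan is therefore to make the first-order part of $\kappa_{\operatorname{SOC}}(\mathcal{E}_{\mathcal{N}},\{A_v\})$ explicit by differentiating the network evaluation map along its construction. Then $\mathcal{R}(\mathcal{N},\{A_v\})$ is \emph{defined} as the difference between $\kappa_{\operatorname{SOC}}$ and the displayed first-order sum, and we show that this difference is controlled only by derivatives of order $k\ge 2$.

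\textbf{Step 1: a layered expression for $\mathcal{E}_{\mathcal{N}}$.} Using the proof of Theorem~\ref{thm:network-evaluation}, we first resolve the cycles. Contractivity and Theorem~\ref{thm:fixed-point-contractive} supply the cycle algebras $\tilde A_c$, and the implicit-function/Neumann argument (invertibility of $I-\partial_1^{\mathrm{spec}}\tau_c$, since $\alpha<1$) shows that $\tilde A_c$ depends spectrally analytically on the data. Next we fix a topological order on the resulting DAG and write $\mathcal{E}_{\mathcal{N}}$ as an iterated composite of node insertions $A_v$, edge couplings $\tau_e$, and assembly maps $\mathfrak{A}_j$. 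By Theorem~\ref{thm:spectral-propagation}, the global support splits as the node supports together with $\Sigma^{\mathrm{res}}(\mathcal{N})\cong\coprod_I\mathcal{L}_I$. This lets us treat the output as the sum of a node/edge-transported channel and an interface channel.

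\textbf{Step 2: first-order term.} We apply the operadic chain rule (SOC II, Theorem 10), as in Remark~\ref{rem:low_order_dominance}, to compute $\partial_1^{\mathrm{spec}}\mathcal{E}_{\mathcal{N}}$. By the Leibniz/multilinearity of the assembly maps, the first derivative is a sum of three kinds of terms, one per elementary constituent. Each term varies a single node algebra, a single coupling $\tau_e$, or a single interface residue $\mathcal{L}_I$, and composes that variation with the (undifferentiated) transport operators of the remaining network. Taking norms, each contribution is bounded by $\|\partial_1^{\mathrm{spec}}A_v\|$, $\|\partial_1^{\mathrm{spec}}\tau_e\|$, or $\|\partial_1^{\mathrm{spec}}\mathcal{L}_I\|$ times transport factors.

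The transport factors are where I expect the real difficulty. We need them to be normalized to norm at most one, so that the first-order part of $\kappa_{\operatorname{SOC}}$ reproduces exactly the displayed sum rather than a weighted version. I would first try to obtain this by invoking the normalization of spectral derivatives in Definition~\ref{def:soc_condition_number}. Any discrepancy between the actual first-order norm and the unweighted sum would then be pushed into $\mathcal{R}$, since it is produced by products of derivatives along paths. Such products appear as mixed terms in the Taylor expansion, which come from $\partial_k^{\mathrm{spec}}$ with $k\ge2$ of the composite.

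\textbf{Step 3: the remainder.} We set
\[
\mathcal{R}(\mathcal{N},\{A_v\}):=\kappa_{\operatorname{SOC}}(\mathcal{E}_{\mathcal{N}},\{A_v\})-\Bigl(\sum_v\|\partial_1^{\mathrm{spec}}A_v\|+\sum_e\|\partial_1^{\mathrm{spec}}\tau_e\|+\sum_I\|\partial_1^{\mathrm{spec}}\mathcal{L}_I\|\Bigr).
\]
Expanding $\sum_{k\ge2}\|\partial_k^{\mathrm{spec}}\mathcal{E}_{\mathcal{N}}\|$ by the higher-order chain rule (Faà di Bruno type, SOC II) exhibits $\mathcal{R}$ as a series in derivatives of order at least two. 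Boundedness of all constituent derivatives, finiteness of $V$ and $E$, and the geometric decay from $\alpha<1$ along cycles then ensure that this series converges, so $\mathcal{R}$ is finite. Finally, we note that $\mathcal{R}$ vanishes when everything is linear and feedback-free, consistent with Example~\ref{ex:classical_condition_number}, but is nonzero in general, which gives the last sentence of the statement.
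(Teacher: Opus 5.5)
There is a genuine gap in Step~2. It undermines exactly the part of the statement that is not tautological, namely that $\mathcal{R}$ consists only of terms of order $k\ge2$.

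The ``transport factors'' are not a normalization artifact: Definition~\ref{def:soc_condition_number} only fixes the $1/k!$ convention. By the chain rule you invoke (SOC~II, Theorem~10; compare Proposition~\ref{prop:layerwise_spectral_composition}), $\partial_1^{\mathrm{spec}}\mathcal{E}_{\mathcal{N}}$ is assembled from composites $\partial_1^{\mathrm{spec}}\tau_{e_k}\circ\cdots\circ\partial_1^{\mathrm{spec}}\tau_{e_1}$ along paths. These are \emph{products} of constituent first derivatives, and they are part of the first derivative of the composite map. They do not live in $\partial_k^{\mathrm{spec}}\mathcal{E}_{\mathcal{N}}$ for $k\ge2$: in Fa\`a di Bruno, every term of a $k$-th derivative with $k\ge2$ carries some derivative of order $\ge2$ of a constituent.

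Two further points in Steps~1--2 do not hold either. $\mathcal{E}_{\mathcal{N}}$ is differentiated in the node variables only, so there is no additive term ``varying a single $\tau_e$ or $\mathcal{L}_I$''. And Theorem~\ref{thm:spectral-propagation} gives a union of supports, not an additive splitting of $\mathcal{E}_{\mathcal{N}}$ into channels.

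A concrete check: take the chain $v_1\to v_2\to v_3$ with couplings $\tau_{e_1}=\tau_{e_2}=t\,\mathrm{id}$, output $\tau_{e_2}\tau_{e_1}$ applied to the input datum, and no interface residue. Everything is linear, so $\partial_k^{\mathrm{spec}}\mathcal{E}_{\mathcal{N}}=0$ for $k\ge2$. As in Example~\ref{ex:classical_condition_number}, $\kappa_{\operatorname{SOC}}=\|t^{2}\,\mathrm{id}\|=t^{2}$. The displayed first-order sum is $c+2|t|$, with $c\ge0$ the sum of the node terms, which do not involve the couplings. Your $\mathcal{R}=t^{2}-2|t|-c$ is therefore strictly negative for $0<|t|<2$. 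So it is not a series in derivatives of order $\ge2$, and it does not vanish in the linear feedback-free case, contrary to the end of your Step~3.

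What your proposal actually proves is only the identity obtained by \emph{defining} $\mathcal{R}$ as the difference in Step~3; Steps~1--2 add nothing to it. The paper's proof itself notes that for a chain ``the chain rule gives a product of first derivatives''. It ends the same way as yours: it asserts only $\|\partial_1^{\mathrm{spec}}\mathcal{E}_{\mathcal{N}}\|\le$ the displayed sum, and lets $\mathcal{R}$ absorb ``the remainder from the inequality and all higher-order terms''. So its $\mathcal{R}$ also contains first-order slack, and the example above shows that even this inequality fails once $|t|>1+\sqrt{1+c}$. The honest repair is to drop the claim that $\mathcal{R}$ involves only derivatives of order $\ge2$, not to try to normalize transport away.

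Separately, your convergence claim in Step~3 is unsupported. Derivatives bounded at each order do not make $\sum_k\|\partial_k^{\mathrm{spec}}\|$ finite. The real scalar coupling $\tau=\sin$ at base point $0$ has all derivatives bounded by $1$, yet $\sum_k|\tau^{(k)}(0)|=\infty$ under the normalization of Definition~\ref{def:soc_condition_number}. Fa\`a di Bruno coefficients only make this worse, and $\alpha<1$ controls first derivatives of cycle operators, not growth in $k$. Finiteness of $\kappa_{\operatorname{SOC}}$ has to be assumed, as the definition itself does.
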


\begin{proof}
By Definition~\ref{def:soc_condition_number},
\[
\kappa_{\operatorname{SOC}}(\mathcal{N}, \{A_v\})
= \sum_{k=1}^{\infty}
\left\|
\partial_k^{\mathrm{spec}} \mathcal{E}_{\mathcal{N}}(\{A_v\})
\right\|.
\]

Separating the first-order ($k=1$) term from the higher-order terms ($k \ge 2$)
gives
\[
\kappa_{\operatorname{SOC}}(\mathcal{N}, \{A_v\})
= \left\|
\partial_1^{\mathrm{spec}} \mathcal{E}_{\mathcal{N}}(\{A_v\})
\right\|
+ \sum_{k=2}^{\infty}
\left\|
\partial_k^{\mathrm{spec}} \mathcal{E}_{\mathcal{N}}(\{A_v\})
\right\|.
\]

By the Spectral Propagation Theorem (Theorem~\ref{thm:spectral-propagation})
and the chain rule for spectral derivatives (SOC II, Theorem~10), the
first-order derivative $\partial_1^{\mathrm{spec}} \mathcal{E}_{\mathcal{N}}$
is a sum of contributions from nodes, edges, and interfaces, as each
component's first derivative propagates linearly through the network.
Specifically, for a feedforward chain, the chain rule gives a product of
first derivatives; for general networks with branching, the sum structure
emerges from the multilinearity of cross-effects. The contributions from
interfaces appear via the residue terms $\mathcal{L}_I$ (SOC III, Theorem~4).

Thus,
\[
\left\|
\partial_1^{\mathrm{spec}} \mathcal{E}_{\mathcal{N}}(\{A_v\})
\right\|
\le
\sum_{v \in V}
\left\|
\partial_1^{\mathrm{spec}} A_v
\right\|
+
\sum_{e \in E}
\left\|
\partial_1^{\mathrm{spec}} \tau_e
\right\|
+
\sum_{I \in \mathcal{I}(P)}
\left\|
\partial_1^{\mathrm{spec}} \mathcal{L}_I
\right\|,
\]
with equality holding when the contributions are orthogonal or non-interfering.
In general, the inequality is sufficient for the structural decomposition,
with $\mathcal{R}$ absorbing the remainder from the inequality and all
higher-order terms.
\end{proof}

\begin{remark}[Comparison with Classical Condition Numbers]
\label{rem:comparison_classical}

Traditional condition numbers in numerical analysis usually measure the
sensitivity of a specific computational task, such as solving a linear
system, computing eigenvalues, or iterating a fixed-point map. For example,
the classical linear-system condition number is
\[
\kappa(A) = \|A\|\|A^{-1}\|,
\]
which measures the sensitivity of the solution map $b \mapsto A^{-1}b$.

The SOC condition number is different. It measures the cumulative
spectral sensitivity of an operadic propagation map, incorporating both
first-order and higher-order spectral derivatives. Hence it is designed
for compositional operator networks rather than isolated linear systems.
Under suitable chain-rule and bounded-composition assumptions (e.g., when
higher-order interactions are negligible or satisfy specific compatibility
conditions), it can be used to control the sensitivity of layered or
recursively composed systems. However, a simple multiplicative bound
$\kappa_{\operatorname{SOC}}(G \circ F) \le \kappa_{\operatorname{SOC}}(G)
\kappa_{\operatorname{SOC}}(F)$ does not hold in general without additional
hypotheses due to the presence of Faà di Bruno-type interaction terms.

\end{remark}

\begin{remark}[Interpretation of the SOC Condition Number]
\label{rem:interpretation_soc_condition}

The SOC condition number plays a role analogous to a classical operator
condition number, but at the level of operadic spectral propagation. It
captures multilevel compositional sensitivity, higher-order spectral
amplification, nonlinear perturbation cascades, and instability generated
by repeated operadic interactions. Consequently,
$\kappa_{\operatorname{SOC}}$ provides a useful local invariant for the
stability theory of operadic operator networks.

\end{remark}

\begin{corollary}[Practical Computation of the SOC Condition Number]
\label{cor:kappa_computation}

Let $F = \mathcal{E}_{\mathcal{N}}$ be the global evaluation map of an
admissible operadic network $\mathcal{N}$, and suppose that $F$ is
spectrally analytic at $A$ with finite SOC condition number. For a finite
truncation order $N$, the approximate SOC condition number is
\[
\kappa_{\operatorname{SOC}}^{(N)}(F, A)
=
\sum_{k=1}^{N}
\left\|
\partial_k^{\mathrm{spec}} F(A)
\right\|.
\]

In network form, these derivative norms are determined by the local node
operators, the spectral derivatives of edge couplings, and the
interface-residue contributions appearing in the spectral propagation
decomposition (Theorem~\ref{thm:spectral-propagation}).

Moreover, if there exist constants $C > 0$ and $0 < q < 1$ such that
\[
\left\|
\partial_k^{\mathrm{spec}} F(A)
\right\|
\le C q^k
\qquad (k \ge 1),
\]
then the truncation error satisfies
\[
0 \le
\kappa_{\operatorname{SOC}}(F, A)
-
\kappa_{\operatorname{SOC}}^{(N)}(F, A)
\le
\frac{C q^{N+1}}{1 - q}.
\]

Thus the finite-order approximation converges geometrically (exponentially)
to the full SOC condition number as $N \to \infty$.

\end{corollary}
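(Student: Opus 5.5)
The corollary has two parts. The first is a definitional identification: the truncated condition number is the partial sum of the series in Definition~\ref{def:soc_condition_number}. The second is a geometric-tail estimate, and that is where the real work lies.

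For the first part, I will recall that $\kappa_{\operatorname{SOC}}^{(N)}(F,A)$ is by definition the $N$-th partial sum of the series defining $\kappa_{\operatorname{SOC}}(F,A)$. The statement that these derivative norms are ``determined by'' node operators, edge couplings and interface residues then follows from two earlier results. The Spectral Propagation Theorem (Theorem~\ref{thm:spectral-propagation}) identifies the constituents of $\mathcal{E}_{\mathcal{N}}$. The operadic chain rule (SOC II, Theorem~10), as already used in Proposition~\ref{prop:network_kappa_decomposition}, expresses each $\partial_k^{\mathrm{spec}}\mathcal{E}_{\mathcal{N}}(A)$ through the derivatives of these constituents. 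I will cite this and not re-derive it.

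For the truncation estimate, the key step is to write the difference as a tail:
\[
\kappa_{\operatorname{SOC}}(F,A)-\kappa_{\operatorname{SOC}}^{(N)}(F,A)=\sum_{k=N+1}^{\infty}\left\|\partial_k^{\mathrm{spec}}F(A)\right\|.
\]
This rearrangement is legitimate because the series has nonnegative terms and converges by hypothesis. Actually, convergence is even implied by the bound $\|\partial_k^{\mathrm{spec}}F(A)\|\le Cq^k$ together with comparison to a geometric series. Nonnegativity of the terms gives the lower bound $0$ immediately, which also matches the monotonicity noted in Remark~\ref{rem:soc_properties}. For the upper bound, I will bound the tail termwise by $\sum_{k\ge N+1}Cq^k$ and sum the geometric series to get $Cq^{N+1}/(1-q)$.

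The final claim is geometric convergence. Since $0<q<1$, we have $q^{N+1}\to 0$ exponentially, so the error tends to zero at rate $O(q^N)$. There is no genuine obstacle here. The only point requiring care is to note that absolute convergence justifies splitting the sum. I would also add a remark that the estimate is a statement about real series alone and uses nothing about the operadic structure beyond the definition.
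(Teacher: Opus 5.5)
Your proposal is correct and follows the paper's proof essentially step for step. You identify $\kappa_{\operatorname{SOC}}^{(N)}$ as the partial sum, appeal to the Spectral Propagation Theorem for the network-form claim, write the error as the nonnegative tail, and bound it by $\sum_{k\ge N+1}Cq^k = Cq^{N+1}/(1-q)$. Your extra remarks — citing the chain rule as used in Proposition~\ref{prop:network_kappa_decomposition}, and noting that the geometric bound alone already forces convergence — are harmless refinements, not a different route.
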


\begin{proof}

By Definition~\ref{def:soc_condition_number}, the order-$N$ truncated SOC
condition number is
\[
\kappa_{\operatorname{SOC}}^{(N)}(F, A)
=
\sum_{k=1}^{N}
\left\|
\partial_k^{\mathrm{spec}} F(A)
\right\|.
\]

When $F = \mathcal{E}_{\mathcal{N}}$ is the global network evaluation map,
the Spectral Propagation Theorem (Theorem~\ref{thm:spectral-propagation})
expresses the derivatives of $F$ in terms of the derivative contributions
arising from local node data, edge couplings, and interface-localized
residue terms. Hence the finite quantity $\kappa_{\operatorname{SOC}}^{(N)}(F, A)$
can be computed by evaluating the corresponding spectral derivative norms
up to order $N$.

Since the full SOC condition number is
\[
\kappa_{\operatorname{SOC}}(F, A)
=
\sum_{k=1}^{\infty}
\left\|
\partial_k^{\mathrm{spec}} F(A)
\right\|,
\]
the truncation error is the nonnegative tail
\[
\kappa_{\operatorname{SOC}}(F, A)
-
\kappa_{\operatorname{SOC}}^{(N)}(F, A)
=
\sum_{k=N+1}^{\infty}
\left\|
\partial_k^{\mathrm{spec}} F(A)
\right\|.
\]

Using the assumed geometric bound
\[
\left\|
\partial_k^{\mathrm{spec}} F(A)
\right\|
\le C q^k,
\]
we obtain
\[
\kappa_{\operatorname{SOC}}(F, A)
-
\kappa_{\operatorname{SOC}}^{(N)}(F, A)
\le
\sum_{k=N+1}^{\infty} C q^k
=
C q^{N+1} \sum_{j=0}^{\infty} q^j
=
\frac{C q^{N+1}}{1 - q}.
\]

The inequality $0 \le$ (truncation error) holds because all terms in the
series are nonnegative. This proves the claimed geometric convergence
estimate.

\end{proof}

\begin{remark}[On the Geometric Bound Assumption]
\label{rem:geometric_bound_assumption}

The geometric bound $\| \partial_k^{\mathrm{spec}} F(A) \| \le C q^k$ is
satisfied whenever the spectral radius of convergence $R_F > 0$ (see
Definition~13), with $q = r / R_F$ for any $0 < r < R_F$ (see
Theorem~7). Thus the assumption is automatically true for spectrally
analytic functors when restricted to inputs within the radius of
convergence. The constants $C$ and $q$ can be chosen explicitly using
the Cauchy-Hadamard formula:
\[
q = \limsup_{k \to \infty} \| \partial_k^{\mathrm{spec}} F(A) \|^{1/k}
\quad \text{and} \quad
C = \sup_{k \ge 1} \frac{\| \partial_k^{\mathrm{spec}} F(A) \|}{q^k}.
\]

\end{remark}

\section{Feedback and Recursive Networks}\label{sec:feedback-networks}

Feedback networks arise when operadic compositions form cycles, allowing spectral data to circulate and potentially amplify. Unlike feedforward networks where propagation is directed, recursive compositions introduce the possibility of instability when the amplification around a feedback loop exceeds a critical threshold. Classical control theory addresses this via the spectral radius condition $\rho(T) < 1$ for linear systems. In the operadic setting, the amplification is governed by spectral derivatives rather than the operator itself.

\subsection{Definition of SOC Stability Radius}
\label{subsec:soc-stability-radius}

Let
\[
F : \mathcal{A} \to \mathcal{B}
\]
be a spectrally analytic operadic propagation map describing the
propagation of spectral data through an operadic network component,
and let $A \in \mathcal{A}$ be a base point. The leading-order behavior
of spectral perturbations near $A$ is governed by the first spectral
derivative
\[
\partial^{\mathrm{spec}} F(A),
\]
which acts as the linearized amplification operator for infinitesimal
spectral fluctuations.

This motivates the following definition.

\begin{definition}[SOC Stability Radius]
\label{def:soc_stability_radius}

Assume that $\partial^{\mathrm{spec}} F(A)$ defines a bounded linear
operator on a Banach spectral state space, and let
$\rho(\partial^{\mathrm{spec}} F(A))$ denote its spectral radius.
The \emph{SOC stability radius} of $F$ at $A$ is defined by
\[
r_{\operatorname{SOC}}(F, A)
:=
\frac{1}{\rho\!\left(\partial^{\mathrm{spec}} F(A)\right)},
\]
with the conventions $1/0 = \infty$ (when the spectral radius is zero)
and $1/\infty = 0$ (when the spectral radius is infinite).

\end{definition}

\begin{remark}[Pseudospectral correction for non-normal operators]
For non-normal operators, the spectral radius $\rho(\mathcal{L})$ can underestimate 
transient amplification. The $\varepsilon$-pseudospectrum $\sigma_\varepsilon(\mathcal{L})$ 
satisfies:
\[
\|\mathcal{L}^k\| \ge \max_{z \in \sigma_\varepsilon(\mathcal{L})} |z|^k
\]
for some $k$. A refined stability condition is:
\[
\sup_{\varepsilon>0} \frac{\text{dist}(0, \sigma_\varepsilon(\mathcal{L}))}{\varepsilon} < 1,
\]
which is equivalent to the Kreiss constant condition for power-boundedness.
\end{remark}

The SOC stability radius represents the reciprocal of the leading
linearized spectral amplification rate. It therefore characterizes the
threshold at which recursive propagation of perturbations transitions
from decay to amplification at the linearized level.

In particular, if the effective amplification factor associated with a
feedback loop remains strictly below $r_{\operatorname{SOC}}(F, A)$,
then infinitesimal perturbations decay under repeated linearized
propagation. Conversely, if the amplification factor exceeds this
threshold, the linearized dynamics may exhibit recursive amplification,
indicating possible instability of the underlying operadic network.
(Here the amplification factor is understood as the spectral radius of
the relevant loop composition; see Section~\ref{sec:feedback-networks}.)

More precisely, the classical linear stability condition
$\rho(\partial^{\mathrm{spec}} F(A)) < 1$ is equivalent to
$r_{\operatorname{SOC}}(F, A) > 1$. Thus the SOC stability radius
provides a convenient way to express the stability threshold: the
linearized dynamics are stable when $r_{\operatorname{SOC}}(F, A) > 1$
and linearly unstable when $r_{\operatorname{SOC}}(F, A) < 1$. The
marginal case $r_{\operatorname{SOC}}(F, A) = 1$ (equivalently
$\rho = 1$) requires higher-order analysis to determine nonlinear
stability.

Thus, the SOC stability radius plays a role analogous to a convergence radius for recursive spectral dynamics. The following result formalizes this interpretation.

\begin{proposition}[Linearized Spectral Stability Criterion]
\label{prop:spectral_stability_criterion}

Let
\[
\mathcal{L} := \partial^{\mathrm{spec}} F(A)
\]
be the first spectral derivative operator associated with a feedback
component of an operadic network, evaluated at a fixed point $A$
(i.e., $F(A) = A$). Suppose that $\mathcal{L}$ is a bounded linear
operator on the relevant Banach spectral state space. Assume that the
linearized recursive perturbation dynamics are given by
\[
\delta A_{k+1} = \mathcal{L}(\delta A_k).
\]

Then:
\begin{enumerate}
    \item If $\rho(\mathcal{L}) < 1$, then $\delta A_k \to 0$ exponentially
          for every initial perturbation $\delta A_0$.

    \item If $\rho(\mathcal{L}) > 1$, then there exist initial perturbations
          $\delta A_0$ whose iterates grow exponentially.
\end{enumerate}

\end{proposition}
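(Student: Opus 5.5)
The statement is purely about the linear recursion $\delta A_{k+1} = \mathcal{L}(\delta A_k)$, so $\delta A_k = \mathcal{L}^k \delta A_0$. Both parts then reduce to classical facts about powers of a bounded operator on a Banach space, and the operadic structure plays no role beyond supplying $\mathcal{L}$. The key tool is Gelfand's spectral radius formula,
\[
\rho(\mathcal{L}) = \lim_{k\to\infty} \|\mathcal{L}^k\|^{1/k}.
\]

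\textbf{Part 1.} If $\rho(\mathcal{L})<1$, fix $q$ with $\rho(\mathcal{L})<q<1$. By Gelfand's formula there is $K$ with $\|\mathcal{L}^k\|\le q^k$ for all $k\ge K$. Set
\[
C:=\max_{0\le k\le K} \|\mathcal{L}^k\|\,q^{-k}.
\]
Then $\|\mathcal{L}^k\|\le C q^k$ for every $k$, and hence $\|\delta A_k\|\le C q^k\|\delta A_0\|$. This gives exponential decay, uniformly in the initial perturbation. In the finite-dimensional case one could instead use the Jordan form, but Gelfand's formula covers the Banach setting directly.

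\textbf{Part 2.} Suppose $\rho(\mathcal{L})>1$. Choose $\lambda\in\sigma(\mathcal{L})$ with $|\lambda|=\rho(\mathcal{L})$; it exists because the spectrum is compact and nonempty.

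If the space is finite-dimensional, or more generally if $\lambda$ is an eigenvalue, take $\delta A_0=v$ to be an eigenvector. Then $\|\delta A_k\|=|\lambda|^k\|v\|$ grows exponentially; this is the argument already used in Remark~\ref{rem:lower-bound-spectral-radius}.

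\textbf{The main obstacle} is the general Banach case, where $\lambda$ may lie only in the approximate point spectrum or the residual spectrum, so no eigenvector exists. I would handle this in two steps.

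First, a uniform-boundedness argument yields a single orbit that is unbounded. Fix $1<q<\rho(\mathcal{L})$ and suppose, for contradiction, that every orbit satisfies $\sup_k q^{-k}\|\mathcal{L}^k x\|<\infty$. The Banach--Steinhaus theorem applied to the family $\{q^{-k}\mathcal{L}^k\}$ then gives $\sup_k q^{-k}\|\mathcal{L}^k\|<\infty$. Hence $\rho(\mathcal{L})\le q$, a contradiction. So there is some $\delta A_0$ with $\sup_k q^{-k}\|\mathcal{L}^k\delta A_0\|=\infty$.

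Second, this only gives growth faster than $q^k$ along a subsequence, not a clean lower bound for every $k$. I would therefore interpret ``grow exponentially'' as
\[
\limsup_{k\to\infty}\|\delta A_k\|^{1/k}>1,
\]
which the first step delivers directly, and state this interpretation explicitly in the proof. If a stronger for-all-$k$ lower bound is wanted, I would add the hypothesis that the peripheral spectrum contains an eigenvalue. This holds automatically in finite dimensions and for compact $\mathcal{L}$, by Riesz theory, whenever $\rho(\mathcal{L})>0$.
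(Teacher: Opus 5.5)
Your Part~1 is the paper's argument: Gelfand's formula with $\rho(\mathcal{L})<q<1$. Your constant $C=\max_{0\le k\le K}\|\mathcal{L}^k\|q^{-k}$ just makes the bound hold for every $k$, not only for all sufficiently large $k$ as the paper states it.

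Part~2 is where you genuinely depart, and your version is the more careful one. The paper picks a spectral value $\lambda$ with $|\lambda|>1$, takes ``a corresponding eigenvector (or generalized eigenvector)'' $v$, and asserts $\|\mathcal{L}^k v\|=|\lambda|^k\|v\|$. That is justified only when $\lambda$ is an eigenvalue, for instance in finite dimensions or for compact $\mathcal{L}$. In a general Banach space the peripheral spectrum may lie entirely in the approximate-point or residual spectrum, with no eigenvector at all. For a genuine generalized eigenvector the claimed identity is false anyway, because of the extra term $k\lambda^{k-1}(\mathcal{L}-\lambda)v$.

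Your Banach--Steinhaus argument closes this gap in the full Banach setting the proposition claims. The price is the weaker conclusion $\limsup_k\|\delta A_k\|^{1/k}\ge q>1$, which you correctly flag as an interpretation of ``grow exponentially.'' The paper's eigenvector route gives the sharp every-$k$ formula, but only under an eigenvalue hypothesis it never states.

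For the record, the every-$k$ version does hold in general. A theorem of V.~M\"uller on operator orbits gives, for any positive sequence $a_k\to 0$, a vector $x$ with $\|\mathcal{L}^k x\|\ge a_k\,\rho(\mathcal{L})^k$ for all $k$. Taking $a_k=1/(k+1)$ yields $\|\mathcal{L}^k x\|\ge c\,q^k$ for every $1<q<\rho(\mathcal{L})$. So your fallback peripheral-eigenvalue hypothesis is sufficient but not necessary, though M\"uller's result is considerably harder to prove than your argument.
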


\begin{proof}

Iterating the recursion gives
\[
\delta A_k = \mathcal{L}^k(\delta A_0).
\]

By the spectral radius formula (Gelfand's formula),
\[
\lim_{k \to \infty} \|\mathcal{L}^k\|^{1/k} = \rho(\mathcal{L}).
\]

\paragraph{Case 1: $\rho(\mathcal{L}) < 1$.}
Choose $r$ such that $\rho(\mathcal{L}) < r < 1$. By the spectral radius
formula, there exists a constant $C > 0$ such that
\[
\|\mathcal{L}^k\| \le C r^k
\]
for all sufficiently large $k$. Hence
\[
\|\delta A_k\|
= \|\mathcal{L}^k \delta A_0\|
\le C r^k \|\delta A_0\|
\to 0
\]
exponentially as $k \to \infty$. Thus the linearized feedback dynamics are
exponentially stable.

\paragraph{Case 2: $\rho(\mathcal{L}) > 1$.}
Since $\rho(\mathcal{L}) > 1$, the spectrum of $\mathcal{L}$ contains
spectral values with modulus greater than one, or more generally spectral
components outside the closed unit disk. Let $\lambda$ be such a spectral
value with $|\lambda| > 1$, and let $v$ be a corresponding eigenvector
(or generalized eigenvector). Choose $\delta A_0 = v$. Then
\[
\|\delta A_k\|
= \|\mathcal{L}^k v\|
= |\lambda|^k \|v\|
\to \infty
\]
exponentially as $k \to \infty$. Hence there exist initial perturbations
whose linearized evolution is amplified recursively, giving linearized
spectral instability.

\end{proof}

\begin{remark}[Interpretation of the SOC Stability Radius]
\label{rem:soc_stability_radius_interpretation}

The SOC stability radius
\[
r_{\operatorname{SOC}}(F, A) = \frac{1}{\rho(\partial^{\mathrm{spec}} F(A))}
\]
provides a quantitative bridge between spectral operator theory and
operadic network dynamics. It may be interpreted as:
\begin{itemize}
    \item a spectral robustness threshold for linearized dynamics,
    \item a local stability margin for recursive perturbations,
    \item an operadic analogue of the classical stability margin
          (where stability requires $r_{\operatorname{SOC}} > 1$),
    \item a tool for comparing architectures and detecting potential
          instability propagation in large-scale operadic systems.
\end{itemize}

In large-scale operadic systems, the SOC stability radius can therefore
be used to compare architectures, detect instability propagation, and
quantify the robustness of compositional spectral flows.

However, caution is required: the SOC stability radius is derived from
\emph{linearized} analysis. For nonlinear networks, it provides a
necessary condition for local stability ($r_{\operatorname{SOC}} > 1$ is
required for local asymptotic stability), but sufficiency requires
additional control of higher-order spectral derivatives (see
Theorem~\ref{thm:stability_bound}). The linearized criterion identifies
potential instability, but nonlinear effects may stabilize or destabilize
the system beyond the linearized prediction.

\end{remark}

\subsection{Feedback Stability Criterion}
\label{subsec:feedback-stability-criterion}

In operadic feedback networks, stability depends on the interplay between the transfer operator $\mathcal{T}_{\mathrm{fb}}$ and the first spectral derivative $\partial^{\mathrm{spec}}F(A_*)$. The theorem gives a general criterion $\rho(\partial^{\mathrm{spec}}F(A_*) \circ \mathcal{T}_{\mathrm{fb}}) < 1$ valid for all bounded linear operators. When $\partial^{\mathrm{spec}}F(A_*)$ and $\mathcal{T}_{\mathrm{fb}}$ commute and are normal, this simplifies to $\rho(\mathcal{T}_{\mathrm{fb}}) < r_{\mathrm{SOC}}(F, A_*)$; for non-commuting operators, the product bound does not hold, and one must evaluate the full product operator directly.

\begin{theorem}[Feedback Stability Criterion]
\label{thm:feedback-stability}
Let $\mathcal{T}_{\mathrm{fb}}$ denote the transfer operator of a feedback
loop in an operadic network, assumed to be a bounded linear operator on a
Banach space. Let $F$ be a spectrally analytic operadic propagation map,
and let $A_*$ be a fixed point satisfying
\[
A_* = F\bigl(\mathcal{T}_{\mathrm{fb}}(A_*)\bigr).
\]

Define the linearized one-cycle feedback operator
\[
\mathcal{M}_{\mathrm{fb}}
:=
\partial^{\mathrm{spec}} F(A_*)
\circ
\mathcal{T}_{\mathrm{fb}},
\]
where $\partial^{\mathrm{spec}} F(A_*)$ is the first spectral derivative
evaluated at the fixed point.

\begin{enumerate}
    \item[(a)] \textbf{General criterion (any bounded linear operators).}
    The feedback loop is linearly spectrally stable if and only if
    \[
    \rho(\mathcal{M}_{\mathrm{fb}}) < 1.
    \]
    This criterion applies to all cases, including non-commuting
    and non-normal operators, but requires computation of the
    spectral radius of the product operator $\mathcal{M}_{\mathrm{fb}}$.

    \item[(b)] \textbf{Sufficient condition for commuting normal operators.}
    Suppose that $\partial^{\mathrm{spec}} F(A_*)$ and $\mathcal{T}_{\mathrm{fb}}$
    are commuting normal operators. Then the condition
    \[
    \rho(\mathcal{T}_{\mathrm{fb}})
    \,
    \rho(\partial^{\mathrm{spec}} F(A_*))
    < 1
    \]
    implies feedback stability. Equivalently,
    \[
    \rho(\mathcal{T}_{\mathrm{fb}})
    <
    r_{\operatorname{SOC}}(F, A_*),
    \]
    where $r_{\operatorname{SOC}}(F, A_*) := 1 / \rho(\partial^{\mathrm{spec}} F(A_*))$
    is the SOC stability radius (Definition~\ref{def:soc_stability_radius}).

    \item[(c)] \textbf{Warning for non-commuting operators.}
    For non-commuting operators, the inequality
    $\rho(AB) \le \rho(A)\rho(B)$ is generally false.
    Therefore, the sufficient condition in part (b) does not apply.
    One must instead compute $\rho(\mathcal{M}_{\mathrm{fb}})$ directly
    using part (a), or resort to other estimates such as
    $\|\mathcal{M}_{\mathrm{fb}}\| < 1$ (which is sufficient but not necessary).
\end{enumerate}

\noindent
\textbf{Interpretation:}
\begin{enumerate}
    \item The first-order spectral derivative $\partial^{\mathrm{spec}} F(A_*)$
    acts as the linearized ``gain'' of the feedback loop.
    \item For non-commuting operators, the interaction between
    $\partial^{\mathrm{spec}} F(A_*)$ and $\mathcal{T}_{\mathrm{fb}}$
    cannot be decoupled; the stability condition depends on the
    full product operator, not just individual spectral radii.
    \item The classical spectral radius condition $\rho(T) < 1$ for linear
    feedback systems emerges as the special case where $F$ is linear, so
    $\partial^{\mathrm{spec}} F(A_*) = F = T$.
\end{enumerate}
\end{theorem}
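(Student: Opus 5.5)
The plan is to linearize the fixed-point equation $A_* = F(\mathcal{T}_{\mathrm{fb}}(A_*))$ and reduce every part to Proposition~\ref{prop:spectral_stability_criterion}.

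Consider a perturbation $A_k = A_* + \delta A_k$ propagated once around the loop, $A_{k+1} = F(\mathcal{T}_{\mathrm{fb}}(A_k))$. Since $\mathcal{T}_{\mathrm{fb}}$ is bounded linear, $\mathcal{T}_{\mathrm{fb}}(A_k) = \mathcal{T}_{\mathrm{fb}}(A_*) + \mathcal{T}_{\mathrm{fb}}(\delta A_k)$. Apply the spectral Taylor expansion of Theorem~\ref{thm:stability_bound} with $n=1$ at the point $\mathcal{T}_{\mathrm{fb}}(A_*)$. This gives
\[
\delta A_{k+1} = \mathcal{M}_{\mathrm{fb}}(\delta A_k) + R_2, \qquad \|R_2\| \le C_2 \|\mathcal{T}_{\mathrm{fb}}\|^2 \|\delta A_k\|^2 .
\]
The linearized one-cycle dynamics are therefore exactly $\delta A_{k+1} = \mathcal{M}_{\mathrm{fb}}(\delta A_k)$.

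The derivative must be taken at the argument of $F$, namely $\mathcal{T}_{\mathrm{fb}}(A_*)$, not at $A_*$. I will read the notation $\partial^{\mathrm{spec}}F(A_*)$ in the statement as meaning this, and state that convention explicitly.

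\textbf{Part (a).} Once the linearized dynamics are identified, define "linearly spectrally stable" to mean that $\delta A_k \to 0$ for every $\delta A_0$ under this recursion.
\begin{itemize}
\item For sufficiency, apply Proposition~\ref{prop:spectral_stability_criterion}(1) with $\mathcal{L} = \mathcal{M}_{\mathrm{fb}}$: if $\rho(\mathcal{M}_{\mathrm{fb}}) < 1$, Gelfand's formula gives $\|\mathcal{M}_{\mathrm{fb}}^k\| \le C r^k$ with $r<1$.
\item For necessity when $\rho > 1$, use part (2) of the same proposition.
\end{itemize}

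\textbf{Main obstacle: the boundary case $\rho(\mathcal{M}_{\mathrm{fb}}) = 1$.} An "if and only if" requires that the boundary case not count as stable. I plan to use the uniform notion of exponential stability, $\|\mathcal{M}_{\mathrm{fb}}^k\| \to 0$. Since $\rho(\mathcal{M}_{\mathrm{fb}}) \le \|\mathcal{M}_{\mathrm{fb}}^k\|^{1/k}$ for every $k$, $\rho = 1$ forces $\|\mathcal{M}_{\mathrm{fb}}^k\| \ge 1$ for all $k$, which excludes uniform decay. This gives the equivalence cleanly.

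A second gap is in Proposition~\ref{prop:spectral_stability_criterion}(2): in infinite dimensions a spectral value need not be an eigenvalue. I would repair it with the same uniform formulation: $\rho > 1$ gives $\|\mathcal{M}_{\mathrm{fb}}^k\| \ge \rho^k$, and the uniform boundedness principle then yields a $\delta A_0$ with unbounded orbit. Alternatively, one can use approximate eigenvectors at a boundary point of the spectrum.

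\textbf{Part (b).} Let $D = \partial^{\mathrm{spec}}F(A_*)$ and $T = \mathcal{T}_{\mathrm{fb}}$, and assume they commute. Then $(DT)^k = D^k T^k$, so
\[
\|(DT)^k\|^{1/k} \le \|D^k\|^{1/k}\,\|T^k\|^{1/k}.
\]
Letting $k \to \infty$ gives $\rho(DT) \le \rho(D)\rho(T)$. Normality is not even needed for this step, though for normal operators $\rho = \|\cdot\|$ makes it immediate. Hence $\rho(D)\rho(T) < 1$ implies $\rho(\mathcal{M}_{\mathrm{fb}}) < 1$, and stability follows from (a). Rewriting with $r_{\operatorname{SOC}} = 1/\rho(D)$ via Definition~\ref{def:soc_stability_radius} gives the equivalent form; when $\rho(D)=0$ the convention $1/0 = \infty$ makes the condition hold trivially.

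\textbf{Part (c).} This is a warning, so I will justify it with an explicit $2\times 2$ counterexample. Take the non-commuting nilpotents
\[
A = \begin{pmatrix} 0 & 1 \\ 0 & 0 \end{pmatrix}, \qquad B = \begin{pmatrix} 0 & 0 \\ 1 & 0 \end{pmatrix}.
\]
Then $\rho(A) = \rho(B) = 0$, while $AB = \operatorname{diag}(1,0)$ has $\rho(AB) = 1$, so $\rho(AB) \le \rho(A)\rho(B)$ fails. The norm criterion in (c) follows from $\rho(\mathcal{M}_{\mathrm{fb}}) \le \|\mathcal{M}_{\mathrm{fb}}\|$. The same example shows $\|\mathcal{M}_{\mathrm{fb}}\| < 1$ is not necessary for stability: a nilpotent $\mathcal{M}_{\mathrm{fb}}$ with large norm still has $\rho = 0$.
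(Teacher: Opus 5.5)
Your proposal is correct. For (a) and (c) it follows the paper's outline: linearize one traversal of the loop, apply Gelfand's formula, and exhibit a $2\times 2$ nilpotent pair. It departs from the paper in several substantive places, mostly to its advantage.

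\textbf{Part (b).} The paper simultaneously diagonalizes the commuting normal pair and asserts $\sigma(\mathcal{M}_{\mathrm{fb}})=\{\lambda_i\mu_j\}$, hence $\rho(\mathcal{M}_{\mathrm{fb}})=\rho(\partial^{\mathrm{spec}}F(A_*))\,\rho(\mathcal{T}_{\mathrm{fb}})$. That equality is false in general, because only products over a common eigenvector occur. For example, $D=\operatorname{diag}(1,0)$ and $T=\operatorname{diag}(0,1)$ give $DT=0$. The inequality the paper actually needs does survive. Your argument via $(DT)^k=D^kT^k$ and Gelfand's formula proves exactly $\rho(DT)\le\rho(D)\rho(T)$. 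It uses neither normality nor diagonalization and works on any Banach space, so it is shorter and more general than the paper's.

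\textbf{Part (a).} You evaluate the derivative at $\mathcal{T}_{\mathrm{fb}}(A_*)$, which is the correct base point. The paper itself uses this point later in Proposition~\ref{prop:linearized-feedback-dynamics}, although its proof of this theorem writes $\partial^{\mathrm{spec}}F(A_*)$. You also control the remainder explicitly through Theorem~\ref{thm:stability_bound} instead of citing the chain rule. Finally, you address the boundary case and spectral values that are not eigenvalues, both of which the paper's one-line ``iff by Gelfand'' passes over.

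\textbf{One thing to tighten.} Commit to the uniform notion $\|\mathcal{M}_{\mathrm{fb}}^k\|\to 0$ from the outset. The pointwise definition you first write down ($\delta A_k\to 0$ for every $\delta A_0$) makes the ``only if'' direction false in infinite dimensions. For instance, the backward shift on $\ell^2$ has spectral radius $1$, yet every orbit tends to $0$. With the uniform definition, necessity is just $\|\mathcal{M}_{\mathrm{fb}}^k\|\ge\rho(\mathcal{M}_{\mathrm{fb}})^k\ge 1$. The Banach--Steinhaus step is then needed only if you want to exhibit a single unbounded orbit when $\rho>1$.
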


\begin{proof}

We prove each part.

\medskip
\noindent\textbf{Proof of part (a).}

Consider the feedback loop whose fixed point satisfies
\[
A_* = F(\mathcal{T}_{\mathrm{fb}}(A_*)).
\]

Let $\delta A_k$ denote a small perturbation around $A_*$. Linearizing one
full feedback cycle and applying the operadic chain rule (SOC II,
Theorem~10) gives
\[
\delta A_{k+1}
=
\partial^{\mathrm{spec}} F(A_*)
\circ
\mathcal{T}_{\mathrm{fb}}(\delta A_k)
=
\mathcal{M}_{\mathrm{fb}}(\delta A_k).
\]

Iterating this relation yields
\[
\delta A_k
=
\mathcal{M}_{\mathrm{fb}}^k(\delta A_0).
\]

By the spectral radius formula (Gelfand's theorem),
$\lim_{k\to\infty} \|\mathcal{M}_{\mathrm{fb}}^k\|^{1/k} = \rho(\mathcal{M}_{\mathrm{fb}})$.
Hence $\delta A_k \to 0$ exponentially for all initial perturbations if
and only if $\rho(\mathcal{M}_{\mathrm{fb}}) < 1$. This proves part (a).

\medskip
\noindent\textbf{Proof of part (b).}

Assume that $\partial^{\mathrm{spec}} F(A_*)$ and $\mathcal{T}_{\mathrm{fb}}$
are commuting normal operators. For commuting normal operators, there exists
a simultaneous unitary diagonalization (or, more generally, a joint spectral
decomposition). Consequently, the eigenvalues of $\mathcal{M}_{\mathrm{fb}}$
are products of eigenvalues of the individual operators:
\[
\sigma(\mathcal{M}_{\mathrm{fb}}) = \{\lambda_i \mu_j : \lambda_i \in \sigma(\partial^{\mathrm{spec}} F(A_*)),\ \mu_j \in \sigma(\mathcal{T}_{\mathrm{fb}})\}.
\]

Therefore,
\[
\rho(\mathcal{M}_{\mathrm{fb}})
= \max_{i,j} |\lambda_i \mu_j|
= \bigl(\max_i |\lambda_i|\bigr) \bigl(\max_j |\mu_j|\bigr)
= \rho(\partial^{\mathrm{spec}} F(A_*)) \cdot \rho(\mathcal{T}_{\mathrm{fb}}).
\]

Thus, if $\rho(\partial^{\mathrm{spec}} F(A_*)) \cdot \rho(\mathcal{T}_{\mathrm{fb}}) < 1$,
then $\rho(\mathcal{M}_{\mathrm{fb}}) < 1$, and part (a) implies feedback
stability. The equivalent formulation in terms of the SOC stability radius
follows directly from the definition.

\medskip
\noindent\textbf{Proof of part (c) (counterexample for non-commuting case).}

To see why the product bound fails for non-commuting operators, consider
the $2 \times 2$ matrices:
\[
A = \begin{pmatrix} 0 & 2 \\ 0 & 0 \end{pmatrix}, \qquad
B = \begin{pmatrix} 0 & 0 \\ 2 & 0 \end{pmatrix}.
\]
Then $\rho(A) = 0$, $\rho(B) = 0$, but $AB = \begin{pmatrix} 4 & 0 \\ 0 & 0 \end{pmatrix}$,
so $\rho(AB) = 4$. Hence $\rho(AB) \le \rho(A)\rho(B)$ fails dramatically
($4 \le 0$ is false). Therefore, part (b)'s sufficient condition cannot
be applied to non-commuting operators. For such cases, part (a) must be
used directly.

This completes the proof.
\end{proof}

\begin{remark}[On the Necessity of Additional Assumptions]
\label{rem:feedback_stability_assumptions}

The inequality $\rho(AB) \le \rho(A)\rho(B)$ does  not  hold for general
bounded linear operators. A counterexample exists with $2 \times 2$
nilpotent matrices $A$ and $B$ where $\rho(A)=\rho(B)=0$ but $\rho(AB)=1$.
Therefore, the product condition $\rho(\partial^{\mathrm{spec}} F(A_*))
\cdot \rho(\mathcal{T}_{\mathrm{fb}}) < 1$ is a sufficient condition for
stability only under the additional commuting or triangularizability
hypotheses stated above. Without these assumptions, one must directly
compute or bound $\rho(\mathcal{M}_{\mathrm{fb}})$.

If the commuting assumption is not satisfied, a sufficient condition
using operator norms is always available:
\[
\|\partial^{\mathrm{spec}} F(A_*)\| \cdot \|\mathcal{T}_{\mathrm{fb}}\| < 1
\quad\Longrightarrow\quad
\rho(\mathcal{M}_{\mathrm{fb}}) < 1,
\]
since $\rho(\mathcal{M}_{\mathrm{fb}}) \le \|\mathcal{M}_{\mathrm{fb}}\|
\le \|\partial^{\mathrm{spec}} F(A_*)\| \cdot \|\mathcal{T}_{\mathrm{fb}}\|$.

\end{remark}

\begin{remark}[Comparison with Classical Small-Gain Theorem]
\label{rem:small-gain-comparison}

The classical small-gain theorem states that a feedback interconnection of
two systems $G_1$ and $G_2$ is stable if
\[
\|G_1\|\,\|G_2\| < 1
\]
for suitable induced norms. In the linear time-invariant setting, an
analogous spectral condition is that the closed-loop or one-cycle
feedback operator has spectral radius strictly less than one.

Theorem~\ref{thm:feedback-stability} replaces the open-loop gain by the
first spectral derivative
\[
\partial^{\mathrm{spec}} F(A_*),
\]
which is the linearization of the nonlinear spectrally analytic
propagation map $F$ at the feedback fixed point. Thus the SOC framework
recovers the classical linear criterion when $F$ is linear, since then
\[
\partial^{\mathrm{spec}} F(A_*) = F.
\]

For nonlinear operadic compositions, higher-order spectral derivatives
may be nonzero; they do not change the first-order linearized stability
threshold, but they can influence nonlinear stability, basin of
attraction, and finite-amplitude perturbation growth (see
Theorem~\ref{thm:stability_bound} for quantitative bounds involving
higher-order derivatives).

\end{remark}

\subsection{Relation to Classical Control Theory}
\label{subsec:relation-classical-control}

The SOC stability criterion extends several ideas from classical control
theory into the setting of operadic spectral propagation. In classical
linear feedback systems, stability is commonly characterized either by a
small-gain condition
\[
\|G_1\|\,\|G_2\| < 1
\]
or, for discrete-time linear dynamics, by the spectral radius condition
\[
\rho(G) < 1
\]
for the closed-loop propagation operator.

In the operadic setting, the relevant quantity is not merely the
propagation operator itself, but the first spectral derivative
\[
\partial^{\mathrm{spec}} F(A_*),
\]
which governs the linearized amplification of spectral perturbations near
a feedback fixed point $A_*$. Consequently, two operadic networks may
have identical local node spectra while possessing different spectral
derivatives and therefore different linearized stability margins. The SOC
stability radius $r_{\operatorname{SOC}}(F, A_*) = 1 / \rho(\partial^{\mathrm{spec}} F(A_*))$
captures precisely this interaction-sensitive perturbation geometry.

\begin{example}[Linear versus Nonlinear Feedback]
\label{ex:linear-nonlinear-feedback}

Let $F_1 = T$ be a bounded linear operator satisfying
\[
\rho(T) = 0.8,
\]
and let $F_2$ be a nonlinear spectrally analytic propagation map such
that
\[
\partial^{\mathrm{spec}} F_2(A_*) = T
\]
at a feedback fixed point $A_*$. Then both systems possess the same
linearized stability radius:
\[
r_{\operatorname{SOC}}(F_1, A_*)
=
r_{\operatorname{SOC}}(F_2, A_*)
=
\frac{1}{0.8}
=
1.25.
\]

Therefore first-order linearized stability analysis predicts the same
small-perturbation stability threshold for both systems. However, the
higher-order spectral derivatives
\[
\partial_k^{\mathrm{spec}} F_2(A_*),
\qquad
k \ge 2,
\]
may significantly influence nonlinear dynamics away from the fixed point.
In particular, sufficiently large perturbations may trigger nonlinear
amplification mechanisms even when the first-order criterion
\[
\rho(\mathcal{T}_{\mathrm{fb}}) < 1.25
\]
is satisfied (under the commuting/normal assumptions of
Theorem~\ref{thm:feedback-stability}).

Thus the SOC framework separates:
\begin{enumerate}
    \item first-order linearized stability, governed by
          $\partial^{\mathrm{spec}} F(A_*)$;
    \item nonlinear perturbation amplification, governed by higher-order
          spectral derivatives.
\end{enumerate}

Theorem~\ref{thm:stability_bound} provides quantitative control of the
perturbation dynamics within the convergence regime of the spectral
Taylor expansion. If the perturbation magnitude $\|\delta A\|$ exceeds
the radius of convergence, the Taylor-based stability estimate is no
longer valid, and higher-order analysis becomes necessary.

\end{example}

\begin{remark}[Generalization to Multiple Feedback Loops]
\label{rem:multiple-loops}

For operadic networks containing multiple interacting feedback loops, the
stability analysis is governed by the spectral radius of the composite
linearized derivative operator acting on the full loop-variable space.
Let $\mathbf{F} = (F_1, \ldots, F_m)$ be the collection of loop maps, and
let $\mathbf{A}_* = (A_1^*, \ldots, A_m^*)$ be a fixed point. Define the
block-diagonal operator
\[
\partial^{\mathrm{spec}} \mathbf{F}(\mathbf{A}_*)
:=
\operatorname{diag}\bigl(
\partial^{\mathrm{spec}} F_1(A_1^*), \ldots,
\partial^{\mathrm{spec}} F_m(A_m^*)
\bigr).
\]
Let $\mathbf{T}_{\mathrm{fb}}$ be the block transfer operator coupling
the loops. The generalized linearized small-gain condition becomes
\[
\rho\!\left(
\partial^{\mathrm{spec}} \mathbf{F}(\mathbf{A}_*)
\circ
\mathbf{T}_{\mathrm{fb}}
\right) < 1.
\]

When the loop couplings are weak or nearly block-diagonal (i.e.,
$\mathbf{T}_{\mathrm{fb}}$ is approximately diagonal), the stability
criterion approximately decouples into individual loop stability
conditions. In the strongly coupled regime, however, stability depends on
the joint spectral geometry of the entire coupled derivative system,
requiring analysis of the composite operator rather than individual loop
gains.

\end{remark}

\begin{remark}[Summary: Classical vs. SOC Control Theory]
\label{rem:classical_vs_soc_summary}

The table below summarizes the key differences between classical control
theory and the SOC framework.

\[
\begin{array}{|p{0.28\textwidth}|p{0.32\textwidth}|p{0.32\textwidth}|}
\hline
\textbf{Aspect} & \textbf{Classical Control} & \textbf{SOC Framework} \\
\hline
\textbf{Plant model} & Linear or nonlinear ODEs & Spectrally analytic operadic maps \\
\hline
\textbf{Gain} & Transfer function $G(s)$ or operator $G$ & Spectral derivative $\partial^{\mathrm{spec}} F(A_*)$ \\
\hline
\textbf{Small-gain condition} & $\|G_1\| \|G_2\| < 1$ (norm product) & $\rho(\mathcal{T}_{\mathrm{fb}}) < r_{\mathrm{SOC}}(F, A_*)$ (under commuting assumptions) \\
\hline
\textbf{Discrete-time stability} & $\rho(G) < 1$ & $\rho(\partial^{\mathrm{spec}} F(A_*) \circ \mathcal{T}_{\mathrm{fb}}) < 1$ \\
\hline
\textbf{Handles nonlinearity?} & Via linearization (LTI) & Via spectral Taylor expansion (higher derivatives) \\
\hline
\textbf{Handles composition?} & Series/parallel only & Full operadic composition \\
\hline
\textbf{Handles interfaces?} & No (perfect coupling assumed) & Yes (via residue $\Sigma^{\mathrm{res}}$) \\
\hline
\textbf{Perturbation regime} & Small perturbations (linearization) & Controlled perturbations within radius of convergence \\
\hline
\end{array}
\]

Thus, the SOC framework extends classical control theory to nonlinear,
compositional, and interface-aware systems while recovering classical
results as special cases.

\end{remark}

\begin{example}[Block Operator Feedback Network]
\label{ex:block-feedback}
Consider a $2 \times 2$ block operator system in feedback configuration:
\[
\begin{pmatrix} y_1 \\ y_2 \end{pmatrix} = 
\begin{pmatrix} A & B \\ C & D \end{pmatrix}
\begin{pmatrix} x_1 \\ x_2 \end{pmatrix}, \qquad
x_2 = y_2 \text{ (feedback)}.
\]
This structure arises naturally in interconnected systems, where $A$, $B$, $C$, $D$ are bounded linear operators on suitable Banach spaces, and the feedback $x_2 = y_2$ closes the loop through the $D$ channel.

Solving the feedback equations:
\begin{align*}
y_1 &= A x_1 + B x_2, \\
y_2 &= C x_1 + D x_2, \\
x_2 &= y_2.
\end{align*}
Substituting $x_2 = y_2$ into the second equation gives $y_2 = C x_1 + D y_2$, i.e., $(I - D)y_2 = C x_1$. Assuming $I - D$ is invertible, we obtain $y_2 = (I - D)^{-1} C x_1$. Substituting into the first equation yields the closed-loop map:
\[
y_1 = A x_1 + B (I - D)^{-1} C x_1 = \bigl( A + B (I - D)^{-1} C \bigr) x_1.
\]
Thus, the closed-loop propagation operator is $F(A) = A + B (I - D)^{-1} C$.

Within the SOC framework, the spectral stability of this feedback network is governed by the first spectral derivative of $F$ at the operating point. Using the spectral chain rule (SOC II, Theorem~10):
\[
\partial^{\mathrm{spec}} F(A) = I + B \cdot \partial^{\mathrm{spec}}\bigl((I - D)^{-1}\bigr) \cdot C.
\]
If $D$ is independent of $A$ (i.e., no direct coupling from the feedback variable to itself through the forward path), then $\partial^{\mathrm{spec}}((I - D)^{-1}) = (I - D)^{-2}$ formally. More generally, differentiating $(I - D)^{-1}$ requires the derivative of $D$ with respect to $A$.

The SOC stability radius condition (Definition~\ref{def:soc-stability-radius-feedback}) becomes:
\[
\rho\bigl( I + B \cdot \partial^{\mathrm{spec}}\bigl((I - D)^{-1}\bigr) \cdot C \bigr) < 1,
\]
which must hold for spectral stability of the feedback loop.

\paragraph{Connection to the Classical Small-Gain Theorem.}
If we assume that $A$, $B$, $C$, $D$ are all constant operators (i.e., independent of the spectral state) and that the system is linear, then:
\[
\partial^{\mathrm{spec}} F(A) = I + B (I - D)^{-1} C \cdot \partial^{\mathrm{spec}} A.
\]
For spectral stability analysis around a fixed point where the feedback loop is closed, one typically studies the homogeneous equation. The classical small-gain theorem for interconnected systems states that the feedback interconnection is stable if $\|B\| \cdot \|C\| \cdot \|(I - D)^{-1}\| < 1$ (or a suitable spectral radius condition). 

In the SOC framework, this classical condition is recovered as a special case when:
\begin{itemize}
    \item $A$ is the forward propagation operator (possibly nonlinear, linearized around the fixed point),
    \item $D$ is the feedback operator (assumed stable and contractive),
    \item $B$ and $C$ are interface coupling operators,
    \item The spectral derivative $\partial^{\mathrm{spec}} A$ is the linearized gain of the forward path.
\end{itemize}

The SOC criterion $\rho(\partial^{\mathrm{spec}} F(A)) < 1$ generalizes the classical small-gain condition in several ways:
\begin{enumerate}
    \item It applies to nonlinear spectrally analytic operators via $\partial^{\mathrm{spec}} A$,
    \item It handles noncommutative operator couplings through the product structure,
    \item It incorporates interface residues $\Sigma^{\mathrm{res}}$ when the coupling is between different operadic strata,
    \item It provides a quantitative stability radius $r_{\mathrm{SOC}} = 1 / \rho(\partial^{\mathrm{spec}} F(A))$.
\end{enumerate}

Thus, Example~\ref{ex:block-feedback} demonstrates how the SOC framework extends classical block-diagram feedback analysis to general operadic operator networks, with the spectral derivative playing the role of the linearized gain and the SOC stability radius providing the stability threshold.

\end{example}

\section{Structured Networks and Multiscale Composition}\label{sec:strcutured_networks_multiscale_composition}

Real-world operator networks rarely appear as flat collections of nodes. More often, they exhibit hierarchical structure: layers in a deep neural network, levels of abstraction in a control system, or scales in a multiscale signal processor. The ability to analyze stability recursively—layer by layer—is therefore essential for practical applications. This section establishes that hierarchical compositionality, not geometry, governs the spectral behavior of layered networks.

\subsection{Layerwise Decomposition}
\label{subsec:layerwise-decomposition}

Many operadic networks admit a natural sequential structure in which the
global network decomposes into an ordered composition of subnetworks.
This layered viewpoint plays a central role in the analysis of
propagation, stability, and spectral amplification.

A network is said to admit a layerwise structure when its operadic
composition factors into a sequential composition of subnetworks:
\[
\mathcal{N}
=
\mathcal{N}_L
\circ
\mathcal{N}_{L-1}
\circ
\cdots
\circ
\mathcal{N}_1.
\]
Here $\circ$ denotes the sequential operadic composition compatible with
the interface structure of consecutive layers (i.e., the output interface
of $\mathcal{N}_{\ell}$ matches the input interface of
$\mathcal{N}_{\ell+1}$).

Each layer $\mathcal{N}_\ell$ may itself be an arbitrary operadic
network. The essential requirement is that the interfaces between layers
are compatible and that the global network evaluation map respects this
sequential decomposition.

\begin{definition}[Layerwise Operadic Decomposition]
\label{def:layerwise_operadic_decomposition}

An operadic network $\mathcal{N}$ is said to admit a
\emph{layerwise decomposition} of depth $L$ if there exist subnetworks
\[
\mathcal{N}_1, \mathcal{N}_2, \dots, \mathcal{N}_L
\]
such that
\[
\mathcal{N}
=
\mathcal{N}_L
\circ
\mathcal{N}_{L-1}
\circ
\cdots
\circ
\mathcal{N}_1,
\]
where:
\begin{itemize}
    \item each $\mathcal{N}_\ell$ is an admissible operadic network
          (Definition~\ref{def:admissible-network});

    \item the output interface of $\mathcal{N}_\ell$ is compatible with
          the input interface of $\mathcal{N}_{\ell+1}$ in the operadic
          typing structure;

    \item the global network evaluation map factors as
          \[
          \mathcal{E}_{\mathcal{N}}
          =
          \mathcal{E}_{\mathcal{N}_L}
          \circ
          \cdots
          \circ
          \mathcal{E}_{\mathcal{N}_1}.
          \]
\end{itemize}

The integer $L$ is called the \emph{layer depth} of the decomposition.

\end{definition}

Associated with each layer $\mathcal{N}_\ell$ is a local spectral
propagation map
\[
F_\ell := \mathcal{E}_{\mathcal{N}_\ell}.
\]
Consequently, the global propagation map factors as
\[
F_{\mathcal{N}}
=
F_L
\circ
F_{L-1}
\circ
\cdots
\circ
F_1.
\]

This decomposition allows global spectral dynamics to be analyzed through
the interaction of local layerwise propagation mechanisms.

Typical examples of layerwise decomposable systems include feedforward
computational chains, deep neural network architectures, multistage
signal-processing pipelines, hierarchical control systems, recursive
filtering architectures, and modular operadic communication networks.

The main advantage of layerwise decomposition is that many global spectral quantities become compositional. In particular, spectral sensitivity, stability radii, and propagation defects can often be estimated by combining their layerwise counterparts.

The following proposition formalizes the compositional structure of spectral propagation.

\begin{proposition}[Layerwise Spectral Composition]
\label{prop:layerwise_spectral_composition}

Let
\[
\mathcal{N}
=
\mathcal{N}_L \circ \cdots \circ \mathcal{N}_1
\]
be a layerwise decomposable operadic network with associated spectral
propagation maps
\[
F_\ell := \mathcal{E}_{\mathcal{N}_\ell}.
\]
Then the global spectral propagation map satisfies
\[
F_{\mathcal{N}}
=
F_L \circ F_{L-1} \circ \cdots \circ F_1.
\]

Define the intermediate layer states by
\[
A_0 := A_{\text{in}},
\qquad
A_\ell := F_\ell(A_{\ell-1}),
\quad
1 \le \ell \le L.
\]
Then the first spectral derivative satisfies the layerwise chain rule
\[
\partial^{\mathrm{spec}} F_{\mathcal{N}}(A_0)
=
\partial^{\mathrm{spec}} F_L(A_{L-1})
\circ
\partial^{\mathrm{spec}} F_{L-1}(A_{L-2})
\circ
\cdots
\circ
\partial^{\mathrm{spec}} F_1(A_0).
\]

Moreover, the interaction residue admits the structural decomposition
\[
\Sigma^{\mathrm{res}}(\mathcal{N})
\subseteq
\bigcup_{\ell=1}^{L}
\Phi_{\ell}^{*}
\bigl(
\Sigma^{\mathrm{res}}(\mathcal{N}_\ell)
\bigr)
\;\cup\;
\bigcup_{\ell=1}^{L-1}
\mathcal{I}_{\ell,\ell+1},
\]
where $\Phi_\ell^*$ denotes transport of the $\ell$-th layer residue
through the preceding layerwise propagation, and $\mathcal{I}_{\ell,\ell+1}$
denotes the interlayer interface residue generated by composing adjacent
layers. Equality holds when there are no additional higher-order interlayer
residues beyond adjacent-layer interfaces.

\end{proposition}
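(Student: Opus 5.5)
The proof has three parts, handled in the order stated. The factorization $F_{\mathcal{N}} = F_L \circ \cdots \circ F_1$ is essentially definitional. By Definition~\ref{def:layerwise_operadic_decomposition}, the global evaluation map factors as $\mathcal{E}_{\mathcal{N}} = \mathcal{E}_{\mathcal{N}_L}\circ\cdots\circ\mathcal{E}_{\mathcal{N}_1}$, and $F_\ell := \mathcal{E}_{\mathcal{N}_\ell}$. It only remains to check that this is consistent with the canonical construction of Theorem~\ref{thm:network-evaluation}. For that, I would take a topological ordering of the DAG obtained after cycle resolution in which all nodes of $\mathcal{N}_\ell$ precede all nodes of $\mathcal{N}_{\ell+1}$. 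Such an ordering exists because every interlayer edge goes from layer $\ell$ to layer $\ell+1$. Part~III of that theorem then says that the resulting composite agrees, up to canonical isomorphism, with any other ordering.

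For the chain rule, I would argue by induction on $L$. The case $L=1$ is trivial. For the inductive step, write $F_{\mathcal{N}} = F_L \circ G$ with $G := F_{L-1}\circ\cdots\circ F_1$, and note that $G(A_0) = A_{L-1}$ by the definition of the intermediate states. The operadic chain rule (SOC II, Theorem~10) at first order gives
\[
\partial^{\mathrm{spec}}F_{\mathcal{N}}(A_0) = \partial^{\mathrm{spec}}F_L(G(A_0))\circ \partial^{\mathrm{spec}}G(A_0).
\]
Applying the inductive hypothesis to $G$ then completes the step. One point needs care: the chain rule must be applied at the correct base points $A_{\ell-1}$, and each $F_\ell$ must be spectrally analytic near $A_{\ell-1}$. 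The latter holds by SOC II, Theorem~11, since composites of spectrally analytic maps are spectrally analytic. I would also remark that only the first-order term composes this simply, because higher orders would pick up Faà di Bruno terms.

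The residue inclusion is the main obstacle. My plan is to reduce to two layers and induct. For $\mathcal{N} = \mathcal{N}_2\circ\mathcal{N}_1$, I would apply the Interface Localization Theorem (SOC III, Theorem~4), which writes $\Sigma^{\mathrm{res}}(\mathcal{N}) \cong \coprod_{I} \mathcal{L}_I$. I would then partition the admissible interfaces $I$ of $\mathcal{N}$ into three classes: interfaces internal to $\mathcal{N}_1$, interfaces internal to $\mathcal{N}_2$, and interfaces whose edges cross from $\mathcal{N}_1$ to $\mathcal{N}_2$. The three classes are handled as follows.
\begin{itemize}
\item Internal interfaces of $\mathcal{N}_1$ contribute exactly $\Sigma^{\mathrm{res}}(\mathcal{N}_1)$. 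Because $\mathcal{N}_1$ comes first, its transport map $\Phi_1^*$ is the identity.
\item Internal interfaces of $\mathcal{N}_2$ contribute $\Sigma^{\mathrm{res}}(\mathcal{N}_2)$. Its spectral support has to be expressed in global coordinates by pushing through the preceding propagation $F_1$. That pushforward is what $\Phi_2^*$ means, and its compatibility with supports would follow from the base-change/functoriality statement in Theorem~\ref{thm:spectral-propagation}.
\item Crossing interfaces are by definition collected into $\mathcal{I}_{1,2}$.
\end{itemize}

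For general $L$, I would induct on $L$ by treating $\mathcal{N}_{L-1}\circ\cdots\circ\mathcal{N}_1$ as a single layer. The delicate point is that an interface of the composite could a priori involve edges spanning non-adjacent layers, for example through operadic compositions $\gamma$ whose inputs come from several layers at once. Such interfaces are not covered by the union on the right-hand side. Layer compatibility forces every edge to join adjacent layers, but higher-arity operadic interactions may not respect this. This is why the statement claims only an inclusion in general. I would try to show that any such interface decomposes, via the associativity of $\gamma$, into adjacent-layer interfaces plus internal ones. When that decomposition succeeds, which is exactly the hypothesis of no higher-order interlayer residues, the reverse inclusion and hence equality follow. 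Otherwise one gets only the inclusion.
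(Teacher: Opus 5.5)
Your treatment of the factorization and of the first-order chain rule is the paper's argument. The paper reads the factorization off Definition~\ref{def:layerwise_operadic_decomposition} and iterates the two-factor chain rule, and your induction that peels off $F_L$ at the base point $G(A_0)=A_{L-1}$ is the same thing. Your residue induction, treating $\mathcal{M}=\mathcal{N}_{L-1}\circ\cdots\circ\mathcal{N}_1$ as a single layer, also has the paper's skeleton. You differ in how you justify the two-piece step. The paper simply cites Theorem~\ref{thm:spectral-propagation} for $\Sigma^{\mathrm{res}}(\mathcal{N}_L\circ\mathcal{M})\subseteq\Phi_{\mathcal{M}}^*(\Sigma^{\mathrm{res}}(\mathcal{N}_L))\cup\Sigma^{\mathrm{res}}(\mathcal{M})\cup\mathcal{I}_{\mathcal{M},\mathcal{N}_L}$, while you partition the interfaces in the coproduct $\coprod_I\mathcal{L}_I$ of SOC III, Theorem~4. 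That route is more explicit, but inside it your logic about inclusion versus equality is inverted. Suppose a summand $\mathcal{L}_I$ comes from an interface coupling non-adjacent layers. It belongs to $\Sigma^{\mathrm{res}}(\mathcal{N})$ and, as you say yourself, is not covered by the right-hand side. Its existence therefore \emph{refutes} the inclusion $\subseteq$; it does not explain why only $\subseteq$ is claimed. With a coproduct decomposition, $\subseteq$ is exactly the claim that your three classes exhaust all interfaces of the composite. That is the step you must prove, not one you can leave open. The idea you need is the one you mention and then set aside. By Definition~\ref{def:layerwise_operadic_decomposition}, $\mathcal{N}_L$ is glued to $\mathcal{M}$ only along the output interface of $\mathcal{M}$, which is the output interface of $\mathcal{N}_{L-1}$. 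Hence the crossing class is precisely $\mathcal{I}_{L-1,L}$; this is the identification $\mathcal{I}_{\mathcal{M},\mathcal{N}_L}=\mathcal{I}_{L-1,L}$ that the paper makes tacitly when it applies the induction hypothesis. If you allow operations whose inputs come from several layers at once, the adjacency hypothesis is what you need to get $\subseteq$ at all, not to upgrade it to equality.

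The second weak point is matching each class with its term on the right. For an interface $I$ internal to $\mathcal{N}_\ell$, you need the piece $\mathcal{L}_I$ computed in the composite to coincide with (for $\subseteq$, be contained in) the $\Phi_\ell^*$-transport of the piece computed in $\mathcal{N}_\ell$ alone. That is a locality property of the interface decomposition: residues at $I$ depend only on data at $I$. The base-change clause of Theorem~\ref{thm:spectral-propagation}, which you cite, does not give this. It concerns an admissible strong monoidal functor between ambient categories, whereas $\Phi_2^*$ is transport along the propagation map $F_1=\mathcal{E}_{\mathcal{N}_1}$, which is not such a functor. Also note that once you have exhaustiveness and locality with equality, disjointness of the coproduct gives equality outright. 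So in your framework the equality clause is governed by locality, not by adjacency. The statement's own phrasing of when equality holds has the same inversion, and the paper's proof asserts equality under that hypothesis without further argument. It therefore offers you nothing to recover the reverse inclusion either.
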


\begin{proof}

By Definition~\ref{def:layerwise_operadic_decomposition}, the global network
evaluation map factors through the sequential composition of subnetworks.
Thus
\[
F_{\mathcal{N}}
=
F_L \circ F_{L-1} \circ \cdots \circ F_1.
\]

Define the intermediate layer states by
\[
A_0 := A_{\text{in}},
\qquad
A_\ell := F_\ell(A_{\ell-1}),
\quad
1 \le \ell \le L.
\]
Since each $F_\ell$ is spectrally analytic, the spectral chain rule
(SOC II, Theorem~10) gives, for each adjacent pair,
\[
\partial^{\mathrm{spec}}(F_{\ell+1} \circ F_\ell)(A_{\ell-1})
=
\partial^{\mathrm{spec}} F_{\ell+1}(A_\ell)
\circ
\partial^{\mathrm{spec}} F_\ell(A_{\ell-1}).
\]
Iterating this identity over the $L$ layers yields
\[
\partial^{\mathrm{spec}} F_{\mathcal{N}}(A_0)
=
\partial^{\mathrm{spec}} F_L(A_{L-1})
\circ
\partial^{\mathrm{spec}} F_{L-1}(A_{L-2})
\circ
\cdots
\circ
\partial^{\mathrm{spec}} F_1(A_0).
\]

For the residue statement, we proceed by induction on $L$. The base case
$L = 1$ is immediate (no interfaces between layers). Suppose the claim
holds for $L-1$ layers and set
\[
\mathcal{M}
=
\mathcal{N}_{L-1} \circ \cdots \circ \mathcal{N}_1,
\qquad
\mathcal{N}
=
\mathcal{N}_L \circ \mathcal{M}.
\]
By the Spectral Propagation Theorem (Theorem~\ref{thm:spectral-propagation}),
the residue of the composition is contained in the union of the transported
residue of $\mathcal{M}$, the transported residue of $\mathcal{N}_L$, and
the new interface residue generated by the interface between $\mathcal{M}$
and $\mathcal{N}_L$:
\[
\Sigma^{\mathrm{res}}(\mathcal{N})
\subseteq
\Phi_{\mathcal{M}}^*
\bigl(
\Sigma^{\mathrm{res}}(\mathcal{N}_L)
\bigr)
\;\cup\;
\Sigma^{\mathrm{res}}(\mathcal{M})
\;\cup\;
\mathcal{I}_{\mathcal{M},\mathcal{N}_L},
\]
where $\Phi_{\mathcal{M}}^*$ denotes pullback along the propagation map of
$\mathcal{M}$. Applying the induction hypothesis to
$\Sigma^{\mathrm{res}}(\mathcal{M})$ gives the stated layerwise
decomposition. Under the additional assumption that all interlayer residues
are generated only by adjacent interfaces (i.e., no higher-order
interactions across non-adjacent layers), the inclusion becomes an
equality.

\end{proof}

\begin{remark}[Interpretation]
\label{rem:layerwise_interpretation}
Layerwise decomposition provides a bridge between operadic network theory
and modern compositional architectures. Instead of analyzing a large
network as a single monolithic object, one may study:
\begin{itemize}
    \item local propagation laws,
    \item interlayer amplification,
    \item recursive sensitivity growth,
    \item stability transfer across layers.
\end{itemize}

This viewpoint is particularly useful for understanding deep architectures,
where global instability often emerges from repeated amplification of
local spectral effects across many compositional stages.
\end{remark}

\begin{example}[Three-Layer Feedforward Network]
\label{ex:three-layer-feedforward}

Consider a layered network with $L=3$, where $\mathcal{N}_1$ maps the
input space to the first hidden layer, $\mathcal{N}_2$ maps the first
hidden layer to the second hidden layer, and $\mathcal{N}_3$ maps the
second hidden layer to the output. Then
\[
\mathcal{N}
=
\mathcal{N}_3 \circ \mathcal{N}_2 \circ \mathcal{N}_1.
\]
The global spectral propagation map is
\[
F_{\mathcal{N}}(A_{\mathrm{in}})
=
\mathcal{E}_{\mathcal{N}_3}
\Bigl(
\mathcal{E}_{\mathcal{N}_2}
\bigl(
\mathcal{E}_{\mathcal{N}_1}(A_{\mathrm{in}})
\bigr)
\Bigr).
\]

If each layer $\mathcal{N}_\ell$ is represented by a bounded linear
operator $T_\ell$, then
\[
\partial^{\mathrm{spec}}\mathcal{E}_{\mathcal{N}_\ell}
=
T_\ell,
\]
and hence
\[
\partial^{\mathrm{spec}}\mathcal{E}_{\mathcal{N}}
=
T_3 T_2 T_1,
\]
recovering the usual composition of linear maps (with multiplication
order read from right to left). Interlayer residues $\mathcal{I}_{\ell,\ell+1}$
vanish when the adjacent layers are spectrally compatible and introduce
no new interface interactions (see SOC III, Theorem~4); otherwise they
contribute interface-localized spectral features.

\end{example}

\begin{remark}[Relation to Deep Learning]
\label{rem:deep-learning}

In deep neural networks, each layer typically consists of an affine
transformation followed by a nonlinear activation function. The layerwise
spectral propagation viewpoint explains how spectral information, such as
feature frequencies or mode amplitudes, propagates through depth; how
interaction residues may accumulate at layer interfaces; and why very
deep architectures may develop vanishing or exploding spectral
sensitivity when products of layerwise derivative norms become very small
or very large.

Rather than giving an exact multiplicative formula, the SOC framework
provides the bound
\[
\kappa_{\operatorname{SOC}}^{(1)}(\mathcal{N}, A_0)
\le
\prod_{\ell=1}^{L}
\left\|
\partial^{\mathrm{spec}} F_\ell(A_{\ell-1})
\right\|,
\]
where $A_0 := A_{\mathrm{in}}$ and $A_\ell := F_\ell(A_{\ell-1})$ denote
the propagated spectral states. Higher-order SOC condition numbers
additionally account for nonlinear spectral amplification across layers.

\end{remark}

\begin{corollary}[Recursive Stability for Layered Networks]
\label{cor:layered-stability}

Under the hypotheses of Proposition~\ref{prop:layerwise_spectral_composition},
the global spectral output of a layered network can be computed
recursively by
\[
A_0 = A_{\mathrm{in}},
\qquad
A_\ell = F_\ell(A_{\ell-1}),
\qquad
1 \le \ell \le L,
\]
so that
\[
F_{\mathcal{N}}(A_{\mathrm{in}}) = A_L.
\]

Moreover, the first-order spectral sensitivity operator satisfies
\[
\|\mathcal{S}_{\mathcal{N}}(A_0)\|
\le
\prod_{\ell=1}^{L}
\|\mathcal{S}_{\mathcal{N}_\ell}(A_{\ell-1})\|,
\]
where $\mathcal{S}_{\mathcal{N}_\ell} := \partial^{\mathrm{spec}} \mathcal{E}_{\mathcal{N}_\ell}$.
Consequently, if
\[
\prod_{\ell=1}^{L}
\|\mathcal{S}_{\mathcal{N}_\ell}(A_{\ell-1})\|
< 1,
\]
then the layered network is first-order spectrally contractive at $A_0$.

\end{corollary}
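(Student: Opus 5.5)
The corollary has three parts: a recursion identity, a sensitivity bound, and a contractivity consequence. I will prove them in that order, using Proposition~\ref{prop:layerwise_spectral_composition} as the main tool.

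\textbf{Recursion identity.} By the factorization $F_{\mathcal{N}} = F_L\circ\cdots\circ F_1$ from Definition~\ref{def:layerwise_operadic_decomposition}, I will show by induction on $\ell$ that
\[
A_\ell = (F_\ell\circ\cdots\circ F_1)(A_{\mathrm{in}}).
\]
The base case $\ell=0$ holds by definition. For the step, apply $F_\ell$ to the identity for $\ell-1$. Taking $\ell=L$ gives $F_{\mathcal{N}}(A_{\mathrm{in}})=A_L$.

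\textbf{Sensitivity bound.} By Definition~\ref{def:spectral_sensitivity_operator}, $\mathcal{S}_{\mathcal{N}}(A_0)=\partial^{\mathrm{spec}}\mathcal{E}_{\mathcal{N}}(A_0)$, and likewise for each layer. I will invoke the layerwise chain rule of Proposition~\ref{prop:layerwise_spectral_composition}, which gives
\[
\mathcal{S}_{\mathcal{N}}(A_0)=\mathcal{S}_{\mathcal{N}_L}(A_{L-1})\circ\cdots\circ\mathcal{S}_{\mathcal{N}_1}(A_0).
\]
Each factor is a bounded linear operator between the normed spectral state spaces of consecutive layers. Submultiplicativity of the operator norm, $\|ST\|\le\|S\|\,\|T\|$, applied $L-1$ times then yields the product bound. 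The interaction residue part of Proposition~\ref{prop:layerwise_spectral_composition} is not needed here, since only first-order derivatives enter.

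\textbf{Contractivity.} If the product is $<1$, then $\|\mathcal{S}_{\mathcal{N}}(A_0)\|<1$. I interpret ``first-order spectrally contractive at $A_0$'' to mean exactly that the linearization is a strict contraction. To connect this with Theorem~\ref{thm:stability_bound} for $n=1$, I will note
\[
\|F_{\mathcal{N}}(A_0+\delta A)-F_{\mathcal{N}}(A_0)\|\le q\|\delta A\|+C_2\|\delta A\|^2 .
\]
Here $q<1$ is the product of the layer sensitivities. It follows that the map contracts perturbations with $\|\delta A\|<(1-q)/C_2$, inside the convergence radius. The same bound gives $\rho(\mathcal{S}_{\mathcal{N}}(A_0))<1$, which matches the criterion in Proposition~\ref{prop:spectral_stability_criterion}.

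\textbf{Main obstacle.} The delicate point is justifying the chain rule at the correct base points $A_{\ell-1}$. This is what forces the intermediate states to be defined recursively, and it depends on the spaces matching across interfaces, as required by the interface-compatibility condition in Definition~\ref{def:layerwise_operadic_decomposition}. Once that is granted, everything else is norm arithmetic.
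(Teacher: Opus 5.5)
Your proposal is correct and follows the paper's proof essentially verbatim: the recursion comes directly from the factorization $F_{\mathcal{N}} = F_L\circ\cdots\circ F_1$, and the bound comes from the layerwise chain rule of Proposition~\ref{prop:layerwise_spectral_composition} plus submultiplicativity of the operator norm, with ``first-order spectrally contractive'' read, as in the paper, as $\|\mathcal{S}_{\mathcal{N}}(A_0)\|<1$. Your extra links to Theorem~\ref{thm:stability_bound} and to the spectral radius are not needed for the statement; the spectral-radius remark also only makes sense when the input and output state spaces coincide, since otherwise $\rho(\mathcal{S}_{\mathcal{N}}(A_0))$ is not defined.
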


\begin{proof}

The recursive computation follows directly from the layerwise
factorization
\[
F_{\mathcal{N}}
=
F_L \circ \cdots \circ F_1.
\]
Thus, defining $A_\ell = F_\ell(A_{\ell-1})$ for $1 \le \ell \le L$ gives
\[
F_{\mathcal{N}}(A_{\mathrm{in}}) = A_L.
\]

By Proposition~\ref{prop:layerwise_spectral_composition}, the first
spectral derivative satisfies
\[
\partial^{\mathrm{spec}} F_{\mathcal{N}}(A_0)
=
\partial^{\mathrm{spec}} F_L(A_{L-1})
\circ
\cdots
\circ
\partial^{\mathrm{spec}} F_1(A_0).
\]
Taking operator norms and using submultiplicativity yields
\[
\|\partial^{\mathrm{spec}} F_{\mathcal{N}}(A_0)\|
\le
\prod_{\ell=1}^{L}
\|\partial^{\mathrm{spec}} F_\ell(A_{\ell-1})\|.
\]
Since $\mathcal{S}_{\mathcal{N}}(A_0) = \partial^{\mathrm{spec}} F_{\mathcal{N}}(A_0)$
and $\mathcal{S}_{\mathcal{N}_\ell}(A_{\ell-1}) = \partial^{\mathrm{spec}} F_\ell(A_{\ell-1})$,
the desired sensitivity bound follows. If the product of layerwise norms
is strictly less than one, the global first-order sensitivity operator is
contractive at $A_0$.

\end{proof}

Thus, layerwise decomposition transforms the analysis of a deep network
into the analysis of its constituent layers, enabling modular verification
and recursive computation of spectral behavior.

\subsection{Statement of the Layerwise Stability Theorem}
\label{subsec:layerwise-stability-theorem}

We now formalize the principle that global spectral stability may be derived from local layerwise behavior together with controlled propagation across interfaces. Building on the layerwise decomposition framework established in Definition~\ref{def:layerwise_operadic_decomposition} and Proposition~\ref{prop:layerwise_spectral_composition}, the following theorem provides sufficient conditions under which global spectral stability can be inferred from local layerwise data, enabling modular analysis of deep compositional architectures.

\begin{theorem}[Layerwise Stability Theorem]
\label{thm:layerwise-stability}

Suppose a network admits a layerwise decomposition
\[
\mathcal{N}
=
\mathcal{N}_L
\circ
\cdots
\circ
\mathcal{N}_1,
\]
with associated spectrally analytic propagation maps
\[
F_\ell := \mathcal{E}_{\mathcal{N}_\ell}.
\]

Define the intermediate layer states recursively by
\[
A_0 := A_{\mathrm{in}},
\qquad
A_\ell := F_\ell(A_{\ell-1}),
\quad
1 \le \ell \le L,
\]
and let
\[
\mathcal{S}_\ell
:=
\partial^{\mathrm{spec}} F_\ell(A_{\ell-1})
\]
denote the first spectral derivative at the corresponding layer state.

Assume the following conditions:

\begin{enumerate}

\item \textbf{Bounded derivatives:}
Each $\mathcal{S}_\ell$ is a bounded linear operator on the relevant
Banach spectral state space.

\item \textbf{Uniform layer contraction:}
There exist constants $c_\ell < 1$ such that
\[
\|\mathcal{S}_\ell\|
\le c_\ell
\qquad
(1 \le \ell \le L).
\]

\item \textbf{Controlled interface residues:}
The cumulative interface residue is controlled in the sense that
\[
\sum_{\ell=1}^{L-1}
\|\mathcal{I}_{\ell,\ell+1}\|
< \infty,
\]
where $\mathcal{I}_{\ell,\ell+1}$ denotes the interlayer interface residue
generated by coupling adjacent layers (see
Proposition~\ref{prop:layerwise_spectral_composition}).

\end{enumerate}

Then the global first-order spectral sensitivity operator satisfies
\[
\|\mathcal{S}_{\mathcal{N}}\|
\le
\prod_{\ell=1}^L
c_\ell,
\]
where
\[
\mathcal{S}_{\mathcal{N}}
:=
\partial^{\mathrm{spec}} F_{\mathcal{N}}(A_0)
\]
and $F_{\mathcal{N}} := F_L \circ \cdots \circ F_1$.

In particular, if there exists a uniform constant $c < 1$ such that
$c_\ell \le c$ for all $\ell$, then perturbations decay exponentially
with depth:
\[
\|\delta A_{\mathrm{out}}\|
\le
c^L
\|\delta A_{\mathrm{in}}\|.
\]

\end{theorem}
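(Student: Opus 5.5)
The plan is to derive the global bound directly from the layerwise chain rule of Proposition~\ref{prop:layerwise_spectral_composition} and then convert it into a perturbation estimate. First, by the factorization $F_{\mathcal{N}} = F_L \circ \cdots \circ F_1$ in Definition~\ref{def:layerwise_operadic_decomposition}, I will invoke the chain rule (SOC II, Theorem~10) as packaged in Proposition~\ref{prop:layerwise_spectral_composition}:
\[
\mathcal{S}_{\mathcal{N}} = \mathcal{S}_L \circ \mathcal{S}_{L-1} \circ \cdots \circ \mathcal{S}_1 ,
\]
where each $\mathcal{S}_\ell$ is evaluated at the propagated state $A_{\ell-1}$. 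Hypothesis~1 makes each factor a bounded operator, so submultiplicativity of the operator norm gives $\|\mathcal{S}_{\mathcal{N}}\| \le \prod_{\ell} \|\mathcal{S}_\ell\|$. Hypothesis~2 then yields $\|\mathcal{S}_{\mathcal{N}}\| \le \prod_\ell c_\ell$. This step is the argument of Corollary~\ref{cor:layered-stability}, specialized to the uniform constants $c_\ell$.

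Next I will address hypothesis~3. The interlayer residues $\mathcal{I}_{\ell,\ell+1}$ enter only the support decomposition of $\Sigma^{\mathrm{res}}(\mathcal{N})$ in Proposition~\ref{prop:layerwise_spectral_composition}. They do not enter the first-order derivative identity, which is exact by the chain rule. The summability assumption is therefore used only to guarantee that the layered network is admissible and that $\mathcal{E}_{\mathcal{N}}$ factors as stated, so that no extra additive interface term appears in $\mathcal{S}_{\mathcal{N}}$. I will state this explicitly so the reader sees that the bound does not pick up a residue correction.

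For the exponential-decay clause, the plan has two parts. First, $c_\ell \le c$ gives $\prod_\ell c_\ell \le c^L$. Second, I interpret $\delta A_{\mathrm{out}}$ as the first-order (linearized) response $\mathcal{S}_{\mathcal{N}}(\delta A_{\mathrm{in}})$, which gives $\|\delta A_{\mathrm{out}}\| \le \|\mathcal{S}_{\mathcal{N}}\|\,\|\delta A_{\mathrm{in}}\| \le c^L \|\delta A_{\mathrm{in}}\|$. This can also be seen inductively, with $\delta A_\ell = \mathcal{S}_\ell \delta A_{\ell-1}$ and $\|\delta A_\ell\| \le c\,\|\delta A_{\ell-1}\|$.

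The main obstacle is the gap between this linearized statement and a genuine nonlinear perturbation bound. The derivative bounds hold only at the base states $A_{\ell-1}$, whereas the true perturbed trajectory passes through $A_{\ell-1} + \delta A_{\ell-1}$. To upgrade the estimate, I would first try the mean-value inequality along segments: I would strengthen hypothesis~2 to $\sup \|\partial^{\mathrm{spec}} F_\ell\| \le c_\ell$ on a ball around $A_{\ell-1}$. Then each $F_\ell$ is $c_\ell$-Lipschitz there, and since the perturbations shrink they stay in the balls, which gives the product bound exactly. Failing that, I would use Theorem~\ref{thm:stability_bound} layer by layer to obtain $c_\ell\|\delta\| + O(\|\delta\|^2)$ and absorb the quadratic terms for small $\|\delta A_{\mathrm{in}}\|$, at the cost of replacing $c$ by a slightly larger $c' < 1$. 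I will state clearly which reading the theorem intends, taking the first-order one as primary.
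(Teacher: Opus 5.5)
Your argument matches the paper's: the layerwise chain rule of Proposition~\ref{prop:layerwise_spectral_composition}, submultiplicativity of the operator norm, and reading $\delta A_{\mathrm{out}}$ as the linearized response $\mathcal{S}_{\mathcal{N}}(\delta A_{\mathrm{in}})$ for small $\delta A_{\mathrm{in}}$. Your optional nonlinear upgrade goes beyond what the paper proves. One correction: hypothesis~3 plays no role in the estimate (the paper only gestures at it for higher-order effects), and for finite $L$ it holds automatically whenever each residue has finite norm. The factorization $\mathcal{E}_{\mathcal{N}} = \mathcal{E}_{\mathcal{N}_L}\circ\cdots\circ\mathcal{E}_{\mathcal{N}_1}$ comes from Definition~\ref{def:layerwise_operadic_decomposition}, not from summability of the residues.
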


\begin{proof}

By the layerwise chain rule (Proposition~\ref{prop:layerwise_spectral_composition}),
the first spectral derivative of the global propagation map satisfies
\[
\mathcal{S}_{\mathcal{N}}
=
\mathcal{S}_L
\circ
\cdots
\circ
\mathcal{S}_1.
\]

Taking operator norms and using submultiplicativity yields
\[
\|\mathcal{S}_{\mathcal{N}}\|
\le
\|\mathcal{S}_L\|
\cdots
\|\mathcal{S}_1\|
\le
\prod_{\ell=1}^L
c_\ell.
\]

If $c_\ell \le c < 1$ uniformly, then
\[
\|\mathcal{S}_{\mathcal{N}}\|
\le
c^L.
\]

For a sufficiently small initial perturbation $\delta A_{\mathrm{in}}$,
the linearized output perturbation satisfies
\[
\|\delta A_{\mathrm{out}}\|
\le
\|\mathcal{S}_{\mathcal{N}}\|
\,
\|\delta A_{\mathrm{in}}\|
\le
c^L
\|\delta A_{\mathrm{in}}\|.
\]

Thus perturbations decay exponentially with depth. The interface residue
condition ensures that higher-order nonlinear effects do not induce
spectral divergence beyond the linearized estimate (see
Theorem~\ref{thm:stability_bound} for the role of higher-order terms).

\end{proof}

\begin{corollary}[Exponential Convergence Rate]
\label{cor:exponential-convergence-layerwise}
Under the assumptions of Theorem~\ref{thm:layerwise-stability}, let
\[
\alpha = \max_{1 \le \ell \le L} \|\mathcal{S}_\ell\|, \qquad
\beta = \max_{1 \le \ell \le L-1} \|\mathcal{I}_{\ell,\ell+1}\|,
\]
where $\mathcal{S}_\ell = \partial^{\mathrm{spec}}F_\ell(A_{\ell-1})$ and 
$\mathcal{I}_{\ell,\ell+1}$ are the interlayer interface residues. Assume 
$\alpha < 1$ and $\beta < \infty$.

Then for any initial perturbation $\delta A_0$ and for all $\ell \ge 1$,
\[
\|\delta A_\ell\| \le \alpha^\ell \|\delta A_0\| + \frac{\beta}{1-\alpha},
\]
where $\delta A_\ell$ denotes the perturbation at layer $\ell$ (i.e., after propagating 
through $\ell$ layers). Consequently,
\[
\limsup_{\ell \to \infty} \|\delta A_\ell\| \le \frac{\beta}{1-\alpha}.
\]

If $\beta = 0$ (i.e., all interlayer couplings are internal morphisms with no 
interface residues), then convergence is purely exponential:
\[
\|\delta A_\ell\| \le \alpha^\ell \|\delta A_0\|,
\]
and $\|\delta A_\ell\| \to 0$ exponentially as $\ell \to \infty$.
\end{corollary}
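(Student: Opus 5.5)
The plan is to model the layerwise perturbation dynamics as a contraction perturbed by a bounded additive forcing, and then apply a discrete Gr\"onwall (variation-of-constants) argument.

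\textbf{Step 1: one-step inequality.} Let $\delta A_\ell$ be the perturbation after $\ell$ layers. By the layerwise chain rule (Proposition~\ref{prop:layerwise_spectral_composition}), at first order the perturbation is transported by $\mathcal{S}_\ell$. The residue decomposition in the same proposition shows that the only further contribution at the interface between layers $\ell-1$ and $\ell$ is the interlayer residue $\mathcal{I}_{\ell-1,\ell}$. I will therefore adopt the modelling relation
\[
\delta A_\ell = \mathcal{S}_\ell(\delta A_{\ell-1}) + r_\ell, \qquad \|r_\ell\| \le \|\mathcal{I}_{\ell-1,\ell}\| \le \beta,
\]
with $r_1 = 0$, since there is no interface before the first layer. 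Taking norms and using $\|\mathcal{S}_\ell\| \le \alpha$ gives
\[
\|\delta A_\ell\| \le \alpha \|\delta A_{\ell-1}\| + \beta .
\]

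\textbf{Step 2: unroll.} Induction on $\ell$ gives
\[
\|\delta A_\ell\| \le \alpha^\ell \|\delta A_0\| + \beta \sum_{j=0}^{\ell-1} \alpha^j .
\]
Since $0 \le \alpha < 1$, the geometric sum is bounded by $\frac{1-\alpha^\ell}{1-\alpha} \le \frac{1}{1-\alpha}$, which yields the stated estimate.

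\textbf{Step 3: limsup.} Letting $\ell \to \infty$, the term $\alpha^\ell \|\delta A_0\|$ tends to $0$, so $\limsup_{\ell\to\infty} \|\delta A_\ell\| \le \beta/(1-\alpha)$. When $\beta = 0$, all forcing terms $r_\ell$ vanish. The recursion becomes $\|\delta A_\ell\| \le \alpha \|\delta A_{\ell-1}\|$, which gives $\|\delta A_\ell\| \le \alpha^\ell \|\delta A_0\|$. This is consistent with Theorem~\ref{thm:layerwise-stability}.

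\textbf{Main obstacle.} The hard part is Step 1, namely justifying that interface residues enter additively with norm at most $\|\mathcal{I}_{\ell-1,\ell}\|$, and that the perturbation obeys the linearized map rather than the full nonlinear one. Proposition~\ref{prop:layerwise_spectral_composition} only gives a set-theoretic inclusion for residues. I would therefore interpret $\|\mathcal{I}_{\ell,\ell+1}\|$ as a bound on the interface-generated defect in the perturbation at that layer, and state this as the working convention. For the nonlinear part, I would absorb the Taylor remainders from Theorem~\ref{thm:stability_bound} into $r_\ell$ (equivalently, into $\beta$), or else restrict to the first-order, linearized regime as in the proof of Theorem~\ref{thm:layerwise-stability}. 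Either choice keeps the recursion of Step 1 intact.
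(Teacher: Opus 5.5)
Your proposal is correct and follows the paper's proof essentially verbatim: the paper likewise asserts (rather than derives) the additive one-step recursion $\|\delta A_{\ell+1}\| \le \|\mathcal{S}_{\ell+1}\|\,\|\delta A_\ell\| + \|\mathcal{I}_{\ell,\ell+1}\|$, unrolls it by induction to $\alpha^\ell\|\delta A_0\| + \beta\sum_{j=0}^{\ell-1}\alpha^j$, and bounds the geometric sum by $1/(1-\alpha)$. The differences are minor: at the first layer you set $r_1=0$, whereas the paper absorbs a putative input-interface residue $\mathcal{I}_{0,1}$ into $\beta$; and of your two options for the nonlinear terms, only the linearized one (which the paper implicitly adopts) keeps $\beta=\max_\ell\|\mathcal{I}_{\ell,\ell+1}\|$ as in the statement.
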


\begin{proof}
We prove the bound by induction on the layer index $\ell$, carefully tracking both 
propagated initial perturbations and accumulated interface residues.

\medskip
\noindent\textbf{Base case: $\ell = 1$.}
For the first layer, the perturbation at the output is given by the linearized 
propagation of the input perturbation plus any internal residue generated within 
the first layer. However, the theorem's assumptions bound the first spectral 
derivative by $\alpha$, so
\[
\|\delta A_1\| \le \|\mathcal{S}_1\| \cdot \|\delta A_0\| + \|\mathcal{I}_{0,1}\|,
\]
where $\mathcal{I}_{0,1}$ denotes any residue at the input interface (if present). 
For simplicity, we absorb any input interface residue into the $\beta$ bound, noting 
that the theorem's hypotheses control interface residues uniformly. Thus
\[
\|\delta A_1\| \le \alpha \|\delta A_0\| + \beta.
\]

\medskip
\noindent\textbf{Inductive step.}
Assume that for some $\ell \ge 1$,
\[
\|\delta A_\ell\| \le \alpha^\ell \|\delta A_0\| + \beta \sum_{j=0}^{\ell-1} \alpha^j.
\]
We prove the bound for $\ell+1$.

The perturbation at layer $\ell+1$ receives two contributions:
\begin{enumerate}
    \item The propagated perturbation from layer $\ell$, multiplied by the 
          spectral derivative $\mathcal{S}_{\ell+1}$ of layer $\ell+1$.
    \item The interface residue $\mathcal{I}_{\ell,\ell+1}$ generated by coupling 
          layer $\ell$ and layer $\ell+1$, which enters additively.
\end{enumerate}

Thus,
\[
\|\delta A_{\ell+1}\| \le \|\mathcal{S}_{\ell+1}\| \cdot \|\delta A_\ell\| + \|\mathcal{I}_{\ell,\ell+1}\|.
\]

Using the induction hypothesis and the uniform bounds $\|\mathcal{S}_{\ell+1}\| \le \alpha$ 
and $\|\mathcal{I}_{\ell,\ell+1}\| \le \beta$:
\[
\|\delta A_{\ell+1}\| \le \alpha \left( \alpha^\ell \|\delta A_0\| + \beta \sum_{j=0}^{\ell-1} \alpha^j \right) + \beta.
\]

Simplify the first term: $\alpha \cdot \alpha^\ell \|\delta A_0\| = \alpha^{\ell+1} \|\delta A_0\|$.

For the second term:
\[
\alpha \cdot \beta \sum_{j=0}^{\ell-1} \alpha^j = \beta \sum_{j=0}^{\ell-1} \alpha^{j+1} = \beta \sum_{k=1}^{\ell} \alpha^k.
\]

Adding the interface residue $\beta$ gives:
\[
\beta \sum_{k=1}^{\ell} \alpha^k + \beta = \beta \left(1 + \sum_{k=1}^{\ell} \alpha^k \right) = \beta \sum_{k=0}^{\ell} \alpha^k.
\]

Therefore,
\[
\|\delta A_{\ell+1}\| \le \alpha^{\ell+1} \|\delta A_0\| + \beta \sum_{k=0}^{\ell} \alpha^k.
\]

This completes the induction.

\medskip
\noindent\textbf{Explicit closed form.}
The geometric series $\sum_{k=0}^{\ell-1} \alpha^k$ has the closed form 
$\frac{1 - \alpha^\ell}{1 - \alpha}$ when $\alpha \neq 1$. Hence for any $\ell \ge 1$,
\[
\|\delta A_\ell\| \le \alpha^\ell \|\delta A_0\| + \beta \cdot \frac{1 - \alpha^\ell}{1 - \alpha}
\le \alpha^\ell \|\delta A_0\| + \frac{\beta}{1 - \alpha},
\]
since $\frac{1 - \alpha^\ell}{1 - \alpha} \le \frac{1}{1 - \alpha}$ for $\alpha \in [0,1)$.

\medskip
\noindent\textbf{Asymptotic bound.}
Taking the limit superior as $\ell \to \infty$:
\[
\limsup_{\ell \to \infty} \|\delta A_\ell\| \le \lim_{\ell \to \infty} \alpha^\ell \|\delta A_0\| + \frac{\beta}{1 - \alpha} = \frac{\beta}{1 - \alpha},
\]
because $\alpha^\ell \to 0$ when $\alpha < 1$.

\medskip
\noindent\textbf{Special case $\beta = 0$.}
When all interface residues vanish, the recurrence simplifies to
\[
\|\delta A_{\ell+1}\| \le \alpha \|\delta A_\ell\|,
\]
which iterates to $\|\delta A_\ell\| \le \alpha^\ell \|\delta A_0\|$. Since $\alpha < 1$,
this yields exponential convergence to zero. The exponential rate is at least 
$-\ln \alpha$ (i.e., $\|\delta A_\ell\| = O(e^{-c\ell})$ with $c = -\ln \alpha$).

\medskip
\noindent\textbf{Sharpness of the bound.}
The bound $\|\delta A_\ell\| \le \alpha^\ell \|\delta A_0\| + \beta/(1-\alpha)$ 
is sharp in the following sense:
\begin{itemize}
    \item If $\mathcal{S}_\ell = \alpha I$ (uniform contraction in all directions) 
          and $\mathcal{I}_{\ell,\ell+1} = \beta$ (constant residue magnitude), then 
          equality is achieved asymptotically.
    \item The additive constant $\beta/(1-\alpha)$ is the unique fixed point of the 
          inequality $x \le \alpha x + \beta$, representing the minimal achievable 
          steady-state perturbation.
\end{itemize}

Thus the corollary provides a complete quantitative characterization of perturbation 
propagation in layerwise networks: exponential decay of the initial condition plus 
a bounded residue floor determined by the interface imperfections.
\end{proof}

\begin{remark}[Spectral Radius Version under Commuting Assumptions]
\label{rem:layerwise_spectral_radius_version}

If, in addition to the above, all $\mathcal{S}_\ell$ are commuting normal
operators (or are simultaneously triangularizable with compatible spectral
decomposition), then the spectral radius condition
\[
\rho(\mathcal{S}_\ell) \le \rho_\ell < 1
\]
implies
\[
\rho(\mathcal{S}_{\mathcal{N}})
\le
\prod_{\ell=1}^L
\rho_\ell.
\]
In this case, the exponential decay statement holds with $\rho_\ell$ in
place of $c_\ell$, and the interface residue condition remains as above.

\end{remark}

\begin{remark}[Recursive Spectral Computation]
\label{rem:recursive_spectral_computation}

Under the layerwise decomposition, the global spectral output can be
computed recursively without assuming contraction:
\[
A_0 = A_{\mathrm{in}},
\qquad
A_{\ell+1} = F_{\ell+1}(A_\ell),
\qquad
F_{\mathcal{N}}(A_{\mathrm{in}}) = A_L.
\]
This recursive formula follows directly from the factorization
$F_{\mathcal{N}} = F_L \circ \cdots \circ F_1$ and holds regardless of
stability.

\end{remark}

\begin{remark}[Separation of Instability Mechanisms]
\label{rem:layerwise-stability-mechanisms}

Theorem~\ref{thm:layerwise-stability} provides a compositional criterion
for stability in deep operadic networks. Rather than analyzing the entire
network as a monolithic object, one may verify stability locally at each
layer together with compatibility conditions governing spectral transport
and residue accumulation across interfaces.

Conceptually, the theorem separates instability into three distinct
mechanisms. First, intrinsic instability may arise within a single layer
when its local spectral derivative fails to be contractive, for instance
when
\[
\|\partial^{\mathrm{spec}}F_\ell\| \ge 1.
\]
Second, amplification may occur during spectral propagation across layers:
even if individual layers are controlled, the composition of layerwise
spectral derivatives may amplify perturbations when the corresponding
interlayer operator norm is not contractive. Third, interface residues
\[
\mathcal{I}_{\ell,\ell+1}
\]
may accumulate across depth, generating new spectral content that can
destabilize the network even when the derivative propagation is
contractive.

This decomposition is particularly useful for deep neural architectures,
multistage signal-processing systems, hierarchical control networks, and
compositional operator systems, where global analysis is often intractable
without layerwise reduction principles.

\end{remark}

\begin{remark}[Comparison with Classical Layered Systems]
\label{rem:classical-layered-comparison}

In classical linear systems theory, a cascade of stable subsystems remains
stable under well-posed interconnection and compatible norm assumptions.
Theorem~\ref{thm:layerwise-stability} extends this principle in three
directions. First, for nonlinear operadic layers, stability is governed
by the local spectral derivatives rather than by the propagation maps
alone. Second, interlayer coupling may generate interface residues
\[
\mathcal{I}_{\ell,\ell+1},
\]
so stability depends not only on layerwise contraction but also on
controlled residue accumulation. Third, compositional sensitivity is
captured by the norm estimate
\[
\|\partial^{\mathrm{spec}}F_{\mathcal{N}}\|
\le
\prod_{\ell=1}^{L}
\|\partial^{\mathrm{spec}}F_\ell\|,
\]
which quantifies how perturbation amplification or attenuation propagates
through depth.

\end{remark}

\begin{example}[Two-Layer System with Contractive Layers]
\label{ex:two-layer-stable}

Let
\[
\mathcal{N}
=
\mathcal{N}_2 \circ \mathcal{N}_1
\]
and suppose that the layerwise first spectral derivatives satisfy
\[
\|\partial^{\mathrm{spec}}F_1\| \le 0.5,
\qquad
\|\partial^{\mathrm{spec}}F_2\| \le 0.6.
\]
Then
\[
\|\partial^{\mathrm{spec}}F_2
\circ
\partial^{\mathrm{spec}}F_1\|
\le
0.6 \cdot 0.5
=
0.3 < 1.
\]
Thus the derivative propagation is contractive. If both layers have
bounded internal residues, for example
\[
\|\Sigma^{\mathrm{res}}(\mathcal{N}_\ell)\| \le 0.1,
\]
and no interlayer interface residue is generated,
\[
\mathcal{I}_{1,2} = \emptyset,
\]
then Theorem~\ref{thm:layerwise-stability} guarantees first-order global
spectral stability with contraction factor at most $0.3$.

\end{example}

\begin{corollary}[Layerwise Contractive Stability with Residue Saturation]
\label{cor:exponential-layerwise-stability}

Suppose that for each layer $\ell = 1, \dots, L$, the first spectral
derivative satisfies
\[
\|\partial^{\mathrm{spec}} F_\ell(A_{\ell-1})\|
\le
\alpha
<
1,
\]
and the interface residues satisfy
\[
\|\mathcal{I}_{\ell,\ell+1}\|
\le
\beta,
\]
where $\beta$ is independent of the depth $L$.

Then the global output perturbation satisfies
\[
\|\delta A_{\mathrm{out}}\|
\le
\alpha^L
\|\delta A_{\mathrm{in}}\|
+
\frac{\beta}{1-\alpha}.
\]

Consequently,
\[
\limsup_{L \to \infty}
\|\delta A_{\mathrm{out}}\|
\le
\frac{\beta}{1-\alpha}.
\]
Thus the propagated perturbation decays exponentially with depth, while
the accumulated interface residue remains uniformly bounded.

\end{corollary}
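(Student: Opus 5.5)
The plan is to reduce this corollary to the one-step perturbation recursion already used in Corollary~\ref{cor:exponential-convergence-layerwise}, now run over exactly $L$ layers. First, set up the layerwise states $A_0 = A_{\mathrm{in}}$ and $A_\ell = F_\ell(A_{\ell-1})$, as in Proposition~\ref{prop:layerwise_spectral_composition}. Let $\delta A_\ell$ be the perturbation after $\ell$ layers, so that $\delta A_0 = \delta A_{\mathrm{in}}$ and $\delta A_L = \delta A_{\mathrm{out}}$.

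The key step is the one-layer inequality
\[
\|\delta A_{\ell+1}\| \le \|\partial^{\mathrm{spec}}F_{\ell+1}(A_\ell)\|\,\|\delta A_\ell\| + \|\mathcal{I}_{\ell,\ell+1}\| \le \alpha\|\delta A_\ell\| + \beta .
\]
Here the first term is the linearized transport through layer $\ell+1$, via the chain rule of Proposition~\ref{prop:layerwise_spectral_composition} and the first-order part of Theorem~\ref{thm:stability_bound}. The second term is the additive interface contribution. This is exactly the recursion justified in the inductive step of Corollary~\ref{cor:exponential-convergence-layerwise}, and I will invoke it in the same sense. The only remaining issue is the first layer: there is no interface $\mathcal{I}_{0,1}$, so I take $\|\delta A_1\| \le \alpha\|\delta A_0\|$. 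Alternatively, I can absorb a possible input residue into $\beta$, as that corollary does; either choice only helps the bound.

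Next, unroll the recursion by induction on $\ell$ to obtain
\[
\|\delta A_\ell\| \le \alpha^\ell\|\delta A_0\| + \beta\sum_{k=0}^{\ell-1}\alpha^k .
\]
Since $\alpha<1$, the geometric sum equals $(1-\alpha^\ell)/(1-\alpha) \le 1/(1-\alpha)$. Setting $\ell = L$ gives the displayed bound. The key point is that $\beta$ is independent of $L$, so the additive constant $\beta/(1-\alpha)$ is uniform in depth. Taking $\limsup_{L\to\infty}$ and using $\alpha^L\to 0$ (with $\|\delta A_{\mathrm{in}}\|$ fixed) yields the saturation statement.

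The main obstacle is justifying the additive one-step inequality. The layerwise chain rule is purely first-order, and the residues $\mathcal{I}_{\ell,\ell+1}$ are a priori spectral-support objects rather than vectors. To get around this, I will interpret $\|\mathcal{I}_{\ell,\ell+1}\|$ as the norm of the interface-generated increment in the spectral state space, exactly as in Corollary~\ref{cor:exponential-convergence-layerwise}. I will also work in the linearized, small-perturbation regime of Theorem~\ref{thm:layerwise-stability}. Under those conventions the corollary is a direct specialization of Corollary~\ref{cor:exponential-convergence-layerwise} with $\ell = L$. I will state it that way rather than re-deriving the nonlinear remainder control.
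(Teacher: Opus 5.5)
Your proposal is correct and is essentially the paper's argument: the paper bounds $\delta A_{\mathrm{out}}$ by the input perturbation transported through all $L$ layers ($\le \alpha^L\|\delta A_{\mathrm{in}}\|$) plus each interface residue $\mathcal{I}_{k,k+1}$ transported through the remaining layers ($\le \alpha^{L-k}\beta$), and then sums the geometric series, which is exactly the sum you get by unrolling your recursion $\|\delta A_{\ell+1}\|\le\alpha\|\delta A_\ell\|+\beta$. The differences are cosmetic and do not affect the final bound $\beta/(1-\alpha)$: your placement of each residue saves one factor of $\alpha$ relative to the paper's count, and your sum $\beta\sum_{k=0}^{\ell-1}\alpha^k$ includes a first-layer term you do not need. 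Like the paper's proof, you rely on the additive-superposition convention for residues without further justification, and you correctly flag this yourself.
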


\begin{proof}

Let $\delta A_0 = \delta A_{\mathrm{in}}$. The propagated contribution
through all $L$ layers satisfies
\[
\|\delta A_L^{(\mathrm{prop})}\|
\le
\prod_{\ell=1}^L
\|\partial^{\mathrm{spec}} F_\ell(A_{\ell-1})\|
\,
\|\delta A_{\mathrm{in}}\|
\le
\alpha^L
\|\delta A_{\mathrm{in}}\|.
\]

Now consider the interface residues. A residue generated at the interface
between layers $k$ and $k+1$ propagates through the remaining $L-k$ layers,
giving the estimate
\[
\|\delta A_L^{(\mathrm{res},k)}\|
\le
\alpha^{L-k} \beta.
\]

Summing over all interfaces yields
\[
\sum_{k=1}^{L-1}
\|\delta A_L^{(\mathrm{res},k)}\|
\le
\beta
\sum_{k=1}^{L-1}
\alpha^{L-k}.
\]
Reindexing with $j = L-k$ gives
\[
\beta
\sum_{j=1}^{L-1}
\alpha^{j}
\le
\beta
\sum_{j=0}^{\infty}
\alpha^{j}
=
\frac{\beta}{1-\alpha}.
\]

Therefore
\[
\|\delta A_{\mathrm{out}}\|
\le
\alpha^L
\|\delta A_{\mathrm{in}}\|
+
\frac{\beta}{1-\alpha}.
\]

Finally, since $\alpha < 1$, $\alpha^L \to 0$ as $L \to \infty$, which
implies
\[
\limsup_{L \to \infty}
\|\delta A_{\mathrm{out}}\|
\le
\frac{\beta}{1-\alpha}.
\]

\end{proof}

\begin{remark}[Interpretation of the Bound]
\label{rem:exponential-stability-interpretation}

The bound $\|\delta A_{\text{out}}\| \le \alpha^L \|\delta A_{\text{in}}\|
+ \beta/(1-\alpha)$ admits a clear physical interpretation:

\begin{itemize}
    \item The first term $\alpha^L \|\delta A_{\text{in}}\|$ represents the
          \textbf{diminishing memory of the initial condition}. As depth
          $L$ increases, the influence of the input perturbation decays
          exponentially because each layer contracts by factor $\alpha < 1$.

    \item The second term $\beta/(1-\alpha)$ represents the
          \textbf{steady-state sensitivity to interface imperfections}.
          Even with zero input perturbation ($\delta A_{\text{in}} = 0$),
          the output perturbation cannot be driven below this threshold
          because each interface inevitably generates residue of magnitude
          at most $\beta$, and these residues propagate forward.

    \item The ratio $\beta/(1-\alpha)$ is analogous to the
          \textbf{steady-state error} in a feedback system with loop gain
          $\alpha$ and disturbance $\beta$. When $\alpha$ is small (strong
          contraction), the bound is approximately $\beta$; when $\alpha$
          approaches $1$ (weak contraction), the bound blows up, indicating
          that residue accumulation becomes uncontrollable.
\end{itemize}

Thus, the system converges to a bounded residual floor rather than to zero;
this is a form of input-to-state stability (ISS) for layered networks.

\end{remark}

\begin{example}[Numerical Illustration]
\label{ex:exponential-stability-numerical}

Let $\alpha = 0.5$ and $\beta = 0.1$. For a network with $L = 10$ layers and
initial perturbation $\|\delta A_{\mathrm{in}}\| = 1$, Corollary~\ref{cor:exponential-layerwise-stability} gives
\[
\|\delta A_{\mathrm{out}}\|
\le
(0.5)^{10} \cdot 1 + \frac{0.1}{1-0.5}
=
\frac{1}{1024} + 0.2
\approx
0.20098.
\]
The asymptotic upper bound is $0.2$. As $L$ increases, the upper bound
approaches $0.2$ from above. If no interface residue is generated
($\beta = 0$), then the bound decays exponentially to zero. Thus interface
residues create a nonzero stability floor even when the layerwise
propagation is contractive.

\end{example}

\begin{remark}[Practical Implications for Deep Architectures]
\label{rem:deep-architecture-stability}

Theorem~\ref{thm:layerwise-stability} has several practical implications.
For deep neural networks, a sufficient condition for stability is that
the layerwise spectral derivative norms, analogous to local Lipschitz
constants, remain uniformly below one and that activation nonlinearities
do not generate unbounded interface residues. For multistage control
systems, local controllers may be designed modularly, while global
stability follows when interstage amplification is controlled and
actuator–sensor interface residues remain bounded. For hierarchical
signal-processing systems, filter banks remain stable when each stage
satisfies an appropriate small-gain condition and quantization or aliasing
residues are uniformly controlled.

Thus, the Layerwise Stability Theorem provides a modular, recursive
framework for verifying stability in compositional operator networks.

\end{remark}

\subsection{Practical Significance for Hierarchical Systems}
\label{subsec:practical-significance-hierarchical}

The Layerwise Stability Theorem (Theorem~\ref{thm:layerwise-stability}) provides a fundamental reduction principle for the analysis of deep or multilayered operadic networks. Rather than studying the entire network as a single monolithic system, the theorem reduces global stability analysis to the study of its constituent layers together with their interface interactions.

This reduction has several important consequences.

\paragraph{Modular verification.}
The theorem enables a modular approach to stability analysis. Each layer may be designed, analyzed, and verified independently before being composed into a larger architecture. This is particularly valuable in large-scale systems where direct global analysis is computationally or conceptually intractable.

Concretely, Theorem~\ref{thm:layerwise-stability} provides sufficient conditions expressed entirely in terms of layerwise data: the layerwise spectral derivative norms $\|\partial^{\mathrm{spec}}F_\ell\|$, their compositional products, and the interface residue bounds $\|\mathcal{I}_{\ell,\ell+1}\|$. Once each layer is certified to satisfy:
\[
\|\partial^{\mathrm{spec}}F_\ell\| \le \alpha_\ell < \infty, \qquad
\|\Sigma^{\mathrm{res}}(\mathcal{N}_\ell)\| \le \rho_\ell,
\]
and the interfaces are verified to produce bounded residues $\|\mathcal{I}_{\ell,\ell+1}\| \le \beta_\ell$, the global stability guarantee follows without re-analyzing the entire network. In engineering contexts where different teams develop different layers independently, this modularity is indispensable.

\paragraph{Recursive spectral computation.}
The theorem also provides a recursive computational mechanism for determining global spectral behavior. Starting from the input layer, spectral data propagate sequentially through the hierarchy:
\[
\mathcal{N}_1 \rightarrow \mathcal{N}_2 \rightarrow \cdots \rightarrow \mathcal{N}_L.
\]

At each stage, the Spectral Propagation Theorem (Theorem~\ref{thm:spectral-propagation}) may be applied to update the propagated spectra, derivative contributions, and interface residues. Defining $F_\ell := \mathcal{E}_{\mathcal{N}_\ell}$, we have:
\[
A_0 = A_{\text{in}}, \qquad
A_{\ell+1} = F_{\ell+1}(A_\ell), \qquad
F_{\mathcal{N}}(A_{\text{in}}) = A_L.
\]

This recursive viewpoint replaces a potentially large global spectral analysis problem by a sequence of smaller layerwise computations. When individual layers are substantially smaller than the full network, this may yield significant computational savings and improved numerical tractability.

\paragraph{Hierarchical stabilization effect.}
A particularly important—and initially counterintuitive—implication is that hierarchical organization itself may improve stability. Even when individual nodes or subsystems exhibit unstable spectral behavior (e.g., $\|\partial^{\mathrm{spec}}F\| = 2 > 1$), suitable layerwise organization can suppress instability through controlled propagation and residue management.

Consider a flat network where a node has $\|\partial^{\mathrm{spec}}F\| = 2 > 1$, indicating local instability in the sense of norm expansion. In a flat architecture, this instability propagates directly to the output. However, if the same node is embedded as a layer with carefully chosen interlayer propagation operators that contract the spectral derivative (e.g., by inserting a contracting map $T$ with $\|T\| < 0.5$ before and after the unstable node), the effective amplification becomes $\|T\|^2 \cdot \|\partial^{\mathrm{spec}}F\| < 1$, restoring contractive first-order stability. This principle is exploited in:
\begin{itemize}
    \item \textbf{Deep residual networks}: Skip connections provide contractive paths that bypass unstable layers.
    \item \textbf{Multistage amplifiers}: Interstage attenuators prevent oscillation even when individual gain stages are unstable.
    \item \textbf{Hierarchical control}: Outer-loop controllers stabilize inner loops that would otherwise diverge.
\end{itemize}

Thus, stability becomes an emergent property of the architecture rather than merely a property of isolated components. The theorem therefore provides a principled explanation for why layered architectures often exhibit improved robustness and scalability.

Conversely, a flat network containing the same collection of nodes may fail to remain stable because all interactions occur simultaneously without hierarchical damping or staged propagation control. The theorem therefore helps explain why layered architectures frequently outperform non-hierarchical systems in many engineering domains.

\paragraph{Residue management as a design principle.}
The theorem identifies interface residues $\mathcal{I}_{\ell,\ell+1}$ as a distinct source of potential instability—a factor often overlooked in classical layered analysis, which typically assumes perfect interfaces ($\mathcal{I}_{\ell,\ell+1} = 0$). Theorem~\ref{thm:layerwise-stability} shows that even when all layers are contractive ($\|\partial^{\mathrm{spec}}F_\ell\| < 1$) and interlayer propagation is controlled (i.e., $\|\partial^{\mathrm{spec}}F_{\ell+1}\| \cdot \|\partial^{\mathrm{spec}}F_\ell\| < 1$), unbounded interface residues can still cause divergence via the asymptotic bound $\beta/(1-\alpha)$.

This explains phenomena such as:
\begin{itemize}
    \item \textbf{Quantization noise accumulation} in digital signal processing chains,
    \item \textbf{Discretization errors} in multiscale simulations,
    \item \textbf{Actuator saturation effects} in hierarchical control,
    \item \textbf{Activation function mismatches} in neural networks with different nonlinearities across layers,
    \item \textbf{Cross-talk} between layers in quantum circuits when qubits are imperfectly isolated.
\end{itemize}

The residue bound $\beta/(1-\alpha)$ provides a quantitative design target: to achieve a given stability margin, one must control both layer contraction $\alpha$ and interface residue magnitude $\beta$.

\paragraph{Depth-dependent stability trade-offs.}
The asymptotic bound $\beta/(1-\alpha)$ from Corollary~\ref{cor:exponential-layerwise-stability} reveals a fundamental trade-off: increasing depth $L$ does not indefinitely degrade stability—the output perturbation saturates at $\beta/(1-\alpha)$ rather than growing linearly with $L$. However, the transient contribution decays geometrically at rate $\alpha^L$. This has practical implications:
\begin{itemize}
    \item For $\alpha$ close to $1$ (weak contraction), the decay is slow, and deep networks may require many layers to reach the asymptotic regime.
    \item For $\alpha$ very small (strong contraction), the output perturbation approaches $\beta$ rapidly, but strong contraction may also suppress useful signal propagation (the "vanishing gradient" problem in deep learning).
    \item The optimal $\alpha$ balances stability (small $\alpha$ reduces the transient $\alpha^L$) with signal propagation (larger $\alpha$ preserves input variations).
\end{itemize}

\paragraph{Applications.}
The Layerwise Stability Theorem applies naturally to a broad class of
hierarchical systems, including deep feedforward neural networks,
multiscale signal-processing pipelines, hierarchical control systems,
layered quantum circuits, and distributed computational architectures.

In deep feedforward neural networks, each layer acts as a nonlinear
operator. In ordinary feature space, the first spectral derivative
\[
\partial^{\mathrm{spec}}F_\ell
\]
plays the role of the layer Jacobian, and its operator norm is analogous
to a local Lipschitz constant. Interface residues may arise from changes
of activation type, normalization-induced effects, or incompatibilities
between adjacent feature representations.

In multiscale signal-processing pipelines, each stage may involve
filtering, downsampling, upsampling, or nonlinear thresholding. The
layerwise contraction condition corresponds to a small-gain type bound,
while interface residues capture artifacts such as aliasing,
quantization error, or reconstruction mismatch.

In hierarchical control systems with nested feedback loops, each layer
may represent a controller operating at a different time scale. The
layerwise decomposition separates local feedback behavior from
interlayer actuator--sensor coupling, while the interface residues model
unmodeled dynamics or interconnection mismatch.

In layered quantum circuits and compositional quantum processes, each
layer may consist of a tensor product of gates acting on different
subsystems. Spectral derivatives describe infinitesimal perturbations of
the induced propagation, while residues capture cross-talk or
non-ideal coupling between nominally separated layers.

In distributed computational architectures, each stage may involve a
different communication protocol, numerical representation, or
computational substrate. The theorem then quantifies how local stability
and interface compatibility combine to determine global robustness.

In each setting, the theorem explains how local spectral behavior
combines with interface effects to produce globally stable or unstable
dynamics, and it provides quantitative guidance for modular design.

\begin{corollary}[Modular Certification Bound]
\label{cor:modular-certification-hierarchical}

Under the hypotheses of Theorem~\ref{thm:layerwise-stability}, suppose
that each layer satisfies
\[
\|\partial^{\mathrm{spec}}F_\ell(A_{\ell-1})\|
\le
\alpha < 1,
\qquad
1 \le \ell \le L,
\]
and that no interlayer interface residue is generated:
\[
\mathcal{I}_{\ell,\ell+1} = 0,
\qquad
1 \le \ell \le L-1.
\]
Then the global network satisfies
\[
\|\delta A_{\mathrm{out}}\|
\le
\alpha^L
\|\delta A_{\mathrm{in}}\|.
\]
Hence the network is first-order exponentially contractive with respect
to depth.

\end{corollary}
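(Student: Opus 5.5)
The plan is to obtain the bound as the $\beta = 0$ specialization of the machinery already established. There are two routes, and I would present the direct one with the other as a cross-check.

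\emph{Direct route.} First invoke the layerwise chain rule of Proposition~\ref{prop:layerwise_spectral_composition}, evaluated at the intermediate states $A_\ell = F_\ell(A_{\ell-1})$, to write
\[
\mathcal{S}_{\mathcal{N}} = \partial^{\mathrm{spec}}F_{\mathcal{N}}(A_0) = \mathcal{S}_L \circ \cdots \circ \mathcal{S}_1,
\qquad \mathcal{S}_\ell = \partial^{\mathrm{spec}}F_\ell(A_{\ell-1}).
\]
Next, use submultiplicativity of the operator norm together with the uniform hypothesis $\|\mathcal{S}_\ell\| \le \alpha$. This gives $\|\mathcal{S}_{\mathcal{N}}\| \le \alpha^L$, exactly as in Theorem~\ref{thm:layerwise-stability} with $c_\ell = \alpha$. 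The first-order output perturbation is $\delta A_{\mathrm{out}} = \mathcal{S}_{\mathcal{N}}(\delta A_{\mathrm{in}})$, which yields $\|\delta A_{\mathrm{out}}\| \le \alpha^L \|\delta A_{\mathrm{in}}\|$.

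\emph{Cross-check.} Alternatively, I would note that Corollary~\ref{cor:exponential-layerwise-stability} applies with $\beta = 0$. Its per-interface terms $\alpha^{L-k}\beta$ all vanish, so its bound collapses to the same estimate. This is equivalent to the special case $\beta = 0$ of Corollary~\ref{cor:exponential-convergence-layerwise}, where the recursion $\|\delta A_{\ell+1}\| \le \alpha \|\delta A_\ell\|$ iterates directly.

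\emph{Main obstacle.} The one point requiring care is interpretive: making precise that ``$\delta A_{\mathrm{out}}$'' means the linearized, first-order perturbation. The statement only claims first-order contractivity, and nonlinear remainders are governed separately by Theorem~\ref{thm:stability_bound}. I would state this explicitly. I would also note that $\mathcal{I}_{\ell,\ell+1} = 0$ guarantees that no additive residue enters the recursion, so the residue hypothesis of Theorem~\ref{thm:layerwise-stability} holds trivially. Finally, since $\alpha < 1$, the factor $\alpha^L \to 0$ as $L \to \infty$, which gives exponential contractivity in depth with rate $-\ln\alpha$.
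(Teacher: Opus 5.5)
Your direct route is essentially the paper's proof. It factors $\partial^{\mathrm{spec}}F_{\mathcal{N}}(A_0)$ via the layerwise chain rule of Proposition~\ref{prop:layerwise_spectral_composition}, reads $\delta A_{\mathrm{out}}$ as the linearized image of $\delta A_{\mathrm{in}}$, applies submultiplicativity with $\|\mathcal{S}_\ell\|\le\alpha$, and notes that vanishing $\mathcal{I}_{\ell,\ell+1}$ means no forcing term; your $\beta=0$ cross-check through the two earlier corollaries is consistent but not needed.
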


\begin{proof}

Let
\[
\mathcal{S}_\ell
:=
\partial^{\mathrm{spec}}F_\ell(A_{\ell-1})
\]
denote the first spectral derivative of layer $\ell$ evaluated at the
appropriate intermediate state. By the layerwise chain rule
(Proposition~\ref{prop:layerwise_spectral_composition}),
\[
\partial^{\mathrm{spec}}F_{\mathcal{N}}(A_0)
=
\mathcal{S}_L
\circ
\mathcal{S}_{L-1}
\circ
\cdots
\circ
\mathcal{S}_1.
\]
Therefore, for an input perturbation $\delta A_{\mathrm{in}}$, the
linearized output perturbation is
\[
\delta A_{\mathrm{out}}
=
\partial^{\mathrm{spec}}F_{\mathcal{N}}(A_0)(\delta A_{\mathrm{in}}).
\]
Taking norms and using submultiplicativity gives
\[
\|\delta A_{\mathrm{out}}\|
\le
\prod_{\ell=1}^{L}
\|\mathcal{S}_\ell\|
\,
\|\delta A_{\mathrm{in}}\|.
\]
Since $\|\mathcal{S}_\ell\| \le \alpha < 1$ for every layer, we obtain
\[
\|\delta A_{\mathrm{out}}\|
\le
\alpha^L
\|\delta A_{\mathrm{in}}\|.
\]
Because all interlayer residues vanish, no additional forcing term
appears. This proves the claim.

\end{proof}

\begin{remark}[Comparison with Flat Network Analysis]
\label{rem:flat-vs-hierarchical-final}

For a flat network viewed as a single layer, stability must be checked
directly from the global derivative
\[
\partial^{\mathrm{spec}}F_{\mathcal{N}}.
\]
In contrast, the layerwise formulation replaces this monolithic analysis
with bounds on smaller layerwise derivatives together with interface
conditions. This is analogous to estimating the norm of a large
composition by bounding the norms of its factors:
\[
\|\partial^{\mathrm{spec}}F_{\mathcal{N}}(A_0)\|
\le
\prod_{\ell=1}^{L}
\|\partial^{\mathrm{spec}}F_\ell(A_{\ell-1})\|.
\]
Thus the Layerwise Stability Theorem transforms the analysis of
hierarchical operadic networks from a single global spectral problem into
a modular, recursive, and computationally tractable verification
framework.

\end{remark}

\section{Functorial Robustness}\label{sec:functorial_robustness}

A fundamental requirement for any practical theory of network stability is that its conclusions should not depend on arbitrary choices of representation. If a network is stable when described in one coordinate system, it should remain stable when described in another. Similarly, stability should be preserved under natural transformations such as discretization, quantization, or Fourier transform. This section establishes this requirement: spectral propagation laws are covariant under admissible base change functors.

\subsection{Base Change and Representation Change}
\label{subsec:base-change-representation}

A central principle in operadic spectral theory is that spectral behavior should remain compatible with changes of representation. Such changes are naturally modeled by admissible strong monoidal functors between symmetric monoidal categories.

Let
\[
\Phi : \mathcal{M} \longrightarrow \mathcal{N}
\]
be a strong monoidal functor between symmetric monoidal categories. 
Typical examples include:
\begin{itemize}
    \item basis transformations and coordinate changes within a fixed category,
    \item discretization procedures transforming continuous systems into discrete operator models,
    \item quantization functors mapping classical systems to quantum systems,
    \item Fourier-type transforms passing from time-domain to frequency-domain representations,
    \item tensorization procedures extending systems to larger tensor-product spaces,
    \item complexification functors from real vector spaces to complex vector spaces,
    \item Gelfand transforms identifying commutative $C^*$-algebras with algebras of continuous functions.
\end{itemize}

Because $\Phi$ is strong monoidal, it preserves tensor-product structures up to coherent natural isomorphism:
\[
\Phi(X \otimes Y) \cong \Phi(X) \otimes \Phi(Y), \qquad
\Phi(\mathbf{1}_{\mathcal{M}}) \cong \mathbf{1}_{\mathcal{N}}.
\]
Consequently, operadic compositions are transported coherently through $\Phi$.

Let $P$ be an operad in $\mathcal{M}$ and let
\[
A \in \mathrm{Alg}_P(\mathcal{M})
\]
be a $P$-algebra. 
Applying $\Phi$ yields a transformed operadic structure
\[
\Phi(P)
\]
together with an induced algebra
\[
\Phi(A) \in \mathrm{Alg}_{\Phi(P)}(\mathcal{N}).
\]
Thus, operadic systems admit functorial transport across representation changes.

The associated operadic spectrum transforms accordingly:
\[
\sigma_P(A) \longmapsto \sigma_{\Phi(P)}(\Phi(A)).
\]
This establishes that spectral data are not tied to a particular realization of the system but instead behave functorially under admissible representation changes.

Conceptually, the functor $\Phi$ acts as a representation bridge:
\[
(\mathcal{M}, P, A) \longrightarrow (\mathcal{N}, \Phi(P), \Phi(A)),
\]
allowing spectral information to be transferred between different mathematical, physical, or computational frameworks while preserving operadic compositional structure.

\paragraph{Admissibility conditions.}
For $\Phi$ to preserve spectral propagation and stability conclusions, it is not enough for $\Phi$ to be strong monoidal. We assume that $\Phi$ is an \emph{admissible base-change functor} (see Definition~\ref{def:admissible-base-change} in Section~\ref{subsec:thm-propagation}), meaning that it preserves the spectral analytic structure, transports operadic compositions coherently, and preserves the relevant spectral radii or operator-norm stability bounds. These assumptions hold for unitary changes of representation and monoidal equivalences such as the Fourier transform, but may fail or hold only approximately for discretization, quantization, or model reduction.

\paragraph{Transport of operadic structures.}
The induced algebra $\Phi(A)$ has structure maps given by:
\[
\Phi(P)(n) \otimes_{\mathcal{N}} \Phi(A)^{\otimes n} \cong \Phi\bigl(P(n) \otimes_{\mathcal{M}} A^{\otimes n}\bigr) \xrightarrow{\Phi(\gamma)} \Phi(A),
\]
where $\gamma: P(n) \otimes A^{\otimes n} \to A$ is the original algebra structure map. This coherence guarantees that $\Phi$ is a functor between categories of operadic algebras:
\[
\Phi: \mathrm{Alg}_P(\mathcal{M}) \longrightarrow \mathrm{Alg}_{\Phi(P)}(\mathcal{N}).
\]

\begin{definition}[Transport of Operadic Spectrum]
\label{def:transport-spectrum}
Under an admissible base change functor $\Phi: \mathcal{M} \to \mathcal{N}$, the operadic spectrum transforms via the \emph{transport map}:
\[
\Phi_*: \operatorname{Spec}_{\mathcal{M}}(A) \longrightarrow \operatorname{Spec}_{\mathcal{N}}(\Phi(A)),
\]
defined by the canonical isomorphism from the Base Change Theorem (SOC I, Theorem 8):
\[
\sigma_{\Phi(P)}(\Phi(A)) \cong \Phi(\sigma_P(A)).
\]
Here $\Phi_*$ denotes the induced map on spectral objects. In particular, spectral data (eigenvalues, spectral bands, residues) are mapped functorially, preserving all algebraic and analytic relations.
\end{definition}

\begin{proposition}[Covariance of Spectral Propagation]
\label{prop:covariance-propagation}
Let $\Phi: \mathcal{M} \to \mathcal{N}$ be an admissible base change functor, and let $\mathcal{N}_{\text{net}}$ be an admissible operadic operator network in $\mathcal{M}$. Then:
\begin{enumerate}
    \item The image $\Phi(\mathcal{N}_{\text{net}})$ is an admissible operadic operator network in $\mathcal{N}$.
    \item The network evaluation map commutes with $\Phi$:
    \[
    \mathcal{E}_{\Phi(\mathcal{N}_{\text{net}})} \circ \Phi = \Phi \circ \mathcal{E}_{\mathcal{N}_{\text{net}}}.
    \]
    \item The spectral sensitivity operator transforms covariantly:
    \[
    \mathcal{S}_{\Phi(\mathcal{N}_{\text{net}})} \cong \Phi_*(\mathcal{S}_{\mathcal{N}_{\text{net}}}).
    \]
\end{enumerate}
\end{proposition}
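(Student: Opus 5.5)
We prove the three items in order, each reducing to transport of structure along $\Phi$ together with the admissibility conditions of Definition~\ref{def:admissible-base-change}.

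\textbf{Item 1.} We define $\Phi(\mathcal{N}_{\text{net}})$ by keeping the same graph $(V,E)$, hence the same path set $\mathcal{P}$ and cycle set $\mathcal{C}$. Node algebras become $\Phi(A_v) \in \mathrm{Alg}_{\Phi(P)}(\mathcal{N})$, couplings become $\Phi(\tau_e)$, and the assembly data uses $\Phi(\gamma)$ through the coherence isomorphisms $\Phi(X\otimes Y)\cong\Phi(X)\otimes\Phi(Y)$.

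Spectral analyticity of the nodes is condition~1 of Definition~\ref{def:admissible-base-change}. Functoriality gives $\Phi(\tau_p)=\Phi(\tau_{e_k})\circ\cdots\circ\Phi(\tau_{e_1})$, so paths and cycle operators are transported. For admissibility we must check contractivity in the sense of Definition~\ref{def:contractive-network}. Condition~3 (spectral radius invariance) gives $\rho(\partial^{\mathrm{spec}}\Phi(\tau_c))=\rho(\partial^{\mathrm{spec}}\tau_c)\le\alpha_c$, so the uniform bound $\alpha<1$ is preserved. Theorem~\ref{thm:fixed-point-contractive} then supplies unique cycle fixed points in the image network.

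\textbf{Item 2.} We follow the construction in the proof of Theorem~\ref{thm:network-evaluation}: cycle resolution, passage to a DAG, recursive assembly in topological order, and finally applying $\sigma_P$. We argue by induction along the topological order that $\Phi$ commutes with each step.

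First, $\Phi$ maps the fixed point $A_c$ to a fixed point of $\Phi(\tau_c)$, since $\Phi(\tau_c)(\Phi(A_c))=\Phi(\tau_c(A_c))=\Phi(A_c)$. By uniqueness in the image network, this is the resolved cycle algebra there; the argument mirrors Part~IV, Step~1 of that proof. Next, cocontinuity (condition~2) together with strong monoidality ensures that the assembly maps $\mathfrak{A}_j$, which are built from operadic composites and colimit-type combinations of incoming data, satisfy $\Phi(\mathfrak{A}_j(\cdots))\cong\mathfrak{A}_j^{\Phi}(\Phi(\cdots))$. Hence $\Phi(\mathcal{O}_{\mathcal{N}_{\text{net}}})\cong\mathcal{O}_{\Phi(\mathcal{N}_{\text{net}})}$. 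Finally, the Base Change Theorem (SOC~I, Theorem~8), as packaged in Definition~\ref{def:transport-spectrum}, gives $\sigma_{\Phi(P)}(\Phi(\mathcal{O}))\cong\Phi(\sigma_P(\mathcal{O}))$, which is the required commutation up to canonical isomorphism.

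\textbf{Item 3.} We differentiate the identity of item~2. Applying the operadic chain rule (SOC~II, Theorem~10) to both sides of $\mathcal{E}_{\Phi(\mathcal{N}_{\text{net}})}\circ\Phi=\Phi\circ\mathcal{E}_{\mathcal{N}_{\text{net}}}$ gives
\[
\mathcal{S}_{\Phi(\mathcal{N}_{\text{net}})}\circ\partial^{\mathrm{spec}}\Phi=\partial^{\mathrm{spec}}\Phi\circ\mathcal{S}_{\mathcal{N}_{\text{net}}}.
\]
Since $\Phi$ is an equivalence on the relevant state spaces up to the canonical isomorphisms, $\partial^{\mathrm{spec}}\Phi$ is identified with the transport map $\Phi_*$. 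This yields $\mathcal{S}_{\Phi(\mathcal{N}_{\text{net}})}\cong\Phi_*\,\mathcal{S}_{\mathcal{N}_{\text{net}}}\,\Phi_*^{-1}$, which is what we mean by $\Phi_*(\mathcal{S}_{\mathcal{N}_{\text{net}}})$.

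\textbf{Main obstacle.} The hard step is this last identification. It requires that $\Phi$ be linear, or at least differentiable, on spectral state spaces, with derivative equal to the base-change isomorphism, and that $\Phi_*$ be invertible. Neither property is explicit in Definition~\ref{def:admissible-base-change}. Our first approach is to use strong monoidality and linearity of $\Phi$ on morphism spaces, so that $\Phi$ coincides with its own derivative and $\partial^{\mathrm{spec}}\Phi=\Phi_*$ holds directly. If invertibility fails, we would state item~3 only as the intertwining relation displayed above, which already suffices for the norm and spectral-radius consequences used in Section~\ref{sec:functorial_robustness}.
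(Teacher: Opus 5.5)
Your proposal follows the paper's proof step for step. Item~1 comes from transporting node algebras and couplings under the admissibility conditions. Item~2 comes from $\Phi(\mathcal{O}_{\mathcal{N}_{\text{net}}})\cong\mathcal{O}_{\Phi(\mathcal{N}_{\text{net}})}$ followed by the Base Change Theorem (SOC~I, Theorem~8). Item~3 comes from differentiating that commuting square with the chain rule, and your contractivity check and cycle fixed-point transport add detail the paper omits.

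The obstacle you isolate, identifying $\partial^{\mathrm{spec}}\Phi$ with an invertible transport $\Phi_*$, is exactly the step the paper covers by simply asserting that a strong monoidal $\Phi$ ``preserves the differentiation structure.'' Your fallback would not rescue the later corollaries, because an intertwining relation $S'J=JS$ with non-invertible $J$ does not transfer spectral radii in general (take $J=0$).
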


\begin{proof}
The first statement follows from admissibility: applying $\Phi$ to each node algebra $A_v$ yields $\Phi(A_v)$, a spectrally analytic $\Phi(P)$-algebra; applying $\Phi$ to edge couplings $\tau_I$ yields $\Phi(\tau_I)$, preserving composition and admissibility.

For the second statement, recall that $\mathcal{E}_{\mathcal{N}_{\text{net}}}$ is defined as $\sigma_P(\mathcal{O}_{\mathcal{N}_{\text{net}}})$, where $\mathcal{O}_{\mathcal{N}_{\text{net}}}$ is the global composite operator. Since $\Phi$ is strong monoidal and preserves operadic compositions,
\[
\Phi(\mathcal{O}_{\mathcal{N}_{\text{net}}}) = \mathcal{O}_{\Phi(\mathcal{N}_{\text{net}})}(\{\Phi(A_v)\}).
\]

Applying the Base Change Theorem (SOC I, Theorem 8),
\[
\mathcal{E}_{\Phi(\mathcal{N}_{\text{net}})}(\{\Phi(A_v)\}) = \sigma_{\Phi(P)}(\Phi(\mathcal{O}_{\mathcal{N}_{\text{net}}})) \cong \Phi(\sigma_P(\mathcal{O}_{\mathcal{N}_{\text{net}}})) = \Phi(\mathcal{E}_{\mathcal{N}_{\text{net}}}(\{A_v\})).
\]

Thus $\mathcal{E}_{\Phi(\mathcal{N}_{\text{net}})} \circ \Phi = \Phi \circ \mathcal{E}_{\mathcal{N}_{\text{net}}}$.

For the third statement, differentiate the commuting diagram at a fixed point using the chain rule for spectral derivatives (SOC II, Theorem 10). Since $\Phi$ is strong monoidal, it preserves the differentiation structure, yielding
\[
\mathcal{S}_{\Phi(\mathcal{N}_{\text{net}})} = \partial^{\mathrm{spec}}\mathcal{E}_{\Phi(\mathcal{N}_{\text{net}})} \cong \Phi_*(\partial^{\mathrm{spec}}\mathcal{E}_{\mathcal{N}_{\text{net}}}) = \Phi_*(\mathcal{S}_{\mathcal{N}_{\text{net}}}).
\]
\end{proof}

\begin{corollary}[Stability Invariance under Isometric Base Change]
\label{cor:stability-invariance}

If $\Phi$ is an admissible isometric monoidal equivalence (i.e., $\Phi$ preserves norms and is essentially surjective) and
\[
\rho(\partial^{\mathrm{spec}}\mathcal{E}_{\mathcal{N}}) < 1,
\]
then the transformed network $\Phi(\mathcal{N})$ is stable and
\[
\rho(\partial^{\mathrm{spec}}\mathcal{E}_{\Phi(\mathcal{N})})
=
\rho(\partial^{\mathrm{spec}}\mathcal{E}_{\mathcal{N}}).
\]

More generally, if $\Phi$ is an admissible base change with distortion constants $c_1, c_2$ (i.e., $c_1 \rho(T) \le \rho(\Phi(T)) \le c_2 \rho(T)$), then stability of $\mathcal{N}$ implies $\rho(\partial^{\mathrm{spec}}\mathcal{E}_{\Phi(\mathcal{N})}) \le c_2 \rho(\partial^{\mathrm{spec}}\mathcal{E}_{\mathcal{N}})$. If $c_2 < 1/\rho(\partial^{\mathrm{spec}}\mathcal{E}_{\mathcal{N}})$, stability is preserved.
\end{corollary}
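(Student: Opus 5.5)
The plan is to reduce everything to Proposition~\ref{prop:covariance-propagation}, part~3, which identifies the transformed sensitivity operator with the transport of the original one:
\[
\mathcal{S}_{\Phi(\mathcal{N})}
=
\partial^{\mathrm{spec}}\mathcal{E}_{\Phi(\mathcal{N})}
\cong
\Phi_*\bigl(\partial^{\mathrm{spec}}\mathcal{E}_{\mathcal{N}}\bigr).
\]
Write $T := \partial^{\mathrm{spec}}\mathcal{E}_{\mathcal{N}}$, a bounded linear operator on the Banach spectral state space. The whole statement then becomes a comparison between $\rho(T)$ and $\rho(\Phi_*(T))$. Once that comparison is in hand, stability follows from the Linearized Spectral Stability Criterion (Proposition~\ref{prop:spectral_stability_criterion}) or, equivalently, from part~(a) of the Feedback Stability Criterion (Theorem~\ref{thm:feedback-stability}).

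\textbf{Isometric case.} Since $\Phi$ is an isometric monoidal equivalence, $\Phi_*$ acts on operators by conjugation with a norm-preserving invertible transport $U$, so $\Phi_*(T) \cong U T U^{-1}$.
\begin{enumerate}
\item Because $\Phi$ is a functor, $\Phi_*(T^k) = \Phi_*(T)^k$.
\item Isometry gives $\|\Phi_*(T)^k\| = \|T^k\|$ for every $k$.
\item Gelfand's formula $\rho(S) = \lim_k \|S^k\|^{1/k}$ then yields $\rho(\Phi_*(T)) = \rho(T)$.
\end{enumerate}
Alternatively, the similarity $U T U^{-1}$ preserves the spectrum directly. This equality is also consistent with the spectral radius invariance axiom in Definition~\ref{def:admissible-base-change}, which we may cite as a check. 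In particular $\rho(\partial^{\mathrm{spec}}\mathcal{E}_{\Phi(\mathcal{N})}) = \rho(T) < 1$, so Proposition~\ref{prop:spectral_stability_criterion}(1) gives exponential decay of the linearized perturbations. Equivalently, $r_{\mathrm{SOC}}$ is preserved.

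\textbf{Distorted case.} Apply the distortion hypothesis to the operator $T$ itself, using the isomorphism from Proposition~\ref{prop:covariance-propagation} to identify $\rho(\partial^{\mathrm{spec}}\mathcal{E}_{\Phi(\mathcal{N})})$ with $\rho(\Phi_*(T))$. This gives the upper bound $\rho(\Phi_*(T)) \le c_2\,\rho(T)$. The condition $c_2 < 1/\rho(T)$ then forces $\rho(\Phi_*(T)) < 1$, and stability follows as in the isometric case. If $\rho(T) = 0$, the conclusion is immediate with the convention $1/0 = \infty$. The lower constant $c_1$ is not needed here; it would only matter for a converse, namely transferring instability back along $\Phi$.

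\textbf{Main obstacle.} The delicate step is the justification that the isomorphism ``$\cong$'' in Proposition~\ref{prop:covariance-propagation} preserves spectral radius. A merely abstract isomorphism of spectral objects does not automatically control operator norms of iterates. I would handle this by making explicit that the isomorphism is implemented by the invertible, norm-controlled transport underlying the Base Change Theorem (SOC~I, Theorem~8). In the isometric case it is an isometry; in general its distortion is exactly what the constants $c_1, c_2$ encode. With that stated, both cases reduce to Gelfand's formula.
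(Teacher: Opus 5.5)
Your proposal is correct and follows essentially the paper's route. You reduce via Proposition~\ref{prop:covariance-propagation} to comparing $\rho(T)$ with $\rho(\Phi_*(T))$, then use exact spectral-radius preservation in the isometric case and the upper constant $c_2$ in the distorted case; your Gelfand-formula/similarity argument and your remark that the isomorphism ``$\cong$'' must be implemented by a norm-controlled transport make explicit what the paper's short proof only asserts.
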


\begin{proof}
By Proposition~\ref{prop:covariance-propagation}, $\partial^{\mathrm{spec}}\mathcal{E}_{\Phi(\mathcal{N})} \cong \Phi_*(\partial^{\mathrm{spec}}\mathcal{E}_{\mathcal{N}})$. For an isometric monoidal equivalence, $\Phi_*$ preserves spectral radii exactly, giving equality. For general admissible functors with distortion constants, the inequality follows from the definition of admissibility.
\end{proof}

\begin{remark}[Representation Independence]
\label{rem:representation-independence}
Proposition~\ref{prop:covariance-propagation} and Corollary~\ref{cor:stability-invariance} imply that spectral propagation laws and stability conclusions are \emph{coordinate-free} in the categorical sense for isometric monoidal equivalences. For more general admissible functors (e.g., discretization, quantization), stability conclusions transfer with controlled distortion or approximation errors, rather than exactly.
\end{remark}

\begin{example}[Fourier Transform of a Convolution Network]
\label{ex:fourier-covariance}
Let $\mathcal{N}$ be a network where each node is a convolution operator on $L^2(\mathbb{R})$ and edges represent compositions. Under the Fourier transform $\mathcal{F}: L^2(\mathbb{R}) \to L^2(\mathbb{R})$, each convolution operator $T_f: g \mapsto f * g$ becomes a multiplication operator $M_{\hat{f}}: \hat{g} \mapsto \hat{f} \cdot \hat{g}$. The Fourier transform is unitary, hence an isometric monoidal equivalence (convolution becomes pointwise multiplication). Therefore, operator norms and spectral radii are preserved exactly. Thus, stability analysis of the convolution network can be performed equivalently in the frequency domain, where the spectral derivative of $M_{\hat{f}}$ is multiplication by $\hat{f}$, and the SOC stability radius becomes $r_{\mathrm{SOC}}(M_{\hat{f}}) = 1 / \|\hat{f}\|_\infty$ (for $\hat{f} \in L^\infty$).
\end{example}

\begin{example}[Quantization of a Classical Network (Semiclassical Approximation)]
\label{ex:quantization-covariance}
Let $\mathcal{N}_{\text{classical}}$ be a network of classical observables (functions on phase space) with Poisson bracket as the operadic composition. The quantization functor $\mathcal{Q}$ maps observables to operators on a Hilbert space, sending the Poisson bracket to the commutator up to $i\hbar$. Under $\mathcal{Q}$, spectral derivatives become operator derivatives, and the SOC stability radius transforms as $r_{\mathrm{SOC}}(\mathcal{Q}(F)) = r_{\mathrm{SOC}}(F) + O(\hbar)$ in the semiclassical regime. In the classical limit $\hbar \to 0$, stability conclusions coincide, providing a bridge between classical and quantum control theory. However, the quantization functor is not an exact admissible base change in the sense defined above; it preserves stability only up to $O(\hbar)$ errors.
\end{example}

\begin{remark}[Practical Implications for Numerical Analysis]
\label{rem:numerical-implications}
Corollary~\ref{cor:stability-invariance} has practical consequences for numerical simulation of operadic networks:
\begin{itemize}
    \item \textbf{Discretization invariance}: A stability conclusion transfers across discretization only when the discretization functor is admissible in the stronger sense of preserving the relevant spectral-radius or norm bounds. For standard discretization schemes, one typically obtains approximate stability with error bounds that vanish as the discretization parameter tends to zero.
    \item \textbf{Basis independence}: Stability analysis performed in one orthonormal basis applies to all orthonormal bases, since the change-of-basis transformation is unitary and hence isometric.
    \item \textbf{Model order reduction}: If a reduced-order model is obtained via a transformation that is approximately isometric (e.g., balanced truncation with error bounds), stability of the reduced model implies approximate stability of the original system with controlled error.
\end{itemize}
Thus, the categorical framework guarantees that stability is not an artifact of representation for isometric transformations; for non-isometric admissible functors, stability transfers with controlled distortion or approximation errors.
\end{remark}

\paragraph{Conceptual summary.}
This perspective is especially important in applications where one passes repeatedly between equivalent realizations of a system, such as:
\begin{itemize}
    \item time-domain and frequency-domain signal analysis (via unitary Fourier transform),
    \item continuous and discretized PDE models (with error bounds),
    \item classical and quantum mechanical descriptions (semiclassical approximation),
    \item low-dimensional and tensorized high-dimensional representations (via isometric embeddings),
    \item algebraic and geometric formulations of operator systems (via Gelfand transform for commutative $C^*$-algebras).
\end{itemize}

The functorial viewpoint therefore provides a unified framework for understanding spectral invariance and spectral deformation across representation changes, with exact invariance for isometric equivalences and controlled approximation for more general transformations. This categorical foundation sets the stage for the Covariant Stability Theorem (Theorem~\ref{thm:covariant-stability}), which extends these invariance principles to the full spectral propagation laws, including the interaction residue $\Sigma^{\mathrm{res}}$ and higher-order spectral derivatives.

\subsection{Statement of the Covariant Stability Theorem}
\label{subsec:covariant-stability-theorem}

We now establish that spectral propagation laws are functorially compatible with admissible representation changes. This shows that stability phenomena are intrinsic to the operadic structure itself and do not depend on a particular realization of the system.

\begin{theorem}[Covariant Stability Theorem]
\label{thm:covariant-stability}

Let
\[
\Phi : \mathcal{M} \longrightarrow \mathcal{N}
\]
be an admissible strong monoidal functor between symmetric monoidal
categories (Definition~\ref{def:admissible-base-change}) that preserves
colimits (coproducts) and admissible interfaces. Assume that the spectral
realization functor is compatible with $\Phi$ in the sense that the Base
Change Theorem (SOC I, Theorem 8) applies.

Then the spectral propagation laws are covariant under $\Phi$ in the
following sense:

\[
\sigma_{\Phi(P)}(\Phi(A)) \;\cong\; \Phi_*\bigl(\sigma_P(A)\bigr),
\qquad
\Sigma^{\mathrm{res}}_{\Phi(P)} \;\cong\; \Phi_*\bigl(\Sigma^{\mathrm{res}}_P\bigr),
\]
where $\Phi_*$ denotes the induced transformation on spectral objects
obtained by applying $\Phi$ componentwise and using the coherence
isomorphisms of the strong monoidal functor, and $\cong$ denotes canonical
isomorphism in the appropriate category of spectral objects (see
Remark~\ref{rem:spec-category}).

Consequently, stability and robustness properties transform under
representation change as follows:

\begin{enumerate}
    \item \textbf{Stability invariance under admissible base change.}
    Suppose $\Phi$ satisfies the spectral subunitarity condition:
    \[
    \rho(T) < 1 \quad\Longrightarrow\quad \rho(\Phi_*(T)) < 1
    \]
    for every operator $T$ in $\mathcal{M}$ whose spectral radius is defined
    (e.g., bounded linear operators on Banach spaces). Then spectral
    stability is preserved: if a network $\mathcal{N}$ is stable in
    $\mathcal{M}$, then $\Phi(\mathcal{N})$ is stable in $\mathcal{N}$.

    \item \textbf{Exact covariance for isometric monoidal equivalences.}
    If $\Phi$ is an isometric monoidal equivalence (e.g., unitary
    transformation, Fourier transform), then
    \[
    \rho(\Phi_*(T)) = \rho(T)
    \]
    for all admissible $T$, and the stability condition is preserved
    exactly.

    \item \textbf{Covariance of spectral sensitivity.}
    The spectral sensitivity operator commutes with $\Phi$ up to natural
    equivalence:
    \[
    \mathcal{S}_{\Phi(\mathcal{N})} \cong \Phi_*\bigl(\mathcal{S}_{\mathcal{N}}\bigr),
    \]
    provided the sensitivity operator is defined via the first spectral
    derivative as in Definition~\ref{def:spectral_sensitivity_operator}.
\end{enumerate}

\noindent
\textbf{Interpretation.} For admissible functors that preserve the
stability regime (i.e., map contractive operators to contractive operators),
spectral propagation laws are covariant. For isometric monoidal equivalences
(e.g., unitary transformations, Fourier transform), this covariance is
exact. For more general admissible functors (e.g., discretization,
quantization), the stability regime is preserved, but spectral radii may
be distorted by bounded constants.
\end{theorem}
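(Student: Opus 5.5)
The plan is to derive everything from three earlier inputs: the Base Change Theorem (SOC I, Theorem 8), the Interface Localization Theorem (SOC III, Theorem 4), and Proposition~\ref{prop:covariance-propagation}. Stability is then read off from the Feedback Stability Criterion (Theorem~\ref{thm:feedback-stability}(a)), which reduces everything to spectral radii of linearized one-cycle operators.

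The first isomorphism, $\sigma_{\Phi(P)}(\Phi(A)) \cong \Phi_*(\sigma_P(A))$, is exactly the Base Change Theorem as packaged in Definition~\ref{def:transport-spectrum}, so nothing new is needed there.

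For the residue isomorphism I will use the interface decomposition $\Sigma^{\mathrm{res}}_P \cong \coprod_{I \in \mathcal{I}(P)} \mathcal{L}_I(P,A)$ and argue in three moves:
\begin{enumerate}
\item Since $\Phi$ preserves admissible interfaces, it induces a bijection $\mathcal{I}(P) \to \mathcal{I}(\Phi(P))$, $I \mapsto \Phi(I)$.
\item Since $\Phi$ preserves coproducts, $\Phi_*$ commutes with the disjoint union.
\item It then suffices to show $\mathcal{L}_{\Phi(I)}(\Phi(P),\Phi(A)) \cong \Phi_*\mathcal{L}_I(P,A)$ for each interface.
\end{enumerate}
For the per-interface statement I would use the characterization of the residue in the proof of Theorem~\ref{thm:spectral-propagation}: it is the part of the global support not accounted for by the local node spectra. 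The global object is $\sigma_P(\mathcal{O}_{\mathcal{N}})$, and $\Phi(\mathcal{O}_{\mathcal{N}}) = \mathcal{O}_{\Phi(\mathcal{N})}$ by Proposition~\ref{prop:covariance-propagation}(2). Base change therefore transports both the global spectrum and each node spectrum. The complement of the latter in the former is carried to the corresponding complement, and the localization at $I$ is carried to the localization at $\Phi(I)$. This complement step is where I expect the main obstacle. A canonical isomorphism need not preserve complements unless $\Phi_*$ acts injectively on supports, and the localization maps must be natural in $\Phi$. I would first try to deduce injectivity from essential faithfulness implicit in strong monoidality together with cocontinuity. Failing that, I would state it as part of the assumption that "the spectral realization functor is compatible with $\Phi$", in the same spirit as the coherence assumptions already made at the end of the proof of Theorem~\ref{thm:fixed-point-contractive}.

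For items (1)–(3) the order is reversed: prove (3) first and deduce the others from it.
\begin{itemize}
\item \textbf{Item (3).} This is Proposition~\ref{prop:covariance-propagation}(3). The chain rule (SOC II, Theorem 10) applied to $\mathcal{E}_{\Phi(\mathcal{N})}\circ\Phi = \Phi\circ\mathcal{E}_{\mathcal{N}}$ gives $\mathcal{S}_{\Phi(\mathcal{N})} \cong \Phi_*(\mathcal{S}_{\mathcal{N}})$.
\item \textbf{Linearized loop operators.} Applying the same identity to each feedback loop gives $\mathcal{M}^{\Phi}_{\mathrm{fb}} \cong \Phi_*(\mathcal{M}_{\mathrm{fb}})$. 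This uses that $\Phi$ maps fixed points to fixed points, as in Step 1 of Part IV of the proof of Theorem~\ref{thm:network-evaluation}.
\item \textbf{Item (1).} By Theorem~\ref{thm:feedback-stability}(a), stability of $\mathcal{N}$ means $\rho(\mathcal{M}_{\mathrm{fb}}) < 1$ for every loop. Spectral subunitarity then gives $\rho(\Phi_*(\mathcal{M}_{\mathrm{fb}})) < 1$, and since isomorphic operators have equal spectral radius, $\Phi(\mathcal{N})$ is stable.
\item \textbf{Item (2).} For an isometric monoidal equivalence $\Phi_*$ is implemented by an isometric similarity. Hence $\|\Phi_*(T)^k\| = \|T^k\|$ for all $k$, and Gelfand's formula gives $\rho(\Phi_*(T)) = \rho(T)$, matching the argument of Corollary~\ref{cor:stability-invariance}.
\end{itemize}
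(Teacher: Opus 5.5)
Your proposal is correct and has the same skeleton as the paper's proof. Both use the Base Change Theorem for $\sigma_P$. For the residue, both use the decomposition $\Sigma^{\mathrm{res}}_P\cong\coprod_{I\in\mathcal{I}(P)}\mathcal{L}_I(P,A)$ together with preservation of coproducts and interfaces. For items (1)--(3), both combine covariance of the first spectral derivative with subunitarity or isometry. The differences lie in three places.

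First, for the per-interface isomorphism the paper makes no complement argument. It reads $\Phi(\mathcal{L}_I(P,A))\cong\mathcal{L}_{\Phi(I)}(\Phi(P),\Phi(A))$ directly into the hypothesis that $\Phi$ preserves admissible interfaces, which is exactly your fallback. Your first idea cannot replace that hypothesis, because strong monoidality plus cocontinuity gives no faithfulness, let alone injectivity on supports. For example, extension of scalars $-\otimes_{\mathbb{Z}}\mathbb{Q}$ is strong monoidal and cocontinuous yet kills every torsion group. So images of complements need not be complements of images, and the assumption is genuinely needed.

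Second, for item (1) the paper reads ``stable'' as $\rho(\partial^{\mathrm{spec}}\mathcal{E}_{\mathcal{N}})<1$ and applies subunitarity once, to the transported sensitivity operator. You read it loop by loop through Theorem~\ref{thm:feedback-stability}(a). Your version matches Section~\ref{sec:feedback-networks} more closely, but it costs an extra fixed-point transport, which needs admissibility of $\Phi(\mathcal{N})$ (Proposition~\ref{prop:covariance-propagation}(1)) for uniqueness. You also make explicit the similarity invariance of $\rho$ that the paper uses silently. Likewise, your Gelfand-formula argument for item (2) is more explicit than the paper's one-line assertion.

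Third, your gloss on item (3), differentiating $\mathcal{E}_{\Phi(\mathcal{N})}\circ\Phi=\Phi\circ\mathcal{E}_{\mathcal{N}}$, yields only an intertwining relation. That relation determines $\mathcal{S}_{\Phi(\mathcal{N})}$ only on the range of the derivative of $\Phi$. The paper's proof of the theorem instead invokes functoriality of the cross-effect construction (SOC II) to get $\mathcal{S}_{\Phi(\mathcal{N})}\cong\Phi_*(\mathcal{S}_{\mathcal{N}})$ outright, which is what item (1) needs when $\Phi$ is not an equivalence. Since you cite Proposition~\ref{prop:covariance-propagation}(3) as stated, this does not break your argument.
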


\begin{proof}
We prove each claim in sequence, building on the Base Change Theorem
(SOC I, Theorem 8).

\paragraph{Part 1: Covariance of the operadic spectrum.}
Let $A$ be a spectrally analytic $P$-algebra in $\mathcal{M}$. By the
Base Change Theorem (SOC I, Theorem 8), there exists a canonical
isomorphism:
\[
\sigma_{\Phi(P)}(\Phi(A)) \cong \Phi(\sigma_P(A)).
\]
The transport map $\Phi_*$ is defined precisely as this isomorphism,
using the coherence isomorphisms of the strong monoidal functor $\Phi$.
Hence $\sigma_{\Phi(P)}(\Phi(A)) \cong \Phi_*\sigma_P(A)$.

\paragraph{Part 2: Covariance of the interaction residue.}
Let $\mathcal{N}_{\text{net}}$ be an admissible operadic operator network.
The interaction residue $\Sigma^{\mathrm{res}}_P$ is characterized
intrinsically by the interface-localization decomposition of the global
spectral support. By the Interface Localization Theorem (SOC III, Theorem 4),
\[
\Sigma^{\mathrm{res}}_P \cong \coprod_{I \in \mathcal{I}(P)} \mathcal{L}_I(P,A).
\]

Since $\Phi$ is admissible, strong monoidal, and preserves colimits
(by hypothesis), it preserves coproducts. Moreover, because $\Phi$
preserves admissible interfaces (by hypothesis), we have
$\mathcal{I}(\Phi(P)) \cong \Phi(\mathcal{I}(P))$ and
$\Phi(\mathcal{L}_I(P,A)) \cong \mathcal{L}_{\Phi(I)}(\Phi(P),\Phi(A))$.
Applying $\Phi$ componentwise yields:
\[
\Phi(\Sigma^{\mathrm{res}}_P)
\cong \coprod_{I \in \mathcal{I}(P)} \Phi(\mathcal{L}_I(P,A))
\cong \coprod_{I \in \mathcal{I}(\Phi(P))} \mathcal{L}_I(\Phi(P),\Phi(A))
\cong \Sigma^{\mathrm{res}}_{\Phi(P)}.
\]
Thus $\Sigma^{\mathrm{res}}_{\Phi(P)} \cong \Phi_*\Sigma^{\mathrm{res}}_P$.

\paragraph{Part 3: Stability invariance.}
If $\mathcal{N}$ is stable in $\mathcal{M}$, then by definition
$\rho(\partial^{\mathrm{spec}}\mathcal{E}_{\mathcal{N}}) < 1$, where
$\mathcal{E}_{\mathcal{N}}$ denotes the error propagation operator
associated with $\mathcal{N}$. From the Base Change Theorem (SOC I, Theorem 8)
and the functoriality of the spectral derivative construction (SOC II,
Proposition 5 and Theorem 8, which establish that $\partial^{\mathrm{spec}}$ is
functorial and forms a symmetric sequence), we have:
\[
\partial^{\mathrm{spec}}\mathcal{E}_{\Phi(\mathcal{N})}
\cong \Phi_*(\partial^{\mathrm{spec}}\mathcal{E}_{\mathcal{N}}).
\]

By the spectral subunitarity condition assumed in part 3(a),
$\rho(T) < 1$ implies $\rho(\Phi_*(T)) < 1$. Hence
\[
\rho(\partial^{\mathrm{spec}}\mathcal{E}_{\Phi(\mathcal{N})}) < 1,
\]
so $\Phi(\mathcal{N})$ is stable in $\mathcal{N}$.

For part 3(b) (isometric monoidal equivalences), the stronger property
$\rho(\Phi_*(T)) = \rho(T)$ holds because isometric functors preserve
norms and spectral radii. Hence stability is preserved exactly, and the
condition $\rho(\partial^{\mathrm{spec}}\mathcal{E}_{\mathcal{N}}) < 1$
is equivalent to $\rho(\partial^{\mathrm{spec}}\mathcal{E}_{\Phi(\mathcal{N})}) < 1$.

\paragraph{Part 4: Covariance of the spectral sensitivity operator.}
From the covariance of the first spectral derivative under admissible
base change (which follows from SOC I, Theorem 8 and the definition
of $\partial^{\mathrm{spec}}$ as the first cross-effect), we have
\[
\mathcal{S}_{\Phi(\mathcal{N})} \cong \Phi_*(\mathcal{S}_{\mathcal{N}}),
\]
where $\mathcal{S}_{\mathcal{N}} = \partial^{\mathrm{spec}}\mathcal{E}_{\mathcal{N}}$.
This completes the proof.
\end{proof}

\begin{remark}[On the SOC Stability Radius under Base Change]
\label{rem:stability_radius_base_change}

The SOC stability radius $r_{\mathrm{SOC}}(F, A) = 1 / \rho(\partial^{\mathrm{spec}} F(A))$
transforms under an admissible base change $\Phi$ according to the
distortion of the spectral radius. For an isometric monoidal equivalence,
$r_{\mathrm{SOC}}$ is invariant:
\[
r_{\mathrm{SOC}}(\Phi(F), \Phi(A)) = r_{\mathrm{SOC}}(F, A).
\]

For general admissible functors satisfying the spectral subunitarity
condition, the stability threshold ($r_{\mathrm{SOC}} > 1$) is preserved,
but the exact numerical value may be distorted. If $\Phi$ has distortion
constants $c_1, c_2$ such that $c_1 \rho(T) \le \rho(\Phi(T)) \le c_2 \rho(T)$,
then
\[
\frac{1}{c_2} r_{\mathrm{SOC}}(F, A) \le r_{\mathrm{SOC}}(\Phi(F), \Phi(A))
\le \frac{1}{c_1} r_{\mathrm{SOC}}(F, A).
\]

\end{remark}

\begin{remark}[Categorical Status of the Theorem]
\label{rem:covariant_theorem_status}

Theorem~\ref{thm:covariant-stability} synthesizes results from
Proposition~\ref{prop:covariance-propagation} and the Base Change Theorem
(SOC I, Theorem 8). It demonstrates that the operadic propagation
architecture is preserved by admissible strong monoidal functors up to
natural equivalence. The individual covariance statements (spectrum,
residue, sensitivity) follow from the constituent results; the theorem
serves to collect them into a unified covariant framework.

\end{remark}

\begin{remark}[Categorical Invariance Principle]
\label{rem:covariant-stability}
Theorem~\ref{thm:covariant-stability} establishes a categorical invariance
principle for operadic spectral theory. Different realizations of the same
compositional system—such as continuous versus discrete models,
time-domain versus frequency-domain representations, or classical versus
quantum formulations—may therefore be analyzed within a unified spectral
framework, provided the transformations are admissible in the sense of
preserving the stability regime (or are isometric equivalences for exact
invariance).

In particular, the theorem explains why stability properties are often
preserved under transforms such as Fourier transforms (exact, due to
unitarity), and approximately preserved under discretization schemes,
tensor lifts, and quantization procedures (with errors controlled by the
approximation). These transformations alter the representation of the
system while approximately preserving its underlying operadic propagation
structure.

From a physical perspective, the theorem expresses a form of
\textbf{representation covariance} analogous to coordinate invariance in
geometry or gauge covariance in physics: the observable spectral dynamics
remain structurally equivalent under admissible changes of representation,
exactly for isometric equivalences and up to controlled distortion for
more general transformations.

\end{remark}

\begin{corollary}[Invariance of Layerwise Stability under Isometric Equivalence]
\label{cor:layerwise-invariance}
Let $\mathcal{N} = \mathcal{N}_L \circ \cdots \circ \mathcal{N}_1$ be a
layerwise decomposition, and let $\Phi$ be an isometric monoidal
equivalence (i.e., $\Phi$ preserves norms and is essentially surjective).
Then:
\begin{enumerate}
    \item Local layer stability is preserved and reflected:
    \[
    \rho(\partial^{\mathrm{spec}} F_\ell(A_{\ell-1})) < 1
    \;\Longleftrightarrow\;
    \rho(\partial^{\mathrm{spec}} \Phi(F_\ell)(\Phi(A_{\ell-1}))) < 1.
    \]

    \item Interlayer amplification factors are invariant under the
    transformation: $\gamma_\ell$ remains unchanged.

    \item The residue accumulation bound $\beta/(1-\alpha)$ is invariant.
\end{enumerate}
Consequently, if the Layerwise Stability Theorem
(Theorem~\ref{thm:layerwise-stability}) certifies stability of $\mathcal{N}$
in $\mathcal{M}$, then $\Phi(\mathcal{N})$ is automatically stable in
$\mathcal{N}$.

For general admissible functors satisfying only spectral subunitarity,
stability is preserved (if $\mathcal{N}$ is stable, then $\Phi(\mathcal{N})$
is stable), but the converse may not hold.
\end{corollary}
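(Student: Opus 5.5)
The proof will transport each ingredient of the Layerwise Stability Theorem through $\Phi$, one layer at a time, using the covariance results already established. The statement is Corollary~\ref{cor:layerwise-invariance}.

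\emph{Step 1: the layerwise decomposition is preserved.} I will first show that $\Phi(\mathcal{N}) = \Phi(\mathcal{N}_L)\circ\cdots\circ\Phi(\mathcal{N}_1)$ is again a layerwise decomposition in the sense of Definition~\ref{def:layerwise_operadic_decomposition}. Each $\Phi(\mathcal{N}_\ell)$ is admissible by Proposition~\ref{prop:covariance-propagation}(1). Interface compatibility is kept because $\Phi$ is strong monoidal and preserves admissible interfaces. The factorization $\mathcal{E}_{\Phi(\mathcal{N})} = \mathcal{E}_{\Phi(\mathcal{N}_L)}\circ\cdots\circ\mathcal{E}_{\Phi(\mathcal{N}_1)}$ follows by applying Proposition~\ref{prop:covariance-propagation}(2) layer by layer. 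It follows that the transported intermediate states are $\Phi(A_\ell)$.

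\emph{Step 2: item 1 and item 2.} By Proposition~\ref{prop:covariance-propagation}(3), applied to each layer at its base point $A_{\ell-1}$, we get $\partial^{\mathrm{spec}}\Phi(F_\ell)(\Phi(A_{\ell-1})) \cong \Phi_*(\mathcal{S}_\ell)$. Theorem~\ref{thm:covariant-stability}(2) gives $\rho(\Phi_*(T))=\rho(T)$ for isometric equivalences, and this yields the equivalence in item 1. Reflection comes for free because equality is symmetric; alternatively, one can note that a quasi-inverse of a monoidal equivalence is again isometric. For item 2, I will read $\gamma_\ell$ as the interlayer operator norms $\|\mathcal{S}_{\ell+1}\circ\mathcal{S}_\ell\|$, or as products of such norms. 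Since $\Phi_*$ is isometric and compatible with composition by the chain rule (Proposition~\ref{prop:layerwise_spectral_composition}), these norms are unchanged.

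\emph{Step 3: item 3.} The constant $\alpha=\max_\ell\|\mathcal{S}_\ell\|$ is invariant by the same norm preservation. For $\beta=\max_\ell\|\mathcal{I}_{\ell,\ell+1}\|$, I use that the interlayer residues are components of $\Sigma^{\mathrm{res}}$. Theorem~\ref{thm:covariant-stability} gives $\Sigma^{\mathrm{res}}_{\Phi(P)}\cong\Phi_*\Sigma^{\mathrm{res}}_P$, and since $\Phi$ preserves coproducts and interfaces, the coproduct decomposition should identify $\Phi_*\mathcal{I}_{\ell,\ell+1}$ with the corresponding interface residue of $\Phi(\mathcal{N})$. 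Isometry then gives equal norms, so $\beta$ and hence $\beta/(1-\alpha)$ are invariant.

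\emph{Step 4: conclusion.} The hypotheses of Theorem~\ref{thm:layerwise-stability} are uniform contraction $c_\ell<1$, bounded derivatives, and summable residues. All of these are norm statements, so they hold for $\Phi(\mathcal{N})$ with the same constants, and the theorem certifies its stability. For merely subunitary $\Phi$, Theorem~\ref{thm:covariant-stability}(1) gives the forward implication only. For the failure of the converse, I would give an example in which $\Phi$ shrinks spectral radii, for instance scaling by a factor less than one, so that an unstable $T$ becomes stable.

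\emph{Main obstacle.} The hardest step is item 3, specifically identifying $\Phi_*\mathcal{I}_{\ell,\ell+1}$ with the interlayer residue of the image network. The residue covariance in Theorem~\ref{thm:covariant-stability} is stated for the whole $\Sigma^{\mathrm{res}}$, not for individual interface summands. I would first try to match summands using $\mathcal{I}(\Phi(P))\cong\Phi(\mathcal{I}(P))$, which is available from the proof of Part~2 of that theorem. I would also need to make sure that the norm on residues is one that an isometric $\Phi$ preserves. If a clean summandwise identification is not available, I would state this as part of what is meant by ``isometric monoidal equivalence''.
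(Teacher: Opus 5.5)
Your treatment of items 1--3 and of the forward implication follows the paper's route. The paper's proof says only that Theorem~\ref{thm:covariant-stability} gives exact preservation of spectral radii and residues for isometric equivalences, and only the forward implication otherwise; you spell this out layer by layer via Proposition~\ref{prop:covariance-propagation}. You are more careful than the paper on two points. First, the hypotheses of Theorem~\ref{thm:layerwise-stability} and the constants $\alpha,\beta$ are operator-norm bounds, so you need $\|\Phi_*(T)\|=\|T\|$ from the isometry hypothesis itself, not merely the $\rho$-invariance of Theorem~\ref{thm:covariant-stability}(2). Second, the residue bound needs summand-by-summand matching together with a residue norm that $\Phi$ preserves. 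The paper's one-line proof passes over both, so building them into the meaning of ``isometric monoidal equivalence,'' as you propose, is what the paper implicitly does.

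The one step that would fail is your counterexample for the converse. Multiplying every morphism by a constant $c<1$ is not a functor: $\Phi(ST)=c\,ST\neq c^{2}ST=\Phi(S)\Phi(T)$ and $\Phi(\mathrm{id})=c\,\mathrm{id}\neq\mathrm{id}$. More seriously, clause~3 of Definition~\ref{def:admissible-base-change} demands $\rho(\partial^{\mathrm{spec}}\Phi(T))=\rho(\partial^{\mathrm{spec}}T)$. So no functor that is admissible in the paper's strict sense can turn an unstable network into a stable one, and for such functors the converse actually holds at the level of spectral radii. The paper supplies no counterexample. Its ``may not hold'' is only a disclaimer, and it makes sense only under the paper's informal practice of calling spectral-radius-distorting maps such as discretization and quantization admissible. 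A genuine example requires dropping clause~3. For instance, take the projection $(T_1,T_2)\mapsto T_1$ from the product symmetric monoidal category $\mathcal{M}\times\mathcal{M}$, viewing morphisms as block-diagonal operators $T_1\oplus T_2$. It is strict monoidal, preserves colimits and is spectrally subunitary, yet it sends the unstable $(\tfrac12\,\mathrm{id},\,2\,\mathrm{id})$, with $\rho=2$, to the stable $\tfrac12\,\mathrm{id}$. Otherwise, phrase that last sentence only as ``the argument does not yield the converse.''
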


\begin{proof}
For isometric monoidal equivalences, Theorem~\ref{thm:covariant-stability}
provides exact preservation of spectral radii and residues. Hence each
claim follows directly. For general admissible functors with spectral
subunitarity, only the forward implication (stability preservation)
holds.
\end{proof}

\begin{example}[Stability Invariance under Fourier Transform]
\label{ex:fourier-stability-invariance}
Consider a convolutional feedback network in the time domain with
$F: f \mapsto k * f$ where $\|k\|_{L^1} < 1$. Classical theory tells us
the network is stable. Under the Fourier transform $\mathcal{F}$, which
is unitary and hence an isometric monoidal equivalence, this becomes a
multiplication network $M_{\hat{k}}: \hat{f} \mapsto \hat{k} \cdot \hat{f}$.
Theorem~\ref{thm:covariant-stability} guarantees stability in the
frequency domain with
\[
\rho(\partial^{\mathrm{spec}} M_{\hat{k}})
= \|\hat{k}\|_\infty
\le \|k\|_{L^1}
< 1.
\]
Thus, stability analysis can be performed in whichever domain is more
convenient, with equivalent results up to the norm inequality.
\end{example}

\begin{example}[Quantization of a Classical Feedback Loop (Heuristic)]
\label{ex:quantum-stability-invariance}
Let $\mathcal{N}_{\text{classical}}$ be a classical feedback system with
$F_{\text{classical}}(x) = \tanh(x)$ and loop gain
$\rho(\partial^{\mathrm{spec}} F) = \operatorname{sech}^2(0) = 1$.
This system is marginally stable. Under a quantization functor $\mathcal{Q}$,
the quantum analog is heuristically modeled by an operator map
$F_{\text{quantum}}$ whose first spectral derivative satisfies
$\partial^{\mathrm{spec}} F_{\text{quantum}} = \operatorname{sech}^2(0) \cdot (1 + O(\hbar))$
in the semiclassical regime. Theorem~\ref{thm:covariant-stability} suggests
that marginal stability persists up to $O(\hbar)$ corrections, and for
sufficiently small $\hbar$ the stability classification coincides with the
classical limit. A rigorous treatment would require specifying the
quantization functor and verifying admissibility conditions.
\end{example}

\begin{remark}[Relation to Gelfand Transform]
\label{rem:gelfand-covariance}
A particularly important special case is the Gelfand transform for
commutative C*-algebras. Let $\mathcal{M}$ be the category of commutative
C*-algebras and $\mathcal{N}$ the category of continuous functions on
compact Hausdorff spaces. The Gelfand functor $\Gamma: \mathcal{M} \to
\mathcal{N}$ is an admissible strong monoidal equivalence (in fact, an
isometric equivalence). Theorem~\ref{thm:covariant-stability} therefore
implies that:
\begin{itemize}
    \item Algebraic stability conditions in the C*-algebra setting
          (e.g., $\rho(A) < 1$) are equivalent to topological stability
          conditions in the function setting (e.g., $\|\hat{A}\|_\infty < 1$).
    \item The operadic spectrum of an algebra maps to the spectrum of its
          Gelfand transform.
    \item Interaction residues in the algebraic picture may be interpreted
          as interface discontinuities in the topological picture.
\end{itemize}
This bridges the algebraic and functional-analytic approaches to spectral
theory.
\end{remark}

\begin{corollary}[Numerical Stability of Discretized Networks]
\label{cor:discretization-stability}
Let $\mathcal{N}$ be a stable continuous-time operadic network, and let
$\Phi_{\text{disc}}$ be an admissible discretization functor (e.g., finite
differences, finite elements, or spectral methods) that satisfies the
spectral subunitarity condition. Then the discretized network
$\Phi_{\text{disc}}(\mathcal{N})$ is numerically stable: small perturbations
in the discretized data produce bounded perturbations in the discretized
output. Moreover, the exponential decay constant satisfies
\[
\rho(\partial^{\mathrm{spec}}\mathcal{E}_{\Phi_{\text{disc}}(\mathcal{N})})
= \rho(\partial^{\mathrm{spec}}\mathcal{E}_{\mathcal{N}})
+ O(\Delta x^p),
\]
where $\Delta x$ is the discretization parameter and $p > 0$ is the order
of accuracy. Hence the decay rate converges to the continuous decay rate
as $\Delta x \to 0$, and for sufficiently fine discretizations the
discretized network remains stable.
\end{corollary}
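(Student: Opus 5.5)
The plan is to obtain Corollary~\ref{cor:discretization-stability} from Theorem~\ref{thm:covariant-stability}, supplemented by a quantitative consistency estimate for the discretization. The argument has three steps.

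\emph{Step 1 (qualitative stability).} Stability of $\mathcal{N}$ means $\rho(\partial^{\mathrm{spec}}\mathcal{E}_{\mathcal{N}})<1$. Since $\Phi_{\text{disc}}$ is admissible, Proposition~\ref{prop:covariance-propagation} applies and gives
\[
\partial^{\mathrm{spec}}\mathcal{E}_{\Phi_{\text{disc}}(\mathcal{N})}\cong \Phi_{\text{disc},*}\bigl(\partial^{\mathrm{spec}}\mathcal{E}_{\mathcal{N}}\bigr).
\]
Spectral subunitarity, used as in part~1 of Theorem~\ref{thm:covariant-stability}, then yields $\rho(\partial^{\mathrm{spec}}\mathcal{E}_{\Phi_{\text{disc}}(\mathcal{N})})<1$.

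\emph{Step 2 (bounded sensitivity).} Numerical stability, meaning that small data perturbations produce bounded output perturbations, follows in two ways. For the linearized dynamics it follows from Proposition~\ref{prop:spectral_stability_criterion}. For finite perturbations inside the radius of convergence it follows from Theorem~\ref{thm:stability_bound} or Proposition~\ref{prop:global_stability_estimate}, applied to the discretized evaluation map. The only input needed here is that $\Phi_{\text{disc}}$ preserves spectral analyticity, which is item~1 of Definition~\ref{def:admissible-base-change}.

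\emph{Step 3 (the rate $O(\Delta x^p)$).} This is the main obstacle. Subunitarity only controls the threshold, and item~3 of Definition~\ref{def:admissible-base-change} would force exact equality of spectral radii, so it cannot produce an error term. I would therefore add an explicit consistency hypothesis of the form
\[
\|\Pi_{\Delta x}\,\partial^{\mathrm{spec}}\mathcal{E}_{\mathcal{N}} - \partial^{\mathrm{spec}}\mathcal{E}_{\Phi_{\text{disc}}(\mathcal{N})}\,\Pi_{\Delta x}\| = O(\Delta x^p),
\]
where $\Pi_{\Delta x}$ is the restriction or projection operator of the scheme. The first attempt would then run as follows. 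Combine this consistency bound with uniform boundedness of the discrete resolvents on a contour separating the spectrum from the unit circle, in the spirit of the Lax equivalence principle. Then apply resolvent perturbation (Riesz projection) estimates to get $|\rho_{\Delta x}-\rho| = O(\Delta x^p)$.

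This resolvent argument only gives the sharp order when the eigenvalues attaining $\rho$ are semisimple and isolated. In the defective case, a Jordan block of size $m$ degrades the rate to $O(\Delta x^{p/m})$, so the statement should carry semisimplicity of the dominant spectrum as an additional assumption. Finally, since $\rho<1$ and the error tends to $0$, stability persists for $\Delta x$ small enough, and the decay rates converge as $\Delta x\to 0$.
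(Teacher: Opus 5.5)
\textbf{Verdict: correct, by a different and more explicit route than the paper's.}

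\textbf{Step 1.} This is the paper's proof. It appeals to Theorem~\ref{thm:covariant-stability} plus spectral subunitarity to get $\rho<1$. It then simply asserts that the spectral radius obeys the expansion ``up to the approximation error of the discretization functor, which vanishes as $\Delta x\to 0$.''

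\textbf{Steps 2 and 3.} The paper never derives the $O(\Delta x^p)$ term. It also never separately justifies the bounded-sensitivity clause. Your Step~2 supplies the latter. Your Step~3 replaces the assertion with an explicit consistency hypothesis and a resolvent/Riesz-projection argument.

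\textbf{What each approach buys.} The paper's proof is short only because the rate is silently built into what ``order of accuracy $p$'' means. Yours actually proves the rate and shows what it costs. Your semisimplicity caveat is correct and exposes a real defect in the statement. Take $0<\lambda<1$ and the $2\times 2$ matrix with diagonal $\lambda,\lambda$, upper entry $1$ and lower-left entry $\Delta x^p$. Its consistency error relative to $J_2(\lambda)$ is $\Delta x^p$, but its spectral radius is $\lambda+\Delta x^{p/2}$. So $O(\Delta x^p)$ cannot follow from consistency alone. The same caveat applies to the rate claimed in Step~7 of the proof of Theorem~\ref{thm:discretization-preserves-stability}.

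\textbf{Refinement 1: the literal reading is trivial.} Under the literal Definition~\ref{def:admissible-base-change}, item~3 gives exact equality of spectral radii. The displayed formula then holds trivially with zero error. So your extra hypotheses are needed only for the substantive reading, in which $\Phi_{\text{disc}}$ genuinely perturbs spectra. That is the reading the paper intends, since otherwise subunitarity would be redundant.

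\textbf{Refinement 2: the resolvent bounds need a larger region.} Uniform discrete resolvent bounds on a single separating contour do not exclude discrete eigenvalues outside it. You need such bounds on the whole region $\{|z|\ge\rho-\eta\}$ with small disks around the dominant eigenvalues removed. You also need a rank-preservation argument for the discrete Riesz projections. Together these rule out spurious discrete eigenvalues of large modulus and make the two-sided estimate $|\rho_{\Delta x}-\rho|=O(\Delta x^p)$ go through.
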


\begin{proof}
By Theorem~\ref{thm:covariant-stability} and the spectral subunitarity
condition, $\Phi_{\text{disc}}(\mathcal{N})$ is stable in the discretized
category. The exponential decay constant satisfies the given asymptotic
expansion up to the approximation error of the discretization functor,
which vanishes as $\Delta x \to 0$. Hence for sufficiently fine
discretizations, the discretized network remains stable and its decay
rate approximates the continuous decay rate.
\end{proof}

\begin{remark}[Philosophical Significance]
\label{rem:philosophical-significance}
The Covariant Stability Theorem elevates operadic spectral analysis from
a collection of computational techniques to a truly \emph{intrinsic}
theory. In classical spectral theory, the spectrum of an operator depends
on the underlying Banach space—different spaces can yield different
spectra. In contrast, the operadic framework, via covariance under
admissible base changes, identifies a class of spectral invariants that
remain stable under admissible representation changes. For isometric
equivalences, this stability is exact; for more general admissible
functors, it holds up to controlled distortion.

This is analogous to how sheaf cohomology in algebraic geometry provides
invariants that are independent of the chosen open cover, or how homotopy
groups provide invariants independent of the chosen CW-complex structure.

Thus, Theorem~\ref{thm:covariant-stability} is not merely a technical
convenience but a foundational statement: the SOC invariants
($\sigma_P$, $\partial^{\mathrm{spec}}_*$, $\Sigma^{\mathrm{res}}$)
capture the essential spectral behavior of composite operator systems,
with representation covariance as a guiding principle.

This completes the statement and proof of the Covariant Stability Theorem,
establishing the representation covariance of the operadic spectral
framework developed in the current work.
\end{remark}

\subsection{Examples and Consequences}
\label{subsec:covariant-examples-consequences}

The Covariant Stability Theorem (Theorem~\ref{thm:covariant-stability}) has
several immediate conceptual and practical consequences. Its primary
significance is that spectral propagation and stability are intrinsic
properties of the operadic system itself rather than artifacts of a
particular representation, at least for admissible transformations that
preserve the stability regime (or exactly for isometric equivalences).

\paragraph{Basis invariance.}
A first consequence is basis invariance. Suppose two representations of a
network differ only by a change of basis implemented by an admissible
isomorphism:
\[
\Phi : V \longrightarrow V.
\]
Then the Covariant Stability Theorem implies that stability conclusions
are preserved under the transformation:
\[
\sigma_{\Phi(P)}(\Phi(A)) \cong \Phi_*(\sigma_P(A)).
\]
Hence, spectral instability cannot be artificially created or removed
merely by changing coordinates, provided the change of basis is admissible
(e.g., unitary or more generally an isometric isomorphism). This eliminates
an important source of representational artifacts in operator analysis and
numerical computation.

\begin{example}[Change of Basis in a Matrix Network]
\label{ex:basis-invariance}
Let $\mathcal{N}$ be a network where each node is an $n \times n$ matrix
and edges represent matrix multiplication. Under a change of basis $P$
(an invertible matrix), each node transforms as $A_v \mapsto P A_v P^{-1}$.
The spectral derivative $\partial^{\mathrm{spec}}A_v$ (which is simply
$A_v$ itself for linear maps) satisfies:
\[
\rho(P A_v P^{-1}) = \rho(A_v).
\]
Hence stability classification ($\rho < 1$ vs. $\rho > 1$) is basis-
independent for similarity transformations. More generally, the entire
spectral propagation law is invariant under simultaneous similarity
transformations of all nodes, as guaranteed by
Theorem~\ref{thm:covariant-stability}.
\end{example}

\paragraph{Discretization stability.}
A second consequence concerns discretization procedures. Let a
continuous-time operadic system be represented in a category of continuous
operators, and let
\[
\Phi
\]
be a discretization functor mapping the system into a discrete-time or
finite-dimensional approximation. If $\Phi$ is admissible (i.e., satisfies
spectral subunitarity), then stability of the continuous system implies
stability of sufficiently fine discretizations.

Consequently, stability properties established for the continuous model
propagate to sufficiently faithful discretizations. This provides a
categorical justification for many numerical approximation schemes used
in dynamical systems and PDE-based control, provided the discretization
scheme is consistent and stable.

\begin{example}[Discretization of a Distributed Parameter System]
\label{ex:discretization-stability}
Consider a continuous-time feedback network described by partial
differential equations. A finite-difference discretization yields a
discrete-time operator network. Let $\Phi_{\Delta x}$ denote the
discretization functor with mesh size $\Delta x$. For consistent
discretizations of normal or self-adjoint operators, one typically has
$\rho(\Phi_{\Delta x}(A)) = \rho(A) + O(\Delta x^p)$ for some $p > 0$.
Theorem~\ref{thm:covariant-stability} then guarantees:
\[
\rho(\partial^{\mathrm{spec}}\mathcal{E}_{\Phi_{\Delta x}(\mathcal{N})})
= \rho(\partial^{\mathrm{spec}}\mathcal{E}_{\mathcal{N}})
+ O(\Delta x^p),
\]
under suitable admissibility conditions. Thus, if the continuous network
is stable ($\rho < 1$), the discretized network is stable for sufficiently
fine meshes. Conversely, if the discretized network is unstable with a
margin that persists as $\Delta x \to 0$, this provides evidence that the
continuous network may also be unstable, or at least that it cannot remain
uniformly stable under refinement.
\end{example}

\begin{remark}[Practical Implication for Numerical Analysis]
\label{rem:numerical-discretization}
Corollary~\ref{cor:discretization-stability} formalizes a principle well
known in numerical analysis: consistent discretizations of stable operators
remain stable in the limit of vanishing mesh size. However,
Theorem~\ref{thm:covariant-stability} goes further: it provides categorical
conditions under which stability is preserved up to discretization error
for \emph{any} admissible discretization, including non-uniform grids,
spectral methods, and finite element schemes.
\end{remark}

\paragraph{Quantization covariance.}
The theorem also provides a bridge between classical and quantum systems.
Under a quantization functor
\[
\Phi : \mathcal{C}_{\mathrm{classical}} \longrightarrow \mathcal{C}_{\mathrm{quantum}},
\]
a classical operadic network and its quantum counterpart inherit
corresponding spectral propagation laws, at least semiclassically. Thus,
stability and robustness properties become functorially related across
classical and quantum formulations up to $O(\hbar)$ corrections.

This establishes a structural connection between classical control theory
and quantum control theory, suggesting that many propagation principles
are fundamentally representation-covariant in the semiclassical limit.

\begin{example}[Quantization of a Classical Feedback Oscillator (Heuristic)]
\label{ex:quantization-covariance}
Let $\mathcal{N}_{\text{classical}}$ be a classical feedback network with a
marginally stable component $F_{\text{classical}}(x) = x - x^3$ (a cubic
nonlinearity). The spectral derivative at the fixed point $x=0$ is
$\partial^{\mathrm{spec}}F_{\text{classical}}(0) = 1$, so classical analysis
predicts marginal stability. Under canonical quantization, the quantum
analog is heuristically modeled by $F_{\text{quantum}}(\rho) = \rho - \rho^3 + O(\hbar^2)$
(with operator ordering chosen consistently). Theorem~\ref{thm:covariant-stability}
suggests the semiclassical asymptotic relation
\[
\rho(\partial^{\mathrm{spec}}F_{\text{quantum}}) = 1 + O(\hbar^2).
\]

Thus, for sufficiently small $\hbar$, the quantum network remains marginally
stable; no sudden destabilization occurs due to quantization. This provides
a categorical framework for semiclassical stability analysis in quantum
control systems.
\end{example}

\begin{remark}[Quantum-Classical Correspondence]
\label{rem:quantum-classical}
The quantization functor is typically not an equivalence—quantum systems
contain strictly more information than classical ones (e.g., phase coherence,
entanglement). Nevertheless, Theorem~\ref{thm:covariant-stability} suggests
that stability margins are preserved up to $O(\hbar)$. This helps explain
why classical control designs often work well for quantum systems in the
semiclassical regime, and it identifies the precise conditions ($\hbar$
small, admissible quantization functor) under which stability conclusions
may transfer approximately.
\end{remark}

\paragraph{Gelfand transform.}
An especially important example arises from the Gelfand transform for
commutative $C^*$-algebras:
\[
\mathcal{A} \longrightarrow C(\operatorname{Spec}(\mathcal{A})).
\]
In this setting, algebraic operator systems correspond to continuous
function systems on the Gelfand spectrum. The Covariant Stability Theorem
implies that stability of the original algebraic network is equivalent to
stability of the associated topological function network, since the
Gelfand transform is an isometric equivalence.

This creates a direct bridge between algebraic spectral theory and
topological spectral analysis.

\begin{example}[Gelfand Transform of a Commutative Network]
\label{ex:gelfand-covariance}
Let $\mathcal{N}$ be a network where each node is an element $a_v$ of a
commutative C*-algebra $\mathcal{A}$ (e.g., continuous functions on a
compact Hausdorff space $X$ under pointwise operations). The Gelfand
transform $\Gamma: \mathcal{A} \to C(\operatorname{Sp}(\mathcal{A}))$
maps each $a_v$ to a continuous function $\hat{a}_v$ on the maximal ideal
space $\operatorname{Sp}(\mathcal{A})$. For a feedback loop with $F(a) = \phi(a)$
where $\phi$ is an analytic function, Theorem~\ref{thm:covariant-stability}
gives:
\[
\rho(\partial^{\mathrm{spec}}\Gamma(F)) = \|\phi'(\hat{a})\|_\infty
= \max_{x \in \operatorname{Sp}(\mathcal{A})} |\phi'(a(x))|
= \rho(\partial^{\mathrm{spec}}F).
\]

Thus, stability of the algebraic network is equivalent to pointwise
stability of the function network: for all $x \in X$, the complex number
$a(x)$ must satisfy the same spectral bound. This equivalence allows one
to switch freely between algebraic and functional-analytic methods when
analyzing commutative operator networks.
\end{example}

\begin{corollary}[Spectral Covariance for Networks]
\label{cor:spectral-mapping-network}
Let $\mathcal{N}$ be a network of commuting normal operators (e.g.,
multiplication operators on an $L^2$ space). Then for any admissible base
change functor $\Phi$ that is an isometric monoidal equivalence (including
the Gelfand transform, Fourier transform, or joint diagonalization), the
global spectrum satisfies:
\[
\operatorname{Spec}_{\Phi(P)}(\Phi(\mathcal{N})) \cong \Phi(\operatorname{Spec}_P(\mathcal{N})).
\]

In words: spectral propagation is covariant under representation change
for isometric equivalences. This generalizes the classical spectral mapping
theorem from single operators to entire operadic networks.
\end{corollary}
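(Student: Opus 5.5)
The statement to prove is Corollary~\ref{cor:spectral-mapping-network}. I would derive it as a direct specialization of the Operadic Network Evaluation Theorem (Theorem~\ref{thm:network-evaluation}) combined with the covariance statements of Proposition~\ref{prop:covariance-propagation} and Theorem~\ref{thm:covariant-stability}. The guiding observation is that, by Part~II of the proof of Theorem~\ref{thm:network-evaluation}, the global spectral object is the operadic spectrum of the assembled composite operator. That is, $\operatorname{Spec}_P(\mathcal{N}) = \sigma_P(\mathcal{O}_{\mathcal{N}})$. So the claim reduces to two facts. First, $\Phi$ commutes with the assembly $\mathcal{O}_{(-)}$. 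Second, $\Phi$ commutes with $\sigma_P$.

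\textbf{Step 1: $\Phi$ commutes with the assembly.} I would check that $\Phi(\mathcal{N})$ is again admissible. This is item~1 of Proposition~\ref{prop:covariance-propagation}. Here contractivity of cycles is preserved exactly, since an isometric equivalence preserves norms and, by item~2 of Theorem~\ref{thm:covariant-stability}, spectral radii. Next, following Part~IV of the proof of Theorem~\ref{thm:network-evaluation}, I would show that $\Phi$ carries each resolved cycle algebra $\tilde A_c$ to the resolved cycle algebra of $\Phi(c)$. This uses uniqueness of the Banach fixed point (Theorem~\ref{thm:fixed-point-contractive}) in the target. Then $\Phi$ carries the topological-order recursion to the corresponding recursion. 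Strong monoidality transports each operadic assembly map $\mathfrak{A}_j$. The outcome is $\Phi(\mathcal{O}_{\mathcal{N}}) \cong \mathcal{O}_{\Phi(\mathcal{N})}$.

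\textbf{Step 2: $\Phi$ commutes with $\sigma_P$.} I would apply the Base Change Theorem (SOC~I, Theorem~8), in the form of the first isomorphism of Theorem~\ref{thm:covariant-stability}, to the single algebra $\mathcal{O}_{\mathcal{N}}$. This gives $\sigma_{\Phi(P)}(\Phi(\mathcal{O}_{\mathcal{N}})) \cong \Phi(\sigma_P(\mathcal{O}_{\mathcal{N}}))$. Combining with Step~1 yields the displayed isomorphism.

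\textbf{Step 3: consistency with the support decomposition.} To tie the result to the support decomposition of Theorem~\ref{thm:spectral-propagation}, I would also transport each term separately. The node spectra transport by base change. The residue $\Sigma^{\mathrm{res}}$ transports by the second isomorphism of Theorem~\ref{thm:covariant-stability}, since isometric equivalences preserve coproducts and interfaces. Because $\Phi$ is an equivalence, it preserves the unions of supports. This shows the decomposition is carried termwise.

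\textbf{Role of commuting normality.} The hypothesis that the operators are commuting and normal is used to make the classical spectral-mapping reading concrete. In that setting, the joint spectral decomposition (as in the proof of Theorem~\ref{thm:feedback-stability}(b)) identifies $\sigma_P$ with the joint spectrum. An isometric equivalence such as the Gelfand or Fourier transform acts on it by the induced homeomorphism of maximal ideal spaces.

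\textbf{Main obstacle.} I expect the main difficulty to be Step~1 in the presence of cycles. One needs that $\Phi$ reflects as well as preserves the fixed-point equations, so that the image of the fixed point is \emph{the} fixed point in $\Phi(\mathcal{N})$ and not merely \emph{a} fixed point. My first attempt is to use essential surjectivity together with full faithfulness of the equivalence. Fixed points of $\tau'_{\Phi(c)}$ then pull back along a quasi-inverse to fixed points of $\tau_c$, and uniqueness on both sides finishes the argument.
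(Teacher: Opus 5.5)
Your argument is correct, but your main line differs from the paper's. The paper proves the corollary in one sentence by transporting the terms of the decomposition separately. It invokes the covariance of $\sigma_P$ and of $\Sigma^{\mathrm{res}}$ from Theorem~\ref{thm:covariant-stability}, then uses that an isometric equivalence preserves the colimit and localization structures used to reassemble the global spectrum. That is essentially your Step~3, which you present only as a consistency check.

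Your primary route instead starts from the definition $\operatorname{Spec}_P(\mathcal{N})=\sigma_P(\mathcal{O}_{\mathcal{N}})$ in Part~II of Theorem~\ref{thm:network-evaluation}. You then prove $\Phi(\mathcal{O}_{\mathcal{N}})\cong\mathcal{O}_{\Phi(\mathcal{N})}$, which is a more careful version of item~2 of Proposition~\ref{prop:covariance-propagation} that now includes cycle resolution. Finally, you apply the Base Change Theorem once, to a single algebra.

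What your route buys:
\begin{itemize}
\item It gives the isomorphism directly at the level of the spectral object. The termwise route naturally controls only the support decomposition of Theorem~\ref{thm:spectral-propagation}.
\item It does not need the interface-preservation hypothesis behind the residue covariance.
\end{itemize}
The cost is that you must treat feedback cycles explicitly, which the paper's termwise argument keeps hidden inside $\Sigma^{\mathrm{res}}$.

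Your stated main obstacle is not real; no reflection, full faithfulness, or quasi-inverse is needed. Once you know $\Phi(\mathcal{N})$ is admissible, which you already get from preserved contractivity, its cycle fixed points are unique. The identity $\Phi(\tau_c)(\Phi(\tilde{A}_c))=\Phi(\tau_c(\tilde{A}_c))=\Phi(\tilde{A}_c)$ then already shows that $\Phi(\tilde{A}_c)$ is that fixed point. This is exactly the argument of Part~IV, Step~1 of Theorem~\ref{thm:network-evaluation}.

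Finally, neither your argument nor the paper's actually uses the commuting-normal hypothesis; it only supports the interpretive spectral-mapping reading.
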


\begin{proof}
This follows from iterating the covariance of $\sigma_P$ and
$\Sigma^{\mathrm{res}}$ established in Theorem~\ref{thm:covariant-stability},
together with the fact that for isometric equivalences, $\Phi$ preserves
the corresponding colimit and localization structures under the analytic
realization.
\end{proof}

\paragraph{Representation-covariant spectral propagation.}
Taken together, these examples demonstrate that the present framework
provides a representation-covariant language for spectral propagation.
The theory applies uniformly across:
\begin{itemize}
    \item finite-dimensional and infinite-dimensional systems,
    \item continuous and discrete models (with approximation errors),
    \item algebraic and geometric formulations,
    \item classical and quantum operator networks (semiclassically),
    \item tensorized and transformed representations.
\end{itemize}

As a consequence, the framework is well suited for applications spanning
functional analysis, control theory, signal processing, quantum computation,
numerical analysis, and hierarchical machine learning systems.

\begin{example}[Unified Treatment of a Multidomain Network]
\label{ex:multidomain-network}
Consider a networked system that spans multiple representations: a
continuous-time mechanical component (modeled by ODEs), a digital
controller (discrete-time difference equations), a communication channel
(frequency-domain transfer function), and a quantum sensor (operator
algebra). Traditionally, analyzing such a heterogeneous system requires
patching together incompatible stability criteria. Within this work,
however, each component resides in its own category, connected by
admissible base change functors (discretization, Fourier transform,
quantization). The Covariant Stability Theorem suggests that a unified
stability analysis can be performed by transporting all components to a
common representation (e.g., the frequency domain) and applying the
operadic spectral propagation laws, with errors controlled by the
admissibility of the transformations. The resulting stability conclusion
is approximately covariant under the chosen common representation, up to
the distortion constants of the base changes.
\end{example}

\paragraph{Conceptual summary.}
Conceptually, the Covariant Stability Theorem elevates spectral propagation
from a representation-dependent computational procedure to an intrinsic
categorical invariant of operadic network dynamics, up to the admissible
distortions allowed by the base change. This is the philosophical heart of
this work: spectral behavior is not a contingent property of how we choose
to represent operators, but rather an essential feature of the compositional
architecture itself, modulo the equivalence relation induced by admissible
transformations.

\begin{remark}[Beyond the Four Consequences]
\label{rem:beyond-consequences}
The four consequences listed above are not exhaustive. Other admissible
base changes include:
\begin{itemize}
    \item \textbf{Wick rotation}: mapping between Euclidean and Minkowski
          signature in quantum field theory.
    \item \textbf{Legendre transform}: passing between Lagrangian and
          Hamiltonian formulations.
    \item \textbf{Schrödinger vs. Heisenberg picture}: different
          representations of quantum dynamics.
    \item \textbf{Passivity transformations}: converting between different
          scattering representations in circuit theory.
\end{itemize}
In each case, Theorem~\ref{thm:covariant-stability} guarantees that
stability and spectral propagation laws are preserved under the appropriate
admissibility conditions, providing a broad categorical framework for
operadic spectral analysis.
\end{remark}

Thus, the Covariant Stability Theorem serves as a foundational structural
principle of the present work, ensuring that operadic spectral propagation
laws remain covariant under admissible representation changes across
mathematics, physics, and engineering.

\section{Universality of Spectral Propagation}
\label{sec:universality}

This section presents the philosophical heart of this work. The preceding sections established that spectral propagation is governed by the three invariants $\sigma_P$, $\partial_*^{\mathrm{spec}}$, and $\Sigma^{\mathrm{res}}$. But could there be alternative propagation rules not captured by these invariants? The Universality Theorem answers this question negatively: any reasonable spectral propagation rule must factor through these three invariants. This result elevates the present framework from an application of previous work to a universal language for spectral propagation in composite operator systems.

\subsection{The Axioms of a Reasonable Propagation Rule}
\label{subsec:axioms}

Before stating the universality theorem, we formalize what it means for
a spectral propagation rule to be mathematically reasonable. 
Intuitively, such a rule should satisfy three fundamental principles:
\begin{enumerate}
    \item compatibility with operadic composition,
    \item continuity under local perturbations,
    \item covariance under admissible representation changes,
    \item normalization on elementary networks.
\end{enumerate}

Let
\[
R
\]
be a propagation rule assigning to each admissible operadic operator network
\[
\mathcal{N}
\]
(Definition~\ref{def:admissible-network}) a spectral output
\[
R(\mathcal{N})
\]
in a fixed category $\mathsf{SpecObj}(P)$ of spectral objects
(Remark~\ref{rem:spec-category}), which we assume is equipped with a metric
(or more generally a normed structure) so that continuity and approximation
are meaningful.

We impose the following axioms.

\paragraph{(A1) Compositionality.}
The rule $R$ respects operadic gluing and compositional structure.

Suppose a network decomposes as
\[
\mathcal{N}
=
\mathcal{N}_1
\circ
\mathcal{N}_2
\]
(along an admissible interface). Then the propagated spectral output
\[
R(\mathcal{N})
\]
is uniquely determined by:
\begin{enumerate}
    \item the local spectral outputs
    \[
    R(\mathcal{N}_1),
    \qquad
    R(\mathcal{N}_2),
    \]
    \item together with the operadic composition map defining the gluing interface.
\end{enumerate}

Formally, there exists a fixed composition rule $\mathcal{C}$ (independent
of $R$) derived from the operadic structure, such that:
\[
R(\mathcal{N}_1 \circ \mathcal{N}_2) \cong \mathcal{C}\bigl(R(\mathcal{N}_1), R(\mathcal{N}_2)\bigr),
\]
where the isomorphism is natural in $\mathcal{N}_1$ and $\mathcal{N}_2$.

Thus, global spectral behavior must be computable from local spectral data
and interface interactions. This axiom formalizes the principle that
spectral propagation is fundamentally compositional.

\begin{remark}
\label{rem:compositionality-consequence}
Compositionality implies that $R$ is a functor from the operadic network
category $\mathsf{OpNet}(P)$ to the category $\mathsf{SpecObj}(P)$ of
spectral objects, preserving the operadic composition structure. This is
precisely the functoriality property established for the SOC evaluation
map $\mathcal{E}_{\mathcal{N}}$ in Theorem~\ref{thm:network-evaluation}.
\end{remark}

\paragraph{(A2) Perturbative locality.}
The rule $R$ depends continuously on local spectral data up to controlled
perturbative error.

More precisely, suppose two networks $\mathcal{N}$ and $\mathcal{N}'$ differ
only by a perturbation of the node algebras on a localized subnetwork
$\mathcal{S}$; i.e., $A_v' = A_v + \delta A_v$ for $v \in \mathcal{S}$ and
$A_v' = A_v$ otherwise. Let $\|\sigma(\Delta \mathcal{N})\|$ measure the
maximum change in the operadic spectra of the affected nodes:
\[
\|\sigma(\Delta \mathcal{N})\| := \max_{v \in \mathcal{S}} \|\delta \sigma_P(A_v)\|.
\]
Then the propagated spectral outputs satisfy an estimate of the form
\[
d\bigl(R(\mathcal{N}), R(\mathcal{N}')\bigr)
\le
C \cdot \|\sigma(\Delta \mathcal{N})\|,
\]
for some controlled constant $C$ depending only on the surrounding
propagation structure, where $d$ is the metric on $\mathsf{SpecObj}(P)$.
More generally, there exists a continuous function $\omega$ with $\omega(0) = 0$
such that:
\[
d\bigl(R(\{A_v\}), R(\{A_v + \delta A_v\})\bigr)
\le \omega\left(\max_{v \in \mathcal{S}} \|\delta \sigma_P(A_v)\|\right).
\]

This axiom expresses the principle that local perturbations should induce
only controlled changes in global spectral behavior. In particular,
spectral propagation must vary continuously under sufficiently small
local deformations.

\begin{remark}
\label{rem:perturbative-locality-consequence}
Perturbative locality is a weak form of stability: it requires that small
local changes produce small global changes. It does \emph{not} require that
the rule be stable under feedback amplification—indeed, the SOC stability
radius condition precisely characterizes when such amplification occurs.
Rather, perturbative locality demands continuity of the propagation rule
\emph{at the level of the rule itself}, independent of the specific network
architecture.
\end{remark}

\paragraph{(A3) Base-change covariance.}
The rule $R$ is covariant under admissible strong monoidal functors.

Let
\[
\Phi :
\mathcal{M}
\longrightarrow
\mathcal{N}
\]
be an admissible strong monoidal functor between symmetric monoidal
categories (Definition~\ref{def:admissible-base-change}). 
Then:
\[
R(\Phi(\mathcal{N}))
\cong
\Phi_*\bigl(R(\mathcal{N})\bigr),
\]
where $\Phi_*$ is the induced transformation on spectral outputs (e.g., the
transport map on spectra).

Thus, spectral propagation is covariant under admissible representation
changes. The propagation law therefore transforms in a controlled manner
under such changes, rather than being absolutely representation-independent.

This axiom formalizes the covariance principle established in the
Covariant Stability Theorem (Theorem~\ref{thm:covariant-stability}).

\begin{remark}
\label{rem:base-change-consequence}
Base-change covariance implies that $R$ is compatible with the functorial
structure of operadic networks. This compatibility is central to the proof
of the Universality Theorem.
\end{remark}

\paragraph{(A4) Normalization and non-redundancy.}
The rule $R$ satisfies the following two conditions:

\begin{enumerate}
    \item \textbf{Normalization:} For a network consisting of a single node $v$ with algebra $A_v$,
    \[
    R(\mathcal{N}) \cong \sigma_P(A_v) \quad \text{(canonically)}.
    \]
    
    \item \textbf{Non-redundancy:} For any network $\mathcal{N}$, the spectral output $R(\mathcal{N})$ 
    can be reconstructed from the following data alone:
    \begin{itemize}
        \item the collection of node spectra $\{\sigma_P(A_v)\}_{v \in V}$,
        \item the spectral derivatives $\{\partial^{\mathrm{spec}}\tau_e\}_{e \in E}$ along edges,
        \item the interaction residues $\{\mathcal{L}_I\}_{I \in \mathcal{I}(P)}$ that constitute $\Sigma^{\mathrm{res}}$,
    \end{itemize}
    using only the operadic composition rules inherited from $P$ and the fixed-point 
    resolution of cycles (Theorem~\ref{thm:fixed-point-contractive}).
\end{enumerate}

This axiom ensures that $R$ does not introduce extraneous or representation-dependent 
information beyond the essential spectral data. It does \emph{not} assume that $R$ is 
\emph{uniquely} determined by these data; rather, it asserts that these data are 
\emph{sufficient} to compute $R(\mathcal{N})$ without invoking additional hidden 
invariants.

\paragraph{Comparison with the SOC Framework.}
The four axioms are not arbitrary; they are precisely the properties
satisfied by the SOC spectral propagation law established in
Theorem~\ref{thm:spectral-propagation}:

\begin{itemize}
    \item \textbf{Compositionality} follows from the functoriality of
          $\mathcal{E}_{\mathcal{N}}$ (Theorem~\ref{thm:network-evaluation})
          and the Spectral Propagation Theorem's decomposition into node
          spectra, derivatives, and residues.
    \item \textbf{Perturbative locality} follows from the stability bound
          via spectral derivatives (Theorem~\ref{thm:stability_bound}) and
          the analyticity of the spectral Taylor expansion.
    \item \textbf{Base-change covariance} follows from the Covariant
          Stability Theorem (Theorem~\ref{thm:covariant-stability}).
    \item \textbf{Normalization and minimality} follows from the definition
          of the operadic spectrum and the Spectral Propagation Theorem.
\end{itemize}

Thus, the SOC propagation rule $R_{\text{SOC}}(\mathcal{N}) := \operatorname{Spec}(\mathcal{N})$
satisfies all four axioms.

\begin{definition}[Reasonable Spectral Propagation Rule]
\label{def:reasonable-rule}
A spectral propagation rule $R$ is called \emph{reasonable} if it satisfies
the four axioms (A1) Compositionality, (A2) Perturbative Locality,
(A3) Base-change Covariance, and (A4) Normalization and Minimality.
\end{definition}

\begin{remark}[Summary of the Axioms]
\label{rem:reasonable-axioms}
The four axioms isolate the minimal structural properties expected of any
physically or mathematically meaningful spectral propagation theory.

\begin{itemize}
    \item \textbf{Compositionality} ensures compatibility with operadic
          network assembly.
    \item \textbf{Perturbative locality} guarantees robustness under local
          deformation.
    \item \textbf{Base-change covariance} ensures categorical covariance
          under admissible representation changes.
    \item \textbf{Normalization and minimality} ensures that the rule does
          not introduce extraneous information beyond the essential spectral
          data.
\end{itemize}

Together, these axioms define the class of admissible propagation rules to
which the universality theorem applies.
\end{remark}

\begin{remark}[On the Necessity of the Axioms]
\label{rem:necessity-axioms}
Each axiom excludes a class of undesirable or impractical propagation rules:
\begin{itemize}
    \item Without compositionality, global spectral analysis would require
          solving the entire network at once, with no possibility of modular
          reduction—this is computationally infeasible for large networks.
    \item Without perturbative locality, the rule could exhibit discontinuous
          dependence on node data, making numerical approximation and
          experimental measurement impossible.
    \item Without base-change covariance, stability conclusions would be
          representation-dependent, rendering the theory unreliable for
          applications where representations are chosen arbitrarily
          (e.g., by numerical solvers).
    \item Without normalization and minimality, the rule could include
          arbitrary extraneous invariants, making any uniqueness claim
          impossible.
\end{itemize}
Thus, any viable spectral propagation rule for operadic operator networks
must be reasonable in the sense of Definition~\ref{def:reasonable-rule}.
\end{remark}

The Universality Theorem, stated in the next subsection, proves that under
these axioms (in particular, the normalization and minimality condition),
the SOC spectral propagation rule is universal among reasonable propagation
rules, up to canonical isomorphism.

\subsection{Statement of the Universality Theorem}
\label{subsec:statement-universality}

We now arrive at the central structural result of this work. The theorem
shows that the three fundamental invariants
\[
\sigma_P,
\qquad
\partial_*^{\mathrm{spec}},
\qquad
\Sigma^{\mathrm{res}}
\]
are not merely convenient descriptors of spectral propagation, but are in
fact canonical structural invariants underlying reasonable propagation
laws within the SOC framework, under the additional assumption of
minimal completeness.

\begin{definition}[Spectral Propagation Rule]
\label{def:propagation-rule}
A \emph{spectral propagation rule} $R$ is a functor from the category 
$\mathsf{OpNet}(P)$ of admissible operadic operator networks to the category 
$\mathsf{SpecObj}(P)$ of operadic spectral objects, satisfying:

\begin{enumerate}
    \item \textbf{Compositionality (C1)}: For any composable networks $\mathcal{N}_1, \mathcal{N}_2$,
    \[
    R(\mathcal{N}_1 \circ \mathcal{N}_2) \cong R(\mathcal{N}_1) \otimes_{\mathsf{Spec}} R(\mathcal{N}_2),
    \]
    where $\otimes_{\mathsf{Spec}}$ is the monoidal structure on $\mathsf{SpecObj}(P)$ 
    induced by the operadic composition.
    
    \item \textbf{Perturbative Locality (C2)}: Assume $\mathcal{M}$ is a normed 
    symmetric monoidal category (SOC II, Definition 1). For any one-parameter deformation 
    $A_v(\varepsilon) = A_v + \varepsilon \delta A_v$ supported on a finite set of nodes,
    the map $\varepsilon \mapsto R(\{A_v(\varepsilon)\})$ is differentiable at $\varepsilon = 0$,
    and its derivative depends only on $\{\partial^{\mathrm{spec}} A_v(0)\}$ and the 
    edge coupling derivatives $\{\partial^{\mathrm{spec}} \tau_e\}$, where $\partial^{\mathrm{spec}}$ 
    denotes the first spectral derivative (SOC II, Definition 14).
    
    \item \textbf{Base-change Covariance (C3)}: For any admissible strong monoidal 
    functor $\Phi: \mathcal{M} \to \mathcal{N}$ (SOC I, Definition 1),
    \[
    R(\Phi(\mathcal{N})) \cong \Phi_*(R(\mathcal{N})),
    \]
    where $\Phi_*$ is the induced map on spectral objects (SOC I, Theorem 8).
    
    \item \textbf{Normalization (C4)}: For a single-node network $\mathcal{N}$ with node 
    algebra $A$, $R(\mathcal{N}) \cong \sigma_P(A)$ canonically (SOC I, Definition 9).
    
    \item \textbf{Generation (C5)}: The category $\mathsf{OpNet}(P)$ is generated by 
    single-node networks under disjoint union, interface gluing, and feedback closure.
\end{enumerate}

A spectral propagation rule is called \emph{universal} if it satisfies (C1)-(C5).
\end{definition}

\begin{theorem}[Universality of the SOC Propagation Law]
\label{thm:universality}
The SOC propagation rule $R_{\mathrm{SOC}}(\mathcal{N}) := \operatorname{Spec}(\mathcal{N})$ 
satisfies (C1)-(C5). Moreover, it is \emph{universal} in the following sense:

For any spectral propagation rule $R$ satisfying (C1)-(C5), there exists a 
unique natural isomorphism $\Theta_R: R_{\mathrm{SOC}} \Rightarrow R$ such that 
$\Theta_R$ commutes with the monoidal structure and base-change functors.

Equivalently, the functor $R_{\mathrm{SOC}}$ is an initial object in the category 
of spectral propagation rules (with natural isomorphisms as morphisms).

Consequently, the propagated spectral output of any admissible operadic network 
is completely determined by the SOC invariants:
\[
\sigma_P,\qquad \partial_*^{\mathrm{spec}},\qquad \Sigma^{\mathrm{res}}.
\]
\end{theorem}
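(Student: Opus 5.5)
The proof has two halves: first verify that $R_{\mathrm{SOC}}$ satisfies (C1)--(C5), then construct $\Theta_R$ for an arbitrary rule $R$ by induction over the generation axiom (C5).

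\emph{Verification for $R_{\mathrm{SOC}}$.}
\begin{itemize}
\item (C4) holds by construction: for a single node, the assembly in Part~I of the proof of Theorem~\ref{thm:network-evaluation} returns $\mathcal{O}_{\mathcal{N}} = A$, so $\operatorname{Spec}(\mathcal{N}) = \sigma_P(A)$.
\item (C1) holds because Part~III of that proof shows $\mathcal{O}_{\mathcal{N}_1\circ\mathcal{N}_2}$ is the operadic composite of $\mathcal{O}_{\mathcal{N}_1}$ and $\mathcal{O}_{\mathcal{N}_2}$. I would take $\otimes_{\mathsf{Spec}}$ to be the structure transported along $\sigma_P$, so (C1) is essentially definitional.
\item (C2) follows from Theorem~\ref{thm:stability_bound} together with the chain rule. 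Differentiability at $\varepsilon=0$ is the first-order term of the spectral Taylor expansion, and its dependence only on $\partial^{\mathrm{spec}}A_v$ and $\partial^{\mathrm{spec}}\tau_e$ is the layerwise chain rule of Proposition~\ref{prop:layerwise_spectral_composition}, applied node by node in a topological order.
\item (C3) is Theorem~\ref{thm:covariant-stability}, via Proposition~\ref{prop:covariance-propagation}(2).
\item (C5) is a property of $\mathsf{OpNet}(P)$ rather than of $R$. It is justified by the DAG reduction in Theorem~\ref{thm:network-evaluation}. Each admissible network is obtained from its nodes by gluing along edges in topological order (interface gluing), taking disjoint unions of independent components, and then closing the cycles in $\mathcal{C}$ (feedback closure). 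Theorem~\ref{thm:fixed-point-contractive} guarantees that the closures are well defined.

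\end{itemize}

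\emph{Construction of $\Theta_R$.} I would define $\Theta_R$ by structural induction along (C5).
\begin{itemize}
\item On single-node networks, $\Theta_R(\mathcal{N})$ is the composite $\operatorname{Spec}(\mathcal{N}) = \sigma_P(A) \cong R(\mathcal{N})$ of the two canonical normalization isomorphisms (C4).
\item For disjoint unions and interface gluings, $\Theta_R(\mathcal{N}_1\circ\mathcal{N}_2)$ is the composite
\[
R_{\mathrm{SOC}}(\mathcal{N}_1\circ\mathcal{N}_2)\cong R_{\mathrm{SOC}}(\mathcal{N}_1)\otimes_{\mathsf{Spec}}R_{\mathrm{SOC}}(\mathcal{N}_2)\xrightarrow{\Theta_R\otimes\Theta_R}R(\mathcal{N}_1)\otimes_{\mathsf{Spec}}R(\mathcal{N}_2)\cong R(\mathcal{N}_1\circ\mathcal{N}_2),
\]
using (C1) for both rules.
\item For feedback closure, (C1) does not apply directly, so I would argue through (C2) and the contraction. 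The Banach iteration $A_{n+1}=\tau_c(A_n)$ of Theorem~\ref{thm:fixed-point-contractive} presents the closed network as a limit of acyclic unrollings $\mathcal{N}^{(n)}$, on which $\Theta_R$ is already defined. Perturbative locality (C2) makes both $R$ and $R_{\mathrm{SOC}}$ continuous along this convergent sequence, and naturality of the $\Theta_R(\mathcal{N}^{(n)})$ lets them pass to the limit. The resulting isomorphism is independent of the starting point $A_0$ by uniqueness of the fixed point.
\end{itemize}

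\emph{Naturality, uniqueness, compatibility.} Naturality in network morphisms $\Phi$ follows from functoriality (Part~IV of Theorem~\ref{thm:network-evaluation}) applied generator by generator. Uniqueness of $\Theta_R$ follows because any monoidal natural isomorphism is fixed on single nodes by (C4) and is then forced on all of $\mathsf{OpNet}(P)$ by (C1) and (C5). Compatibility with base change follows from (C3) for both rules together with the coherence isomorphisms of $\Phi$. The closing sentence of the theorem then follows: by (A4)/non-redundancy and Theorem~\ref{thm:spectral-propagation}, $R_{\mathrm{SOC}}$ is computed from $\sigma_P$, $\partial^{\mathrm{spec}}_*$ and $\Sigma^{\mathrm{res}}$, and $\Theta_R$ transports this description to $R$. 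The ``initial object'' reformulation is immediate, since every morphism in the category of rules is an isomorphism and we have exhibited a unique one out of $R_{\mathrm{SOC}}$.

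\emph{Main obstacle.} The hardest step is well-definedness of $\Theta_R$: the inductive definition must not depend on the chosen decomposition of $\mathcal{N}$ into generators. Distinct topological orderings and distinct groupings of gluings must yield the same isomorphism, which is a coherence (Mac~Lane-type) statement for $\otimes_{\mathsf{Spec}}$. I would first try to reduce this to the associativity and unit coherence of operadic composition, as used in Part~III of Theorem~\ref{thm:network-evaluation}. I would also assume, if necessary, that the isomorphisms in (C1) are coherent (associator-compatible), making that hypothesis explicit. The feedback-limit argument is the second delicate point. It needs the metric on $\mathsf{SpecObj}(P)$ to be compatible with isomorphisms, and I would state this as a standing assumption rather than derive it.
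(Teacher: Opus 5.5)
Your route is the paper's own: verify (C1)--(C5) for $R_{\mathrm{SOC}}$, build $\Theta_R$ by structural induction over the generators of (C5), and get uniqueness from the same induction. The unrolling argument for feedback and the explicit coherence caveat are refinements the paper does not attempt. However, the verification half has a genuine gap, and the paper's Part~I has the same one.

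\textbf{(C1) fails for $R_{\mathrm{SOC}}$.} (C1) requires $R(\mathcal{N}_1\circ\mathcal{N}_2)$ to be produced by a bifunctor $\otimes_{\mathsf{Spec}}$ on $\mathsf{SpecObj}(P)$ from $R(\mathcal{N}_1)$ and $R(\mathcal{N}_2)$ alone. For single-node pieces these are $\sigma_P(A_1)$ and $\sigma_P(A_2)$ by (C4), so the output cannot depend on the coupling across the interface. ``Transporting operadic composition along $\sigma_P$'' does not define such a bifunctor, because $\sigma_P$ forgets exactly what the spectrum of a composite depends on. For composition of maps, the nilpotent pair of Theorem~\ref{thm:feedback-stability}(c) has $\sigma(A)=\sigma(B)=\{0\}$, yet $\sigma(A^2)=\{0\}\neq\{0,4\}=\sigma(AB)$. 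So $\{0\}\otimes\{0\}$ would have to equal both. This non-functoriality is precisely what $\Sigma^{\mathrm{res}}$ is meant to measure.

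Inside the paper the clash is explicit. Your inductive step gives $\mathcal{N}_1\sqcup\mathcal{N}_2$ and $\mathcal{N}_1\circ_I\mathcal{N}_2$ the same value $R(\mathcal{N}_1)\otimes_{\mathsf{Spec}}R(\mathcal{N}_2)$; indeed it forces any two gluings of the same pieces to agree. But by Theorem~\ref{thm:spectral-propagation} and Case~2 of Example~\ref{ex:two-node-propagation}, the glued network's support contains an interface term $\mathcal{L}_I\neq\emptyset$ that the uncoupled one lacks. So there are two options. Either $\otimes_{\mathsf{Spec}}$ must see interface and algebra-level data, in which case $\Theta_R(\mathcal{N}_1)\otimes\Theta_R(\mathcal{N}_2)$ no longer induces a map between the glued outputs and your inductive step has nothing to act on. Or $R_{\mathrm{SOC}}$ violates (C1).

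\textbf{Your (C2) check has a related problem.} The spectral Taylor expansion of Theorem~\ref{thm:stability_bound} controls the propagation map, not the map $\varepsilon\mapsto\operatorname{Spec}(\mathcal{N}(\varepsilon))$. Take a single node $A(\varepsilon)=\bigl(\begin{smallmatrix}0&1\\ \varepsilon&0\end{smallmatrix}\bigr)$ as in Example~\ref{ex:two-level-ep}. Its output $\{\pm\sqrt{\varepsilon}\}$ lies at Hausdorff distance $|\varepsilon|^{1/2}$ from $\{0\}$, so it is not differentiable at $\varepsilon=0$.

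\textbf{The feedback step is a second gap that your limit argument does not close.} The paper's Step~4 simply asserts it. Nothing in (C1)--(C5) relates $R$ on a feedback-closed network to $R$ on its open part, so the axioms do not force $R$ to agree with $R_{\mathrm{SOC}}$ on cyclic networks. Your argument needs two things the axioms do not supply:
\begin{itemize}
\item That $R(\mathcal{N})$ is the limit of $R(\mathcal{N}^{(n)})$. (C2) only gives differentiability at $\varepsilon=0$ along a straight-line deformation $A_v+\varepsilon\,\delta A_v$ of a fixed network. That yields neither continuity along the iterates $A_n\to A_c^*$ nor any link between a closed loop and networks with different vertex sets.
\item That $\Theta_R(\mathcal{N}^{(n)})$ converges to an isomorphism. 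This requires a topology on hom-sets rather than a metric on objects, and limits of isomorphisms need not be isomorphisms.
\end{itemize}
What is missing is an explicit feedback axiom. For instance, you could require that $R$ is invariant under the cycle resolution of Theorem~\ref{thm:network-evaluation} (replacing $c$ by a node carrying $\tilde{A}_c$), which reduces the cyclic case to the acyclic one.

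\textbf{Uniqueness also needs an extra hypothesis.} Your claim that a monoidal natural isomorphism is ``fixed on single nodes by (C4)'' does not follow. (C4) supplies an isomorphism $R(\mathcal{N})\cong\sigma_P(A)$ but does not require $\Theta_R$ to restrict to it. Composing with any nontrivial monoidal, base-change-compatible automorphism of $R$ gives another $\Theta_R$, so the statement must build in that normalization condition.
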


\begin{proof}
We prove the theorem in three parts: (I) verification that $R_{\mathrm{SOC}}$ 
satisfies (C1)-(C5), (II) construction of $\Theta_R$ for an arbitrary rule $R$, 
and (III) uniqueness of $\Theta_R$.

\medskip
\noindent\textbf{Part I: $R_{\mathrm{SOC}}$ satisfies the axioms.}

\emph{Axiom (C1) — Compositionality.}
By the Operadic Network Evaluation Theorem (Theorem~\ref{thm:network-evaluation}),
$\operatorname{Spec}(\mathcal{N}_1 \circ \mathcal{N}_2)$ is obtained by composing 
the spectral data of $\mathcal{N}_1$ and $\mathcal{N}_2$ via the operadic gluing 
maps. The operadic chain rule (SOC II, Theorem 10) gives the explicit decomposition:
\[
\operatorname{Spec}(\mathcal{N}_1 \circ \mathcal{N}_2) \cong 
\operatorname{Spec}(\mathcal{N}_1) \otimes_{\mathsf{Spec}} \operatorname{Spec}(\mathcal{N}_2),
\]
where $\otimes_{\mathsf{Spec}}$ is induced by the operadic composition. Hence (C1) holds.

\emph{Axiom (C2) — Perturbative Locality.}
By construction, $\operatorname{Spec}(\mathcal{N})$ is obtained from the local 
node spectra $\sigma_P(A_v)$ via the spectral Taylor expansion. The first-order 
variation is given by the operadic chain rule (SOC II, Theorem 10):
\[
\partial^{\mathrm{spec}} \operatorname{Spec}(\mathcal{N}) = 
\sum_{\text{paths } \pi} \bigotimes_{e \in \pi} \partial^{\mathrm{spec}} \tau_e,
\]
which depends only on $\{\partial^{\mathrm{spec}} A_v\}$ and $\{\partial^{\mathrm{spec}} \tau_e\}$. 
The differentiability follows from the normed enrichment of $\mathcal{M}$ (SOC II, Definition 1) 
and the convergence estimates (SOC II, Theorem 7). Thus (C2) holds.

\emph{Axiom (C3) — Base-change Covariance.}
This follows directly from the Base Change Theorem (SOC I, Theorem 8) and the 
functoriality of $\operatorname{Spec}(-)$ established in the Operadic Network 
Evaluation Theorem (Theorem~\ref{thm:network-evaluation}).

\emph{Axiom (C4) — Normalization.}
For a single-node network, $\operatorname{Spec}(\mathcal{N}) = \sigma_P(A)$ by 
definition (SOC I, Definition 9). Thus (C4) holds.

\emph{Axiom (C5) — Generation.}
By construction, $\mathsf{OpNet}(P)$ is generated from single-node networks 
via disjoint union, interface gluing, and feedback closure. This is a structural 
property of the category (see Definition~\ref{def:admissible-network} and the 
construction in Theorem~\ref{thm:network-evaluation}). Hence (C5) holds.

\medskip
\noindent\textbf{Part II: Construction of $\Theta_R: R_{\mathrm{SOC}} \Rightarrow R$.}

Let $R$ be any spectral propagation rule satisfying (C1)-(C5). We construct 
a natural isomorphism $\Theta_R$ by structural induction on $\mathcal{N}$, 
using axiom (C5) to ensure that every network can be built from the base cases.

\emph{Step 1: Single-node networks (base case).}
For a network $\mathcal{N}$ consisting of a single node $v$ with algebra $A_v$, 
axiom (C4) gives $R(\mathcal{N}) \cong \sigma_P(A_v)$. But $R_{\mathrm{SOC}}(\mathcal{N}) = \sigma_P(A_v)$ 
by definition. Hence there is a canonical isomorphism $\Theta_R(\mathcal{N}): R_{\mathrm{SOC}}(\mathcal{N}) \to R(\mathcal{N})$ 
given by the identity on $\sigma_P(A_v)$ (identifying via (C4)). This defines 
$\Theta_R$ on all single-node networks.

\emph{Step 2: Disjoint unions (monoidal product).}
For a disjoint union $\mathcal{N}_1 \sqcup \mathcal{N}_2$ (i.e., networks with 
no connecting edges), axiom (C1) and the monoidal structure give:
\[
R(\mathcal{N}_1 \sqcup \mathcal{N}_2) \cong R(\mathcal{N}_1) \otimes_{\mathsf{Spec}} R(\mathcal{N}_2).
\]
By the induction hypothesis, $\Theta_R(\mathcal{N}_1)$ and $\Theta_R(\mathcal{N}_2)$ 
are already defined. Define $\Theta_R(\mathcal{N}_1 \sqcup \mathcal{N}_2)$ as the 
composition:
\[
R_{\mathrm{SOC}}(\mathcal{N}_1 \sqcup \mathcal{N}_2) \cong 
R_{\mathrm{SOC}}(\mathcal{N}_1) \otimes_{\mathsf{Spec}} R_{\mathrm{SOC}}(\mathcal{N}_2) 
\xrightarrow{\Theta_R(\mathcal{N}_1) \otimes_{\mathsf{Spec}} \Theta_R(\mathcal{N}_2)} 
R(\mathcal{N}_1) \otimes_{\mathsf{Spec}} R(\mathcal{N}_2) \cong R(\mathcal{N}_1 \sqcup \mathcal{N}_2).
\]

\emph{Step 3: Gluing along an interface (operadic composition).}
For networks $\mathcal{N}_1$ and $\mathcal{N}_2$ glued along an admissible 
interface $I$, axiom (C1) gives:
\[
R(\mathcal{N}_1 \circ_I \mathcal{N}_2) \cong R(\mathcal{N}_1) \otimes_{\mathsf{Spec}} R(\mathcal{N}_2),
\]
where $\otimes_{\mathsf{Spec}}$ includes the interface identification. The SOC 
invariants $\Sigma^{\mathrm{res}}$ capture the interface contribution 
(SOC III, Theorem 4). By the induction hypothesis, $\Theta_R$ is defined on 
$\mathcal{N}_1$ and $\mathcal{N}_2$. Define $\Theta_R(\mathcal{N}_1 \circ_I \mathcal{N}_2)$ 
via the universal property of the interface gluing, using that both $R_{\mathrm{SOC}}$ 
and $R$ satisfy the same compositionality axiom (C1). The interface residue 
$\Sigma^{\mathrm{res}}$ is treated consistently because both rules factor through 
the same operadic composition structure.

\emph{Step 4: Feedback loops (cycles).}
For a network $\mathcal{N}$ containing a cycle $c \in \mathcal{C}$, the evaluation 
map is defined via the fixed-point equation $A = \tau_c(A)$ (see Theorem~\ref{thm:network-evaluation}, 
Part V). By axiom (C2) (Perturbative Locality), the behavior of $R$ under the 
feedback loop is determined by the spectral derivative $\partial^{\mathrm{spec}}\tau_c$. 
The contraction mapping principle (or the admissibility condition in 
Definition~\ref{def:admissible-network}) guarantees a unique fixed point. 
Define $\Theta_R(\mathcal{N})$ as the unique isomorphism compatible with the 
fixed-point construction, which exists because both $R_{\mathrm{SOC}}$ and $R$ 
satisfy the same recursive equations.

\emph{Step 5: General networks.}
By axiom (C5), any admissible operadic network can be built from single-node 
networks by repeated applications of disjoint union, interface gluing, and 
feedback closure. Hence Steps 1-4 define $\Theta_R(\mathcal{N})$ for all 
$\mathcal{N} \in \mathsf{OpNet}(P)$ uniquely and functorially.

\medskip
\noindent\textbf{Part III: Uniqueness of $\Theta_R$.}

Suppose $\Theta_R, \Theta'_R: R_{\mathrm{SOC}} \Rightarrow R$ are two natural 
isomorphisms. We prove $\Theta_R = \Theta'_R$ by structural induction on 
$\mathcal{N}$, using axiom (C5) to cover all cases.

\begin{itemize}
    \item \textbf{Base case (single node)}: For any single-node network $\mathcal{N}$, 
          both $\Theta_R(\mathcal{N})$ and $\Theta'_R(\mathcal{N})$ must agree with 
          the canonical isomorphism from axiom (C4). Hence $\Theta_R(\mathcal{N}) = \Theta'_R(\mathcal{N})$.
    
    \item \textbf{Inductive step (disjoint union)}: Assume $\Theta_R = \Theta'_R$ on 
          $\mathcal{N}_1$ and $\mathcal{N}_2$. Then for $\mathcal{N}_1 \sqcup \mathcal{N}_2$, 
          both $\Theta_R$ and $\Theta'_R$ are defined via the monoidal product 
          $\otimes_{\mathsf{Spec}}$. Since the monoidal structure is fixed and 
          the factors agree by the induction hypothesis, we have 
          $\Theta_R(\mathcal{N}_1 \sqcup \mathcal{N}_2) = \Theta'_R(\mathcal{N}_1 \sqcup \mathcal{N}_2)$.
    
    \item \textbf{Inductive step (gluing)}: Assume $\Theta_R = \Theta'_R$ on 
          $\mathcal{N}_1$ and $\mathcal{N}_2$. For the glued network $\mathcal{N}_1 \circ_I \mathcal{N}_2$, 
          the gluing map $\otimes_{\mathsf{Spec}}$ is uniquely determined by the 
          interface data. Since both $\Theta_R$ and $\Theta'_R$ commute with the 
          gluing construction, they agree on the glued network.
    
    \item \textbf{Inductive step (feedback)}: For a network containing a cycle, 
          the fixed-point solution is unique (by the admissibility condition in 
          Definition~\ref{def:admissible-network}). Since both $\Theta_R$ and 
          $\Theta'_R$ must commute with the fixed-point construction, they agree 
          on the feedback network.
\end{itemize}

Thus $\Theta_R = \Theta'_R$ pointwise for all $\mathcal{N} \in \mathsf{OpNet}(P)$, 
proving uniqueness.

\medskip
\noindent\textbf{Conclusion.}
The SOC propagation rule $R_{\mathrm{SOC}}(\mathcal{N}) = \operatorname{Spec}(\mathcal{N})$ 
satisfies axioms (C1)-(C5). For any other spectral propagation rule $R$ satisfying 
(C1)-(C5), there exists a unique natural isomorphism $\Theta_R: R_{\mathrm{SOC}} \Rightarrow R$. 
Hence $R_{\mathrm{SOC}}$ is universal: all such rules are naturally isomorphic, 
and thus determined uniquely by the SOC invariants 
$(\sigma_P, \partial_*^{\mathrm{spec}}, \Sigma^{\mathrm{res}})$.

This completes the proof of the Universality Theorem.
\end{proof}

\begin{remark}[Interpretation and Canonical Structure]
\label{rem:universality-interpretation}

The Universality Theorem establishes the SOC invariants as
\textbf{canonical coordinates} for spectral propagation theory within
the SOC framework. Any admissible propagation mechanism that satisfies
the four axioms (Compositionality, Perturbative Locality, Base-change
Covariance, and Minimal Completeness) must factor through
\[
\sigma_P,\qquad \partial_*^{\mathrm{spec}},\qquad \Sigma^{\mathrm{res}}.
\]

In particular:
\begin{itemize}
    \item $\sigma_P$ captures the primary spectral content,
    \item $\partial_*^{\mathrm{spec}}$ governs propagation sensitivity and deformation,
    \item $\Sigma^{\mathrm{res}}$ records interaction-induced spectral corrections.
\end{itemize}

Together, these invariants provide the canonical operadic description of
spectral propagation dynamics within the SOC framework.

\end{remark}

\begin{remark}[Analogy with Classical Universality Principles]
\label{rem:uniqueness-analogies}

The Universality Theorem is conceptually analogous to several classical
universality principles in mathematics. Examples include:
\begin{itemize}
    \item the role of the fundamental group and homology groups as
          canonical invariants in algebraic topology,
    \item universal properties in category theory,
    \item normal-form reductions in dynamical systems.
\end{itemize}
Similarly, the SOC invariants
$(\sigma_P,\partial_*^{\mathrm{spec}},\Sigma^{\mathrm{res}})$
provide the canonical structural data underlying admissible spectral
propagation laws for operadic operator networks.

\end{remark}

\begin{corollary}[Universality of the SOC Propagation Rule]
\label{cor:universality-soc}

The SOC propagation rule $R_{\text{SOC}}(\mathcal{N}) := \operatorname{Spec}(\mathcal{N})$
provides a canonical realization of the universal propagation invariants
\[
(\sigma_P,\partial_*^{\mathrm{spec}},\Sigma^{\mathrm{res}}).
\]
Consequently, every reasonable propagation rule (satisfying A1–A4)
factors naturally through the SOC propagation framework.

\end{corollary}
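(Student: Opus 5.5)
The plan is to reduce the corollary to Theorem~\ref{thm:universality}. The work is to show that every rule satisfying the informal axioms (A1)--(A4) of Subsection~\ref{subsec:axioms} also satisfies the formal axioms (C1)--(C5) of Definition~\ref{def:propagation-rule}. Once that is done, the natural isomorphism $\Theta_R\colon R_{\mathrm{SOC}}\Rightarrow R$ supplied by the theorem is the required factorization through the SOC framework.

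The \emph{first assertion} (that $R_{\mathrm{SOC}}$ realizes the invariant triple) needs no new argument. The Spectral Propagation Theorem (Theorem~\ref{thm:spectral-propagation}) writes $\operatorname{supp}(\operatorname{Spec}(\mathcal{N}))$ as the union of the node supports $\operatorname{supp}(\sigma_P(A_v))$ with $\Sigma^{\mathrm{res}}(\mathcal{N})$. Propagation along paths is carried by $\partial^{\mathrm{spec}}\tau_\pi$, so $R_{\mathrm{SOC}}$ is built exactly from $(\sigma_P,\partial_*^{\mathrm{spec}},\Sigma^{\mathrm{res}})$.

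For the \emph{second assertion} I would check the five formal axioms in turn, as follows.
\begin{enumerate}
\item The first part of (A4) is literally (C4).
\item (A3) is literally (C3), since both use admissible strong monoidal functors and the transport $\Phi_*$.
\item (C5) is a property of $\mathsf{OpNet}(P)$ rather than of $R$. It was already verified in Part~I of the proof of Theorem~\ref{thm:universality}, so it holds automatically.
\item For (C1), I would take the composition rule $\mathcal{C}$ of (A1) and use Remark~\ref{rem:compositionality-consequence}. The aim is to show that $\mathcal{C}$ coincides with $\otimes_{\mathsf{Spec}}$. The argument would apply (A1) to $R_{\mathrm{SOC}}$ itself, which satisfies (A1) by the comparison paragraph after (A4). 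Since $\mathcal{C}$ is independent of $R$ and is derived from the operadic structure, $\mathcal{C}(X,Y)\cong X\otimes_{\mathsf{Spec}}Y$ on all objects of the form $\operatorname{Spec}(\mathcal{N}_i)$. These are the only objects that occur.
\end{enumerate}

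The \emph{main obstacle} is deriving (C2) from (A2), because (A2) gives only a modulus of continuity $\omega$, not differentiability. To get around this, I would use the non-redundancy clause of (A4). It says $R(\mathcal{N})=G(\{\sigma_P(A_v)\},\{\partial^{\mathrm{spec}}\tau_e\},\{\mathcal{L}_I\})$ for some reconstruction map $G$ built only from operadic composition and the fixed-point resolution of Theorem~\ref{thm:fixed-point-contractive}. Each of these operations is spectrally analytic by SOC~II, Theorem~11. The inputs depend analytically on $\varepsilon$ along $A_v+\varepsilon\,\delta A_v$, by spectral analyticity of the node algebras. The fixed points vary smoothly with their parameters, since the contraction constant $\alpha<1$ allows an implicit-function argument in the spirit of Part~3 of Theorem~\ref{thm:fixed-point-contractive}. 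Differentiability at $\varepsilon=0$ then follows. The chain rule (SOC~II, Theorem~10) shows that the derivative involves only $\partial^{\mathrm{spec}}A_v$ and $\partial^{\mathrm{spec}}\tau_e$. The residue terms $\mathcal{L}_I$ depend only on interface data, so (C2) holds. I would use (A2) only as a consistency check: the derivative just obtained must be bounded by the Lipschitz constant $C$.

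With (C1)--(C5) in place, Theorem~\ref{thm:universality} gives the unique natural isomorphism $\Theta_R$, commuting with $\otimes_{\mathsf{Spec}}$ and with base change. That isomorphism is exactly the asserted natural factorization of $R$ through $R_{\mathrm{SOC}}$.
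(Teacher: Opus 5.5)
Your overall reduction is the paper's. Its proof is a one-line appeal to Theorem~\ref{thm:universality}, plus the remark that $R_{\mathrm{SOC}}$ is itself reasonable with $\Theta_{R_{\mathrm{SOC}}}=\mathrm{id}$. It tacitly treats (A1)--(A4) as though they were the hypotheses (C1)--(C5) of Definition~\ref{def:propagation-rule}. You are right that this identification needs an argument, and (C3), (C4), (C5) do transfer as you say.

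\textbf{The gap is your derivation of (C2), and it cannot be repaired, because the implication is false.} The non-redundancy clause of (A4) is only a sufficiency statement: the listed data suffice to compute $R(\mathcal{N})$. It gives no continuity, let alone analyticity, of your reconstruction map $G$. SOC~II, Theorem~11 concerns closure of spectrally analytic functors under composition, not regularity of a map from spectral data into $\mathsf{SpecObj}(P)$.

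More decisively, your claim that $\varepsilon\mapsto\sigma_P(A_v+\varepsilon\,\delta A_v)$ is analytic fails. Take the classical specialization of Example~\ref{ex:two-node-propagation} and the single-node network with $A(\varepsilon)=\bigl(\begin{smallmatrix}0&1\\ \varepsilon&0\end{smallmatrix}\bigr)$. Its spectrum is $\{\pm\sqrt{\varepsilon}\}$ (Example~\ref{ex:two-level-ep}), which is not differentiable at $\varepsilon=0$. Normalization in (A4) forces $R(\mathcal{N})\cong\sigma_P(A(\varepsilon))$ on this network, so no rule satisfying (A4) can satisfy (C2) along this deformation. In particular $R_{\mathrm{SOC}}$, which the paper asserts satisfies (A1)--(A4), satisfies those axioms but not (C2). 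On this network (A2) holds with $C=1$, because (A2) measures perturbations by the change of node spectra.

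Your handling of residues has the same defect. $\mathcal{L}_I(P,A)$ depends on the node algebras, not only on interface data, so you cannot discard its $\varepsilon$-variation. That variation is also not among the data on which (C2) allows the derivative to depend.

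This matters because (C2) is exactly what the proof of Theorem~\ref{thm:universality} uses at feedback closure (Part~II, Step~4). Without it, your reduction loses every network with a cycle.

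Your (C1) step is also incomplete. You identify $\mathcal{C}$ with $\otimes_{\mathsf{Spec}}$ only at pairs $(\operatorname{Spec}(\mathcal{N}_1),\operatorname{Spec}(\mathcal{N}_2))$ of composable networks. Transferring this to $(R(\mathcal{N}_1),R(\mathcal{N}_2))$ requires either $R(\mathcal{N}_i)\cong\operatorname{Spec}(\mathcal{N}_i)$, which is the conclusion, or an argument that $\mathcal{C}$ and $\otimes_{\mathsf{Spec}}$ agree as functors. The gluing case is cleaner by direct induction over (C5). If $R(\mathcal{N}_i)\cong R_{\mathrm{SOC}}(\mathcal{N}_i)$, then (A1) for both rules (with the same $R$-independent $\mathcal{C}$) and functoriality of $\mathcal{C}$ give $R(\mathcal{N}_1\circ\mathcal{N}_2)\cong R_{\mathrm{SOC}}(\mathcal{N}_1\circ\mathcal{N}_2)$, without passing through (C1).

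For the cyclic case you need two changes. First, add differentiability, with the dependence demanded in (C2), as an explicit axiom on reasonable rules; in effect this reads ``reasonable'' as (C1)--(C5), which is what the paper's proof silently does. Second, since the example above defeats (C2) even for $R_{\mathrm{SOC}}$, restrict the admissible deformations, e.g.\ to those along which the node spectra vary differentiably, and state that restriction in the corollary.
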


\begin{proof}
By Theorem~\ref{thm:universality}, any reasonable rule $R$ factors uniquely
through the SOC invariants via a natural transformation $\Theta_R : \mathcal{U} \Rightarrow R$.
The SOC rule $R_{\text{SOC}}$ is itself reasonable and corresponds to the
choice $\Theta_{R_{\text{SOC}}} = \mathrm{id}_{\mathcal{U}}$. Hence every
reasonable rule factors through the SOC framework, establishing its
universality.
\end{proof}

\begin{remark}[Classical Methods and the Role of $\Sigma^{\mathrm{res}}$]
\label{rem:classical-and-residue}

Classical linear time-invariant (LTI) system theory uses transfer functions
$H(s)$ as propagation rules, with stability determined by poles in the
left half-plane. This rule satisfies compositionality and perturbative
locality in the LTI setting. Therefore, by Theorem~\ref{thm:universality},
classical LTI propagation must factor through the SOC invariants. Indeed,
for LTI systems:
\begin{itemize}
    \item $\sigma_P(A_v)$ is the set of poles of component $v$,
    \item $\partial^{\mathrm{spec}}F$ is represented by the transfer
          function $H$,
    \item $\Sigma^{\mathrm{res}}$ vanishes because classical LTI
          interconnections are spectrally additive under ideal linear
          coupling assumptions.
\end{itemize}
Thus, the classical spectral radius condition $\rho(H) < 1$ emerges as the
special case of the SOC stability criterion where $\partial^{\mathrm{spec}}F = H$
and $\Sigma^{\mathrm{res}} = 0$. The universality theorem explains why
classical methods work when interfaces are perfect (i.e., when residues
vanish) and fail when interfaces become nontrivial.

The interaction residue $\Sigma^{\mathrm{res}}$ is therefore the critical
invariant that distinguishes the present framework from classical
approaches. Many classical network theories implicitly assume
$\Sigma^{\mathrm{res}} = 0$ (i.e., perfect interfaces with no emergent
spectral content). The Universality Theorem shows that any rule that
ignores $\Sigma^{\mathrm{res}}$ cannot fully capture interface-localized
propagation phenomena in networks with nontrivial interfaces.
In particular, perturbative locality would be violated because small
changes at an interface could produce discontinuous changes in spectral
output if the residue is not properly accounted for. Thus,
$\Sigma^{\mathrm{res}}$ is not an optional refinement but an
\textbf{essential component} of any reasonable spectral propagation rule.

\end{remark}

\subsection{Interpretation, Significance, and Classical Analogies}
\label{subsec:interpretation-significance-analogies}

The Universality Theorem (Theorem~\ref{thm:universality}) has far-reaching
conceptual and structural consequences for operadic spectral theory. Its
significance extends beyond the construction of a particular analytical
framework: it identifies the intrinsic mathematical structure underlying
all reasonable spectral propagation laws (satisfying the four axioms:
Compositionality, Perturbative Locality, Base-change Covariance, and
Minimal Completeness).

\paragraph{Necessity of the SOC invariants.}
A first consequence is that the SOC invariants are not merely sufficient
descriptors of spectral propagation — they are canonically forced within
the admissible axiomatic framework. The theorem shows that any admissible
propagation rule satisfying the four axioms must factor through
\[
\sigma_P,\qquad \partial_*^{\mathrm{spec}},\qquad \Sigma^{\mathrm{res}}.
\]
Thus, the SOC triple provides the \textbf{canonical universal data}
required for propagation theory under these axioms; no admissible
propagation mechanism can avoid factoring through these invariants.

\begin{remark}[Philosophical consequence]
\label{rem:philosophical-necessity-universality}
The theorem implies that any admissible spectral propagation theory must
at least factor through the SOC invariants. Alternative frameworks may
enrich or reinterpret the theory, but cannot avoid the structural role
played by $(\sigma_P,\partial_*^{\mathrm{spec}},\Sigma^{\mathrm{res}})$.
\end{remark}

\paragraph{Relationship with classical network theory.}
The theorem clarifies the relationship between the present framework and
classical theories of network propagation. Many traditional frameworks —
transfer-function methods, signal-flow graphs, Lyapunov propagation
schemes, and small-gain theorems — satisfy the admissibility axioms in
restricted regimes. Consequently, they arise as specializations or partial
realizations of the SOC framework.

From this viewpoint:
\begin{itemize}
    \item transfer functions encode portions of the operadic spectral and
          propagation structure,
    \item sensitivity and gain propagation encode fragments of $\partial_*^{\mathrm{spec}}$,
    \item coupling corrections and hidden feedback effects correspond to components of $\Sigma^{\mathrm{res}}$.
\end{itemize}

The theorem explains both the \textbf{success} and the \textbf{limitations}
of classical methods. They succeed whenever the omitted invariants are
negligible or trivial, and they fail precisely when neglected residue
interactions or higher-order spectral propagation effects become dominant.

\begin{center}
\begin{tabular}{|p{0.4\textwidth}|p{0.5\textwidth}|}
\hline
\textbf{Classical method succeeds when} & \textbf{Reason (in SOC terms)} \\
\hline
Linear, time-invariant, perfect interfaces & $\partial^{\mathrm{spec}}F = F$, $\Sigma^{\mathrm{res}} = 0$ \\
\hline
Feedforward architecture & No feedback amplification, residue propagation trivial \\
\hline
Isolated operator analysis & Single node, no composition \\
\hline
\end{tabular}
\end{center}

\begin{center}
\begin{tabular}{|p{0.4\textwidth}|p{0.5\textwidth}|}
\hline
\textbf{Classical method fails when} & \textbf{Reason (in SOC terms)} \\
\hline
Nonlinear components & $\partial^{\mathrm{spec}}F \neq F$; higher derivatives matter \\
\hline
Nontrivial interfaces & $\Sigma^{\mathrm{res}} \neq 0$; emergent spectral content \\
\hline
Deep or hierarchical structure & Layerwise propagation and residue accumulation \\
\hline
Non-commuting operators & Spectral derivatives do not commute \\
\hline
\end{tabular}
\end{center}

\paragraph{Universality as a foundational language.}
The theorem shows that every admissible propagation rule factors through
the same invariant structure. Thus, the SOC invariants play a role
analogous to curvature tensors in differential geometry, conserved
quantities in physics, and canonical coordinates in dynamical systems.

Specifically:
\begin{itemize}
    \item $\sigma_P$ describes the \textbf{primary spectral geometry} of the network.
    \item $\partial_*^{\mathrm{spec}}$ governs \textbf{deformation and sensitivity} of propagation.
    \item $\Sigma^{\mathrm{res}}$ captures \textbf{emergent interaction phenomena} from operadic gluing.
\end{itemize}

\paragraph{Comparison with classical uniqueness theorems.}
The Universality Theorem belongs to a broader mathematical tradition in
which canonical invariants characterize a class of structures. It is
structurally analogous to:
\begin{itemize}
    \item \textbf{Fundamental group $\pi_1$}: Universal invariant of pointed spaces.
    \item \textbf{Singular homology $H_*$}: Universal additive invariant satisfying the Eilenberg–Steenrod axioms.
    \item \textbf{Gelfand–Naimark theorem}: Commutative C*-algebras are function algebras on their spectra.
\end{itemize}
Thus, $(\sigma_P, \partial_*^{\mathrm{spec}}, \Sigma^{\mathrm{res}})$
constitute \textbf{canonical propagation invariants} for spectral
propagation in operadic operator networks under the stated axioms.

\paragraph{What makes the theorem distinctive.}
While the analogies above are illuminating, the Universality Theorem
possesses distinctive features:
\begin{enumerate}
    \item \textbf{Three interdependent invariants} interacting nontrivially.
    \item \textbf{Operadic composition structure} — the gluing axiom is tailored to operadic networks.
    \item \textbf{Perturbative locality} — an analytic/computational axiom with no analogue in topology.
    \item \textbf{Base-change covariance} — categorical covariance under representation changes.
\end{enumerate}

\paragraph{Practical consequences.}
The theorem has immediate practical implications:
\begin{enumerate}
    \item \textbf{Model selection}: Any reasonable method factors through the SOC invariants.
    \item \textbf{Error diagnosis}: Failure of a method indicates omission of one of the three invariants.
    \item \textbf{New applications}: The SOC framework applies directly to any new class of operator networks.
    \item \textbf{Approximation theory}: Approximation error is controlled by errors in the three invariants.
\end{enumerate}

\begin{corollary}[Detection Principle]
\label{cor:detection-universality}
If two admissible operadic operator networks $\mathcal{N}$ and $\mathcal{N}'$
have identical SOC invariants:
\[
\sigma_P(\mathcal{N}) = \sigma_P(\mathcal{N}'),\quad
\partial_*^{\mathrm{spec}}(\mathcal{N}) = \partial_*^{\mathrm{spec}}(\mathcal{N}'),\quad
\Sigma^{\mathrm{res}}(\mathcal{N}) = \Sigma^{\mathrm{res}}(\mathcal{N}'),
\]
then every reasonable spectral propagation rule $R$ (satisfying A1–A4)
assigns the same propagated output to both networks:
\[
R(\mathcal{N}) = R(\mathcal{N}').
\]
\end{corollary}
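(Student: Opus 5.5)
The plan is to deduce the Detection Principle from the non-redundancy clause of axiom (A4), using Theorem~\ref{thm:universality} only as a consistency check. The argument has a ``reconstruction map'' shape. By (A4), for each reasonable rule $R$ there is a reconstruction procedure $\mathrm{Rec}_R$ with
\[
R(\mathcal{N}) \cong \mathrm{Rec}_R\Bigl(\{\sigma_P(A_v)\}_{v},\ \{\partial^{\mathrm{spec}}\tau_e\}_{e},\ \{\mathcal{L}_I\}_{I}\Bigr).
\]
This procedure uses only the operadic composition rules of $P$ and the fixed-point resolution of cycles from Theorem~\ref{thm:fixed-point-contractive}. Since those two ingredients are fixed by $P$ and by admissibility, $\mathrm{Rec}_R$ depends on $R$ but not on the particular network beyond its invariant data.

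The first step is to make precise what ``identical SOC invariants'' means in the hypothesis. I will read it as follows.
\begin{enumerate}
\item There is a bijection of node sets under which $\sigma_P(A_v)=\sigma_P(A'_{v'})$.
\item There is a matching of edges under which $\partial^{\mathrm{spec}}\tau_e=\partial^{\mathrm{spec}}\tau'_{e'}$.
\item There is a matching of interfaces under which $\mathcal{L}_I=\mathcal{L}'_{I'}$, so that by SOC III, Theorem~4, $\Sigma^{\mathrm{res}}(\mathcal{N})\cong\coprod_I\mathcal{L}_I\cong\Sigma^{\mathrm{res}}(\mathcal{N}')$.
\end{enumerate}
Under this reading the two input tuples to $\mathrm{Rec}_R$ coincide, so $R(\mathcal{N})\cong R(\mathcal{N}')$.

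To get equality rather than isomorphism, I will argue that the reconstruction is canonical. Concretely, I will use Theorem~\ref{thm:universality} to write $R\cong R_{\mathrm{SOC}}$ via the unique $\Theta_R$. Then I will apply the Spectral Propagation Theorem (Theorem~\ref{thm:spectral-propagation}) to $R_{\mathrm{SOC}}$. It gives $\operatorname{supp}\operatorname{Spec}(\mathcal{N})=\bigcup_v\operatorname{supp}\sigma_P(A_v)\cup\Sigma^{\mathrm{res}}(\mathcal{N})$, with path contributions computed through the chain rule $\partial^{\mathrm{spec}}\tau_\pi=\partial^{\mathrm{spec}}\tau_{e_k}\circ\cdots\circ\partial^{\mathrm{spec}}\tau_{e_1}$. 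Every term on the right-hand side is fixed by the matched data, so $\operatorname{Spec}(\mathcal{N})$ and $\operatorname{Spec}(\mathcal{N}')$ agree. Transporting this agreement along $\Theta_R$ and $\Theta_{R}'$ gives the claim for $R$.

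Cycles need separate treatment. The resolved cycle algebras of Theorem~\ref{thm:network-evaluation} depend on $\tau_c$ itself and not only on $\partial^{\mathrm{spec}}\tau_c$. The handle is that (A4) explicitly permits fixed-point resolution as part of the reconstruction. So I only need the linearized feedback data $\partial^{\mathrm{spec}}\tau_c$, which by the chain rule is determined by the matched edge derivatives, to determine the cycle's spectral contribution. The uniqueness statement in Theorem~\ref{thm:fixed-point-contractive}, Part~1, then settles which fixed point is used.

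I expect the main obstacle to be the gap between ``determined up to canonical isomorphism'' and the strict equality $R(\mathcal{N})=R(\mathcal{N}')$. Equality also needs the network-level derivative invariant $\partial_*^{\mathrm{spec}}(\mathcal{N})$ to carry the edgewise data, and not just the global derivative. My first attempt will be to state the conclusion in $\mathsf{SpecObj}(P)$ modulo canonical isomorphism, which is how the paper uses equality of spectral objects throughout. I will also interpret the hypothesis as an identification of the edgewise data that (A4) takes as input.
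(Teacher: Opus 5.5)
Your central idea, that the non-redundancy clause of (A4) supplies a reconstruction map so that identical input data force identical outputs, has the same logical shape as the paper's one-line proof. That proof writes $R=\Theta_R\circ\mathcal{U}$, with $\mathcal{U}$ the SOC-invariant map, and applies $\Theta_R$ to $\mathcal{U}(\mathcal{N})=\mathcal{U}(\mathcal{N}')$. Using (A4) in place of Theorem~\ref{thm:universality} even fits the corollary's ``A1--A4'' hypothesis better, since that theorem is stated for rules satisfying (C1)--(C5).

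\textbf{The gap is in reaching the stated equality.} You write the reconstruction with $\cong$ and then try to upgrade it via $R_{\mathrm{SOC}}$, and that step fails for two reasons.
\begin{itemize}
\item Theorem~\ref{thm:spectral-propagation} gives an explicit formula only for $\operatorname{supp}\operatorname{Spec}(\mathcal{N})$. It determines $\operatorname{Spec}(\mathcal{N})$ itself only up to canonical equivalence.
\item Transporting along $\Theta_R$ can only produce $R(\mathcal{N})\cong R(\mathcal{N}')$. Naturality of $\Theta_R$ relates its components only along morphisms of $\mathsf{OpNet}(P)$, and two networks with equal invariants need not be joined by any morphism.
\end{itemize}
The missing step is to use the determination as an honest function of the data. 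The paper does this by treating $\Theta_R$ as a single map on invariant tuples. You can do the same by reading (A4) as $R(\mathcal{N})=\mathrm{Rec}_R(\ldots)$ with $\mathrm{Rec}_R$ a function. Equality is then immediate, and the detour through $R_{\mathrm{SOC}}$, the support formula and the chain rule can be dropped. Your unease is partly justified: Theorem~\ref{thm:universality} as stated only gives a natural isomorphism $R_{\mathrm{SOC}}\Rightarrow R$. The paper's equality rests on reading that theorem's ``Consequently'' clause as a factorization through $\mathcal{U}$.

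\textbf{You have also strengthened the hypothesis.} The corollary, and the paper's proof via $\mathcal{U}(\mathcal{N})=\mathcal{U}(\mathcal{N}')$, concern network-level invariants. Compare Step~2 of the proof of Theorem~\ref{thm:representation-independence}, where $\sigma_P(\mathcal{G})$ is defined as $\sigma_P(\mathcal{O}_{\mathcal{G}})$. By contrast, (A4) consumes node-, edge- and interface-level data. Equality of the global invariants provides no bijection of nodes and no matching of edges or interfaces; the two networks may even have different numbers of nodes. So through (A4) you prove only the weaker statement under your local reading. The stated hypothesis needs the factorization through the global invariant map that the paper invokes. If you keep the local reading, the edge matching must also respect sources and targets under the node bijection, since the reconstruction uses the incidence structure.

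\textbf{The cycle paragraph should go.} It is superfluous once (A4) is assumed, and its key claim is false. Uniqueness of the fixed point of each $\tau_c$ does not make that fixed point a function of $\partial^{\mathrm{spec}}\tau_c$: the maps $x\mapsto x/2$ and $x\mapsto x/2+1$ have the same derivative but fixed points $0$ and $2$. The same point undercuts your $R_{\mathrm{SOC}}$ step for cyclic networks. There $\operatorname{Spec}(\mathcal{N})=\sigma_P(\mathcal{O}_{\mathcal{N}})$ depends on the resolved cycle algebras $\tilde{A}_c$, not only on matched derivatives.
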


\begin{proof}
By Theorem~\ref{thm:universality}, any reasonable rule $R$ factors through
the SOC invariants via $R = \Theta_R \circ \mathcal{U}$. If $\mathcal{U}(\mathcal{N}) = \mathcal{U}(\mathcal{N}')$,
then $R(\mathcal{N}) = \Theta_R(\mathcal{U}(\mathcal{N})) = \Theta_R(\mathcal{U}(\mathcal{N}')) = R(\mathcal{N}')$.
\end{proof}

\paragraph{Final synthesis.}
The Universality Theorem elevates spectral propagation from a collection
of computational techniques to a \textbf{categorical structural principle}.
Propagation laws become intrinsic properties of compositional operator
systems, up to the admissible transformations allowed by the axioms.

\begin{center}
\boxed{
\begin{minipage}{0.85\textwidth}
\centering
\textbf{Universality of the SOC Propagation Law (Theorem~\ref{thm:universality}):} \\
Any spectral propagation rule $R$ satisfying compositionality, perturbative 
locality, base-change covariance, and normalization factors uniquely through 
the SOC invariants:
\[
(\sigma_P,\; \partial_*^{\mathrm{spec}},\; \Sigma^{\mathrm{res}}).
\]
Thus, these three invariants universally determine spectral propagation in 
admissible operadic operator networks.
\end{minipage}
}
\end{center}

This is a central contribution of this work: not merely a set of theorems
about specific classes of networks, but a foundational framework for
understanding spectral propagation in compositional systems under the
stated axioms. Any future admissible theory of spectral propagation in
compositional systems must naturally interact with this invariant structure.

\section{Case Studies and Applications}
\label{sec:case_studies_applications}

\subsection{Feedforward Networks (Chain)}
\label{subsec:feedforward-chain}

We first consider the simplest hierarchical operadic architecture: a feedforward chain. 
In this setting, spectral propagation reduces to sequential composition of layerwise propagation operators, and the SOC framework reproduces the classical chain rule as a special case.

Let
\[
\mathcal{N}
=
\mathcal{N}_1
\circ
\mathcal{N}_2
\circ
\cdots
\circ
\mathcal{N}_L
\]
be a feedforward operadic network with no feedback loops or branching interactions. 
Each layer
\[
\mathcal{N}_\ell
\]
acts as an operator transforming spectral data from one stage to the next.

\begin{definition}[Feedforward Chain Network]
\label{def:feedforward-chain}
A \emph{feedforward chain network} of length $L$ is an admissible operadic operator network $\mathcal{N}$ of the form
\[
\mathcal{N} = \mathcal{N}_L \circ \mathcal{N}_{L-1} \circ \cdots \circ \mathcal{N}_1,
\]
where each $\mathcal{N}_\ell$ consists of a single node $v_\ell$ with spectrally analytic $P$-algebra $A_\ell$, and edges $\tau_{\ell}: A_\ell \to A_{\ell+1}$ coupling consecutive nodes. There are no feedback loops, and all interfaces are assumed to be compatible (i.e., $\Sigma^{\mathrm{res}} = \emptyset$ for each edge when strata match).
\end{definition}

\subsection*{Spectral Propagation in Feedforward Chains}

Because the network is purely sequential, the global spectral propagation functor is obtained by iterated composition:
\[
F_{\mathcal{N}}
=
F_{\mathcal{N}_L}
\circ
F_{\mathcal{N}_{L-1}}
\circ
\cdots
\circ
F_{\mathcal{N}_1},
\]
where $F_{\mathcal{N}_\ell} := \mathcal{E}_{\mathcal{N}_\ell}$ is the spectral evaluation functor of layer $\ell$.

\begin{theorem}[Operadic Chain Rule for Layerwise Feedforward Networks]
\label{thm:chain-rule-feedforward-operadic}
Let $\mathcal{N}$ be a layerwise decomposable admissible operadic network of depth $L$ in the sense of Definition~\ref{def:layerwise_operadic_decomposition}, so that
\[
\mathcal{N}
=
\mathcal{N}_L \circ \mathcal{N}_{L-1} \circ \cdots \circ \mathcal{N}_1.
\]
Set
\[
F_\ell := \mathcal{E}_{\mathcal{N}_\ell}, \qquad
F_{\mathcal{N}} := \mathcal{E}_{\mathcal{N}},
\]
and define intermediate states by
\[
A_0 := A_{\mathrm{in}}, \qquad
A_\ell := F_\ell(A_{\ell-1}), \quad 1 \le \ell \le L.
\]
Then:
\begin{enumerate}[label=(\arabic*)]
    \item The global spectral evaluation map factors as
    \[
    \mathcal{E}_{\mathcal{N}}
    =
    \mathcal{E}_{\mathcal{N}_L}
    \circ
    \mathcal{E}_{\mathcal{N}_{L-1}}
    \circ
    \cdots
    \circ
    \mathcal{E}_{\mathcal{N}_1}.
    \]

    \item If each $F_\ell$ is spectrally analytic at $A_{\ell-1}$, then the first spectral derivative satisfies
    \[
    \partial^{\mathrm{spec}}\mathcal{E}_{\mathcal{N}}(A_0)
    =
    \mathcal{S}_L
    \circ
    \mathcal{S}_{L-1}
    \circ
    \cdots
    \circ
    \mathcal{S}_1,
    \]
    where
    \[
    \mathcal{S}_\ell
    :=
    \partial^{\mathrm{spec}}F_\ell(A_{\ell-1})
    =
    \partial^{\mathrm{spec}}\mathcal{E}_{\mathcal{N}_\ell}(A_{\ell-1}).
    \]

    \item For any admissible directed propagation path
    \[
    \pi = \tau_{L-1} \circ \cdots \circ \tau_1
    \]
    from $A_1$ to $A_L$, the propagated spectral contribution satisfies
    \[
    \sigma_P^\pi(A_L)
    \subseteq
    \bigl(
    \partial^{\mathrm{spec}}\tau_{L-1}
    \circ
    \cdots
    \circ
    \partial^{\mathrm{spec}}\tau_1
    \bigr)
    \bigl(\sigma_P(A_1)\bigr)
    \subseteq
    \sigma_P(A_L).
    \]
\end{enumerate}
\end{theorem}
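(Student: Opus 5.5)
The plan is to treat the three items in order. Each reduces to a result already established earlier in the paper.

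\emph{Item (1).} This is essentially definitional. By hypothesis $\mathcal{N}$ is layerwise decomposable in the sense of Definition~\ref{def:layerwise_operadic_decomposition}, and the third bullet of that definition asserts exactly the factorization
\[
\mathcal{E}_{\mathcal{N}}=\mathcal{E}_{\mathcal{N}_L}\circ\cdots\circ\mathcal{E}_{\mathcal{N}_1}.
\]
For completeness I will also note that this agrees with the construction of Theorem~\ref{thm:network-evaluation}. A feedforward chain has no cycles, so Step~1 of that construction is vacuous. The topological order can be chosen layer by layer, and the recursive assembly in Step~3 then processes $\mathcal{N}_1$ first, then $\mathcal{N}_2$, and so on. By associativity of operadic composition (Part~III), the result is the iterated composite. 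Different topological orders give canonically isomorphic results, so the identity holds up to the canonical isomorphism already built into that theorem.

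\emph{Item (2).} I will invoke Proposition~\ref{prop:layerwise_spectral_composition}, whose chain-rule clause is precisely this statement. To keep the argument self-contained, I will redo it as a short induction on $L$. The case $L=1$ is trivial. For the inductive step, write $F_{\mathcal{N}}=F_L\circ G$ with $G=F_{L-1}\circ\cdots\circ F_1$, and note that $G(A_0)=A_{L-1}$. The operadic chain rule (SOC~II, Theorem~10) gives
\[
\partial^{\mathrm{spec}}F_{\mathcal{N}}(A_0)=\partial^{\mathrm{spec}}F_L(A_{L-1})\circ\partial^{\mathrm{spec}}G(A_0),
\]
and the induction hypothesis then expands $\partial^{\mathrm{spec}}G(A_0)$ into $\mathcal{S}_{L-1}\circ\cdots\circ\mathcal{S}_1$. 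The only point needing care is the hypothesis of the chain rule. Spectral analyticity of each $F_\ell$ at $A_{\ell-1}$ must propagate to $G$ at $A_0$, and this follows from closure of spectrally analytic maps under composition (SOC~II, Theorem~11).

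\emph{Item (3).} This is the step I expect to be the main obstacle, because the statement has two inclusions.

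For the first inclusion, Theorem~\ref{thm:spectral-propagation} defines the contribution along $\pi$ as
\[
\sigma_P^\pi(A_L)=\partial^{\mathrm{spec}}(\tau_{L-1}\circ\cdots\circ\tau_1)\bigl(\sigma_P(A_1)\bigr).
\]
The operadic chain rule identifies $\partial^{\mathrm{spec}}(\tau_{L-1}\circ\cdots\circ\tau_1)$ with $\partial^{\mathrm{spec}}\tau_{L-1}\circ\cdots\circ\partial^{\mathrm{spec}}\tau_1$. This gives the first inclusion, in fact as an equality.

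The second inclusion is the substantive one. I plan to argue it edge by edge. For a single edge, Theorem~\ref{thm:spectral-propagation} asserts $\sigma_P^{\tau_\ell}(A_{\ell+1})=\partial^{\mathrm{spec}}\tau_\ell(\sigma_P(A_\ell))\subseteq\sigma_P(A_{\ell+1})$. An induction on $\ell$ then applies monotonicity of each $\partial^{\mathrm{spec}}\tau_\ell$ under inclusion of spectral subobjects. The inductive hypothesis gives that the image of $\sigma_P(A_1)$ after $\ell-1$ edges lies inside $\sigma_P(A_\ell)$. Applying $\partial^{\mathrm{spec}}\tau_\ell$ preserves this inclusion, so the image after $\ell$ edges lies inside $\partial^{\mathrm{spec}}\tau_\ell(\sigma_P(A_\ell))$, and hence inside $\sigma_P(A_{\ell+1})$.

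Monotonicity is the part I am least sure is available explicitly. I would justify it by regarding $\partial^{\mathrm{spec}}\tau_\ell$ as a morphism in $\mathsf{SpecObj}(P)$, so that it acts on spectral subobjects by taking images, and images preserve inclusions.

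Finally, Definition~\ref{def:feedforward-chain} assumes compatible interfaces, so $\Sigma^{\mathrm{res}}=\emptyset$. This means no residue term intervenes, which keeps the containment clean.
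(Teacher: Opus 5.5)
Your proposal is correct and follows essentially the paper's route. For (1), you read the factorization off Definition~\ref{def:layerwise_operadic_decomposition}, where it is already built in; the paper phrases this through the functoriality in Theorem~\ref{thm:network-evaluation}. For (2), you induct on $L$ with the SOC~II chain rule. For (3), you use Theorem~\ref{thm:spectral-propagation} together with the chain rule; the paper's own edge-by-edge induction relies on the same image-monotonicity you invoke. The step you flag as the main obstacle is in fact immediate: because your first inclusion is an equality, the middle term is $\sigma_P^\pi(A_L)$ itself, and Theorem~\ref{thm:spectral-propagation} already asserts $\sigma_P^\pi(A_{v_k})\subseteq\sigma_P(A_{v_k})$ for every admissible path. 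So you need neither monotonicity nor the residue-free assumption of Definition~\ref{def:feedforward-chain}, which is not a hypothesis of this theorem in any case.
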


\begin{proof}
We prove each statement in order.

\paragraph{Proof of (1).}
By Definition~\ref{def:layerwise_operadic_decomposition}, the network $\mathcal{N}$ is obtained by operadic composition of its layers: $\mathcal{N} = \mathcal{N}_L \circ \cdots \circ \mathcal{N}_1$. By the functoriality of the spectral evaluation map (Theorem~\ref{thm:network-evaluation}), the evaluation functor respects operadic composition. Therefore:
\[
\mathcal{E}_{\mathcal{N}} = \mathcal{E}_{\mathcal{N}_L \circ \cdots \circ \mathcal{N}_1} = \mathcal{E}_{\mathcal{N}_L} \circ \cdots \circ \mathcal{E}_{\mathcal{N}_1}.
\]
This follows by induction on $L$: for $L=2$, $\mathcal{E}_{\mathcal{N}_2 \circ \mathcal{N}_1} = \mathcal{E}_{\mathcal{N}_2} \circ \mathcal{E}_{\mathcal{N}_1}$ by the compatibility of the evaluation functor with gluing; the inductive step assumes the factorization holds for $L-1$ and composes with the $L$-th layer.

\paragraph{Proof of (2).}
Since each $\mathcal{E}_{\mathcal{N}_\ell}$ is spectrally analytic (by admissibility of the network and SOC II, Definition 10), we may differentiate the composition. Apply the spectral chain rule (SOC II, Theorem 10) iteratively, carefully evaluating derivatives at the appropriate intermediate states:
\[
\partial^{\mathrm{spec}}\mathcal{E}_{\mathcal{N}}(A_0)
= \partial^{\mathrm{spec}}(\mathcal{E}_{\mathcal{N}_L} \circ \cdots \circ \mathcal{E}_{\mathcal{N}_1})(A_0)
= \partial^{\mathrm{spec}}\mathcal{E}_{\mathcal{N}_L}(A_{L-1})
\circ \cdots
\circ \partial^{\mathrm{spec}}\mathcal{E}_{\mathcal{N}_1}(A_0)
= \mathcal{S}_L \circ \cdots \circ \mathcal{S}_1.
\]
The base case $L=2$ follows directly from the chain rule:
\[
\partial^{\mathrm{spec}}(\mathcal{E}_{\mathcal{N}_2} \circ \mathcal{E}_{\mathcal{N}_1})(A_0) = \partial^{\mathrm{spec}}\mathcal{E}_{\mathcal{N}_2}(A_1) \circ \partial^{\mathrm{spec}}\mathcal{E}_{\mathcal{N}_1}(A_0).
\]
The induction step assumes the formula holds for $L-1$ factors, then:
\[
\partial^{\mathrm{spec}}(\mathcal{E}_{\mathcal{N}_L} \circ \mathcal{E}_{\mathcal{N}_{L-1} \circ \cdots \circ \mathcal{E}_{\mathcal{N}_1}})(A_0)
= \partial^{\mathrm{spec}}\mathcal{E}_{\mathcal{N}_L}(A_{L-1})
\circ \partial^{\mathrm{spec}}(\mathcal{E}_{\mathcal{N}_{L-1}} \circ \cdots \circ \mathcal{E}_{\mathcal{N}_1})(A_0),
\]
and the induction hypothesis completes the proof.

\paragraph{Proof of (3).}
By the Spectral Propagation Theorem (Theorem~\ref{thm:spectral-propagation}), for a single edge $\tau: A_1 \to A_2$, the propagated spectral contribution satisfies
\[
\sigma_P^\tau(A_2) \subseteq \partial^{\mathrm{spec}}\tau(\sigma_P(A_1)) \subseteq \sigma_P(A_2).
\]
Applying this sequentially along the path $\pi = \tau_{L-1} \circ \cdots \circ \tau_1$:
\[
\sigma_P^{\tau_1}(A_2) \subseteq \partial^{\mathrm{spec}}\tau_1(\sigma_P(A_1)),
\]
\[
\sigma_P^{\tau_2 \circ \tau_1}(A_3) \subseteq \partial^{\mathrm{spec}}\tau_2(\sigma_P^{\tau_1}(A_2)) \subseteq \partial^{\mathrm{spec}}\tau_2 \circ \partial^{\mathrm{spec}}\tau_1(\sigma_P(A_1)),
\]
and by induction on $L$:
\[
\sigma_P^\pi(A_L) \subseteq \bigl( \partial^{\mathrm{spec}}\tau_{L-1} \circ \cdots \circ \partial^{\mathrm{spec}}\tau_1 \bigr)\bigl(\sigma_P(A_1)\bigr) \subseteq \sigma_P(A_L).
\]
This completes the proof.
\end{proof}

Thus, propagation through a feedforward operadic network reduces to composition of local spectral derivatives. This is precisely the operadic analogue of the classical chain rule:
\[
D(f_L \circ \cdots \circ f_1) = Df_L \cdots Df_1.
\]

In the SOC framework, the chain rule statement is substantially more general than its classical counterpart:
\begin{itemize}
    \item the layers may be nonlinear (spectral derivatives capture linearized behavior),
    \item the operators may be infinite-dimensional (spectral radii and norms are well-defined),
    \item the propagation may occur in arbitrary symmetric monoidal categories (base-change compatibility ensures representation independence),
    \item the derivatives encode spectral rather than merely pointwise sensitivity (they act on spectra, not just points).
\end{itemize}

\subsection*{Residue Simplification in Feedforward Chains}

Because feedforward chains contain no feedback loops (no cycles that can amplify residues recursively), the interaction residue simplifies significantly.

\begin{proposition}[Residue Decomposition for Layerwise Feedforward Chains]
\label{prop:feedforward-residue}
Let $\mathcal{N}$ be a layerwise decomposable admissible operadic network
\[
\mathcal{N}
=
\mathcal{N}_L \circ \cdots \circ \mathcal{N}_1
\]
in the sense of Definition~\ref{def:layerwise_operadic_decomposition}. 
Suppose that each layer $\mathcal{N}_\ell$ is a single-node layer, so that it has no internal interface residue:
\[
\Sigma^{\mathrm{res}}(\mathcal{N}_\ell) = \emptyset, \qquad 1 \le \ell \le L.
\]
Then the global interaction residue is given by
\[
\Sigma^{\mathrm{res}}(\mathcal{N})
=
\bigcup_{\ell=1}^{L-1} \mathcal{I}_{\ell,\ell+1},
\]
where $\mathcal{I}_{\ell,\ell+1}$ denotes the interface residue generated by the coupling between
$\mathcal{N}_\ell$ and $\mathcal{N}_{\ell+1}$.

If, moreover, every interlayer coupling is an internal morphism within the same operadic stratum, then
\[
\Sigma^{\mathrm{res}}(\mathcal{N}) = \emptyset.
\]
\end{proposition}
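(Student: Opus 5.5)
The plan is to derive the statement from the layerwise residue decomposition of Proposition~\ref{prop:layerwise_spectral_composition}, specialized to single-node layers. That proposition gives the inclusion
\[
\Sigma^{\mathrm{res}}(\mathcal{N})
\subseteq
\bigcup_{\ell=1}^{L}
\Phi_{\ell}^{*}\bigl(\Sigma^{\mathrm{res}}(\mathcal{N}_\ell)\bigr)
\;\cup\;
\bigcup_{\ell=1}^{L-1}\mathcal{I}_{\ell,\ell+1}.
\]
It also says the inclusion is an equality when no higher-order interlayer residues occur beyond adjacent-layer interfaces.

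\textbf{Step 1: kill the intralayer terms.} Each layer is a single node with no internal interfaces, so $\Sigma^{\mathrm{res}}(\mathcal{N}_\ell)=\emptyset$ by hypothesis. I will confirm this is consistent with normalization: by the Spectral Propagation Theorem~\ref{thm:spectral-propagation} and (C4), the global support of a single-node network equals $\operatorname{supp}\sigma_P(A_\ell)$. Hence the transported terms $\Phi_\ell^*(\emptyset)=\emptyset$ drop out, and the inclusion reduces to $\Sigma^{\mathrm{res}}(\mathcal{N})\subseteq\bigcup_{\ell}\mathcal{I}_{\ell,\ell+1}$.

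\textbf{Step 2: upgrade the inclusion to equality.} I will argue that a feedforward chain admits no non-adjacent interlayer interaction. The only edges are $\tau_\ell:A_\ell\to A_{\ell+1}$, and there are no cycles, so by Definition~\ref{def:feedforward-chain} every admissible interface $I\in\mathcal{I}(P)$ of the glued network is one of the adjacent couplings. Invoking the interface localization $\Sigma^{\mathrm{res}}(\mathcal{N})\cong\coprod_{I}\mathcal{L}_I(P,A)$ from Theorem~\ref{thm:spectral-propagation}, the coproduct is then indexed exactly by the edges $\tau_\ell$, with $\mathcal{L}_{\tau_\ell}=\mathcal{I}_{\ell,\ell+1}$. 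This gives both inclusions at once, and shows the equality hypothesis of Proposition~\ref{prop:layerwise_spectral_composition} is satisfied.

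I expect this step to be the main obstacle. The issue is justifying that the interfaces of a composite of layers are exactly the adjacent-layer interfaces, i.e.\ that gluing creates no residue along a path $\tau_{\ell+1}\circ\tau_\ell$ beyond what its constituent edges already produce. My first attempt is an induction on $L$ following the proof of Proposition~\ref{prop:layerwise_spectral_composition}: write $\mathcal{N}=\mathcal{N}_L\circ\mathcal{M}$, and note that $\mathcal{N}_L$ is a single node attached to $\mathcal{M}$ only through $\tau_{L-1}$. Then the only new interface is $\mathcal{I}_{L-1,L}$, and the induction closes.

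\textbf{Step 3: the same-stratum case.} If every $\tau_\ell$ is an internal morphism within one operadic stratum, the stratified decomposition of SOC~III (Theorem~1), as used in Example~\ref{ex:two-node-propagation}, Case~1, gives $\mathcal{I}_{\ell,\ell+1}=\mathcal{L}_{\tau_\ell}=\emptyset$ for each $\ell$. The union from Step~2 is therefore empty, so $\Sigma^{\mathrm{res}}(\mathcal{N})=\emptyset$.
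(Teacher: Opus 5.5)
Your Steps~1 and~3 match the paper's proof. It applies Proposition~\ref{prop:layerwise_spectral_composition}, discards the transported terms $\Phi_\ell^*(\Sigma^{\mathrm{res}}(\mathcal{N}_\ell))=\emptyset$, and gets $\mathcal{I}_{\ell,\ell+1}=\emptyset$ in the same-stratum case from SOC~III. The paper has no counterpart to your Step~2. It simply quotes Proposition~\ref{prop:layerwise_spectral_composition} with an equality sign, although that proposition only gives $\subseteq$, with equality under the extra hypothesis that no residues arise beyond adjacent-layer interfaces. You are right to notice this. The second claim needs only $\subseteq$, since $\Sigma^{\mathrm{res}}(\mathcal{N})\subseteq\bigcup_\ell\mathcal{I}_{\ell,\ell+1}=\emptyset$, so it is unaffected.

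Step~2 as written, however, does not prove $\supseteq$. The induction on $\mathcal{N}=\mathcal{N}_L\circ\mathcal{M}$ only shows that gluing adds nothing beyond $\Sigma^{\mathrm{res}}(\mathcal{M})$ and one interface term, which is $\subseteq$ again. It also tacitly equates the residue created by gluing all of $\mathcal{M}$ to $\mathcal{N}_L$ with $\mathcal{I}_{L-1,L}$. All the real content sits in your identification $\mathcal{L}_{\tau_\ell}=\mathcal{I}_{\ell,\ell+1}$, and how it fares depends on what $\mathcal{I}_{\ell,\ell+1}$ means.

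\emph{If you define $\mathcal{I}_{\ell,\ell+1}$ as the component $\mathcal{L}_{\tau_\ell}$ of the global residue,} the equality is immediate from interface localization. This requires re-indexing the operad-level set $\mathcal{I}(P)$ by the edges of $\mathcal{N}$, and realizing the coproduct inside the support rather than as an abstract isomorphism. In that case the induction is superfluous, and Step~3 should cite SOC~III, Theorem~4 directly for $\mathcal{L}_{\tau_\ell}=\emptyset$, as the paper does.

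\emph{If instead $\mathcal{I}_{\ell,\ell+1}$ is the residue of the two-node subnetwork $\mathcal{N}_{\ell+1}\circ\mathcal{N}_\ell$,} which is the reading your Step~3 uses via Example~\ref{ex:two-node-propagation}, then a genuine locality lemma is missing, and the chain shape does not supply it. There are two concrete problems.
\begin{itemize}
\item In the evaluation of Theorem~\ref{thm:network-evaluation}, the data crossing $\tau_\ell$ is $\tau_\ell(\mathcal{O}_\ell)$, which depends on all earlier layers, not just $A_\ell$ and $A_{\ell+1}$.
\item Take $\Sigma^{\mathrm{res}}$ to be the complement of the node supports in the global support (SOC~III, Definition~7, as quoted in the proof of Theorem~\ref{thm:spectral-propagation}). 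Then any point of $\mathcal{I}_{\ell,\ell+1}$ lying in $\operatorname{supp}\sigma_P(A_j)$ for a non-adjacent $j$ is excised from $\Sigma^{\mathrm{res}}(\mathcal{N})$, and the claimed equality fails.
\end{itemize}
Under this reading you must either add a locality-and-disjointness hypothesis, or assume the equality clause of Proposition~\ref{prop:layerwise_spectral_composition} outright, which is what the paper implicitly does.
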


\begin{proof}
By Proposition~\ref{prop:layerwise_spectral_composition}, the residue of a layerwise decomposable network satisfies
\[
\Sigma^{\mathrm{res}}(\mathcal{N})
=
\bigcup_{\ell=1}^{L}
\Phi_\ell^*
\bigl(
\Sigma^{\mathrm{res}}(\mathcal{N}_\ell)
\bigr)
\;\cup\;
\bigcup_{\ell=1}^{L-1}
\mathcal{I}_{\ell,\ell+1}.
\]

Since each $\mathcal{N}_\ell$ is assumed to be a single-node layer, it has no internal interfaces. Hence
\[
\Sigma^{\mathrm{res}}(\mathcal{N}_\ell) = \emptyset
\]
for all $\ell$, and therefore
\[
\Phi_\ell^*
\bigl(
\Sigma^{\mathrm{res}}(\mathcal{N}_\ell)
\bigr)
=
\emptyset.
\]
Thus only the interlayer interface residues remain:
\[
\Sigma^{\mathrm{res}}(\mathcal{N})
=
\bigcup_{\ell=1}^{L-1}
\mathcal{I}_{\ell,\ell+1}.
\]

Finally, if every coupling between adjacent layers is an internal morphism within the same operadic stratum, then no nontrivial interface is created. By the Interface Localization Theorem (SOC III, Theorem 4), each
\[
\mathcal{I}_{\ell,\ell+1} = \emptyset.
\]
Consequently,
\[
\Sigma^{\mathrm{res}}(\mathcal{N}) = \emptyset.
\]
\end{proof}

\subsection*{Sensitivity and Condition Number}

\begin{theorem}[Sensitivity Factorization for Feedforward Chains]
\label{thm:sensitivity-factorization}
Let $\mathcal{N}$ be a layerwise feedforward admissible operadic network with compatible interfaces, so that no interaction residue contributes to propagation. Suppose the layer maps are spectrally analytic and define
\[
\mathcal{S}_\ell := \partial^{\mathrm{spec}}\mathcal{E}_{\mathcal{N}_\ell}(A_{\ell-1}),
\]
where $A_{\ell-1}$ are the intermediate states defined recursively by $A_0 := A_{\mathrm{in}}$ and $A_\ell := \mathcal{E}_{\mathcal{N}_\ell}(A_{\ell-1})$.

Then the first-order spectral sensitivity operator satisfies
\[
\mathcal{S}_{\mathcal{N}} = \mathcal{S}_L \circ \mathcal{S}_{L-1} \circ \cdots \circ \mathcal{S}_1,
\]
and therefore
\[
\|\mathcal{S}_{\mathcal{N}}\| \le \prod_{\ell=1}^{L} \|\mathcal{S}_\ell\|.
\]

If each layer is spectrally linear, so that all higher spectral derivatives vanish, then
\[
\kappa_{\operatorname{SOC}}(\mathcal{N}) \le \prod_{\ell=1}^{L} \kappa_{\operatorname{SOC}}(\mathcal{N}_\ell).
\]
\end{theorem}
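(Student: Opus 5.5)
The plan is to treat the three assertions of Theorem~\ref{thm:sensitivity-factorization} in order, reusing the layerwise machinery already established.

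For the factorization $\mathcal{S}_{\mathcal{N}} = \mathcal{S}_L \circ \cdots \circ \mathcal{S}_1$, I would first invoke part (1) of Theorem~\ref{thm:chain-rule-feedforward-operadic} to write $\mathcal{E}_{\mathcal{N}} = \mathcal{E}_{\mathcal{N}_L}\circ\cdots\circ\mathcal{E}_{\mathcal{N}_1}$. I would then apply part (2) of the same theorem, which is the layerwise chain rule of Proposition~\ref{prop:layerwise_spectral_composition}, evaluated at the intermediate states $A_{\ell-1}$. The hypothesis that interfaces are compatible matters here. By Proposition~\ref{prop:feedforward-residue}, $\Sigma^{\mathrm{res}}(\mathcal{N})=\emptyset$, so no interface term $\mathcal{I}_{\ell,\ell+1}$ enters the linearized perturbation recursion; compare the forcing term in Corollary~\ref{cor:exponential-convergence-layerwise}, which vanishes when $\beta=0$. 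The derivative is therefore exactly the composite, and Definition~\ref{def:spectral_sensitivity_operator} identifies it with $\mathcal{S}_{\mathcal{N}}$. The norm bound then follows from submultiplicativity of the operator norm, exactly as in Corollary~\ref{cor:layered-stability}.

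For the condition-number inequality, I would use spectral linearity of each layer. If $\partial_k^{\mathrm{spec}}\mathcal{E}_{\mathcal{N}_\ell}=0$ for all $k\ge 2$, then by Definition~\ref{def:soc_condition_number} we get $\kappa_{\operatorname{SOC}}(\mathcal{N}_\ell)=\|\mathcal{S}_\ell\|$. This parallels Example~\ref{ex:classical_condition_number}, and the value is independent of the base point because the derivative is constant. Next I need that the composite of spectrally linear maps is spectrally linear, so that $\kappa_{\operatorname{SOC}}(\mathcal{N})=\|\mathcal{S}_{\mathcal{N}}\|$. For this I would run induction on $L$ using the Faà di Bruno structure of the higher chain rule (SOC II, Theorem~10). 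Every term of order $k\ge2$ in the expansion of $\partial_k^{\mathrm{spec}}(G\circ F)$ contains either a factor $\partial_j^{\mathrm{spec}}G$ with $j\ge 2$ or a factor $\partial_i^{\mathrm{spec}}F$ with $i\ge2$, and both vanish by assumption. Combining this with the norm bound from the first part gives
\[
\kappa_{\operatorname{SOC}}(\mathcal{N})=\|\mathcal{S}_{\mathcal{N}}\|\le\prod_{\ell=1}^L\|\mathcal{S}_\ell\|=\prod_{\ell=1}^L\kappa_{\operatorname{SOC}}(\mathcal{N}_\ell).
\]

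I expect the main obstacle to be the vanishing of higher derivatives under composition. Remark~\ref{rem:comparison_classical} explicitly warns that multiplicativity of $\kappa_{\operatorname{SOC}}$ fails in general because of Faà di Bruno cross terms, so the argument must spell out that every cross term carries a higher derivative of some layer. I would first try to cite the explicit higher-order chain rule from SOC II. If only the first-order rule is available there, I would fall back on an elementary argument. Spectrally linear maps coincide with their first-order Taylor expansion, by the convergent expansion used in Theorem~\ref{thm:stability_bound}. A composite of such affine maps is again affine, so its Taylor series terminates at first order.
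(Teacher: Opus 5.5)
Your proposal is correct and follows essentially the same route as the paper. The paper also uses the factorization from Theorem~\ref{thm:chain-rule-feedforward-operadic}(2), submultiplicativity for the norm bound, and the collapse of the Fa\`a di Bruno expansion under spectral linearity for the condition-number inequality. Your last step is more precise than the paper's: you show that every higher-order term of the composite vanishes, so $\kappa_{\operatorname{SOC}}(\mathcal{N})=\|\mathcal{S}_{\mathcal{N}}\|$. Your appeal to Proposition~\ref{prop:feedforward-residue} is unnecessary but harmless, since the chain rule involves no residue terms and that proposition additionally assumes single-node layers.
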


\begin{proof}
By Theorem~\ref{thm:chain-rule-feedforward-operadic}(2), the first spectral derivative satisfies
\[
\partial^{\mathrm{spec}}\mathcal{E}_{\mathcal{N}}(A_0) = \partial^{\mathrm{spec}}\mathcal{E}_{\mathcal{N}_L}(A_{L-1}) \circ \cdots \circ \partial^{\mathrm{spec}}\mathcal{E}_{\mathcal{N}_1}(A_0).
\]
Since $\mathcal{S}_{\mathcal{N}} = \partial^{\mathrm{spec}}\mathcal{E}_{\mathcal{N}}(A_0)$ and $\mathcal{S}_\ell = \partial^{\mathrm{spec}}\mathcal{E}_{\mathcal{N}_\ell}(A_{\ell-1})$, the factorization follows. Taking operator norms and using submultiplicativity yields the norm bound.

If each layer is spectrally linear, then $\partial_k^{\mathrm{spec}}\mathcal{E}_{\mathcal{N}_\ell} = 0$ for all $k \ge 2$. Hence the Faà di Bruno sum collapses, and the $k$-th derivative of the composition is a sum over products of first derivatives. Summing over $k$ gives the inequality for the SOC condition numbers.
\end{proof}

\begin{corollary}[Exponential Sensitivity Growth in Deep Chains]
\label{cor:exponential-sensitivity-chain}
For a feedforward chain network of depth $L$ with compatible interfaces and
\[
\|\mathcal{S}_\ell\| \le \alpha_\ell,
\]
the global spectral sensitivity satisfies
\[
\|\mathcal{S}_{\mathcal{N}}\| \le \prod_{\ell=1}^{L} \alpha_\ell.
\]

If $\alpha_\ell = \alpha < 1$ for all $\ell$, then
\[
\|\mathcal{S}_{\mathcal{N}}\| \le \alpha^L,
\]
so sensitivity decays exponentially. If $\alpha_\ell = \alpha > 1$ for all $\ell$, then the upper bound grows exponentially, indicating possible sensitivity amplification.
\end{corollary}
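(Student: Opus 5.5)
The plan is to reduce the corollary directly to Theorem~\ref{thm:sensitivity-factorization}. The chain has compatible interfaces, so no interaction residue enters, and that theorem applies with $\mathcal{S}_\ell = \partial^{\mathrm{spec}}\mathcal{E}_{\mathcal{N}_\ell}(A_{\ell-1})$ evaluated along the recursively defined intermediate states $A_0 = A_{\mathrm{in}}$, $A_\ell = \mathcal{E}_{\mathcal{N}_\ell}(A_{\ell-1})$. Its first conclusion gives the factorization $\mathcal{S}_{\mathcal{N}} = \mathcal{S}_L \circ \cdots \circ \mathcal{S}_1$, which ultimately comes from Theorem~\ref{thm:chain-rule-feedforward-operadic}(2) and the operadic chain rule. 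Its second conclusion gives the submultiplicative bound $\|\mathcal{S}_{\mathcal{N}}\| \le \prod_{\ell} \|\mathcal{S}_\ell\|$.

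The first claim then follows by applying the hypotheses $\|\mathcal{S}_\ell\| \le \alpha_\ell$ factor by factor. Since all quantities are nonnegative, replacing each factor by its upper bound preserves the inequality, so $\|\mathcal{S}_{\mathcal{N}}\| \le \prod_{\ell=1}^L \alpha_\ell$.

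For the uniform case $\alpha_\ell = \alpha$, the product is $\alpha^L$. When $\alpha<1$, write $\alpha^L = e^{-L\ln(1/\alpha)}$ to exhibit exponential decay in depth, exactly as in Corollary~\ref{cor:modular-certification-hierarchical}. When $\alpha>1$, the same bound reads $\alpha^L = e^{L\ln\alpha}$, which grows exponentially.

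The only point needing care is the statement's wording in the $\alpha>1$ case, which concerns just the upper bound. There I will note that an upper bound cannot certify actual amplification. To justify the phrase ``possible amplification,'' I would take $\mathcal{S}_\ell = \alpha I$ on a common space, where the bound is attained with equality, $\|\mathcal{S}_{\mathcal{N}}\| = \alpha^L$. That example shows the bound is sharp, so growth can genuinely occur. I do not expect any real obstacle, since everything reduces to the earlier factorization theorem and submultiplicativity of operator norms.
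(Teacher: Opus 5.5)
Your proposal is correct and follows the paper's proof: invoke the submultiplicative bound from Theorem~\ref{thm:sensitivity-factorization}, replace each factor by $\alpha_\ell$, and specialize to the uniform case $\alpha_\ell = \alpha$. Your sharpness example $\mathcal{S}_\ell = \alpha I$ is not needed, but it complements the paper's own remark that a growing upper bound alone does not certify actual amplification.
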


\begin{proof}
From Theorem~\ref{thm:sensitivity-factorization},
\[
\|\mathcal{S}_{\mathcal{N}}\| \le \prod_{\ell=1}^{L} \|\mathcal{S}_\ell\| \le \prod_{\ell=1}^{L} \alpha_\ell.
\]
If $\alpha_\ell = \alpha$ for all $\ell$, then $\prod_{\ell=1}^{L} \alpha_\ell = \alpha^L$. The exponential decay/growth of the upper bound follows immediately. The qualifier "possible" is necessary because an upper bound growing does not guarantee that the actual norm grows; it only indicates that growth is not ruled out by this bound.
\end{proof}

\subsection*{Layerwise Stability in Feedforward Chains}

The Layerwise Stability Theorem (Theorem~\ref{thm:layerwise-stability}) becomes particularly transparent in the feedforward setting.

\begin{theorem}[Feedforward Stability Criterion]
\label{thm:feedforward-stability}
Let $\mathcal{N}$ be a layerwise feedforward admissible operadic network with layer maps $F_1, \ldots, F_L$ (so there are $L$ layers and $L-1$ interlayer interfaces). Define the intermediate states recursively by
\[
A_0 := A_{\mathrm{in}}, \qquad A_\ell := F_\ell(A_{\ell-1}), \quad 1 \le \ell \le L,
\]
and let
\[
\mathcal{S}_\ell := \partial^{\mathrm{spec}} F_\ell(A_{\ell-1}), \qquad 1 \le \ell \le L.
\]

Assume that:
\begin{enumerate}
    \item \textbf{Layerwise contraction}: $\|\mathcal{S}_\ell\| \le \alpha_\ell$ for $1 \le \ell \le L$, with $\alpha_\ell < \infty$,
    \item \textbf{Controlled interface residues}: The interlayer interface residues satisfy $\|\mathcal{I}_{\ell,\ell+1}\| \le \beta_\ell$ for $1 \le \ell \le L-1$.
\end{enumerate}

Then the output perturbation satisfies the explicit bound
\[
\|\delta A_{\mathrm{out}}\|
\le
\left( \prod_{\ell=1}^{L} \alpha_\ell \right) \|\delta A_{\mathrm{in}}\|
+ \sum_{\ell=1}^{L-1} \left( \prod_{j=\ell+1}^{L} \alpha_j \right) \beta_\ell.
\]

In particular, if $\alpha_\ell \le \alpha < 1$ uniformly and the residues are summable, then perturbations generated at early layers are exponentially damped along the chain. If, in addition, $\beta_\ell = 0$ for all $\ell$ (i.e., all interlayer couplings are internal morphisms within the same operadic stratum), then
\[
\|\delta A_{\mathrm{out}}\|
\le
\alpha^L \|\delta A_{\mathrm{in}}\|,
\]
so the feedforward chain is exponentially contractive.
\end{theorem}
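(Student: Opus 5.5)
The plan is to mirror the proof of Corollary~\ref{cor:exponential-convergence-layerwise}, replacing the uniform constants $\alpha,\beta$ by the layer-dependent $\alpha_\ell,\beta_\ell$, and to run an induction on the layer index. Write $\delta A_0 := \delta A_{\mathrm{in}}$ and let $\delta A_\ell$ denote the perturbation after $\ell$ layers. As in that corollary, the perturbation entering layer $\ell+1$ has two sources. The first is the propagated perturbation from layer $\ell$, transported linearly by $\mathcal{S}_{\ell+1}$; this uses the layerwise chain rule of Proposition~\ref{prop:layerwise_spectral_composition}, or equivalently Theorem~\ref{thm:chain-rule-feedforward-operadic}(2). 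The second is the interface residue $\mathcal{I}_{\ell,\ell+1}$, which enters additively at the interface between layers $\ell$ and $\ell+1$ and is then acted on by $\mathcal{S}_{\ell+1}$ and all later layers. Since there are no cycles, Proposition~\ref{prop:feedforward-residue} ensures these interlayer residues are the only residue sources.

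\textbf{Step 1: the one-step recursion.} Taking operator norms, I would establish
\[
\|\delta A_1\| \le \alpha_1 \|\delta A_0\|, \qquad \|\delta A_{\ell+1}\| \le \alpha_{\ell+1}\bigl(\|\delta A_\ell\| + \beta_\ell\bigr), \quad 1 \le \ell \le L-1.
\]
Here the residue is inserted at the output interface of layer $\ell$, before layer $\ell+1$ acts. This placement is exactly what produces the product $\prod_{j=\ell+1}^{L}\alpha_j$ multiplying $\beta_\ell$, rather than $\prod_{j=\ell+2}^{L}\alpha_j$.

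\textbf{Step 2: unrolling.} Inducting on $\ell$ gives
\[
\|\delta A_\ell\| \le \Bigl(\prod_{j=1}^{\ell}\alpha_j\Bigr)\|\delta A_0\| + \sum_{k=1}^{\ell-1}\Bigl(\prod_{j=k+1}^{\ell}\alpha_j\Bigr)\beta_k.
\]
The inductive step is routine: multiply the hypothesis by $\alpha_{\ell+1}$ and add the new term $\alpha_{\ell+1}\beta_\ell$. Setting $\ell=L$ yields the stated bound. Alternatively, one can argue as in Corollary~\ref{cor:exponential-layerwise-stability}: the input perturbation passes through all $L$ layers, while a residue generated at interface $(\ell,\ell+1)$ passes through layers $\ell+1,\dots,L$. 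Summing these contributions gives the same bound.

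\textbf{Step 3: the special cases.} If $\alpha_j \le \alpha < 1$, then $\prod_{j=\ell+1}^{L}\alpha_j \le \alpha^{L-\ell}$, so a residue born at interface $\ell$ is damped by $\alpha^{L-\ell}$, which is exponential in its distance from the output. Summability of the $\beta_\ell$ then keeps the residue sum bounded by $\sum_\ell \beta_\ell$. If instead all $\beta_\ell = 0$, the sum vanishes and the bound reduces to $\prod_{\ell=1}^{L}\alpha_\ell \le \alpha^L$, which is also Corollary~\ref{cor:modular-certification-hierarchical}.

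The main obstacle is justifying Step 1. The recursion is first-order: it treats the residue as an additive forcing term transported by the linearized maps $\mathcal{S}_j$, and it ignores the higher-order Taylor terms. I would handle this as the paper does in Corollary~\ref{cor:exponential-convergence-layerwise}, by stating the bound for the linearized perturbation dynamics. Higher-order remainders would be controlled by Theorem~\ref{thm:stability_bound} for sufficiently small perturbations, and this caveat should be recorded explicitly in the proof.
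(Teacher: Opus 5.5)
Your proposal is correct and is essentially the paper's argument. The paper sums the input perturbation transported through all $L$ layers together with each residue $\mathcal{I}_{\ell,\ell+1}$ transported through layers $\ell+1,\dots,L$, which is exactly your unrolled recursion (your ``alternatively'' remark in Step 2); your explicit one-step recursion, with the residue inserted before layer $\ell+1$ acts, and your first-order caveat make precise what the paper's superposition argument leaves implicit.
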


\begin{proof}
We apply the Layerwise Stability Theorem (Theorem~\ref{thm:layerwise-stability}) to the feedforward chain. Condition (i) provides the uniform contraction bounds $\|\mathcal{S}_\ell\| \le \alpha_\ell$. Condition (ii) provides the residue bounds $\beta_\ell$.

The initial perturbation $\delta A_{\mathrm{in}} = \delta A_0$ propagates through all $L$ layers, each contributing a factor at most $\alpha_\ell$ in norm, yielding the first term.

For an interface residue $\mathcal{I}_{\ell,\ell+1}$ generated between layer $\ell$ and layer $\ell+1$, it must propagate through the remaining $L - \ell$ layers (layers $\ell+1$ through $L$). The contribution is bounded by $\left( \prod_{j=\ell+1}^{L} \alpha_j \right) \beta_\ell$.

Summing over all $L-1$ interfaces gives the second term. The total output perturbation is bounded by the sum of these contributions.

If $\alpha_\ell \le \alpha < 1$ and $\beta_\ell$ are summable, the geometric series $\sum_{k=0}^{\infty} \alpha^k = 1/(1-\alpha)$ bounds the cumulative residue effect. When $\beta_\ell = 0$, the residue term vanishes, leaving $\|\delta A_{\mathrm{out}}\| \le \alpha^L \|\delta A_{\mathrm{in}}\|$, which decays exponentially with depth $L$.
\end{proof}

\subsection*{Relation to Classical Calculus and Backpropagation}

When $P$ is the associative operad, each $A_v$ is a complex number (or a function), and each $\tau_I$ is an analytic function $\mathbb{C} \to \mathbb{C}$, the SOC framework reduces to classical calculus.

\begin{corollary}[Classical Chain Rule as a Special Case]
\label{cor:classical-chain-rule}
Let $f_1, \ldots, f_{L-1}: \mathbb{C} \to \mathbb{C}$ be analytic functions and define
\[
A_{\ell+1} = f_\ell(A_\ell), \qquad 1 \le \ell \le L-1.
\]
Then
\[
A_L = (f_{L-1} \circ \cdots \circ f_1)(A_1), \qquad
\sigma_P(A_L) = \{A_L\}.
\]

Moreover, in the scalar case the spectral derivative reduces to the ordinary complex derivative,
\[
\partial^{\mathrm{spec}} f_\ell(A_\ell) = f_\ell'(A_\ell),
\]
and hence by Theorem~\ref{thm:chain-rule-feedforward-operadic}(3),
\[
\frac{dA_L}{dA_1}
= f_{L-1}'(A_{L-1}) \cdots f_1'(A_1).
\]
Thus the operadic spectral chain rule recovers the classical chain rule.
\end{corollary}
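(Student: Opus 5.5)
The plan is to treat Corollary~\ref{cor:classical-chain-rule} as a direct specialization of Theorem~\ref{thm:chain-rule-feedforward-operadic} to the scalar setting. I will read the chain $A_1 \to A_2 \to \cdots \to A_L$ as a feedforward chain network in the sense of Definition~\ref{def:feedforward-chain}. Each node $v_\ell$ carries the one-dimensional algebra $\mathbb{C}$ with distinguished element $A_\ell$, and each edge coupling is $\tau_\ell = f_\ell$. Since all nodes lie in the same (unique) stratum, Proposition~\ref{prop:feedforward-residue} gives $\Sigma^{\mathrm{res}}(\mathcal{N}) = \emptyset$. Only the propagated part of Theorem~\ref{thm:spectral-propagation} therefore survives.

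The first claim, $A_L = (f_{L-1}\circ\cdots\circ f_1)(A_1)$, follows by a trivial induction on the recursion $A_{\ell+1} = f_\ell(A_\ell)$. Equivalently, it is part (1) of Theorem~\ref{thm:chain-rule-feedforward-operadic}, since each layer evaluation map is just $f_\ell$.

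The claim $\sigma_P(A_L) = \{A_L\}$ is where I must fix an interpretation. For a scalar regarded as a multiplication operator on $\mathbb{C}$ (the case $P=\mathbb{I}$ noted in Example~\ref{ex:two-node-propagation}), the classical spectrum of $\lambda$ is $\{\lambda\}$. I will take as given, following the normalization used there, that $\sigma_P$ reduces to the ordinary spectrum in this case.

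For the derivative identity, I will argue that when the state space is $\mathbb{C}$ and $f_\ell$ is analytic, its spectral Taylor expansion (SOC II, Theorem~7) coincides with the ordinary power series $f_\ell(A_\ell + \delta) = \sum_k f_\ell^{(k)}(A_\ell)\,\delta^k/k!$. This uses the normalization of Definition~\ref{def:soc_condition_number}. Uniqueness of power series coefficients then forces $\partial_1^{\mathrm{spec}} f_\ell(A_\ell) = f_\ell'(A_\ell)$. Applying part (2) of Theorem~\ref{thm:chain-rule-feedforward-operadic}, with $\mathcal{S}_\ell = f_\ell'(A_\ell)$ acting on $\mathbb{C}$ as multiplication, gives
\[
\frac{dA_L}{dA_1} = \partial^{\mathrm{spec}}\mathcal{E}_{\mathcal{N}}(A_1) = f_{L-1}'(A_{L-1})\cdots f_1'(A_1).
\]
Here composition of multiplication operators is just the product, and the order is immaterial by commutativity. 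Part (3) then reads, at the level of spectra, as the inclusion of $\{A_L\}$ in the image of $\{A_1\}$ under the linearized map, consistent with the first-order change.

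The main obstacle is the identification of $\partial^{\mathrm{spec}}$ with the complex derivative. It rests on the external SOC II definitions, which I can only invoke through the Taylor-expansion normalization. I will therefore state this identification as the step that uses spectral analyticity and the uniqueness of analytic expansions, and let everything else follow formally from the already-established chain rule.
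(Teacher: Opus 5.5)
Your argument is sound. It relies on the same external SOC~II input as the paper, and, like the paper, it takes $\sigma_P(A_\ell)=\{A_\ell\}$ as a convention. It does, however, take a different route. The paper's proof cites SOC~II, Definition~14 directly for $\partial^{\mathrm{spec}} f_\ell = f_\ell'$. It then invokes Theorem~\ref{thm:chain-rule-feedforward-operadic}(3) only to produce the composition formula for $A_L$, and obtains $dA_L/dA_1$ ``by differentiation,'' i.e.\ from ordinary calculus.

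You instead derive the identification $\partial_1^{\mathrm{spec}} f_\ell(A_\ell)=f_\ell'(A_\ell)$ from uniqueness of power-series coefficients in the spectral Taylor expansion. Only the $k=1$ coefficient matters, so the $1/k!$ convention is irrelevant. You then get the product formula from part~(2), the derivative factorization, applied with depth $L-1$ and $A_{\mathrm{in}}=A_1$. Your equality $dA_L/dA_1=\partial^{\mathrm{spec}}\mathcal{E}_{\mathcal{N}}(A_1)$ is justified by the same coefficient argument applied to the analytic composite $f_{L-1}\circ\cdots\circ f_1$, so say so explicitly.

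Your route genuinely exhibits the classical chain rule as the scalar specialization of the operadic derivative factorization. The paper's route is shorter, but its derivative formula ultimately comes from classical calculus rather than from the theorem it cites. The residue discussion via Proposition~\ref{prop:feedforward-residue} is harmless but not needed.

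Do drop your closing remark about part~(3). With $\sigma_P(A_\ell)=\{A_\ell\}$ and $\partial^{\mathrm{spec}} f_\ell$ acting as multiplication by $f_\ell'(A_\ell)$, the middle set in part~(3) is $\{f_{L-1}'(A_{L-1})\cdots f_1'(A_1)\,A_1\}$. This generally differs from $\{A_L\}$: for $L=2$, $f_1(z)=z^2$, $A_1=1$, it is $\{2\}$ while $A_2=1$. So in the scalar case neither your reading ($\{A_L\}$ inside the image) nor the literal inclusion of part~(3) (image inside $\sigma_P(A_L)$) holds. Part~(3) is a set-level statement that cannot yield a derivative identity. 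That is exactly why part~(2) is the right citation, even though the statement names part~(3).
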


\begin{proof}
For scalar functions, the operadic spectrum $\sigma_P(A_\ell)$ is the singleton set containing the value $A_\ell$. The spectral derivative $\partial^{\mathrm{spec}} f_\ell$ is the ordinary derivative $f_\ell'$ by SOC II, Definition 14. Substituting into Theorem~\ref{thm:chain-rule-feedforward-operadic}(3) yields the composition formula for $A_L$, and the chain rule follows by differentiation.
\end{proof}

\begin{remark}[Backpropagation as Adjoint Spectral Derivative Propagation]
\label{rem:backpropagation}
In deep learning, backpropagation computes gradients by applying the chain rule in reverse order:
\[
\frac{\partial \mathcal{L}}{\partial x} = \frac{\partial \mathcal{L}}{\partial y} \cdot \frac{\partial y}{\partial h_{L-1}} \cdots \frac{\partial h_1}{\partial x}.
\]

In the SOC framework, this may be interpreted as the adjoint (or transpose) form of spectral derivative propagation along the feedforward chain. The spectral propagation viewpoint reveals that:
\begin{itemize}
    \item The norm of the Jacobian $\partial^{\mathrm{spec}}\mathcal{N}$ determines sensitivity to input perturbations (the "vanishing/exploding gradient" problem).
    \item For feedback networks, one may define a first-order SOC stability radius $r_{\mathrm{SOC}}^{(1)} = 1 / \|\partial^{\mathrm{spec}}F\|$ as a stability threshold.
    \item Higher-order spectral derivatives capture curvature information (Hessians), relevant for second-order optimization methods.
\end{itemize}
Thus, the SOC framework provides a principled mathematical foundation for understanding training dynamics in deep neural networks.
\end{remark}

\begin{example}[Layerwise Jacobian Propagation in Deep Networks]
\label{ex:neural-jacobian}
Consider a deep feedforward neural network with layers:
\[
x_{\ell+1} = \sigma_\ell(W_\ell x_\ell + b_\ell), \quad \ell = 1, \ldots, L-1,
\]
where $x_\ell \in \mathbb{R}^{n_\ell}$ are the layer activations, $W_\ell$ are weight matrices, $b_\ell$ are bias vectors, and $\sigma_\ell$ are activation functions applied componentwise.

Within the operadic framework, each layer $\mathcal{N}_\ell$ corresponds to the affine transformation followed by nonlinear activation. The spectral derivative $\partial^{\mathrm{spec}}F_\ell$, evaluated at the layer input $x_{\ell-1}$, is precisely the Jacobian matrix:
\[
J_\ell = \frac{\partial x_\ell}{\partial x_{\ell-1}} = \operatorname{diag}\bigl(\sigma_\ell'(W_\ell x_{\ell-1} + b_\ell)\bigr) \cdot W_\ell,
\]
where $\sigma_\ell'$ denotes the derivative of the activation function applied componentwise.

By the feedforward chain rule (Theorem~\ref{thm:chain-rule-feedforward-operadic}), the Jacobian of the entire network is the product of layerwise Jacobians:
\[
J_{\text{net}} = J_{L-1} \cdot J_{L-2} \cdots J_1.
\]

The SOC condition number (Definition~\ref{def:soc_condition_number}) for this network becomes:
\[
\kappa_{\mathrm{SOC}}(\mathcal{N}, x_0) = \sum_{k=1}^{\infty} \left\| \prod_{\ell=1}^{k} J_\ell \right\|,
\]
where the product is taken in order of propagation. For a network of fixed depth $L$, the sum terminates at $k = L$ because higher-order terms vanish beyond the network depth.

If each layer satisfies the uniform contraction bound $\|J_\ell\| \le \alpha < 1$ for all $\ell$, then the SOC condition number is bounded by the geometric series:
\[
\kappa_{\mathrm{SOC}}(\mathcal{N}, x_0) \le \sum_{k=1}^{L} \alpha^k = \alpha \cdot \frac{1 - \alpha^{L}}{1 - \alpha} \le \frac{\alpha}{1 - \alpha}.
\]

This bound recovers the classical "vanishing gradient" condition: when all layerwise Jacobian norms are strictly less than 1, the network is exponentially contractive, and the SOC condition number provides a quantitative measure of gradient stability.

Conversely, if $\|J_\ell\| \ge \beta > 1$ for all $\ell$, then $\kappa_{\mathrm{SOC}}(\mathcal{N})$ grows at least as $\beta^L$, capturing the "exploding gradient" phenomenon. Thus, the SOC framework provides a unified mathematical language for analyzing gradient propagation in deep neural networks, with the SOC condition number serving as a depth-dependent sensitivity measure.

For practical applications, the truncated SOC condition number of order $L$:
\[
\kappa_{\mathrm{SOC}}^{(L)}(\mathcal{N}, x_0) = \sum_{k=1}^{L} \left\| \prod_{\ell=1}^{k} J_\ell \right\|
\]
is directly computable from the layerwise Jacobian products and provides a quantitative stability certificate for finite-depth networks.
\end{example}

\subsection*{Example: Two-Node Feedforward Chain}

Consider the simplest feedforward chain with $L=2$: nodes $A_1$, $A_2$ and edge coupling $\tau: A_1 \to A_2$. 

If $A_1$ and $A_2$ are bounded linear operators on a Banach space and $\tau$ is the identity map, then $A_2 = A_1$ and $\sigma(A_2) = \sigma(A_1)$. 

If $\tau$ is a linear isomorphism (i.e., $A_2 = T A_1 T^{-1}$ for some invertible $T$), then classical spectral invariance gives $\sigma(A_2) = \sigma(A_1)$. In this case, the first spectral derivative satisfies $\partial^{\mathrm{spec}}\tau = T$, and one may compute the first-order sensitivity bound $\|\partial^{\mathrm{spec}}\tau\| = \|T\|$.

\begin{example}[Two-Layer Linear Network]
\label{ex:two-layer-linear}
Let $A_1 = \begin{pmatrix} 2 & 0 \\ 0 & 0.5 \end{pmatrix}$ and $T = \begin{pmatrix} 0 & 1 \\ 1 & 0 \end{pmatrix}$ (the swap matrix). Then:
\[
\sigma(A_1) = \{2, 0.5\}, \qquad
A_2 = T A_1 T^{-1} = \begin{pmatrix} 0.5 & 0 \\ 0 & 2 \end{pmatrix}, \qquad
\sigma(A_2) = \{0.5, 2\} = \sigma(A_1).
\]
Thus, the spectrum is preserved under conjugation. The first-order sensitivity satisfies $\|T\| = 1$, so the first-order SOC stability radius (if defined for this feedforward context) would be $r_{\mathrm{SOC}}^{(1)} = 1/\|T\| = 1$.
\end{example}

\subsection*{Summary}

Feedforward chains illustrate the core compositional mechanism of spectral propagation: spectral derivatives compose along directed paths, and the global spectral output is obtained by iteratively applying local spectral propagation rules. This example demonstrates that:
\begin{itemize}
    \item Classical deep feedforward systems arise naturally as a special case of operadic spectral propagation theory.
    \item The SOC framework generalizes the classical chain rule to nonlinear, infinite-dimensional, and category-theoretic settings.
    \item Backpropagation-type derivative transport in deep neural networks may be interpreted as a concrete realization of the adjoint form of operadic spectral derivative propagation.
\end{itemize}

This forms the foundation for analyzing more complex architectures, including feedback networks and hierarchical systems, where additional phenomena such as residue accumulation and recursive amplification arise.

\subsection{Feedback Loops (Single Loop)}
\label{subsec:feedback-single-loop}

We next consider the simplest nontrivial recursive architecture: a network containing a single feedback loop. 
Unlike feedforward chains, feedback systems permit iterated amplification of propagated perturbations, making stability substantially more delicate.

Let the feedback loop consist of two subnetworks composed operadically:
\[
\mathcal{N} = F \circ \mathcal{L}_{\mathrm{fb}},
\]
where:
\begin{itemize}
    \item $\mathcal{L}_{\mathrm{fb}}$ denotes the feedback connection (typically a linear transfer operator $\mathcal{T}_{\mathrm{fb}}$),
    \item $F$ is the forward propagation operator (a spectrally analytic functor).
\end{itemize}

\begin{definition}[Single Feedback Loop Network]
\label{def:single-feedback-loop}
A \emph{single feedback loop network} is an admissible operadic operator network consisting of:
\begin{itemize}
    \item A spectrally analytic functor $F: \mathcal{A} \to \mathcal{A}$ describing propagation through the forward path,
    \item A transfer operator $\mathcal{T}_{\mathrm{fb}}: \mathcal{A} \to \mathcal{A}$ (assumed linear, or more generally compatible with an abelian group structure on $\mathcal{A}$) describing the feedback path,
    \item The recursive relation 
    \[
    A_{\mathrm{out}} = F\bigl( A_{\mathrm{in}} + \mathcal{T}_{\mathrm{fb}}(A_{\mathrm{out}}) \bigr),
    \]
    where the addition is taken in the underlying additive structure of $\mathcal{A}$.
\end{itemize}
When the input is zero ($A_{\mathrm{in}} = 0$), the network reduces to the homogeneous feedback loop:
\[
A = F\bigl( \mathcal{T}_{\mathrm{fb}}(A) \bigr).
\]
\end{definition}

The central quantity governing stability is the spectral derivative
\[
\partial^{\mathrm{spec}}F,
\]
which measures the infinitesimal amplification of propagated spectral perturbations after one traversal of the loop.

\begin{proposition}[Linearized Feedback Dynamics]
\label{prop:linearized-feedback-dynamics}
Let $A_0$ be a fixed point of the homogeneous feedback loop:
\[
A_0 = F(\mathcal{T}_{\mathrm{fb}}(A_0)).
\]
Suppose $F$ and $\mathcal{T}_{\mathrm{fb}}$ are spectrally differentiable at the relevant points. Then a perturbation sequence
\[
A_k = A_0 + \delta A_k
\]
satisfies the linearized dynamics
\[
\delta A_{k+1} = \mathcal{G}(\delta A_k) + O(\|\delta A_k\|^2),
\]
where
\[
\mathcal{G} := \partial^{\mathrm{spec}}F\bigl(\mathcal{T}_{\mathrm{fb}}(A_0)\bigr) \circ \partial^{\mathrm{spec}}\mathcal{T}_{\mathrm{fb}}(A_0)
\]
is the \emph{loop gain operator}. Neglecting higher-order terms yields
\[
\delta A_{k+1} = \mathcal{G}(\delta A_k).
\]
\end{proposition}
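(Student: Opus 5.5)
We will linearize the one-step loop map and read off the first-order term. Define the loop map $\Psi := F \circ \mathcal{T}_{\mathrm{fb}}$, so the homogeneous iteration is $A_{k+1} = \Psi(A_k)$ and the fixed point satisfies $A_0 = \Psi(A_0)$. Writing $A_k = A_0 + \delta A_k$ and subtracting the fixed-point identity gives the exact recursion
\[
\delta A_{k+1} = \Psi(A_0 + \delta A_k) - \Psi(A_0).
\]
Everything then reduces to a first-order expansion of $\Psi$ at $A_0$ with a quadratic remainder.

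The first step is to expand $\mathcal{T}_{\mathrm{fb}}$ at $A_0$: $\mathcal{T}_{\mathrm{fb}}(A_0+\delta A) = \mathcal{T}_{\mathrm{fb}}(A_0) + \partial^{\mathrm{spec}}\mathcal{T}_{\mathrm{fb}}(A_0)(\delta A) + r_1(\delta A)$, where $r_1 = O(\|\delta A\|^2)$. This follows from Theorem~\ref{thm:stability_bound} with $n=1$, using spectral analyticity so that the remainder constant $C_2$ is finite on a small ball. If $\mathcal{T}_{\mathrm{fb}}$ is linear, as Definition~\ref{def:single-feedback-loop} usually assumes, then $r_1 = 0$ and $\partial^{\mathrm{spec}}\mathcal{T}_{\mathrm{fb}}(A_0) = \mathcal{T}_{\mathrm{fb}}$.

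The second step is to expand $F$ at $B_0 := \mathcal{T}_{\mathrm{fb}}(A_0)$ with increment $\eta := \partial^{\mathrm{spec}}\mathcal{T}_{\mathrm{fb}}(A_0)(\delta A) + r_1$. Again by Theorem~\ref{thm:stability_bound},
\[
F(B_0 + \eta) = F(B_0) + \partial^{\mathrm{spec}}F(B_0)(\eta) + r_2(\eta), \qquad \|r_2(\eta)\| \le C_2'\|\eta\|^2 .
\]
Substituting and using $F(B_0) = A_0$, we obtain
\[
\delta A_{k+1} = \mathcal{G}(\delta A_k) + \partial^{\mathrm{spec}}F(B_0)\bigl(r_1(\delta A_k)\bigr) + r_2(\eta_k).
\]
Both correction terms are $O(\|\delta A_k\|^2)$. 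This uses three facts: $\partial^{\mathrm{spec}}F(B_0)$ is bounded, $\|\eta_k\| \le \|\partial^{\mathrm{spec}}\mathcal{T}_{\mathrm{fb}}(A_0)\|\,\|\delta A_k\| + O(\|\delta A_k\|^2)$, and $\|\eta_k\|^2 = O(\|\delta A_k\|^2)$. The identification of the composite linear term with $\mathcal{G}$ can also be read directly off the operadic chain rule (SOC II, Theorem 10), as in the proof of Theorem~\ref{thm:feedback-stability}(a). Dropping the quadratic terms gives the linearized recursion $\delta A_{k+1} = \mathcal{G}(\delta A_k)$.

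The main obstacle is making the $O(\|\delta A_k\|^2)$ claim uniform. The remainder bounds in Theorem~\ref{thm:stability_bound} hold only inside the spectral radius of convergence, so we must restrict to $\|\delta A_k\| < r$ for some $r$ small enough that both $\delta A_k$ and $\eta_k$ lie in the respective convergence balls. We would fix such an $r$ and take the $O$-constant to be $C_2'\,(\|\partial^{\mathrm{spec}}\mathcal{T}_{\mathrm{fb}}(A_0)\|+1)^2 + \|\partial^{\mathrm{spec}}F(B_0)\|\,C_2$. The statement is then read as a local, one-step estimate valid while the iterates stay in that ball; no claim is made about global behavior of the orbit.
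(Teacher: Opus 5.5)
Your proof is correct and follows essentially the same route as the paper: expand $\mathcal{T}_{\mathrm{fb}}$ to first order at $A_0$, expand $F$ to first order at $\mathcal{T}_{\mathrm{fb}}(A_0)$, substitute, and cancel the zeroth-order terms using $F(\mathcal{T}_{\mathrm{fb}}(A_0)) = A_0$. Your version is more careful than the paper's in three respects: you track the cross term $\partial^{\mathrm{spec}}F(B_0)(r_1)$ explicitly, you make the quadratic remainder local to a ball, and you openly invoke spectral analyticity through Theorem~\ref{thm:stability_bound} — this last point matters, because mere spectral differentiability, as assumed in the statement, would only give an $o(\|\delta A_k\|)$ remainder rather than the claimed $O(\|\delta A_k\|^2)$.
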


\begin{proof}
Write $A_k = A_0 + \delta A_k$. Substituting into the homogeneous feedback equation:
\[
A_0 + \delta A_{k+1} = F\bigl( \mathcal{T}_{\mathrm{fb}}(A_0 + \delta A_k) \bigr).
\]

Let $Y = \mathcal{T}_{\mathrm{fb}}(A_0)$ and $\delta Y = \partial^{\mathrm{spec}}\mathcal{T}_{\mathrm{fb}}(A_0)(\delta A_k) + O(\|\delta A_k\|^2)$. Expand $F$ using the spectral Taylor expansion around $Y$:
\[
F(Y + \delta Y) = F(Y) + \partial^{\mathrm{spec}}F(Y)(\delta Y) + O(\|\delta Y\|^2).
\]

Substituting and using $A_0 = F(Y)$ yields:
\[
\delta A_{k+1} = \partial^{\mathrm{spec}}F(Y) \circ \partial^{\mathrm{spec}}\mathcal{T}_{\mathrm{fb}}(A_0)(\delta A_k) + O(\|\delta A_k\|^2).
\]

Thus, with $\mathcal{G} = \partial^{\mathrm{spec}}F(\mathcal{T}_{\mathrm{fb}}(A_0)) \circ \partial^{\mathrm{spec}}\mathcal{T}_{\mathrm{fb}}(A_0)$, we obtain
\[
\delta A_{k+1} = \mathcal{G}(\delta A_k) + O(\|\delta A_k\|^2).
\]

Neglecting the $O(\|\delta A_k\|^2)$ term gives the linearized dynamics.
\end{proof}

\begin{theorem}[Local Asymptotic Stability Criterion for Single Feedback Loop]
\label{thm:feedback-local-stability}
Under the assumptions of Proposition~\ref{prop:linearized-feedback-dynamics}, if the spectral radius of the loop gain operator satisfies
\[
\rho(\mathcal{G}) < 1,
\]
then the fixed point $A_0$ is locally asymptotically stable. That is, there exists a neighborhood of $A_0$ such that for any initial perturbation $\delta A_0$ sufficiently small,
\[
\lim_{k \to \infty} \|\delta A_k\| = 0,
\]
and the convergence is exponential.
\end{theorem}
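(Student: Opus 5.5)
The plan is the standard discrete-time linearization argument: an adapted norm turns the linear part into a strict contraction, and a uniform quadratic remainder bound keeps the nonlinear dynamics in a small invariant ball.

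I treat the perturbation recursion from Proposition~\ref{prop:linearized-feedback-dynamics} as an exact map on a Banach spectral state space,
\[
\delta A_{k+1} = \Psi(\delta A_k), \qquad \Psi(h) := F\bigl(\mathcal{T}_{\mathrm{fb}}(A_0+h)\bigr) - A_0 .
\]
Its properties are:
\begin{itemize}
\item $\Psi(0)=0$, since $A_0$ is a fixed point.
\item $\Psi(h) = \mathcal{G}h + N(h)$, with $\mathcal{G}$ the loop gain operator.
\item There are constants $K$ and $r_0>0$ such that $\|N(h)\|\le K\|h\|^2$ for $\|h\|\le r_0$.
\end{itemize}
I obtain the remainder bound from the spectral Taylor expansion of $F$ about $\mathcal{T}_{\mathrm{fb}}(A_0)$ and of $\mathcal{T}_{\mathrm{fb}}$ about $A_0$. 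Concretely, I apply Theorem~\ref{thm:stability_bound} with $n=1$ to each map. This gives the constant $C_2$ on a ball inside the radius of convergence, and composing the two expansions yields the single $O(\|h\|^2)$ bound, uniform on that ball.

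The key step is to convert the spectral-radius hypothesis into a norm contraction. Fix $r$ with $\rho(\mathcal{G})<r<1$. By Gelfand's formula, as in the proof of Proposition~\ref{prop:spectral_stability_criterion}, there is a constant $C\ge 1$ with $\|\mathcal{G}^k\|\le C r^k$ for all $k$ (enlarge $C$ to cover the finitely many small $k$). I then introduce the equivalent adapted norm
\[
|h|_* := \sum_{k\ge 0} r^{-k}\,\|\mathcal{G}^k h\|\,\bigl(r'/r\bigr)^{k}\quad\text{with } r<r'<1,
\]
or more simply $|h|_* := \sup_{k\ge0} r'^{-k}\|\mathcal{G}^k h\|$ for some $r'\in(r,1)$. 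These satisfy $\|h\|\le |h|_*\le C'\|h\|$ and $|\mathcal{G}h|_*\le r'|h|_*$. This step is the one I expect to be the main obstacle, because $\mathcal{G}$ need not be normal, so $\|\mathcal{G}\|$ may exceed $1$ even though $\rho(\mathcal{G})<1$ (the non-normal caveat noted near Definition~\ref{def:soc_stability_radius}). Without the renorming, a one-step estimate in the original norm would fail.

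In the adapted norm I estimate
\[
|\Psi(h)|_* \le r'|h|_* + C'K\|h\|^2 \le \bigl(r' + C'K|h|_*\bigr)|h|_* .
\]
I choose $\varepsilon>0$ with $\varepsilon\le r_0$ and $r'+C'K\varepsilon =: q<1$. Then the ball $\{|h|_*\le\varepsilon\}$ is invariant under $\Psi$, and every iterate contracts by the factor $q$. Induction gives $|\delta A_k|_*\le q^k|\delta A_0|_*$, and norm equivalence converts this to $\|\delta A_k\|\le C' q^k\|\delta A_0\|$. This is exponential convergence for every $\delta A_0$ with $\|\delta A_0\|\le \varepsilon/C'$, which proves local asymptotic stability of $A_0$.
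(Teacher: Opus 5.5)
Your proof is correct and takes the same route as the paper. Both arguments use the spectral radius formula to renorm so that $\mathcal{G}$ becomes a strict contraction, then let the linear contraction dominate the quadratic remainder on a small ball; your invariant-ball induction simply writes out the step the paper leaves to ``standard nonlinear stability theory.'' One small slip: the weighted-sum norm you write first has weights $(r'/r^{2})^{k}$ and need not converge, so use your $\sup_{k}r'^{-k}\|\mathcal{G}^{k}h\|$ version (or $\sum_{k}r'^{-k}\|\mathcal{G}^{k}h\|$), which does have the equivalence and contraction properties you claim.
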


\begin{proof}
By Proposition~\ref{prop:linearized-feedback-dynamics}, the nonlinear dynamics satisfy
\[
\delta A_{k+1} = \mathcal{G}(\delta A_k) + R(\delta A_k),
\]
where $\|R(\delta A_k)\| = O(\|\delta A_k\|^2)$. Since $\rho(\mathcal{G}) < 1$, there exists $\varepsilon > 0$ such that $\rho(\mathcal{G}) + \varepsilon < 1$ and an equivalent norm $\|\cdot\|_\varepsilon$ such that $\|\mathcal{G}\|_\varepsilon \le \rho(\mathcal{G}) + \varepsilon < 1$. For sufficiently small $\|\delta A_0\|_\varepsilon$, the quadratic remainder $R$ is dominated by the linear term, yielding contraction. Standard nonlinear stability theory (e.g., the Lyapunov-Perron theorem) then guarantees local asymptotic stability with exponential convergence rate.
\end{proof}

\subsection*{The SOC Stability Radius}

For a single feedback loop with forward path $F$ and feedback path $\mathcal{T}_{\mathrm{fb}}$, let $A_0$ be a fixed point of the homogeneous feedback loop:
\[
A_0 = F(\mathcal{T}_{\mathrm{fb}}(A_0)).
\]

The linearized dynamics near $A_0$ are governed by the loop gain operator
\[
\mathcal{G} := \partial^{\mathrm{spec}}F\bigl(\mathcal{T}_{\mathrm{fb}}(A_0)\bigr) \circ \partial^{\mathrm{spec}}\mathcal{T}_{\mathrm{fb}}(A_0),
\]
where both spectral derivatives are evaluated at the appropriate points.

\begin{definition}[SOC Stability Radius]
\label{def:soc-stability-radius-feedback}
For a single feedback loop with forward path $F$ and feedback path $\mathcal{T}_{\mathrm{fb}}$, the \emph{SOC stability radius} is defined by
\[
r_{\mathrm{SOC}}(F, \mathcal{T}_{\mathrm{fb}}) := \frac{1}{\rho(\mathcal{G})},
\]
where $\mathcal{G}$ is the linearized loop gain operator defined above.

When $\mathcal{T}_{\mathrm{fb}}$ is the identity (direct feedback), this simplifies to
\[
r_{\mathrm{SOC}}(F) := \frac{1}{\rho\bigl(\partial^{\mathrm{spec}}F(A_0)\bigr)}.
\]
\end{definition}

The interpretation is geometric: the larger the spectral radius of the loop gain operator, the smaller the stability margin. When $r_{\mathrm{SOC}} > 1$, the linearized dynamics are contractive; when $r_{\mathrm{SOC}} < 1$, they are expansive.

\subsection*{Stability Criterion}

\begin{theorem}[Linearized Stability Criterion for Single Feedback Loop]
\label{thm:single-loop-stability-criterion}
Consider a single feedback loop with fixed point $A_0$ and loop gain operator $\mathcal{G}$ defined above, assumed to be a bounded linear operator on a complex Banach space. Then:

\begin{enumerate}
    \item If $\rho(\mathcal{G}) < 1$ (equivalently $r_{\mathrm{SOC}}(F, \mathcal{T}_{\mathrm{fb}}) > 1$), the linearized feedback dynamics are asymptotically stable: for any sufficiently small initial perturbation $\delta A_0$, $\|\delta A_k\| \to 0$ exponentially as $k \to \infty$.

    \item If $\rho(\mathcal{G}) > 1$, the linearized dynamics are unstable: there exist initial perturbations that grow exponentially.

    \item If $\rho(\mathcal{G}) = 1$, the linearized dynamics are marginally stable; higher-order (nonlinear) terms determine the actual stability of the fixed point.
\end{enumerate}

For direct feedback ($\mathcal{T}_{\mathrm{fb}} = \mathrm{id}$), the condition becomes $\rho(\partial^{\mathrm{spec}}F(A_0)) < 1$, i.e., $r_{\mathrm{SOC}}(F) > 1$.
\end{theorem}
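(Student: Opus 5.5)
The statement concerns only the linearized recursion $\delta A_{k+1} = \mathcal{G}(\delta A_k)$ obtained in Proposition~\ref{prop:linearized-feedback-dynamics}, so I would reduce everything to the iterates $\delta A_k = \mathcal{G}^k(\delta A_0)$ of a single bounded operator on a complex Banach space. The argument is essentially the one given for Proposition~\ref{prop:spectral_stability_criterion} and Theorem~\ref{thm:feedback-stability}(a), with $\mathcal{L}$ replaced by the loop gain operator $\mathcal{G}$. The equivalence $\rho(\mathcal{G})<1 \iff r_{\mathrm{SOC}}(F,\mathcal{T}_{\mathrm{fb}})>1$ is immediate from Definition~\ref{def:soc-stability-radius-feedback}, using the conventions $1/0=\infty$ and $1/\infty = 0$. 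The direct-feedback clause follows by substituting $\mathcal{T}_{\mathrm{fb}}=\mathrm{id}$. Then $\partial^{\mathrm{spec}}\mathcal{T}_{\mathrm{fb}}=\mathrm{id}$, so $\mathcal{G}=\partial^{\mathrm{spec}}F(A_0)$, where the base point $\mathcal{T}_{\mathrm{fb}}(A_0)=A_0$.

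\emph{Part 1.} I would use Gelfand's formula $\lim_k\|\mathcal{G}^k\|^{1/k}=\rho(\mathcal{G})$. Pick $r$ with $\rho(\mathcal{G})<r<1$. Then $\|\mathcal{G}^k\|\le r^k$ for $k\ge k_0$. Setting $C=\max_{k<k_0}\|\mathcal{G}^k\|r^{-k}\vee 1$ gives $\|\delta A_k\|\le Cr^k\|\delta A_0\|$ for all $k$. This is exponential decay for every initial perturbation, which is stronger than what is claimed; the ``sufficiently small'' qualifier is only needed for the nonlinear statement, already covered by Theorem~\ref{thm:feedback-local-stability}.

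\emph{Part 2.} This is the only step requiring care, since a spectral point $\lambda$ with $|\lambda|=\rho(\mathcal{G})>1$ need not be an eigenvalue in infinite dimensions. My first approach would avoid eigenvectors entirely. Gelfand's formula gives $\|\mathcal{G}^k\|\ge\rho(\mathcal{G})^k$ for all $k$, since $\rho(\mathcal{G}^k)=\rho(\mathcal{G})^k\le\|\mathcal{G}^k\|$. Suppose, for contradiction, that every orbit satisfies $\sup_k \|\mathcal{G}^k x\|/s^k<\infty$ for some fixed $1<s<\rho(\mathcal{G})$. The uniform boundedness principle would then force $\|\mathcal{G}^k\|\le Ms^k$, contradicting $\|\mathcal{G}^k\| \ge \rho(\mathcal{G})^k$. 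Hence some $\delta A_0$ has $\limsup_k\|\mathcal{G}^k\delta A_0\|/s^k=\infty$, which is exponential growth along a subsequence at rate at least $s>1$. Any fixed $s$ in this range works, which gives the claim. In finite dimensions, or when $\lambda$ lies in the point spectrum, one can simply take an eigenvector, so that $\|\mathcal{G}^kv\|=|\lambda|^k\|v\|$. I would also remark that any $\lambda$ on the boundary of the spectrum is an approximate eigenvalue, which gives a more hands-on route.

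\emph{Part 3.} Here I would show inconclusiveness by examples rather than prove a dichotomy. $\mathcal{G}=\mathrm{id}$ has bounded orbits that do not decay. A Jordan block $\begin{pmatrix}1&1\\0&1\end{pmatrix}$ has polynomially growing orbits. An operator with $\rho=1$ but spectrum off the point spectrum, such as a weighted shift, can have decaying orbits. So $\rho(\mathcal{G})=1$ does not determine the behaviour of the linearization. For the nonlinear fixed point, the $O(\|\delta A_k\|^2)$ remainder in Proposition~\ref{prop:linearized-feedback-dynamics} then governs stability, which is controlled by the higher spectral derivatives as in Theorem~\ref{thm:stability_bound}. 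The scalar maps $x\mapsto x\mp x^3$, which are stable and unstable respectively, both have derivative $1$ and illustrate this.
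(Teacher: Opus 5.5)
Your proposal is correct. For Part~1, the equivalence with $r_{\mathrm{SOC}}(F,\mathcal{T}_{\mathrm{fb}})>1$, and the direct-feedback clause, it coincides with the paper. The paper runs the argument for $\mathcal{L}=\partial^{\mathrm{spec}}F(A_0)$ and then substitutes $\mathcal{G}$, which is the reverse of your order but the same reasoning.

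The genuine difference is Part~2. The paper takes $\lambda\in\sigma(\mathcal{L})$ with $|\lambda|>1$ and an eigenvector (or generalized eigenvector) $v$, giving $\|\mathcal{L}^k v\|=|\lambda|^k\|v\|$ for every $k$. That yields growth along the whole sequence at an explicit rate, but only when $\lambda$ is actually an eigenvalue. This is automatic in finite dimensions and can fail on the general Banach spaces the theorem allows: for instance $2S$, with $S$ the unilateral shift on $\ell^2$, has $\rho=2$ and no eigenvalues. Your Banach--Steinhaus argument plays $\rho(\mathcal{G})^k\le\|\mathcal{G}^k\|$ against the bound $\sup_k s^{-k}\|\mathcal{G}^k\|<\infty$ that pointwise boundedness would force. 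It needs no eigenvectors and therefore proves Part~2 in the stated generality. The cost is that it is non-constructive and yields only $\limsup_k s^{-k}\|\mathcal{G}^k\delta A_0\|=\infty$, i.e.\ exponential growth along a subsequence at any rate $s<\rho(\mathcal{G})$. Getting a single vector with $\|\delta A_k\|\ge c\,s^k$ for all $k$ would need a finer orbit theorem of the type due to M\"uller.

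Keep the Banach--Steinhaus step as the proof rather than your side remark about approximate eigenvalues. Approximate eigenvectors for a boundary point $\lambda$ only give growth like $|\lambda|^k$ up to any prescribed finite horizon, not one orbit that grows forever, so they would still need a Baire-type gluing argument.

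Your Part~3 makes the paper's ``marginal case'' remark explicit. The decaying weighted-shift example and the $x\mapsto x\mp x^3$ pair are useful additions.
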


\begin{proof}
We prove the direct feedback case ($\mathcal{T}_{\mathrm{fb}} = \mathrm{id}$); the general case follows by replacing $\partial^{\mathrm{spec}}F(A_0)$ with $\mathcal{G}$.

\paragraph{Step 1: Iteration of perturbations.}
From Proposition~\ref{prop:linearized-feedback-dynamics}, the linearized dynamics satisfy $\delta A_{k+1} = \mathcal{L}(\delta A_k)$ with $\mathcal{L} = \partial^{\mathrm{spec}}F(A_0)$. Iterating gives:
\[
\delta A_k = \mathcal{L}^k(\delta A_0).
\]

\paragraph{Step 2: Sufficiency of $\rho(\mathcal{L}) < 1$.}
By the spectral radius formula for bounded linear operators on a complex Banach space (Dunford–Schwartz, Chapter VII):
\[
\limsup_{k \to \infty} \|\mathcal{L}^k\|^{1/k} = \rho(\mathcal{L}).
\]
If $\rho(\mathcal{L}) < 1$, choose $\varepsilon > 0$ such that $\rho(\mathcal{L}) + \varepsilon < 1$. By the spectral radius formula, there exists $C < \infty$ such that $\|\mathcal{L}^k\| \le C (\rho(\mathcal{L}) + \varepsilon)^k$ for all $k$. Hence:
\[
\|\delta A_k\| \le \|\mathcal{L}^k\| \cdot \|\delta A_0\| \le C (\rho(\mathcal{L}) + \varepsilon)^k \|\delta A_0\| \to 0 \quad \text{exponentially}.
\]

\paragraph{Step 3: Necessity of $\rho(\mathcal{L}) < 1$ for linearized stability.}
If $\rho(\mathcal{L}) > 1$, there exists $\lambda \in \sigma(\mathcal{L})$ with $|\lambda| > 1$. Let $\delta A_0$ be a corresponding eigenvector (or generalized eigenvector if $\lambda$ is not semisimple). Then $\mathcal{L}^k(\delta A_0) = \lambda^k \delta A_0$ (up to polynomial factors in the generalized case), so $\|\delta A_k\| \to \infty$ exponentially. Hence $\rho(\mathcal{L}) < 1$ is necessary for asymptotic stability of the linearized dynamics.

If $\rho(\mathcal{L}) = 1$, the linearized dynamics may not decay (e.g., eigenvalues on the unit circle produce oscillations; Jordan blocks produce polynomial growth). This case is marginal and requires higher-order analysis.

\paragraph{Step 4: Equivalence to $r_{\mathrm{SOC}} > 1$.}
Since $r_{\mathrm{SOC}}(F) = 1/\rho(\partial^{\mathrm{spec}}F(A_0))$, the condition $\rho(\partial^{\mathrm{spec}}F(A_0)) < 1$ is equivalent to $r_{\mathrm{SOC}}(F) > 1$. This completes the proof.
\end{proof}

\begin{remark}[On the Role of Nonlinear Terms]
\label{rem:feedback-nonlinear-stability}
The theorem gives necessary and sufficient conditions for \emph{linearized} asymptotic stability. For the full nonlinear system, $\rho(\mathcal{G}) < 1$ is sufficient for local asymptotic stability (by the Lyapunov-Perron theorem or center manifold theory), but it is not necessary: nonlinear effects can stabilize a system even when $\rho(\mathcal{G}) \ge 1$ (e.g., saturation) or destabilize it when $\rho(\mathcal{G}) < 1$ (e.g., resonance). The SOC stability radius therefore provides a \emph{linearized} stability margin; higher-order spectral derivatives capture nonlinear corrections.
\end{remark}

\begin{corollary}[Perturbation Decay Rate for Direct Feedback]
\label{cor:feedback-decay-rate}
Let $\mathcal{L} = \partial^{\mathrm{spec}}F(A_0)$ be a bounded linear operator on a complex Banach space satisfying $\rho(\mathcal{L}) < 1$. Then for any $\varepsilon > 0$ sufficiently small such that $\rho(\mathcal{L}) + \varepsilon < 1$, there exists an equivalent norm $\|\cdot\|_\varepsilon$ and a constant $C_\varepsilon < \infty$ such that
\[
\|\delta A_k\| \le C_\varepsilon (\rho(\mathcal{L}) + \varepsilon)^k \|\delta A_0\|.
\]
In particular, linearized perturbations decay exponentially with rate arbitrarily close to $\rho(\mathcal{L})$.
\end{corollary}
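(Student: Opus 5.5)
The statement is the standard Rota--Strang / Gelfand renorming fact, and I would prove it directly from the spectral radius formula, as in Step~2 of the proof of Theorem~\ref{thm:single-loop-stability-criterion}. Fix $\varepsilon>0$ with $q:=\rho(\mathcal{L})+\varepsilon<1$. By Gelfand's formula $\lim_{k\to\infty}\|\mathcal{L}^k\|^{1/k}=\rho(\mathcal{L})$, there is $K$ with $\|\mathcal{L}^k\|\le q^k$ for all $k\ge K$. For the finitely many $k<K$, each ratio $\|\mathcal{L}^k\|/q^k$ is finite. Set $C_\varepsilon:=\max\bigl(1,\max_{0\le k<K}\|\mathcal{L}^k\|q^{-k}\bigr)$. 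Then
\[
\|\mathcal{L}^k\|\le C_\varepsilon q^k
\]
holds for every $k\ge 0$.

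Next I would construct the equivalent norm explicitly:
\[
\|x\|_\varepsilon:=\sup_{k\ge0} q^{-k}\|\mathcal{L}^k x\|.
\]
Taking $k=0$ gives $\|x\|\le\|x\|_\varepsilon$, and the previous bound gives $\|x\|_\varepsilon\le C_\varepsilon\|x\|$, so the two norms are equivalent. Shifting the index in the supremum yields
\[
\|\mathcal{L}x\|_\varepsilon=\sup_{k\ge0}q^{-k}\|\mathcal{L}^{k+1}x\|=q\sup_{k\ge1}q^{-k}\|\mathcal{L}^{k}x\|\le q\|x\|_\varepsilon .
\]
Hence $\|\mathcal{L}\|_\varepsilon\le\rho(\mathcal{L})+\varepsilon$, which is the renorming asserted in the corollary.

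Finally, the linearized iterates are $\delta A_k=\mathcal{L}^k\delta A_0$ (Proposition~\ref{prop:linearized-feedback-dynamics}, with $\mathcal{T}_{\mathrm{fb}}=\mathrm{id}$). Iterating the contraction gives $\|\delta A_k\|_\varepsilon\le q^k\|\delta A_0\|_\varepsilon$. Converting back with the norm equivalence gives $\|\delta A_k\|\le C_\varepsilon q^k\|\delta A_0\|$; the direct bound $\|\mathcal{L}^k\|\le C_\varepsilon q^k$ gives the same conclusion. Since $\varepsilon$ is arbitrary, the decay rate is arbitrarily close to $\rho(\mathcal{L})$.

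The only delicate point is that the constant $C_\varepsilon$ must be uniform in $k$, not merely asymptotic. Handling the finitely many initial indices separately, as in the first paragraph, resolves this. No completeness or complex-scalar subtlety beyond the validity of Gelfand's formula on a complex Banach space is needed.
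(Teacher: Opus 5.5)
Your proof is correct and follows the same route as the paper. Both arguments use the spectral radius formula to get an adapted equivalent norm with $\|\mathcal{L}\|_\varepsilon \le \rho(\mathcal{L})+\varepsilon$, iterate in that norm, and convert back by norm equivalence. The only difference is that you construct the adapted norm explicitly as $\sup_{k\ge 0} q^{-k}\|\mathcal{L}^k x\|$, and you also observe that the uniform bound $\|\mathcal{L}^k\|\le C_\varepsilon q^k$ already gives the estimate without any renorming, whereas the paper just cites the existence of such a norm.
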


\begin{proof}
By the spectral radius formula, for any $\varepsilon > 0$, there exists an equivalent norm $\|\cdot\|_\varepsilon$ (e.g., the adapted norm from the spectral radius theorem) such that $\|\mathcal{L}\|_\varepsilon \le \rho(\mathcal{L}) + \varepsilon$. Then:
\[
\|\delta A_k\|_\varepsilon \le \|\mathcal{L}\|_\varepsilon^k \|\delta A_0\|_\varepsilon \le (\rho(\mathcal{L}) + \varepsilon)^k \|\delta A_0\|_\varepsilon.
\]
Converting back to the original norm via norm equivalence yields the constant $C_\varepsilon$.
\end{proof}

\subsection*{Relation to Classical Control Theory}

This criterion generalizes the classical small-gain condition from control theory. 
In standard linear systems, stability is governed by the spectral radius or operator norm of the feedback gain. 
The SOC formulation extends this principle to:
\begin{itemize}
    \item nonlinear propagation (via spectral derivatives),
    \item operadic network composition,
    \item infinite-dimensional operators,
    \item categorical representation changes,
    \item noncommutative spectral dynamics.
\end{itemize}

\begin{theorem}[Recovery of Classical Small-Gain Theorem]
\label{thm:small-gain-recovery-feedback}
Let $F = G_2$ be a linear operator and let $\mathcal{T}_{\mathrm{fb}} = G_1$ be another linear operator, both acting on a Banach space. For linear operators, the spectral derivative is the operator itself, independent of the evaluation point: $\partial^{\mathrm{spec}}F(A_0) = F$, $\partial^{\mathrm{spec}}\mathcal{T}_{\mathrm{fb}}(A_0) = \mathcal{T}_{\mathrm{fb}}$.

\begin{enumerate}
    \item The SOC linearized stability condition reduces to the classical condition $\rho(G_1 G_2) < 1$.
    
    \item A sufficient condition for stability is $\|G_1\| \cdot \|G_2\| < 1$, since $\rho(G_1 G_2) \le \|G_1 G_2\| \le \|G_1\| \|G_2\|$.
    
    \item If $G_1$ and $G_2$ are commuting normal operators, the spectral-radius criterion $\rho(G_1 G_2) < 1$ is equivalent to the classical small-gain stability condition (though it does not imply $\|G_1\| < 1$ or $\|G_2\| < 1$ individually).
\end{enumerate}
\end{theorem}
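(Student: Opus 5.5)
The plan is to reduce everything to the single-loop linearization already established and then apply elementary spectral-radius facts. First I will specialize Proposition~\ref{prop:linearized-feedback-dynamics} to linear maps. Since $F = G_2$ and $\mathcal{T}_{\mathrm{fb}} = G_1$ are linear, the spectral Taylor expansion has no terms of order $k \ge 2$. Hence $\partial^{\mathrm{spec}}G_2 = G_2$ and $\partial^{\mathrm{spec}}G_1 = G_1$ at every base point, and the remainder $O(\|\delta A_k\|^2)$ in the linearized dynamics vanishes identically. The loop gain operator is therefore $\mathcal{G} = G_2 G_1$, and the perturbation recursion $\delta A_{k+1} = G_2G_1\,\delta A_k$ is exact rather than approximate.

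For item 1, Theorem~\ref{thm:single-loop-stability-criterion} (equivalently part (a) of Theorem~\ref{thm:feedback-stability}) gives stability if and only if $\rho(G_2G_1) < 1$. To put this in the stated form $\rho(G_1G_2)<1$, I will use the standard fact that $\sigma(AB)\cup\{0\} = \sigma(BA)\cup\{0\}$ for bounded operators on a Banach space. This follows from the identity
\[
(\lambda - BA)^{-1} = \lambda^{-1}\bigl(I + B(\lambda - AB)^{-1}A\bigr), \qquad \lambda \neq 0 .
\]
Hence $\rho(G_2G_1) = \rho(G_1G_2)$, which closes item 1.

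Item 2 is immediate from the inequality $\rho(T) \le \|T\|$ (a consequence of Gelfand's formula) together with submultiplicativity of the operator norm, exactly as in Remark~\ref{rem:feedback_stability_assumptions}.

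For item 3, I will argue as in the proof of Theorem~\ref{thm:feedback-stability}(b), but without finite-dimensional eigenvalue language. Commuting normal operators generate a commutative $C^*$-algebra, and by the Gelfand transform (joint spectral theorem) $G_1$ and $G_2$ become continuous functions $\hat g_1, \hat g_2$ on the maximal ideal space $X$. Then
\[
\rho(G_1G_2) = \|\hat g_1 \hat g_2\|_\infty .
\]
This is because $G_1G_2$ is again normal, and for normal operators $\rho = \|\cdot\|$. Consequently $\rho(G_1G_2) < 1$ is equivalent to $\|G_1G_2\| < 1$, which is the small-gain statement applied to the composite loop.

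To justify the parenthetical remark, I will give a diagonal $2\times 2$ example: $G_1 = \mathrm{diag}(2, 1/4)$ and $G_2 = \mathrm{diag}(1/4, 2)$. Then $\rho(G_1G_2) = 1/2 < 1$, while $\|G_1\| = \|G_2\| = 2$.

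I expect item 3 to be the main obstacle, because of how "classical small-gain stability condition" should be interpreted. The individual-norm condition $\|G_1\|\|G_2\| < 1$ is strictly stronger than $\rho(G_1G_2) < 1$, as the example above shows. So the equivalence only holds with the composite-norm reading $\|G_1G_2\| < 1$. I will state that interpretation explicitly. The needed fact that $\rho = \|\cdot\|$ for normal operators is then the key input, obtained from Gelfand's formula and the $C^*$-identity $\|T^{2^n}\| = \|T\|^{2^n}$.
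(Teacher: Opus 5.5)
Your proposal is correct. For items 1 and 2 it follows the paper's route: specialize the loop gain to $\mathcal{G}=G_2G_1$, then use $\rho(T)\le\|T\|$ and submultiplicativity. The differences are in completeness.

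The paper's proof computes $\rho(F\circ\mathcal{T}_{\mathrm{fb}})=\rho(G_2G_1)$ and silently identifies it with the stated $\rho(G_1G_2)$. Your identity $\sigma(AB)\cup\{0\}=\sigma(BA)\cup\{0\}$ supplies exactly that missing step.

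For item 3 the paper offers only two remarks: the spectral radius is multiplicative on the joint spectrum, and the norm condition is sufficient but not necessary. Neither establishes an equivalence. Your argument ($G_1G_2$ is normal, so $\rho(G_1G_2)=\|G_1G_2\|=\|\hat g_1\hat g_2\|_\infty$) actually proves one. Your diagonal example also pins down that ``equivalent'' holds only for the loop-norm reading $\|G_1G_2\|<1$, not for $\|G_1\|\,\|G_2\|<1$. This is consistent with the paper's own concession that the norm-product condition is not necessary.

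Three things to tighten. First, normality presupposes a Hilbert space, not a general Banach space. Second, in infinite dimensions the step ``commuting normal operators generate a commutative $C^*$-algebra'' (equivalently, $G_1G_2$ is normal) needs Fuglede's theorem, $G_1G_2^*=G_2^*G_1$, which you should cite. In finite dimensions, simultaneous unitary diagonalization suffices.

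Third, do not present item 3 as following the proof of Theorem~\ref{thm:feedback-stability}(b). That proof asserts that $\sigma(\mathcal{M}_{\mathrm{fb}})$ consists of all products $\lambda_i\mu_j$, and hence that $\rho(\mathcal{M}_{\mathrm{fb}})=\rho(\partial^{\mathrm{spec}}F(A_*))\,\rho(\mathcal{T}_{\mathrm{fb}})$. Your own example refutes this: $\rho(G_1)\rho(G_2)=4$ while $\rho(G_1G_2)=1/2$. Your Gelfand formula, which pairs values at the same point of the joint spectrum, is the correct version.
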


\begin{proof}
For linear operators, $\partial^{\mathrm{spec}}F = F$ and $\partial^{\mathrm{spec}}\mathcal{T}_{\mathrm{fb}} = \mathcal{T}_{\mathrm{fb}}$ (SOC II, Proposition 5 and Definition 14; for a linear functor, the first cross-effect equals the functor itself and all higher cross-effects vanish). Thus:
\[
\rho(\partial^{\mathrm{spec}}F \circ \partial^{\mathrm{spec}}\mathcal{T}_{\mathrm{fb}}) = \rho(F \circ \mathcal{T}_{\mathrm{fb}}).
\]

By submultiplicativity of the operator norm, $\|F \circ \mathcal{T}_{\mathrm{fb}}\| \le \|F\| \cdot \|\mathcal{T}_{\mathrm{fb}}\|$, so $\|F\| \cdot \|\mathcal{T}_{\mathrm{fb}}\| < 1$ implies $\rho(F \circ \mathcal{T}_{\mathrm{fb}}) < 1$. For commuting normal operators, the spectral radius is multiplicative on the joint spectrum, but the norm condition remains sufficient, not necessary.
\end{proof}

Thus, the SOC stability radius provides a universal geometric measure of robustness for recursive operadic systems.

\subsection*{Residue Interpretation}

Feedback loops also generate nontrivial interaction residues. Unlike feedforward chains where $\Sigma^{\mathrm{res}}$ is a finite union of interface terms, feedback loops can produce infinite residue accumulation due to repeated circulation.

\begin{proposition}[Residue Accumulation in Feedback Loops]
\label{prop:feedback-residue-accumulation}
For a single feedback loop with forward path $F$ and feedback path $\mathcal{T}_{\mathrm{fb}}$, let $A_0$ be a fixed point and define the loop gain operator
\[
\mathcal{G} := \partial^{\mathrm{spec}}F\bigl(\mathcal{T}_{\mathrm{fb}}(A_0)\bigr) \circ \partial^{\mathrm{spec}}\mathcal{T}_{\mathrm{fb}}(A_0).
\]
Let $\mathcal{L}_I$ denote the interface residue generated by a single traversal of the loop (see SOC III, Theorem 4). Then the total interaction residue is formally represented by the series
\[
\Sigma^{\mathrm{res}} \sim \sum_{k=1}^{\infty} \mathcal{G}^{k-1}(\mathcal{L}_I),
\]
where the sum converges in the spectral topology when $\rho(\mathcal{G}) < 1$.
\end{proposition}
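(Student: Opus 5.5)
The plan is to model the residue as an additively accumulated forcing term in the linearized loop recursion, and then to sum the resulting Neumann series. The model is the one already used for layered networks in Corollary~\ref{cor:exponential-convergence-layerwise}: each traversal of the loop injects a fresh interface residue $\mathcal{L}_I$, and what is already present is transported by one linearized traversal.

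\textbf{Step 1.} We unroll the loop into a feedforward chain. The $k$-th copy of the cycle is treated as layer $k$. Each copy has the same first spectral derivative, and by Proposition~\ref{prop:linearized-feedback-dynamics} this derivative is the loop gain $\mathcal{G}$. The interlayer interface at each copy carries the same residue $\mathcal{L}_I$, by SOC III, Theorem~4, since the interface is the same admissible one on every traversal. Proposition~\ref{prop:layerwise_spectral_composition} then gives the residue of the first $n$ traversals. It is the union, or the sum in the linear spectral state space, of the interface residues transported through the later copies. At first order that transport is the composite of copies of $\mathcal{G}$ given by the layerwise chain rule. This yields the partial sums
\[
\Sigma^{\mathrm{res}}_n \sim \sum_{k=1}^{n}\mathcal{G}^{k-1}(\mathcal{L}_I),
\]
exactly as in the proof of Corollary~\ref{cor:exponential-layerwise-stability}. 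Equivalently, $\Sigma^{\mathrm{res}}_{n+1}=\mathcal{L}_I+\mathcal{G}(\Sigma^{\mathrm{res}}_n)$.

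\textbf{Step 2.} We prove convergence when $\rho(\mathcal{G})<1$. We pick $\varepsilon$ with $\rho(\mathcal{G})+\varepsilon<1$. Gelfand's formula, in the form used in Corollary~\ref{cor:feedback-decay-rate}, gives $\|\mathcal{G}^{k}\|\le C_\varepsilon(\rho(\mathcal{G})+\varepsilon)^k$. Hence the series is absolutely convergent in the Banach spectral state space and has norm at most $C_\varepsilon\|\mathcal{L}_I\|/(1-\rho(\mathcal{G})-\varepsilon)$. Its limit is $(I-\mathcal{G})^{-1}\mathcal{L}_I$, the unique fixed point of the affine recursion. This links to Theorem~\ref{thm:fixed-point-contractive} and identifies the total residue with the self-consistent residue of the resolved cycle. ``Convergence in the spectral topology'' will be taken to mean convergence in the norm of $\mathcal{S}$.

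\textbf{Main obstacle.} The difficult step is Step~1's justification that the residue really accumulates additively and linearly, that is, that transported residues compose through $\mathcal{G}$ rather than through the full nonlinear map. I would address this by working at first order, consistent with the word ``formally'' and the symbol $\sim$ in the statement. All transport is replaced by the linearization at the fixed point $A_0$, and the $O(\|\cdot\|^2)$ remainders of Proposition~\ref{prop:linearized-feedback-dynamics} are discarded. Where $\Sigma^{\mathrm{res}}$ is set-valued, I would interpret the series as the union of the images $\mathcal{G}^{k-1}(\mathcal{L}_I)$ and state convergence for their norm representatives.
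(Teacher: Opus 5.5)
This is essentially the paper's argument: the paper likewise has each traversal of the loop inject a fresh copy of $\mathcal{L}_I$ that picks up one factor of $\mathcal{G}$ per later traversal, and then sums, but it only asserts convergence when $\rho(\mathcal{G})<1$, whereas you prove it via Gelfand's formula and identify the limit as $(I-\mathcal{G})^{-1}\mathcal{L}_I$. One minor misattribution: Proposition~\ref{prop:layerwise_spectral_composition} is only an inclusion and leaves the interface residues $\mathcal{I}_{\ell,\ell+1}$ untransported, so your $\mathcal{G}^{k-1}$ weighting rests on the additive-forcing model from the proof of Corollary~\ref{cor:exponential-layerwise-stability} rather than on that proposition, which is also what the paper's heuristic relies on.
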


\begin{proof}
The iterated feedback dynamics generate repeated residue propagation through successive loop traversals. By the Spectral Propagation Theorem (Theorem~\ref{thm:spectral-propagation}), each traversal of the loop contributes a residue $\mathcal{L}_I$. After the first traversal, the residue propagates through one full loop, acquiring a factor $\mathcal{G}$. After $k$ traversals, the contribution is $\mathcal{G}^{k-1}(\mathcal{L}_I)$. Summing over all traversals yields the formal series representation. When $\rho(\mathcal{G}) < 1$, the series converges in the appropriate spectral topology.
\end{proof}

Near the critical regime where $\rho(\mathcal{G}) \approx 1$, the residue terms may accumulate significantly, potentially altering global spectral behavior.

\begin{corollary}[Residue-Driven Instability]
\label{cor:residue-instability}
Even when $\rho(\mathcal{G}) < 1$, the accumulated residue $\Sigma^{\mathrm{res}}$ can be large if $\mathcal{L}_I$ has non-zero components in directions corresponding to eigenvalues of $\mathcal{G}$ with modulus close to unity. In such cases, the linearized stability analysis may be quantitatively inaccurate or qualitatively misleading, potentially invalidating the linearized stability approximation.
\end{corollary}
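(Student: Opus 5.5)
The statement to justify is Corollary~\ref{cor:residue-instability}. Large accumulated residue can arise even when $\rho(\mathcal{G})<1$, provided $\mathcal{L}_I$ has components along spectral directions of $\mathcal{G}$ of modulus near one. My plan is to work directly from the series representation of Proposition~\ref{prop:feedback-residue-accumulation}, namely $\Sigma^{\mathrm{res}} \sim \sum_{k\ge1}\mathcal{G}^{k-1}(\mathcal{L}_I)$. When $\rho(\mathcal{G})<1$ this is the Neumann series $(I-\mathcal{G})^{-1}\mathcal{L}_I$. The corollary then becomes a quantitative lower bound on $\|(I-\mathcal{G})^{-1}\mathcal{L}_I\|$ in terms of the component of $\mathcal{L}_I$ along near-unimodular spectral data of $\mathcal{G}$. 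Convergence of the series and the identification with the resolvent at $z=1$ both follow from Gelfand's formula, exactly as in the proof of Proposition~\ref{prop:spectral_stability_criterion}.

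The key step is a spectral decomposition. Let $\lambda\in\sigma(\mathcal{G})$ be an isolated eigenvalue with $|\lambda|$ close to one, let $\Pi_\lambda$ be its Riesz projection, and treat $\mathcal{L}_I$ as an element of the Banach spectral state space. The projection commutes with $\mathcal{G}$, so
\[
\Pi_\lambda\Sigma^{\mathrm{res}}=(I-\mathcal{G})^{-1}\Pi_\lambda\mathcal{L}_I .
\]
In the semisimple case this equals $(1-\lambda)^{-1}\Pi_\lambda\mathcal{L}_I$. Since $\|\Pi_\lambda\|$ is finite, this gives
\[
\|\Sigma^{\mathrm{res}}\|\ge \frac{\|\Pi_\lambda\mathcal{L}_I\|}{\|\Pi_\lambda\|\,|1-\lambda|} .
\]
The bound blows up as $\lambda\to1$. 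If $\lambda$ is near the unit circle but away from $1$, the partial sums still grow transiently like $\sum_k|\lambda|^{k-1}\approx(1-|\lambda|)^{-1}$ before settling. This is the mechanism by which the $k$-th loop traversal contributes at magnitude $|\lambda|^{k-1}\|\Pi_\lambda\mathcal{L}_I\|$. In the non-semisimple case, Jordan-block contributions add polynomial factors and only strengthen the bound. For non-normal $\mathcal{G}$ I would also note that a large $\|(I-\mathcal{G})^{-1}\|$ already signals this, via the pseudospectral remark after Definition~\ref{def:soc_stability_radius}, even without eigenvalues near one.

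Finally I would connect this to the linearized analysis. The linearized dynamics of Proposition~\ref{prop:linearized-feedback-dynamics} drop the residue forcing. With the forcing restored, the perturbation obeys $\delta A_{k+1}=\mathcal{G}\delta A_k+\mathcal{L}_I+O(\|\delta A_k\|^2)$. Its steady state is of size $\|\Sigma^{\mathrm{res}}\|$, which the bound above shows can be large. Once that size exceeds the neighborhood in which Theorem~\ref{thm:feedback-local-stability} controls the quadratic remainder, the linearized prediction is no longer justified; this is the same residue-floor phenomenon as in Corollary~\ref{cor:exponential-layerwise-stability}.

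The main obstacle is making this last step genuinely rigorous rather than qualitative. The corollary only asserts that the analysis ``may be'' inaccurate, so the honest target is the lower bound on $\|\Sigma^{\mathrm{res}}\|$ together with the observation that it can exceed the validity radius. I would state the hypotheses explicitly: $\lambda$ is isolated in $\sigma(\mathcal{G})$, and $\mathcal{L}_I$ lies in the state space on which $\mathcal{G}$ acts. Without these, the Riesz projection argument is unavailable.
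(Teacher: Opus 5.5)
Your core argument is correct, and it takes a different route from the paper's. The paper writes $\Sigma^{\mathrm{res}}=\sum_{k\ge0}\mathcal{G}^k(\mathcal{L}_I)$, bounds $\|\Sigma^{\mathrm{res}}\|\le\sum_k\|\mathcal{G}^k\|\,\|\mathcal{L}_I\|$, and notes that $\sum_k|\lambda|^k$ is large when $|\lambda|\approx1$. That only shows an upper bound becoming large, which cannot by itself show the residue is large. You instead identify the series with $(I-\mathcal{G})^{-1}\mathcal{L}_I$ and project with $\Pi_\lambda$, which gives a genuine lower bound $\|\Sigma^{\mathrm{res}}\|\ge\|\Pi_\lambda\mathcal{L}_I\|/(\|\Pi_\lambda\|\,|1-\lambda|)$. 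That lower bound is exactly what ``can be large'' requires. Your forced recursion $\delta A_{k+1}=\mathcal{G}\delta A_k+\mathcal{L}_I+O(\|\delta A_k\|^2)$, whose linear steady state is $\Sigma^{\mathrm{res}}$, makes ``invalidating the linearized approximation'' concrete where the paper leaves it as prose. Your observation that the governing quantity is the resolvent norm $\|(I-\mathcal{G})^{-1}\|$ is also sharper than anything in the paper.

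Two of your auxiliary claims are wrong, however.

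\emph{Transient growth.} For $\lambda$ near the unit circle but away from $1$, the partial sums do \emph{not} grow transiently like $(1-|\lambda|)^{-1}$. In the semisimple case the projected $n$-th partial sum is $\frac{1-\lambda^n}{1-\lambda}\,\Pi_\lambda\mathcal{L}_I$, whose norm is at most $2\|\Pi_\lambda\mathcal{L}_I\|/|1-\lambda|$. For $\lambda=-0.999$ it never exceeds roughly $\|\Pi_\lambda\mathcal{L}_I\|$. The number $(1-|\lambda|)^{-1}$ is the sum of the norms of the individual traversal contributions, which is the same upper-bound quantity the paper uses. Your bound therefore proves the corollary when $|1-\lambda|$ is small, or more generally when $\|(I-\mathcal{G})^{-1}\|$ is large, and that suffices for the ``can be'' phrasing. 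But $|\lambda|\approx1$ alone is not enough. State the hypothesis in terms of $|1-\lambda|$ rather than patching it with the transient claim.

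\emph{Jordan blocks.} Jordan blocks do not ``only strengthen'' a bound phrased via $\|\Pi_\lambda\mathcal{L}_I\|$. Take a $2\times2$ block $\lambda I+N$ with $Ne_2=e_1$. The vector $y_0=(1-\lambda)e_2-e_1$ satisfies $(I-\mathcal{G})^{-1}y_0=e_2$, so there is no amplification at all. The fix is to measure the component with a left eigenvector. If $\ell\circ\mathcal{G}=\lambda\ell$ (available, e.g., when $\Pi_\lambda$ has finite rank), then $\ell(\Sigma^{\mathrm{res}})=\ell(\mathcal{L}_I)/(1-\lambda)$. Hence $\|\Sigma^{\mathrm{res}}\|\ge|\ell(\mathcal{L}_I)|/(\|\ell\|\,|1-\lambda|)$, with no semisimplicity assumption.
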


\begin{proof}
The total residue is formally given by $\Sigma^{\mathrm{res}} = \sum_{k=0}^{\infty} \mathcal{G}^k(\mathcal{L}_I)$ (re-indexing). Its norm satisfies $\|\Sigma^{\mathrm{res}}\| \le \sum_{k=0}^{\infty} \|\mathcal{G}^k\| \cdot \|\mathcal{L}_I\|$. If $\mathcal{L}_I$ is aligned with eigenvectors corresponding to eigenvalues $\lambda$ with $|\lambda|$ close to 1, the sum $\sum_{k=0}^{\infty} |\lambda|^k$ becomes large, even though it remains finite when $|\lambda| < 1$. Thus, $\|\Sigma^{\mathrm{res}}\|$ may be large, contributing significant spectral content beyond the linearized spectrum and potentially invalidating predictions based solely on $\rho(\mathcal{G}) < 1$.
\end{proof}

Consequently, the SOC framework separates feedback instability into two conceptually distinct mechanisms:
\begin{enumerate}
    \item \textbf{Derivative amplification} governed by $\rho(\mathcal{G})$ (linearized stability),
    \item \textbf{Residue accumulation} governed by $\|\Sigma^{\mathrm{res}}\|$ (nonlinear/interaction corrections).
\end{enumerate}

This decomposition provides a refined stability analysis beyond classical gain-based methods. It is one of the key conceptual contributions of the SOC framework: recognizing that stability is determined not only by the linearized gain but also by the accumulation of interface-generated residues through repeated feedback traversals.

\subsection*{Examples}

\begin{example}[Linear Scalar Feedback Loop]
\label{ex:scalar-feedback-single}
Consider a scalar feedback loop with
\[
F(x) = ax,
\]
where $a \in \mathbb{C}$ is the loop gain, and direct feedback
\[
\mathcal{T}_{\mathrm{fb}} = \mathrm{id}.
\]
Then
\[
\partial^{\mathrm{spec}}F = a, \qquad \rho(a) = |a|.
\]
The SOC stability radius is
\[
r_{\mathrm{SOC}} = \frac{1}{|a|}.
\]
The perturbation dynamics are
\[
\delta A_n = a^n \delta A_0.
\]
Hence perturbations decay when $|a| < 1$ and grow when $|a| > 1$. Equivalently, stability holds when
\[
r_{\mathrm{SOC}} > 1.
\]
\end{example}

\begin{example}[Nonlinear Feedback with Saturation]
\label{ex:nonlinear-feedback-single}
Let
\[
F(x) = \tanh(x), \qquad \mathcal{T}_{\mathrm{fb}} = \mathrm{id}.
\]
At the fixed point $x_0 = 0$,
\[
\partial^{\mathrm{spec}}F(0) = F'(0) = \operatorname{sech}^2(0) = 1.
\]
Thus
\[
\rho(\partial^{\mathrm{spec}}F(0)) = 1, \qquad r_{\mathrm{SOC}} = 1.
\]
The linearized analysis is therefore marginal at the origin. However, the nonlinear map $\tanh(x)$ is globally bounded and satisfies $|\tanh(x)| \le 1$. Thus large perturbations do not grow without bound. This saturation effect is not detected by the first-order loop gain and instead appears through higher-order spectral derivatives of $F$.
\end{example}

\begin{example}[Non-Hermitian Feedback with Nilpotent Operator]
\label{ex:nilpotent-feedback}
Let
\[
F = \begin{pmatrix}
0 & 2 \\
0 & 0
\end{pmatrix}, \qquad \mathcal{T}_{\mathrm{fb}} = I.
\]
Since $F$ is linear,
\[
\partial^{\mathrm{spec}}F = F.
\]
Moreover,
\[
\rho(F) = 0, \qquad \|F\|_2 = 2, \qquad F^2 = 0,
\]
where $\|\cdot\|_2$ denotes the spectral (operator) norm induced by the Euclidean norm.
Hence the SOC stability radius is formally
\[
r_{\mathrm{SOC}} = \frac{1}{\rho(F)} = \infty.
\]
Although the operator norm $\|F\|_2 = 2$ exceeds 1 (suggesting possible amplification), the perturbation dynamics satisfy
\[
\delta A_2 = F^2 \delta A_0 = 0.
\]
Thus linearized perturbations vanish after two iterations. This example illustrates that spectral radius, rather than operator norm alone, governs asymptotic linear stability, while the norm may still describe transient amplification.
\end{example}

\subsection*{Summary}

The single feedback loop exemplifies the core concepts of operadic stability theory:
\begin{itemize}
    \item The loop gain operator
    \[
    \mathcal{G} = \partial^{\mathrm{spec}}F \circ \partial^{\mathrm{spec}}\mathcal{T}_{\mathrm{fb}}
    \]
    governs linearized recursive amplification.
    \item The SOC stability radius
    \[
    r_{\mathrm{SOC}} = \frac{1}{\rho(\mathcal{G})}
    \]
    provides a critical threshold for local linearized stability.
    \item Stability of the linearized feedback dynamics requires
    \[
    \rho(\mathcal{G}) < 1.
    \]
    \item For linear systems, the SOC criterion recovers the classical spectral-radius form of feedback stability.
    \item Feedback loops may generate residue accumulation, captured by $\Sigma^{\mathrm{res}}$, which separates derivative amplification from higher-order or interface-driven effects.
\end{itemize}

This forms the basis for analyzing more complex networks with multiple interacting feedback loops.

\subsection{Multilayer Operator Systems}
\label{subsec:multilayer-operator-systems}

We now consider general multilayer operator architectures in which propagation occurs through a hierarchy of interacting layers. 
Such systems arise naturally in deep neural networks, hierarchical control systems, multiscale signal processing, and compositional quantum circuits.

\begin{definition}[Multilayer Operator System]
\label{def:multilayer-system}
A \emph{multilayer operator system} of depth $L$ is an admissible operadic operator network
\[
\mathcal{N} = \mathcal{N}_L \circ \mathcal{N}_{L-1} \circ \cdots \circ \mathcal{N}_1,
\]
where:
\begin{itemize}
    \item each $\mathcal{N}_\ell$ (for $\ell = 1, \dots, L$) is itself an admissible operadic operator network, called the \emph{$\ell$-th layer};
    
    \item the composition $\circ$ denotes admissible operadic gluing along interfaces $I_{\ell,\ell+1}$;
    
    \item the output type of $\mathcal{N}_\ell$ is admissibly composable with the input type of $\mathcal{N}_{\ell+1}$;
    
    \item the global evaluation map satisfies
    \[
    \mathcal{E}_{\mathcal{N}} = \mathcal{E}_{\mathcal{N}_L} \circ \mathcal{E}_{\mathcal{N}_{L-1}} \circ \cdots \circ \mathcal{E}_{\mathcal{N}_1}.
    \]
\end{itemize}

The integer $L$ is called the \emph{depth} of the multilayer system. The system is called \emph{homogeneous} if all layers $\mathcal{N}_\ell$ are isomorphic as operadic networks; otherwise it is \emph{heterogeneous}.
\end{definition}

For a multilayer system as in Definition~\ref{def:multilayer-system}, define the intermediate states recursively by
\[
A_0 := A_{\mathrm{in}}, \qquad A_\ell := \mathcal{E}_{\mathcal{N}_\ell}(A_{\ell-1}), \quad 1 \le \ell \le L.
\]

Each layer $\mathcal{N}_\ell$ possesses:
\begin{itemize}
    \item a local propagation operator $F_\ell := \mathcal{E}_{\mathcal{N}_\ell}$,
    \item a local spectral invariant $\sigma_\ell := \sigma_P(F_\ell)$,
    \item spectral derivatives $\partial_*^{\mathrm{spec}}F_\ell$ evaluated at the appropriate intermediate state $A_{\ell-1}$.
\end{itemize}

The Layerwise Stability Theorem (Theorem~\ref{thm:layerwise-stability}) suggests a recursive strategy for verifying global stability:
\begin{enumerate}
    \item \textbf{Local layer stability}: Verify spectral stability of each individual layer $\mathcal{N}_\ell$ at its operating point $A_{\ell-1}$ (e.g., $\rho(\partial^{\mathrm{spec}}F_\ell(A_{\ell-1})) < 1$ for linearized stability).
    
    \item \textbf{Interlayer propagation control}: Control interlayer propagation via bounded spectral derivatives, e.g., $\|\partial^{\mathrm{spec}}F_\ell(A_{\ell-1})\| \le \alpha_\ell$.
    
    \item \textbf{Residue accumulation estimate}: Estimate cumulative residue growth across interfaces, including internal layer residues $\Sigma^{\mathrm{res}}(\mathcal{N}_\ell)$ and interlayer interface residues $\mathcal{I}_{\ell,\ell+1}$.
\end{enumerate}

This decomposition provides a modular framework for analyzing stability in deep compositional architectures, where global behavior emerges from the composition of local layer dynamics and interface interactions.

\subsection*{Layerwise Derivative Propagation}

The global spectral derivative is obtained by composition of layerwise derivatives, evaluated at the appropriate intermediate states.

\begin{proposition}[Global First-Derivative Composition]
\label{prop:multilayer-derivative-composition}
Let
\[
\mathcal{N} = \mathcal{N}_L \circ \cdots \circ \mathcal{N}_1
\]
be a multilayer operator system, and set
\[
F_\ell := \mathcal{E}_{\mathcal{N}_\ell}, \qquad
F_{\mathcal{N}} := \mathcal{E}_{\mathcal{N}}.
\]
Define intermediate states recursively by
\[
A_0 := A_{\mathrm{in}}, \qquad
A_\ell := F_\ell(A_{\ell-1}), \quad 1 \le \ell \le L.
\]
Then the first spectral derivative satisfies
\[
\partial^{\mathrm{spec}}F_{\mathcal{N}}(A_0)
=
\partial^{\mathrm{spec}}F_L(A_{L-1})
\circ
\partial^{\mathrm{spec}}F_{L-1}(A_{L-2})
\circ
\cdots
\circ
\partial^{\mathrm{spec}}F_1(A_0).
\]
\end{proposition}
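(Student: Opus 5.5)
The plan is to reduce the statement to the layerwise chain rule already established, rather than re-derive anything. By Definition~\ref{def:multilayer-system}, a multilayer operator system carries the factorization
\[
\mathcal{E}_{\mathcal{N}} = \mathcal{E}_{\mathcal{N}_L} \circ \cdots \circ \mathcal{E}_{\mathcal{N}_1}
\]
as part of its data. Each layer $\mathcal{N}_\ell$ is admissible, and the interfaces $I_{\ell,\ell+1}$ are admissibly composable. It follows that $\mathcal{N}$ is a layerwise decomposable network in the sense of Definition~\ref{def:layerwise_operadic_decomposition}, with the same layers and depth $L$.

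The first step is to check this identification explicitly. All three bullet conditions of Definition~\ref{def:layerwise_operadic_decomposition} appear verbatim among the hypotheses of Definition~\ref{def:multilayer-system}, so I would simply match them item by item.

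The second step is then to invoke Proposition~\ref{prop:layerwise_spectral_composition}, or equivalently Theorem~\ref{thm:chain-rule-feedforward-operadic}(2). This yields the displayed formula, with the intermediate states $A_\ell = F_\ell(A_{\ell-1})$ defined exactly as in the statement.

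If a self-contained argument is preferred, I would run induction on $L$. For $L=1$ the claim is trivial. For the inductive step, write $F_{\mathcal{N}} = F_L \circ G$ with $G := F_{L-1}\circ\cdots\circ F_1$. Apply the spectral chain rule (SOC II, Theorem~10) at $A_0$ to get
\[
\partial^{\mathrm{spec}}F_{\mathcal{N}}(A_0) = \partial^{\mathrm{spec}}F_L\bigl(G(A_0)\bigr)\circ \partial^{\mathrm{spec}}G(A_0).
\]
Noting that $G(A_0) = A_{L-1}$ by the recursion, the induction hypothesis applied to $G$ finishes the argument.

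The main point requiring care is the evaluation points. The chain rule must be applied with the outer derivative taken at the image point $G(A_0)=A_{L-1}$, not at $A_0$, and the recursive definition of the $A_\ell$ is exactly what makes this bookkeeping consistent.

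A secondary requirement is that each $F_\ell$ be spectrally analytic at $A_{\ell-1}$, so that the chain rule applies. I would justify this from admissibility of each layer together with closure of spectral analyticity under composition (SOC II, Theorem~11), as was done in Theorem~\ref{thm:network-evaluation}.
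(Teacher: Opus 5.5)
Your proposal is correct and follows the paper's proof: obtain the factorization $F_{\mathcal{N}} = F_L \circ \cdots \circ F_1$, then apply the operadic chain rule (SOC II, Theorem~10) iteratively, evaluating each derivative at the intermediate state $A_{\ell-1}$. The paper cites functoriality of the evaluation map for the factorization, whereas you note it is already part of Definition~\ref{def:multilayer-system}; your explicit induction, or the reduction to Proposition~\ref{prop:layerwise_spectral_composition}, simply spells out the iteration the paper leaves implicit.
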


\begin{proof}
By Definition~\ref{def:multilayer-system}, the network is obtained by sequential operadic composition. By the functoriality of the spectral evaluation map (Theorem~\ref{thm:network-evaluation}), $F_{\mathcal{N}} = F_L \circ \cdots \circ F_1$. Differentiating both sides and applying the operadic chain rule (SOC II, Theorem 10) iteratively yields the claimed composition, with each derivative evaluated at the appropriate intermediate state.
\end{proof}

Consequently, the global propagation norm satisfies the estimate:
\[
\bigl\|
\partial^{\mathrm{spec}}F_{\mathcal{N}}(A_0)
\bigr\|
\le
\prod_{\ell=1}^L
\bigl\|
\partial^{\mathrm{spec}}F_\ell(A_{\ell-1})
\bigr\|.
\]

\begin{theorem}[Exponential Amplification Bound]
\label{thm:exponential-amplification}
Suppose
\[
\|\partial^{\mathrm{spec}}F_\ell(A_{\ell-1})\| \le \alpha_\ell
\]
for each layer $\ell$. Then
\[
\|\partial^{\mathrm{spec}}F_{\mathcal{N}}(A_0)\|
\le
\prod_{\ell=1}^L \alpha_\ell.
\]

If $\alpha_\ell = \alpha$ for all $\ell$, then
\[
\|\partial^{\mathrm{spec}}F_{\mathcal{N}}(A_0)\|
\le
\alpha^L.
\]
Thus, when $\alpha > 1$, the upper bound grows exponentially with depth, indicating possible amplification.
\end{theorem}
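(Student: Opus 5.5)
The plan is to deduce the bound directly from Proposition~\ref{prop:multilayer-derivative-composition}, which already gives the factorization
\[
\partial^{\mathrm{spec}}F_{\mathcal{N}}(A_0)
=
\partial^{\mathrm{spec}}F_L(A_{L-1})
\circ
\cdots
\circ
\partial^{\mathrm{spec}}F_1(A_0),
\]
with each factor evaluated at the correct intermediate state $A_{\ell-1}$. Since each factor is assumed to be a bounded linear operator on the relevant Banach spectral state space, as in the hypotheses of Theorem~\ref{thm:layerwise-stability}, the composite is again a bounded linear operator. Its operator norm is therefore controlled by the norms of its factors.

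The key step is submultiplicativity of the operator norm, $\|S\circ T\|\le\|S\|\,\|T\|$. I would apply it by induction on $L$. The case $L=1$ is the hypothesis itself. For the inductive step, write $F_{\mathcal{N}}=F_L\circ F_{\le L-1}$ with $F_{\le L-1}:=F_{L-1}\circ\cdots\circ F_1$. Applying the chain rule (SOC II, Theorem~10) once, and noting that $F_{\le L-1}(A_0)=A_{L-1}$, gives
\[
\|\partial^{\mathrm{spec}}F_{\mathcal{N}}(A_0)\|\le \alpha_L\,\|\partial^{\mathrm{spec}}F_{\le L-1}(A_0)\|.
\]
The induction hypothesis then yields the bound $\prod_{\ell=1}^L\alpha_\ell$. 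This is exactly the estimate already displayed after Proposition~\ref{prop:multilayer-derivative-composition}, combined with the assumed layer bounds, and the argument is the same as the one used in Corollary~\ref{cor:layered-stability}.

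The uniform case $\alpha_\ell=\alpha$ is immediate from the general case, since the product becomes $\alpha^L$. The remaining claim is that for $\alpha>1$ the bound grows exponentially in $L$, since $\alpha^L=e^{L\ln\alpha}$ with $\ln\alpha>0$. I would state explicitly that this is growth of the upper bound only, in the same way as in Corollary~\ref{cor:exponential-sensitivity-chain}. Any claim of actual amplification would need a lower bound, for instance aligned top singular directions across layers, and the theorem does not assert this.

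There is no real analytic obstacle here. The one point requiring care is the bookkeeping of base points: the chain rule must be applied with each derivative evaluated at the propagated state $A_{\ell-1}$, not at $A_0$. This is why I would route the induction through the definition $A_\ell=F_\ell(A_{\ell-1})$ rather than differentiate the composite naively. A second point is to make sure that the norms refer to a common Banach structure on the intermediate state spaces, so that submultiplicativity applies across the layer interfaces. This is implicit in the admissible composability of layers in Definition~\ref{def:multilayer-system}.
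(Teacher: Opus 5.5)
Your proposal is correct and follows essentially the paper's argument. The paper applies submultiplicativity of the operator norm directly to the $L$-fold factorization from Proposition~\ref{prop:multilayer-derivative-composition}, while you unroll the same estimate as an induction on $L$, with the same base-point bookkeeping $A_\ell = F_\ell(A_{\ell-1})$ and the same reading that only the upper bound grows when $\alpha > 1$.
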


\begin{proof}
From Proposition~\ref{prop:multilayer-derivative-composition} and submultiplicativity of the operator norm:
\[
\|\partial^{\mathrm{spec}}F_{\mathcal{N}}(A_0)\|
= \|\partial^{\mathrm{spec}}F_L(A_{L-1}) \circ \cdots \circ \partial^{\mathrm{spec}}F_1(A_0)\|
\le \prod_{\ell=1}^L \|\partial^{\mathrm{spec}}F_\ell(A_{\ell-1})\|
\le \prod_{\ell=1}^L \alpha_\ell.
\]

If $\alpha_\ell = \alpha$ for all $\ell$, then $\prod_{\ell=1}^L \alpha_\ell = \alpha^L$. For $\alpha > 1$, this upper bound grows exponentially with $L$; for $\alpha < 1$, it decays exponentially.
\end{proof}

\begin{corollary}[Depth Bound for Controlled Amplification]
\label{cor:critical-depth}
If
\[
\|\partial^{\mathrm{spec}}F_\ell(A_{\ell-1})\| \le 1 + \varepsilon
\]
for all $\ell$, then
\[
\|\partial^{\mathrm{spec}}F_{\mathcal{N}}(A_0)\|
\le
(1 + \varepsilon)^L.
\]
To keep this upper bound below a tolerance $T > 1$, it is sufficient that
\[
L \le \frac{\log T}{\log(1 + \varepsilon)}.
\]
\end{corollary}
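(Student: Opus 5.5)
The first claim follows directly from Theorem~\ref{thm:exponential-amplification} with $\alpha_\ell = 1+\varepsilon$ for every layer. The hypothesis $\|\partial^{\mathrm{spec}}F_\ell(A_{\ell-1})\| \le 1+\varepsilon$ matches that theorem's per-layer bound, so the conclusion $\|\partial^{\mathrm{spec}}F_{\mathcal{N}}(A_0)\| \le (1+\varepsilon)^L$ is the uniform case $\alpha = 1+\varepsilon$. For completeness, the chain is:
\begin{itemize}
\item Proposition~\ref{prop:multilayer-derivative-composition} factors the global derivative as the composite of the layerwise derivatives, each evaluated at its intermediate state $A_{\ell-1}$.
\item Submultiplicativity of the operator norm then bounds the norm of that composite by the product of the layerwise norms.
\end{itemize}
No new input is needed.

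For the depth condition, we show that $L \le \log T/\log(1+\varepsilon)$ forces $(1+\varepsilon)^L \le T$, and chain this with the first claim. We assume $\varepsilon > 0$; the case $\varepsilon \le 0$ is trivial or degenerate and is discussed below. Under $\varepsilon>0$ we have $\log(1+\varepsilon) > 0$, so multiplying the depth inequality by $\log(1+\varepsilon)$ preserves its direction and gives $L\log(1+\varepsilon) \le \log T$. Because $\exp$ is monotone increasing, this becomes $(1+\varepsilon)^L \le T$. Combining with the first part gives $\|\partial^{\mathrm{spec}}F_{\mathcal{N}}(A_0)\| \le T$. The statement says "below a tolerance $T$"; we interpret this as the non-strict inequality $\le T$. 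A strict bound would require a strict depth inequality.

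The only real obstacle is the edge cases around $\log(1+\varepsilon)$, and we would state them explicitly.
\begin{itemize}
\item If $\varepsilon = 0$, the bound is $1^L = 1 < T$ for every $L$. The depth formula is then undefined (division by zero), so the condition should be read as vacuous, meaning no depth restriction.
\item If $-1 < \varepsilon < 0$, the bound decays, and $(1+\varepsilon)^L \le 1 < T$ holds for all $L$. The displayed inequality would have a negative right-hand side, so we would remark that the corollary is intended for $\varepsilon > 0$.
\item The assumption $T > 1$ guarantees $\log T > 0$, so the admissible depth range is nonempty. In particular it contains $L = 0$ and, for small $\varepsilon$, many positive depths.
\end{itemize}
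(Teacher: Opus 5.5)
Your proposal is correct and follows the paper's own argument. The paper also specializes Theorem~\ref{thm:exponential-amplification} to $\alpha_\ell = 1+\varepsilon$ and then solves $(1+\varepsilon)^L \le T$ for $L$; the paper silently assumes $\varepsilon > 0$, so your explicit handling of the sign of $\log(1+\varepsilon)$ is a welcome addition rather than a deviation.
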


\begin{proof}
From Theorem~\ref{thm:exponential-amplification}, $\|\partial^{\mathrm{spec}}F_{\mathcal{N}}(A_0)\| \le (1 + \varepsilon)^L$. Setting $(1 + \varepsilon)^L \le T$ and solving for $L$ yields the sufficient condition $L \le \log(T)/\log(1 + \varepsilon)$.
\end{proof}

\subsection*{Residue Accumulation}

In multilayer systems, interface interactions generate cumulative residue corrections. As depth increases, deep multilayer systems may exhibit effectively unbounded residue growth.

\begin{definition}[Formal Residue Decomposition]
\label{def:cumulative-residue}
Let
\[
\mathcal{N} = \mathcal{N}_L \circ \cdots \circ \mathcal{N}_1
\]
be a multilayer operator system.

The total residue is formally decomposed into:
\[
\Sigma^{\mathrm{res}}(\mathcal{N})
\sim
\sum_{\ell=1}^L
\Phi_\ell^*
\bigl(
\Sigma^{\mathrm{res}}(\mathcal{N}_\ell)
\bigr)
+
\sum_{\ell=1}^{L-1}
\mathcal{I}_{\ell,\ell+1}
+
\Sigma_{\mathrm{higher}}^{\mathrm{res}},
\]
where:
\begin{itemize}
    \item $\Phi_\ell^*$ denotes propagation of internal layer residues through preceding layers (see Proposition~\ref{prop:layerwise_spectral_composition});
    \item $\mathcal{I}_{\ell,\ell+1}$ denotes pairwise interface residues between adjacent layers;
    \item $\Sigma_{\mathrm{higher}}^{\mathrm{res}}$ formally collects higher-order non-pairwise interaction residues generated across multiple layers (e.g., three-layer interactions, loop closures, etc.).
\end{itemize}
The notation $\sim$ indicates that this decomposition is formal; a rigorous interpretation requires specifying the algebraic structure of residues (e.g., as elements of a normed module over the operadic spectral algebra).
\end{definition}

\begin{proposition}[Formal Residue Growth Estimate]
\label{prop:multilayer-residue-bound}
Assume that:
\begin{enumerate}
    \item Each layer satisfies $\|\partial^{\mathrm{spec}}F_\ell(A_{\ell-1})\| \le \alpha < 1$ (contractive layers);
    \item Interface residues satisfy $\|\mathcal{I}_{\ell,\ell+1}\| \le \beta$ for all $\ell$;
    \item The pullback operators $\Phi_\ell^*$ are bounded and compatible with the residue norm, i.e.,
    \[
    \|\Phi_\ell^*(\Sigma^{\mathrm{res}}(\mathcal{N}_\ell))\| \le \left( \prod_{j=1}^{\ell-1} \|\partial^{\mathrm{spec}}F_j(A_{j-1})\| \right) \|\Sigma^{\mathrm{res}}(\mathcal{N}_\ell)\|;
    \]
    \item Higher-order residues $\Sigma_{\mathrm{higher}}^{\mathrm{res}}$ are bounded.
\end{enumerate}

Then the total residue satisfies the formal growth estimate
\[
\|\Sigma^{\mathrm{res}}(\mathcal{N})\| \le \frac{R_{\text{max}} + \beta}{1 - \alpha} + \|\Sigma_{\mathrm{higher}}^{\mathrm{res}}\|,
\]
where $R_{\text{max}} = \max_{\ell} \|\Sigma^{\mathrm{res}}(\mathcal{N}_\ell)\|$.

If $\alpha \ge 1$, the geometric series diverges, and residue accumulation may grow without bound as depth increases.
\end{proposition}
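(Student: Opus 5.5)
The plan is to bound each of the three summands in the formal decomposition of Definition~\ref{def:cumulative-residue} separately and then combine them with the triangle inequality for the residue norm. Concretely, I would start from
\[
\|\Sigma^{\mathrm{res}}(\mathcal{N})\|
\le
\sum_{\ell=1}^{L}\bigl\|\Phi_\ell^*\bigl(\Sigma^{\mathrm{res}}(\mathcal{N}_\ell)\bigr)\bigr\|
+\sum_{\ell=1}^{L-1}\|\mathcal{I}_{\ell,\ell+1}\|
+\|\Sigma^{\mathrm{res}}_{\mathrm{higher}}\|.
\]
This step presupposes that residues live in a normed module, as the remark after Definition~\ref{def:cumulative-residue} requires. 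I would state this as the standing interpretation under which the estimate is meant.

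For the internal residues, hypothesis~3 combined with hypothesis~1 gives
\[
\|\Phi_\ell^*(\Sigma^{\mathrm{res}}(\mathcal{N}_\ell))\|\le \alpha^{\ell-1}R_{\max}.
\]
Summing over $\ell=1,\dots,L$ then yields at most $R_{\max}\sum_{j=0}^{L-1}\alpha^j\le R_{\max}/(1-\alpha)$. This is the same geometric-series mechanism as in Corollary~\ref{cor:exponential-layerwise-stability} and Corollary~\ref{cor:exponential-convergence-layerwise}.

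The interface residues are the one delicate point. Hypothesis~2 only gives $\|\mathcal{I}_{\ell,\ell+1}\|\le\beta$, and the raw sum $\sum_\ell\|\mathcal{I}_{\ell,\ell+1}\|$ is then only bounded by $(L-1)\beta$, which is not uniform in depth. To obtain $\beta/(1-\alpha)$, I would read the interface term as in the proofs of Corollary~\ref{cor:exponential-layerwise-stability} and Theorem~\ref{thm:feedforward-stability}. In that reading, the residue generated at interface $(\ell,\ell+1)$ is transported through the contractive layers it traverses, so its contribution to the output residue is at most $\alpha^{L-\ell}\beta$. Summing this over $\ell$ gives at most $\beta\sum_{j\ge1}\alpha^j\le\beta/(1-\alpha)$.

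The main obstacle is justifying this transported interpretation from the formal decomposition, since Definition~\ref{def:cumulative-residue} writes the interface residues untransported. I would resolve this by invoking Proposition~\ref{prop:layerwise_spectral_composition}, whose inductive proof attaches the pullback $\Phi_{\mathcal{M}}^*$ to residues crossing later layers. I would also state explicitly that interface residues are measured after transport, making this a convention of the formal estimate.

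The higher-order term is bounded by hypothesis~4 and is added as is. Adding the three bounds gives the claimed estimate $(R_{\max}+\beta)/(1-\alpha)+\|\Sigma^{\mathrm{res}}_{\mathrm{higher}}\|$.

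For the final sentence of the statement, I would note that when $\alpha\ge1$ the majorant $\sum_j\alpha^j$ diverges. The bound therefore gives no uniform control in $L$. As a witness that growth genuinely can occur, I would take scalar layers with $\partial^{\mathrm{spec}}F_\ell=\alpha$ and constant nonzero residues, for which the accumulated residue grows at least linearly in $L$.
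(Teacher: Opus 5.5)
Your proposal is essentially the paper's proof. Both use the same three-term split of the formal decomposition, the bound $\alpha^{\ell-1}R_{\max}$ on internal residues from hypotheses 1 and 3, the weight $\alpha^{L-\ell}\beta$ on the interface terms, and the geometric series, with $\|\Sigma^{\mathrm{res}}_{\mathrm{higher}}\|$ added unchanged. The paper inserts that $\alpha^{L-\ell}$ weight without comment, so your explicit ``measured after transport'' convention is exactly the tacit extra hypothesis it needs; however, Proposition~\ref{prop:layerwise_spectral_composition} does not supply it, because there $\Phi_{\mathcal{M}}^*$ acts on the later layer's internal residue while $\mathcal{I}_{\mathcal{M},\mathcal{N}_L}$ appears untransported, so you should state the convention as an added assumption rather than derive it from that proposition.
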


\begin{proof}
Under the stated assumptions, we sum the propagated contributions:
\[
\|\Sigma^{\mathrm{res}}(\mathcal{N})\| \le R_{\text{max}} \sum_{\ell=1}^{L} \alpha^{\ell-1} + \beta \sum_{\ell=1}^{L-1} \alpha^{L-\ell} + \|\Sigma_{\mathrm{higher}}^{\mathrm{res}}\|.
\]

Re-indexing the sums:
\[
\sum_{\ell=1}^{L} \alpha^{\ell-1} = \sum_{k=0}^{L-1} \alpha^k, \qquad
\sum_{\ell=1}^{L-1} \alpha^{L-\ell} = \sum_{k=1}^{L-1} \alpha^k.
\]

As $L \to \infty$, $\sum_{k=0}^{\infty} \alpha^k = 1/(1-\alpha)$ when $\alpha < 1$. Hence:
\[
\|\Sigma^{\mathrm{res}}(\mathcal{N})\| \le \frac{R_{\text{max}}}{1-\alpha} + \beta \left( \frac{1}{1-\alpha} - 1 \right) + \|\Sigma_{\mathrm{higher}}^{\mathrm{res}}\| = \frac{R_{\text{max}} + \beta}{1-\alpha} - \beta + \|\Sigma_{\mathrm{higher}}^{\mathrm{res}}\|.
\]

Since $\beta \ge 0$, the simpler bound $\|\Sigma^{\mathrm{res}}(\mathcal{N})\| \le (R_{\text{max}} + \beta)/(1-\alpha) + \|\Sigma_{\mathrm{higher}}^{\mathrm{res}}\|$ holds.
\end{proof}

The residue terms measure the failure of exact spectral decoupling across the hierarchy. 
When these residues remain bounded, stability propagates globally. 
However, uncontrolled residue accumulation may generate emergent spectral modes absent from any individual layer — a phenomenon reminiscent of resonance generation, collective modes, or nonlocal spectral coupling in coupled oscillator networks.

\subsection*{Hierarchical Stabilization}

An important consequence is that hierarchical organization can either suppress or amplify instability.

\begin{theorem}[Hierarchical Stabilization Bound]
\label{thm:hierarchical-stabilization}
Let $\mathcal{N}$ be a multilayer operator system of depth $L$ with intermediate states $A_0 := A_{\mathrm{in}}$, $A_\ell := F_\ell(A_{\ell-1})$. Suppose that:
\begin{enumerate}
    \item $\|\partial^{\mathrm{spec}}F_\ell(A_{\ell-1})\| \le \alpha < 1$ for all $\ell$ (uniform contraction);
    \item $\|\Sigma^{\mathrm{res}}(\mathcal{N}_\ell)\| \le \rho$ for all $\ell$ (bounded internal residues);
    \item $\|\mathcal{I}_{\ell,\ell+1}\| \le \beta$ for all $\ell$ (bounded interface residues).
\end{enumerate}
Then the output perturbation satisfies
\[
\|\delta A_{\mathrm{out}}\|
\le
\alpha^L \|\delta A_{\mathrm{in}}\|
+ \rho \sum_{k=0}^{L-1} \alpha^k
+ \beta \sum_{k=0}^{L-2} \alpha^k.
\]
Consequently,
\[
\limsup_{L \to \infty}
\|\delta A_{\mathrm{out}}\|
\le
\frac{\rho + \beta}{1 - \alpha}.
\]
Thus input perturbations are exponentially damped, while uniformly bounded internal and interface residues produce a bounded steady-state perturbation.
\end{theorem}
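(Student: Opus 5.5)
The plan is to prove the Hierarchical Stabilization Bound by a one-step recursive inequality for the layer perturbations, iterate it, and then take the limit in $L$. It parallels the proof of Corollary~\ref{cor:exponential-convergence-layerwise} and the residue bookkeeping of Proposition~\ref{prop:multilayer-residue-bound}. Write $\delta A_\ell$ for the perturbation of the intermediate state $A_\ell$, with $\delta A_0 = \delta A_{\mathrm{in}}$ and $\delta A_L = \delta A_{\mathrm{out}}$. As in Corollary~\ref{cor:exponential-convergence-layerwise}, each layer is modelled at first order as linearized propagation plus additive forcing. Passing through layer $\ell$ applies $\mathcal{S}_\ell = \partial^{\mathrm{spec}}F_\ell(A_{\ell-1})$, by Proposition~\ref{prop:multilayer-derivative-composition}. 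The forcing consists of the internal residue $\Sigma^{\mathrm{res}}(\mathcal{N}_\ell)$ of that layer and, for $\ell \ge 2$, the interface residue $\mathcal{I}_{\ell-1,\ell}$ entering from the preceding interface.

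Using hypotheses (1)--(3), this gives
\[
\|\delta A_\ell\| \le \alpha\,\|\delta A_{\ell-1}\| + \rho + \beta\,\mathbf{1}_{\ell\ge 2}.
\]
To obtain the stated form, I would not iterate this directly. Instead I would unfold the recursion by superposition, following the path-sum argument in the proof of Corollary~\ref{cor:exponential-layerwise-stability}. The input perturbation passes through all $L$ layers, contributing at most $\alpha^L\|\delta A_{\mathrm{in}}\|$. The internal residue created in layer $\ell$ then passes through the remaining $L-\ell$ layers, contributing at most $\alpha^{L-\ell}\rho$. The interface residue $\mathcal{I}_{\ell,\ell+1}$ is injected at the input of layer $\ell+1$ and is propagated through the remaining layers.

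Here lies the bookkeeping subtlety. Suppose $\mathcal{I}_{\ell,\ell+1}$ passes through layers $\ell+1,\dots,L$, which is $L-\ell$ layers. Summing over $\ell = 1,\dots,L-1$ then gives $\beta\sum_{k=1}^{L-1}\alpha^k$. That is bounded by $\beta\sum_{k=0}^{L-2}\alpha^k$, since $\alpha<1$ and the two sums have the same number of terms. Alternatively, suppose the residue is counted as appearing at the output of layer $\ell+1$, propagated through the remaining $L-\ell-1$ layers. Then the sum is exactly $\beta\sum_{k=0}^{L-2}\alpha^k$. Either convention yields the claimed bound, so I would state the convention explicitly and note the termwise comparison $\alpha^{k+1}\le\alpha^k$. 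Likewise, reindexing $j = L-\ell$ gives $\rho\sum_{\ell=1}^{L}\alpha^{L-\ell} = \rho\sum_{k=0}^{L-1}\alpha^k$. Adding the three contributions by the triangle inequality gives the displayed estimate.

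For the limit statement, I would bound both finite geometric sums by $\sum_{k\ge0}\alpha^k = 1/(1-\alpha)$, and use $\alpha^L\|\delta A_{\mathrm{in}}\|\to 0$ because $\alpha<1$. This gives $\limsup_{L\to\infty}\|\delta A_{\mathrm{out}}\| \le (\rho+\beta)/(1-\alpha)$. Since the constants $\alpha,\rho,\beta$ are uniform in $\ell$, the bound is independent of depth. The interpretive sentence then follows: the first term decays exponentially, and the residue terms saturate.

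The main obstacle is justifying the additive-forcing model: that residues enter linearly and are transported by the same $\mathcal{S}_j$ as ordinary perturbations. For the internal residues I would invoke hypothesis (3) of Proposition~\ref{prop:multilayer-residue-bound}, the compatibility of the pullbacks $\Phi_\ell^*$ with products of layer derivative norms. For the interface terms I would invoke the decomposition in Proposition~\ref{prop:layerwise_spectral_composition}, restricted to first order. Higher-order interlayer residues $\Sigma^{\mathrm{res}}_{\mathrm{higher}}$ are taken to be absent or absorbed into $\rho$. I would state this explicitly as part of the first-order regime of Theorem~\ref{thm:layerwise-stability}.
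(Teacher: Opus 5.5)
Your proposal is correct and follows the same path-sum superposition as the paper: the input perturbation is damped by $\alpha^L$, the internal residue of layer $\ell$ is transported through the remaining $L-\ell$ layers, the interface residues are summed in the same way, and the finite geometric sums are bounded by $1/(1-\alpha)$. Your handling of the interface index is more careful than the paper's. The paper asserts $\beta\sum_{k=1}^{L-1}\alpha^k = \beta\sum_{k=0}^{L-2}\alpha^k$ as an equality, whereas, as you note, only the inequality obtained from $\alpha^{k+1}\le\alpha^k$ holds.
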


\begin{proof}
From Corollary~\ref{cor:exponential-layerwise-stability} (or by direct propagation analysis), the output perturbation is bounded by the sum of three contributions:
\begin{itemize}
    \item The input perturbation propagates through all $L$ layers, each contributing a factor at most $\alpha$, giving $\alpha^L \|\delta A_{\mathrm{in}}\|$.
    \item An internal residue generated at layer $\ell$ propagates through the remaining $L-\ell$ layers, contributing at most $\alpha^{L-\ell} \rho$. Summing $\ell = 1$ to $L$ yields $\rho \sum_{k=0}^{L-1} \alpha^k$.
    \item An interface residue generated between layer $\ell$ and $\ell+1$ propagates through the remaining $L-\ell$ layers, contributing at most $\alpha^{L-\ell} \beta$. Summing $\ell = 1$ to $L-1$ yields $\beta \sum_{k=1}^{L-1} \alpha^k = \beta \sum_{k=0}^{L-2} \alpha^k$.
\end{itemize}
Adding these contributions gives the stated bound.

As $L \to \infty$, $\alpha^L \|\delta A_{\mathrm{in}}\| \to 0$, $\sum_{k=0}^{L-1} \alpha^k \to 1/(1-\alpha)$, and $\sum_{k=0}^{L-2} \alpha^k \to 1/(1-\alpha)$. Hence the $\limsup$ bound follows.
\end{proof}

Conversely, if derivative amplification or residue accumulation exceeds critical thresholds, instability may grow exponentially with depth. This phenomenon explains spectral explosion effects in highly coupled multilayer systems.

\begin{corollary}[Failure of Stabilization under Unbounded Residues]
\label{cor:residue-divergence-instability}
Even when
\[
\|\partial^{\mathrm{spec}}F_\ell(A_{\ell-1})\| \le \alpha < 1,
\]
uniform stabilization can fail if the residue sequence is not uniformly bounded. In particular, if
\[
R_\ell := \|\Sigma^{\mathrm{res}}(\mathcal{N}_\ell)\| + \|\mathcal{I}_{\ell,\ell+1}\|
\]
is unbounded and the weighted sums
\[
\sum_{\ell=1}^{L} \alpha^{L-\ell} R_\ell
\]
are unbounded as $L \to \infty$, then no depth-independent residue bound can be guaranteed, and the steady-state perturbation may grow with depth.
\end{corollary}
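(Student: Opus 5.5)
The plan is to mirror the additive propagation analysis used in the proof of Theorem~\ref{thm:hierarchical-stabilization}, but keep the residue forcing terms $R_\ell$ general instead of uniformly bounded. I would first establish a layerwise recursion for the output perturbation. Following the inductive step of Corollary~\ref{cor:exponential-convergence-layerwise}, I would linearize each layer through $\mathcal{S}_\ell=\partial^{\mathrm{spec}}F_\ell(A_{\ell-1})$ (Proposition~\ref{prop:multilayer-derivative-composition}) and add the residue generated at stage $\ell$ as a forcing term. Both the internal residue $\Sigma^{\mathrm{res}}(\mathcal{N}_\ell)$ and the interface residue $\mathcal{I}_{\ell,\ell+1}$ would be treated additively, as in Definition~\ref{def:cumulative-residue}. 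This gives
\[
\delta A_\ell=\mathcal{S}_\ell(\delta A_{\ell-1})+r_\ell,
\qquad \|r_\ell\|=R_\ell .
\]
Unrolling this recursion produces
\[
\delta A_L=\mathcal{S}_L\cdots\mathcal{S}_1\,\delta A_0+\sum_{\ell=1}^{L}\mathcal{S}_L\cdots\mathcal{S}_{\ell+1}\, r_\ell ,
\]
and submultiplicativity bounds each residue term by $\alpha^{L-\ell}R_\ell$.

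Everything so far yields only an upper bound, which is exactly what Theorem~\ref{thm:hierarchical-stabilization} needed. The corollary claims something different: that a depth-independent bound \emph{cannot be guaranteed}. To prove that, it suffices to show that the weighted-sum bound is attained by an admissible configuration. I would therefore build an explicit witness family, namely a scalar or diagonal chain with $\mathcal{S}_\ell=\alpha\,\mathrm{id}$ and residues $r_\ell=R_\ell e$ all aligned along a fixed unit vector $e$. For this family the recursion becomes exact, with no cancellation, and gives
\[
\|\delta A_L\|=\alpha^L\|\delta A_0\|+\sum_{\ell=1}^{L}\alpha^{L-\ell}R_\ell .
\]
This is the same sharpness device that the proof of Corollary~\ref{cor:exponential-convergence-layerwise} invokes. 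By the hypothesis that the weighted sums are unbounded in $L$, the output perturbation is unbounded, so no depth-independent bound of the form $C\|\delta A_{\mathrm{in}}\|+C'$ can hold uniformly over this admissible class.

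I would also add a short remark explaining why unboundedness of the weighted sums is the right hypothesis rather than mere unboundedness of $R_\ell$. One direction is immediate: if $\sup_\ell R_\ell<\infty$, then $\sum_\ell \alpha^{L-\ell}R_\ell\le \sup_\ell R_\ell/(1-\alpha)$, so the weighted sums stay bounded. In the other direction, letting $\ell=L$ shows that the weighted sum is at least $R_L$. Hence unbounded $R_\ell$ already forces unbounded weighted sums, so the two hypotheses in the statement are consistent.

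The main obstacle is the witness step: it must be checked that the constructed chain is a legitimate admissible multilayer operator system whose residues can be prescribed with arbitrary norms $R_\ell$. The paper only treats residues formally through the $\sim$ decomposition, so I would not try to realize them from genuine interface geometry. Instead I would model the residues directly as additive forcing terms in the spectral state space, which is the interpretation already adopted in Corollaries~\ref{cor:exponential-convergence-layerwise} and~\ref{cor:exponential-layerwise-stability}. The conclusion would then be phrased, as the statement itself is, as ``may grow'' and ``cannot be guaranteed'' rather than as an unconditional divergence for every such network.
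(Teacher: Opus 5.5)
Your proposal is correct, and it goes further than the paper's own argument.

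The paper's proof imports the estimate of Theorem~\ref{thm:hierarchical-stabilization} ``up to indexing adjustments'', even though that theorem assumes uniform residue constants $\rho,\beta$. It observes that the residue contribution is at most $\sum_{\ell=1}^{L}\alpha^{L-\ell}R_\ell$, and concludes from the divergence of this upper bound that no depth-independent bound exists. Strictly speaking, a diverging upper bound shows only that this particular estimate stops giving a uniform guarantee. It does not show that no uniform bound can hold.

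You do two things differently. First, you derive the non-uniform estimate directly by unrolling $\delta A_\ell=\mathcal{S}_\ell(\delta A_{\ell-1})+r_\ell$, instead of borrowing a theorem whose hypotheses do not literally apply. Second, your aligned witness ($\mathcal{S}_\ell=\alpha\,\mathrm{id}$, $r_\ell=R_\ell e$) shows that the weighted sum is actually attained. That turns ``cannot be guaranteed'' into a genuine statement about the class of systems satisfying the hypotheses. The cost is the modelling assumption that residues can be prescribed as arbitrary additive forcing terms. You rightly flag this, and it is the same reading the paper adopts in Corollaries~\ref{cor:exponential-convergence-layerwise} and~\ref{cor:exponential-layerwise-stability}.

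Your remark on the hypotheses is also sharper than the paper. Write $W_L:=\sum_{\ell=1}^{L}\alpha^{L-\ell}R_\ell$. The inequalities $W_L\ge R_L$ and $W_L\le\sup_\ell R_\ell/(1-\alpha)$ show that the two hypotheses in the statement are in fact equivalent. Moreover, your witness only needs $\sup_L W_L=\infty$, not the divergence $W_L\to\infty$ that the paper's proof speaks of.

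Two small fixes:
\begin{itemize}
\item In the general recursion, write $\|r_\ell\|\le R_\ell$, since the internal and interface pieces combine via the triangle inequality. Equality holds only in the aligned witness.
\item For the exact identity $\|\delta A_L\|=\alpha^L\|\delta A_0\|+W_L$, take $\delta A_0$ to be a nonnegative multiple of $e$, or simply $\delta A_0=0$.
\end{itemize}
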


\begin{proof}
From the bound in Theorem~\ref{thm:hierarchical-stabilization}, the contribution from residues is at most $\sum_{\ell=1}^{L} \alpha^{L-\ell} R_\ell$ (up to indexing adjustments). If this weighted sum diverges as $L \to \infty$, then the upper bound on $\|\delta A_{\mathrm{out}}\|$ grows without bound, implying that a uniform depth-independent bound does not exist. This indicates potential instability or at least unbounded sensitivity to residue accumulation.
\end{proof}

\subsection*{Decomposition of Instability Mechanisms}

The SOC framework therefore decomposes multilayer stability into two interacting mechanisms:
\begin{enumerate}
    \item \textbf{Propagation amplification} governed by spectral derivatives $\partial_*^{\mathrm{spec}}F_\ell$,
    \item \textbf{Interaction accumulation} governed by residue geometry $\Sigma^{\mathrm{res}}$.
\end{enumerate}

\begin{theorem}[Instability Mechanism Decomposition]
\label{thm:instability-decomposition}
For a multilayer operator system, loss of stability may arise through three mechanisms:
\begin{enumerate}
    \item \textbf{Derivative-driven amplification}: the linearized propagation operator satisfies
    \[
    \rho(\partial^{\mathrm{spec}}F_{\mathcal{N}}) > 1.
    \]
    \item \textbf{Residue-driven amplification}: accumulated interaction residues become unbounded (in the limit of infinite depth) or exceed the admissible residue tolerance (for finite depth).
    \item \textbf{Coupled amplification}: derivative propagation and residue accumulation interact so that propagated residue contributions grow beyond the stability margin.
\end{enumerate}
These mechanisms provide sufficient diagnostic indicators of instability, but they do not constitute a complete if-and-only-if characterization without additional dynamical assumptions.
\end{theorem}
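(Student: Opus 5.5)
The statement is a sufficiency claim: each of the three listed mechanisms, if present, rules out stability. It is not a full characterization. So I would prove one implication per mechanism, using a common perturbation model. The model is the one already used in the proofs of Corollary~\ref{cor:exponential-layerwise-stability} and Theorem~\ref{thm:hierarchical-stabilization}: the layerwise recursion
\[
\delta A_\ell = \mathcal{S}_\ell(\delta A_{\ell-1}) + r_\ell + O(\|\delta A_{\ell-1}\|^2),
\]
where $\mathcal{S}_\ell = \partial^{\mathrm{spec}}F_\ell(A_{\ell-1})$ and $r_\ell$ collects the internal residue $\Sigma^{\mathrm{res}}(\mathcal{N}_\ell)$ and the interface residue $\mathcal{I}_{\ell-1,\ell}$. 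Unrolling this recursion with Proposition~\ref{prop:multilayer-derivative-composition} gives the output perturbation as a sum of two parts. The first is the propagated input term $\mathcal{S}_L\circ\cdots\circ\mathcal{S}_1(\delta A_0)$. The second is the transported residue sum $\sum_\ell \mathcal{S}_L\circ\cdots\circ\mathcal{S}_{\ell+1}(r_\ell)$, plus higher-order terms controlled by Theorem~\ref{thm:stability_bound}.

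\textbf{Derivative-driven amplification.} If $\rho(\partial^{\mathrm{spec}}F_{\mathcal{N}})>1$, I would regard one pass through the network as a map that is composed or iterated, as in a recursive or feedback use. Then Proposition~\ref{prop:spectral_stability_criterion}(2), or Theorem~\ref{thm:single-loop-stability-criterion}(2), applied to $\mathcal{L}=\partial^{\mathrm{spec}}F_{\mathcal{N}}(A_0)$ gives perturbations whose linearized iterates grow exponentially. Remark~\ref{rem:lower-bound-spectral-radius} transfers this growth to the nonlinear map for sufficiently small perturbations.

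\textbf{Residue-driven amplification.} Here I would use the contrapositive of Theorem~\ref{thm:hierarchical-stabilization} together with Corollary~\ref{cor:residue-divergence-instability}. If the weighted residue sums $\sum_\ell \alpha^{L-\ell}R_\ell$ are unbounded, or exceed the tolerance at finite depth, then the residue forcing term in the output perturbation is not uniformly controlled, even with zero input perturbation. This is exactly the residue floor described in Remark~\ref{rem:exponential-stability-interpretation}. To show this is a genuine loss of stability and not just a weak bound, I would exhibit a witness. The simplest is a scalar linear chain with $\mathcal{S}_\ell=\alpha$ and residues $r_\ell=R_\ell\ge 0$. All terms then have the same sign, so equality holds in the bound and the output perturbation actually diverges.

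\textbf{Coupled amplification.} This is the main obstacle, because the mechanism is defined only qualitatively. I would make it precise by showing that the residue term can be large even when $\|\delta A_0\|$ and every individual $R_\ell$ are small. This happens when the transported residues $\mathcal{S}_L\circ\cdots\circ\mathcal{S}_{\ell+1}(r_\ell)$ align with directions where the partial products have norm near or above one. That is the multilayer analogue of Corollary~\ref{cor:residue-instability}, and I would prove it by the same eigenvector-alignment argument, using a $2\times 2$ non-normal example like the one in Theorem~\ref{thm:feedback-stability}(c).

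\textbf{Closing remark on sufficiency only.} To justify the statement's final sentence, I would point to Example~\ref{ex:nonlinear-feedback-single} (saturation) and Example~\ref{ex:nilpotent-feedback} (nilpotent). These show that none of the conditions is necessary. The three mechanisms are therefore sufficient indicators of instability, not a characterization.
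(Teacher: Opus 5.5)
Your treatment of derivative-driven amplification is the paper's Case~1. Both arguments view one pass of the network as an iterated map and read off exponential growth of $\|\mathcal{L}^k\|$ from $\rho(\mathcal{L})>1$ via the spectral radius formula.

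The gap is in mechanisms 2 and 3. You set out to show that each mechanism \emph{forces} instability, but you formalize it through the norm majorant $\sum_\ell \alpha^{L-\ell}R_\ell$. Nothing you cite lets you pass from ``the majorant is unbounded'' to ``the perturbation is unbounded''. The contrapositive of Theorem~\ref{thm:hierarchical-stabilization} runs the other way: an unbounded output forces some hypothesis to fail. Corollary~\ref{cor:residue-divergence-instability} only records the loss of an upper bound. A scalar chain in which the bound is attained is an existence statement, not a universal one.

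The universal statement is in fact false in your own recursion. On $\mathbb{C}^2$ take $\mathcal{S}_\ell=\operatorname{diag}(\alpha,0)$ with $0<\alpha<1$, $r_\ell=(0,\ell)$ for $\ell<L$, and $r_L=0$, so $R_\ell=\|r_\ell\|$. Each $r_\ell$ lies in $\ker\mathcal{S}_{\ell+1}$, so $\delta A_L=\mathcal{S}_L\cdots\mathcal{S}_1(\delta A_0)$ and $\|\delta A_L\|\le\alpha^L\|\delta A_0\|\to 0$. Meanwhile $\sum_\ell\alpha^{L-\ell}R_\ell\ge\alpha(L-1)\to\infty$. With $L=2$ and $r_1=(0,M)$ for $M$ large, the same construction defeats the finite-depth tolerance version. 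Your alignment argument for coupled amplification has the same defect: it shows the transported residues \emph{can} be large, not that they must be.

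There are two honest repairs.
\begin{enumerate}
    \item State mechanisms 2 and 3 as possibility claims. This is all the paper's own proof asserts (``may dominate, potentially leading to instability'', ``can produce instability''). Then your one-signed scalar chain and an explicit non-normal alignment example are exactly the right proofs, and they are more concrete than the paper's sketch.
    \item Formulate the mechanism in terms of the transported residue $\sum_\ell\mathcal{S}_L\cdots\mathcal{S}_{\ell+1}(r_\ell)$ itself. Its unboundedness gives instability immediately in your linearized model, since with $\delta A_0=0$ it \emph{is} the output. You must then accept that growth of the raw residues $R_\ell$ alone is not sufficient.
\end{enumerate}

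Separately, your closing paragraph does not show what you claim. Examples~\ref{ex:nonlinear-feedback-single} and~\ref{ex:nilpotent-feedback} are both stable: $|\tanh x|<|x|$ drives the iterates to $0$, and $F^2=0$. So neither exhibits instability in the absence of the three mechanisms, which is what non-necessity requires. A correct witness is $F(x)=x+x^3$ at $x_0=0$, or $L$ stacked copies of it. There $\rho(\partial^{\mathrm{spec}}F(0))=1$ and there are no residues, yet every small nonzero initial perturbation escapes.
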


\begin{proof}
We outline the diagnostic reasoning for each mechanism.

\emph{Case 1 (Derivative-driven)}: If $\rho(\partial^{\mathrm{spec}}F_{\mathcal{N}}) > 1$, then by the spectral radius formula, there exists a perturbation direction along which $\|\partial^{\mathrm{spec}}F_{\mathcal{N}}^k\|$ grows exponentially. Hence, the linearized system is unstable, and for sufficiently small perturbations the nonlinear system inherits this instability.

\emph{Case 2 (Residue-driven)}: If accumulated residues become unbounded (as depth increases) or exceed a prescribed tolerance, then spectral content not captured by the linearized derivative may dominate, potentially leading to instability.

\emph{Case 3 (Coupled)}: Even when $\rho(\partial^{\mathrm{spec}}F_{\mathcal{N}}) \le 1$ and individual residues are bounded, the weighted sum of propagated residues may diverge, indicating that the combined effect of derivatives and residues can produce instability.
\end{proof}

This decomposition generalizes classical layerwise stability analysis by incorporating noncommutative interaction effects and operadic coupling structures absent from traditional approaches.

\subsection*{Recursive Stability Certification (Heuristic)}

The layerwise structure enables efficient recursive certification. The following Algorithm~\ref{alg:recursive-verification-multilayer} provides a heuristic procedure for verifying sufficient stability conditions; rigorous implementation requires precise definitions of residue norms and pullback operations.

\begin{algorithm}[H]
\caption{Conceptual Recursive Stability Certification (Heuristic)}
\label{alg:recursive-verification-multilayer}
\SetAlgoLined

\KwIn{Multilayer system $\mathcal{N}=\mathcal{N}_L\circ\cdots\circ\mathcal{N}_1$, derivative tolerance $\varepsilon>0$, residue tolerance $\tau_{\mathrm{res}}>0$}
\KwOut{Certification verdict: \textnormal{CERTIFIED STABLE} or \textnormal{NOT CERTIFIED}}

Initialize
\[
F_{\mathrm{acc}} \leftarrow F_1,
\qquad
\Sigma_{\mathrm{acc}} \leftarrow \Sigma^{\mathrm{res}}(\mathcal{N}_1).
\]

\For{$\ell=2$ \KwTo $L$}{
    Compute $\partial^{\mathrm{spec}}F_\ell(A_{\ell-1})$ and $\Sigma^{\mathrm{res}}(\mathcal{N}_\ell)$\;
    
    Compute the interface residue $\mathcal{I}_{\ell-1,\ell}$ induced by the coupling between $\mathcal{N}_{\ell-1}$ and $\mathcal{N}_\ell$\;
    
    Update the accumulated propagation map:
    \[
    F_{\mathrm{acc}} \leftarrow F_\ell \circ F_{\mathrm{acc}}.
    \]
    
    Update the accumulated residue (formally, as an element of a normed residue space):
    \[
    \Sigma_{\mathrm{acc}} \leftarrow \Sigma_{\mathrm{acc}} + \Phi_{\ell-1,\ell}^{*}(\Sigma^{\mathrm{res}}(\mathcal{N}_\ell)) + \mathcal{I}_{\ell-1,\ell},
    \]
    where $\Phi_{\ell-1,\ell}^{*}$ denotes propagation through preceding layers.
    
    \If{$\rho\!\left(\partial^{\mathrm{spec}}F_{\mathrm{acc}}(A_0)\right) \ge 1 - \varepsilon$}{
        \KwRet{\textnormal{NOT CERTIFIED}: derivative threshold exceeded at layer $\ell$}
    }
    
    \If{$\left\|\Sigma_{\mathrm{acc}}\right\| \ge \tau_{\mathrm{res}}$ (where the norm is defined with respect to a suitable residue structure)}{
        \KwRet{\textnormal{NOT CERTIFIED}: residue bound exceeded at layer $\ell$}
    }
}

\KwRet{\textnormal{CERTIFIED STABLE (within tolerances)}}
\end{algorithm}

\begin{remark}[Complexity and Implementability]
\label{rem:algorithm-complexity}
Algorithm~\ref{alg:recursive-verification-multilayer} is conceptual and intended to 
illustrate the recursive certification principle. A concrete implementation requires 
the following components:

\begin{enumerate}
    \item \textbf{Numerical computation of spectral derivatives}: 
          For each layer $\ell$, $\partial^{\mathrm{spec}}F_\ell(A_{\ell-1})$ can be 
          computed via automatic differentiation (AD) or finite differences when 
          $F_\ell$ is given as a differentiable program. For neural network layers, 
          this corresponds to the Jacobian matrix.
    
    \item \textbf{Norm for residues}: 
          The residue $\Sigma^{\mathrm{res}}$ must be equipped with a computable norm. 
          For matrix-valued residues, the Frobenius norm $\| \cdot \|_F$ is a natural 
          choice. For operator-valued residues in infinite dimensions, one may use 
          the operator norm or a suitable Schatten norm.
    
    \item \textbf{Representation of $\Phi_{\ell-1,\ell}^*$}: 
          The pullback operator $\Phi_{\ell-1,\ell}^*$ propagates residues through 
          preceding layers. In practice, this can be implemented via adjoint propagation:
          \[
          \Phi_{\ell-1,\ell}^*(\Sigma^{\mathrm{res}}(\mathcal{N}_\ell)) = 
          (\partial^{\mathrm{spec}}F_{\ell-1})^* \circ \cdots \circ (\partial^{\mathrm{spec}}F_1)^* (\Sigma^{\mathrm{res}}(\mathcal{N}_\ell)),
          \]
          where $(\partial^{\mathrm{spec}}F_j)^*$ denotes the adjoint (transpose) of the 
          layer Jacobian. This is analogous to backpropagation in deep learning.
    
    \item \textbf{Spectral radius computation}: 
          The spectral radius $\rho(\partial^{\mathrm{spec}}F_{\mathrm{acc}})$ can be 
          approximated by power iteration or by directly computing eigenvalues when 
          the matrix size is moderate ($n \le 10^3$).
\end{enumerate}

\paragraph{Complexity analysis.}
Consider a multilayer network of depth $L$ where each layer $\ell$ has input dimension 
$n_{\ell-1}$ and output dimension $n_\ell$. The Jacobian $\partial^{\mathrm{spec}}F_\ell$ 
is an $n_\ell \times n_{\ell-1}$ matrix. The complexity of a single layer update is:

\begin{itemize}
    \item Computing $\partial^{\mathrm{spec}}F_\ell$: $O(n_{\ell-1} \cdot n_\ell \cdot C)$ 
          where $C$ is the cost of evaluating $F_\ell$ (e.g., $O(n_{\ell-1}n_\ell)$ for 
          a dense linear layer).
    \item Residue propagation via adjoint: $O(n_\ell^2)$ for matrix-vector products.
    \item Spectral radius estimation (power iteration): $O(n_\ell^2)$ per iteration, 
          with convergence typically achieved in $O(\log n_\ell)$ iterations.
\end{itemize}

The total complexity is therefore
\[
O\left(L \cdot \max_{\ell} n_\ell^2\right) \quad \text{for each forward pass},
\]
assuming the dominant cost is matrix multiplication for spectral radius estimation. 
For networks with layer dimensions $n_\ell \le 10^4$ and depth $L \le 10^3$, this is 
practical (on the order of $10^3 \times 10^8 = 10^{11}$ floating-point operations, 
feasible on modern hardware with optimizations).

\paragraph{Limitations.}
The algorithm is heuristic in the following respects:
\begin{itemize}
    \item It assumes that higher-order spectral derivatives beyond first order are 
          negligible or controlled by the tolerance $\varepsilon$.
    \item The residue update rule $\Sigma_{\mathrm{acc}} \leftarrow \Sigma_{\mathrm{acc}} + \Phi^*(\Sigma^{\mathrm{res}}(\mathcal{N}_\ell)) + \mathcal{I}_{\ell-1,\ell}$ 
          assumes linear superposition of residues, which holds only in a linearized 
          approximation.
    \item The spectral radius condition $\rho(\partial^{\mathrm{spec}}F_{\mathrm{acc}}) < 1 - \varepsilon$ 
          is sufficient but not necessary for stability; the algorithm may produce 
          false negatives.
\end{itemize}
Despite these limitations, the algorithm provides a practical heuristic for 
certifying stability in large multilayer networks when used with appropriate 
safety margins.
\end{remark}

\begin{example}[Numerical Certification of a Small Network]
\label{ex:algorithm-run}
We illustrate Algorithm~\ref{alg:recursive-verification-multilayer} on a concrete 
small-scale multilayer network. The numbers shown are illustrative and depend on 
the specific random initialization.

\paragraph{Network setup.}
Consider a 3-layer feedforward network with:
\begin{itemize}
    \item Input dimension $n_0 = 2$, hidden layer dimensions $n_1 = 3$, $n_2 = 3$, 
          output dimension $n_3 = 2$.
    \item Weight matrices $W_1 \in \mathbb{R}^{3 \times 2}$, $W_2 \in \mathbb{R}^{3 \times 3}$, 
          $W_3 \in \mathbb{R}^{2 \times 3}$ initialized with random Gaussian entries 
          (mean $0$, variance $1/\sqrt{n_{\text{in}}}$).
    \item Bias vectors $b_1 \in \mathbb{R}^3$, $b_2 \in \mathbb{R}^3$, $b_3 \in \mathbb{R}^2$ 
          initialized to zero.
    \item Activation function $\sigma(x) = \tanh(x)$ applied componentwise.
\end{itemize}
The layer maps are:
\[
F_\ell(x) = \tanh(W_\ell x + b_\ell), \qquad \ell = 1,2,3.
\]
For a $\tanh$ layer, the first spectral derivative $\partial^{\mathrm{spec}}F_\ell(x)$ 
equals the Jacobian matrix $J_\ell(x) = \operatorname{diag}(1 - \tanh^2(W_\ell x + b_\ell)) \cdot W_\ell$.

We run the certification with tolerances $\varepsilon = 0.1$ and $\tau_{\mathrm{res}} = 0.01$,
starting from a random input $x_0 \sim \mathcal{N}(0, I_2)$.

\paragraph{Certification steps.}

\begin{center}
\begin{tabular}{|c|c|c|c|}
\hline
Layer $\ell$ & $\rho(\partial^{\mathrm{spec}}F_{\mathrm{acc}})$ & $\|\Sigma_{\mathrm{acc}}\|$ & Verdict \\
\hline
1 & 0.85 & 0.00 & PASS \\
2 & 0.72 & 0.01 & PASS \\
3 & 0.68 & 0.02 & CERTIFIED STABLE \\
\hline
\end{tabular}
\end{center}

\paragraph{Step-by-step breakdown.}

\emph{Layer 1:} After processing the first layer, $F_{\mathrm{acc}} = F_1$. 
The spectral radius of $\partial^{\mathrm{spec}}F_1(x_0)$ is computed via power iteration 
as $\rho \approx 0.85$, which is below the threshold $1 - \varepsilon = 0.9$. 
No residues have accumulated yet, so $\|\Sigma_{\mathrm{acc}}\| = 0$. Verdict: PASS.

\emph{Layer 2:} Update $F_{\mathrm{acc}} \leftarrow F_2 \circ F_1$. The Jacobian 
$\partial^{\mathrm{spec}}F_{\mathrm{acc}} = \partial^{\mathrm{spec}}F_2(x_1) \cdot \partial^{\mathrm{spec}}F_1(x_0)$ 
has spectral radius $\rho \approx 0.72$. The interface residue $\mathcal{I}_{1,2}$ 
(representing spectral content generated purely by the coupling between layers) 
has Frobenius norm $\|\mathcal{I}_{1,2}\|_F \approx 0.01$. 
The accumulated residue $\Sigma_{\mathrm{acc}}$ (including propagation of $\Sigma^{\mathrm{res}}(\mathcal{N}_2)$) 
has norm $\approx 0.01$. Both quantities are within tolerances. Verdict: PASS.

\emph{Layer 3:} Update $F_{\mathrm{acc}} \leftarrow F_3 \circ F_2 \circ F_1$. The spectral 
radius of the global Jacobian is $\rho \approx 0.68$, well below the threshold. 
The interface residue $\mathcal{I}_{2,3}$ has norm $\approx 0.01$, and the propagated 
residue from earlier layers contributes an additional $\approx 0.01$, giving 
$\|\Sigma_{\mathrm{acc}}\| \approx 0.02$. The cumulative residue remains below the 
global stability threshold derived from Corollary~\ref{cor:exponential-layerwise-stability}:
$\beta/(1-\alpha) \approx 0.01/(1-0.85) \approx 0.067$. The algorithm returns 
CERTIFIED STABLE because the sufficient stability conditions are satisfied.

\paragraph{Complexity.}
The total computational cost is:
\begin{itemize}
    \item Layer 1: $2 \times 3 = 6$ parameters for $W_1$, spectral radius via power 
          iteration on a $3 \times 2$ matrix: $O(3 \cdot 2) = O(6)$ per iteration.
    \item Layer 2: $3 \times 3 = 9$ parameters for $W_2$, spectral radius: $O(9)$.
    \item Layer 3: $3 \times 2 = 6$ parameters for $W_3$, spectral radius: $O(6)$.
\end{itemize}
Total operations are $O(2^2 + 3^2 + 2^2) = O(4 + 9 + 4) = O(17)$ per power iteration, 
plus the cost of forward propagation. With 10 power iterations per layer, the total 
is approximately $O(170)$ floating-point operations, negligible for this small network.

\paragraph{Interpretation.}
The algorithm successfully certifies stability because:
\begin{enumerate}
    \item All layerwise Jacobian spectral radii are below $0.9$, ensuring contraction.
    \item Interface residues are small ($\le 0.02$), so the accumulated residue floor 
          $\beta/(1-\alpha) \approx 0.01/(1-0.85) \approx 0.067$ is acceptable.
\end{enumerate}
If any layer had $\rho \ge 0.9$ or if residues had grown beyond the stability floor, 
the algorithm would return NOT CERTIFIED, indicating that the sufficient stability 
conditions are not satisfied (though the network might still be stable in practice).
\end{example}

\begin{proposition}[Soundness of Recursive Certification]
\label{prop:recursive-verification-correctness}
Under the assumptions of Theorem~\ref{thm:layerwise-stability} (or Corollary~\ref{cor:exponential-layerwise-stability}), if Algorithm~\ref{alg:recursive-verification-multilayer} returns \textnormal{CERTIFIED STABLE}, then the multilayer system satisfies the prescribed stability certificate (within the specified tolerances). If the algorithm returns \textnormal{NOT CERTIFIED}, then the sufficient stability conditions have failed, but instability is not necessarily guaranteed; further analysis may be required.
\end{proposition}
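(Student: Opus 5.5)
The plan is to unpack what the verdict \textnormal{CERTIFIED STABLE} actually certifies, and to match each of the algorithm's two checks to a hypothesis of Theorem~\ref{thm:layerwise-stability} or Corollary~\ref{cor:exponential-layerwise-stability}. The algorithm returns \textnormal{CERTIFIED STABLE} only if every iteration $\ell=2,\dots,L$ passes both tests. It is therefore enough to show two things: passing the derivative test at every layer gives the linearized contraction part of the certificate, and passing the residue test at every layer gives the residue-floor part. The completeness half of the statement (\textnormal{NOT CERTIFIED} does not imply instability) is essentially a remark. It will be handled by pointing out that both tests are threshold checks on sufficient conditions, for example $\rho<1-\varepsilon$ rather than $\rho<1$. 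A system with $1-\varepsilon\le\rho(\partial^{\mathrm{spec}}F_{\mathrm{acc}}(A_0))<1$ is still linearly stable by Proposition~\ref{prop:spectral_stability_criterion}, yet it is rejected. So the algorithm can only fail on the safe side.

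\textbf{Derivative part.} By Proposition~\ref{prop:multilayer-derivative-composition}, after iteration $\ell$ the accumulated map $F_{\mathrm{acc}}=F_\ell\circ\cdots\circ F_1$ has first spectral derivative $\mathcal{S}_\ell\circ\cdots\circ\mathcal{S}_1$ evaluated along the intermediate states. At termination, the last check gives $\rho(\partial^{\mathrm{spec}}F_{\mathcal{N}}(A_0))<1-\varepsilon<1$. Proposition~\ref{prop:spectral_stability_criterion}, or Corollary~\ref{cor:feedback-decay-rate} for the rate, then yields exponential decay of linearized perturbations under iteration, with rate at most $1-\varepsilon+\eta$ for every $\eta>0$. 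If one wants the per-layer norm form instead, one adds the standing hypothesis $\|\mathcal{S}_\ell\|\le\alpha<1$ of Theorem~\ref{thm:layerwise-stability}; that theorem then gives $\|\mathcal{S}_{\mathcal{N}}\|\le\alpha^L$ directly.

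\textbf{Residue part.} I will argue by induction on $\ell$ that the quantity $\Sigma_{\mathrm{acc}}$ computed by the update rule dominates the residue contribution in the bound of Theorem~\ref{thm:hierarchical-stabilization}, or equivalently in Corollary~\ref{cor:exponential-layerwise-stability}. The induction has two ingredients. First, use the containment in Proposition~\ref{prop:layerwise_spectral_composition}. Second, use the triangle inequality together with the assumed compatibility of the pullbacks $\Phi^*$ with the residue norm, as in item 3 of Proposition~\ref{prop:multilayer-residue-bound}. Combined, these give $\|\Sigma^{\mathrm{res}}(\mathcal{N}_\ell\circ\cdots\circ\mathcal{N}_1)\|\le\|\Sigma_{\mathrm{acc}}^{(\ell)}\|$. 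Passing every check then means this quantity stays below $\tau_{\mathrm{res}}$ at all depths. The output perturbation bound therefore becomes $\|\delta A_{\mathrm{out}}\|\le\alpha^L\|\delta A_{\mathrm{in}}\|+\tau_{\mathrm{res}}$, and this is the certificate ``within the specified tolerances.''

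\textbf{Main obstacle.} The hard step is the residue induction. The update rule is only a formal superposition, and the residue norm is not specified, so the inequality $\|\Sigma^{\mathrm{res}}\|\le\|\Sigma_{\mathrm{acc}}\|$ cannot be proved unconditionally. I would first make the linearized-superposition and pullback-compatibility hypotheses explicit as part of ``the assumptions of Theorem~\ref{thm:layerwise-stability}.'' I would then prove soundness relative to them, and state clearly that the certificate is a first-order one in the sense of Remark~\ref{rem:algorithm-complexity}.
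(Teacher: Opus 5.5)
Your skeleton matches the paper's. Both maintain the invariant that $F_{\mathrm{acc}}$ and $\Sigma_{\mathrm{acc}}$ represent the evaluation and residue of $\mathcal{N}_\ell\circ\cdots\circ\mathcal{N}_1$. Both read a clean run as ``the tested sufficient conditions hold within tolerance'' and read \textnormal{NOT CERTIFIED} as failure of a sufficient condition; your $1-\varepsilon\le\rho<1$ example settles that half.

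\textbf{The gap.} It lies where you go beyond the paper and derive the explicit certificate $\|\delta A_{\mathrm{out}}\|\le\alpha^L\|\delta A_{\mathrm{in}}\|+\tau_{\mathrm{res}}$, and the hypotheses you propose to add do not close it.

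First, your three ingredients cannot give $\|\Sigma^{\mathrm{res}}\|\le\|\Sigma_{\mathrm{acc}}\|$. The containment of Proposition~\ref{prop:layerwise_spectral_composition}, the triangle inequality and item~3 of Proposition~\ref{prop:multilayer-residue-bound} only bound $\|\Sigma^{\mathrm{res}}\|$ from above by a sum of norms $\sum_\ell\|\Phi^*_\ell(\Sigma^{\mathrm{res}}(\mathcal{N}_\ell))\|+\sum_\ell\|\mathcal{I}_{\ell,\ell+1}\|$. The algorithm instead tests $\|\Sigma_{\mathrm{acc}}\|$, the norm of a vector sum in a normed residue space (Remark~\ref{rem:algorithm-complexity}), and that norm is itself bounded by the same sum of norms. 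Two quantities bounded by the same number are not thereby ordered. Item~3, being an upper bound on pullback norms, points the wrong way for a domination argument. The inequality you want holds only if you simply postulate $\Sigma^{\mathrm{res}}=\Sigma_{\mathrm{acc}}$.

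Second, even then the passage to $\delta A_{\mathrm{out}}$ fails. The forcing term in Corollary~\ref{cor:exponential-layerwise-stability} and Theorem~\ref{thm:hierarchical-stabilization} is a sum of norms of residues pushed \emph{forward} through the remaining layers, with weights $\alpha^{L-\ell}$. By contrast, $\Sigma_{\mathrm{acc}}$ is a vector sum of residues pulled \emph{back} through the preceding layers. Cancellation in the latter says nothing about the former, and the weights do not match. For example, the internal residue of $\mathcal{N}_L$ enters $\delta A_{\mathrm{out}}$ with weight $1$, but enters $\Sigma_{\mathrm{acc}}$ only through $\Phi^*_{L-1,L}$, whose norm item~3 bounds by $\alpha^{L-1}\|\Sigma^{\mathrm{res}}(\mathcal{N}_L)\|$. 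So $\|\Sigma_{\mathrm{acc}}\|<\tau_{\mathrm{res}}$ gives no bound on the output perturbation.

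\textbf{Two ways to repair it.} You can settle for the paper's reading: the certificate is just the tested thresholds, and the quantitative conclusions of Theorem~\ref{thm:layerwise-stability} or Corollary~\ref{cor:exponential-layerwise-stability} come from the standing hypotheses, with floor $\beta/(1-\alpha)$ rather than $\tau_{\mathrm{res}}$. Alternatively, change what is accumulated. Set $r_\ell:=\|\Sigma^{\mathrm{res}}(\mathcal{N}_\ell)\|+\|\mathcal{I}_{\ell-1,\ell}\|$ (with $\mathcal{I}_{0,1}:=0$), track the scalar $s_\ell:=\|\mathcal{S}_\ell\|\,s_{\ell-1}+r_\ell$ with $s_0=0$, and test $s_\ell<\tau_{\mathrm{res}}$. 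Use the per-layer recursion $\|\delta A_\ell\|\le\|\mathcal{S}_\ell\|\,\|\delta A_{\ell-1}\|+r_\ell$ from the proof of Corollary~\ref{cor:exponential-convergence-layerwise}, with internal residues added as in Theorem~\ref{thm:hierarchical-stabilization}. The same induction then gives $\|\delta A_{\mathrm{out}}\|\le\bigl(\prod_{\ell}\|\mathcal{S}_\ell\|\bigr)\|\delta A_{\mathrm{in}}\|+s_L$.

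\textbf{The derivative check.} Be explicit that the $\rho$-test never supplies the norm hypothesis of Theorem~\ref{thm:layerwise-stability}, because $\rho$ does not control $\|\mathcal{S}_{\mathcal{N}}\|$ (see Example~\ref{ex:nilpotent-feedback}). Your factor $\alpha^L$ is therefore certified by the standing assumptions, not by the algorithm. What the $\rho$-test does give is decay under iteration of $\partial^{\mathrm{spec}}F_{\mathcal{N}}(A_0)$. That presupposes the map is an endomorphism, which fails at $\ell=2$ in Example~\ref{ex:algorithm-run}, where it is $3\times 2$. It also presupposes $L\ge 2$, since the loop performs no check when $L=1$.
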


\begin{proof}
The algorithm maintains the invariant that after processing layer $\ell$, $F_{\text{acc}} = \mathcal{E}_{\mathcal{N}^{(\ell)}}$ and $\Sigma_{\text{acc}}$ formally represents the accumulated residue of $\mathcal{N}^{(\ell)} = \mathcal{N}_\ell \circ \cdots \circ \mathcal{N}_1$ (up to the heuristic interpretation of residue addition in a normed space).

If the algorithm completes all $L$ layers without exceeding thresholds, then by Theorem~\ref{thm:layerwise-stability} (or Corollary~\ref{cor:exponential-layerwise-stability}) the sufficient conditions for stability are satisfied within tolerances, so the system is certified stable.

If a threshold is exceeded, then one of the sufficient conditions from Theorem~\ref{thm:layerwise-stability} has been violated, meaning the certificate cannot be guaranteed. However, the system may still be stable; the algorithm only indicates that the particular sufficient conditions have failed.
\end{proof}

\subsection*{Applications}

Examples of multilayer operator systems include:
\begin{itemize}
    \item \textbf{Deep neural architectures}: Each layer is an affine transformation plus nonlinear activation. Layerwise stability verification ensures gradient stability during training.
    \item \textbf{Multistage adaptive filters}: Each stage performs filtering and downsampling. Residue accumulation captures aliasing artifacts.
    \item \textbf{Hierarchical Bayesian inference systems}: Each layer updates posterior beliefs. Spectral derivatives govern convergence rates.
    \item \textbf{Layered quantum computation pipelines}: Each layer consists of unitary gates. Residues capture cross-talk and decoherence.
    \item \textbf{Recursive distributed control networks}: Each layer represents a control hierarchy. Stability verification ensures global robustness.
\end{itemize}

In all such settings, the SOC framework provides a principled mechanism for tracking how local spectral behavior propagates and accumulates across hierarchical depth.

\subsection*{Summary}

Multilayer operator systems demonstrate the power of layerwise stability verification:
\begin{itemize}
    \item Global stability decomposes into derivative amplification and residue accumulation.
    \item Exponential amplification bounds explain why deep systems can become unstable despite locally stable components.
    \item Residue accumulation bounds provide quantitative estimates of output perturbation.
    \item Recursive verification algorithms enable efficient stability certification with linear complexity in depth.
    \item The framework generalizes classical layerwise analysis to nonlinear, noncommutative, and operadic settings.
\end{itemize}

\subsection{Non-Hermitian Networks}
\label{subsec:nonhermitian-networks}

Non-Hermitian operadic networks exhibit fundamentally richer spectral behavior than self-adjoint systems. 
In particular, interface couplings may generate defective eigenstructures, Jordan block formation, and exceptional-point phenomena that are invisible to purely eigenvalue-based analysis.
Within the SOC framework, these effects are naturally captured through the interaction between operadic residues and nilpotent spectral derivatives.

\begin{definition}[Non-Hermitian Operator Network]
\label{def:non-hermitian-network}
A \emph{non-Hermitian operadic operator network} is an admissible operadic operator network where:
\begin{itemize}
    \item Each node operator $A_v$ is non-Hermitian (i.e., $A_v \neq A_v^\dagger$ on a Hilbert space),
    \item Edge couplings $\tau_I$ may be non-unitary,
    \item The operadic spectrum $\sigma_P(A_v)$ includes complex eigenvalues, and eigenvectors may be non-orthogonal.
\end{itemize}
\end{definition}

\subsection*{Jordan Block Formation and the Jordan Residue}

Consider a local propagation operator $F$ whose spectral decomposition contains a nontrivial Jordan component.

\begin{definition}[Jordan Decomposition on a Generalized Eigenspace]
\label{def:jordan-decomposition}
Let $F$ be an operator on a finite-dimensional complex vector space, and let $\mathcal{G}_\lambda$ be the generalized eigenspace associated with an eigenvalue $\lambda$. The restriction of $F$ to $\mathcal{G}_\lambda$ admits a Jordan decomposition:
\[
F|_{\mathcal G_\lambda} = \lambda I + N,
\qquad
N^k = 0,
\]
where:
\begin{itemize}
    \item $\lambda$ is the eigenvalue associated with the defective mode,
    \item $N$ is the nilpotent component (Jordan block of size $k$), which we term the \emph{Jordan residue} associated with the defective spectral mode,
    \item $k$ is the size of the largest Jordan block.
\end{itemize}
For infinite-dimensional operators, an analogous decomposition exists on the generalized eigenspace associated with an isolated eigenvalue of finite algebraic multiplicity.
\end{definition}

Such structures frequently arise at strongly coupled interfaces in non-Hermitian systems. 
Even when the eigenvalue $\lambda$ remains stable, the nilpotent component may produce highly amplified perturbation sensitivity.

\begin{proposition}[Exceptional-Point Sensitivity from Jordan Defects]
\label{prop:nilpotent-sensitivity}
Let
\[
F|_{\mathcal G_\lambda}
=
\lambda I + N,
\qquad
N^k=0,
\qquad
N^{k-1}\neq0,
\]
be the restriction of $F$ to the generalized eigenspace associated with an eigenvalue $\lambda$.

For a generic perturbation
\[
F_\varepsilon
=
F+\varepsilon E
\]
that couples to the highest nilpotent level, the perturbed eigenvalues satisfy
\[
\mu_j(\varepsilon)
=
\lambda
+
\varepsilon^{1/k}\omega_j
+
o(\varepsilon^{1/k}),
\]
where $\omega_j^k=1$.

Consequently, the eigenvalue sensitivity scales as
\[
\left|
\frac{d\mu_j}{d\varepsilon}
\right|
\sim
\varepsilon^{-(k-1)/k},
\]
which diverges as $\varepsilon\to0$ whenever $k>1$.
\end{proposition}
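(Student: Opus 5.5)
The proof is classical Lidskii--Vishik--Lyusternik perturbation theory, carried out inside the generalized eigenspace $\mathcal G_\lambda$. First reduce to that block. Since $\lambda$ is an isolated eigenvalue of finite algebraic multiplicity, the Riesz projection $P_\varepsilon$ onto the total eigenspace of $F_\varepsilon$ near $\lambda$ is analytic in $\varepsilon$. Conjugating by the Kato transformation function, the eigenvalues of $F_\varepsilon$ near $\lambda$ become the eigenvalues of an analytic family of operators on $\mathcal G_\lambda$:
\[
\lambda I + N + \varepsilon \widetilde E + O(\varepsilon^2),
\]
where $\widetilde E = P_0 E P_0$. After translating by $\lambda$, it therefore suffices to study the eigenvalues of $N + \varepsilon \widetilde E + O(\varepsilon^2)$ on a $k$-dimensional space.

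Next I would write $N$ in the single Jordan chain basis $e_1,\dots,e_k$, with $N e_{j+1} = e_j$. This is the situation when the largest block, of size $k$, is the one relevant to the perturbation; other blocks only add roots at orders $\varepsilon^{1/k'}$ with $k' < k$. I then expand the characteristic polynomial
\[
\det(\mu I - N - \varepsilon \widetilde E) = \mu^k - \varepsilon\, c + \varepsilon \sum_{j<k} a_j \mu^j + O(\varepsilon^2).
\]
The key computation is that the $\varepsilon\mu^0$ coefficient equals $c = \widetilde E_{k1}$, the matrix entry $\langle e_k^*, \widetilde E e_1\rangle$ coupling the bottom of the chain to the top. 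I read the hypothesis ``couples to the highest nilpotent level'' precisely as $c \neq 0$; this is the genericity assumption.

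With $c \neq 0$, the Newton polygon of this polynomial in $(\mu,\varepsilon)$ has a single edge, running from $(k,0)$ to $(0,1)$, of slope $1/k$. The Puiseux theorem then gives $k$ branches
\[
\mu_j(\varepsilon) = (c\,\varepsilon)^{1/k}\omega_j + o(\varepsilon^{1/k}), \qquad \omega_j^k = 1.
\]
Absorbing $c^{1/k}$ into the normalization of $E$, or into $\omega_j$, gives the stated form. One checks directly that the $\mu^j\varepsilon$ terms with $j \ge 1$ and the $O(\varepsilon^2)$ terms lie strictly above the edge, so they only contribute to the $o(\varepsilon^{1/k})$ remainder.

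Finally, the Puiseux series for $\mu_j$ converges for small $\varepsilon > 0$, so it can be differentiated termwise:
\[
\frac{d\mu_j}{d\varepsilon} = \tfrac1k\, c^{1/k}\omega_j\, \varepsilon^{1/k - 1}\bigl(1 + o(1)\bigr).
\]
This gives the scaling $\varepsilon^{-(k-1)/k}$, which blows up as $\varepsilon \to 0$ whenever $k > 1$.

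I expect the main obstacle to be making the genericity and the reduction rigorous. Two things need care. First, one must verify that the determinant coefficient is exactly $\widetilde E_{k1}$, with no lower-order cancellation from the Kato conjugation. Second, when several Jordan blocks share $\lambda$, one must confirm that the block of size $k$ really controls the leading $\varepsilon^{1/k}$ roots; the Newton polygon handles this once the relevant Schur complement is shown to keep $c$ as its constant term. I would first try the single-block case explicitly with a $k \times k$ companion-matrix calculation, and then extend to several blocks via the Lidskii formula.
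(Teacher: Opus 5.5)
Your proof is correct, and it is a more general route than the paper's. The paper never treats an arbitrary $E$. It chooses the specific coupling with $(N+\varepsilon E)^k=\varepsilon I$ on $\mathcal G_\lambda$, e.g.\ $E=e_ke_1^{*}$, which is exactly your normalized case $c=1$ with all other entries zero. The characteristic equation is then exactly $\delta^k=\varepsilon$, and both the $\varepsilon^{1/k}$ splitting and the derivative (from $k\delta^{k-1}\,d\delta/d\varepsilon=1$) come out in closed form, with no Newton polygon and no remainder terms.

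That is short and exact, but it has three limitations. It tacitly assumes a single Jordan block. It ignores the part of $E$ acting off $\mathcal G_\lambda$. And it proves the claim for one representative perturbation rather than a generic one.

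Your argument gains the following over the paper's:
\begin{itemize}
\item It covers every $E$ with $c\neq 0$.
\item It makes ``couples to the highest nilpotent level'' precise as the left--right eigenvector pairing $c=\widetilde E_{k1}$.
\item It handles the off-block part of $E$ through Kato's reduction.
\item It exposes the prefactor $c^{1/k}$. So the literal coefficient $\omega_j$ requires rescaling $\varepsilon$ or $E$; absorbing $c^{1/k}$ into $\omega_j$ instead would break $\omega_j^k=1$.
\item It justifies the derivative asymptotics by differentiating a convergent Puiseux series, rather than an asymptotic relation.
\end{itemize}

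The cancellation you worry about does not occur. Any first-order similarity correction enters the compressed operator as a commutator $[N,X]$, and for Kato's transformation function one even has $X=0$. Moreover $[N,X]_{k1}=0$, because $N$ has zero last row and zero first column.

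Your coefficient is also the right one. The paper's general version, Lemma~\ref{lem:ep-perturbation-decomposition}, uses $\ell_1(Er_k)=E_{1k}$ instead. Taking $E=e_1e_k^{*}$ there gives $E_{1k}=1$, yet $N+\varepsilon E$ stays strictly upper triangular, hence nilpotent, so there is no splitting.

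One correction to your multi-block aside. It holds when the block of size $k$ is unique. If size $k$ occurs $r>1$ times, however, those blocks contribute $rk$ roots of order $\varepsilon^{1/k}$, not lower-order ones. In that case $c$ is replaced by the eigenvalues of the $r\times r$ Lidskii matrix, so the Lidskii step you plan is genuinely needed there.
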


\begin{proof}
Consider the perturbed operator $F_\varepsilon = \lambda I + N + \varepsilon E$, where $E$ is chosen so that $(N + \varepsilon E)^k = \varepsilon I$ (the minimal coupling that lifts the Jordan block). Restricting to the generalized eigenspace $\mathcal{G}_\lambda$, the characteristic equation becomes $(\lambda - \mu)^k = \varepsilon$. Setting $\mu = \lambda + \delta$, we obtain $\delta^k = \varepsilon$, hence $\delta = \varepsilon^{1/k} \omega_k$ with $\omega_k^k = 1$.

Differentiating $\delta^k = \varepsilon$ with respect to $\varepsilon$:
\[
k \delta^{k-1} \frac{d\delta}{d\varepsilon} = 1
\quad\Longrightarrow\quad
\frac{d\mu}{d\varepsilon} = \frac{1}{k \delta^{k-1}} = \frac{1}{k} \varepsilon^{-(k-1)/k} \omega_k^{-(k-1)}.
\]

As $\varepsilon \to 0$, $\left|\varepsilon^{-(k-1)/k}\right| \to \infty$ for any $k > 1$. Hence the magnitude of the eigenvalue sensitivity scales as $\varepsilon^{-(k-1)/k}$, demonstrating the diverging spectral sensitivity characteristic of exceptional points.
\end{proof}

\subsection*{Exceptional Points}

An exceptional point occurs when:
\begin{enumerate}
    \item eigenvalues coalesce,
    \item eigenvectors simultaneously merge,
    \item the operator becomes non-diagonalizable.
\end{enumerate}

\begin{definition}[Exceptional Point]
\label{def:exceptional-point}
An \emph{exceptional point} (EP) of order $k$ is a degeneracy in the parameter space of a non-Hermitian operator where:
\begin{enumerate}
    \item $k$ eigenvalues coalesce to a common value $\lambda_0$,
    \item The operator has a Jordan block of size $k$ at $\lambda_0$,
    \item The eigenprojection associated with the coalescing eigenvalue becomes non-analytic or ill-conditioned under perturbation.
\end{enumerate}
\end{definition}

Near an exceptional point, arbitrarily small perturbations can produce large spectral splitting effects. 
Classical spectral theory based solely on eigenvalues often fails to predict this instability because the dominant behavior is governed by the nilpotent structure rather than by the eigenvalues themselves.

\begin{theorem}[Exceptional-Point Sensitivity]
\label{thm:ep-sensitivity}
Let $F_\varepsilon$ be a perturbation of a non-Hermitian operator possessing an exceptional point of order $k$ at $\varepsilon=0$.

Then the perturbed eigenvalues admit Puiseux expansions of the form
\[
\mu_j(\varepsilon)
=
\lambda_0
+
c_j \varepsilon^{1/k}
+
o(\varepsilon^{1/k}),
\]
where $c_j$ are determined by the perturbation structure.

Consequently, the eigenvalue sensitivity scales as
\[
\left|
\frac{d\mu_j}{d\varepsilon}
\right|
\sim
\varepsilon^{-(k-1)/k},
\]
which diverges as $\varepsilon\to0$ for $k>1$.
\end{theorem}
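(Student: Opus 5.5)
The plan is to reduce Theorem~\ref{thm:ep-sensitivity} to the characteristic polynomial of $F_\varepsilon$ restricted to a perturbed invariant subspace, and then read off the eigenvalues from a Newton-polygon (Puiseux) analysis.

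\textbf{Step 1: localize to the generalized eigenspace.} Since $\lambda_0$ is isolated with finite algebraic multiplicity $k$, take the Riesz projection
\[
P_\varepsilon=\frac{1}{2\pi i}\oint_\Gamma (z-F_\varepsilon)^{-1}\,dz
\]
on a small circle $\Gamma$ around $\lambda_0$. For $|\varepsilon|$ small, $P_\varepsilon$ is analytic in $\varepsilon$ and of constant rank $k$. The total eigenprojection is analytic even though the individual eigenprojections are not; this is exactly clause~(3) of Definition~\ref{def:exceptional-point}. Transporting $\mathrm{Ran}\,P_\varepsilon$ to $\mathcal G_{\lambda_0}$ by Kato's analytic transformation function gives a $k\times k$ matrix family
\[
M(\varepsilon)=\lambda_0 I+N+\varepsilon E_{\mathrm{eff}}+O(\varepsilon^2),
\]
where $N$ is the Jordan residue of Definition~\ref{def:jordan-decomposition}, with $N^{k-1}\neq0$. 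The eigenvalues of $F_\varepsilon$ near $\lambda_0$ are exactly those of $M(\varepsilon)$.

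\textbf{Step 2: Newton polygon.} Set $\mu=\lambda_0+\delta$ and expand
\[
p(\delta,\varepsilon)=\det(\delta I-N-\varepsilon E_{\mathrm{eff}}-\cdots)=\delta^k-\varepsilon\,\gamma+(\text{higher terms}),
\]
where $\gamma$ is the bottom-left Jordan-chain matrix entry $\langle e_k^*,E_{\mathrm{eff}}e_1\rangle$. Every other monomial $\varepsilon^a\delta^b$ has $a\ge1$, $b\ge1$, or $a\ge2$, so each lies strictly above the segment joining $(0,k)$ to $(1,0)$. The Newton--Puiseux theorem then gives $k$ branches
\[
\delta_j=c_j\varepsilon^{1/k}+o(\varepsilon^{1/k}),\qquad c_j^k=\gamma.
\]
This produces the stated expansion. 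It also matches Proposition~\ref{prop:nilpotent-sensitivity}, where the genericity hypothesis ``couples to the highest nilpotent level'' is exactly $\gamma\neq0$.

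\textbf{Step 3: derivative scaling.} Differentiate the Puiseux series termwise, which is allowed because each branch is a convergent series in $\varepsilon^{1/k}$. This gives
\[
\frac{d\mu_j}{d\varepsilon}=\tfrac1k c_j\,\varepsilon^{1/k-1}\bigl(1+o(1)\bigr),
\]
so $|d\mu_j/d\varepsilon|\sim\varepsilon^{-(k-1)/k}$, which diverges when $k>1$ and $c_j\neq0$.

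\textbf{Main obstacle.} Step~2 needs the genericity condition $\gamma\neq0$. If $\gamma=0$, the Newton polygon can have several slopes, the branches can split into cycles of length less than $k$, and the exponent can differ from $1/k$. I will handle this by stating the theorem under $\gamma\neq0$, reading ``determined by the perturbation structure'' this way. I would then remark that in the degenerate case one still gets expansions in $\varepsilon^{1/m}$ with $m\le k$, and the sensitivity bound becomes $\varepsilon^{-(m-1)/m}$. A smaller point is checking that the $O(\varepsilon^2)$ terms from the Kato reduction do not enter the lowest Newton edge; this follows because they contribute only monomials with $a\ge2$.
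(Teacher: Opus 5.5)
Your argument is correct, given the hypothesis $\gamma\neq0$ that you add. The statement needs that hypothesis, and the paper uses it tacitly. Your route differs from the paper's and is more complete.

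\textbf{The paper's route.} The paper proves this theorem in one line by citing Proposition~\ref{prop:nilpotent-sensitivity}. That proposition only treats a specially chosen $E$ with $(N+\varepsilon E)^k=\varepsilon I$ on $\mathcal G_\lambda$. Its local characteristic equation is therefore exactly $\delta^k=\varepsilon$, with $c_j=\omega_j$. The paper then differentiates the leading-order expansion directly.

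\textbf{Your route.} You handle an arbitrary analytic perturbation. The Riesz projection and Kato's transformation function reduce it to a $k\times k$ family whose first-order term is the compression $P_0EP_0$. Your Newton--Puiseux analysis of $\det(\delta I-N-\varepsilon E_{\mathrm{eff}}-\cdots)$ is essentially the argument the paper gives only later, and only in finite dimensions, in Lemma~\ref{lem:ep-perturbation-decomposition} and Theorem~\ref{thm:fractional-splitting}.

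\textbf{What your route buys.}
\begin{itemize}
\item It covers higher-order terms in $\varepsilon$, and isolated eigenvalues of finite multiplicity in infinite dimensions.
\item It makes explicit that $\gamma\neq0$ is required. Without it the stated scaling is false: for $\lambda_0I+N+\varepsilon E$ on $\mathbb C^k$ with $E$ upper triangular, the eigenvalues are $\lambda_0+\varepsilon E_{ii}$, analytic with bounded derivative.
\item It pins down the constants through $c_j^k=\gamma$.
\item Differentiating a convergent series in $\varepsilon^{1/k}$ term by term actually justifies the derivative asymptotics. Differentiating a relation with an $o(\varepsilon^{1/k})$ remainder, as the paper does, does not.
\end{itemize}

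\textbf{Your coefficient is the right one.} Your $\gamma=\langle e_k^*,E_{\mathrm{eff}}e_1\rangle$ pairs the left eigenvector with $E$ applied to the right eigenvector. This is the correct coefficient for the chain convention $F_0v_j=\lambda v_j+v_{j-1}$. The paper's later condition $\ell_1(Er_k)\neq0$ is the transposed $(1,k)$ entry. In that convention it does not by itself force $\varepsilon^{1/k}$ splitting; take $k=2$ and $E=e_1e_2^{T}$. So keep your version rather than aligning with the paper's.

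\textbf{Comparison.} The paper's route buys only brevity and an exact closed form in its special case.

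\textbf{Minor point.} In your degenerate-case remark, the leading exponent can be $p/m$ with $p>1$. So $\varepsilon^{-(m-1)/m}$ is an upper bound, not necessarily the actual rate; your wording as a ``bound'' is consistent with this.
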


\begin{proof}
From Proposition~\ref{prop:nilpotent-sensitivity}, the perturbed eigenvalues satisfy $\mu_j(\varepsilon) = \lambda_0 + \varepsilon^{1/k} \omega_j + o(\varepsilon^{1/k})$. Differentiating with respect to $\varepsilon$ yields $\frac{d\mu_j}{d\varepsilon} = \frac{1}{k} \varepsilon^{-(k-1)/k} \omega_j^{-(k-1)} + o(\varepsilon^{-(k-1)/k})$, giving the claimed scaling.
\end{proof}

In the SOC framework, this sensitivity is encoded by the spectral derivative operator $\partial^{\mathrm{spec}}$, which maps parameter perturbations to eigenvalue shifts. For an exceptional point, the sensitivity radius
\[
r_{\mathrm{SOC}}^{\mathrm{sens}}(F) := \left\|\partial^{\mathrm{spec}}F\right\|^{-1}
\]
scales as $\varepsilon^{(k-1)/k} \to 0$ as $\varepsilon \to 0$ for $k>1$, indicating extreme parametric fragility.

\begin{lemma}[Perturbation Decomposition for a Single Jordan Block]
\label{lem:ep-perturbation-decomposition}
Let $F_0$ be an operator on a finite-dimensional complex vector space, and let 
$\lambda$ be an eigenvalue of $F_0$. Assume that the generalized eigenspace 
$\mathcal{G}_\lambda$ is a single Jordan block of size $k \ge 1$, i.e., there exists 
a basis $\{v_1, v_2, \dots, v_k\}$ such that
\[
F_0 v_1 = \lambda v_1, \qquad F_0 v_j = \lambda v_j + v_{j-1} \quad (j = 2,\dots,k).
\]
Equivalently, in this basis,
\[
F_0|_{\mathcal{G}_\lambda} = \lambda I + N,
\]
where $N$ is the nilpotent Jordan block with $N^k = 0$ and $N^{k-1} \neq 0$.

Consider a one-parameter perturbation
\[
F_\varepsilon = F_0 + \varepsilon E,
\]
where $E$ is a fixed linear operator. Let $\ell_1$ be the left eigenvector 
(dual basis covector) corresponding to $v_1$, normalized so that $\ell_1(v_1) = 1$, 
and let $r_k = v_k$ be the highest generalized eigenvector in the Jordan chain. 
Assume the following generic condition holds:
\[
\alpha := \ell_1(E r_k) \neq 0.
\]

Then for sufficiently small $|\varepsilon|$, the $k$ eigenvalues of $F_\varepsilon$ 
near $\lambda$ admit the Puiseux expansion
\[
\mu_j(\varepsilon) = \lambda + \alpha^{1/k} \varepsilon^{1/k} \omega_j + o(\varepsilon^{1/k}), \qquad j = 1,\dots,k,
\]
where $\omega_j^k = 1$ are the $k$-th roots of unity, and $\alpha^{1/k}$ denotes 
a fixed branch.

Consequently, the eigenvalue sensitivity satisfies
\[
\frac{d\mu_j}{d\varepsilon} = \frac{\alpha^{1/k} \omega_j}{k} \varepsilon^{-(k-1)/k} + o(\varepsilon^{-(k-1)/k}),
\]
which diverges as $\varepsilon \to 0$ whenever $k > 1$.

\noindent
\textbf{Conceptual decomposition.} 
The sensitivity can be heuristically separated into two conceptual parts:
\begin{itemize}
    \item a \emph{regular} contribution that would arise from the diagonalizable part 
          $\lambda I$ alone (analytic in $\varepsilon$, bounded derivative);
    \item a \emph{singular} contribution arising from the nilpotent part $N$, 
          responsible for the $\varepsilon^{1/k}$ fractional power and the 
          divergent derivative $\sim \varepsilon^{-(k-1)/k}$.
\end{itemize}
This decomposition is interpretation, not a mathematically defined splitting 
of $\frac{d\mu}{d\varepsilon}$ into separately computable terms.
\end{lemma}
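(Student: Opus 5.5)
We prove the lemma by a Newton-polygon (Lidskii-type) analysis of the characteristic polynomial, restricted to the $k$-dimensional invariant subspace that continues $\mathcal{G}_\lambda$.

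\textbf{Step 1: reduce to the perturbed invariant subspace.} Since $\lambda$ is an isolated eigenvalue of $F_0$, fix a small circle $\Gamma$ around $\lambda$ that encloses no other eigenvalue. Let
\[
P_\varepsilon = \frac{1}{2\pi i}\oint_\Gamma (z-F_\varepsilon)^{-1}\,dz
\]
be the Riesz projection. It is analytic in $\varepsilon$, has constant rank $k$ for small $|\varepsilon|$, and $P_0$ projects onto $\mathcal{G}_\lambda$. Using the standard transformation function $U_\varepsilon$ (Kato), which is analytic with $U_0 = I$ and $U_\varepsilon P_0 U_\varepsilon^{-1} = P_\varepsilon$, the operator $F_\varepsilon$ restricted to $\operatorname{ran} P_\varepsilon$ is similar to a $k\times k$ matrix. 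In the Jordan basis $v_1,\dots,v_k$ this matrix is
\[
M_\varepsilon = \lambda I + N + \varepsilon \widetilde E + O(\varepsilon^2),
\]
where $\widetilde E = P_0 E P_0|_{\mathcal{G}_\lambda}$. The $k$ eigenvalues of $F_\varepsilon$ near $\lambda$ are exactly the eigenvalues of $M_\varepsilon$.

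\textbf{Step 2: expand the determinant.} Write $\mu = \lambda + \delta$ and expand $\det(\delta I - N - \varepsilon\widetilde E - O(\varepsilon^2))$ by the Leibniz formula. The superdiagonal of $N$ consists of ones in positions $(j-1,j)$.
\begin{itemize}
\item The identity permutation contributes $\delta^k$ plus terms of the form $\varepsilon\,\delta^{j}$ with $j\ge1$.
\item The only term linear in $\varepsilon$ and free of $\delta$ comes from the cyclic permutation. It uses all $k-1$ superdiagonal entries together with the corner entry $\widetilde E_{k1}$, and contributes $-\varepsilon\,\widetilde E_{k1}$ up to a sign.
\item All remaining terms are $O(\varepsilon\delta)$ or $O(\varepsilon^2)$.
\end{itemize}
The corner coefficient is the Lidskii quantity $y^{*}Ex$, where $x=v_1$ is the right eigenvector and $y$ is the left eigenvector. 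Here $y$ is the covector killing $\operatorname{ran}N$, i.e.\ the dual covector of the top of the chain. One step therefore needs care: matching this coefficient with the paper's $\alpha=\ell_1(Er_k)$. This amounts to fixing the convention for which end of the chain carries the "left eigenvector" label, and transposing the basis ordering if necessary. I would state the result with $\alpha := y^{*}Ex$, note that it coincides with $\ell_1(Er_k)$ under that convention, and track the sign, which can be absorbed into the choice of root branch. The outcome is
\[
\det = \delta^k - \alpha\varepsilon + \varepsilon\sum_{j\ge1} c_j\delta^j + O(\varepsilon^2).
\]

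\textbf{Step 3: rescale and solve.} Put $s=\varepsilon^{1/k}$ and $\delta = s z$. Dividing the determinant by $s^k$ gives
\[
g(z,s) := z^k - \alpha + O(s),
\]
which is analytic in $(z,s)$. At $s=0$ its roots are the $k$ distinct simple roots $z=\alpha^{1/k}\omega_j$, distinct because $\alpha\neq0$. By the analytic implicit function theorem, each root continues to an analytic branch $z_j(s)=\alpha^{1/k}\omega_j+O(s)$. These $k$ branches account for all $k$ eigenvalues near $\lambda$, which gives the Puiseux expansion for $\mu_j(\varepsilon)$.

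\textbf{Step 4: derivative asymptotics.} This is the step I expect to be the main obstacle, because an $o(\varepsilon^{1/k})$ remainder cannot in general be differentiated. Analyticity resolves it. Since $\mu_j=\lambda+s\,z_j(s)$ is a convergent power series in $s$, we may differentiate termwise:
\[
\frac{d\mu_j}{d\varepsilon} = \frac{d\mu_j}{ds}\cdot\frac{ds}{d\varepsilon} = \bigl(z_j(0)+O(s)\bigr)\cdot\frac{1}{k}\,s^{1-k}.
\]
This yields $\dfrac{\alpha^{1/k}\omega_j}{k}\,\varepsilon^{-(k-1)/k} + O(\varepsilon^{-(k-2)/k})$, which is the claimed leading term and diverges as $\varepsilon\to0$ whenever $k>1$. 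The regular/singular splitting stated after the lemma is interpretive and needs no separate proof.
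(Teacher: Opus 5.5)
Your argument is correct. It follows the same route as the paper: expand the characteristic polynomial, isolate the single $k$-cycle term, and solve $\delta^k\approx\alpha\varepsilon$ by Puiseux. It also tightens two loose spots in that route.
\begin{itemize}
\item The paper simply writes $F_\varepsilon=\lambda I+N+\varepsilon E$ on $\mathcal{G}_\lambda$, as though $\mathcal{G}_\lambda$ were $F_\varepsilon$-invariant. Your Riesz/Kato reduction is what justifies passing to a $k\times k$ matrix.
\item The paper gets the derivative by differentiating $\delta_j^k=\gamma\varepsilon+o(\varepsilon)$, which is not a legitimate step. Your analyticity of $z_j(s)$ in $s=\varepsilon^{1/k}$ makes termwise differentiation valid.
\end{itemize}

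The one point to sharpen is $\alpha$. Your suspicion is right, and once the chain relations are fixed as in the statement, a choice of convention cannot absorb it.
\begin{itemize}
\item With $Nv_j=v_{j-1}$, so that $N$ is superdiagonal, the $k$-cycle picks out $\widetilde E_{k1}=\ell_k(Ev_1)$.
\item By contrast, $\ell_1(Er_k)=\ell_1(Ev_k)$ is the $(1,k)$ entry.
\item Moreover, $\ell_1$ is not a left eigenvector, since $\ell_1(F_0v_2)=1\neq 0=\lambda\,\ell_1(v_2)$.
\end{itemize}
The printed hypothesis is therefore genuinely insufficient. For $k=2$ and $E=N$ one has $\alpha=\ell_1(Ev_2)=1\neq0$, yet $F_\varepsilon=\lambda I+(1+\varepsilon)N$ keeps the double eigenvalue $\lambda$ for every $\varepsilon$.

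The paper's proof reaches $\ell_1(Er_k)$ only by switching conventions midway. Its displayed $\delta I-N$ carries the $-1$'s on the subdiagonal. That is the reversed chain, in which $v_k$ is the eigenvector and $\ell_1$ really is the left eigenvector. The proof then asserts $E_{k1}=\ell_1(Er_k)$. So do not claim the two quantities coincide. State explicitly that you prove the lemma under the corrected hypothesis $\alpha:=\ell_k(Ev_1)\neq 0$, the Lidskii quantity $y^*Ex$. This agrees with the printed formula only after reversing the chain.

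Also drop the remark that a sign "can be absorbed into the choice of root branch." First, there is no sign to absorb. The $k$-cycle has signature $(-1)^{k-1}$, which cancels the factor $(-1)^{k-1}$ from the superdiagonal entries $-1$. The term is therefore exactly $-\varepsilon\widetilde E_{k1}$, as your displayed determinant already has it. Second, a genuine factor $-1$ could not be absorbed anyway, since the $k$-th roots of $-\alpha$ are not the $k$-th roots of $\alpha$. The paper makes exactly this slip when it calls $((-1)^{k-1})^{1/k}\omega_j$ a $k$-th root of unity, which is false for even $k$.
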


\begin{proof}
We prove the lemma in several rigorous steps.

\paragraph{Step 1: Setup and notation.}
Let $\{v_1,\dots,v_k\}$ be the Jordan basis for $\mathcal{G}_\lambda$ satisfying
\[
F_0 v_1 = \lambda v_1, \qquad F_0 v_j = \lambda v_j + v_{j-1} \quad (j = 2,\dots,k).
\]
Let $\{\ell_1,\dots,\ell_k\}$ be the dual basis of left eigenvectors (covectors) 
normalized so that $\ell_i(v_j) = \delta_{ij}$. In particular, $\ell_1$ satisfies
\[
\ell_1(F_0 v) = \lambda \ell_1(v) \quad \text{for all } v \in \mathcal{G}_\lambda,
\]
and $\ell_1(v_1) = 1$, $\ell_1(v_j) = 0$ for $j \ge 2$. Let $r_k = v_k$ be the 
highest generalized eigenvector.

\paragraph{Step 2: Characteristic polynomial of the perturbed operator.}
Consider the restriction $F_\varepsilon|_{\mathcal{G}_\lambda}$. In the Jordan basis, 
write $F_\varepsilon = \lambda I + N + \varepsilon E$, where $E$ is represented by 
a $k \times k$ matrix. The characteristic polynomial of $F_\varepsilon$ restricted 
to $\mathcal{G}_\lambda$ is
\[
p_\varepsilon(\mu) = \det\bigl((\lambda - \mu)I + N + \varepsilon E\bigr).
\]
Set $\delta = \mu - \lambda$. Then
\[
p_\varepsilon(\mu) = \det\bigl(-\delta I + N + \varepsilon E\bigr) = (-1)^k \det\bigl(\delta I - N - \varepsilon E\bigr).
\]

\paragraph{Step 3: Leading-order behavior of the determinant.}
Since $N$ is nilpotent, $\det(\delta I - N) = \delta^k$. Expand the determinant 
as a polynomial in $\delta$ and $\varepsilon$:
\[
\det(\delta I - N - \varepsilon E) = \delta^k - \varepsilon \cdot \operatorname{Tr}(\operatorname{adj}(\delta I - N) E) + O(\varepsilon^2),
\]
where $\operatorname{adj}$ denotes the adjugate matrix. The key observation is 
that the term linear in $\varepsilon$ is governed by the $(1,k)$-cofactor of 
$\delta I - N$, because the only way to obtain a non-zero contribution at leading 
order in $\delta$ is to take the product of the off-diagonal entries that connect 
the top of the Jordan chain to the bottom.

More concretely, in the Jordan basis, the matrix $\delta I - N$ has the form
\[
\delta I - N = \begin{pmatrix}
\delta & 0 & \cdots & 0 \\
-1 & \delta & \cdots & 0 \\
0 & -1 & \ddots & \vdots \\
\vdots & \ddots & \ddots & \delta \\
0 & \cdots & 0 & -1 & \delta
\end{pmatrix}.
\]
The $(1,k)$-cofactor of this matrix is $(-1)^{k-1}$ (independent of $\delta$). 
Therefore, the coefficient of $\varepsilon$ in the expansion of $\det(\delta I - N - \varepsilon E)$ 
is, up to sign, $(-1)^{k-1} E_{k1}$ plus terms of higher order in $\delta$, 
where $E_{k1}$ is the $(k,1)$ entry of $E$ in the Jordan basis. But $E_{k1} = \ell_1(E r_k)$ 
by construction. Hence
\[
\det(\delta I - N - \varepsilon E) = \delta^k - \varepsilon \cdot (-1)^{k-1} \alpha + O(\varepsilon^2) + \text{terms of order } \delta \varepsilon \text{ and higher}.
\]

For a generic perturbation with $\alpha \neq 0$, the leading behavior of the 
characteristic equation $p_\varepsilon(\mu) = 0$ is therefore
\[
\delta^k = \varepsilon \cdot (-1)^{k-1} \alpha + O(\varepsilon^2, \varepsilon \delta, \delta^{k+1}).
\]

\paragraph{Step 4: Puiseux expansion via Newton polygon.}
The equation $\delta^k = \varepsilon \gamma$ with $\gamma = (-1)^{k-1} \alpha \neq 0$ 
has $k$ distinct solutions $\delta = (\varepsilon \gamma)^{1/k} \omega_j$, where 
$\omega_j^k = 1$. By the Newton polygon method (or the implicit function theorem 
for algebraic functions), the full solution branches admit Puiseux series
\[
\delta_j(\varepsilon) = (\gamma \varepsilon)^{1/k} \omega_j + \sum_{m=2}^\infty c_{j,m} \varepsilon^{m/k},
\]
convergent for sufficiently small $|\varepsilon|$. In particular,
\[
\mu_j(\varepsilon) = \lambda + \alpha^{1/k} \varepsilon^{1/k} \omega_j' + o(\varepsilon^{1/k}),
\]
where $\omega_j' = ((-1)^{k-1})^{1/k} \omega_j$ is again a $k$-th root of unity, 
and $\alpha^{1/k}$ denotes a fixed branch.

\paragraph{Step 5: Divergence of the derivative.}
Differentiating the leading-order relation $\delta_j(\varepsilon)^k = \varepsilon \gamma + o(\varepsilon)$ 
gives
\[
k \delta_j(\varepsilon)^{k-1} \frac{d\delta_j}{d\varepsilon} = \gamma + o(1).
\]
Substituting $\delta_j(\varepsilon) = (\gamma \varepsilon)^{1/k} \omega_j + o(\varepsilon^{1/k})$ yields
\[
\frac{d\mu_j}{d\varepsilon} = \frac{d\delta_j}{d\varepsilon} = \frac{\gamma}{k \delta_j^{k-1}} + o(\varepsilon^{-(k-1)/k}) = \frac{\alpha^{1/k} \omega_j'}{k} \varepsilon^{-(k-1)/k} + o(\varepsilon^{-(k-1)/k}),
\]
which diverges as $|\varepsilon| \to 0$ for any $k > 1$.

\paragraph{Step 6: Interpretation of the decomposition.}
If $N = 0$ (the semisimple case, $k = 1$), the eigenvalue is analytic in $\varepsilon$ 
and the derivative remains bounded. The nilpotent part $N$ is responsible for the 
degeneracy that forces the characteristic polynomial to have a multiple root at 
$\varepsilon = 0$, leading to the fractional-power expansion and the divergent 
derivative. Thus one may conceptually separate the sensitivity into a regular 
part (from the diagonalizable component $\lambda I$) and a singular part (from the 
nilpotent component $N$). This separation is heuristic: there is no mathematically 
well-defined splitting of $d\mu/d\varepsilon$ into two separately computable 
terms without additional structure.

This completes the proof.
\end{proof}

\begin{remark}
The Puiseux-series behavior $\varepsilon^{1/k}, \varepsilon^{2/k}, \dots, \varepsilon^{(k-1)/k}$ is reflected in the singular structure of the resolvent $(z - F_\varepsilon)^{-1}$ near the exceptional point. The nilpotent component $N$ in the Jordan decomposition $F|_{\mathcal{G}_\lambda} = \lambda I + N$ is responsible for this enhanced sensitivity; when $k>1$, the resolvent has a pole of order $k$, leading to fractional power dependence in the perturbed eigenvalues and consequently divergent derivatives as $\varepsilon \to 0$.
\end{remark}

\subsection*{Spectral Splitting}

Suppose a perturbation
\[
F_\varepsilon = F_0 + \varepsilon \Delta F
\]
is introduced near an exceptional point. 
Then the resulting eigenvalue splitting follows a fractional-power law:
\[
\Delta \lambda \sim \varepsilon^{1/k},
\]
where $k$ is the size of the Jordan block.

\begin{theorem}[Fractional Power Splitting Law]
\label{thm:fractional-splitting}
Let $F_\varepsilon = F_0 + \varepsilon \Delta F$ be an analytic perturbation of a 
non-Hermitian operator $F_0$ on a finite-dimensional complex vector space. 
Suppose that $F_0$ has an eigenvalue $\lambda_0$ whose generalized eigenspace 
$\mathcal{G}_{\lambda_0}$ contains a \emph{single} Jordan block of size $k \ge 2$. 
Equivalently, in a Jordan basis $\{v_1, v_2, \dots, v_k\}$,
\[
F_0 v_1 = \lambda_0 v_1, \qquad F_0 v_j = \lambda_0 v_j + v_{j-1} \quad (j = 2,\dots,k),
\]
so that $F_0|_{\mathcal{G}_{\lambda_0}} = \lambda_0 I + N$ with $N^k = 0$ and $N^{k-1} \neq 0$.

Let $\ell_1$ be the left eigenvector (dual basis covector) normalized so that 
$\ell_1(v_1) = 1$, and let $r_k = v_k$ be the highest generalized eigenvector 
in the Jordan chain. Assume the perturbation is \emph{generic} in the sense that
\[
C := \ell_1(\Delta F \, r_k) \neq 0.
\]

Then for sufficiently small $|\varepsilon|$, the $k$ eigenvalues of $F_\varepsilon$ 
bifurcating from $\lambda_0$ admit the Puiseux expansion
\[
\lambda_j(\varepsilon) = \lambda_0 + C^{1/k} \omega_j \varepsilon^{1/k} + o(\varepsilon^{1/k}), \qquad j = 1,\dots,k,
\]
where $\omega_j^k = 1$ are the $k$-th roots of unity, and $C^{1/k}$ denotes a 
fixed branch.

Consequently, the eigenvalue sensitivity satisfies
\[
\left|\frac{d\lambda_j}{d\varepsilon}\right| \sim \frac{|C|^{1/k}}{k} |\varepsilon|^{-(k-1)/k} \quad \text{as } \varepsilon \to 0.
\]
\end{theorem}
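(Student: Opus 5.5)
The plan is to follow Lemma~\ref{lem:ep-perturbation-decomposition} closely, but to make the reduction to the Jordan block and the Newton-polygon step rigorous, since the statement here concerns $F_\varepsilon$ on the whole space. The argument has four steps.

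\emph{Step 1: reduction to $\mathcal{G}_{\lambda_0}$.} Choose a small circle $\Gamma$ around $\lambda_0$ that encloses no other eigenvalue of $F_0$. For $|\varepsilon|$ small, the Riesz projection
\[
P_\varepsilon = \frac{1}{2\pi i}\oint_\Gamma (z - F_\varepsilon)^{-1}\,dz
\]
is analytic in $\varepsilon$ and has constant rank $k$. The standard Kato transformation function $U_\varepsilon$ is analytic, satisfies $U_0 = I$, and maps $\mathcal{G}_{\lambda_0}$ onto $\operatorname{ran}P_\varepsilon$. Conjugating by it, the $k$ eigenvalues of $F_\varepsilon$ inside $\Gamma$ become the eigenvalues of a $k\times k$ analytic matrix family
\[
M(\varepsilon) = \lambda_0 I + N + \varepsilon \widetilde{E} + O(\varepsilon^2).
\]
To first order, $\widetilde{E}$ is the compression $P_0\,\Delta F\,P_0$ written in the Jordan basis. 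Its $(1,k)$ entry is therefore $\ell_1(\Delta F\,v_k) = C$; this entry is unaffected by the complementary block at first order.

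\emph{Step 2: the characteristic polynomial.} Set $\delta = \mu - \lambda_0$ and expand
\[
q(\delta,\varepsilon) = \det\bigl(\delta I - N - \varepsilon\widetilde{E} - O(\varepsilon^2)\bigr)
\]
as $\sum_{a,b} c_{ab}\,\delta^a\varepsilon^b$. Here $c_{k0} = 1$, and $c_{0b} = 0$ for every $b \geq 1$ except $b = 1$. To obtain a term with no factor of $\delta$ that is linear in $\varepsilon$, the permutation expansion must use all $k-1$ superdiagonal entries $-1$ of $-N$ together with the single entry $-\varepsilon C$ in position $(1,k)$. This is a $k$-cycle with sign $(-1)^{k-1}$, so the term equals $(-1)^{k-1}(-1)^{k-1}(-\varepsilon C) = -\varepsilon C$, giving $c_{01} = -C$. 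Every other monomial $\delta^a\varepsilon^b$ with $b \geq 1$ satisfies $a + kb > k$: either $a \geq 1$, or $b \geq 2$ and the term is at least $O(\varepsilon^2)$. The Newton polygon of $q$ is therefore the single segment joining $(k,0)$ to $(0,1)$, of slope $1/k$.

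\emph{Step 3: Puiseux branches.} Substitute $\delta = \varepsilon^{1/k}\eta$ and divide by $\varepsilon$. This gives $\eta^k - C + O(\varepsilon^{1/k}) = 0$. The reduced equation $\eta^k = C$ has $k$ simple roots $C^{1/k}\omega_j$ because $C \neq 0$. The implicit function theorem, applied in the variable $s = \varepsilon^{1/k}$, then produces $k$ analytic branches $\eta_j(s) = C^{1/k}\omega_j + O(s)$. This yields the stated expansion
\[
\lambda_j(\varepsilon) = \lambda_0 + C^{1/k}\omega_j\,\varepsilon^{1/k} + o(\varepsilon^{1/k}),
\]
and it accounts for all $k$ eigenvalues inside $\Gamma$.

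\emph{Step 4: the derivative.} Each branch is a convergent power series in $s = \varepsilon^{1/k}$, so it may be differentiated term by term. Using $ds/d\varepsilon = \tfrac{1}{k}\,\varepsilon^{-(k-1)/k}$,
\[
\frac{d\lambda_j}{d\varepsilon} = \frac{1}{k}\,\varepsilon^{-(k-1)/k}\bigl(C^{1/k}\omega_j + O(\varepsilon^{1/k})\bigr).
\]
Taking moduli gives the claimed asymptotic $\bigl|d\lambda_j/d\varepsilon\bigr| \sim \tfrac{|C|^{1/k}}{k}\,|\varepsilon|^{-(k-1)/k}$.

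The main obstacle is the bookkeeping in Steps 1 and 2. First, one must confirm that the first-order term of the reduced matrix really has $(1,k)$ entry $C$; this holds because $U_\varepsilon = I + O(\varepsilon)$, so the $O(\varepsilon)$ correction it introduces multiplies $N$ and does not reach the corner entry at order $\varepsilon$. Second, one must verify that no cross term $\delta^a\varepsilon^b$ lies on or below the Newton segment. If the Kato reduction proves awkward, the fallback is a Schur complement: write the full characteristic determinant as $\det(\text{complement block})\cdot\det(\text{Schur complement on }\mathcal{G}_{\lambda_0})$, where the complement block is invertible near $\lambda_0$, and apply the same leading-term analysis to the Schur complement.
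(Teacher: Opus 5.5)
\textbf{The gap is in Step 2, and it cannot be fixed without changing the statement, because the statement as written is false.}

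With the chain $F_0v_j=\lambda_0v_j+v_{j-1}$, the ones of $N$ sit at positions $(1,2),\dots,(k-1,k)$. These already use row $1$ and column $k$, so position $(1,k)$ cannot be added to them. The only permutation through all the superdiagonal entries is the $k$-cycle $1\to2\to\cdots\to k\to1$, and it closes at $(k,1)$, not $(1,k)$. Your sign count is correct, but what it actually yields is
\[
q(\delta,\varepsilon)=\delta^k-\varepsilon\,\widetilde E_{k1}+(\text{monomials with } a+kb>k),\qquad \widetilde E_{k1}=\ell_k(\Delta F\,v_1).
\]
The entry $C=\widetilde E_{1k}$ only appears in monomials carrying further factors of $\delta$ or $\varepsilon$. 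So $C\neq 0$ does not fix the Newton segment, and the conclusion genuinely fails:
\begin{itemize}
\item For $k=2$, $F_0=\begin{pmatrix}\lambda_0&1\\0&\lambda_0\end{pmatrix}$ and $\Delta F=\begin{pmatrix}0&1\\0&0\end{pmatrix}$ give $C=\ell_1(\Delta F\,v_2)=1$. Yet $F_\varepsilon$ is upper triangular with the double eigenvalue $\lambda_0$ for every $\varepsilon$.
\item Conversely, $\Delta F=\begin{pmatrix}0&0\\1&0\end{pmatrix}$ has $C=0$, but the eigenvalues split as $\lambda_0\pm\sqrt{\varepsilon}$.
\end{itemize}

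An invariance check shows why $(1,k)$ can never be the controlling entry. A first-order similarity $I+\varepsilon X$ on the block replaces $\widetilde E$ by $\widetilde E+[N,X]$ and leaves the eigenvalues unchanged. But $[N,X]_{1k}=X_{2k}-X_{1,k-1}$ can be made arbitrary, whereas $[N,X]_{k1}=0$ always. So your remark that the correction ``does not reach the corner entry'' holds for the $(k,1)$ corner, not for $(1,k)$. Step 1 itself is fine for Kato's $U_\varepsilon$, since $P_0U_1P_0=0$.

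\textbf{The defect originates in the statement.} $\ell_1$ is not a left eigenvector, since $\ell_1\circ F_0=\lambda_0\ell_1+\ell_2$. The left eigenvector is $\ell_k$, which annihilates $v_1$ and so cannot be normalized by $\ell(v_1)=1$.

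The paper's proof has the same mismatch. It uses the $(1,k)$ cofactor of $\delta I-N$, which vanishes for the superdiagonal $N$ defined by the stated chain. It then calls $\ell_1(Ev_k)$ ``the $(k,1)$ entry of $E$''.

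There are two ways to repair the statement:
\begin{itemize}
\item replace $C$ by $C':=\ell_k(\Delta F\,v_1)$, i.e.\ left eigenvector against right eigenvector, as in Lidskii's theory; or
\item keep $C$ and reverse the chain to $F_0v_j=\lambda_0v_j+v_{j+1}$.
\end{itemize}
With either change, your Steps 1--4 go through essentially verbatim. Your bookkeeping then gives $\delta^k=C'\varepsilon$ exactly, matching the stated form $\lambda_0+C'^{1/k}\omega_j\varepsilon^{1/k}$. The paper, by contrast, reaches $\delta^k=(-1)^{k-1}C\varepsilon$ and absorbs $((-1)^{k-1})^{1/k}$ into a $k$-th root of unity, which is illegitimate for even $k$.

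A minor slip: you claim $c_{0b}=0$ for $b\ge 2$, but these coefficients need not vanish (for $k=2$ one involves $\det\widetilde E$). This is harmless, since those points lie above the Newton segment.
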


\begin{proof}
We prove the theorem in several rigorous steps.

\paragraph{Step 1: Restriction to the generalized eigenspace.}
Since the eigenvalues near $\lambda_0$ are determined by the restriction of 
$F_\varepsilon$ to $\mathcal{G}_{\lambda_0}$, we consider $F_\varepsilon|_{\mathcal{G}_{\lambda_0}}$. 
In the Jordan basis $\{v_1,\dots,v_k\}$, write
\[
F_\varepsilon = \lambda_0 I + N + \varepsilon E,
\]
where $E$ is the matrix representation of $\Delta F$ restricted to $\mathcal{G}_{\lambda_0}$.

\paragraph{Step 2: The characteristic polynomial.}
Let $\mu$ be an eigenvalue near $\lambda_0$ and set $\delta = \mu - \lambda_0$. 
The characteristic polynomial of $F_\varepsilon|_{\mathcal{G}_{\lambda_0}}$ is
\[
p_\varepsilon(\mu) = \det\bigl( (\lambda_0 - \mu)I + N + \varepsilon E \bigr) = (-1)^k \det\bigl( \delta I - N - \varepsilon E \bigr).
\]

Since $N$ is nilpotent, $\det(\delta I - N) = \delta^k$. Expanding the determinant:
\[
\det(\delta I - N - \varepsilon E) = \delta^k - \varepsilon \cdot \operatorname{Tr}\bigl(\operatorname{adj}(\delta I - N) E\bigr) + O(\varepsilon^2),
\]
where $\operatorname{adj}$ denotes the adjugate matrix. The $(1,k)$-cofactor of 
$\delta I - N$ is $(-1)^{k-1}$ (independent of $\delta$). By construction, 
the $(k,1)$ entry of $E$ in the Jordan basis equals $\ell_1(E v_k) = \ell_1(\Delta F v_k) = C$.
Hence,
\[
\operatorname{Tr}\bigl(\operatorname{adj}(\delta I - N) E\bigr) = (-1)^{k-1} C + O(\delta).
\]

Therefore, the characteristic equation $p_\varepsilon(\mu) = 0$ becomes
\[
\delta^k = \varepsilon \cdot (-1)^{k-1} C + O(\varepsilon^2, \varepsilon \delta, \delta^{k+1}).
\]

\paragraph{Step 3: Puiseux expansion.}
For $C \neq 0$, the leading terms $\delta^k$ and $\varepsilon \cdot (-1)^{k-1} C$ 
balance. By the Newton polygon method (or the Weierstrass preparation theorem 
for analytic functions), the solutions admit a convergent Puiseux series:
\[
\delta_j(\varepsilon) = (\gamma \varepsilon)^{1/k} \omega_j + \sum_{m=2}^\infty c_{j,m} \varepsilon^{m/k},
\]
where $\gamma = (-1)^{k-1} C$ and $\omega_j^k = 1$. The leading term is
\[
\delta_j(\varepsilon) = C^{1/k} \varepsilon^{1/k} \omega_j' + o(\varepsilon^{1/k}),
\]
where $\omega_j' = ((-1)^{k-1})^{1/k} \omega_j$ is again a $k$-th root of unity.

\paragraph{Step 4: Eigenvalue sensitivity.}
Differentiating the leading-order relation $\delta_j(\varepsilon)^k = \gamma \varepsilon + o(\varepsilon)$ 
with respect to $\varepsilon$:
\[
k \delta_j(\varepsilon)^{k-1} \frac{d\delta_j}{d\varepsilon} = \gamma + o(1).
\]

Substituting $\delta_j(\varepsilon) = (\gamma \varepsilon)^{1/k} \omega_j + o(\varepsilon^{1/k})$:
\[
\frac{d\lambda_j}{d\varepsilon} = \frac{d\delta_j}{d\varepsilon}
= \frac{\gamma}{k \delta_j^{k-1}} + o(\varepsilon^{-(k-1)/k})
= \frac{C^{1/k} \omega_j'}{k} \varepsilon^{-(k-1)/k} + o(\varepsilon^{-(k-1)/k}).
\]

Taking absolute values and using $|\omega_j'| = 1$:
\[
\left|\frac{d\lambda_j}{d\varepsilon}\right| \sim \frac{|C|^{1/k}}{k} |\varepsilon|^{-(k-1)/k} \quad \text{as } \varepsilon \to 0.
\]

This completes the proof.
\end{proof}

This fractional-power instability is a hallmark of non-Hermitian spectral dynamics. 
In the SOC framework, such behavior is associated with interface residues that fail to decay sufficiently fast, leading to non-diagonalizable composite structures.

\subsection*{Interface Residue Interpretation}

In operadic networks, exceptional points may emerge from interface interactions between otherwise stable subnetworks. 
The residue invariant
\[
\Sigma^{\mathrm{res}}
\]
records the failure of exact spectral decoupling across interfaces and can become large when the composite operator approaches an exceptional configuration.

\begin{proposition}[Interface Residues as Exceptional-Point Diagnostics]
\label{prop:residue-ep-diagnostic}
Let $\mathcal N = \mathcal N_2 \circ_I \mathcal N_1$ be a two-layer non-Hermitian operadic network, and suppose that the interface coupling induces an effective composite operator $H_\varepsilon$ depending analytically on $\varepsilon$. 

If $H_0$ has a Jordan block of size $k$ at $\lambda_0$, then:
\begin{itemize}
    \item The resolvent of $H_0$ has a pole of order $k$ at $\lambda_0$,
    \item Generic perturbations produce eigenvalue splitting of order $\varepsilon^{1/k}$,
    \item The eigenvalue sensitivity scales as $\varepsilon^{-(k-1)/k}$.
\end{itemize}

In the SOC description, the interface residue $\Sigma^{\mathrm{res}}$ records the non-decoupled interface contribution responsible for this defective component. Thus large or singular residue terms may serve as \emph{diagnostics} for exceptional-point formation, but residue growth alone is not sufficient to prove an exceptional point without verifying Jordan-block formation (i.e., eigenvalue coalescence and loss of eigenvector dimension).
\end{proposition}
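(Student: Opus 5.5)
The plan is to reduce all three bullet points to finite-dimensional statements about the composite operator $H_\varepsilon$ restricted to the generalized eigenspace $\mathcal{G}_{\lambda_0}$. The residue-diagnostic sentence will then be treated as an interpretive claim, justified by the Interface Localization Theorem (SOC III, Theorem 4) together with a counterexample showing that residue growth alone is insufficient. The whole proof therefore reduces to the three analytic bullets plus one remark-level argument.

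\textbf{First bullet (pole of order $k$).} Fix a Jordan basis of $\mathcal{G}_{\lambda_0}$ as in Lemma~\ref{lem:ep-perturbation-decomposition}, so that $H_0|_{\mathcal{G}_{\lambda_0}} = \lambda_0 I + N$ with $N^k=0$ and $N^{k-1}\neq 0$. On this subspace I will write the finite Neumann expansion
\[
(z-H_0)^{-1}|_{\mathcal{G}_{\lambda_0}} = \sum_{j=0}^{k-1} \frac{N^j}{(z-\lambda_0)^{j+1}} .
\]
On the complementary invariant subspace the resolvent is holomorphic near $\lambda_0$. Because $N^{k-1}\neq 0$, the coefficient of $(z-\lambda_0)^{-k}$ is nonzero, so the pole has order exactly $k$.

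\textbf{Second and third bullets (splitting and sensitivity).} These follow by invoking Theorem~\ref{thm:fractional-splitting} (equivalently Lemma~\ref{lem:ep-perturbation-decomposition}) with $F_0 = H_0$ and $\Delta F = \partial_\varepsilon H_\varepsilon|_{\varepsilon=0}$. Two points need care here:
\begin{itemize}
\item I will interpret ``generic'' as the condition $C=\ell_1(\Delta F\, v_k)\neq 0$.
\item I will check that the $O(\varepsilon^2)$ terms coming from analyticity of $H_\varepsilon$ do not disturb the Newton polygon. The leading balance $\delta^k \sim \varepsilon$ is unchanged, because the higher terms only contribute monomials $\varepsilon^a\delta^b$ lying above the segment joining $(0,1)$ and $(k,0)$.
\end{itemize}
This yields $\lambda_0 + C^{1/k}\omega_j\varepsilon^{1/k} + o(\varepsilon^{1/k})$, and differentiating the Puiseux series gives the $\varepsilon^{-(k-1)/k}$ sensitivity. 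If the proposition is read to include a Jordan block that is not single, I will restrict attention to a largest block and use the same argument with the generic condition placed on it.

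\textbf{Residue statement.} Here I expect the main obstacle, since $\Sigma^{\mathrm{res}}$ has no quantitative definition in this setting. I plan to argue as follows. By Theorem~\ref{thm:spectral-propagation}, any spectral content of $H_0$ not present in $\sigma_P$ of $\mathcal N_1$ or $\mathcal N_2$ lies in $\Sigma^{\mathrm{res}}$. If each layer operator is diagonalizable, then the nilpotent part $N$ cannot be inherited from either layer alone, so it must be attributed to the interface component $\mathcal{L}_I$. This is the ``diagnostic'' direction.

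For non-sufficiency, I will exhibit the two-layer linear example $H_\varepsilon=\begin{pmatrix} a & \varepsilon^{-1} \\ 0 & b\end{pmatrix}$ with $a\neq b$. Its off-diagonal interface coupling, and hence any residue measured by norm, grows without bound, yet the matrix stays diagonalizable with no coalescence. This shows that one must verify Jordan-block formation separately. I will state explicitly that this last part is interpretive, relative to SOC III, rather than a theorem-level deduction.
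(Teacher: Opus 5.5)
Your skeleton is the same as the paper's: the finite Neumann expansion for the pole, an appeal to Theorem~\ref{thm:fractional-splitting} with $E=\Delta F=\partial_\varepsilon H_\varepsilon|_{\varepsilon=0}$ for the splitting and the sensitivity, and an interpretive residue paragraph followed by a caution. Your explicit Newton-polygon bookkeeping and your concrete non-sufficiency example go beyond the paper, whose Parts~2 and~5 are only sketched.

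\textbf{The gap.} The step that bullets two and three rest on fails as written, because the genericity condition $C=\ell_1(E\,v_k)\neq 0$ picks the wrong matrix entry. In the chain you fix ($H_0v_1=\lambda_0v_1$, $H_0v_j=\lambda_0v_j+v_{j-1}$) one has $\operatorname{adj}(\delta I-N)=\sum_{m=0}^{k-1}\delta^{k-1-m}N^m$. Hence
\[
\det(\delta I-N-\varepsilon E)=\delta^k-\varepsilon\sum_{m=0}^{k-1}\delta^{k-1-m}\operatorname{tr}(N^mE)+O(\varepsilon^2).
\]
So the coefficient at the vertex $(0,1)$ of your Newton polygon is $-\operatorname{tr}(N^{k-1}E)=-\ell_k(E\,v_1)$, since $N^{k-1}x=\ell_k(x)\,v_1$.

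\begin{itemize}
\item This coefficient pairs the left eigenvector $\ell_k$ (note $\ell_k\circ N=0$) with $E$ applied to the right eigenvector $v_1$; it is the Lidskii condition.
\item Your entry $\ell_1(E\,v_k)$ does not occur among the $\operatorname{tr}(N^mE)$ at all. Moreover $\ell_1$ is not a left eigenvector: for $k\ge2$ no left eigenvector can satisfy $\ell(v_1)=1$.
\item Concretely, for $k=2$ the family $\begin{pmatrix}\lambda_0&1+\varepsilon\\0&\lambda_0\end{pmatrix}$ has $\ell_1(E\,v_2)=1\neq0$ and no splitting whatsoever.
\item By contrast, $\begin{pmatrix}\lambda_0&1\\\varepsilon&\lambda_0\end{pmatrix}$ (as in Example~\ref{ex:two-level-ep}) has $\ell_2(E\,v_1)=1$ and splits as $\lambda_0\pm\sqrt{\varepsilon}$.
\end{itemize}

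The paper's proof uses the same $C=\ell_1(\Delta H\,r_k)$. It is inherited from Lemma~\ref{lem:ep-perturbation-decomposition} and Theorem~\ref{thm:fractional-splitting}, whose displayed $\delta I-N$ has its $-1$'s below the diagonal. That is the transposed chain convention, under which $\ell_1(E\,v_k)$ would be the correct entry. By citing those results you import the slip.

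The fix is to restate genericity as $\ell_k(E\,v_1)\neq0$. Your Newton-polygon argument then goes through verbatim. Note also that $\mathcal{G}_{\lambda_0}$ is not $H_\varepsilon$-invariant. However, its coupling to the complementary spectral subspace enters the reduced determinant only at order $\varepsilon^2$ through a Schur complement, so it too lies above your segment.

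\textbf{Two smaller points.}
\begin{itemize}
\item Your fallback for several Jordan blocks is not ``the same argument.'' The first bullet holds only with $k$ the largest block size. When several blocks share that size, the $\varepsilon^{1/k}$ law requires nonsingularity of the matrix $\bigl[\ell^{(i)}_k(E\,v^{(j)}_1)\bigr]_{i,j}$ over those blocks (Lidskii--Vishik--Lyusternik), not a condition on one block. Either assume a single block, as the paper's proof does, or cite that theorem.
\item Attributing $N$ to $\mathcal{L}_I$ via Theorem~\ref{thm:spectral-propagation} does not actually follow. That decomposition is stated for supports and cannot see a nilpotent part at an eigenvalue already lying in a layer's spectrum. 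This is harmless because you, like the paper, label the residue sentence as interpretation, but do not present it as a deduction from that theorem.
\end{itemize}
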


\begin{proof}
Since the effective composite operator $H_\varepsilon$ depends analytically on
$\varepsilon$, it suffices to analyze the local spectral behavior of $H_\varepsilon$
near the eigenvalue $\lambda_0$ of $H_0$. Let $\mathcal{G}_{\lambda_0}$ denote the
generalized eigenspace associated with $\lambda_0$. By assumption, the restriction
of $H_0$ to $\mathcal{G}_{\lambda_0}$ contains a \emph{single} Jordan block of size $k \ge 2$.
Thus, on the corresponding Jordan chain $\{v_1,\dots,v_k\}$ with
$H_0 v_1 = \lambda_0 v_1$ and $H_0 v_j = \lambda_0 v_j + v_{j-1}$ for $j \ge 2$,
we may write
\[
H_0|_{\mathcal{G}_{\lambda_0}} = \lambda_0 I + N, \qquad N^k = 0,\; N^{k-1} \neq 0.
\]

\paragraph{Part 1: Resolvent pole order.}
First, consider the resolvent of $H_0$ restricted to this Jordan block. For
$z \neq \lambda_0$,
\[
(zI - H_0)^{-1} = \bigl((z - \lambda_0)I - N\bigr)^{-1}.
\]
Using the nilpotency of $N$, we obtain the finite Neumann expansion
\[
\bigl((z - \lambda_0)I - N\bigr)^{-1} = \sum_{\ell=0}^{k-1} \frac{N^{\ell}}{(z - \lambda_0)^{\ell+1}}.
\]
Since $N^{k-1} \neq 0$, the highest nonzero term is
\[
\frac{N^{k-1}}{(z - \lambda_0)^k}.
\]
Therefore the resolvent has a pole of order $k$ at $z = \lambda_0$.

\paragraph{Part 2: Eigenvalue splitting under generic perturbation.}
Next, consider the analytic perturbation
\[
H_\varepsilon = H_0 + \varepsilon \Delta H + O(\varepsilon^2),
\]
where $\Delta H = \left.\frac{dH_\varepsilon}{d\varepsilon}\right|_{\varepsilon=0}$.
Write the perturbation in the Jordan basis. Let $\ell_1$ be the left eigenvector
(dual basis covector) normalized so that $\ell_1(v_1) = 1$, and let $r_k = v_k$
be the highest generalized eigenvector in the Jordan chain. The generic
perturbation condition that the perturbation couples to the highest nilpotent
level is precisely
\[
C := \ell_1(\Delta H \, r_k) \neq 0.
\]

Under this condition, the characteristic equation for eigenvalues
$\lambda = \lambda_0 + \delta$ bifurcating from $\lambda_0$ takes the local form
\[
\delta^k = \varepsilon \cdot (-1)^{k-1} C + o(\varepsilon),
\]
where the $o(\varepsilon)$ term includes contributions of order $\varepsilon^2$,
$\varepsilon \delta$, and $\delta^{k+1}$. (For a detailed derivation, see
Theorem~\ref{thm:fractional-splitting}.)

Solving this local characteristic equation gives the Puiseux expansions
\[
\lambda_j(\varepsilon) = \lambda_0 + C^{1/k} \omega_j \varepsilon^{1/k} + o(\varepsilon^{1/k}), \qquad j = 1,\dots,k,
\]
where $\omega_j^k = 1$ are the $k$ branches of the $k$-th roots of unity, and
$C^{1/k}$ denotes a fixed branch. Hence generic perturbations split the
exceptional point at order $\varepsilon^{1/k}$.

\paragraph{Part 3: Eigenvalue sensitivity scaling.}
Differentiating the leading-order relation
\[
\lambda_j(\varepsilon) - \lambda_0 = C^{1/k} \omega_j \varepsilon^{1/k} + o(\varepsilon^{1/k})
\]
with respect to $\varepsilon$ yields
\[
\frac{d\lambda_j}{d\varepsilon} = \frac{1}{k} C^{1/k} \omega_j \varepsilon^{1/k - 1} + o\bigl(\varepsilon^{1/k - 1}\bigr).
\]
Taking absolute values and using $|\omega_j| = 1$,
\[
\left|\frac{d\lambda_j}{d\varepsilon}\right| \sim \frac{|C|^{1/k}}{k} |\varepsilon|^{-(k-1)/k},
\]
which proves the claimed eigenvalue sensitivity scaling.

\paragraph{Part 4: Interpretation as interface residue diagnostic.}
It remains to interpret these facts in the SOC framework. In the two-layer
operadic composition
\[
\mathcal N = \mathcal N_2 \circ_I \mathcal N_1,
\]
the interface residue $\Sigma^{\mathrm{res}}$ records the part of the composite
spectral behavior that is not obtained by simply transporting and composing the
decoupled spectra of the two layers (see Theorem~\ref{thm:spectral-propagation}
and SOC III, Theorem~4). Thus, when the interface coupling
creates or enhances a nilpotent component in the effective operator $H_0$, that
non-decoupled contribution is encoded in the residue term. Such a residue may
therefore signal the presence of defective spectral behavior and may serve as a
diagnostic for possible exceptional-point formation.

Specifically:
\begin{itemize}
    \item The resolvent pole order $k$ indicates that the interface residue
          $\mathcal{L}_I$ carries nilpotent structure of depth $k$.
    \item The $\varepsilon^{1/k}$ splitting of eigenvalues manifests as a
          singular scaling in the interface residue:
          \[
          \|\mathcal{L}_I(\varepsilon)\| \sim |\varepsilon|^{-(k-1)/k}
          \]
          when the perturbation is taken as a variation of the interface coupling.
\end{itemize}

\paragraph{Part 5: Caution on diagnostics.}
However, residue growth alone does not imply the existence of an exceptional
point. An exceptional point requires both eigenvalue coalescence and a loss of
eigenvector dimension, equivalently the formation of a nontrivial Jordan block.
Large or singular residue terms may also arise from other singularities
(e.g., essential singularities, branch points not associated with Jordan
structure). Thus $\Sigma^{\mathrm{res}}$ serves as a \emph{diagnostic indicator}
of possible exceptional-point behavior, while the actual exceptional point must
still be confirmed by verifying:
\begin{enumerate}
    \item eigenvalue coalescence as $\varepsilon \to 0$,
    \item the geometric multiplicity of $\lambda_0$ is strictly less than its
          algebraic multiplicity,
    \item the resolvent has a pole of order $k \ge 2$ at $\lambda_0$.
\end{enumerate}

This completes the proof.
\end{proof}

\begin{remark}
The presence of an exceptional point is associated with singular behavior in the interface residue. Specifically, if the composite operator develops a Jordan block of size $k$, the associated spectral projection localized at the interface exhibits norm scaling $\sim \varepsilon^{-(k-1)/k}$ in the perturbation parameter $\varepsilon$. This scaling can be interpreted as an accumulation of the interface residue $\Sigma^{\mathrm{res}}$ in the SOC framework. However, the converse — that residue divergence implies an exceptional point — does not hold without additional structural conditions.
\end{remark}

Consequently, large residue accumulation correlates with:
\begin{itemize}
    \item eigenvalue coalescence,
    \item non-diagonalizability,
    \item exceptional-point bifurcation,
    \item heightened parametric sensitivity.
\end{itemize}

This provides a geometric interpretation of non-Hermitian instability as an operadic residue phenomenon, where the splitting exponent $1/k$ reflects the nilpotent order of the interface coupling.

\subsection*{Detecting Exceptional Points via SOC Invariants}

The SOC framework provides several diagnostic tools for identifying exceptional points, though none individually gives a complete characterization in full generality.

\begin{corollary}[SOC Signatures of Exceptional Points]
\label{cor:ep-detection}
An exceptional point of order $k \ge 2$ may exhibit one or more of the following 
SOC signatures:
\begin{enumerate}
    \item \textbf{Divergence of eigenvalue sensitivity}:
    \[
    \left|\frac{d\lambda_j}{d\varepsilon}\right| \to \infty;
    \]

    \item \textbf{Divergence of the SOC condition number}:
    \[
    \kappa_{\mathrm{SOC}}(F) \to \infty;
    \]

    \item \textbf{Singular or rapidly growing interface residues}:
    \[
    \|\Sigma^{\mathrm{res}}\| \to \infty;
    \]

    \item \textbf{Vanishing sensitivity radius}:
    \[
    r_{\mathrm{SOC}}^{\mathrm{sens}} := \|\partial^{\mathrm{spec}}F\|^{-1} \to 0.
    \]
\end{enumerate}
These quantities provide diagnostic indicators of exceptional-point behavior, 
although none individually gives a complete characterization in full generality.
\end{corollary}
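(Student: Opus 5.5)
The statement lists possible signatures rather than asserting that all must occur, so the plan is to exhibit, for each of the four items, a mechanism by which an exceptional point of order $k\ge2$ produces it. The statement carries no universal claim, so no converse needs proving. Throughout, work with the setting of Theorem~\ref{thm:fractional-splitting}: $F_\varepsilon = F_0+\varepsilon\Delta F$ with a single Jordan block $\lambda_0 I+N$, $N^{k-1}\neq0$, and the generic coupling $C=\ell_1(\Delta F\,r_k)\neq0$.

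\textbf{Items (1) and (4).} Item (1) is immediate from Theorem~\ref{thm:fractional-splitting} (equivalently Lemma~\ref{lem:ep-perturbation-decomposition}): $|d\lambda_j/d\varepsilon|\sim |C|^{1/k}k^{-1}|\varepsilon|^{-(k-1)/k}\to\infty$. For item (4), identify $\partial^{\mathrm{spec}}F$ with the map sending parameter perturbations to eigenvalue shifts, as in the discussion following Theorem~\ref{thm:ep-sensitivity}. Its norm is then bounded below by $\max_j|d\lambda_j/d\varepsilon|$, so $r^{\mathrm{sens}}_{\mathrm{SOC}}=\|\partial^{\mathrm{spec}}F\|^{-1}\lesssim k|C|^{-1/k}|\varepsilon|^{(k-1)/k}\to0$.

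\textbf{Item (2).} Since $\kappa_{\mathrm{SOC}}=\sum_{k'\ge1}\|\partial^{\mathrm{spec}}_{k'}F\|\ge\|\partial^{\mathrm{spec}}_1F\|$ by Definition~\ref{def:soc_condition_number}, and every term is nonnegative, divergence of the first term from item (4) forces $\kappa_{\mathrm{SOC}}\to\infty$. If the series fails to converge, the conclusion holds trivially in the extended sense.

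\textbf{Item (3).} This is the main obstacle, because $\Sigma^{\mathrm{res}}$ has no intrinsic norm in the paper. The plan is to restrict to the case where the EP arises from an interface coupling, as in Proposition~\ref{prop:residue-ep-diagnostic}, and to take the residue norm to be the norm of the interface-localized spectral data. Two tools support this. First, Part 1 of that proposition shows that the resolvent has a pole of order $k$, giving the term $N^{k-1}/(z-\lambda_0)^k$. Second, Part 4 and the subsequent remark give $\|\mathcal{L}_I(\varepsilon)\|\sim|\varepsilon|^{-(k-1)/k}$. Because $\Sigma^{\mathrm{res}}\cong\coprod_I\mathcal{L}_I$, its norm dominates each component, which yields divergence.

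Finally, I would stress the wording ``may exhibit'': item (3) holds only under this interface-origin hypothesis, and items (1), (2) and (4) require the genericity $C\ne0$. For a nongeneric perturbation, for instance one with $C=0$ but coupling at a lower level, the splitting exponent changes, although divergence still holds whenever the exponent is below $1$. This justifies the closing caveat of the corollary, and a short counterexample with $C=0$ and $\Delta F$ commuting with $N$ would illustrate it.
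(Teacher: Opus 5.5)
Your proposal follows the paper's overall route. Item (1) comes from Theorem~\ref{thm:fractional-splitting}, and item (3) from the interface scaling $\|\mathcal{L}_I(\varepsilon)\|\sim|\varepsilon|^{-(k-1)/k}$ of Proposition~\ref{prop:residue-ep-diagnostic}. Items (2) and (4) both rest on $\|\partial^{\mathrm{spec}}F\|\to\infty$, using the lower bound $\kappa_{\mathrm{SOC}}\ge\|\partial^{\mathrm{spec}}_1F\|$ to pass from one to the other.

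The one substantive difference is how you get that divergence. The paper proves item (2) first via $\|\partial^{\mathrm{spec}}F_\varepsilon\|\ge\max_j|\lambda_j(\varepsilon)|$ and then simply asserts the order $|\varepsilon|^{-(k-1)/k}$, deducing (4) from (2). That spectral-radius bound cannot produce divergence, since $\lambda_j(\varepsilon)\to\lambda_0$ stays bounded. You instead prove (4) first by reading $\partial^{\mathrm{spec}}F$ as the parameter-to-eigenvalue sensitivity map, the reading the paper itself adopts after Theorem~\ref{thm:ep-sensitivity}. Its norm then dominates $\max_j|d\lambda_j/d\varepsilon|$, so item (1) transfers directly, and this is the more coherent argument.

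You should state that reading as an explicit hypothesis, though. Under the paper's convention $\partial^{\mathrm{spec}}T=T$ for linear $T$ (Example~\ref{ex:classical_condition_number}), one would get $\|\partial^{\mathrm{spec}}F_\varepsilon\|=\|F_0+\varepsilon\Delta F\|$, which stays bounded. So (2) and (4) do not follow from the paper's definitions of $\partial^{\mathrm{spec}}$ and $\kappa_{\mathrm{SOC}}$ alone. Your closing nongeneric example is a useful complement: the paper's caveats only address the converse direction, where signatures occur without an exceptional point.

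There is also a slip you inherit from Theorem~\ref{thm:fractional-splitting}. With the chain convention $F_0v_j=\lambda_0 v_j+v_{j-1}$, the right eigenvector is $v_1$ and the genuine left eigenvector is $\ell_k$, so the nondegeneracy condition should be $\ell_k(\Delta F\,v_1)\neq0$. As written, $C=\ell_1(\Delta F\,v_k)$ can be nonzero with no splitting at all. For example, take $k=2$ and $\Delta F=N$: then $C=1$, but $F_\varepsilon=\lambda_0 I+(1+\varepsilon)N$ on $\mathcal{G}_{\lambda_0}$ keeps the single eigenvalue $\lambda_0$.
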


\begin{proof}
We prove each signature under the assumption that $F_\varepsilon$ is a family of 
operators (or, more generally, spectrally analytic propagation maps) depending 
analytically on $\varepsilon$, with an exceptional point of order $k \ge 2$ at 
$\varepsilon = 0$. Specifically, assume that $F_0$ has a single Jordan block of 
size $k$ associated with eigenvalue $\lambda_0$, and that the perturbation is 
generic in the sense that $C = \ell_1(\Delta F \, r_k) \neq 0$ (see 
Theorem~\ref{thm:fractional-splitting} for the precise conditions).

\paragraph{Signature 1: Divergence of eigenvalue sensitivity.}
By the fractional power splitting law (Theorem~\ref{thm:fractional-splitting}), 
the eigenvalues bifurcating from $\lambda_0$ satisfy
\[
\lambda_j(\varepsilon) = \lambda_0 + C^{1/k} \omega_j \varepsilon^{1/k} + o(\varepsilon^{1/k}),
\]
and their derivatives scale as
\[
\left|\frac{d\lambda_j}{d\varepsilon}\right| \sim \frac{|C|^{1/k}}{k} |\varepsilon|^{-(k-1)/k}.
\]
Since $k \ge 2$, the exponent $-(k-1)/k \le -1/2 < 0$, so
\[
\lim_{\varepsilon \to 0} \left|\frac{d\lambda_j}{d\varepsilon}\right| = \infty.
\]

\paragraph{Signature 2: Divergence of the SOC condition number.}
Recall from Definition~\ref{def:soc_condition_number} that the SOC condition 
number is defined by
\[
\kappa_{\mathrm{SOC}}(F_\varepsilon) = \sum_{m=1}^{\infty} \left\|
\partial_m^{\mathrm{spec}} F_\varepsilon
\right\|,
\]
where convergence is understood within the radius of spectral analyticity. 
For an exceptional point, the first spectral derivative $\partial^{\mathrm{spec}} F_\varepsilon$ 
has eigenvalues $\lambda_j(\varepsilon)$ whose sensitivity diverges. In finite 
dimensions, or more generally when the operator norm is controlled by the 
spectral radius of the linearization,
\[
\|\partial^{\mathrm{spec}} F_\varepsilon\| \ge \max_j |\lambda_j(\varepsilon)|.
\]
At $\varepsilon = 0$, the eigenvalues coalesce, and the norm of the derivative 
operator is at least of order $|\varepsilon|^{-(k-1)/k}$. Hence
\[
\lim_{\varepsilon \to 0} \kappa_{\mathrm{SOC}}(F_\varepsilon) = \infty,
\]
since even the first term $\|\partial^{\mathrm{spec}} F_\varepsilon\|$ diverges.

\paragraph{Signature 3: Singular or rapidly growing interface residues.}
In an operadic network, the interface residue $\Sigma^{\mathrm{res}}$ captures 
the spectral content generated purely by coupling between subsystems (SOC III, 
Theorem~4). When the composite operator $H_\varepsilon$ 
has an exceptional point, the nilpotent structure responsible for the Jordan 
block is encoded in the interface residue. Specifically, in a two-layer network 
$\mathcal{N} = \mathcal{N}_2 \circ_I \mathcal{N}_1$, the residue $\mathcal{L}_I$ 
inherits the singular scaling of the eigenvalue splitting:
\[
\|\mathcal{L}_I(\varepsilon)\| \sim |\varepsilon|^{-(k-1)/k}.
\]
Thus $\|\Sigma^{\mathrm{res}}\| \to \infty$ as $\varepsilon \to 0$. In more 
general networks with multiple interfaces, at least one interface-localized 
defect $\mathcal{L}_I$ exhibits this singular behavior.

\paragraph{Signature 4: Vanishing sensitivity radius.}
The SOC sensitivity radius is defined as the reciprocal of the norm of the 
first spectral derivative (see Definition~\ref{def:soc-stability-radius-feedback}):
\[
r_{\mathrm{SOC}}^{\mathrm{sens}}(F_\varepsilon) = \|\partial^{\mathrm{spec}} F_\varepsilon\|^{-1}.
\]
From Signature 2, $\|\partial^{\mathrm{spec}} F_\varepsilon\| \to \infty$ as 
$\varepsilon \to 0$, so
\[
\lim_{\varepsilon \to 0} r_{\mathrm{SOC}}^{\mathrm{sens}}(F_\varepsilon) = 0.
\]

\paragraph{Caution on completeness.}
While these signatures are strong indicators of an exceptional point, none 
provides a complete characterization in full generality. The following caveats 
apply:

\begin{enumerate}
    \item Divergence of eigenvalue sensitivity may also occur at essential 
          singularities or branch points that are not associated with Jordan 
          block formation.
    \item The SOC condition number may diverge due to other forms of spectral 
          instability (e.g., accumulation of eigenvalues at a limit point) 
          without a true exceptional point.
    \item Interface residues may grow due to other interface phenomena 
          (e.g., strong coupling without degeneracy, resonance effects) and 
          do not automatically imply Jordan-block formation.
    \item The sensitivity radius may vanish for non-normal operators even 
          without an exceptional point, since $\|\partial^{\mathrm{spec}} F_\varepsilon\|$ 
          can be large even when the spectral radius is small.
\end{enumerate}

Thus, these SOC signatures serve as \emph{diagnostic indicators} rather than 
\emph{definitive proofs}. A complete characterization of an exceptional point 
requires verifying:
\begin{itemize}
    \item eigenvalue coalescence as $\varepsilon \to 0$,
    \item the geometric multiplicity of the coalesced eigenvalue is strictly 
          less than its algebraic multiplicity,
    \item the resolvent has a pole of order $k \ge 2$ at the coalesced eigenvalue,
    \item the existence of a nontrivial Jordan block in the Jordan decomposition 
          of $F_0$.
\end{itemize}

This completes the proof.
\end{proof}

\begin{remark}
The signatures listed above are sufficient indicators and asymptotic diagnostics, not rigorous necessary conditions. For example:
\begin{itemize}
    \item Residue blow-up is not universally equivalent to EP formation;
    \item Derivative divergence may depend on parametrization;
    \item Condition-number divergence can occur without a true exceptional point.
\end{itemize}
A complete characterization requires verifying Jordan-block formation directly (eigenvalue coalescence, loss of eigenvector dimension, and non-diagonalizability).
\end{remark}

\subsection*{Examples}

\begin{example}[Two-Level Exceptional Point]
\label{ex:two-level-ep}
Consider the non-Hermitian matrix:
\[
H = \begin{pmatrix} 0 & 1 \\ \varepsilon & 0 \end{pmatrix}.
\]
The eigenvalues are $\lambda_{\pm} = \pm \sqrt{\varepsilon}$. At $\varepsilon = 0$, the eigenvalues coalesce, and $H$ becomes a Jordan block $J_2(0)$. The eigenvalue sensitivity is $d\lambda/d\varepsilon = \pm 1/(2\sqrt{\varepsilon})$, which diverges as $\varepsilon \to 0$. Consequently, the sensitivity radius scales as $r_{\mathrm{SOC}}^{\mathrm{sens}} \sim 2\sqrt{|\varepsilon|}$, vanishing at the EP.
\end{example}

\begin{example}[PT-Symmetric Non-Hermitian Network]
\label{ex:pt-symmetric}
A PT-symmetric system is described by:
\[
H = \begin{pmatrix} i\gamma & g \\ g & -i\gamma \end{pmatrix},
\]
where $\gamma$ is the gain/loss parameter. The eigenvalues are $\lambda = \pm \sqrt{g^2 - \gamma^2}$. At $\gamma = g$, the eigenvalues coalesce at $\lambda = 0$, forming an exceptional point. The spectral sensitivity with respect to $\gamma$ diverges as $\left|\frac{d\lambda}{d\gamma}\right| \sim 1/\sqrt{g - \gamma}$.
\end{example}

\begin{example}[Interface Residue in a PT-Symmetric Dimer]
\label{ex:pt-interface-residue}
Consider a two-node operadic network where each node is a $2\times 2$ matrix:
\[
A_1 = \begin{pmatrix} i\gamma & 0 \\ 0 & -i\gamma \end{pmatrix}, \quad
A_2 = \begin{pmatrix} i\gamma & 0 \\ 0 & -i\gamma \end{pmatrix},
\]
and the interface coupling between them is given by the off-diagonal matrix
\[
\tau = \begin{pmatrix} 0 & 1 \\ 1 & 0 \end{pmatrix}.
\]
Assembling the composite system yields the block operator (after applying the coupling):
\[
H = \begin{pmatrix} i\gamma & 1 \\ 1 & -i\gamma \end{pmatrix},
\]
which acts on $\mathbb{C}^2$. Here the two copies of $\mathbb{C}$ correspond to the two nodes.

The eigenvalues of $H$ are $\lambda_{\pm} = \pm\sqrt{1 - \gamma^2}$. At $\gamma = 1$, 
the eigenvalues coalesce at $\lambda = 0$, and the matrix becomes
\[
H(1) = \begin{pmatrix} i & 1 \\ 1 & -i \end{pmatrix},
\]
which is non-diagonalizable and contains a Jordan block $J_2(0)$. Thus $\gamma = 1$ 
is an exceptional point of order 2.

Near the exceptional point, set $\gamma = 1 + \varepsilon$ with $|\varepsilon| \ll 1$. 
The Puiseux expansion of the eigenvalues is:
\[
\lambda_{\pm}(1+\varepsilon) = \pm\sqrt{1 - (1+\varepsilon)^2} = \pm\sqrt{-2\varepsilon - \varepsilon^2}
= \pm\sqrt{-2\varepsilon}\;\left(1 + \frac{\varepsilon}{4} + O(\varepsilon^2)\right).
\]
The leading-order splitting $\Delta \lambda \sim \sqrt{\varepsilon}$ is characteristic 
of an exceptional point. The coefficient $\sqrt{2}$ (up to the branch cut) quantifies 
the singular sensitivity of the spectrum to perturbations.

By the Interface Localization Theorem (SOC III, Theorem~4), the interaction residue 
$\mathcal{L}_I$ localizes on the interface between the two nodes. For this PT-symmetric 
dimer, the residue captures the exceptional-point sensitivity. Concretely, the 
Puiseux coefficient $\sqrt{2}$ (or equivalently, the Jordan block structure) is an 
invariant of the interface defect. A coordinate-invariant measure of this sensitivity 
is given by the norm of the residue, which in this case satisfies $\|\mathcal{L}_I\| = \sqrt{2}$ 
(up to a factor depending on normalization).

Thus the SOC invariant $\mathcal{L}_I$ encodes the exceptional-point sensitivity 
without requiring a full eigenvalue analysis. This demonstrates how interface residues 
detect singular spectral phenomena arising from non-Hermitian coupling.
\end{example}

\begin{example}[Interface-Induced EP in Operadic Network]
\label{ex:interface-ep}
Consider a two-layer operadic network whose composite operator is identical to Example~\ref{ex:two-level-ep}. In the SOC description, the interface residue may inherit the same singular scaling behavior as the exceptional-point sensitivity, serving as a diagnostic indicator of the underlying Jordan structure. Example~\ref{ex:pt-interface-residue} provides a concrete computation of such a residue for a PT-symmetric dimer, illustrating the general principle.
\end{example}

\subsection*{Applications}

Non-Hermitian operadic propagation appears naturally in:
\begin{itemize}
    \item \textbf{Open quantum systems}: Decoherence and dissipation produce non-Hermitian effective Hamiltonians.
    \item \textbf{Dissipative wave propagation}: Absorption and amplification in waveguides.
    \item \textbf{Nonreciprocal photonic networks}: Circulators and isolators with broken time-reversal symmetry.
    \item \textbf{PT-symmetric operator systems}: Balanced gain and loss leading to real spectra below exceptional points.
    \item \textbf{Highly coupled feedback architectures}: Strong coupling in control systems can produce Jordan block structures.
\end{itemize}

In such systems, the SOC framework provides a unified language for tracking how nilpotent structure, interface residues, and spectral sensitivity jointly govern exceptional-point phenomena.

\subsection*{Summary}

Non-Hermitian networks demonstrate the relevance of the SOC framework for analyzing exceptional point phenomena:

\begin{itemize}
    \item Jordan blocks at interfaces produce exceptional points where eigenvalues coalesce and spectral sensitivity diverges.
    \item The spectral sensitivity follows a fractional-power law $\Delta \lambda \sim \varepsilon^{1/k}$.
    \item The sensitivity radius $r_{\mathrm{SOC}}^{\mathrm{sens}} = \|\partial^{\mathrm{spec}}F\|^{-1}$ vanishes at exceptional points.
    \item Interface residues $\Sigma^{\mathrm{res}}$ may serve as diagnostics, with large or singular values indicating proximity to an exceptional configuration.
    \item Pseudospectral growth provides an alternative detection method complementary to eigenvalue analysis.
    \item The spectral sensitivity may be formally decomposed into diagonal (analytic) and nilpotent (singular) contributions, though this decomposition is heuristic unless fully axiomatized.
\end{itemize}

\subsection{Base Change Examples}
\label{subsec:base-change-examples}

We now illustrate several important examples of admissible base changes. 
In each case, the Covariant Stability Theorem (Theorem~\ref{thm:covariant-stability}) guarantees that spectral propagation laws transform functorially and that stability properties remain representation-independent.

\subsection*{Semiclassical Correspondence: Classical $\to$ Quantum Systems}

Let
\[
\Phi_{\mathrm{sc}}
:
\mathcal{C}_{\mathrm{classical}}
\longrightarrow
\mathcal{C}_{\mathrm{quantum}}
\]
be a semiclassical quantization scheme defined on a suitable restricted class of observables.

\begin{definition}[Semiclassical Quantization Scheme]
\label{def:quantization-scheme}
Let $\mathcal{C}_{\mathrm{classical}}$ be a suitable category of Poisson algebras (classical observables) and $\mathcal{C}_{\mathrm{quantum}}$ a category of associative operator algebras (quantum observables). A \emph{semiclassical quantization scheme} $\Phi_{\mathrm{sc}}$ is an admissible quantization procedure defined on a restricted class of observables, satisfying the correspondence principle:
\[
\frac{1}{i\hbar} [\Phi_{\mathrm{sc}}(f), \Phi_{\mathrm{sc}}(g)] = \Phi_{\mathrm{sc}}(\{f,g\}) + O(\hbar),
\]
where the $O(\hbar)$ term captures higher-order corrections.
\end{definition}

\begin{remark}
A full functorial quantization from Poisson algebras to operator algebras is obstructed by the Groenewold–van Hove theorem. The scheme above is therefore understood as a \emph{partial} or \emph{asymptotic} correspondence, valid on a restricted class of observables and in the semiclassical limit $\hbar \to 0$.
\end{remark}

Under admissible semiclassical quantization, the propagated spectral structure exhibits asymptotic compatibility:
\[
R(\Phi_{\mathrm{sc}}(\mathcal{N}))
\cong
(\Phi_{\mathrm{sc}})_*(R(\mathcal{N}))
\quad\text{as}\quad \hbar \to 0.
\]

\begin{proposition}[Semiclassical Covariance of Spectral Propagation]
\label{prop:quantization-covariance}
Suppose a semiclassical quantization scheme
\[
\Phi_{\mathrm{sc}}
:
\mathcal{C}_{\mathrm{classical}}
\to
\mathcal{C}_{\mathrm{quantum}}
\]
is defined on a suitable class of observables and satisfies the correspondence principle
\[
\frac{1}{i\hbar}
[\Phi_{\mathrm{sc}}(f),
\Phi_{\mathrm{sc}}(g)]
=
\Phi_{\mathrm{sc}}(\{f,g\})
+
O(\hbar),
\]
where $[\cdot,\cdot]$ denotes the commutator on $\mathcal{C}_{\mathrm{quantum}}$, 
$\{\cdot,\cdot\}$ denotes the Poisson bracket on $\mathcal{C}_{\mathrm{classical}}$, 
and the $O(\hbar)$ term is understood in the sense of formal power series in 
$\hbar$ or as a norm estimate in a suitable operator topology.

Then the associated spectral propagation structures are asymptotically compatible 
in the semiclassical limit $\hbar \to 0$. In particular, semiclassical stability 
properties may persist perturbatively under quantization when the corresponding 
operator families converge continuously in the appropriate topology.
\end{proposition}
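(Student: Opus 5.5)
The plan is to read the proposition as a perturbative version of the Covariant Stability Theorem (Theorem~\ref{thm:covariant-stability}). The base-change functor is replaced by the semiclassical scheme $\Phi_{\mathrm{sc}}$, and every exact isomorphism in that theorem is replaced by an isomorphism modulo $O(\hbar)$. I will treat $\Phi_{\mathrm{sc}}$ as a family $\Phi_\hbar$ that is ``admissible up to $O(\hbar)$''. Concretely, it is strong monoidal and preserves the operadic structure maps up to an error controlled in the chosen operator topology, and the correspondence principle is exactly the statement that the Poisson operadic composition is intertwined with the commutator composition to first order. The goal is then to show that each ingredient of the propagation law survives with $O(\hbar)$ error: node spectra $\sigma_P(A_v)$, spectral derivatives $\partial^{\mathrm{spec}}\tau_e$, and residues $\mathcal{L}_I$. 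After that, I conclude via the Spectral Propagation Theorem (Theorem~\ref{thm:spectral-propagation}) and the Stability Bound (Theorem~\ref{thm:stability_bound}).

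The steps are as follows.

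\emph{First}, for a single node, compare $\sigma_{\Phi_{\mathrm{sc}}(P)}(\Phi_{\mathrm{sc}}(A_v))$ with $(\Phi_{\mathrm{sc}})_*\sigma_P(A_v)$. The structure maps of $\Phi_{\mathrm{sc}}(A_v)$ differ from $\Phi_{\mathrm{sc}}$ applied to those of $A_v$ by $O(\hbar)$, by the correspondence principle. I then invoke continuity of $\sigma_P$ in the structure data, in the form of perturbative locality (A2), to get agreement up to $O(\hbar)$. In the limit $\hbar\to0$ this recovers the Base Change Theorem (SOC I, Theorem 8).

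\emph{Second}, for the edges, differentiate. Because the $O(\hbar)$ remainder is assumed uniform in a neighbourhood, the spectral Taylor expansion (SOC II, Theorem 7) gives $\partial^{\mathrm{spec}}\Phi_{\mathrm{sc}}(\tau_e)=(\Phi_{\mathrm{sc}})_*\partial^{\mathrm{spec}}\tau_e+O(\hbar)$. The chain rule (SOC II, Theorem 10) then propagates this along paths, with the error growing at most linearly in path length for a finite acyclic network.

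\emph{Third}, for feedback cycles, use Theorem~\ref{thm:fixed-point-contractive}(3). If the classical cycle operators are contractive with constant $\alpha<1$, then for small $\hbar$ the quantized ones are too, with constant $\alpha+O(\hbar)<1$. Fixed points therefore move by at most $O(\hbar)/(1-\alpha)$.

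\emph{Finally}, assemble the network through the topological-order construction of Theorem~\ref{thm:network-evaluation}. With finitely many nodes and edges the errors sum to $O(\hbar)$. For the stability claim, note that the spectral radius is upper semicontinuous under norm-convergent families. Hence $\rho(\partial^{\mathrm{spec}}\mathcal{E}_{\mathcal{N}})<1$ persists as $\rho(\partial^{\mathrm{spec}}\mathcal{E}_{\Phi_{\mathrm{sc}}(\mathcal{N})})<1$ for $\hbar$ sufficiently small, provided the operator families converge continuously. This is exactly the hypothesis in the statement, and it is why the conclusion is only ``may persist perturbatively''.

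The main obstacle is the residue term $\Sigma^{\mathrm{res}}$. It is defined as a complement of supports, and supports are not continuous under perturbation: new interface-localized points can appear or vanish. My first attempt would be to bypass set-theoretic supports. I would work with the localized pieces $\mathcal{L}_I$ of SOC III, Theorem~4, as elements of a normed residue space, as in Definition~\ref{def:cumulative-residue}, and prove $\|\mathcal{L}_{\Phi_{\mathrm{sc}}(I)}-(\Phi_{\mathrm{sc}})_*\mathcal{L}_I\|=O(\hbar)$ from the fact that $\Phi_{\mathrm{sc}}$ preserves interfaces to first order. If that fails, I would weaken ``asymptotic compatibility'' to upper semicontinuity of supports: the quantum support lies in an $O(\hbar)$-neighbourhood of the classical one. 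That weaker form still suffices for the stability consequence.
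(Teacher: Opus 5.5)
Your route differs from the paper's. The paper works with a single propagation map $F$:
\begin{itemize}
\item it sets $F_{\mathrm{quantum}}=\Phi_{\mathrm{sc}}\circ F\circ\Phi_{\mathrm{sc}}^{-1}$;
\item it uses the Moyal star-product expansion to get $\partial^{\mathrm{spec}}F_{\mathrm{quantum}}\circ\Phi_{\mathrm{sc}}=\Phi_{\mathrm{sc}}\circ\partial^{\mathrm{spec}}F+O(\hbar)$;
\item it concludes $\rho(\partial^{\mathrm{spec}}F_{\mathrm{quantum}})<1$ for small $\hbar$;
\item it passes to networks in one sentence by citing the Covariant Stability Theorem ``up to $O(\hbar)$''.
\end{itemize}
Node spectra are handled by postulating that the spectrum of $\Phi_{\mathrm{sc}}(f)$ lies in an $\hbar$-neighbourhood of that of $f$, and residues enter only through that citation. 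You instead rebuild the construction of Theorem~\ref{thm:network-evaluation} ingredient by ingredient, with cycles controlled by Theorem~\ref{thm:fixed-point-contractive}(3). Two parts of your version are sounder than the paper's. First, deriving the derivative estimate from a uniform $O(\hbar)$ intertwining error plus analyticity (Cauchy estimates on a slightly smaller ball) needs no star product. Second, upper semicontinuity of $\rho$ is exactly what the stability clause needs. The paper instead asserts that $\rho$ is norm-continuous and that $\rho(\partial^{\mathrm{spec}}F_{\mathrm{quantum}})=\rho(\partial^{\mathrm{spec}}F)+O(\hbar)$, and both claims are false for non-normal operators.

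The step that fails is your first one. Axiom (A2) is a condition on a propagation rule $R$, and it measures perturbations by $\max_v\|\delta\sigma_P(A_v)\|$. It therefore presupposes control of how $\sigma_P$ moves. For a single node, where (A4) gives $R(\mathcal{N})\cong\sigma_P(A_v)$, it reduces to a tautology. It says nothing about how $\sigma_P(A)=\mathrm{Hoch}_{\mathcal{M}}(A)\otimes_P\mathcal{O}_P^{\mathrm{res}}$ responds to an $O(\hbar)$ change of structure maps, and nothing else in the paper does either; homological invariants of this type can jump under deformation. You must add this continuity as a hypothesis, for example the paper's spectral asymptotic condition or norm-resolvent convergence, rather than derive it. Your stability clause survives, since it uses only your Steps 2--3 and upper semicontinuity. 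The node-spectrum part of ``asymptotic compatibility'' is not established as written.

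Several of your rates and hypotheses are also overstated.
\begin{itemize}
\item Upper semicontinuity puts perturbed spectra in an $\varepsilon$-neighbourhood for small $\hbar$, which is an $o(1)$ statement, not $O(\hbar)$. At a Jordan block of size $k$ the displacement is of order $\hbar^{1/k}$, as in Theorem~\ref{thm:fractional-splitting}, and an $O(\hbar)$ Hausdorff bound needs normality.
\item $\Sigma^{\mathrm{res}}$ is a set difference of supports, and upper semicontinuity does not pass to differences. Quantum residue points can appear near classical node spectra wherever those shrink. Your fallback therefore controls the total support, which is enough for stability, but not the residue itself.
\item Along a path of length $k$ with factor norms at most $M$, the error is of order $kM^{k-1}\hbar$, not linear in $k$ unless $M\le1$. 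This is harmless on a finite DAG.
\item Being ``admissible up to $O(\hbar)$'' is an added hypothesis, not a consequence of the correspondence principle, which controls only the bracket and not the product. It must also include $\Phi_\hbar$ being uniformly bounded with uniformly bounded inverse. Without that you cannot pull the quantum operators back to one fixed space, and both Theorem~\ref{thm:fixed-point-contractive}(3) and upper semicontinuity require this.
\end{itemize}
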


\begin{proof}
We prove the proposition in several rigorous steps, building on the Covariant 
Stability Theorem (Theorem~\ref{thm:covariant-stability}) and the formal 
structure of deformation quantization.

\paragraph{Step 1: Setup and assumptions.}
Let $\mathcal{C}_{\mathrm{classical}}$ be a suitable category of Poisson algebras 
(classical observables) and let $\mathcal{C}_{\mathrm{quantum}}$ be a category of 
associative operator algebras (quantum observables) over the complex numbers, 
equipped with the commutator bracket $[A,B] = AB - BA$. The semiclassical 
quantization scheme $\Phi_{\mathrm{sc}}$ is a functor that maps classical 
observables to quantum operators. We assume the following:

\begin{enumerate}
    \item $\Phi_{\mathrm{sc}}$ is defined on a dense subcategory of observables 
          (e.g., polynomials in position and momentum) and extends continuously 
          to a larger class by completion.
    \item For each classical observable $f$, the operator $\Phi_{\mathrm{sc}}(f)$ 
          is self-adjoint (or at least normal) on a suitable Hilbert space.
    \item The correspondence principle holds as an asymptotic expansion in $\hbar$:
          \[
          \frac{1}{i\hbar} [\Phi_{\mathrm{sc}}(f), \Phi_{\mathrm{sc}}(g)] = \Phi_{\mathrm{sc}}(\{f,g\}) + \sum_{n=1}^{\infty} \hbar^n B_n(f,g),
          \]
          where $B_n$ are bidifferential operators determined by the chosen 
          quantization scheme (e.g., Moyal product for Weyl quantization). 
          For the purpose of this proof, we only need the leading-order 
          estimate:
          \[
          \left\| \frac{1}{i\hbar} [\Phi_{\mathrm{sc}}(f), \Phi_{\mathrm{sc}}(g)] - \Phi_{\mathrm{sc}}(\{f,g\}) \right\| \le C \hbar,
          \]
          for some constant $C$ depending on $f$ and $g$, and for all 
          sufficiently small $\hbar > 0$.
    \item The quantization map is compatible with the operadic structure of 
          the network, i.e., for any classical operadic composition, the 
          quantized composite is the composition of the quantized components 
          up to $O(\hbar)$.
\end{enumerate}

\paragraph{Step 2: Admissibility of $\Phi_{\mathrm{sc}}$ as a base change functor.}
We verify that $\Phi_{\mathrm{sc}}$ satisfies the conditions of an admissible 
base change functor (Definition~\ref{def:admissible-base-change}) asymptotically 
as $\hbar \to 0$.

\begin{enumerate}
    \item \textbf{Spectral analyticity preservation:} 
          If $A$ is a spectrally analytic $P$-algebra in $\mathcal{C}_{\mathrm{classical}}$, 
          then $\Phi_{\mathrm{sc}}(A)$ is a spectrally analytic $\Phi_{\mathrm{sc}}(P)$-algebra 
          in $\mathcal{C}_{\mathrm{quantum}}$ for sufficiently small $\hbar$, 
          because the $O(\hbar)$ corrections do not affect the radius of 
          convergence at leading order.
    
    \item \textbf{Cocontinuity:} 
          The quantization functor preserves colimits in the sense that 
          $\Phi_{\mathrm{sc}}(\lim A_i) \cong \lim \Phi_{\mathrm{sc}}(A_i)$ 
          up to $O(\hbar)$, which is sufficient for the asymptotic analysis.
    
    \item \textbf{Spectral radius invariance up to $O(\hbar)$:} 
          For any classical observable $f$, the spectrum of $\Phi_{\mathrm{sc}}(f)$ 
          is contained in an $\hbar$-neighborhood of the spectrum of $f$ 
          (viewed as a multiplication operator or as the range of $f$ on phase 
          space), provided the quantization satisfies the spectral asymptotic 
          condition (e.g., for Weyl quantization of sufficiently regular 
          symbols). Hence,
          \[
          \rho(\partial^{\mathrm{spec}}\Phi_{\mathrm{sc}}(F)) = \rho(\partial^{\mathrm{spec}}F) + O(\hbar),
          \]
          where the $O(\hbar)$ bound is uniform on compact sets of observables.
\end{enumerate}

\paragraph{Step 3: Moyal expansion of the spectral derivative.}
Let $F: \mathcal{C}_{\mathrm{classical}} \to \mathcal{C}_{\mathrm{classical}}$ be 
a classical propagation map (e.g., Hamiltonian flow, Poisson map). Under 
quantization, we obtain the quantum propagation map
\[
F_{\mathrm{quantum}} := \Phi_{\mathrm{sc}} \circ F \circ \Phi_{\mathrm{sc}}^{-1},
\]
defined up to $O(\hbar)$ due to the non-invertibility of $\Phi_{\mathrm{sc}}$ 
on the entire category. The spectral derivative $\partial^{\mathrm{spec}} F_{\mathrm{quantum}}$ 
satisfies the asymptotic expansion derived from the Moyal product.

Recall that for Weyl quantization, the Moyal product $\star$ satisfies
\[
f \star g = fg + \frac{i\hbar}{2} \{f,g\} + O(\hbar^2).
\]
More generally, for any quantization scheme satisfying the correspondence 
principle, the star product has the expansion
\[
f \star g = fg + \frac{i\hbar}{2} \{f,g\} + \sum_{n=2}^{\infty} \hbar^n B_n(f,g),
\]
where $B_n$ are bidifferential operators. Consequently, the quantum commutator 
satisfies
\[
[f,g]_\star := f \star g - g \star f = i\hbar \{f,g\} + O(\hbar^3),
\]
since the even terms in $\hbar$ cancel due to antisymmetry.

Now consider the first spectral derivative $\partial^{\mathrm{spec}} F$. For a 
classical observable $f$, we have, by definition of the spectral derivative as 
the first cross-effect,
\[
\partial^{\mathrm{spec}} F(f) = \left.\frac{d}{dt}\right|_{t=0} F(f + t\delta f).
\]
Applying $\Phi_{\mathrm{sc}}$ and using the functoriality of quantization,
\[
\Phi_{\mathrm{sc}}(\partial^{\mathrm{spec}} F(f)) = \left.\frac{d}{dt}\right|_{t=0} \Phi_{\mathrm{sc}}(F(f + t\delta f)).
\]
On the other hand, the quantum spectral derivative is defined by
\[
\partial^{\mathrm{spec}} F_{\mathrm{quantum}}(\Phi_{\mathrm{sc}}(f)) = \left.\frac{d}{dt}\right|_{t=0} F_{\mathrm{quantum}}(\Phi_{\mathrm{sc}}(f) + t \Phi_{\mathrm{sc}}(\delta f)).
\]

By the asymptotic compatibility of $\Phi_{\mathrm{sc}}$ with the dynamics, we have
\[
F_{\mathrm{quantum}}(\Phi_{\mathrm{sc}}(f)) = \Phi_{\mathrm{sc}}(F(f)) + O(\hbar).
\]
Differentiating with respect to the perturbation $\delta f$ and using the 
Moyal expansion to handle the $O(\hbar)$ terms, we obtain
\[
\partial^{\mathrm{spec}} F_{\mathrm{quantum}}(\Phi_{\mathrm{sc}}(f)) = \Phi_{\mathrm{sc}}(\partial^{\mathrm{spec}} F(f)) + O(\hbar).
\]

More formally, one can show that the difference $\partial^{\mathrm{spec}} F_{\mathrm{quantum}} 
\circ \Phi_{\mathrm{sc}} - \Phi_{\mathrm{sc}} \circ \partial^{\mathrm{spec}} F$ 
is an operator of order $O(\hbar)$ in the $\hbar \to 0$ limit, with the bound
\[
\left\| \partial^{\mathrm{spec}} F_{\mathrm{quantum}}(\Phi_{\mathrm{sc}}(f)) - \Phi_{\mathrm{sc}}(\partial^{\mathrm{spec}} F(f)) \right\| \le C \hbar \|f\|,
\]
for some constant $C$ independent of $\hbar$ and $f$ in a suitable Sobolev or 
$C^\infty$ norm. This estimate follows from the fact that the Moyal product 
differs from the pointwise product by terms of order $\hbar$, and the spectral 
derivative is a differential operator of finite order.

\paragraph{Step 4: Stability preservation under quantization.}
Let $F$ be a classical propagation map such that the classical system is 
spectrally stable, i.e.,
\[
\rho(\partial^{\mathrm{spec}} F) < 1.
\]
Choose $\delta > 0$ such that $\rho(\partial^{\mathrm{spec}} F) + \delta < 1$. 
By the asymptotic estimate from Step 3, for sufficiently small $\hbar > 0$,
\[
\rho(\partial^{\mathrm{spec}} F_{\mathrm{quantum}}) \le \rho(\partial^{\mathrm{spec}} F) + \frac{\delta}{2} + O(\hbar).
\]
More precisely, there exists $\hbar_0 > 0$ such that for all $0 < \hbar < \hbar_0$,
\[
\rho(\partial^{\mathrm{spec}} F_{\mathrm{quantum}}) \le \rho(\partial^{\mathrm{spec}} F) + \frac{\delta}{2}.
\]
Thus,
\[
\rho(\partial^{\mathrm{spec}} F_{\mathrm{quantum}}) < \rho(\partial^{\mathrm{spec}} F) + \delta < 1.
\]

Therefore, the quantum system is spectrally stable for sufficiently small 
$\hbar$. The stability persists perturbatively: if the classical stability 
margin $\varepsilon = 1 - \rho(\partial^{\mathrm{spec}} F)$ is positive, then 
for all $\hbar$ smaller than some threshold $\hbar_{\mathrm{max}}$, the quantum 
system remains stable.

\paragraph{Step 5: Convergence of operator families.}
To ensure that the $O(\hbar)$ estimate is uniform and that the stability 
conclusion holds, we require that the families of operators $\{\partial^{\mathrm{spec}} F_{\mathrm{quantum}}(\hbar)\}_{\hbar>0}$ 
converge to $\Phi_{\mathrm{sc}}(\partial^{\mathrm{spec}} F)$ in a topology that 
controls spectral radii. This is guaranteed by:
\begin{enumerate}
    \item \textbf{Norm convergence:} $\|\partial^{\mathrm{spec}} F_{\mathrm{quantum}} - \Phi_{\mathrm{sc}}(\partial^{\mathrm{spec}} F)\| = O(\hbar)$ 
          in the operator norm on a suitable dense domain.
    \item \textbf{Resolvent convergence:} The resolvent operators converge in 
          norm, which implies spectral convergence.
    \item \textbf{Continuity of the spectral radius:} For a family of bounded 
          operators $T_\hbar$ converging to $T_0$ in norm, $\rho(T_\hbar) \to \rho(T_0)$.
\end{enumerate}
Under these conditions, the estimate $\rho(\partial^{\mathrm{spec}} F_{\mathrm{quantum}}) = \rho(\partial^{\mathrm{spec}} F) + O(\hbar)$ 
holds rigorously.

\paragraph{Step 6: Extension to operadic networks.}
For a classical operadic operator network $\mathcal{N}_{\mathrm{classical}}$, the 
quantization map $\Phi_{\mathrm{sc}}$ applies componentwise to each node algebra 
and each edge coupling. By the Covariant Stability Theorem 
(Theorem~\ref{thm:covariant-stability}), the spectral propagation laws are 
compatible with admissible base changes up to the $O(\hbar)$ corrections 
introduced by quantization. Consequently, if the classical network is stable, 
the quantized network is stable for sufficiently small $\hbar$, with the same 
asymptotic stability margin.

\paragraph{Conclusion.}
We have shown that under the correspondence principle and the assumption of 
asymptotic compatibility, the spectral propagation structures of classical and 
quantum systems are related by
\[
\partial^{\mathrm{spec}} F_{\mathrm{quantum}} = \Phi_{\mathrm{sc}}(\partial^{\mathrm{spec}} F) + O(\hbar),
\]
and consequently
\[
\rho(\partial^{\mathrm{spec}} F_{\mathrm{quantum}}) = \rho(\partial^{\mathrm{spec}} F) + O(\hbar).
\]
Thus, spectral stability of the classical system implies spectral stability of 
the quantum system for sufficiently small $\hbar$. This establishes the 
semiclassical covariance of spectral propagation and the persistence of 
stability properties under quantization.

This completes the proof.
\end{proof}

\begin{remark}
The above result is asymptotic and depends on the existence of a continuous semiclassical limit. It does not claim exact stability preservation for arbitrary $\hbar$, nor does it assert that quantization is a global functor. Rather, it demonstrates how the SOC framework can relate classical and quantum descriptions within their respective regimes of validity.
\end{remark}

Consequently:
\begin{itemize}
    \item stable classical propagation induces stable quantum propagation in the semiclassical limit,
    \item perturbative sensitivity transfers asymptotically,
    \item residue structures correspond across classical and quantum representations up to $\hbar$-dependent corrections.
\end{itemize}

Thus, the SOC framework provides a unified language bridging classical and quantum operator networks in the semiclassical regime.

\begin{example}[Semiclassical Correspondence for a Damped Oscillator]
\label{ex:quantized-oscillator}
Consider a classical damped harmonic oscillator:
\[
\ddot{x} + \gamma \dot{x} + \omega^2 x = 0,
\]
stable when $\gamma > 0$. Under a suitable semiclassical quantization scheme, this maps asymptotically to a quantum dissipative system (e.g., a Lindblad master equation):
\[
\frac{d\rho}{dt} = -\frac{i}{\hbar}[H, \rho] + \gamma \left( a\rho a^\dagger - \frac{1}{2}\{a^\dagger a, \rho\} \right).
\]
The dissipative structure of the Lindblad generator inherits the damping mechanism of the classical system, providing a semiclassical analogue of stability preservation. For sufficiently small $\hbar$, the quantum dynamics remains stable, illustrating how the SOC framework can relate classical and quantum descriptions despite the absence of a strict functorial quantization.
\end{example}

\subsection*{Discretization: Continuous $\to$ Discrete Systems}

Let
\[
\Phi_{\mathrm{disc}}
:
\mathcal{C}_{\mathrm{cont}}
\longrightarrow
\mathcal{C}_{\mathrm{disc}}
\]
be a discretization scheme mapping continuous operator systems to discrete approximations (e.g., finite differences, finite elements, spectral methods).

\begin{definition}[Discretization Scheme]
\label{def:discretization-scheme}
Let $\mathcal{C}_{\mathrm{cont}}$ be a suitable category of operators on function spaces (e.g., differential operators with appropriate domains) and $\mathcal{C}_{\mathrm{disc}}$ the category of matrices on $\mathbb{C}^N$. A \emph{discretization scheme} $\Phi_{\Delta x}$ with mesh size $\Delta x > 0$ is admissible if it satisfies:
\begin{itemize}
    \item \textbf{Consistency}: There exist projection operators $P_{\Delta x}$ (mapping continuous functions to discrete vectors) and interpolation operators $I_{\Delta x}$ (mapping discrete vectors to continuous functions) such that for sufficiently regular functions $u$,
    \[
    \lim_{\Delta x \to 0} \|I_{\Delta x} \Phi_{\Delta x}(A) P_{\Delta x} u - A u\| = 0,
    \]
    in the appropriate norm (e.g., strong operator topology or graph norm);
    
    \item \textbf{Stability}: The family $\{\Phi_{\Delta x}(A)\}_{\Delta x>0}$ is uniformly bounded in the operator norm: $\|\Phi_{\Delta x}(A)\| \le C$ for some constant $C$ independent of $\Delta x$;
    
    \item \textbf{Monoidal compatibility} (where defined): For product systems, the discretization of a tensor product is compatible with the Kronecker product of discretizations, up to natural isomorphisms and projection/interpolation maps.
\end{itemize}
\end{definition}

\begin{remark}
A fully functorial discretization is difficult to achieve globally due to domain mismatches and the unbounded nature of differential operators. The above conditions define an \emph{admissible discretization scheme} compatible with tensor/operadic composition, rather than a strict functor. The Covariant Stability Theorem applies when these conditions are met and the appropriate categorical structure exists.
\end{remark}

\begin{theorem}[Stability under Stable Spectral Discretization]
\label{thm:discretization-preserves-stability}
Let $\mathcal{N}_{\mathrm{cont}}$ be a continuous operator network whose linearized propagation operator
\[
S_{\mathrm{cont}} = \partial^{\mathrm{spec}}F_{\mathrm{cont}}
\]
satisfies
\[
\rho(S_{\mathrm{cont}}) < 1.
\]

Suppose the discretized operators
\[
S_{\Delta x} := \partial^{\mathrm{spec}}F_{\mathrm{disc}}
\]
converge to $S_{\mathrm{cont}}$ in a topology that ensures spectral convergence. Specifically, assume there exist projection and interpolation maps $P_{\Delta x}, I_{\Delta x}$ such that:
\[
\|I_{\Delta x} S_{\Delta x} P_{\Delta x} - S_{\mathrm{cont}}\| \le C \Delta x^p
\]
for some $C > 0$ and order $p > 0$, where the norm is understood in the appropriate operator topology.

If the eigenvalues near the unit circle are spectrally stable under this approximation, then for sufficiently small $\Delta x$,
\[
\rho(S_{\Delta x}) < 1.
\]

Thus the discretized network preserves the linearized stability of the continuous network under a stable and spectrally consistent discretization scheme.
\end{theorem}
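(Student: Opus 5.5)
The plan is to reduce the claim to upper semicontinuity of the spectrum under norm perturbation, with the comparison made through the lifted operators $\widetilde S_{\Delta x} := I_{\Delta x} S_{\Delta x} P_{\Delta x}$. These act on the same space as $S_{\mathrm{cont}}$. Set $\rho_0 := \rho(S_{\mathrm{cont}}) < 1$ and fix $r$ with $\rho_0 < r < 1$.

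\emph{Step 1: bound the resolvent on the annulus.} The set $K := \{ z : r \le |z| \le R \}$, with $R$ larger than $\|S_{\mathrm{cont}}\| + 1$, is compact and lies in the resolvent set of $S_{\mathrm{cont}}$. By continuity of the resolvent, $M := \sup_{z \in K} \|(z - S_{\mathrm{cont}})^{-1}\| < \infty$.

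\emph{Step 2: transfer invertibility to $\widetilde S_{\Delta x}$.} By hypothesis $\|\widetilde S_{\Delta x} - S_{\mathrm{cont}}\| \le C \Delta x^p$. Choose $\Delta x$ small enough that $C\Delta x^p M < 1$. For every $z \in K$ we can then write
\[
z - \widetilde S_{\Delta x} = (z - S_{\mathrm{cont}})\bigl(I - (z - S_{\mathrm{cont}})^{-1}(\widetilde S_{\Delta x} - S_{\mathrm{cont}})\bigr),
\]
and the second factor is invertible by a Neumann series. For $|z| > R$, invertibility follows directly from $\|\widetilde S_{\Delta x}\| < R$. Hence $\sigma(\widetilde S_{\Delta x}) \subseteq \{ |z| < r \}$, so $\rho(\widetilde S_{\Delta x}) < r < 1$. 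This is the standard argument; in finite dimensions one could instead invoke continuity of eigenvalues. Either way, the hypothesis about ``spectral stability near the unit circle'' enters only through the uniform resolvent bound $M$, which holds automatically once $\rho_0 < 1$.

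\emph{Step 3: return to $S_{\Delta x}$ itself.} This is the step I expect to be the main obstacle. We must relate $\sigma(S_{\Delta x})$ on $\mathbb{C}^N$ to $\sigma(I_{\Delta x} S_{\Delta x} P_{\Delta x})$. I would assume the natural discretization property $P_{\Delta x} I_{\Delta x} = \mathrm{id}_{\mathbb{C}^N}$, which holds for nodal and Galerkin schemes. Then $I_{\Delta x} P_{\Delta x}$ is a projection $Q$, and the lifted operator satisfies $\widetilde S_{\Delta x} = I_{\Delta x} S_{\Delta x} P_{\Delta x}$. The identity $\sigma(AB) \cup \{0\} = \sigma(BA) \cup \{0\}$, applied with $A = I_{\Delta x}$ and $B = S_{\Delta x} P_{\Delta x}$, gives $\sigma(S_{\Delta x} P_{\Delta x} I_{\Delta x}) \cup \{0\} = \sigma(\widetilde S_{\Delta x}) \cup \{0\}$. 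Since $S_{\Delta x} P_{\Delta x} I_{\Delta x} = S_{\Delta x}$, we get $\rho(S_{\Delta x}) = \rho(\widetilde S_{\Delta x}) < 1$.

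If $P_{\Delta x} I_{\Delta x} = \mathrm{id}$ fails, the fallback is the uniform stability bound of Definition~\ref{def:discretization-scheme}. It would allow us to estimate $\|S_{\Delta x}^k\| \le \|P_{\Delta x}\|\,\|I_{\Delta x}\|$-type constants times $\|\widetilde S_{\Delta x}^k\|$, and Gelfand's formula would then yield the same conclusion.

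Finally, by Theorem~\ref{thm:single-loop-stability-criterion}, $\rho(S_{\Delta x}) < 1$ means the discretized network is linearly stable. As a by-product, the argument gives the quantitative threshold $\Delta x < (CM)^{-1/p}$.
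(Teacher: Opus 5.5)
Your argument is correct, and its core matches the paper's. The paper's Step~4 (the containment $\sigma(T_{\Delta x})\subset\sigma_\varepsilon(S_{\mathrm{cont}})$ followed by shrinking the pseudospectrum) is exactly your resolvent bound on the annulus plus a Neumann series. Like you, the paper also works under the standing assumption $P_{\Delta x}I_{\Delta x}=\mathrm{id}$ (its Step~1).

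The differences favour your version.

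\textbf{The transfer step.} To pass from $T_{\Delta x}=I_{\Delta x}S_{\Delta x}P_{\Delta x}$ back to $S_{\Delta x}$, the paper writes $S_{\Delta x}=P_{\Delta x}T_{\Delta x}I_{\Delta x}$. It then asserts $\rho(PTI)\le\|P\|\,\rho(T)\,\|I\|$, justified only by boundedness of $P$ and $I$. That inequality is false in general: take $T$ a $2\times 2$ nilpotent Jordan block, $I=\mathrm{id}$ and $P=T^{\mathsf T}$, so $PTI$ has spectral radius $1$ while $\rho(T)=0$. It holds here only because $P_{\Delta x}I_{\Delta x}=\mathrm{id}$ gives $S_{\Delta x}^k=P_{\Delta x}\widetilde S_{\Delta x}^kI_{\Delta x}$ and $\widetilde S_{\Delta x}^k=I_{\Delta x}S_{\Delta x}^kP_{\Delta x}$. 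Your use of $\sigma(AB)\cup\{0\}=\sigma(BA)\cup\{0\}$ is the right tool and gives the exact equality $\rho(S_{\Delta x})=\rho(\widetilde S_{\Delta x})$.

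\textbf{The extra hypothesis.} You correctly observe that upper semicontinuity of the spectrum holds for every bounded operator. Once $P_{\Delta x}I_{\Delta x}=\mathrm{id}$ is in force, the paper's split into a normal Case~A and a non-normal Case~B is therefore superfluous, and so is the ``spectral stability near the unit circle'' hypothesis.

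\textbf{Your fallback should be dropped.} If $P_{\Delta x}I_{\Delta x}=\mathrm{id}$ fails, the uniform bound of Definition~\ref{def:discretization-scheme} gives no relation between $S_{\Delta x}^k$ and $\widetilde S_{\Delta x}^k$. Without some link between $P_{\Delta x}$ and $I_{\Delta x}$ the statement can actually fail. With $S_{\mathrm{cont}}=0$ and $I_{\Delta x}=0$, the consistency hypothesis holds for arbitrary $S_{\Delta x}$. The only way out is to read the vague spectral-stability hypothesis as a direct constraint on $\sigma(S_{\Delta x})$, as the paper's Case~B does, and then the result is nearly tautological. So state $P_{\Delta x}I_{\Delta x}=\mathrm{id}$ as an explicit hypothesis rather than as an optional convenience.
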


\begin{proof}
We prove the theorem in several rigorous steps, carefully addressing both normal and non-normal operator cases.

\paragraph{Step 1: Setup and notation.}
Let $\mathcal{H}_{\mathrm{cont}}$ and $\mathcal{H}_{\mathrm{disc}}$ be the Banach (or Hilbert) spaces on which $S_{\mathrm{cont}}$ and $S_{\Delta x}$ act, respectively. The projection map $P_{\Delta x}: \mathcal{H}_{\mathrm{cont}} \to \mathcal{H}_{\mathrm{disc}}$ and the interpolation map $I_{\Delta x}: \mathcal{H}_{\mathrm{disc}} \to \mathcal{H}_{\mathrm{cont}}$ are assumed to be bounded linear operators satisfying:
\begin{itemize}
    \item $P_{\Delta x} I_{\Delta x} = \mathrm{id}_{\mathcal{H}_{\mathrm{disc}}}$ (or at least $\|P_{\Delta x} I_{\Delta x} - \mathrm{id}\| \to 0$ as $\Delta x \to 0$).
    \item $\|P_{\Delta x}\| \le C_P$ and $\|I_{\Delta x}\| \le C_I$ uniformly in $\Delta x$.
    \item The consistency condition: $\|I_{\Delta x} S_{\Delta x} P_{\Delta x} - S_{\mathrm{cont}}\| \le C \Delta x^p$.
\end{itemize}
These conditions are standard for finite element or finite difference discretizations.

\paragraph{Step 2: Uniform boundedness of the discretized operators.}
From the consistency condition and the triangle inequality,
\[
\|I_{\Delta x} S_{\Delta x} P_{\Delta x}\| \le \|S_{\mathrm{cont}}\| + C \Delta x^p \le \|S_{\mathrm{cont}}\| + C,
\]
for $\Delta x \le 1$. Since $P_{\Delta x}$ and $I_{\Delta x}$ are uniformly bounded,
\[
\|S_{\Delta x}\| = \|(P_{\Delta x} I_{\Delta x}) S_{\Delta x} (P_{\Delta x} I_{\Delta x})\| \le \|P_{\Delta x}\| \|I_{\Delta x} S_{\Delta x} P_{\Delta x}\| \|I_{\Delta x}\| \le C_P C_I (\|S_{\mathrm{cont}}\| + C).
\]
Thus the family $\{S_{\Delta x}\}_{\Delta x > 0}$ is uniformly bounded in operator norm.

\paragraph{Step 3: Spectral convergence under the given assumptions.}
We need to show that the eigenvalues of $S_{\Delta x}$ converge to those of $S_{\mathrm{cont}}$ as $\Delta x \to 0$. The precise statement depends on the spectral properties of $S_{\mathrm{cont}}$.

\subparagraph{Case A: $S_{\mathrm{cont}}$ is normal or self-adjoint.}
If $S_{\mathrm{cont}}$ is normal (or, more specifically, if it is diagonalizable with a basis of eigenvectors and has a gap in its spectrum around the unit circle), then the spectral radius is continuous under norm perturbations. Specifically, for any $\varepsilon > 0$, there exists $\delta > 0$ such that if $\|S - S_{\mathrm{cont}}\| < \delta$, then $|\rho(S) - \rho(S_{\mathrm{cont}})| < \varepsilon$. This follows from the fact that the spectrum of a normal operator is stable under small perturbations (the $\varepsilon$-pseudospectrum is contained in an $\varepsilon$-neighborhood of the spectrum).

Define $T_{\Delta x} := I_{\Delta x} S_{\Delta x} P_{\Delta x}$. Then
\[
\|T_{\Delta x} - S_{\mathrm{cont}}\| \le C \Delta x^p.
\]
Hence, for sufficiently small $\Delta x$, we have $\|T_{\Delta x} - S_{\mathrm{cont}}\| < \delta$, where $\delta$ is chosen so that $\rho(T_{\Delta x}) \le \rho(S_{\mathrm{cont}}) + \varepsilon$. Taking $\varepsilon = (1 - \rho(S_{\mathrm{cont}}))/2 > 0$, we obtain $\rho(T_{\Delta x}) < 1$ for all $\Delta x$ sufficiently small.

Now note that $S_{\Delta x}$ is similar to $P_{\Delta x} S_{\Delta x} I_{\Delta x}$ up to an error. More precisely, using $P_{\Delta x} I_{\Delta x} = \mathrm{id}$,
\[
S_{\Delta x} = P_{\Delta x} I_{\Delta x} S_{\Delta x} = P_{\Delta x} (I_{\Delta x} S_{\Delta x} P_{\Delta x}) I_{\Delta x} = P_{\Delta x} T_{\Delta x} I_{\Delta x}.
\]
Since $P_{\Delta x}$ and $I_{\Delta x}$ are bounded, the spectral radius of $S_{\Delta x}$ satisfies
\[
\rho(S_{\Delta x}) = \rho(P_{\Delta x} T_{\Delta x} I_{\Delta x}) \le \|P_{\Delta x}\| \rho(T_{\Delta x}) \|I_{\Delta x}\| \le C_P C_I \rho(T_{\Delta x}).
\]
If $C_P C_I = 1$ (e.g., when $P_{\Delta x}$ and $I_{\Delta x}$ are isometries), then $\rho(S_{\Delta x}) = \rho(T_{\Delta x})$. Otherwise, we need the stronger condition $\rho(T_{\Delta x}) < 1/(C_P C_I)$. But since $\rho(T_{\Delta x}) \to \rho(S_{\mathrm{cont}}) < 1$, for sufficiently small $\Delta x$ we have $\rho(T_{\Delta x}) < 1/(C_P C_I)$ as well, provided $C_P C_I$ is finite. Thus $\rho(S_{\Delta x}) < 1$.

\subparagraph{Case B: $S_{\mathrm{cont}}$ is non-normal.}
For non-normal operators, the spectral radius is not continuous under norm perturbations; a small perturbation can cause a large change in the spectrum (the ``pseudospectral" phenomenon). The hypothesis ``if the eigenvalues near the unit circle are spectrally stable under this approximation" is precisely the condition needed to ensure that the eigenvalues of $S_{\Delta x}$ converge to those of $S_{\mathrm{cont}}$ in a way that preserves the spectral radius bound.

Formally, we assume the existence of $\delta_0 > 0$ and $\Delta x_0 > 0$ such that for all $\Delta x < \Delta x_0$,
\[
\sigma(S_{\Delta x}) \cap \{z \in \mathbb{C} : |z| \ge 1 - \delta_0\} \subset \{z \in \mathbb{C} : |z - \mu| \le C \Delta x^p \text{ for some } \mu \in \sigma(S_{\mathrm{cont}})\}.
\]
This means that eigenvalues of $S_{\Delta x}$ lying outside the disk of radius $1 - \delta_0$ are close to eigenvalues of $S_{\mathrm{cont}}$.

Since $\rho(S_{\mathrm{cont}}) < 1$, let $R = (\rho(S_{\mathrm{cont}}) + 1)/2 < 1$. Then there exists $\varepsilon > 0$ such that
\[
\sigma(S_{\mathrm{cont}}) \subset \{z \in \mathbb{C} : |z| \le R - \varepsilon\}.
\]
By the spectral stability hypothesis, for sufficiently small $\Delta x$, all eigenvalues of $S_{\Delta x}$ satisfy $|\lambda| \le R + C \Delta x^p$. For $\Delta x$ small enough that $R + C \Delta x^p < 1$, we obtain $\rho(S_{\Delta x}) \le R + C \Delta x^p < 1$.

\paragraph{Step 4: Explicit bound under the consistency condition.}
A more direct approach uses the concept of $\varepsilon$-pseudospectrum. For any bounded linear operator $T$, the $\varepsilon$-pseudospectrum is defined as
\[
\sigma_\varepsilon(T) = \{z \in \mathbb{C} : \|(zI - T)^{-1}\| \ge \varepsilon^{-1}\} \cup \sigma(T).
\]
It is known that if $\|T_{\Delta x} - S_{\mathrm{cont}}\| \le \varepsilon$, then $\sigma(T_{\Delta x}) \subset \sigma_\varepsilon(S_{\mathrm{cont}})$. Moreover, for any $\delta > 0$, there exists $\varepsilon_0 > 0$ such that if $\varepsilon < \varepsilon_0$, then $\sigma_\varepsilon(S_{\mathrm{cont}}) \subset \{z \in \mathbb{C} : |z| \le \rho(S_{\mathrm{cont}}) + \delta\}$.

Take $\varepsilon = C \Delta x^p$. For sufficiently small $\Delta x$, we have $\varepsilon < \varepsilon_0$, so
\[
\sigma(T_{\Delta x}) \subset \sigma_\varepsilon(S_{\mathrm{cont}}) \subset \{z \in \mathbb{C} : |z| \le \rho(S_{\mathrm{cont}}) + \delta\}.
\]
Choosing $\delta = (1 - \rho(S_{\mathrm{cont}}))/2$ gives $\rho(T_{\Delta x}) \le (\rho(S_{\mathrm{cont}}) + 1)/2 < 1$. Then, as in Step 3, Case A, we conclude $\rho(S_{\Delta x}) < 1$ for sufficiently small $\Delta x$.

\paragraph{Step 5: Handling the projection and interpolation maps.}
The argument above used $T_{\Delta x} = I_{\Delta x} S_{\Delta x} P_{\Delta x}$. To relate $\rho(T_{\Delta x})$ to $\rho(S_{\Delta x})$, we note that
\[
S_{\Delta x} = P_{\Delta x} T_{\Delta x} I_{\Delta x}.
\]
If $P_{\Delta x}$ and $I_{\Delta x}$ are isometries (i.e., $\|P_{\Delta x}\| = \|I_{\Delta x}\| = 1$ and $P_{\Delta x} I_{\Delta x} = \mathrm{id}$), then
\[
\rho(S_{\Delta x}) = \rho(P_{\Delta x} T_{\Delta x} I_{\Delta x}) \le \rho(T_{\Delta x}).
\]
In many discretization schemes (e.g., finite differences on uniform grids with appropriate scaling), the projection and interpolation can be chosen to be isometric embeddings. If not, we use the estimate
\[
\rho(S_{\Delta x}) \le \|P_{\Delta x}\| \rho(T_{\Delta x}) \|I_{\Delta x}\| \le C_P C_I \rho(T_{\Delta x}).
\]
Since $C_P C_I$ is finite and $\rho(T_{\Delta x}) < 1/(C_P C_I)$ for sufficiently small $\Delta x$ (because $\rho(T_{\Delta x}) \to \rho(S_{\mathrm{cont}}) < 1$), we still obtain $\rho(S_{\Delta x}) < 1$.

\paragraph{Step 6: Stability of the discretized network.}
Having established $\rho(S_{\Delta x}) < 1$ for sufficiently small $\Delta x$, we conclude that the linearized propagation operator of the discretized network is spectrally stable. By the Feedback Stability Criterion (Theorem~\ref{thm:feedback-stability}), this implies that the discretized network $\Phi_{\mathrm{disc}}(\mathcal{N}_{\mathrm{cont}})$ is stable in the sense of linearized spectral stability. For nonlinear networks, higher-order spectral derivatives control the nonlinear stability margin, but the linearized condition is necessary (and, under additional assumptions, sufficient) for local asymptotic stability.

\paragraph{Step 7: Remark on the rate of convergence.}
Under the given assumptions, the spectral radius error satisfies
\[
|\rho(S_{\Delta x}) - \rho(S_{\mathrm{cont}})| = O(\Delta x^p).
\]
This follows from the pseudospectral bound: for sufficiently small $\Delta x$,
\[
\rho(S_{\Delta x}) \le \rho(S_{\mathrm{cont}}) + C' \Delta x^p,
\]
and the reverse inequality holds if $S_{\mathrm{cont}}$ is normal or if the discretization is consistent in the sense that the eigenvalues of $S_{\Delta x}$ approximate those of $S_{\mathrm{cont}}$ from below. Thus the convergence rate of the stability margin is at least $O(\Delta x^p)$.

\paragraph{Conclusion.}
We have shown that under the consistency condition $\|I_{\Delta x} S_{\Delta x} P_{\Delta x} - S_{\mathrm{cont}}\| = O(\Delta x^p)$ and the spectral stability hypothesis (or the stronger assumption that $S_{\mathrm{cont}}$ is normal), the spectral radius of the discretized operator satisfies $\rho(S_{\Delta x}) < 1$ for all sufficiently small mesh sizes $\Delta x$. Hence the discretized network preserves the linearized stability of the continuous network. This completes the proof.
\end{proof}

\begin{remark}
The inequality $\rho(A+E) \le \rho(A) + \|E\|$ is not generally valid for non-normal operators. The proof above instead relies on spectral convergence (e.g., via norm-resolvent convergence or collective compactness), which provides a rigorous foundation for eigenvalue approximation.
\end{remark}

Hence, spectral stability is preserved under admissible discretization schemes under suitable spectral convergence assumptions. A stable continuous-time system therefore yields a stable discrete approximation provided the discretization preserves the operadic propagation structure.

This result gives a categorical explanation for the success of many stable numerical approximation procedures in:
\begin{itemize}
    \item dynamical systems,
    \item PDE discretization,
    \item finite-element propagation,
    \item multiscale simulation frameworks.
\end{itemize}

Moreover, instability generated purely by discretization artifacts may indicate failure of admissibility or spectral convergence of the discretization scheme.

\begin{example}[Discretization of the Heat Equation]
\label{ex:discretized-heat}
Consider the one-dimensional heat equation:
\[
\frac{\partial u}{\partial t} = \frac{\partial^2 u}{\partial x^2}, \quad u(0,t)=u(1,t)=0.
\]
The continuous spatial operator $A = \partial_x^2$ has eigenvalues $\lambda_n = -n^2\pi^2$ (all negative, hence stable). The finite difference discretization with mesh $\Delta x = 1/N$ yields:
\[
A_{\text{disc}} = \frac{1}{\Delta x^2} \text{tridiag}(1, -2, 1),
\]
with eigenvalues $\lambda_n^{\text{disc}} = -\frac{4}{\Delta x^2} \sin^2\left(\frac{n\pi \Delta x}{2}\right) \to -n^2\pi^2$ as $\Delta x \to 0$.

For a complete space-time discretization, stability additionally requires a time step satisfying a CFL condition (e.g., $\Delta t \le \frac{1}{2} \Delta x^2$ for explicit Euler). This example illustrates how spatial discretization alone can preserve spectral stability, while full numerical stability requires coupling with an appropriate time-stepping scheme.
\end{example}

\subsection*{Gelfand Transform: Algebraic $\to$ Topological Representation}

Consider the Gelfand representation of a commutative C*-algebra:
\[
\Gamma_A : A \longrightarrow C(\Delta(A)),
\]
where $A$ is a unital commutative C*-algebra and $\Delta(A)$ denotes its character space (Gelfand spectrum), equipped with the weak-* topology.

\begin{definition}[Gelfand Representation]
\label{def:gelfand-representation}
Let $A$ be a unital commutative C*-algebra and let $\Delta(A)$ denote its character space (the set of nonzero multiplicative linear functionals $\chi: A \to \mathbb{C}$), equipped with the weak-* topology. The \emph{Gelfand transform} is the map
\[
\Gamma_A : A \longrightarrow C(\Delta(A)),
\qquad
\Gamma_A(a)(\chi) = \chi(a),
\]
for $\chi \in \Delta(A)$. By the commutative Gelfand–Naimark theorem, $\Gamma_A$ is an isometric $*$-isomorphism:
\[
\|\Gamma_A(a)\|_\infty = \|a\|.
\]
Thus every unital commutative C*-algebra may be represented as an algebra of continuous functions on its compact Hausdorff character space.
\end{definition}

\begin{remark}
The Gelfand duality establishes a contravariant equivalence (not a covariant functor) between the category of unital commutative C*-algebras with $*$-homomorphisms and the category of compact Hausdorff spaces with continuous maps:
\[
\mathbf{CommC^*Alg^{op}} \simeq \mathbf{CompHaus}.
\]
A $*$-homomorphism $f: A \to B$ induces a continuous map $\Delta(f): \Delta(B) \to \Delta(A)$ by precomposition: $\Delta(f)(\chi) = \chi \circ f$. Consequently, the Gelfand transform is an isometric $*$-isomorphism for each algebra, and these isomorphisms are natural with respect to the contravariant structure.
\end{remark}

Under this representation, algebraic operator systems are represented as continuous function systems on the Gelfand spectrum. The propagation structure satisfies the following compatibility. But we will begin with  preliminary lemmas 

\begin{lemma}[Gelfand–Naimark Isomorphism]
\label{lem:gn-isomorphism}
Let $A$ be a unital commutative C*-algebra and let $\Delta(A)$ denote its character space (the set of nonzero multiplicative linear functionals $\chi: A \to \mathbb{C}$) equipped with the weak-$*$ topology. Then the map
\[
\Gamma_A: A \longrightarrow C(\Delta(A)), \qquad \Gamma_A(a)(\chi) = \chi(a),
\]
is an isometric $*$-isomorphism. In particular:
\begin{enumerate}
    \item $\|\Gamma_A(a)\|_\infty = \|a\|$ for all $a \in A$,
    \item $\Gamma_A(ab) = \Gamma_A(a)\Gamma_A(b)$,
    \item $\Gamma_A(a^*) = \overline{\Gamma_A(a)}$,
    \item $\Gamma_A$ is bijective.
\end{enumerate}
\end{lemma}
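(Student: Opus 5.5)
The plan is to prove the four properties in the order homomorphism, $*$-preservation, isometry, bijectivity, with isometry as the pivot on which the last step rests. Throughout I take as given that $\Delta(A)$ is a nonempty compact Hausdorff space (Banach–Alaoglu applied to the weak-$*$ closed set of characters inside the unit ball of $A^*$, together with $\|\chi\|=\chi(1)=1$). I also take as given that for every $a\in A$ one has $\sigma(a)=\{\chi(a):\chi\in\Delta(A)\}$; this is the standard maximal-ideal argument, since every maximal ideal of a unital commutative Banach algebra is the kernel of a character.

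\textbf{Homomorphism and continuity.} Linearity and multiplicativity of $\Gamma_A$ are immediate because each $\chi$ is a multiplicative linear functional. Continuity of $\Gamma_A(a)$ on $\Delta(A)$ is exactly the definition of the weak-$*$ topology. This gives item 2.

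\textbf{$*$-preservation.} For item 3 I first show that $\chi(h)\in\mathbb{R}$ for self-adjoint $h$. Setting $u_t=e^{ith}$, which is unitary, gives $\|u_t\|=1$, so $|\chi(u_t)|=|e^{it\chi(h)}|\le 1$ for all real $t$. This bound for all $t$ forces $\operatorname{Im}\chi(h)=0$. Writing an arbitrary $a$ as $h+ik$ with $h,k$ self-adjoint then yields $\chi(a^*)=\overline{\chi(a)}$.

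\textbf{Isometry.} By the spectral description of $\sigma(a)$ above, $\|\Gamma_A(a)\|_\infty=r(a)$, the spectral radius. For self-adjoint $h$, the C$^*$-identity gives $\|h^{2^n}\|=\|h\|^{2^n}$. Gelfand's formula (already used in the proof of Theorem~\ref{thm:feedback-stability}) then gives $r(h)=\|h\|$. For general $a$,
\[
\|a\|^2=\|a^*a\|=r(a^*a)=\|\Gamma_A(a^*a)\|_\infty=\||\Gamma_A(a)|^2\|_\infty=\|\Gamma_A(a)\|_\infty^2,
\]
using items 2 and 3. This proves item 1, and injectivity follows at once.

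\textbf{Surjectivity.} This is the main obstacle. Since $\Gamma_A$ is an isometry from a Banach space, its image $\Gamma_A(A)$ is closed. It is a unital $*$-subalgebra of $C(\Delta(A))$, by items 2 and 3 and because $\Gamma_A(1)=1$. It also separates points, since distinct characters differ on some $a$. The Stone–Weierstrass theorem therefore makes the image dense, and closedness then gives $\Gamma_A(A)=C(\Delta(A))$, which is item 4.

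The delicate inputs are the spectral description of $\sigma(a)$ via characters, which needs Gelfand–Mazur, and the realness of characters on self-adjoint elements. I will cite both as standard facts rather than reprove them, since nothing in the SOC framework is used here.
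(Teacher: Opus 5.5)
Your proof is correct and follows the same route as the paper's sketch: $\Delta(A)$ is compact Hausdorff, $\Gamma_A$ is a $*$-homomorphism, isometry comes from Gelfand's spectral radius formula, and surjectivity comes from Stone--Weierstrass applied to the closed, point-separating, unital $*$-subalgebra $\Gamma_A(A)$. You also supply details the paper leaves implicit, namely the realness of $\chi(h)$ via the unitaries $e^{ith}$ and the reduction of $r(a)=\|a\|$ to the self-adjoint case through $\|a\|^2=\|a^*a\|$; the paper instead asserts $\|a\|=\lim_n\|a^n\|^{1/n}$ directly, which holds because every element of a commutative C*-algebra is normal.
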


\begin{proof}
The commutative Gelfand–Naimark theorem states that the Gelfand transform $\Gamma_A$ is an isometric $*$-isomorphism from $A$ onto $C(\Delta(A))$. For a detailed proof, see any standard text on C*-algebras (e.g., Murphy, \emph{C*-Algebras and Operator Theory}, Theorem 2.1.11). The key steps are:
\begin{itemize}
    \item $\Delta(A)$ is nonempty and compact Hausdorff in the weak-$*$ topology.
    \item $\Gamma_A$ is a $*$-homomorphism with $\|\Gamma_A(a)\|_\infty \le \|a\|$.
    \item The spectral radius formula $\|a\| = \lim_{n\to\infty} \|a^n\|^{1/n} = \max_{\chi \in \Delta(A)} |\chi(a)| = \|\Gamma_A(a)\|_\infty$ shows isometry.
    \item The image $\Gamma_A(A)$ is a closed $*$-subalgebra of $C(\Delta(A))$ separating points, hence by the Stone–Weierstrass theorem equals $C(\Delta(A))$.
\end{itemize}
\end{proof}

\begin{lemma}[Naturality of the Gelfand Transform for $*$-Homomorphisms]
\label{lem:naturality}
Let $f: A \to B$ be a $*$-homomorphism between unital commutative C*-algebras. Define $\Delta(f): \Delta(B) \to \Delta(A)$ by $\Delta(f)(\chi) = \chi \circ f$ for $\chi \in \Delta(B)$. Define $\widetilde{f}: C(\Delta(A)) \to C(\Delta(B))$ by $\widetilde{f}(g) = g \circ \Delta(f)$. Then the following diagram commutes:
\[
\begin{tikzcd}
A \arrow[r, "f"] \arrow[d, "\Gamma_A"] & B \arrow[d, "\Gamma_B"] \\
C(\Delta(A)) \arrow[r, "\widetilde{f}"] & C(\Delta(B))
\end{tikzcd}
\]
That is, $\widetilde{f} \circ \Gamma_A = \Gamma_B \circ f$.
\end{lemma}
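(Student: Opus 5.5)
The plan is to verify the identity $\widetilde{f} \circ \Gamma_A = \Gamma_B \circ f$ pointwise. Both sides are maps $A \to C(\Delta(B))$, so it suffices to fix $a \in A$ and $\chi \in \Delta(B)$ and compare the two continuous functions at the point $\chi$. Before doing so, I would check that the auxiliary maps in the statement are well defined.

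\textbf{Step 1: $\Delta(f)$ lands in $\Delta(A)$.} For $\chi \in \Delta(B)$, the composite $\chi \circ f$ is linear and multiplicative, since $f$ is a $*$-homomorphism. It is nonzero provided $f$ is unital, because then $\chi(f(1_A)) = \chi(1_B) = 1$.

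This is the one genuine obstacle. If $f$ is not unital, then $\chi \circ f$ may vanish identically, and $\Delta(f)$ is undefined. I would therefore read ``$*$-homomorphism between unital algebras'' as ``unital $*$-homomorphism'', consistent with the paper's remark on the duality $\mathbf{CommC^*Alg^{op}} \simeq \mathbf{CompHaus}$. I would state this convention explicitly.

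\textbf{Step 2: $\Delta(f)$ is continuous.} I would use the weak-$*$ topologies on both character spaces. If $\chi_\alpha \to \chi$ weak-$*$ in $\Delta(B)$, then for each $a \in A$ we have $\chi_\alpha(f(a)) \to \chi(f(a))$. This is exactly weak-$*$ convergence $\chi_\alpha \circ f \to \chi \circ f$ in $\Delta(A)$. Consequently $\widetilde{f}(g) = g \circ \Delta(f)$ is a composite of continuous maps and lies in $C(\Delta(B))$. (One could also observe that $\widetilde{f}$ is a $*$-homomorphism, though that is not needed here.)

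\textbf{Step 3: the pointwise computation.} Using the definition of the Gelfand transform from Lemma~\ref{lem:gn-isomorphism},
\[
(\widetilde{f}\circ\Gamma_A)(a)(\chi) = \Gamma_A(a)(\Delta(f)(\chi)) = (\chi\circ f)(a) = \chi(f(a)) = \Gamma_B(f(a))(\chi).
\]
Since $a$ and $\chi$ were arbitrary, the diagram commutes.

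The remaining steps are routine unwinding of definitions; only the unitality convention in Step 1 needs care.
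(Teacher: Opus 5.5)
Your proof is correct and follows the same route as the paper, which likewise fixes $a \in A$ and $\chi \in \Delta(B)$ and unwinds both sides to $\chi(f(a))$. Your preliminary checks are genuine well-definedness points that the paper's proof passes over silently: $f$ must be unital for $\chi\circ f$ to be a nonzero character, and $\Delta(f)$ must be weak-$*$ continuous so that $\widetilde{f}$ actually lands in $C(\Delta(B))$. Stating the unitality convention explicitly is therefore a worthwhile addition.
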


\begin{proof}
Take any $a \in A$ and any $\chi \in \Delta(B)$. Compute:
\[
(\widetilde{f} \circ \Gamma_A)(a)(\chi) = \widetilde{f}(\Gamma_A(a))(\chi) = \Gamma_A(a)(\Delta(f)(\chi)) = \Gamma_A(a)(\chi \circ f).
\]
By definition of $\Gamma_A$, $\Gamma_A(a)(\chi \circ f) = (\chi \circ f)(a) = \chi(f(a))$. On the other hand,
\[
(\Gamma_B \circ f)(a)(\chi) = \Gamma_B(f(a))(\chi) = \chi(f(a)).
\]
Thus $(\widetilde{f} \circ \Gamma_A)(a)(\chi) = (\Gamma_B \circ f)(a)(\chi)$ for all $\chi \in \Delta(B)$. Hence $\widetilde{f} \circ \Gamma_A = \Gamma_B \circ f$ as functions on $\Delta(B)$, and therefore as elements of $C(\Delta(B))$.
\end{proof}

\begin{lemma}[Character Space of Tensor Product]
\label{lem:tensor-spectrum}
Let $A$ and $B$ be unital commutative C*-algebras. Then there exists a natural homeomorphism
\[
\Phi_{A,B}: \Delta(A \otimes B) \longrightarrow \Delta(A) \times \Delta(B),
\]
where $A \otimes B$ denotes the minimal (spatial) tensor product. Moreover, for any elementary tensor $a \otimes b \in A \otimes B$ and any $(\chi, \psi) \in \Delta(A) \times \Delta(B)$,
\[
\Gamma_{A \otimes B}(a \otimes b)(\Phi_{A,B}^{-1}(\chi, \psi)) = \chi(a) \psi(b).
\]
\end{lemma}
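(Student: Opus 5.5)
The plan is to construct $\Phi_{A,B}$ explicitly by restriction, and then use compactness together with the Gelfand–Naimark isomorphism (Lemma~\ref{lem:gn-isomorphism}) to upgrade a continuous bijection to a homeomorphism. First I would record that, $A$ and $B$ being unital, the maps $\iota_A: a \mapsto a\otimes 1$ and $\iota_B: b \mapsto 1\otimes b$ are unital $*$-homomorphisms into $A\otimes B$. For $\omega \in \Delta(A\otimes B)$ I set
\[
\Phi_{A,B}(\omega) := (\omega\circ\iota_A,\ \omega\circ\iota_B).
\]
Each component is multiplicative, and it is nonzero because $\omega(1\otimes 1)=1$, so $\Phi_{A,B}$ lands in $\Delta(A)\times\Delta(B)$. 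This is exactly $(\Delta(\iota_A),\Delta(\iota_B))$ in the notation of Lemma~\ref{lem:naturality}. Continuity for the weak-$*$ topologies is immediate, since each coordinate is evaluation at a fixed element.

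Next I check that $\Phi_{A,B}$ is a bijection. Injectivity is quick: $\omega(a\otimes b)=\omega(\iota_A(a))\,\omega(\iota_B(b))$, and $\omega$ is a bounded linear functional, hence determined by its values on the dense span of elementary tensors. Surjectivity is the step I expect to be the main obstacle. Given $(\chi,\psi)$, the obvious candidate is $\chi\otimes\psi$. On the algebraic tensor product it is a multiplicative functional, but I must show it is bounded for the minimal C$^*$-norm, so that it extends to $A\otimes B$.

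For this I would use Lemma~\ref{lem:gn-isomorphism} to identify $A\cong C(X)$ and $B\cong C(Y)$, with $X=\Delta(A)$ and $Y=\Delta(B)$. I then invoke the standard fact that $C(X)\otimes_{\min} C(Y)\cong C(X\times Y)$ isometrically, via $f\otimes g\mapsto \bigl((x,y)\mapsto f(x)g(y)\bigr)$. Under this identification $\chi\otimes\psi$ is simply point evaluation at $(\chi,\psi)\in X\times Y$, which is a bounded character. A fallback route avoids quoting that fact: every character is a state, and the product of two states is bounded by $1$ on the spatial tensor product. Either way, the extension $\omega$ exists and satisfies $\Phi_{A,B}(\omega)=(\chi,\psi)$.

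The upgrade to a homeomorphism is then automatic. $\Delta(A\otimes B)$ is compact, $\Delta(A)\times\Delta(B)$ is Hausdorff, and a continuous bijection from a compact space to a Hausdorff space is a homeomorphism. For naturality, I would take $*$-homomorphisms $f:A\to A'$ and $g:B\to B'$ and verify $(\Delta(f)\times\Delta(g))\circ\Phi_{A',B'}=\Phi_{A,B}\circ\Delta(f\otimes g)$. This follows by evaluating both sides on $a\otimes 1$ and $1\otimes b$, exactly as in the proof of Lemma~\ref{lem:naturality}.

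Finally, the formula is read off directly. If $\omega=\Phi_{A,B}^{-1}(\chi,\psi)$, then
\[
\Gamma_{A\otimes B}(a\otimes b)(\omega)=\omega(a\otimes b)=\omega(a\otimes 1)\,\omega(1\otimes b)=\chi(a)\,\psi(b).
\]
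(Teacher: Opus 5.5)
Your proposal is correct and uses the same construction as the paper: the restriction map $\omega\mapsto(\omega\circ\iota_A,\,\omega\circ\iota_B)$ with inverse $(\chi,\psi)\mapsto\chi\otimes\psi$, after which the formula is read off from multiplicativity. The paper only asserts that this map is a homeomorphism, citing $\min=\max$ for commutative algebras. You supply the steps it omits: boundedness of $\chi\otimes\psi$ for the minimal norm, and the compact-to-Hausdorff upgrade. One small caveat is that in your naturality check $f$ and $g$ should be unital, so that $\Delta(f)$ and $\Delta(g)$ land in the character spaces; this is already implicit in Lemma~\ref{lem:naturality}.
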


\begin{proof}
For unital commutative C*-algebras, the minimal tensor product coincides with the maximal tensor product, and the character space satisfies $\Delta(A \otimes B) \cong \Delta(A) \times \Delta(B)$. The homeomorphism $\Phi_{A,B}$ is defined as follows. For $\omega \in \Delta(A \otimes B)$, define $\chi_\omega: A \to \mathbb{C}$ by $\chi_\omega(a) = \omega(a \otimes 1_B)$ and $\psi_\omega: B \to \mathbb{C}$ by $\psi_\omega(b) = \omega(1_A \otimes b)$. Then $\chi_\omega \in \Delta(A)$, $\psi_\omega \in \Delta(B)$, and the map $\omega \mapsto (\chi_\omega, \psi_\omega)$ is a homeomorphism. Its inverse sends $(\chi, \psi) \in \Delta(A) \times \Delta(B)$ to $\chi \otimes \psi \in \Delta(A \otimes B)$ defined by $(\chi \otimes \psi)(a \otimes b) = \chi(a)\psi(b)$.

For any $a \otimes b \in A \otimes B$ and $(\chi, \psi) \in \Delta(A) \times \Delta(B)$,
\[
\Gamma_{A \otimes B}(a \otimes b)(\Phi_{A,B}^{-1}(\chi, \psi)) = (\chi \otimes \psi)(a \otimes b) = \chi(a)\psi(b).
\]
Thus the claimed identity holds.
\end{proof}

\begin{lemma}[Gelfand Transform Commutes with Tensor Products]
\label{lem:tensor-commutation}
Let $A$ and $B$ be unital commutative C*-algebras. Define the isomorphism
\[
\Psi_{A,B}: C(\Delta(A)) \otimes C(\Delta(B)) \longrightarrow C(\Delta(A) \times \Delta(B))
\]
by $\Psi_{A,B}(f \otimes g)(\chi, \psi) = f(\chi)g(\psi)$, extended linearly and continuously. Then the following diagram commutes:
\[
\begin{tikzcd}
A \otimes B \arrow[r, "\Gamma_A \otimes \Gamma_B"] \arrow[d, "\Gamma_{A \otimes B}"] & C(\Delta(A)) \otimes C(\Delta(B)) \arrow[d, "\Psi_{A,B}"] \\
C(\Delta(A \otimes B)) \arrow[r, "\Phi_{A,B}^*"] & C(\Delta(A) \times \Delta(B))
\end{tikzcd}
\]
where $\Phi_{A,B}^*: C(\Delta(A \otimes B)) \to C(\Delta(A) \times \Delta(B))$ is the isomorphism induced by the homeomorphism $\Phi_{A,B}$ from Lemma~\ref{lem:tensor-spectrum}, i.e., $\Phi_{A,B}^*(h) = h \circ \Phi_{A,B}^{-1}$.
\end{lemma}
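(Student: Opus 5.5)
We will verify the commutativity $\Phi_{A,B}^* \circ \Gamma_{A\otimes B} = \Psi_{A,B} \circ (\Gamma_A \otimes \Gamma_B)$ first on elementary tensors and then extend by linearity and continuity. All four maps are bounded linear maps between C*-algebras, so agreement on a dense subset suffices.

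\emph{Elementary tensors.} Fix $a \in A$, $b \in B$ and $(\chi,\psi) \in \Delta(A)\times\Delta(B)$. For the top-right route, the definition of $\Psi_{A,B}$ gives
\[
\Psi_{A,B}\bigl(\Gamma_A(a)\otimes\Gamma_B(b)\bigr)(\chi,\psi) = \Gamma_A(a)(\chi)\,\Gamma_B(b)(\psi) = \chi(a)\psi(b).
\]
For the left-bottom route, the definition $\Phi_{A,B}^*(h) = h\circ\Phi_{A,B}^{-1}$ together with Lemma~\ref{lem:tensor-spectrum} gives
\[
\Phi_{A,B}^*\bigl(\Gamma_{A\otimes B}(a\otimes b)\bigr)(\chi,\psi) = \Gamma_{A\otimes B}(a\otimes b)\bigl(\Phi_{A,B}^{-1}(\chi,\psi)\bigr) = \chi(a)\psi(b).
\]
The two routes therefore agree on every elementary tensor.

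\emph{Algebraic tensor product.} Both composites are linear, and $\Gamma_A\otimes\Gamma_B$ is defined on the algebraic tensor product $A\odot B$ by linear extension. Hence the two composites agree on all of $A\odot B$.

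\emph{Extension to $A\otimes B$.} This is the only step with real content. We need each map to be bounded and well defined on the minimal tensor product. $\Gamma_{A\otimes B}$ is isometric by Lemma~\ref{lem:gn-isomorphism}, and $\Phi_{A,B}^*$ is an isometric $*$-isomorphism because $\Phi_{A,B}$ is a homeomorphism. For $\Gamma_A\otimes\Gamma_B$, we invoke the standard fact that a tensor product of $*$-isomorphisms extends to an isometric $*$-isomorphism of minimal tensor products. Commutativity of the algebras gives nuclearity, so no ambiguity in the choice of norm arises. $\Psi_{A,B}$ is the canonical identification $C(X)\otimes_{\min} C(Y)\cong C(X\times Y)$. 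Its image is dense by Stone--Weierstrass, since sums of products $f(\chi)g(\psi)$ form a unital self-adjoint subalgebra separating points of the compact space $\Delta(A)\times\Delta(B)$. It is isometric because the minimal norm on $C(X)\odot C(Y)$ coincides with the sup norm on $X\times Y$.

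Once boundedness is established, the two bounded maps agree on the dense subspace $A\odot B$ and hence on all of $A\otimes B$, so the diagram commutes. The main obstacle is justifying that $\Psi_{A,B}$ (and $\Gamma_A\otimes\Gamma_B$) are isometric on the completed tensor product. If the sup-norm identification is not available, we will instead cite the spatial-norm characterization via product states: the characters $\chi\otimes\psi$ give the norm of commutative elements in $C(X)\odot C(Y)$.
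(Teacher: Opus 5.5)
Your proposal is correct and matches the paper's proof: evaluate both composites on an elementary tensor $a\otimes b$ at $(\chi,\psi)$, obtain $\chi(a)\psi(b)$ in each case via Lemma~\ref{lem:tensor-spectrum}, and extend by linearity and density. The only difference is that you justify why $\Gamma_A\otimes\Gamma_B$ and $\Psi_{A,B}$ are bounded on the completed (minimal) tensor product, using nuclearity, Stone--Weierstrass and the sup-norm identification; the paper takes this for granted when it calls both composites continuous.
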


\begin{proof}
Take an elementary tensor $a \otimes b \in A \otimes B$ and $(\chi, \psi) \in \Delta(A) \times \Delta(B)$. Compute:
\[
(\Psi_{A,B} \circ (\Gamma_A \otimes \Gamma_B))(a \otimes b)(\chi, \psi) = \Psi_{A,B}(\Gamma_A(a) \otimes \Gamma_B(b))(\chi, \psi) = \Gamma_A(a)(\chi) \cdot \Gamma_B(b)(\psi) = \chi(a)\psi(b).
\]
On the other hand,
\[
(\Phi_{A,B}^* \circ \Gamma_{A \otimes B})(a \otimes b)(\chi, \psi) = \Gamma_{A \otimes B}(a \otimes b)(\Phi_{A,B}^{-1}(\chi, \psi)) = (\chi \otimes \psi)(a \otimes b) = \chi(a)\psi(b),
\]
where the last equality uses Lemma~\ref{lem:tensor-spectrum}. Since both maps are continuous $*$-homomorphisms and agree on elementary tensors, they agree on the dense linear span of elementary tensors, and by continuity on all of $A \otimes B$. Thus $\Psi_{A,B} \circ (\Gamma_A \otimes \Gamma_B) = \Phi_{A,B}^* \circ \Gamma_{A \otimes B}$.
\end{proof}

\begin{lemma}[Contraction Compatibility]
\label{lem:contraction-compatibility}
Let $A$ be a unital commutative C*-algebra and let $\chi_0 \in \Delta(A)$ be a fixed character. Define the contraction map $\operatorname{Tr}_A: A \otimes A \to \mathbb{C}$ by $\operatorname{Tr}_A(a \otimes b) = \chi_0(ab)$, extended linearly. Define $\operatorname{Tr}_{C(\Delta(A))}: C(\Delta(A)) \otimes C(\Delta(A)) \to \mathbb{C}$ by $\operatorname{Tr}_{C(\Delta(A))}(f \otimes g) = f(\chi_0)g(\chi_0)$, extended linearly. Then the following diagram commutes:
\[
\begin{tikzcd}
A \otimes A \arrow[r, "\Gamma_A \otimes \Gamma_A"] \arrow[d, "\operatorname{Tr}_A"] & C(\Delta(A)) \otimes C(\Delta(A)) \arrow[d, "\operatorname{Tr}_{C(\Delta(A))}"] \\
\mathbb{C} \arrow[r, "\Gamma_{\mathbb{C}}"] & \mathbb{C}
\end{tikzcd}
\]
where $\Gamma_{\mathbb{C}}: \mathbb{C} \to C(\Delta(\mathbb{C})) \cong \mathbb{C}$ is the identity isomorphism.
\end{lemma}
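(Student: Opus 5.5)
The plan is to verify commutativity on elementary tensors and then extend by linearity and continuity, as in the proof of Lemma~\ref{lem:tensor-commutation}. The key observation is that both contraction maps are, in disguise, the product character $\chi_0\otimes\chi_0$ of Lemma~\ref{lem:tensor-spectrum}.

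\emph{Well-definedness.} Since $\chi_0$ is multiplicative, $\chi_0(ab)=\chi_0(a)\chi_0(b)$. Hence the assignment $(a,b)\mapsto\chi_0(ab)$ is bilinear, so $\operatorname{Tr}_A$ is well defined on the algebraic tensor product $A\odot A$ by the universal property. Moreover $\operatorname{Tr}_A$ agrees there with the functional $(\chi_0\otimes\chi_0)(a\otimes b)=\chi_0(a)\chi_0(b)$. By Lemma~\ref{lem:tensor-spectrum}, $\chi_0\otimes\chi_0=\Phi_{A,A}^{-1}(\chi_0,\chi_0)$ is a character of $A\otimes A$. It is therefore bounded of norm $1$, and it provides the continuous extension of $\operatorname{Tr}_A$ to the minimal tensor product.

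Similarly, $\operatorname{Tr}_{C(\Delta(A))}(f\otimes g)=f(\chi_0)g(\chi_0)$ is the tensor product of two evaluation characters $\mathrm{ev}_{\chi_0}$ on $C(\Delta(A))$. Equivalently, under $\Psi_{A,A}$ from Lemma~\ref{lem:tensor-commutation}, it is evaluation at the point $(\chi_0,\chi_0)\in\Delta(A)\times\Delta(A)$. So it too is a bounded character.

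\emph{Commutation on elementary tensors.} For $a\otimes b$ I compute
\[
\operatorname{Tr}_{C(\Delta(A))}\bigl((\Gamma_A\otimes\Gamma_A)(a\otimes b)\bigr)=\Gamma_A(a)(\chi_0)\,\Gamma_A(b)(\chi_0)=\chi_0(a)\chi_0(b)=\chi_0(ab),
\]
using the definition of $\Gamma_A$ in Lemma~\ref{lem:gn-isomorphism} and multiplicativity of $\chi_0$. The other path is $\Gamma_{\mathbb{C}}(\operatorname{Tr}_A(a\otimes b))=\chi_0(ab)$, since $\Gamma_{\mathbb{C}}$ is the identity. The two values agree.

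\emph{Extension.} Both composites are linear and agree on elementary tensors, so they agree on the dense span of elementary tensors. Each composite is continuous: $\Gamma_A\otimes\Gamma_A$ is an isometric $*$-isomorphism by Lemma~\ref{lem:gn-isomorphism} and functoriality of the minimal tensor product, and the two traces are characters. Hence the composites agree on all of $A\otimes A$.

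The main obstacle is this continuity step. As literally stated, $\operatorname{Tr}_A$ is only given on elementary tensors, and in general a contraction $a\otimes b\mapsto\phi(ab)$ need not be bounded for the minimal norm. I will resolve this by exploiting multiplicativity of $\chi_0$ to identify $\operatorname{Tr}_A$ with the product character, exactly as described above.
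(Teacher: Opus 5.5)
Your proposal is correct and follows the same route as the paper. The paper's proof consists exactly of your elementary-tensor computation, showing that both composites send $a\otimes b$ to $\chi_0(a)\chi_0(b)=\chi_0(ab)$. Your extra step identifies $\operatorname{Tr}_A$ with the product character $\chi_0\otimes\chi_0$ (and $\operatorname{Tr}_{C(\Delta(A))}$ with evaluation at $(\chi_0,\chi_0)$); this soundly justifies boundedness and the extension from the algebraic to the completed tensor product, a point the paper leaves implicit.
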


\begin{proof}
For any $a, b \in A$,
\[
\Gamma_{\mathbb{C}}(\operatorname{Tr}_A(a \otimes b)) = \Gamma_{\mathbb{C}}(\chi_0(ab)) = \chi_0(ab),
\]
since $\Gamma_{\mathbb{C}}$ is the identity on $\mathbb{C}$ (identifying $\mathbb{C}$ with $C(\Delta(\mathbb{C}))$ via the Gelfand transform, where $\Delta(\mathbb{C})$ is a singleton).

On the other hand,
\[
\operatorname{Tr}_{C(\Delta(A))}((\Gamma_A \otimes \Gamma_A)(a \otimes b)) = \operatorname{Tr}_{C(\Delta(A))}(\Gamma_A(a) \otimes \Gamma_A(b)) = \Gamma_A(a)(\chi_0) \cdot \Gamma_A(b)(\chi_0).
\]
By definition of $\Gamma_A$, $\Gamma_A(a)(\chi_0) = \chi_0(a)$ and $\Gamma_A(b)(\chi_0) = \chi_0(b)$. Hence
\[
\operatorname{Tr}_{C(\Delta(A))}((\Gamma_A \otimes \Gamma_A)(a \otimes b)) = \chi_0(a)\chi_0(b) = \chi_0(ab).
\]
Thus $\Gamma_{\mathbb{C}} \circ \operatorname{Tr}_A = \operatorname{Tr}_{C(\Delta(A))} \circ (\Gamma_A \otimes \Gamma_A)$.
\end{proof}

\begin{lemma}[Induced Map on Spectral Data]
\label{lem:induced-map}
Let $\mathcal{E}: \bigotimes_{v \in V} A_v \to \bigotimes_{v \in V} A_v$ be a linear map constructed from $*$-homomorphisms, tensor products, and contractions. Define $\Gamma_*(\mathcal{E})$ as the map obtained by applying $\Gamma_{A_v}$ to each node algebra, replacing each $*$-homomorphism $\tau$ with $\widetilde{\tau}$ as in Lemma~\ref{lem:naturality}, and replacing each contraction $\operatorname{Tr}_A$ with $\operatorname{Tr}_{C(\Delta(A))}$ as in Lemma~\ref{lem:contraction-compatibility}. Then:
\[
\Gamma_*(\mathcal{E}) \circ \Gamma_{\mathcal{N}} = \Gamma_{\mathcal{N}} \circ \mathcal{E},
\]
where $\Gamma_{\mathcal{N}} = \bigotimes_{v \in V} \Gamma_{A_v}$.
\end{lemma}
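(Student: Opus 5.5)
The natural approach is structural induction on the expression tree that builds $\mathcal{E}$ from its three kinds of primitives: $*$-homomorphisms, tensor products, and contractions. The induction proves one statement, uniformly: for every map $\mathcal{E}'$ in the class built so far, the naturality square $\Gamma_*(\mathcal{E}') \circ \Gamma_{\mathrm{dom}} = \Gamma_{\mathrm{cod}} \circ \mathcal{E}'$ commutes. Here $\Gamma_{\mathrm{dom}}$ and $\Gamma_{\mathrm{cod}}$ are the tensor products of the relevant Gelfand transforms on domain and codomain. The stated identity with $\Gamma_{\mathcal{N}} = \bigotimes_{v} \Gamma_{A_v}$ is then the special case where domain and codomain are both $\bigotimes_{v\in V} A_v$.

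The base cases are already in hand. For a $*$-homomorphism $\tau$, Lemma~\ref{lem:naturality} gives $\widetilde{\tau} \circ \Gamma_A = \Gamma_B \circ \tau$. For a contraction, Lemma~\ref{lem:contraction-compatibility} gives $\Gamma_{\mathbb{C}} \circ \operatorname{Tr}_A = \operatorname{Tr}_{C(\Delta(A))} \circ (\Gamma_A\otimes\Gamma_A)$. Identity maps are trivial.

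There are two inductive steps.
\begin{enumerate}
\item \emph{Sequential composition.} If squares commute for $\mathcal{E}_1$ and $\mathcal{E}_2$, one pastes them, using $\Gamma_*(\mathcal{E}_2\circ\mathcal{E}_1) = \Gamma_*(\mathcal{E}_2)\circ\Gamma_*(\mathcal{E}_1)$, which holds by the definition of $\Gamma_*$ as a primitive-wise substitution.
\item \emph{Tensor product.} For $\mathcal{E}_1\otimes\mathcal{E}_2$ one checks the square on elementary tensors $x\otimes y$: both sides equal $\Gamma(\mathcal{E}_1 x)\otimes\Gamma(\mathcal{E}_2 y)$. One then extends by linearity and continuity, since all maps involved are bounded on the minimal tensor product.
\end{enumerate}
Where the target is to be read as functions on a product of character spaces, Lemma~\ref{lem:tensor-commutation} supplies the identification $\Psi_{A,B}\circ(\Gamma_A\otimes\Gamma_B)=\Phi^*_{A,B}\circ\Gamma_{A\otimes B}$. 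This makes the two readings of $\Gamma_{\mathcal{N}}$ agree.

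The main obstacle is that $\Gamma_*$ must be well defined. The same $\mathcal{E}$ could arise from different expressions in the primitives, and $\Gamma_*(\mathcal{E})$ must not depend on the chosen expression. I would try to sidestep this rather than prove it directly: since each $\Gamma_{A_v}$ is bijective (Lemma~\ref{lem:gn-isomorphism}), $\Gamma_{\mathcal{N}}$ is an isomorphism. The commuting square then forces
\[
\Gamma_*(\mathcal{E}) = \Gamma_{\mathcal{N}}\circ\mathcal{E}\circ\Gamma_{\mathcal{N}}^{-1}
\]
for every expression, so $\Gamma_*(\mathcal{E})$ is independent of the presentation automatically.

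A secondary concern is the contraction primitive. It lands in $\mathbb{C}$, so it alters the tensor shape, and the inductive hypothesis must therefore be phrased for maps between arbitrary finite tensor products of the $A_v$ and $\mathbb{C}$, not only endomorphisms. I would also verify continuity of $\operatorname{Tr}_A$ on the completed tensor product, where $\chi_0\circ m$ is a character composed with multiplication.
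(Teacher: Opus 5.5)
Your proposal is correct and follows the paper's proof essentially step for step. It uses structural induction, with the $*$-homomorphism and contraction base cases supplied by Lemmas~\ref{lem:naturality} and~\ref{lem:contraction-compatibility}, a pasting argument for composition, and an elementary-tensor check (via Lemma~\ref{lem:tensor-commutation}) for tensor products. Your additional remarks make explicit two points the paper leaves tacit: the induction must be phrased for maps between arbitrary tensor shapes, since contractions land in $\mathbb{C}$; and $\Gamma_*(\mathcal{E})$ is independent of the chosen presentation because $\Gamma_{\mathcal{N}}$ is invertible, so $\Gamma_*(\mathcal{E})=\Gamma_{\mathcal{N}}\circ\mathcal{E}\circ\Gamma_{\mathcal{N}}^{-1}$, which is exactly the form the paper then uses in Lemma~\ref{lem:spectrum-preservation}.
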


\begin{proof}
We prove by structural induction on the construction of $\mathcal{E}$.

\emph{Base case: $\mathcal{E} = \tau$ is a single $*$-homomorphism}. Then Lemma~\ref{lem:naturality} gives $\widetilde{\tau} \circ \Gamma_{A_{s(e)}} = \Gamma_{A_{t(e)}} \circ \tau$. Taking the tensor product with identity maps on other nodes yields the desired commutation.

\emph{Base case: $\mathcal{E} = \operatorname{Tr}_A$ is a contraction}. Then Lemma~\ref{lem:contraction-compatibility} gives $\Gamma_{\mathbb{C}} \circ \operatorname{Tr}_A = \operatorname{Tr}_{C(\Delta(A))} \circ (\Gamma_A \otimes \Gamma_A)$. Since $\Gamma_{\mathbb{C}}$ is the identity on $\mathbb{C}$, this establishes the commutation.

\emph{Inductive step: $\mathcal{E} = \mathcal{E}_2 \circ \mathcal{E}_1$}. Assume the claim holds for $\mathcal{E}_1$ and $\mathcal{E}_2$. Then:
\[
\Gamma_*(\mathcal{E}_2 \circ \mathcal{E}_1) \circ \Gamma_{\mathcal{N}} = (\Gamma_*(\mathcal{E}_2) \circ \Gamma_*(\mathcal{E}_1)) \circ \Gamma_{\mathcal{N}} = \Gamma_*(\mathcal{E}_2) \circ (\Gamma_*(\mathcal{E}_1) \circ \Gamma_{\mathcal{N}}).
\]
By the induction hypothesis, $\Gamma_*(\mathcal{E}_1) \circ \Gamma_{\mathcal{N}} = \Gamma_{\mathcal{N}} \circ \mathcal{E}_1$. Substituting:
\[
\Gamma_*(\mathcal{E}_2) \circ (\Gamma_{\mathcal{N}} \circ \mathcal{E}_1) = (\Gamma_*(\mathcal{E}_2) \circ \Gamma_{\mathcal{N}}) \circ \mathcal{E}_1.
\]
Applying the induction hypothesis to $\mathcal{E}_2$ gives $\Gamma_*(\mathcal{E}_2) \circ \Gamma_{\mathcal{N}} = \Gamma_{\mathcal{N}} \circ \mathcal{E}_2$. Hence:
\[
\Gamma_*(\mathcal{E}_2 \circ \mathcal{E}_1) \circ \Gamma_{\mathcal{N}} = (\Gamma_{\mathcal{N}} \circ \mathcal{E}_2) \circ \mathcal{E}_1 = \Gamma_{\mathcal{N}} \circ (\mathcal{E}_2 \circ \mathcal{E}_1).
\]

\emph{Inductive step: $\mathcal{E} = \mathcal{E}_1 \otimes \mathcal{E}_2$}. For tensor products, the induced map $\Gamma_*(\mathcal{E}_1 \otimes \mathcal{E}_2)$ is defined as $\Gamma_*(\mathcal{E}_1) \otimes \Gamma_*(\mathcal{E}_2)$ using the isomorphism from Lemma~\ref{lem:tensor-commutation}. The commutation follows from the induction hypothesis and Lemma~\ref{lem:tensor-commutation}. Explicitly, for any $x_1 \otimes x_2$ in the domain:
\[
\Gamma_*(\mathcal{E}_1 \otimes \mathcal{E}_2)(\Gamma_{\mathcal{N}}(x_1 \otimes x_2)) = (\Gamma_*(\mathcal{E}_1) \otimes \Gamma_*(\mathcal{E}_2))(\Gamma_{A_1}(x_1) \otimes \Gamma_{A_2}(x_2)).
\]
By the induction hypothesis, $\Gamma_*(\mathcal{E}_i)(\Gamma_{A_i}(x_i)) = \Gamma_{A_i}(\mathcal{E}_i(x_i))$ for $i = 1,2$. Thus the expression equals $\Gamma_{A_1}(\mathcal{E}_1(x_1)) \otimes \Gamma_{A_2}(\mathcal{E}_2(x_2)) = \Gamma_{\mathcal{N}}(\mathcal{E}_1(x_1) \otimes \mathcal{E}_2(x_2)) = \Gamma_{\mathcal{N}}((\mathcal{E}_1 \otimes \mathcal{E}_2)(x_1 \otimes x_2))$.

Since every network evaluation map $\mathcal{E}_{\mathcal{N}}$ is built from finitely many applications of composition, tensor product, and contraction starting from $*$-homomorphisms, the induction covers all cases.
\end{proof}

\begin{lemma}[Preservation of Spectra and Spectral Radii]
\label{lem:spectrum-preservation}
Let $\mathcal{E}: \bigotimes_{v \in V} A_v \to \bigotimes_{v \in V} A_v$ be a linear map constructed as above, and let $\partial^{\mathrm{spec}}\mathcal{E}$ denote its spectral derivative (linearization). Then:
\[
\sigma(\Gamma_*(\partial^{\mathrm{spec}}\mathcal{E})) = \sigma(\partial^{\mathrm{spec}}\mathcal{E}), \qquad \rho(\Gamma_*(\partial^{\mathrm{spec}}\mathcal{E})) = \rho(\partial^{\mathrm{spec}}\mathcal{E}),
\]
where $\Gamma_*$ acts componentwise via the isometric $*$-isomorphisms $\Gamma_{A_v}$.
\end{lemma}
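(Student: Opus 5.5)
The plan is to reduce the statement to similarity invariance of the spectrum. First I will identify the linearization $\partial^{\mathrm{spec}}\mathcal{E}$ as a map of the same type as $\mathcal{E}$. Since $\mathcal{E}$ is linear and built from $*$-homomorphisms, tensor products and contractions, the first spectral derivative of a linear map is the map itself. This was used already in Theorem~\ref{thm:small-gain-recovery-feedback}, via SOC II, Proposition~5: the first cross-effect of a linear functor is the functor. Hence $\partial^{\mathrm{spec}}\mathcal{E} = \mathcal{E}$ is again a bounded linear operator of the form covered by Lemma~\ref{lem:induced-map}. Lemma~\ref{lem:induced-map} then gives the intertwining identity
\[
\Gamma_*(\partial^{\mathrm{spec}}\mathcal{E}) \circ \Gamma_{\mathcal{N}} = \Gamma_{\mathcal{N}} \circ \partial^{\mathrm{spec}}\mathcal{E}.
\]

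The second step is to show that $\Gamma_{\mathcal{N}} = \bigotimes_{v} \Gamma_{A_v}$ is a bounded invertible operator with bounded inverse, in fact an isometry. By Lemma~\ref{lem:gn-isomorphism}, each $\Gamma_{A_v}$ is an isometric $*$-isomorphism. Applying Lemma~\ref{lem:tensor-commutation} inductively over the finite vertex set $V$ identifies $\bigotimes_v C(\Delta(A_v))$ with $C(\prod_v \Delta(A_v))$. Under this identification $\Gamma_{\mathcal{N}}$ corresponds to $\Gamma_{\bigotimes A_v}$ composed with the homeomorphism-induced isomorphism $\Phi^*$. Both are isometric $*$-isomorphisms of C*-algebras, so $\Gamma_{\mathcal{N}}$ is a surjective isometry, and in particular it is invertible.

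Given this, the intertwining relation becomes a similarity:
\[
\Gamma_*(\partial^{\mathrm{spec}}\mathcal{E}) = \Gamma_{\mathcal{N}} \circ \partial^{\mathrm{spec}}\mathcal{E} \circ \Gamma_{\mathcal{N}}^{-1}.
\]
The spectrum is then preserved by the standard resolvent argument. If $T' = S T S^{-1}$, then
\[
(z - T')^{-1} = S (z - T)^{-1} S^{-1},
\]
so the two resolvent sets coincide and $\sigma(T') = \sigma(T)$. Equality of the spectral radii follows immediately, since $\rho$ is the supremum of $|z|$ over the spectrum. As a cross-check, Gelfand's formula gives the same conclusion: $\|T'^n\| = \|S T^n S^{-1}\| = \|T^n\|$ because $S$ is isometric.

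The main obstacle is the second step, making the tensor-product isometry rigorous. The relevant norm on $\bigotimes_v A_v$ must be the minimal C*-norm, which coincides with the maximal one by commutativity, as noted in Lemma~\ref{lem:tensor-spectrum}. I must also check that $\Gamma_{A_1} \otimes \Gamma_{A_2}$ extends from the algebraic tensor product to a bounded map on the completion. I will handle this by not working with $\Gamma_{A_1}\otimes\Gamma_{A_2}$ directly. Instead I will use the commuting square of Lemma~\ref{lem:tensor-commutation} to express it as $\Psi_{A,B}^{-1}\circ\Phi^*_{A,B}\circ\Gamma_{A\otimes B}$, a composite of known isometric $*$-isomorphisms. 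Boundedness and isometry are then inherited rather than proved from scratch.

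A secondary subtlety is that contractions land in $\mathbb{C}$, so $\mathcal{E}$ may change the underlying space. I will restrict to endomorphisms, as the statement does, and use the source and target versions of $\Gamma_{\mathcal{N}}$ in the similarity.
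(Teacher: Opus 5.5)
Your proposal is correct and follows the paper's own argument: Lemma~\ref{lem:induced-map} turns the intertwining relation into the similarity $\Gamma_*(\partial^{\mathrm{spec}}\mathcal{E}) = \Gamma_{\mathcal{N}} \circ \partial^{\mathrm{spec}}\mathcal{E} \circ \Gamma_{\mathcal{N}}^{-1}$ by the isometric isomorphism $\Gamma_{\mathcal{N}}$, and spectra and spectral radii are invariant under such similarities. Your additional steps make rigorous what the paper asserts in one line: noting that $\partial^{\mathrm{spec}}\mathcal{E} = \mathcal{E}$ for linear $\mathcal{E}$ so that Lemma~\ref{lem:induced-map} applies, checking that $\Gamma_{\mathcal{N}}$ is a surjective isometry on the completed tensor product, and writing out the resolvent identity.
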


\begin{proof}
By Lemma~\ref{lem:induced-map}, $\Gamma_*(\partial^{\mathrm{spec}}\mathcal{E})$ is obtained from $\partial^{\mathrm{spec}}\mathcal{E}$ by conjugating by the isometric $*$-isomorphism $\Gamma_{\mathcal{N}} = \bigotimes_v \Gamma_{A_v}$. Specifically,
\[
\Gamma_*(\partial^{\mathrm{spec}}\mathcal{E}) = \Gamma_{\mathcal{N}} \circ (\partial^{\mathrm{spec}}\mathcal{E}) \circ \Gamma_{\mathcal{N}}^{-1}.
\]
Since $\Gamma_{\mathcal{N}}$ is an isometric $*$-isomorphism, it preserves the algebraic structure and the norm. For any operator $T$ and any invertible isometry $U$, $\sigma(UTU^{-1}) = \sigma(T)$ and $\rho(UTU^{-1}) = \rho(T)$. Applying this with $U = \Gamma_{\mathcal{N}}$ yields the desired equalities.
\end{proof}

\section*{Main Proposition}

\begin{proposition}[Compatibility of Propagation with Gelfand Representation]
\label{prop:gelfand-propagation}
Let $\mathcal{N}$ be an operadic network whose node algebras are unital commutative C*-algebras, and assume that the network evaluation map $\mathcal{E}_{\mathcal{N}}$ is defined functorially with respect to $*$-isomorphisms. Then the Gelfand representation preserves the propagation structure in the following sense:
\[
\mathcal{E}_{\Gamma(\mathcal{N})} \circ \Gamma_{\mathcal{N}} = \Gamma_* \circ \mathcal{E}_{\mathcal{N}},
\]
where:
\begin{itemize}
    \item $\Gamma_{\mathcal{N}} = \bigotimes_{v \in V} \Gamma_{A_v}$ is the componentwise Gelfand transform,
    \item $\Gamma(\mathcal{N})$ is the network obtained by replacing each node algebra $A_v$ with $C(\Delta(A_v))$ and each edge $*$-homomorphism $\tau_e$ with $\widetilde{\tau_e}$ as in Lemma~\ref{lem:naturality},
    \item $\mathcal{E}_{\Gamma(\mathcal{N})}$ is the evaluation map of the transformed network,
    \item $\Gamma_*$ denotes the induced map on spectral propagation data defined by componentwise conjugation by $\Gamma_{\mathcal{N}}$.
\end{itemize}

Consequently, spectral data computed in the algebraic representation agree, up to the Gelfand isomorphism, with spectral data computed in the continuous function representation.
\end{proposition}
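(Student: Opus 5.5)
The plan is to reduce the proposition to the chain of lemmas just established, principally Lemma~\ref{lem:induced-map} and Lemma~\ref{lem:spectrum-preservation}. The whole argument rests on fixing the meaning of $\mathcal{E}_{\Gamma(\mathcal{N})}$ concretely. I will take it to be the evaluation map assembled, by the same recipe as in Theorem~\ref{thm:network-evaluation}, from the transformed node algebras $C(\Delta(A_v))$ and the transformed couplings $\widetilde{\tau_e}$, using the same topological ordering and the same operadic assembly pattern. With this definition, $\mathcal{E}_{\Gamma(\mathcal{N})}$ coincides with $\Gamma_*(\mathcal{E}_{\mathcal{N}})$ as constructed in Lemma~\ref{lem:induced-map}. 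The stated identity then reduces to the intertwining relation
\[
\Gamma_*(\mathcal{E}_{\mathcal{N}}) \circ \Gamma_{\mathcal{N}} = \Gamma_{\mathcal{N}} \circ \mathcal{E}_{\mathcal{N}},
\]
where the right-hand $\Gamma_*$ in the proposition is interpreted as conjugation by $\Gamma_{\mathcal{N}}$ applied to the output.

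The key steps are as follows. \emph{First}, I will verify that $\Gamma(\mathcal{N})$ is again an admissible network. The couplings $\widetilde{\tau_e}$ are $*$-homomorphisms, and they compose correctly along paths because $\widetilde{g\circ f} = \widetilde g \circ \widetilde f$. This last identity follows from the contravariance $\Delta(g\circ f)=\Delta(f)\circ\Delta(g)$. Contractivity of cycles transfers because $\Gamma_{A_v}$ is isometric by Lemma~\ref{lem:gn-isomorphism}.

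\emph{Second}, I will treat the acyclic part. After the cycle resolution of Theorem~\ref{thm:network-evaluation}, the map $\mathcal{E}_{\mathcal{N}}$ is built from the couplings, tensor products and contractions. Lemma~\ref{lem:naturality} handles the couplings, Lemma~\ref{lem:tensor-commutation} the tensor products, and Lemma~\ref{lem:contraction-compatibility} the contractions. Lemma~\ref{lem:induced-map} then gives the intertwining relation by structural induction.

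\emph{Third}, I will treat the cycles. Let $A_c^*$ be the unique fixed point of $\tau_c$ (Theorem~\ref{thm:fixed-point-contractive}). Naturality gives $\widetilde{\tau_c}(\Gamma(A_c^*)) = \Gamma(\tau_c(A_c^*)) = \Gamma(A_c^*)$. By uniqueness of fixed points in the transformed network, $\Gamma(A_c^*)$ is therefore the resolved cycle algebra of $\Gamma(\mathcal{N})$. This is the same argument used in Part IV, Step 1, of the proof of Theorem~\ref{thm:network-evaluation}.

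\emph{Finally}, for the spectral consequence, I will apply Lemma~\ref{lem:spectrum-preservation}. The transformed derivative is $\Gamma_{\mathcal{N}}\,\partial^{\mathrm{spec}}\mathcal{E}_{\mathcal{N}}\,\Gamma_{\mathcal{N}}^{-1}$, which is a conjugation by an isometric isomorphism. Hence it has the same spectrum and the same spectral radius, and the statement that spectral data agree up to the Gelfand isomorphism follows.

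I expect the main obstacle to be the operadic assembly maps $\mathfrak{A}_j$. These are not obviously built only from $*$-homomorphisms, tensor products and contractions, which are the only constructions Lemma~\ref{lem:induced-map} covers. My first approach is to use the hypothesis that $\mathcal{E}_{\mathcal{N}}$ is functorial with respect to $*$-isomorphisms. Since $\Gamma_{A_v}$ is a $*$-isomorphism onto $C(\Delta(A_v))$, we may regard $\Gamma$ as a morphism of networks $\mathcal{N}\to\Gamma(\mathcal{N})$ in $\mathsf{OpNet}(P)$ in the sense of Definition~\ref{def:opnet-category}. Its edge-compatibility squares are exactly those of Lemma~\ref{lem:naturality}. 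Assembly compatibility (item 5 of that definition) then delivers the required commutation directly, without decomposing $\mathfrak{A}_j$. A secondary point is to check that the minimal tensor product used in Lemma~\ref{lem:tensor-spectrum} is the one underlying $\Gamma_{\mathcal{N}} = \bigotimes_v \Gamma_{A_v}$, so that $\Gamma_{\mathcal{N}}$ is itself an isometric $*$-isomorphism.
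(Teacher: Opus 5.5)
Your proposal is essentially the paper's own proof. Like the paper, you take $\mathcal{E}_{\Gamma(\mathcal{N})}=\Gamma_*(\mathcal{E}_{\mathcal{N}})$, assembled by the same pattern; you get the intertwining $\mathcal{E}_{\Gamma(\mathcal{N})}\circ\Gamma_{\mathcal{N}}=\Gamma_{\mathcal{N}}\circ\mathcal{E}_{\mathcal{N}}$ from Lemma~\ref{lem:induced-map} and the spectral claim from Lemma~\ref{lem:spectrum-preservation}. Your admissibility check and fixed-point argument for cycles fill steps the paper skips, since it only asserts that every evaluation map is built from $*$-homomorphisms, tensor products and contractions and never addresses cycle resolution. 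One caution: your fallback for the assembly maps is circular as phrased. Item~5 of Definition~\ref{def:opnet-category} must be verified before $\Gamma$ counts as a morphism of networks, so it cannot itself deliver the commutation. The fallback works only if you define the assembly structure of $\Gamma(\mathcal{N})$ by transport along the $\Gamma_{A_v}$, or read the functoriality hypothesis as asserting the intertwining outright.
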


\begin{proof}
By definition, $\mathcal{E}_{\Gamma(\mathcal{N})}$ is constructed from the transformed edge morphisms $\widetilde{\tau_e}$ using the same operadic composition pattern as $\mathcal{E}_{\mathcal{N}}$ uses $\tau_e$. That is, $\mathcal{E}_{\Gamma(\mathcal{N})} = \Gamma_*(\mathcal{E}_{\mathcal{N}})$, where $\Gamma_*$ acts by replacing each $*$-homomorphism $\tau_e$ with $\widetilde{\tau_e}$ and each contraction $\operatorname{Tr}_A$ with $\operatorname{Tr}_{C(\Delta(A))}$, exactly as defined in Lemma~\ref{lem:induced-map}.

Lemma~\ref{lem:induced-map} then directly yields:
\[
\mathcal{E}_{\Gamma(\mathcal{N})} \circ \Gamma_{\mathcal{N}} = \Gamma_*(\mathcal{E}_{\mathcal{N}}) \circ \Gamma_{\mathcal{N}} = \Gamma_{\mathcal{N}} \circ \mathcal{E}_{\mathcal{N}}.
\]
Thus the propagation structure commutes with the Gelfand transform.

For the spectral data, consider the linearization $\partial^{\mathrm{spec}}\mathcal{E}_{\mathcal{N}}$. Applying $\Gamma_*$ and using Lemma~\ref{lem:induced-map}:
\[
\Gamma_*(\partial^{\mathrm{spec}}\mathcal{E}_{\mathcal{N}}) = \partial^{\mathrm{spec}}(\Gamma_*(\mathcal{E}_{\mathcal{N}})) = \partial^{\mathrm{spec}}\mathcal{E}_{\Gamma(\mathcal{N})}.
\]
By Lemma~\ref{lem:spectrum-preservation}, $\sigma(\partial^{\mathrm{spec}}\mathcal{E}_{\Gamma(\mathcal{N})}) = \sigma(\partial^{\mathrm{spec}}\mathcal{E}_{\mathcal{N}})$ and $\rho(\partial^{\mathrm{spec}}\mathcal{E}_{\Gamma(\mathcal{N})}) = \rho(\partial^{\mathrm{spec}}\mathcal{E}_{\mathcal{N}})$. Hence the spectrum and spectral radius are invariant under the Gelfand transform.

Therefore, spectral stability ($\rho < 1$) is preserved, and the spectral propagation data computed in the algebraic representation coincide with those computed in the continuous function representation under the Gelfand isomorphism.
\end{proof}

\begin{corollary}[Stability Equivalence under Gelfand Transform]
\label{cor:gelfand-stability}
A commutative network $\mathcal{N}$ is spectrally stable if and only if its Gelfand representation $\Gamma(\mathcal{N})$ is spectrally stable. Moreover:
\[
\rho(\partial^{\mathrm{spec}}\mathcal{E}_{\Gamma(\mathcal{N})}) = \rho(\partial^{\mathrm{spec}}\mathcal{E}_{\mathcal{N}}).
\]
\end{corollary}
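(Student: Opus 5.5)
The plan is to deduce the corollary directly from Proposition~\ref{prop:gelfand-propagation} and Lemma~\ref{lem:spectrum-preservation}, working entirely at the level of the linearized propagation operators. The first step is to record the intertwining identity
\[
\mathcal{E}_{\Gamma(\mathcal{N})} \circ \Gamma_{\mathcal{N}} = \Gamma_{\mathcal{N}} \circ \mathcal{E}_{\mathcal{N}}
\]
from Proposition~\ref{prop:gelfand-propagation}. Because $\Gamma_{\mathcal{N}} = \bigotimes_v \Gamma_{A_v}$ is an isometric $*$-isomorphism (Lemma~\ref{lem:gn-isomorphism} applied factorwise, with Lemma~\ref{lem:tensor-commutation} identifying the tensor product of function algebras), we can rewrite this identity as the conjugation
\[
\mathcal{E}_{\Gamma(\mathcal{N})} = \Gamma_{\mathcal{N}} \circ \mathcal{E}_{\mathcal{N}} \circ \Gamma_{\mathcal{N}}^{-1}.
\]

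Next, we differentiate this conjugation at the base point $A$ and at its image $\Gamma_{\mathcal{N}}(A)$. Since $\Gamma_{\mathcal{N}}$ is a bounded linear map, its spectral derivative is $\Gamma_{\mathcal{N}}$ itself; this is the linear case already used in Theorem~\ref{thm:small-gain-recovery-feedback}. The chain rule (SOC II, Theorem~10) then gives
\[
\partial^{\mathrm{spec}}\mathcal{E}_{\Gamma(\mathcal{N})}\bigl(\Gamma_{\mathcal{N}}(A)\bigr) = \Gamma_{\mathcal{N}} \circ \partial^{\mathrm{spec}}\mathcal{E}_{\mathcal{N}}(A) \circ \Gamma_{\mathcal{N}}^{-1}.
\]
Lemma~\ref{lem:spectrum-preservation}, or equivalently the elementary fact that similarity by a bounded invertible operator preserves the spectrum, now yields equality of the spectra and hence the displayed equality of spectral radii.

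The \emph{if and only if} statement follows at once. Spectral stability is defined as $\rho(\partial^{\mathrm{spec}}\mathcal{E}) < 1$, as in Theorem~\ref{thm:feedback-stability} and the Covariant Stability Theorem, and the two spectral radii coincide. The reverse direction needs no separate argument, since $\Gamma_{\mathcal{N}}^{-1}$ is also an isometric $*$-isomorphism. Alternatively, one can invoke part~2 of Theorem~\ref{thm:covariant-stability}, because the Gelfand transform is an isometric equivalence.

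The main obstacle is the differentiation step. One must make sure that the base points correspond, that is, that we linearize $\mathcal{E}_{\Gamma(\mathcal{N})}$ at $\Gamma_{\mathcal{N}}(A)$ and not at some arbitrary point. One must also make sure that the spectral derivative commutes with conjugation by a linear isometry. I would handle this by noting that $\Gamma_{\mathcal{N}}$ has vanishing higher spectral derivatives, so the Faà di Bruno terms collapse to the first-order chain rule and the conjugation formula holds exactly.
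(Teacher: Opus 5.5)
Your proposal is correct and is essentially the paper's argument. The paper's proof cites Proposition~\ref{prop:gelfand-propagation} and the invariance of the spectral radius under conjugation by the isometric isomorphism $\Gamma_*$ (Lemma~\ref{lem:spectrum-preservation}); your explicit chain-rule step, with base points matched at $A$ and $\Gamma_{\mathcal{N}}(A)$, just makes precise the identity $\Gamma_*(\partial^{\mathrm{spec}}\mathcal{E}_{\mathcal{N}}) = \partial^{\mathrm{spec}}\mathcal{E}_{\Gamma(\mathcal{N})}$, which the paper asserts without comment.
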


\begin{proof}
From Proposition~\ref{prop:gelfand-propagation}, the spectral propagation data are isomorphic under $\Gamma_*$. Since $\Gamma_*$ is an isometric isomorphism (induced by the componentwise Gelfand transforms), $\rho(\Gamma_*(T)) = \rho(T)$ for any operator $T$ in the spectral propagation algebra.
\end{proof}

This creates a direct bridge between:
\begin{itemize}
    \item algebraic spectral theory (on commutative C*-algebras),
    \item topological propagation theory (on continuous function algebras),
    \item functional representation methods.
\end{itemize}

\begin{example}[Gelfand Representation of a Commutative Feedback Network]
\label{ex:gelfand-feedback}
Consider a commutative feedback network where each node is a multiplication operator $M_f$ on $L^2(X)$ with $f \in C(X)$, where $X$ is a compact Hausdorff space. The Gelfand transform maps $M_f$ to the continuous function $f$ itself, with character space $\Delta(C(X)) \cong X$.

If the feedback propagation map $F$ acts pointwise on functions, i.e., $(Fg)(x) = \varphi_x(g(x))$ for some family of functions $\varphi_x$, then the spectral derivative $\partial^{\mathrm{spec}}F$ acts pointwise as $\partial_z \varphi_x$ evaluated at the relevant fixed point. In such cases, a sufficient condition for stability is:
\[
\sup_{x \in X} |\partial_z \varphi_x(z)| < 1
\]
on the invariant range of the feedback signal. This reduces the operator stability condition to a family of pointwise scalar stability conditions, one for each $x \in X$, illustrating the power of the Gelfand representation for analyzing commutative networks.
\end{example}

\begin{remark}
For general compact Hausdorff spaces $X$, functions need not be differentiable. The example above assumes additional structure (e.g., $X$ is a smooth manifold and $\varphi_x$ is differentiable) when discussing derivatives. In purely topological settings, stability conditions are expressed directly in terms of spectral radii of multiplication operators without differentiation.
\end{remark}

\subsection*{Summary Table}

\begin{center}
\begin{tabular}{|p{0.25\textwidth}|p{0.35\textwidth}|p{0.3\textwidth}|}
\hline
\textbf{Base Change} & \textbf{Mathematical Relationship} & \textbf{Preservation Result} \\
\hline
Quantization (Semiclassical) & Asymptotic correspondence & Semiclassical stability correspondence (as $\hbar \to 0$) under suitable conditions \\
\hline
Discretization & Spectral approximation & Stability preservation under admissible discretization schemes with spectral convergence \\
\hline
Gelfand Transform & Commutative $C^*$-algebra $\leftrightarrow$ continuous function representation & Exact stability equivalence (via isometric $*$-isomorphism) \\
\hline
\end{tabular}
\end{center}

\begin{remark}
The table summarizes the relationships established in the preceding subsections. Quantization and discretization provide \emph{asymptotic} or \emph{approximate} preservation of stability under specific assumptions, while the Gelfand transform yields an \emph{exact} isomorphism for commutative C*-algebras.
\end{remark}

\subsection*{The Principle of Functorial Compatibility}
\label{subsec:representation-independence-principle}

The examples of quantization, discretization, and the Gelfand transform illustrate a guiding principle underlying the SOC framework: spectral propagation is functorially compatible with admissible representation changes. We now formalize this principle.

\begin{theorem}[Functorial Compatibility of Spectral Propagation]
\label{thm:representation-independence}
Let
\[
\Phi:\mathcal{M}\longrightarrow \mathcal{M}'
\]
be an admissible strong monoidal functor between symmetric monoidal categories, and let $\mathcal{G}$ be an operadic operator network in $\mathcal{M}$ with underlying operad $P$ and propagation functor $F_{\mathcal{G}}$. 

Then spectral propagation is functorially compatible in the following sense:
\[
R(\Phi(\mathcal{G}))
\cong
\Phi_*\bigl(R(\mathcal{G})\bigr),
\]
where $R$ denotes any admissible SOC spectral propagation rule (Definition~\ref{def:reasonable-rule}) and $\Phi_*$ denotes the induced map on spectral data.

Equivalently, the SOC invariants transform covariantly:
\[
\sigma_{\Phi(P)}(\Phi(\mathcal{G}))
\cong
\Phi_*\bigl(\sigma_P(\mathcal{G})\bigr),
\qquad
\partial_*^{\mathrm{spec}}F_{\Phi(\mathcal{G})}
\cong
\Phi_*\bigl(\partial_*^{\mathrm{spec}}F_{\mathcal{G}}\bigr),
\qquad
\Sigma^{\mathrm{res}}_{\Phi(P)}(\Phi(\mathcal{G}))
\cong
\Phi_*\bigl(\Sigma^{\mathrm{res}}_{P}(\mathcal{G})\bigr).
\]

Therefore, stability, robustness, and spectral propagation conclusions obtained in one admissible representation transfer canonically to every admissible base-changed representation, up to the natural transformations induced by $\Phi$.
\end{theorem}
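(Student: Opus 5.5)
The plan is to derive the statement from results already established, without building new machinery. There are three ingredients. The first is the Covariant Stability Theorem (Theorem~\ref{thm:covariant-stability}), which gives covariance of $\sigma_P$ and $\Sigma^{\mathrm{res}}$ under an admissible $\Phi$. The second is Proposition~\ref{prop:covariance-propagation}, which gives commutation of $\mathcal{E}_{\mathcal{G}}$ with $\Phi$ and covariance of the sensitivity operator. The third is the Universality Theorem (Theorem~\ref{thm:universality}), which lets us pass from the SOC rule $R_{\mathrm{SOC}}$ to an arbitrary admissible rule $R$. Accordingly I would prove the three displayed invariant isomorphisms first and deduce the statement for $R$ afterwards.

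\textbf{Step 1: the three invariants.} The isomorphisms for $\sigma_{\Phi(P)}(\Phi(\mathcal{G}))$ and $\Sigma^{\mathrm{res}}_{\Phi(P)}(\Phi(\mathcal{G}))$ are exactly the two covariance statements of Theorem~\ref{thm:covariant-stability}. To use that theorem I must check that $\Phi$ preserves coproducts and admissible interfaces. Cocontinuity in Definition~\ref{def:admissible-base-change} handles coproducts. Interface preservation I would take as part of admissibility, and I will flag this as an assumption.

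For $\partial_*^{\mathrm{spec}}$, Proposition~\ref{prop:covariance-propagation}(2) gives $\mathcal{E}_{\Phi(\mathcal{G})}\circ\Phi=\Phi\circ\mathcal{E}_{\mathcal{G}}$. I would differentiate this identity to all orders. The spectral Taylor expansion (SOC II) is built from cross-effects, and a strong monoidal $\Phi$ carries these to cross-effects, with $\Phi(A^{\otimes k})\cong\Phi(A)^{\otimes k}$ by the coherence isomorphisms. Matching homogeneous layers then gives $\partial_k^{\mathrm{spec}}F_{\Phi(\mathcal{G})}\cong\Phi_*(\partial_k^{\mathrm{spec}}F_{\mathcal{G}})$ for every $k$. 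The case $k=1$ is Proposition~\ref{prop:covariance-propagation}(3).

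\textbf{Step 2: passing to an arbitrary rule $R$.} By Theorem~\ref{thm:universality} there is a natural isomorphism $\Theta_R\colon R_{\mathrm{SOC}}\Rightarrow R$ that commutes with base change. I would then chain
\[
R(\Phi(\mathcal{G}))\cong R_{\mathrm{SOC}}(\Phi(\mathcal{G}))=\operatorname{Spec}(\Phi(\mathcal{G}))\cong\Phi_*\operatorname{Spec}(\mathcal{G})\cong\Phi_*R(\mathcal{G}).
\]
The middle isomorphism comes from Step 1 combined with the Spectral Propagation Theorem (Theorem~\ref{thm:spectral-propagation}), which says $\operatorname{Spec}$ is determined by the triple. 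Alternatively, axiom (A3) for $R$ gives the same conclusion directly, and I would remark on this. The sentence about stability conclusions transferring then follows: $\rho$ and norm bounds are read off from $\partial^{\mathrm{spec}}$, and these are transported by $\Phi_*$ under the spectral radius invariance built into Definition~\ref{def:admissible-base-change}.

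\textbf{Main obstacle.} The hardest step will be the higher-order derivative claim in Step 1. Nothing earlier shows that $\Phi$ commutes with the full Taylor tower, only with the first derivative, and the layer decomposition needs both additivity and that $\Phi$ preserves the normed enrichment. I would first try to reduce this to functoriality of cross-effects under exact strong monoidal functors (SOC II, Proposition~5). If that is unavailable, I would restrict the $\partial_*^{\mathrm{spec}}$ claim to $k=1$ plus those $k$ for which $\Phi$ is additive, and state this restriction explicitly.

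A secondary point needs care as well. Whether $\Phi_*$ is well defined on residues depends on the interface-preservation hypothesis. As in Step 1, I would state this hypothesis outright rather than derive it.
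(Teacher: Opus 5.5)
Your proposal is correct within the paper's framework and follows its proof essentially step for step. The paper also obtains covariance of $\sigma_P$ from the Base Change Theorem applied to $\mathcal{O}_{\mathcal{G}}$, covariance of $\Sigma^{\mathrm{res}}$ from interface localization plus coproduct and (simply assumed) interface preservation, covariance of every $\partial_n^{\mathrm{spec}}$ from an unproved assertion that cross-effects commute with $\Phi$ (so the obstacle you flag is shared there, not resolved), and then passes to general $R$ via Theorem~\ref{thm:universality} before reading off stability from $\rho(\partial^{\mathrm{spec}}F)$; your side remark deserves promotion, since for $R$ satisfying (A3) the isomorphism $R(\Phi(\mathcal{G}))\cong\Phi_*(R(\mathcal{G}))$ is literally that axiom, and using it avoids applying Theorem~\ref{thm:universality}, which is stated for (C1)--(C5), to rules defined by (A1)--(A4), as both your Step~2 and the paper's Step~5 do.
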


\begin{proof}
We prove the theorem in six rigorous steps, building on the Covariant Stability Theorem (Theorem~\ref{thm:covariant-stability}) and the Universality Theorem (Theorem~\ref{thm:universality}).

\paragraph{Step 1: Transport of operadic networks under $\Phi$.}
Since $\Phi$ is an admissible strong monoidal functor (Definition~\ref{def:admissible-base-change}), it satisfies the following properties:

\begin{enumerate}
    \item $\Phi$ preserves tensor products up to coherent natural isomorphism:
          \[
          \Phi(X \otimes_{\mathcal{M}} Y) \cong \Phi(X) \otimes_{\mathcal{M}'} \Phi(Y).
          \]
    \item $\Phi$ preserves the unit object: $\Phi(\mathbf{1}_{\mathcal{M}}) \cong \mathbf{1}_{\mathcal{M}'}$.
    \item $\Phi$ preserves colimits (cocontinuity), ensuring that operadic compositions are transported faithfully.
    \item $\Phi$ preserves spectral analyticity: if $A$ is a spectrally analytic $P$-algebra in $\mathcal{M}$, then $\Phi(A)$ is a spectrally analytic $\Phi(P)$-algebra in $\mathcal{M}'$.
    \item $\Phi$ preserves admissible interfaces: $\mathcal{I}(\Phi(P)) \cong \Phi(\mathcal{I}(P))$.
\end{enumerate}

Let $\mathcal{G} = (V, E, \mathcal{P}, \mathcal{C}, \mathfrak{A})$ be an operadic operator network in $\mathcal{M}$. Applying $\Phi$ componentwise yields a network $\Phi(\mathcal{G})$ in $\mathcal{M}'$ defined by:
\begin{itemize}
    \item Nodes: $\Phi(V) = V$ (the same node set), with node algebras $\Phi(A_v)$.
    \item Edges: $\Phi(E) = E$, with edge coupling maps $\Phi(\tau_e): \Phi(A_{s(e)}) \to \Phi(A_{t(e)})$.
    \item Paths: $\Phi(\mathcal{P}) = \mathcal{P}$, with induced propagation operators $\Phi(\tau_p)$.
    \item Cycles: $\Phi(\mathcal{C}) = \mathcal{C}$, with fixed-point equations $\Phi(\tau_c)(\Phi(A)) = \Phi(A)$.
    \item Assembly structure: $\Phi(\mathfrak{A})$ is obtained by applying $\Phi$ to all operadic composition maps $\gamma$ of $P$, using the coherence isomorphisms of the strong monoidal functor to re-associate tensor products.
\end{itemize}
Since $\Phi$ preserves colimits and operadic compositions, $\Phi(\mathcal{G})$ is an admissible operadic operator network in $\mathcal{M}'$.

\paragraph{Step 2: Covariance of the operadic spectrum.}
The operadic spectrum $\sigma_P(\mathcal{G})$ is defined as $\sigma_P(\mathcal{O}_{\mathcal{G}})$, where $\mathcal{O}_{\mathcal{G}}$ is the global composite operator obtained by evaluating the network (Theorem~\ref{thm:network-evaluation}). By the Base Change Theorem (SOC I, Theorem 8), for any spectrally analytic $P$-algebra $A$,
\[
\sigma_{\Phi(P)}(\Phi(A)) \cong \Phi(\sigma_P(A)).
\]
Applying this to the global composite operator $\mathcal{O}_{\mathcal{G}}$, we obtain
\[
\sigma_{\Phi(P)}(\Phi(\mathcal{G})) = \sigma_{\Phi(P)}(\Phi(\mathcal{O}_{\mathcal{G}})) \cong \Phi(\sigma_P(\mathcal{O}_{\mathcal{G}})) = \Phi(\sigma_P(\mathcal{G})).
\]
Thus
\[
\sigma_{\Phi(P)}(\Phi(\mathcal{G})) \cong \Phi_*(\sigma_P(\mathcal{G})),
\]
where $\Phi_*$ denotes the induced map on spectral objects obtained by applying $\Phi$ componentwise and using the coherence isomorphisms.

\paragraph{Step 3: Covariance of the interaction residue.}
The interaction residue $\Sigma^{\mathrm{res}}_P(\mathcal{G})$ is characterized by the interface-localization decomposition (SOC III, Theorem 4):
\[
\Sigma^{\mathrm{res}}_P(\mathcal{G}) \cong \coprod_{I \in \mathcal{I}(P)} \mathcal{L}_I(P, \{A_v\}),
\]
where $\coprod$ denotes the disjoint union (coproduct) of interface-localized defects.

Since $\Phi$ is admissible and preserves colimits, it preserves coproducts:
\[
\Phi\left(\coprod_{I \in \mathcal{I}(P)} \mathcal{L}_I(P, \{A_v\})\right) \cong \coprod_{I \in \mathcal{I}(P)} \Phi(\mathcal{L}_I(P, \{A_v\})).
\]

Moreover, because $\Phi$ preserves admissible interfaces, we have $\mathcal{I}(\Phi(P)) \cong \Phi(\mathcal{I}(P))$, and for each interface $I$,
\[
\Phi(\mathcal{L}_I(P, \{A_v\})) \cong \mathcal{L}_{\Phi(I)}(\Phi(P), \{\Phi(A_v)\}).
\]
Therefore,
\[
\Phi(\Sigma^{\mathrm{res}}_P(\mathcal{G})) \cong \coprod_{I \in \mathcal{I}(\Phi(P))} \mathcal{L}_I(\Phi(P), \{\Phi(A_v)\}) = \Sigma^{\mathrm{res}}_{\Phi(P)}(\Phi(\mathcal{G})).
\]
Hence
\[
\Sigma^{\mathrm{res}}_{\Phi(P)}(\Phi(\mathcal{G})) \cong \Phi_*(\Sigma^{\mathrm{res}}_P(\mathcal{G})).
\]

\paragraph{Step 4: Covariance of spectral derivatives.}
The spectral derivatives $\partial_n^{\mathrm{spec}} F_{\mathcal{G}}$ are defined as the $n$-th cross-effects of the propagation functor $F_{\mathcal{G}}$ (SOC II, Definition 14). By the admissibility of $\Phi$ (specifically, the preservation of colimits and operadic compositions), the cross-effects commute with $\Phi$:
\[
\Phi(\mathrm{cr}_n F_{\mathcal{G}}(\{A_v\})) \cong \mathrm{cr}_n (\Phi_* F_{\mathcal{G}})(\{\Phi(A_v)\}),
\]
where $\Phi_* F_{\mathcal{G}}$ denotes the induced propagation functor on the base-changed network. Since $\Phi_* F_{\mathcal{G}} = F_{\Phi(\mathcal{G})}$, we obtain
\[
\partial_n^{\mathrm{spec}} F_{\Phi(\mathcal{G})} \cong \Phi_*(\partial_n^{\mathrm{spec}} F_{\mathcal{G}}).
\]

For the collection of all derivatives, we have
\[
\partial_*^{\mathrm{spec}} F_{\Phi(\mathcal{G})} \cong \Phi_*(\partial_*^{\mathrm{spec}} F_{\mathcal{G}}).
\]

\paragraph{Step 5: Functorial compatibility of arbitrary propagation rules.}
Let $R$ be any admissible SOC spectral propagation rule (Definition~\ref{def:reasonable-rule}). By the Universality Theorem (Theorem~\ref{thm:universality}), $R$ is uniquely determined by the SOC triple $(\sigma_P, \partial_*^{\mathrm{spec}}, \Sigma^{\mathrm{res}})$. More precisely, there exists a natural transformation $\Theta_R$ such that for every admissible network $\mathcal{G}$,
\[
R(\mathcal{G}) = \Theta_R\bigl(\sigma_P(\mathcal{G}), \partial_*^{\mathrm{spec}} F_{\mathcal{G}}, \Sigma^{\mathrm{res}}_P(\mathcal{G})\bigr).
\]

Now consider the base-changed network $\Phi(\mathcal{G})$. Applying the same factorization,
\[
R(\Phi(\mathcal{G})) = \Theta_R\bigl(\sigma_{\Phi(P)}(\Phi(\mathcal{G})), \partial_*^{\mathrm{spec}} F_{\Phi(\mathcal{G})}, \Sigma^{\mathrm{res}}_{\Phi(P)}(\Phi(\mathcal{G}))\bigr).
\]

Substituting the covariance isomorphisms from Steps 2, 3, and 4:
\[
R(\Phi(\mathcal{G})) = \Theta_R\bigl(\Phi_*(\sigma_P(\mathcal{G})), \Phi_*(\partial_*^{\mathrm{spec}} F_{\mathcal{G}}), \Phi_*(\Sigma^{\mathrm{res}}_P(\mathcal{G}))\bigr).
\]

Since $\Theta_R$ is functorial and $\Phi_*$ acts componentwise, the naturality of $\Theta_R$ implies that
\[
\Theta_R\bigl(\Phi_*(\sigma_P(\mathcal{G})), \Phi_*(\partial_*^{\mathrm{spec}} F_{\mathcal{G}}), \Phi_*(\Sigma^{\mathrm{res}}_P(\mathcal{G}))\bigr) \cong \Phi_*\bigl(\Theta_R(\sigma_P(\mathcal{G}), \partial_*^{\mathrm{spec}} F_{\mathcal{G}}, \Sigma^{\mathrm{res}}_P(\mathcal{G}))\bigr).
\]

But the right-hand side is precisely $\Phi_*(R(\mathcal{G}))$. Therefore,
\[
R(\Phi(\mathcal{G})) \cong \Phi_*(R(\mathcal{G})).
\]

\paragraph{Step 6: Transfer of stability and robustness conclusions.}
The stability of a network $\mathcal{G}$ is determined by the spectral radius of the linearized propagation operator $\rho(\partial^{\mathrm{spec}} F_{\mathcal{G}})$ (Theorem~\ref{thm:feedback-stability}). From Step 4,
\[
\rho(\partial^{\mathrm{spec}} F_{\Phi(\mathcal{G})}) = \rho(\Phi_*(\partial^{\mathrm{spec}} F_{\mathcal{G}})).
\]

If $\Phi$ is an isometric monoidal equivalence (e.g., unitary transformation, Fourier transform), then $\Phi_*$ preserves spectral radii exactly:
\[
\rho(\Phi_*(\partial^{\mathrm{spec}} F_{\mathcal{G}})) = \rho(\partial^{\mathrm{spec}} F_{\mathcal{G}}).
\]
Hence stability is preserved exactly.

If $\Phi$ is a more general admissible functor satisfying the spectral subunitarity condition
\[
\rho(T) < 1 \quad\Longrightarrow\quad \rho(\Phi_*(T)) < 1,
\]
then stability of $\mathcal{G}$ (i.e., $\rho(\partial^{\mathrm{spec}} F_{\mathcal{G}}) < 1$) implies stability of $\Phi(\mathcal{G})$.

Similarly, robustness properties expressed via the SOC condition number $\kappa_{\mathrm{SOC}}$ and the interaction residue $\Sigma^{\mathrm{res}}$ transfer functorially by the same argument.

\paragraph{Conclusion.}
We have shown that for any admissible SOC spectral propagation rule $R$,
\[
R(\Phi(\mathcal{G})) \cong \Phi_*(R(\mathcal{G})).
\]
This establishes the functorial compatibility of spectral propagation under admissible base changes. Consequently, the SOC invariants $(\sigma_P, \partial_*^{\mathrm{spec}}, \Sigma^{\mathrm{res}})$ transform covariantly, and stability and robustness conclusions transfer canonically to every admissible representation.

This completes the proof.
\end{proof}

\begin{remark}
\label{rem:representation-independence}
The theorem formalizes the principle that spectral propagation is \emph{functorially compatible} with admissible representation changes, not that it is independent of representation in an absolute sense. Different representations (discretization, quantization, etc.) introduce approximations, artifacts, or structural changes; the SOC framework provides conditions under which spectral propagation remains \emph{compatible} across such changes.

Thus, SOC invariants describe intrinsic spectral dynamics up to functorial equivalence, not as representation-free absolutes. Different realizations of the same operadic system—whether classical or quantum, continuous or discrete, algebraic or geometric—produce \emph{covariantly related} spectral propagation behavior under admissible base changes.

This principle is the conceptual heart of the Covariant Stability Theorem and the Universality Theorem, explaining why the SOC framework can bridge distinct mathematical domains.
\end{remark}

\begin{corollary}[Compatibility Across Domains]
\label{cor:uniform-applicability}
The SOC framework applies compatibly across mathematical domains connected by admissible base change functors. In particular, it provides a unified language for spectral propagation in:
\begin{itemize}
    \item classical and quantum systems (semiclassically),
    \item continuous and discrete models (under spectral convergence),
    \item algebraic and geometric formulations,
    \item analytic and topological representations (exactly for commutative C*-algebras).
\end{itemize}
Any spectral analysis performed in one such setting transfers canonically to any other setting connected by an admissible base change, up to the natural transformations induced by the base change functor.
\end{corollary}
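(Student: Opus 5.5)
The proof is a direct reduction to Theorem~\ref{thm:representation-independence} and its building blocks. Fix two domains $\mathcal{M}$ and $\mathcal{M}'$ from the list, connected by an admissible base change $\Phi$. The specific base changes are:
\begin{itemize}
\item semiclassical quantization $\Phi_{\mathrm{sc}}$, for classical and quantum systems;
\item a discretization $\Phi_{\Delta x}$, for continuous and discrete models;
\item the Gelfand transform $\Gamma$, for the algebraic, geometric, analytic and topological pairs.
\end{itemize}
For each bullet we identify $\Phi$ and the exact hypotheses under which it is admissible in the sense of Definition~\ref{def:admissible-base-change}. We then invoke Theorem~\ref{thm:representation-independence} to obtain $R(\Phi(\mathcal{G}))\cong\Phi_*(R(\mathcal{G}))$ together with covariance of the triple $(\sigma_P,\partial_*^{\mathrm{spec}},\Sigma^{\mathrm{res}})$. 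Stability and robustness statements then transfer by Step~6 of that proof, using either spectral subunitarity or exact spectral-radius preservation.

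The order of work is the exact case first, then the asymptotic ones. For commutative C$^*$-algebras we use Proposition~\ref{prop:gelfand-propagation}, Lemma~\ref{lem:spectrum-preservation} and Corollary~\ref{cor:gelfand-stability}. These give exact equality of spectra and spectral radii via the isometric $*$-isomorphism $\Gamma_{\mathcal{N}}$, which settles the ``exactly'' clause.

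For discretization we apply Theorem~\ref{thm:discretization-preserves-stability}, under its consistency and spectral-convergence hypotheses, to get $\rho(S_{\Delta x})<1$ for small $\Delta x$. This is transfer ``under spectral convergence'' rather than exact covariance.

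For quantization we apply Proposition~\ref{prop:quantization-covariance}, which gives $\rho(\partial^{\mathrm{spec}}F_{\mathrm{quantum}})=\rho(\partial^{\mathrm{spec}}F)+O(\hbar)$ and hence transfer as $\hbar\to0$.

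Finally, ``canonical'' transfer follows from the uniqueness of $\Theta_R$ in Theorem~\ref{thm:universality}: the comparison isomorphism is forced, so it does not depend on choices.

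The main obstacle is that quantization and discretization are not strict admissible functors. The Groenewold--van Hove remark and the remark after Definition~\ref{def:discretization-scheme} say exactly this, so Theorem~\ref{thm:representation-independence} does not apply verbatim. We handle this by restricting the claim to the qualified sense already written into the corollary (``semiclassically'', ``under spectral convergence''). Concretely, ``transfers canonically'' in those cases means transfer up to a distortion bounded as in Remark~\ref{rem:stability_radius_base_change}, with distortion constants $c_1,c_2\to1$ in the limit. The stability conclusion then follows from the open condition $\rho<1$ together with continuity of the spectral radius along the approximating family.
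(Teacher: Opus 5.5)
Your proposal is correct and follows essentially the same route as the paper's proof. Both arguments identify the base change for each pair (semiclassical quantization, discretization, and the Gelfand transform, with the analytic/topological case treated as a special case of the algebraic/geometric one), reduce to Theorem~\ref{thm:representation-independence}, and draw the specific transfers from Proposition~\ref{prop:quantization-covariance}, Theorem~\ref{thm:discretization-preserves-stability} and the Gelfand results.

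The only difference is one of care. The paper simply declares $\Phi_{\mathrm{sc}}$ and $\Phi_{\Delta x}$ ``admissible in the asymptotic sense'' and writes $R(\Phi(\mathcal{G}))\cong\Phi_*(R(\mathcal{G}))+O(\hbar)$. You instead note that the theorem does not apply verbatim there, and you restrict those cases to transfer of the stability conclusion $\rho<1$, which is all those two results actually support. For that conclusion only the upper distortion bound is needed, not $c_1\to 1$.
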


\begin{proof}
We prove the corollary by establishing that each of the listed domain pairs is connected by an admissible base change functor (or a family of such functors) satisfying the hypotheses of Theorem~\ref{thm:representation-independence}. The transfer of spectral analysis then follows directly from the functorial compatibility established in that theorem.

\paragraph{Part 1: Classical and quantum systems (semiclassical correspondence).}
Let $\mathcal{M}_{\mathrm{classical}}$ be a suitable category of Poisson algebras (classical observables) and let $\mathcal{M}_{\mathrm{quantum}}$ be a category of associative operator algebras (quantum observables). The semiclassical quantization functor
\[
\Phi_{\mathrm{sc}}: \mathcal{M}_{\mathrm{classical}} \longrightarrow \mathcal{M}_{\mathrm{quantum}}
\]
is defined on a dense subcategory of observables (e.g., polynomials in position and momentum) and extends by continuity. Under the assumptions of Proposition~\ref{prop:quantization-covariance}, $\Phi_{\mathrm{sc}}$ satisfies:

\begin{enumerate}
    \item The correspondence principle:
          \[
          \frac{1}{i\hbar}[\Phi_{\mathrm{sc}}(f), \Phi_{\mathrm{sc}}(g)] = \Phi_{\mathrm{sc}}(\{f,g\}) + O(\hbar).
          \]
    \item Asymptotic spectral compatibility:
          \[
          \rho(\partial^{\mathrm{spec}}\Phi_{\mathrm{sc}}(F)) = \rho(\partial^{\mathrm{spec}}F) + O(\hbar).
          \]
    \item For any spectrally analytic classical network $\mathcal{G}$, the quantized network $\Phi_{\mathrm{sc}}(\mathcal{G})$ is spectrally analytic for sufficiently small $\hbar$.
\end{enumerate}

Thus $\Phi_{\mathrm{sc}}$ is an admissible base change functor in the asymptotic sense as $\hbar \to 0$. By Theorem~\ref{thm:representation-independence}, for any admissible SOC propagation rule $R$,
\[
R(\Phi_{\mathrm{sc}}(\mathcal{G})) \cong (\Phi_{\mathrm{sc}})_*(R(\mathcal{G})) + O(\hbar).
\]
Hence spectral analysis performed in the classical setting transfers to the quantum setting up to $O(\hbar)$ corrections, and exactly in the semiclassical limit $\hbar \to 0$.

\paragraph{Part 2: Continuous and discrete models (spectral discretization).}
Let $\mathcal{M}_{\mathrm{cont}}$ be a category of operators on function spaces (e.g., differential operators on $L^2(\mathbb{R}^d)$) and let $\mathcal{M}_{\mathrm{disc}}$ be a category of matrices on $\mathbb{C}^N$ (or operators on a finite-dimensional space). The discretization functor
\[
\Phi_{\Delta x}: \mathcal{M}_{\mathrm{cont}} \longrightarrow \mathcal{M}_{\mathrm{disc}}
\]
depends on a mesh size parameter $\Delta x > 0$ and is defined via projection and interpolation maps $P_{\Delta x}, I_{\Delta x}$. Under the assumptions of Theorem~\ref{thm:discretization-preserves-stability}:

\begin{enumerate}
    \item Consistency: $\|I_{\Delta x} \Phi_{\Delta x}(A) P_{\Delta x} - A\| \le C \Delta x^p$ for some $p > 0$.
    \item Spectral convergence: $\rho(\partial^{\mathrm{spec}}\Phi_{\Delta x}(F)) = \rho(\partial^{\mathrm{spec}}F) + O(\Delta x^p)$.
    \item For sufficiently small $\Delta x$, the discretized network $\Phi_{\Delta x}(\mathcal{G})$ is stable if the continuous network is stable.
\end{enumerate}

Thus $\Phi_{\Delta x}$ is an admissible base change functor in the limit $\Delta x \to 0$. By Theorem~\ref{thm:representation-independence},
\[
R(\Phi_{\Delta x}(\mathcal{G})) \cong (\Phi_{\Delta x})_*(R(\mathcal{G})) + O(\Delta x^p).
\]
Therefore, spectral analysis performed in the continuous setting transfers to the discrete setting up to discretization error $O(\Delta x^p)$, and exactly in the limit $\Delta x \to 0$.

\paragraph{Part 3: Algebraic and geometric formulations.}
Let $\mathcal{M}_{\mathrm{alg}}$ be a category of commutative C*-algebras (algebraic formulation) and let $\mathcal{M}_{\mathrm{geom}}$ be a category of continuous functions on compact Hausdorff spaces (geometric formulation). The Gelfand transform
\[
\Gamma: \mathcal{M}_{\mathrm{alg}} \longrightarrow \mathcal{M}_{\mathrm{geom}}
\]
is a contravariant equivalence, but its opposite $\Gamma^{\mathrm{op}}: \mathcal{M}_{\mathrm{alg}}^{\mathrm{op}} \to \mathcal{M}_{\mathrm{geom}}^{\mathrm{op}}$ is a covariant strong monoidal equivalence. For the purposes of spectral propagation, we work with the covariant version.

The Gelfand transform satisfies the following properties (see SOC I, Section 10.3):

\begin{enumerate}
    \item Isometric $*$-isomorphism: $\|\Gamma(A)\|_\infty = \|A\|$ for any commutative C*-algebra $A$.
    \item Spectral invariance: $\sigma(\Gamma(A)) = \sigma(A)$ as sets.
    \item Functoriality: For any $*$-homomorphism $f: A \to B$, $\Gamma(f): \Gamma(B) \to \Gamma(A)$ is a continuous map on spectra, and the diagram commutes:
          \[
          \begin{tikzcd}
          A \arrow[r, "f"] \arrow[d, "\Gamma_A"] & B \arrow[d, "\Gamma_B"] \\
          C(\Delta(A)) \arrow[r, "\widetilde{f}"] & C(\Delta(B))
          \end{tikzcd}
          \]
    \item Tensor product compatibility:
          \[
          \Gamma(A \otimes B) \cong C(\Delta(A) \times \Delta(B)) \cong \Gamma(A) \otimes \Gamma(B).
          \]
\end{enumerate}

Since $\Gamma$ is an isometric monoidal equivalence, it is an admissible base change functor with exact preservation of spectral radii:
\[
\rho(\Gamma(\partial^{\mathrm{spec}}F)) = \rho(\partial^{\mathrm{spec}}F).
\]

By Theorem~\ref{thm:representation-independence}, for any admissible SOC propagation rule $R$,
\[
R(\Gamma(\mathcal{G})) \cong \Gamma_*(R(\mathcal{G})).
\]
Thus spectral analysis performed in the algebraic formulation (commutative C*-algebras) transfers exactly to the geometric formulation (continuous functions on spectra), and vice versa.

\paragraph{Part 4: Analytic and topological representations.}
This is a special case of Part 3 when the commutative C*-algebra is of the form $C(X)$ for a compact Hausdorff space $X$. The Gelfand transform identifies $C(X)$ with itself (up to isomorphism), but more generally, any commutative C*-algebra is isomorphic to $C(\Delta(A))$. Thus the analytic representation (C*-algebra) and the topological representation (continuous functions on the spectrum) are equivalent under the Gelfand transform.

For non-commutative C*-algebras, the Gelfand transform does not apply directly. However, the Covariant Stability Theorem (Theorem~\ref{thm:covariant-stability}) still provides compatibility under admissible base changes that preserve the relevant spectral structure.

\paragraph{Part 5: General transfer principle.}
Let $\Phi: \mathcal{M} \to \mathcal{M}'$ be an admissible base change functor connecting two mathematical domains. By Theorem~\ref{thm:representation-independence}, for any admissible SOC propagation rule $R$ and any operadic operator network $\mathcal{G}$ in $\mathcal{M}$,
\[
R(\Phi(\mathcal{G})) \cong \Phi_*(R(\mathcal{G})).
\]

This isomorphism is natural in $\mathcal{G}$ and commutes with the monoidal structure and base-change functors. Consequently, any spectral analysis performed on $\mathcal{G}$ in $\mathcal{M}$—including computation of the operadic spectrum, spectral derivatives, interaction residues, stability margins, and sensitivity bounds—transfers canonically to the analysis of $\Phi(\mathcal{G})$ in $\mathcal{M}'$ via the induced map $\Phi_*$.

Specifically:

\begin{itemize}
    \item \textbf{Spectra}: $\sigma_{\Phi(P)}(\Phi(\mathcal{G})) = \Phi_*(\sigma_P(\mathcal{G}))$.
    \item \textbf{Spectral derivatives}: $\partial_*^{\mathrm{spec}} F_{\Phi(\mathcal{G})} = \Phi_*(\partial_*^{\mathrm{spec}} F_{\mathcal{G}})$.
    \item \textbf{Interaction residues}: $\Sigma^{\mathrm{res}}_{\Phi(P)}(\Phi(\mathcal{G})) = \Phi_*(\Sigma^{\mathrm{res}}_P(\mathcal{G}))$.
    \item \textbf{Stability}: $\rho(\partial^{\mathrm{spec}} F_{\Phi(\mathcal{G})}) = \rho(\Phi_*(\partial^{\mathrm{spec}} F_{\mathcal{G}}))$, and if $\Phi$ is an isometric equivalence, this equals $\rho(\partial^{\mathrm{spec}} F_{\mathcal{G}})$ exactly.
\end{itemize}

If $\Phi$ is only asymptotically admissible (as in the semiclassical and discretization cases), the transfer holds up to the corresponding approximation error, with exact transfer in the limit.

\paragraph{Conclusion.}
We have shown that each of the listed domain pairs is connected by an admissible (or asymptotically admissible) base change functor. By Theorem~\ref{thm:representation-independence}, spectral analysis transfers canonically across these domains. Therefore, the SOC framework provides a unified language for spectral propagation in classical and quantum systems, continuous and discrete models, algebraic and geometric formulations, and analytic and topological representations. This completes the proof of the corollary.
\end{proof}

\begin{example}[Unified Analysis Across Representations]
\label{ex:unified-representation}
Consider a simple feedback network: a single node with $F(x) = kx$ (linear gain). This network can be analyzed in multiple representations:

\begin{itemize}
    \item \textbf{Classical continuous}: $F$ acts on real numbers. Stability condition: $|k| < 1$.
    \item \textbf{Quantum}: $F$ becomes an operator $\hat{F}$ on Hilbert space. For normal operators, stability condition: $\|\hat{F}\| < 1$, which equals $|k| < 1$ when $\hat{F} = kI$.
    \item \textbf{Discrete approximation}: $F$ becomes a matrix. Stability condition: $\rho(F_{\text{disc}}) < 1$ for sufficiently fine discretization under spectral convergence assumptions.
    \item \textbf{Gelfand transform}: $F$ becomes multiplication by $k$ on $C(\text{Spec})$. Stability condition: $|k| < 1$ pointwise.
\end{itemize}

Under the assumptions of Theorem~\ref{thm:representation-independence}, these analyses yield \emph{compatible stability criteria} that agree exactly in the Gelfand case and asymptotically or under convergence assumptions in the quantization and discretization cases.
\end{example}

\begin{center}
\[
\boxed{
\begin{minipage}{0.9\textwidth}
\centering
\textbf{Principle of Functorial Compatibility:} \\
Spectral propagation is functorially compatible with admissible representation changes, \\
up to the natural transformations induced by the base change functor.
\end{minipage}
}
\]
\end{center}

\end{document}